\numberwithin{equation}{section}
\numberwithin{figure}{section}
\newtheorem{theorem}{Theorem}[section]
\newtheorem{remark}[theorem]{Remark}
\newtheorem{lemma}[theorem]{Lemma}
\newtheorem{proposition}[theorem]{Proposition}
\newtheorem{definition}[theorem]{Definition}
\renewcommand{\C}{\mathbf{C}}
\newcommand{\D}{\mathbf{D}}
\newcommand{\E}{\mathbf{E}}
\newcommand{\F}{\mathbf{F}}
\newcommand{\h}{\mathbf{H}}
\newcommand{\N}{\mathbf{N}}
\newcommand{\Z}{\mathbf{Z}}
\newcommand{\p}{\mathbf{P}}
\newcommand{\R}{\mathbf{R}}
\newcommand{\Fh}{\mathfrak {h}}
\newcommand{\CA}{\mathcal {A}}
\newcommand{\CB}{\mathcal {B}}
\newcommand{\CC}{\mathcal {C}}
\newcommand{\CD}{\mathcal {D}}
\newcommand{\CE}{\mathcal {E}}
\newcommand{\CF}{\mathcal {F}}
\newcommand{\CL}{\mathcal {L}}
\newcommand{\CN}{\mathcal {N}}
\newcommand{\CQ}{\mathcal {Q}}
\newcommand{\CS}{\mathcal {S}}
\newcommand{\CT}{\mathcal {T}}
\newcommand{\CW}{\mathcal {W}}
\newcommand{\CX}{\mathcal {X}}
\newcommand{\CZ}{\mathcal {Z}}
\newcommand{\CG}{\mathcal {G}}
\newcommand{\CH}{\mathcal {H}}
\newcommand{\SLE}{{\rm SLE}}
\newcommand{\CLE}{{\rm CLE}}
\newcommand{\dist}{\mathrm{dist}}
\newcommand{\diam}{\mathrm{diam}}
\newcommand{\im}{\mathrm{Im}}
\newcommand{\re}{\mathrm{Re}}
\newcommand{\confrad}{{\rm CR}}
\newcommand{\one}{{\bf 1}}
\newcommand{\wt}{\widetilde}
\newcommand{\wh}{\widehat}
\newcommand{\ol}{\overline}
\newcommand{\ul}{\underline}
\newcommand{\giv}{\,|\,}
\newcommand{\BCLE}{\mathrm{BCLE}}
\newcommand{\ccwBCLE}{\BCLE^{\boldsymbol {\circlearrowleft}}}
\newcommand{\cwBCLE}{\BCLE^{\boldsymbol {\circlearrowright}}}
\newcommand{\strip} {\mathscr{S}}
\newcommand{\IG}{\mathrm{IG}}
\newcommand{\median}[1]{{\mathfrak m}_{#1}}
\newcommand{\medianHP}[1]{{\mathfrak m}_{#1}^{\mathrm{HP}}}
\newcommand{\quantHP}[2]{{{\mathfrak q}}_{#2}^{\mathrm{HP}}(#1)}
\newcommand{\exploreExp}{\alpha_{\mathrm{PERC}}}
\newcommand{\qmeasure}[1]{\mu_{#1}}
\newcommand{\qbmeasure}[1]{\nu_{#1}}
\newcommand{\qcarpet}[2]{\mu_{#1,#2}}
\newcommand{\lebneb}[1]{{\mathfrak N}_{#1}}
\newcommand{\Fd}{\mathfrak d}
\newcommand{\met}[3]{\Fd(#1,#2;#3)}
\newcommand{\metres}[4]{\Fd^{#1}(#2,#3;#4)}
\newcommand{\metplus}[3]{\Fd^+(#1,#2;#3)}
\newcommand{\metapprox}[4]{\Fd_{#1}(#2,#3;#4)}
\newcommand{\metapproxres}[5]{\Fd_{#1}^{#2}(#3,#4;#5)}
\newcommand{\dyad}{\mathsf{DyadDom}}
\newcommand{\distH}{d_{\mathrm H}}
\newcommand{\CK}{{\mathcal K}}
\newcommand{\funcmet}{{\mathbf d}}
\newcommand{\funcset}{\mathbf K}
\newcommand{\cp}{\mathrm{cap}}
\newcommand{\LBD}{\mathrm{LBD}}
\newcommand{\UBD}{\mathrm{UBD}}
\newcommand{\PP}{\mathrm{PP}}
\newcommand{\cwBoundary}[3]{[#1,#2]_{#3}^{\boldsymbol {\circlearrowright}}}
\newcommand{\cwBoundaryOpen}[3]{(#1,#2)_{#3}^{\boldsymbol {\circlearrowright}}}
\newcommand{\ccwBoundary}[3]{[#1,#2]_{#3}^{\boldsymbol {\circlearrowleft}}}
\newcommand{\ccwBoundaryOpen}[3]{(#1,#2)_{#3}^{\boldsymbol {\circlearrowleft}}}
\newcommand{\cpath}{\omega}
\newcommand{\qwedgeW}[2]{\mathsf{QWedge}_{\bm{\gamma}=#1}^{\mathbf{W}=#2}}
\newcommand{\qconeW}[2]{\mathsf{QCone}_{\bm{\gamma}=#1}^{\mathbf{W}=#2}}
\newcommand{\qdisk}[1]{\mathsf{QDisk}_{\bm{\gamma}=#1}}
\newcommand{\qdiskL}[2]{\mathsf{QDisk}_{\bm{\gamma}=#1}^{\bm{L}=#2}}
\newcommand{\qdiskCarpet}[2]{\mathsf{QDiskCLE}_{\bm{\gamma}=#1}^{\bm{L}=#2}}
\newcommand{\qdiskWeighted}[2]{\mathsf{QDiskW}_{\bm{\gamma}=#1}^{\bm{L}=#2}}
\newcommand{\net}{\mathrm{NET}}
\newcommand{\ball}{\mathrm{B}}
\newcommand{\KC}{\mathrm{KC}}
\newcommand{\HO}{\mathrm{HO}}
\begin{document}

\title[Tightness of the chemical distance metric for simple $\CLE$s]{Tightness of approximations to the chemical distance\\ metric for simple conformal loop ensembles}

\author{Jason Miller}


\begin{abstract}
Suppose that $\Gamma$ is a conformal loop ensemble ($\CLE_\kappa$) with simple loops ($\kappa \in (8/3,4)$) in a simply connected domain $D \subseteq \C$ whose boundary is itself a type of $\CLE_\kappa$ loop.  Let $\Upsilon$ be the carpet of $\Gamma$, i.e., the set of points in $D$ not surrounded by a loop of $\Gamma$.  We prove that certain approximations to the chemical distance metric in $\Upsilon$ are tight.  More precisely, for each path $\cpath \colon [0,1] \to \Upsilon$ and $\epsilon > 0$ we let $\lebneb{\epsilon}(\cpath)$ be the Lebesgue measure of the $\epsilon$-neighborhood of $\cpath$.  For $z,w \in \Upsilon$ we let $\metapprox{\epsilon}{z}{w}{\Gamma} = \inf_\cpath \lebneb{\epsilon}(\cpath)$ where the infimum is over all paths $\cpath \colon [0,1] \to \Upsilon$ with $\cpath(0) = z$, $\cpath(1) = w$ and let $\median{\epsilon}$ be the median of $\sup_{z,w \in \partial D} \metapprox{\epsilon}{z}{w}{\Gamma}$.  We prove that $(z,w) \mapsto \median{\epsilon}^{-1} \metapprox{\epsilon}{z}{w}{\Gamma}$ is tight and that any subsequential limit defines a geodesic metric on $\Upsilon$ which is H\"older continuous with respect to the Euclidean metric.  We conjecture that the subsequential limit is unique, conformally covariant, and describes the scaling limit of the chemical distance metric for discrete loop models which converge to $\CLE_\kappa$ for $\kappa \in (8/3,4)$ such as the critical Ising model.
\end{abstract}

\date{\today}
\maketitle

\setcounter{tocdepth}{1}

\parindent 0 pt
\setlength{\parskip}{0.20cm plus1mm minus1mm}

\tableofcontents

\section{Introduction}
\label{sec:intro}

\subsection{Overview}
\label{subsec:overview}

The conformal loop ensemble $\CLE_\kappa$ \cite{s2009cle,sw2012cle} is the canonical conformally invariant measure on loops in a simply connected domain $D \subseteq \C$.  It is the loop version of the Schramm-Loewner evolution ($\SLE_\kappa$) \cite{s2000sle}.  It consists of a countable collection of loops in $D$ each of which looks locally like an $\SLE_\kappa$ curve where $\kappa \in (8/3,8)$.  Just like for $\SLE_\kappa$, the loops are simple and do not intersect each other or the domain boundary for $\kappa \in (8/3,4]$ while for $\kappa \in (4,8)$ the loops are self-intersecting and intersect the domain boundary.  Its importance is that it describes (or is conjectured to describe) the scaling limit of all of the interfaces for a number of discrete models from statistical mechanics in two dimensions.  For example, on deterministic planar lattices, $\CLE_3$ was shown to be the scaling limit of the interfaces in the Ising model \cite{2014ising,bh2019ising}, $\CLE_{16/3}$ the interfaces in the FK-Ising model \cite{ks2019fkising,s2010ising}, $\CLE_6$ the interfaces in the percolation model \cite{s2001percolation,cn2006comm}, and $\CLE_8$ the Peano curve associated with the uniform spanning tree (UST) \cite{lsw2004lerwust}.

The purpose of this article is to initiate the program of constructing a conformally covariant metric associated with a $\CLE_\kappa$.  We will focus on the case that $\kappa \in (8/3,4)$ so that the $\CLE_\kappa$ loops are simple and do not intersect each other.  Suppose that~$D \subseteq \C$ is a simply connected domain and let~$\Gamma$ be a $\CLE_\kappa$ in $D$.  Let~$\Upsilon$ be the carpet of~$\Gamma$, i.e., the set of $z \in D$ which are not surrounded by any loop of~$\Gamma$.  For each path $\cpath \colon [0,1] \to \Upsilon$ and $\epsilon > 0$, we let $\lebneb{\epsilon}(\cpath)$ be the Lebesgue measure of the $\epsilon$-neighborhood of $\cpath$.  For $z,w \in \Upsilon$, we let $\metapprox{\epsilon}{z}{w}{\Gamma} = \inf_\cpath \lebneb{\epsilon}(\cpath)$ where the infimum is over all paths $\cpath \colon [0,1] \to \Upsilon$ with $\cpath(0) = z$, $\cpath(1) = w$.  Finally, we let $\median{\epsilon}$ be the median of the random variable $\sup_{z,w \in \partial D} \metapprox{\epsilon}{z}{w}{\Gamma}$.  Our aim is to show that $(z,w) \mapsto \median{\epsilon}^{-1} \metapprox{\epsilon}{z}{w}{\Gamma}$ is tight and that any subsequential limit defines a geodesic metric on $\Upsilon$ which is H\"older continuous with respect to the Euclidean metric.  

We note that for $z,w \in \Upsilon$ which are away from $\partial D$, it is natural to expect that the ``shortest'' path in~$\Upsilon$ from $z$ to $w$ is a fractal curve and that there exists $\alpha > 1$ so that the median of $\sup_{z,w \in \Upsilon} \metapprox{\epsilon}{z}{w}{\Gamma}$ is $\epsilon^{2-\alpha+o(1)}$ as $\epsilon \to 0$.  If $\partial D$ is smooth and $z,w \in \partial D$, then one could take the path which follows $\partial D$ from $z$ to $w$ hence in this case $\median{\epsilon} = \epsilon^{1+o(1)}$.  This means that if $\partial D$ is smooth then $\median{\epsilon}$ is the wrong normalization to get a finite metric on $\Upsilon$.  For this reason, we will state our main result for a domain with rough boundary.  In particular, we will take~$\partial D$ to be itself a $\CLE_\kappa$ loop.

\subsection{Main result}
\label{subsec:main_result}

The main result of this paper is the tightness of $\median{\epsilon}^{-1} \metapprox{\epsilon}{\cdot}{\cdotp}{\Gamma}$, which we note is a function defined on a random set.  Let us now specify with respect to which topology we will prove tightness.

We let $\funcset_n$ be the set of pairs consisting of a compact subset $K \subseteq \R^n$ and a function $f \colon K \to \R$.  We define a metric on $\funcset_n$ as follows.  Let~$\distH$ denote the Hausdorff distance on compact subsets of~$\R^n$.  For $(f,K), (g,L) \in \funcset_n$ we let
\[ \funcmet_{0,n}( (f,K),(g,L)) = \sup\{ |f(z)-g(w)| : z \in K, w \in L,\ |z-w| \leq \distH(K,L)\}.\]
We note that $\funcmet_{0,n}$ defines a pseudometric on $\funcset_n$ but not a metric.  Indeed, if $K$, $L$ are distinct compact subsets of $\R^n$ and $f,g$ are the zero function then $\funcmet_{0,n}((f,K),(g,L)) = 0$.  To define a metric, we thus set
\[ \funcmet_n((f,K),(g,L)) = \funcmet_{0,n}((f,K),(g,L)) + \distH(K,L).\]
Fix $R,S > 0$.  Suppose that $\F$ is a family in $\funcset_n$ so that:
\begin{enumerate}[(i)]
\item for every $\epsilon > 0$ there exists $\delta > 0$ so that for every $(f,K) \in \F$, $z,w \in K$, and $|z-w| < \delta$ we have that $|f(z)-f(w)| < \epsilon$ and
\item for every $(f,K) \in \F$ we have that $K \subseteq B(0,R)$ and $\sup_{z \in K} |f(z)| \leq S$.
\end{enumerate}
Then the closure of $\F$ is compact in $(\funcset_n,\funcmet_n)$.  Indeed, suppose that $((f_n,K_n))$ is a sequence in $\F$.  Then there exists a subsequence $((f_{n_j},K_{n_j}))$ so that $(K_{n_j})$ converges with respect to $\distH$.  The usual proof of the Arzela-Ascoli theorem then implies that, by taking a further subsequence if necessary, there exists $(f,K) \in \F$ so that $\funcmet_{0,n}((f_{n_j},K_{n_j}),(f,K)) \to 0$ as $j \to \infty$.

The main argument used in this paper works whenever $\partial D$ locally looks like an $\SLE_\kappa$ curve, though we will state our main result with a particular choice for the sake of concreteness (see Figure~\ref{fig:setup_sim} for a numerical simulation of the setup).

\begin{figure}
\begin{center}
\includegraphics[width=0.49\textwidth]{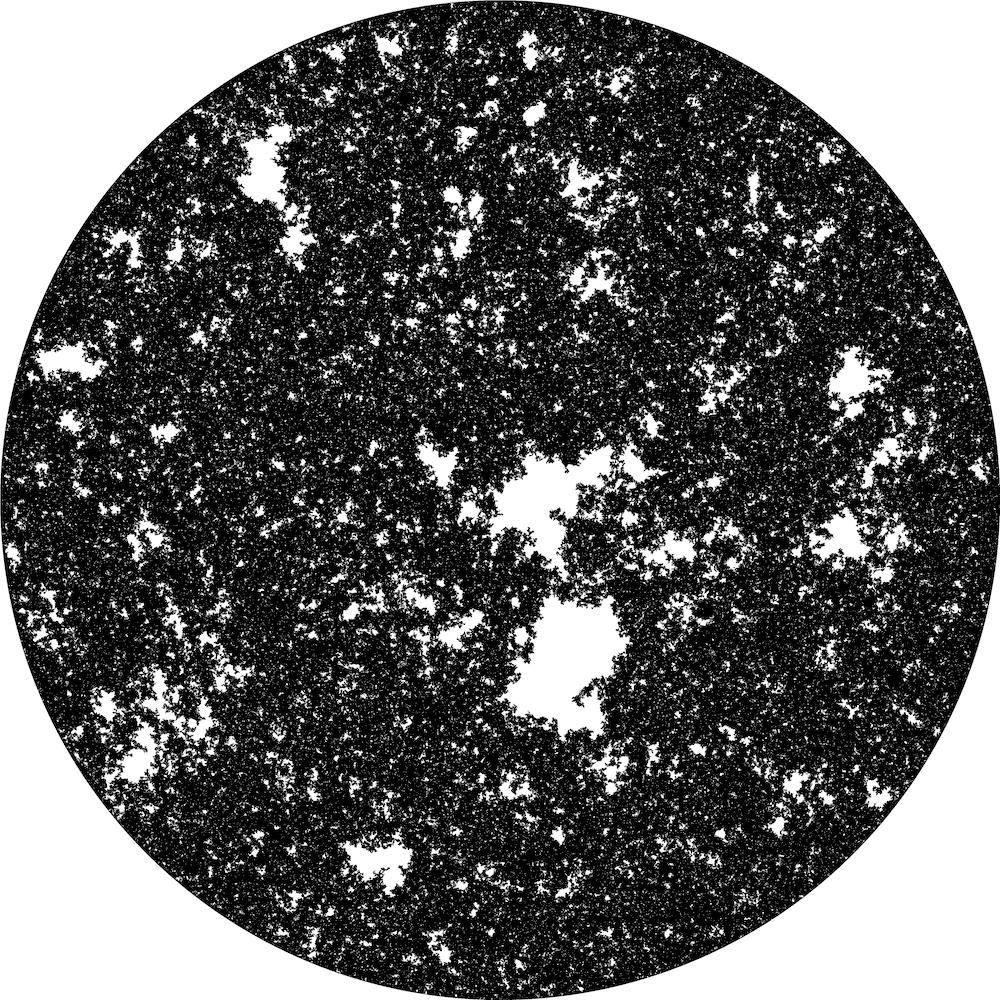}	
\hspace{0.005\textwidth}
\includegraphics[width=0.49\textwidth]{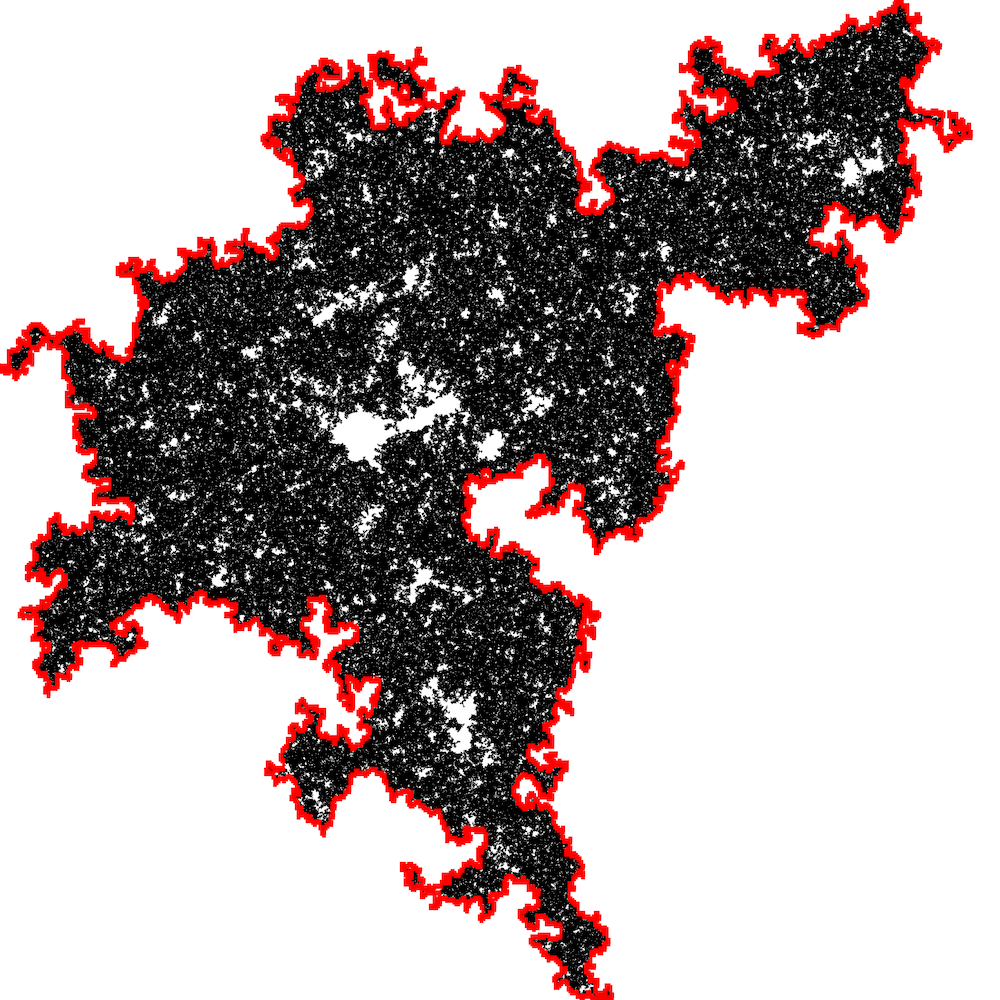}	

\vspace{0.01\textheight}

\includegraphics[width=0.7\textwidth]{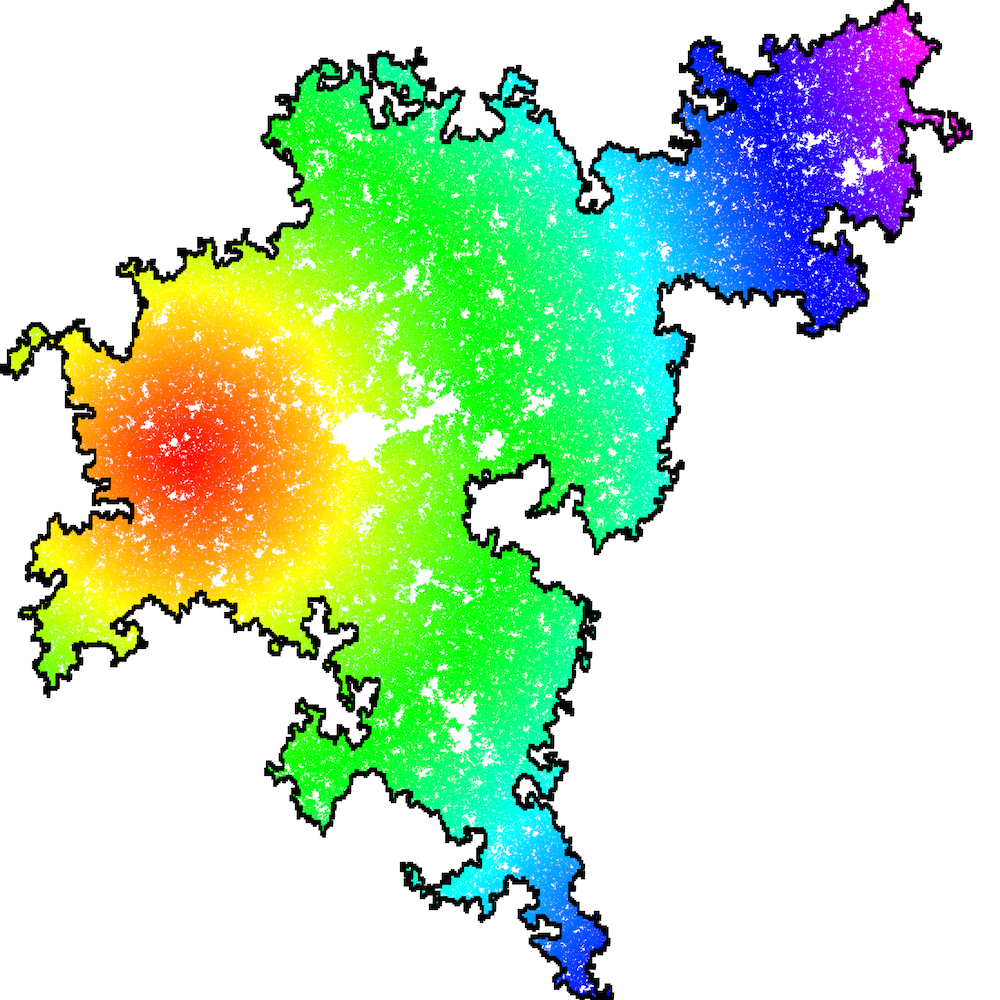}	
\end{center}
\caption{\label{fig:setup_sim} {\bf Top left:} A $\CLE_3$ on the unit disk.  {\bf Top right:} The loop which surrounds the origin from the top left (red) with an independent $\CLE_3$ in the domain surrounded by it.  {\bf Bottom:}  The top right with the $\CLE_3$ carpet colored according to the chemical distance from a uniformly chosen point in the carpet; the closest points are red and the farthest points are magenta.}
\end{figure}

\begin{theorem}
\label{thm:cle_loop}
Fix $\kappa \in (8/3,4)$.  Suppose that $\Gamma_\D$ is a $\CLE_\kappa$ in the unit disk $\D$ and let $D$ be the set of points surrounded by the loop $\CL \in \Gamma_\D$ which surrounds $0$.  Given $\CL$, let $\Gamma$ be a $\CLE_\kappa$ in $D$.  Let $\median{\epsilon}$ be the median of $\sup_{z,w \in \partial D} \metapprox{\epsilon}{z}{w}{\Gamma}$.  Then the law of the map $(z,w) \mapsto \median{\epsilon}^{-1} \metapprox{\epsilon}{z}{w}{\Gamma}$ is tight in the space $(\funcset_4,\funcmet_4)$.  Any subsequential limit is a.s.\ a geodesic metric on $\Upsilon$ which is H\"older continuous with respect to the Euclidean metric.
\end{theorem}

We conjecture that the limit in Theorem~\ref{thm:cle_loop} exists and is characterized by a certain list of axioms, as in the case of the Liouville quantum gravity metric \cite{mq2020geodesics,gm2021uniqueness}.  We plan to address this in future work.  We also conjecture that the limit describes the scaling limit of the chemical distance metric for discrete models which converge to $\CLE$, such as the critical Ising model \cite{bh2019ising} ($\kappa=3$).

\subsection{Outline and proof strategy}
\label{subsec:outline}

Fix $\kappa \in (8/3,4)$.  In proving Theorem~\ref{thm:cle_loop}, we will often work in an infinite volume setup which describes the local behavior of $\CL$ near a ``typical'' point in $\CL$.  That is, we will replace $\CL$ by a so-called two-sided whole-plane $\SLE_\kappa$ process $\eta$ and we let $\Gamma_+$ be a $\CLE_\kappa$ in the component of $\C \setminus \eta$ which is to the left of $\eta$.  Rather than normalize using the median and quantiles associated with the setup described in Theorem~\ref{thm:cle_loop}, we will instead use the median and quantiles associated with this infinite volume setup and then at the very end of the proof show that they are comparable.  In order to distinguish between these two situations, we will use the superscript HP to denote a quantity associated with the infinite volume (half-planar) setup and will not use a superscript when referring to a quantity associated with the setup described in the statement of Theorem~\ref{thm:cle_loop}.

The basic strategy to prove Theorem~\ref{thm:cle_loop} is the following.  For each $\epsilon > 0$ and $p \in (0,1)$ we let $\quantHP{p}{\epsilon}$ denote the $p$th quantile of $\sup_{z,w \in \eta([0,1])} \metapprox{\epsilon}{z}{w}{\Gamma_+}$ and let $\medianHP{\epsilon} = \quantHP{1/2}{\epsilon}$ be the median.  Note that for any $M_0 \geq 1$ there exists $M_1 \geq 1$ so that
\begin{equation}
\label{eqn:quant_comparison}
\frac{1}{M_1} \quantHP{p}{M_0 \epsilon} \leq \quantHP{p}{\epsilon} \leq M_1 \quantHP{p}{\epsilon/M_0}.	
\end{equation}
That is, replacing $\epsilon$ with a fixed multiple of $\epsilon$ changes the quantiles by at most a factor which does not depend on $\epsilon$.  (See also Lemma~\ref{lem:covering_lemma} for a more quantitative version of~\eqref{eqn:quant_comparison}.)

The proof is divided into two main steps:
\begin{itemize}
\item Establish the comparability of the quantiles $\quantHP{p}{\epsilon}$ uniformly in $\epsilon > 0$ and prove the tightness of $(\medianHP{\epsilon})^{-1} \metapprox{\epsilon}{\cdot}{\cdot}{\Gamma}$ in $(\funcset_4,\funcmet_4)$.  We emphasize here that the tightness result will be proved in the finite volume setting but using the normalization from the median defined in the infinite volume setting.
\item Show that every subsequential limit is a.s.\ positive definite hence a metric.
\end{itemize}
The geodesic property of a subsequential limit and the comparability of the infinite volume and finite volume quantiles will not be difficult to establish once we have accomplished both of the steps above.

Let $\kappa' = 16/\kappa \in (4,6)$.  In \cite{msw2017clepercolations}, it was shown how one can view an $\SLE_{\kappa'}(\kappa'-6)$ process $\eta'$ as a \emph{conformal percolation interface} (CPI) inside of the carpet $\Upsilon$ of $\Gamma$ which always keeps the loops of $\Gamma$ which it hits on its right side.  Moreover, the process obtained by following the loops of $\Gamma$ which are hit by $\eta'$ is an $\SLE_\kappa(\kappa-6)$ process $\eta$.  More generally, if we fix $\rho' \in [\kappa'-6,0]$ then there exists a value of $\beta \in [-1,1]$ so that we can couple an $\SLE_{\kappa'}(\rho';\kappa-6-\rho')$ process with $\Upsilon$ so that it keeps each loop of $\Gamma$ that it hits on its left (resp.\ right) side with probability $(1-\beta)/2$ (resp.\ $(1+\beta)/2$) and the process obtained by following the loops which are hit by $\eta'$ is an $\SLE_\kappa^\beta(\kappa-6)$ process $\eta$.  We will often abuse notation and refer to $\eta'$ together with the loops of $\Gamma$ that it hits (i.e., $\eta$) also as a CPI.

CPIs give us a way to ``explore'' the $\CLE_\kappa$ carpet in a Markovian way so that the conditional law of the loops of $\Gamma$ which are in the components of $D \setminus \eta([0,t])$ not surrounded by a loop of $\Gamma$ are independent $\CLE_\kappa$'s.  This means that we can explore further into each such component using a CPI which can also start from any point on the component boundary.  This type of exploration will be the basis for a spatial decomposition of the $\CLE_\kappa$ into ``chunks''.  Since the \emph{shape} of a domain strongly affects the chemical distance metric (e.g., as we explained earlier if the domain boundary is smooth then it is natural to expect that points along the boundary will be close to each other), knowing the shape of the successive chunks of the exploration will restrict the form of the chemical distance metric in the unexplored region.  We will thus want to consider a type of exploration which avoids revealing the exact shape of the explored region as much as possible.  For this reason, we will explore the $\CLE_\kappa$ using CPIs in a ``quantum'' manner using the relationship between $\SLE$ and Liouville quantum gravity (LQG) developed in \cite{s2016zipper,dms2014mating} as well as the relationship between $\CLE$ and LQG developed in \cite{msw2020simplecle}.  The reason for this is that exploring a $\CLE$ in a quantum manner is analogous to exploring a random planar map using a peeling process and in such an exploration the regions which are cut out depend only on the exploration through their boundary length.  In particular, conditioning on chunks which have been explored this way means that we do not have to condition on their actual shape -- only on how they are glued together.

We will call a chunk ``good'' if it is possible to traverse it with a path $\cpath$ with $\lebneb{\epsilon}(\cpath) \leq \quantHP{p}{\epsilon}$ and ``bad'' if it is not.  We will prove that by making the chunk size sufficiently small (but with $p \in (0,1)$ fixed), we can make the probability that a chunk is good as close to $1$ as we wish uniformly in $\epsilon \to 0$.  We will explain these estimates in Section~\ref{sec:chunk_estimates}.  We emphasize that this is the particular point in the proof where we make use of the normalization by $\medianHP{\epsilon}$ instead of making a choice such as by the median length of a crossing $\omega$ which minimizes $\lebneb{\epsilon}(\omega)$ among those which connect two disjoint boundary arcs.  The reason is that we will show that the probability that the diameter of the set of points $z \in \Upsilon_+$, $\Upsilon_+$ the carpet of $\Gamma_+$, with $\metapprox{\epsilon}{0}{z}{\Gamma_+} \leq \quantHP{p}{\epsilon}$ is at least $\delta$ tends to $1$ as $\delta \to 0$ provided $\epsilon > 0$ is sufficiently small relative to $\delta$.  The reason that this is the case is that $\eta([0,1])$ enjoys certain self-similarity properties.  In particular, if there was a positive chance that the aforementioned set was arbitrarily small with positive probability then that would have to be very likely to be true somewhere along $\eta([0,1])$ which in turn would contradict the definition of $\quantHP{p}{\epsilon}$.

We will show that the good vs bad chunks can be viewed as a type of strongly supercritical percolation Section~\ref{sec:percolation_exploration}, though in a more general framework.  In particular, we will show that it is likely that two points on the domain boundary are connected by a collection of such chunks.  The initial estimate that we will obtain here will not have sufficiently good concentration in order to deduce the tightness of $(\medianHP{\epsilon})^{-1} \metapprox{\epsilon}{\cdot}{\cdot}{\Gamma}$ because the exploration that we will consider has the possibility of hitting obstacles such as large $\CLE_\kappa$ loops.   In order to obtain stronger concentration, we will consider an adaptive version which circumvents these obstacles.

Using the definition of good vs bad chunks from Section~\ref{sec:chunk_estimates} and the adaptive percolation exploration from Section~\ref{sec:percolation_exploration}, we will show in Section~\ref{sec:boundary_tightness} that it is possible to string together the crossings between good chunks in order to deduce that with high probability we can connect two boundary points by a path $\cpath$ with $\lebneb{\epsilon}(\cpath)$ at most a constant times $\quantHP{p}{\epsilon}$.  The adaptive version of the exploration will give sufficiently good concentration so that we can establish the tightness of $(\medianHP{\epsilon})^{-1} \metapprox{\epsilon}{\cdot}{\cdot}{\Gamma}$ on $\partial D$ and deduce the comparability of the quantiles~$\quantHP{p}{\epsilon}$.

In Section~\ref{sec:interior_tightness} we then extend the tightness of $(\medianHP{\epsilon})^{-1} \metapprox{\epsilon}{\cdot}{\cdot}{\Gamma}$ from the boundary to the interior.  The first step is to define collections of points $P_j$ in $\Upsilon$ which consist of i.i.d.\ samples from the natural LQG measure on $\Upsilon$ constructed in \cite{msw2020simplecle}.  We will show in Appendix~\ref{app:carpet_measure} that there exists $\alpha_\net > 0$ so that if $|P_j| = 2^{\alpha_\net j}$ then with overwhelming probability $P_j$ forms a $2^{-j}$-net of $\Upsilon$ (with respect to the Euclidean metric).  We then aim to show that there exists $\beta > 0$ so that if $z, w \in P_j$ have Euclidean distance at most $2^{-j}$ then $(\medianHP{\epsilon})^{-1} \metapprox{\epsilon}{z}{w}{\Gamma}$ is very likely to be at most $2^{-\beta j}$.  Proving this statement will require a number of estimates.  For example, we will have to rule out the possibility that such points are separated from each other by a loop of $\Gamma$ with diameter significantly larger than $2^{-j}$.  Upon proving this, the remainder of the proof of tightness follows along the proof of the Kolmogorov-Centsov continuity criterion where the role of the dyadic decomposition in the latter is played by the sets $P_j$ in the former.

In the statement of Theorem~\ref{thm:cle_loop}, it is important to note that the normalization is by the median of $\sup_{z,w \in \eta([0,1])} \metapprox{\epsilon}{z}{w}{\Gamma_+}$ as opposed to the median length of a crossing which connects two disjoint boundary arcs.  In particular, due to this choice it is not obvious that a subsequential limit is positive definite (the other properties of being a metric will at this point be obvious due to how $\metapprox{\epsilon}{\cdot}{\cdot}{\Gamma}$ is defined).  Establishing the positive definiteness will be the focus of Section~\ref{sec:pos_def}.  The first observation is that a subsequential limit $\met{\cdot}{\cdot}{\Gamma}$ cannot be completely degenerate (i.e., vanish) because this would contradict the comparability of the quantiles $\quantHP{p}{\epsilon}$ uniformly in $\epsilon > 0$ which we have discussed earlier.  In particular, this implies that if we take two disjoint arcs $I, J$ on $\partial D$ then it is a positive probability event that the $\met{I}{J}{\Gamma} > 0$.  On this event, let $K = \{ z \in D : \met{z}{I}{\Gamma} = 0\}$ and let $U$ be the component of $D \setminus K^*$ with $J$ on its boundary.  Then $\partial U \setminus \partial D$ has the property that there cannot exist $z \in U$ and $w \in K$ with $\met{z}{w}{\Gamma} = 0$ (for otherwise $z \in K$).  Thus $D \cap \partial U$ can be thought of as a type of ``shield'' through which a path with ``$\met{\cdot}{\cdot}{\Gamma}$-length zero'' cannot pass.  The thrust of the argument in Section~\ref{sec:pos_def} is to use that such shields exist with positive probability and independence to show that there a.s.\ exists a family of such shields which separate~$I$ from~$J$ and therefore $\met{I}{J}{\Gamma} > 0$ a.s. Since $I$, $J$ were arbitrary boundary intervals, this will imply that $\met{\cdot}{\cdot}{\Gamma}$ is positive definite on $\partial D$.  Finally, using a CPI exploration, we will argue that if it is a positive probability event that exist $z,w \in \Upsilon \setminus \partial D$ distinct with $\met{z}{w}{\Gamma} = 0$ then it is also a positive probability event that there exists $z,w \in \partial D$ distinct with $\met{z}{w}{\Gamma} = 0$.  Altogether, this gives the positive definiteness of $\met{\cdot}{\cdot}{\Gamma}$ everywhere.  At the end of Section~\ref{sec:pos_def}, we will explain why the subsequential limit $\met{\cdot}{\cdot}{\Gamma}$ is a.s.\ geodesic and deduce the comparability of the quantiles defined in infinite volume and finite volume settings.

In Appendix~\ref{app:mod_of_cont}, we will give an upper bound for the modulus of continuity of the space-filling version of $\SLE$ from \cite{ms2017ig4} when drawn on top of an independent quantum disk and parameterized by quantum area.  In Appendix~\ref{app:carpet_measure} we will collect some estimates for the natural measure on the carpet of a $\CLE_\kappa$ which comes from LQG constructed in \cite{msw2020simplecle}.    Finally, in Appendix~\ref{app:levy_process}, we will collect some estimates for L\'evy processes.

\subsection{Relationship with other work}
\label{subsec:other_work}

Let us make some comments on the relationship between this work and some recent works focused on the construction of the LQG metric.  Recall that LQG refers to the random two-dimensional Riemannian manifold whose metric tensor is (formally) given by $e^{\gamma h(z)}(dx^2 + dy^2)$ where $h$ is an instance of (some form of) the Gaussian free field (GFF) on a planar domain $D$.  Since $h$ is a distribution and not a function, this expression does not make literal sense and requires interpretation.  The rigorous construction of the two-point distance function associated with LQG was first accomplished in the case that $\gamma=\sqrt{8/3}$ in \cite{ms2020qle1,ms2016qle2,ms2021qle3} and then subsequently (using a different method) in \cite{dddf2020tightness,gm2021uniqueness} for all $\gamma \in (0,2)$.  In the program of constructing a metric in the $\CLE_\kappa$ carpet for $\kappa \in (8/3,4)$, the present work plays the role of \cite{dddf2020tightness} in the program of constructing the LQG metric for $\gamma \in (0,2)$.

The argument that we will employ to establish tightness here will be rather different to that given in \cite{dddf2020tightness}, however.  Let us briefly highlight a few of the main differences.  First of all, the heart of \cite{dddf2020tightness} is to consider a certain mollification $h_\epsilon$ of the GFF $h$ and then to prove the (uniform in $\epsilon$) comparability of quantiles for lengths of side-to-side crossings in a box measured using the metric $e^{\xi h_\epsilon}$ where $\xi > 0$ is determined by $\gamma$.  Establishing this comparability in \cite{dddf2020tightness} is based on decomposing the larger box into smaller boxes and then using the Efron-Stein inequality to get a recursive relationship in the quantiles for different values of $\epsilon > 0$.  Once the comparability of side-to-side quantiles is accomplished, a standard sort of Kolmogorov-Centsov argument makes it possible to string together paths with length comparable to the median length crossing and deduce the tightness of the diameter and indeed the whole metric.  Moreover, the positive definiteness of the subsequential limit is immediate because if there were two points with zero distance in the limit then there would have to be some box across which the quantiles are not all comparable.

In this work, we will instead make the choice of looking at quantiles for the diameter of the whole boundary associated with our approximations.  We will then get comparability of the quantiles of the boundary diameter while at the same time proving that the subsequential limit is continuous with respect to the Euclidean topology.  The argument that we will employ does not use the Efron-Stein inequality or a recursion in order to compare the approximations for different values of $\epsilon$.  Moreover, stringing together short paths in \cite{dddf2020tightness} follows the usual dyadic subdivision as in the proof of the Kolmogorov-Centsov continuity criterion.  In the present work, stringing together short paths is rather delicate because of the presence of obstacles such as loops.  Finally, because we are normalizing our approximations by the median boundary diameter, it is not immediate that the subsequential limit is positive definite.

Next, we mention that the $\CLE_8$ metric was constructed in \cite{hs2018euclidean}, though the nature of the problem is rather different than for $\kappa \in (8/3,8)$.  Recall that $\CLE_8$ is the scaling limit of the UST \cite{lsw2004lerwust} so defining the metric in this case corresponds to properly assigning a length to the continuous analog of the UST branches.  These branches are $\SLE_2$-type curves \cite{ms2016ig1,ms2017ig4} and the appropriate notion of length to consider is the so-called natural parameterization of $\SLE$ \cite{ls2011natural,lr2015natural,ben2018gffnatural}.

Finally, we mention the work \cite{lgm2011largefaces} which studies random planar maps where the face sizes have law which is in the domain of attraction of an $\alpha$-stable distribution and it is shown that the resulting metric space properly renormalized is tight in the Gromov-Hausdorff topology.  The subsequentially limiting metric constructed in \cite{lgm2011largefaces} should correspond to a quantum version (i.e., where the lengths of paths in the approximations are defined using a quantity from LQG rather than a Euclidean quantity such as Lebesgue measure) of our main result, but we will not focus on that in the present paper.

\subsection{Notation}
 
\newcommand{\interior}[1]{\mathrm{int}(#1)}
\newcommand{\closure}[1]{\mathrm{cl}(#1)}

We will make use of the following notation in this article.  We will write $\D$ for the unit disk and $\h$ for the upper half-plane.  If $f, g$ are two functions, then we will write $f \lesssim g$ (resp.\ $f \gtrsim g$) if there exists a constant $c > 0$ so that $f \leq c g$ (resp.\ $f \geq c g$).  For a simply connected domain $D \subseteq \C$ and prime ends $a,b \in \partial D$, we let $\ccwBoundary{a}{b}{\partial D}$ (resp.\ $\cwBoundary{a}{b}{\partial D}$) denote the counterclockwise (resp.\ clockwise) arc of prime ends in $\partial D$ from $a$ to $b$.  For a set $A$, we let $\interior{A}$ (resp.\ $\closure{A}$) denote the interior (resp.\ closure) of $A$.

\subsection*{Acknowledgements}  This research was supported by the ERC starting grant 804166 (SPRS).  We thank Wendelin Werner for helpful discussions related to this work.  We also thank Naotaka Kajino for helpful discussions on related work.

\section{Preliminaries}
\label{sec:preliminaries}

The purpose of this section is to collect a number of preliminaries.  We will start by reviewing some facts about $\SLE$ in Section~\ref{subsec:sle}.  We will then review $\CLE$ in Section~\ref{subsec:cle}.  Next we will review some relevant results from imaginary geometry \cite{ms2016ig1,ms2017ig4} in Section~\ref{subsec:ig}.  Finally, we will review some basics of Liouville quantum gravity (LQG) and its relationship to $\SLE$ in Section~\ref{subsec:lqg}.  We assume that the reader has some familiarity with these objects and we will provide references for the various facts that we will use.

\subsection{Schramm-Loewner evolution}
\label{subsec:sle}

The Schramm-Loewner evolution ($\SLE_\kappa$) is a random fractal curve in a simply connected domain and was introduced by Schramm in \cite{s2000sle}.  The chordal version corresponds to a curve which connects two boundary points.  In the case that the domain is $\h$ and the curve goes from $0$ to $\infty$, it is defined by solving the chordal Loewner equation
\begin{equation}
\label{eqn:loewner}
\partial_t g_t(z) = \frac{2}{g_t(z) - W_t},\quad g_0(z) = z	
\end{equation}
where $W_t = \sqrt{\kappa} B_t$ and $B$ is a standard Brownian motion.  Let $\h_t$ be the domain of $g_t$ and $K_t = \h \setminus \h_t$.  In the case that $\kappa \neq 8$, it was shown by Rohde-Schramm \cite{rs2005basic} that there a.s.\ exists a curve $\eta$ so that $\h_t$ is equal to the unbounded component of $\h \setminus \eta([0,t])$.  The continuity in the case that $\kappa = 8$ follows from the convergence of the uniform spanning tree Peano curve to $\SLE_8$ established in \cite{lsw2004lerwust}.  If $D \subseteq \C$ is another simply connected domain (i.e., not $\h$) and $x,y \in \partial D$ are distinct, then an $\SLE_\kappa$ in $D$ from $x$ to $y$ is defined as $\varphi(\eta)$ where $\eta$ is an $\SLE_\kappa$ in $\h$ from $0$ to $\infty$ and $\varphi \colon \h \to \D$ is a conformal map which takes $0$ (resp.\ $\infty$) to $x$ (resp.\ $y$).

The so-called $\SLE_\kappa(\ul{\rho})$ processes are an important variant of $\SLE_\kappa$ in which one has extra marked points, say at $x_1,\ldots,x_n$, associated with weights $\ul{\rho} = (\rho_1,\ldots,\rho_n) \in \R^n$.  The $x_j$ for $1 \leq j \leq n$ are sometimes called force points and the associated weight $\rho_j$ determines how the curve interacts with $x_j$.  In particular, if $\rho_j$ is positive (resp.\ negative) then the $\SLE_\kappa(\ul{\rho})$ is pushed away from (resp.\ pulled towards) $x_j$.  These processes were first introduced in \cite{lsw2003confres} and are defined by solving~\eqref{eqn:loewner} where $W$ is taken to be the solution to the SDE
\begin{align*}
 dW_t = \sqrt{\kappa} dB_t + \sum_{i=1}^n \frac{\rho_i}{W_t - V_t^i} dt,\quad
 dV_t^i = \frac{2}{V_t^i - W_t} dt,\quad V_0^i = x_i\quad\text{for}\quad i=1,\ldots,n.
\end{align*}
Up until the first time that $W$ collides with one of the $V^i$, the law of an $\SLE_\kappa(\ul{\rho})$ is absolutely continuous with respect to the law of an $\SLE_\kappa$ hence corresponds to a continuous curve.  More generally, the standard $\SLE_\kappa(\ul{\rho})$ processes are defined up to the so-called continuation threshold \cite{ms2016ig1}: the first time $t$ that $\sum_{i=1}^n \rho_i \one_{W_t = V_t^i} \leq -2$.  The continuity of the $\SLE_\kappa(\ul{\rho})$ processes up to the continuation threshold was proved in \cite{ms2016ig1}.  One can also consider $\SLE_\kappa(\rho)$ processes with $-2-\kappa/2 < \rho < -2$, though in this case their behavior is rather than different.  In particular, an $\SLE_\kappa(\rho)$ process with $-2-\kappa/2 < \rho < -2$ is self-intersecting for $\kappa \in (0,4]$ while standard $\SLE_\kappa$ and $\SLE_\kappa(\rho)$ curves are simple.  The continuity in the case $-2-\kappa/2 < \rho < -2$ was proved in \cite{msw2017clepercolations,ms2019lightcone} in the presence of a single force point.  We will sometimes use the notation $\SLE_\kappa(\ul{\rho}_L;\ul{\rho}_R)$ to indicate an $\SLE_\kappa(\ul{\rho})$ process where $\ul{\rho}_L$ (resp.\ $\ul{\rho}_R$) indicates the weights of the of the force points which are to the left (resp.\ right) of $0$.  An $\SLE_\kappa(\ul{\rho})$ connecting two boundary points in a simply connected domain is defined as the conformal image of an $\SLE_\kappa(\ul{\rho})$ in $\h$ from $0$ to $\infty$.

Let us mention some of the special ranges of $\rho$ values.  Suppose that $\eta$ is an $\SLE_\kappa(\ul{\rho}_L;\ul{\rho}_R)$ with force points located at $(\ul{x}_L;\ul{x}_R)$.
\begin{itemize}
\item If $\sum_{i=1}^k \rho_{i,R} \geq \kappa/2-2$ then $\eta$ a.s.\ does not hit $[x_{i,R},x_{i+1,R})$ (and likewise with $L$ in place of~$R$).
\item If $\sum_{i=1}^k \rho_{i,R} \in (-2,\kappa/2-2)$ then $\eta$ can hit $[x_{i,R},x_{i+1,R})$ (and likewise with $L$ in place of $R$).
\item In the case $\kappa' > 4$ and $\sum_{i=1}^k \rho_{i,R} \in (-2,\kappa'/2-4]$ then with positive probability $\eta'$ fills an interval of the form $[x_{i,R},b)$ for $b \in (x_{i,R},x_{i+1,R})$ (and likewise with $L$ in place of $R$).
\end{itemize}

We can also consider whole-plane $\SLE_\kappa$ processes, which are defined by solving the equation
\[ \partial_t g_t(z) = g_t(z) \frac{W_t + g_t(z)}{W_t - g_t(z)}.\]
Here we take $W_t = e^{i \sqrt{\kappa} B_t}$ where $B_t$ is a two-sided (i.e., defined for $t \in \R$) Brownian motion.  For each time $t$, $g_t$ is the unique conformal transformation from the unbounded component of $\C \setminus \eta([0,t])$ to $\C \setminus \closure{\D}$ with positive derivative at $\infty$.  The whole-plane $\SLE_\kappa$ processes also correspond to a continuous curve \cite{rs2005basic,lsw2004lerwust} meaning there exists a continuous curve $\eta \colon \R \to \C$ so that for each $t \in \R$ the domain of $g_t$ is equal to the unbounded component of $\C \setminus \eta((-\infty,t])$.  There are also the whole-plane $\SLE_\kappa(\rho)$ processes where $\rho > -2$ which are constructed by replacing $W_t$ with the solution to the SDE (defined for all $t \in \R$)
\begin{align*}
dW_t &= \left( \frac{\rho}{2} \wt{\Psi}(O_t,W_t) -\frac{\kappa}{2} W_t \right) dt + i \sqrt{\kappa} W_t dB_t, \quad dO_t = \Psi(W_t,O_t) dt
\end{align*}
where
\[ \Psi(w,z) = -z\frac{z+w}{z-w} \quad\text{and}\quad \wt{\Psi}(z,w) = \frac{\Psi(z,w) + \Psi(1/\ol{z},w)}{2}.\]
The continuity of the whole-plane $\SLE_\kappa(\rho)$ processes with $\rho > -2$ was proved in \cite{ms2017ig4}.

\subsection{Conformal loop ensembles}
\label{subsec:cle}

We are now going to give a brief review of the conformal loop ensembles ($\CLE_\kappa$) \cite{s2009cle,sw2012cle}.  We will focus on their construction using the so-called boundary conformal loop ensembles ($\BCLE_\kappa$) which is developed in \cite{msw2017clepercolations} since this is the construction which will be most relevant for this work.  We will primarily focus on the case of the simple $\CLE_\kappa$ (i.e., $\kappa \in (8/3,4)$).

Fix $\kappa \in (2,4)$ and $\rho \in (-2,\kappa-4)$.  A $\cwBCLE_\kappa(\rho)$ (so that the loops have a clockwise orientation) is defined as follows.  Let $\eta$ be an $\SLE_\kappa(\rho ; \kappa-6-\rho)$ in $\h$ from $0$ to $\infty$ with force points at $0^-$, $0^+$.  Then the law of $\eta$ is \emph{target invariant}.  This means that if $\varphi$ is a conformal transformation $\h \to \h$ which fixes $0$ then $\varphi(\eta)$ and $\eta$, viewed modulo time parameterization, have the same law up until first disconnecting $\infty$ and $\varphi(\infty)$.  Let $(x_n)$ be a countable dense set in $\R$ and, for each $n \in \N$, let $\eta_n$ be an $\SLE_\kappa(\rho;\kappa-6-\rho)$ in $\h$ from $0$ to $x$ with force points at $0^-$, $0^+$.  The target invariance implies that we can couple the $\eta_n$ together so that any finite collection of them agree up until their target points are disconnected and then afterwards evolve independently.  The family of paths $(\eta_n)$ should be thought of as branches of a tree which is rooted at $0$.  As we will explain shortly, the $\cwBCLE_\kappa(\rho)$ in $\h$ is defined from the $(\eta_n)$ in a certain way; $\cwBCLE_\kappa(\rho)$ in domains other than $\h$ are defined by applying a conformal transformation.

The way that the loops are defined from the family $(\eta_n)$ is as follows.  Suppose that $\eta_n$ makes an excursion away from $\ccwBoundary{0}{x_n}{\partial \h}$ in the time-interval $[s,t]$.  Then $\eta_n|_{[s,t]}$ corresponds to part of a loop.  If $x_m \in (\eta_n(s),\eta_n(t))$ then there exists $t' > t$ so that $\eta_m$ makes an excursion away from $\ccwBoundary{0}{x_m}{\partial \h}$ in $[s,t']$ and $\eta_n|_{[s,t]} = \eta_m|_{[s,t]}$.  Thus $\eta_m|_{[s,t']}$ corresponds to a larger portion of the same loop.  The whole loop is obtained by by taking a sequence $(x_{m_j})$ in $\ccwBoundary{\eta_n(s)}{\eta_n(t)}{\partial \h}$ which decreases to $\eta_n(s)$ and considering the excursion of $\eta_{m_j}$ which contains $\eta_n|_{[s,t]}$.  It was shown in \cite{msw2017clepercolations} that the loops are simple continuous curves and that the law of the collection of loops does not depend on the root point (i.e., the origin).  By considering the intervals $\cwBoundary{0}{x_n}{\partial \h}$ in place of $\ccwBoundary{0}{x_n}{\partial \h}$ one can define the ``false'' loops of $\cwBCLE_\kappa(\rho)$ and note that these are surrounded counterclockwise.  $\ccwBCLE_\kappa(\rho)$ is defined in the same way except we take $\SLE_\kappa(\kappa-6-\rho; \rho)$ in place of $\SLE_\kappa(\rho ; \kappa-6-\rho)$ and the loops of a $\ccwBCLE_\kappa(\rho)$ correspond to the complementary components which are surrounded counterclockwise.

There is a natural exploration path of a $\cwBCLE_\kappa(\rho)$ process $\Gamma$ on a Jordan domain $D \subseteq \C$.  Namely, let~$\eta_0$ be a path which traverse $\partial D$ counterclockwise.  We then let~$\eta$ be the path which follows~$\eta_0$ except whenever it hits a loop of $\Gamma$ for the first time, it follows the loop in its entirety in the clockwise direction, and then continues following~$\eta_0$.  The continuity of~$\eta$ was proved in \cite{msw2017clepercolations} as a consequence of the continuity of space-filling $\SLE_{\kappa'}$ established in \cite{ms2017ig4}.  If we take $\eta$ and target it at a fixed boundary point (i.e., we parameterize $\eta$ by capacity as viewed from the given boundary point), then we recover the branch of the $\SLE_\kappa(\rho; \kappa-6-\rho)$ tree used to build the $\cwBCLE_\kappa(\rho)$.  We can similarly define an exploration path of a $\ccwBCLE_\kappa(\rho)$.

For $\kappa' \in (4,8)$ and $\rho \in (\kappa'/2-4,\kappa'/2-2)$ a $\cwBCLE_{\kappa'}(\rho')$ is defined in an analogous manner to the case $\kappa \in (2,4)$.  Moreover, one can associate with a $\cwBCLE_{\kappa'}(\rho')$ an exploration path which is defined in the same way as the case $\kappa \in (2,4)$.  When $\rho' = 0$ we will simply write $\cwBCLE_{\kappa'}$.  It turns out that the loops of a $\cwBCLE_{\kappa'}$ have the same law as the boundary touching loops of a $\CLE_{\kappa'}$.

Fix $\kappa \in (8/3,4)$ and let $\kappa'=16/\kappa \in (4,6)$.  The way that the iterated $\BCLE$ construction works is as follows.  Suppose that we first sample a $\cwBCLE_{\kappa'}$ process $\Gamma'$.  Then in each of the complementary components which are surrounded clockwise we sample a conditionally independent $\ccwBCLE_\kappa(-\kappa/2)$.  This procedure is then repeated in each of the components which are surrounded by a false (i.e., counterclockwise) loop of the original $\cwBCLE_{\kappa'}$ or by a false (i.e., clockwise) loop of one of the $\ccwBCLE_\kappa(-\kappa/2)$.  The resulting collection of $\ccwBCLE_\kappa(-\kappa/2)$ loops (from all of the iterations) gives a $\CLE_\kappa$ process $\Gamma$.

Let $\eta'$ be the exploration path associated with the $\Gamma'$.  Let $\eta$ be defined to be the path which follows $\eta'$ except whenever it hits a $\ccwBCLE_\kappa(-\kappa/2)$ loop for the first time then it follows that loop counterclockwise.  It is shown in \cite[Theorem~7.4]{msw2017clepercolations} that if we target $\eta$ at any fixed boundary point then we get an $\SLE_\kappa(\kappa-6)$ process and recall that we know that $\eta'$ targeted at any fixed boundary point is an $\SLE_{\kappa'}(\kappa'-6)$ process.  We call $\eta$ a \emph{conformal percolation interface} (CPI) in $\Gamma$ and $\eta'$ its trunk.  If we draw $\eta$ up to any stopping time in which it is not drawing a loop of $\Gamma$, then the conditional law of the loops of $\Gamma$ in the complementary components are conditionally independent $\CLE_\kappa$'s.  In order to emphasize that all of the loops that $\eta$ hits are to its right, we will sometimes write $\eta \sim \SLE_\kappa^1(\kappa-6)$.  By replacing the $\cwBCLE_{\kappa'}$ and $\ccwBCLE_\kappa(-\kappa/2)$ with $\ccwBCLE_{\kappa'}$ and $\cwBCLE_\kappa(-\kappa/2)$ we can similarly construct an $\SLE_\kappa^{-1}(\kappa-6)$ process meaning that the $\CLE_\kappa$ loops which it hits are always to its left.

What we have described above is a \emph{totally asymmetric} CPI because all of the loops which are hit by~$\eta$ lie to its right.  We can also consider the symmetric version which is constructed as follows.  We first sample a $\cwBCLE_{\kappa'}(\kappa'/2-3)$ process $\Gamma'$.  In each complementary component which is surrounded clockwise we sample a conditionally independent $\ccwBCLE_\kappa(\kappa/4-2)$ and in each component which is surrounded counterclockwise we sample a conditionally independent $\cwBCLE_\kappa(\kappa/4-2)$.  This procedure is then iterated in each of the false loops of the $\ccwBCLE_\kappa(\kappa/4-2)$ and $\cwBCLE_\kappa(\kappa/4-2)$.  The resulting ensemble of loops is again a $\CLE_\kappa$ (see \cite[Theorem~7.4, Corollary~7.5]{msw2017clepercolations}).  Let $\eta'$ be the exploration path associated with $\Gamma'$ and then let~$\eta$ be the path which is obtained by following~$\eta'$ and except it traverses each loop of a $\ccwBCLE_\kappa(\kappa/4-2)$ (resp.\ $\cwBCLE_\kappa(\kappa/4-2)$) that it hits counterclockwise (resp.\ clockwise).  Then $\eta$ targeted at any fixed boundary point has the law of an $\SLE_\kappa^0(\kappa-6)$ process.  Each time that~$\eta$ encounters a new loop of the $\CLE_\kappa$, it goes to its left or right based on the toss of a fair coin flip.

We will not need to consider even more general versions of these explorations, but we remark that one can get $\SLE_\kappa^\beta(\kappa-6)$ for $\beta \in [-1,1]$ using other combinations of $\rho$ values in the above construction properly (see \cite[Theorem~7.4]{msw2017clepercolations}).  The formula which relates $\beta$ and $\rho$ was computed in \cite{msw2020simplecle}.  (The corresponding formula with the roles of $\kappa$ and $\kappa'$ reversed was computed in \cite{msw2020nonsimplecle}.)

\subsection{Imaginary geometry}
\label{subsec:ig}

We are now going to collect some background from the theory of imaginary geometry \cite{ms2016ig1,ms2017ig4} which will be relevant for the present work.  Throughout, we take the convention that $\kappa \in (0,4)$ and $\kappa'=16/\kappa > 4$.  We also let 
\[ \chi = \frac{2}{\sqrt{\kappa}} - \frac{\sqrt{\kappa}}{2},\quad \lambda = \frac{\pi}{\sqrt{\kappa}},\quad \text{and}\quad \lambda' = \frac{\pi}{\sqrt{\kappa'}}.\]

Suppose that we have weights $(\ul{\rho}_L;\ul{\rho}_R)$ and force point locations $(\ul{x}_L;\ul{x}_R)$.  Let $h$ be a GFF on $\h$ with boundary conditions given by
\[ -\lambda\left(1+\sum_{i=1}^k \rho_{i,L} \right) \quad\text{in}\quad (x_{k+1,L},x_{k,L}]  \quad\text{and}\quad \lambda\left( 1+ \sum_{i=1}^k \rho_{i,R} \right) \quad\text{in}\quad [x_{k,R},x_{k+1,R}).\]
It is shown in \cite{ms2016ig1} that an $\SLE_\kappa(\ul{\rho}_L;\ul{\rho}_R)$ process with force points located at $(\ul{x}_L;\ul{x}_R)$ can be coupled with $h$ as a flow line of the formal vector field $e^{i h /\chi}$.  This means that if $(f_t)$ is the centered Loewner flow for $\eta$ (i.e., $f_t = g_t - W_t$ where $(g_t)$ is the Loewner flow) and $\tau$ is a stopping time for $\eta$ then $h \circ f_\tau^{-1} - \chi \arg( f_\tau^{-1})'$ is a GFF on $\h$ with boundary conditions given by
\[ -\lambda\left(1+\sum_{i=1}^k \rho_{i,L} \right) \quad\text{in}\quad (f_\tau(x_{k+1,L}),f_\tau(x_{k,L})]  \quad\text{and}\quad \lambda\left( 1+ \sum_{i=1}^k \rho_{i,R} \right) \quad\text{in}\quad [f_\tau(x_{k,R}),f_\tau(x_{k+1,R})).\]
Moreover, the boundary conditions on the image of the left (resp.\ right) side of $\eta|_{[0,\tau]}$ are given by $-\lambda$ (resp.\ $\lambda$).  Suppose that $D \subseteq \C$ is a simply connected domain, $\varphi \colon \h \to D$ is a conformal transformation, and $\wt{h} = h \circ \varphi^{-1} - \chi \arg (\varphi^{-1})'$.  Then we can similarly view $\varphi(\eta)$ as the flow line of $e^{i \wt{h}/\chi}$.  In the coupling of an $\SLE_\kappa(\ul{\rho}_L;\ul{\rho}_R)$ as a flow line of a GFF, we also have that the former is a.s.\ determined by the latter.  Finally, we can also define the flow line of a GFF $h$ with angle $\theta$ by considering the flow line of $h + \theta \chi$.

We now turn to the case of $\SLE_{\kappa'}$ processes.  Suppose that we have weights $(\ul{\rho}_L;\ul{\rho}_R)$ and force point locations $(\ul{x}_L; \ul{x}_R)$.  Let $h$ be a GFF on $\h$ with boundary conditions
\[ \lambda'\left(1+\sum_{i=1}^k \rho_{i,L}' \right) \quad\text{in}\quad (x_{k+1,L},x_{k,L}]  \quad\text{and}\quad -\lambda' \left( 1+ \sum_{i=1}^k \rho_{i,R}' \right) \quad\text{in}\quad [x_{k,R},x_{k+1,R}).\]
Then we can view an $\SLE_{\kappa'}(\ul{\rho}_L' ; \ul{\rho}_R')$ process $\eta'$ as a counterflow line of $h$ from $0$ to $\infty$.  This means that if $(f_t)$ is the centered Loewner flow for $\eta'$ and $\tau$ is a stopping time for $\eta'$ then we have that $h \circ f_\tau^{-1} - \chi \arg( f_\tau^{-1})'$ is a GFF on $\h$ with boundary conditions given by
\[ \lambda'\left(1+\sum_{i=1}^k \rho_{i,L}' \right) \quad\text{in}\quad (f_\tau(x_{k+1,L}),f_\tau(x_{k,L})]  \quad\text{and}\quad -\lambda' \left( 1+ \sum_{i=1}^k \rho_{i,R}' \right) \quad\text{in}\quad [f_\tau(x_{k,R}),f_\tau(x_{k+1,R})).\]
Moreover, the boundary conditions on the image of the left (resp.\ right) side of $\eta|_{[0,\tau]}$ are given by $\lambda'$ (resp.\ $-\lambda'$).  As in the case of GFF flow lines, counterflow lines are a.s.\ determined by the GFF and counterflow lines for GFFs defined on domains other than $\h$ are defined by applying the same change of coordinates formula as in the case of flow lines.

The discussion above implies that the iterated $\BCLE$ exploration is naturally coupled with the GFF.  We will explain this point in further detail in Appendix~\ref{app:mod_of_cont}.

We will also have occasion to consider GFF flow lines which start from an interior point, as developed in \cite{ms2017ig4}.  If we fix $\alpha > -\chi$ and we have a whole-plane GFF with values modulo $2\pi(\chi+\alpha)$, then its flow line starting from $0$ is a whole-plane $\SLE_\kappa(\rho)$ process with $\rho = 2-\kappa + 2\pi \alpha/\lambda$.  We can also consider flow lines of different angles starting from the origin and talk about their conditional laws.

Lastly, let us also mention the space-filling version of $\SLE$ which is considered in \cite{ms2017ig4}.  It is constructed by fixing a countable dense set of points, starting a flow line from each one, and then ordering the points according to how the flow lines merge.  It is shown in \cite{ms2017ig4} that there is a continuous path which visits these points in this order and this is space-filling $\SLE$.

\subsection{Liouville quantum gravity}
\label{subsec:lqg}

A Liouville quantum gravity (LQG) surface is the random surface whose Riemannian metric admits the formal expression:
\[ e^{\gamma h(z)} (dx^2 + dy^2)\]
where $h$ is an instance of some form of the GFF and $\gamma \in (0,2)$ is a parameter.  This expression does not make literal sense because $h$ is a distribution and not a function.  The volume form was constructed in \cite{ds2011kpz} (see also the references therein).  If $h$ is (some form of) the GFF with free boundary conditions, then one can also make sense of a boundary length measure by considering $e^{\gamma h(x)/2} dx$.  The metric (i.e., two-point distance function) for LQG was constructed first for $\gamma=\sqrt{8/3}$ in \cite{ms2020qle1,ms2016qle2,ms2021qle3} and subsequently for all $\gamma \in (0,2)$ in \cite{dddf2020tightness,gm2021uniqueness}.

If $\varphi \colon D \to \wt{D}$ is a conformal transformation and we let 
\begin{equation}
\label{eqn:q_surface_equiv}
\wt{h} = h \circ \varphi + Q\log|\varphi'| \quad\text{where}\quad Q = \frac{2}{\gamma} + \frac{\gamma}{2}
\end{equation}
then we have that $\qmeasure{\wt{h}}(\varphi(A)) = \qmeasure{h}(A)$ for all Borel sets $A \subseteq D$.  We say that two fields $h$, $\wt{h}$ are equivalent as quantum surfaces if they are related as in~\eqref{eqn:q_surface_equiv}.  A quantum surface is defined as an equivalence class with respect to this equivalence relation.  More generally, we can consider a quantum surface with marked points $x_1,\ldots,x_k \in D$.  Two quantum surfaces with marked points are considered to be equivalent if the fields are related as in~\eqref{eqn:q_surface_equiv} where the conformal map $\varphi$ takes the marked points for one surface to the marked points for the other.  We remark that it is not always obvious that two fields define equivalent quantum surfaces.

We will always assume that the parameters are matched by $\kappa = \gamma^2 \in (0,4)$ and $\kappa'=16/\kappa > 4$.  There is a way to define the quantum length of an $\SLE_\kappa$ on a $\gamma$-LQG surface which comes as a consequence of the main results of \cite{s2016zipper}.  The main results of \cite{dms2014mating} provide a way to define the quantum length (so-called quantum natural time) of an $\SLE_{\kappa'}$ process on a $\gamma$-LQG surface.

There are a number of important types of LQG surfaces: quantum wedges, cones, disks, and spheres.  We will not give a precise definition of these objects here because their exact definition will not matter for what follows, with the exception of the technical estimates which are proved in Appendices~\ref{app:carpet_measure} and~\ref{app:mod_of_cont}.  Briefly, a quantum wedge is an infinite volume surface which comes with two marked points (the ``origin'' and ``infinity'').  We will denote the law of a quantum wedge of weight $W > 0$ as $\qwedgeW{\gamma}{W}$.  Quantum wedges are homeomorphic to $\h$ when $W \geq \tfrac{\gamma^2}{2}$ and in this case we will use the notation $(\h,h,0,\infty)$ to indicate a quantum wedge which is parameterized by $\h$ with the origin point at $0$ and the infinity point at $\infty$.

Following \cite{msw2020simplecle}, we will refer to a sample from the law $\qwedgeW{\gamma}{2}$ as a \emph{quantum half-plane}.  The reason for this terminology is that the quantum half-plane is the LQG analog of the so-called Brownian half-plane \cite{gm2017halfplane}.  It possesses the following special property \cite{s2016zipper} which distinguishes it from other quantum wedges.  If $(\h,h,0,\infty)$ is a quantum half-plane, $r > 0$ is fixed, and $x \in [0,\infty)$ is such that $\qbmeasure{h}([0,x]) = r$, then $(\h,h,x,\infty)$ is a quantum half-plane.

 A quantum cone is an infinite volume surface which is homeomorphic to $\C$ and also has two marked points (the ``origin'' and ``infinity'').  We will denote the law of a quantum cone of weight $W > 0$ as $\qconeW{\gamma}{W}$.  We will use the notation $(\C,h,0,\infty)$ to indicate a quantum cone parameterized by $\C$ where the origin point is at $0$ and the infinity point is at $\infty$.  A quantum disk is a finite volume surface which is homeomorphic to $\D$ and its definition naturally equips it with two marked boundary points.  We will also denote the law of a quantum disk with boundary length $\ell$ using $\qdiskL{\gamma}{\ell}$ and use the notation $(D,h,x,y)$ to denote a quantum disk where the marked boundary points are at $x,y$.

The following theorem is proved in \cite{s2016zipper} in the case of $\qwedgeW{\gamma}{4}$ and an $\SLE_\kappa$ process and then generalized in \cite{dms2014mating} to the case of $\qwedgeW{\gamma}{W}$ for $W > 0$ and $\SLE_\kappa(\rho_1;\rho_2)$ for $\rho_1,\rho_2 > -2$.

\begin{theorem}
\label{thm:wedge_cutting}
Fix $W \geq \tfrac{\gamma^2}{2}$ and suppose that $W_1,W_2 > 0$ are such that $W = W_1 + W_2$.  Suppose that $\CW = (\h,h,0,\infty)$ has law $\qwedgeW{\gamma}{W}$.  Let $\eta$ be an independent $\SLE_\kappa(W_1 - 2; W_2 - 2)$ in $\h$ from $0$ to $\infty$.  Let $\CW_1$ (resp.\ $\CW_2$) be the quantum surface parameterized by the component of $\h \setminus \eta$ which are to the left (resp.\ right) of $\eta$.  Then $\CW_1$ (resp.\ $\CW_2$) has law $\qwedgeW{\gamma}{W_1}$ (resp.\ $\qwedgeW{\gamma}{W_2}$) and $\CW_1, \CW_2$ are independent.
\end{theorem}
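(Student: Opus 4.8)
The plan is to prove Theorem~\ref{thm:wedge_cutting} via the \emph{quantum zipper} of \cite{s2016zipper}, that is, by exhibiting the cutting operation as the inverse of a conformal welding built from the reverse Loewner flow. The starting point is the coupling of $\SLE_\kappa(\ul\rho)$ with the free-boundary GFF used to define LQG surfaces (rather than the imaginary geometry coupling reviewed in Section~\ref{subsec:ig}): if $h$ is the field representing $\CW$, one couples $\eta$ with $h$ so that for every $\eta$-stopping time $\tau$ the field $h \circ f_\tau^{-1} + Q \log |(f_\tau^{-1})'|$, where $f_\tau = g_\tau - W_\tau$ is the centered Loewner map, agrees in law (as a quantum surface marked by the images of $0$ and $\infty$) with $h$ itself. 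The boundary data of $h$ is chosen precisely so that the logarithmic singularities it carries at $0$ and $\infty$ have the strengths dictated by the force point weights $W_1-2$ and $W_2-2$, and so that the reverse flow introduces exactly the drift needed for this stationarity to hold; verifying this is a Gaussian/It\^o computation which is the technical heart of \cite{s2016zipper} in the base case $W=4$, $W_1=W_2=2$ and of \cite{dms2014mating} in general.

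Granting the coupling, the first substantive step is to understand how the quantum boundary length measure $\qbmeasure{h}$ interacts with $\eta$: using the main results of \cite{s2016zipper} one shows that $\eta$ is measurable with respect to the quantum surface it lies on together with its quantum length parameterization, and that the two sides of $\eta([0,\tau])$ carry the same quantum length, i.e.\ $\eta$ welds the left and right pieces isometrically for the boundary measure. The reverse flow then realizes $\CW$ as the isometric welding of $\CW_1$ and $\CW_2$ along their boundary arcs, so that running the forward flow (``unzipping'') recovers $(\CW_1,\CW_2)$ from $(\CW,\eta)$, and the stationarity of the coupling shows that the welded surface has the law of $\qwedgeW{\gamma}{W}$.

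The second substantive step is to show that the welding is \emph{unique}, so that cutting is well-defined and deterministically inverts welding; this is where conformal removability enters. For $\kappa \in (8/3,4)$ the curve $\eta$ is a.s.\ conformally removable, which follows from its H\"older continuity (Rohde--Schramm \cite{rs2005basic}) together with the Jones--Smirnov removability criterion. Removability forces any two conformal weldings producing the same pair of surfaces to differ by a conformal automorphism fixing the marked points, hence to coincide; this upgrades the in-law statement to the assertion that $\CW_1,\CW_2$ are a.s.\ determined by $(\CW,\eta)$ and, conversely, that $(\CW,\eta)$ is a.s.\ determined by $(\CW_1,\CW_2)$. Combining this with the reverse-flow stationarity yields both the independence of $\CW_1$ and $\CW_2$ and the identification of their laws as $\qwedgeW{\gamma}{W_1}$ and $\qwedgeW{\gamma}{W_2}$, the latter via the additivity of weights under welding ($W = W_1 + W_2$) together with scaling; when $W_i < \gamma^2/2$ the corresponding piece is a thin wedge (a Poissonian chain of quantum disks pinched off by the bouncing of $\eta$ on the boundary), which one treats by the same argument bead-by-bead.

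I expect the main obstacle to be the uniqueness-of-welding step: passing from a distributional identity between ``welded'' and ``cut'' configurations to the almost-sure statement of the theorem requires that $\eta$ be recoverable from the surface, and the only known route to this runs through conformal removability of $\SLE_\kappa$, which itself rests on delicate regularity estimates. A secondary difficulty is the bookkeeping of additive constants and of the precise logarithmic-singularity strengths so that the reverse-flow drift computation closes; this is routine in spirit but error-prone, and it is what forces the clean statement to require $W \geq \gamma^2/2$ for the total wedge while still permitting $W_1,W_2$ to be arbitrary positive reals.
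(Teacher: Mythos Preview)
The paper does not give its own proof of this theorem: it is stated in the preliminaries section and attributed directly to \cite{s2016zipper} for the case $W=4$, $W_1=W_2=2$ and to \cite{dms2014mating} for the general case. So there is no in-paper argument to compare your proposal against.

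That said, your sketch is a faithful high-level account of how those references actually establish the result: the reverse-flow/quantum-zipper coupling giving stationarity, the quantum-length welding identification, and conformal removability of $\SLE_\kappa$ (for $\kappa<4$) to pass from the distributional statement to the almost-sure cutting/welding inverse. Your identification of removability as the crux and your remark about thin wedges when $W_i < \gamma^2/2$ are both accurate. One small correction: the paper in this article has $\kappa \in (8/3,4)$, but the theorem as stated (and as proved in \cite{dms2014mating}) holds for all $\kappa \in (0,4)$; your parenthetical ``for $\kappa \in (8/3,4)$'' before the removability claim is unnecessarily restrictive.
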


This is proved in \cite{dms2014mating}.

\begin{theorem}
\label{thm:cone_cutting}
Fix $W > 0$ and suppose that $\CC = (\C,h,0,\infty)$ has law $\qconeW{\gamma}{W}$.  Let $\eta$ be an independent whole-plane $\SLE_\kappa(W-2)$ process in $\C$ from $0$ to $\infty$.  Then the quantum surface parameterized by $\C \setminus \eta$ is a quantum wedge of weight $W$.
\end{theorem}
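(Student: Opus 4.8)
The plan is to prove this by conformal welding: the weight-$W$ quantum cone decorated by an independent whole-plane $\SLE_\kappa(W-2)$ should be precisely the configuration produced by conformally welding a weight-$W$ quantum wedge to itself along its two boundary rays (matching quantum boundary length), and the assertion of the theorem is then that cutting along the welding seam inverts this operation. The main inputs will be the quantum zipper of \cite{s2016zipper} together with its refinements in \cite{dms2014mating}; Theorem~\ref{thm:wedge_cutting} plays the role of a chordal companion and of the local model near a typical point of the seam.

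First I would pass to logarithmic coordinates: writing $\cyl = \R\times(\R/2\pi\Z)$ and pulling $h$ back under $z\mapsto\log z$ via the change of coordinates \eqref{eqn:q_surface_equiv}, the weight-$W$ quantum cone becomes a field $\wh h$ on $\cyl$ whose average over vertical circles is, as a function of the horizontal coordinate, a two-sided Brownian motion with a linear drift determined by $W$ (suitably normalized, the normalization encoding which end is the marked point $0$), independent of a stationary lateral part; and the whole-plane $\SLE_\kappa(W-2)$ from $0$ to $\infty$, being generated by the radial Loewner flow, becomes a curve $\wh\eta$ joining the two ends of $\cyl$ whose Loewner driving data has stationary increments. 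The gain of this reformulation is \emph{locality} together with \emph{horizontal stationarity}: on bounded regions staying away from the two ends, the radial flow agrees with a chordal one and $\wh h$ is mutually absolutely continuous with a weight-$W$ quantum wedge field, so that running the flow generating $\wh\eta$ for one short step and re-uniformizing is locally the zipper step of \cite{s2016zipper}, and the stationarity of $\wh h$ in the horizontal coordinate lets the one-step statement be integrated along the whole of $\wh\eta$. This gives the forward direction: conformally welding a weight-$W$ wedge to itself yields a weight-$W$ quantum cone whose seam is a whole-plane $\SLE_\kappa(W-2)$.

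The step I expect to be the main obstacle is upgrading this to the statement that the seam is \emph{independent} of the cone it produces --- equivalently that the conditional law of the seam given the cone is a fixed (cone-independent) whole-plane $\SLE_\kappa(W-2)$ law --- since this is exactly what allows ``the welding seam'' to be replaced by ``an independent whole-plane $\SLE_\kappa(W-2)$'' in the final statement. I would deduce it from two ingredients: (i) for $\kappa\in(0,4)$ the $\SLE_\kappa$ welding seam is a.s.\ determined by the decorated surface, which follows from imaginary geometry \cite{ms2016ig1,ms2017ig4} by coupling $\eta$ as a flow line of an auxiliary GFF and using that flow lines are a.s.\ determined; and (ii) the stationarity of the zipper under zipping up and down, which forces the conditional law of the seam given the cone to be invariant under horizontal translations of $\wh h$ and hence to equal one fixed SLE law. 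Inverting the welding then shows that cutting a weight-$W$ quantum cone by an independent whole-plane $\SLE_\kappa(W-2)$ returns a weight-$W$ quantum wedge.

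It remains to handle the bookkeeping. Since $W-2>-2$ and $\kappa<4$, the curve $\eta$ is a.s.\ simple, so $\C\setminus\eta$ is simply connected and inherits two distinguished boundary prime ends, namely the ones corresponding to $0$ and to $\infty$; this is exactly the marked-surface structure of a quantum wedge, with $0$ mapping to the origin and $\infty$ to the infinity of the wedge. In the thin (beaded) regime $W<\gamma^2/2$ the resulting surface is a chain of disks rather than a single half-plane; this is treated by the same argument run with the beaded-wedge versions of Theorem~\ref{thm:wedge_cutting} and of the quantum zipper.
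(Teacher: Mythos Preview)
The paper does not prove this theorem: it is stated in the preliminaries section with the one-line attribution ``This is proved in \cite{dms2014mating}'' and no argument is given. So there is no proof in the paper to compare your proposal against.

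Your sketch is a reasonable outline of how the result is actually established in \cite{dms2014mating} (the quantum-cone/whole-plane welding is built from the stationary quantum zipper of \cite{s2016zipper} transferred to the cylinder, and the independence of the seam from the welded surface is the key point). For the purposes of the present paper, however, a citation is all that is expected; you should simply invoke \cite{dms2014mating} rather than reprove the statement.
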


\begin{remark}
\label{rem:cutting_gluing_weight_4_cone}
As a consequence of Theorems~\ref{thm:wedge_cutting}, \ref{thm:cone_cutting} we have the following fact.  Suppose that $\CC = (\C,h,0,\infty)$ has law $\qconeW{\gamma}{4}$.  Let $\eta_-$ be an independent whole-plane $\SLE_\kappa(2)$ in $\C$ from $0$ to $\infty$ and, given $\eta_-$, let $\eta_+$ be a chordal $\SLE_\kappa$ in $\C \setminus \eta_-$ from $0$ to $\infty$.  Let $H_+, H_-$ be the two components of $\C \setminus (\eta_- \cup \eta_+)$ and let $\CH_\pm = (H_\pm,h,0,\infty)$ be the quantum surfaces parameterized by~$H_\pm$.  Then~$\CH_\pm$ are independent quantum half-planes.  We note that if one considers the concatenation of the time-reversal of~$\eta_-$ with~$\eta_+$ then one obtains a curve in the Riemann sphere from~$\infty$ to~$\infty$ which in the literature is sometimes referred to as a two-sided whole-plane $\SLE_\kappa$ from~$\infty$ to~$\infty$ through $0$.  One can also construct the law $\eta_\pm$ using the whole-plane GFF using the machinery from \cite{ms2017ig4}.  Namely, let~$h^w$ be a whole-plane GFF with values modulo $2\pi(\chi + \sqrt{\kappa}/2) = 4\pi/\sqrt{\kappa}$ and let $h^\IG = h^w - \tfrac{\sqrt{\kappa}}{2} \arg(\cdot)$, also with values defined modulo $4\pi/\sqrt{\kappa}$.  Then the joint law of the pair $(\eta_-,\eta_+)$ is equal to that of the flow lines of $h^\IG$ from $0$ to $\infty$ with angles $2\pi/(2-\kappa/2)$, $0$, respectively.

Using the invariance of the law of the quantum-plane under the operation of shifting its origin point a given amount of quantum length to the left or right, one also has the following.  Suppose that $\eta_+$ is parameterized according to quantum length and $r > 0$.  Then $(H_+,h|_{H_+},\eta_+(r),\infty)$, $(H_-,h|_{H_-},\eta_+(r),\infty)$ are independent quantum half-planes.  Therefore the quantum surface $(\C,h,\eta_+(r),\infty)$ also has law $\qconeW{\gamma}{4}$.  Moreover, up to a translation and rescaling the law of the paths $\eta_+|_{[r,\infty)}$ and the concatenation of the time-reversal of $\eta_+|_{[0,r]}$ with $\eta_-$ is the same as the law of $(\eta_-,\eta_+)$.
\end{remark}

The following is proved in \cite{msw2020simplecle}.

\begin{theorem}
\label{thm:cpi_wedge_explore}
Suppose that $\CH = (\h,h,0,\infty)$ is a quantum half-plane.  Fix $\beta \in [-1,1]$.  Let $\eta$ be an independent $\SLE_\kappa^\beta(\kappa-6)$ in $\h$ from $0$ to $\infty$ which is subsequently parameterized by the quantum natural time of its trunk.  Then for each time $t$, the quantum surface $\CH_t$ parameterized by the unbounded component of $\h \setminus \eta([0,t])$ is a quantum half-plane which is independent of the quantum surface $\CK_t$ parameterized by its complement in $\h$ and decorated by $\eta|_{[0,t]}$.  This holds more generally if we replace $t$ by a stopping time of the filtration $\CF_t = \sigma(\CK_s : s \leq t)$.   Finally, let $L_t$ denote the quantum length of the part of $\h \cap \partial \CK_t$ which is to the left of $\eta(t)$ minus the quantum length of the part of $\partial \h \cap \partial \CK_t$ which is to the left of $0$.  Define $R_t$ analogously with right in place of left.  Then the pair $(L_t,R_t)$ evolve as independent $4/\kappa$-stable L\'evy processes.
\end{theorem}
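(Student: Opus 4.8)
The plan is to obtain the coupling of Theorem~\ref{thm:cpi_wedge_explore} from the mating-of-trees description of $\SLE_{\kappa'}$-type curves on a Liouville quantum gravity surface in \cite{dms2014mating} (which also supplies the quantum natural time clock) together with the iterated $\BCLE$ construction of $\CLE_\kappa$ and its relationship to LQG. Recall from \cite[Theorem~7.4]{msw2017clepercolations} and the $\beta \leftrightarrow \rho'$ correspondence of \cite{msw2020simplecle} that the CPI $\eta \sim \SLE_\kappa^\beta(\kappa-6)$ is produced from an iterated $\BCLE$ whose trunk $\eta'$, targeted at $\infty$, is an $\SLE_{\kappa'}(\rho';\kappa-6-\rho')$ in $\h$ from $0$ to $\infty$; the path $\eta$ is recovered from $\eta'$ by filling each bubble that $\eta'$ cuts off with a conditionally independent $\CLE_\kappa$ (coming from the $\BCLE_\kappa$'s nested inside the false loops) and by traversing the $\CLE_\kappa$ loops met along the way. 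Since $\eta'$ is an $\SLE_{\kappa'}$-type curve with $\kappa' > 4$, it is self-intersecting and meets $\partial\h$; the arcs it swallows from $\partial\h$ on either side of $0$ are exactly the arcs $\partial\h \cap \partial\CK_t$ entering the definition of $L_t$ and $R_t$, and the frontier of $\eta'([0,t])$ facing the unbounded component is exactly $\h \cap \partial\CK_t$.

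The core step is to run $\eta'$ on the quantum half-plane $\CH = (\h,h,0,\infty) \sim \qwedgeW{\gamma}{2}$, parameterized by the quantum natural time of its trunk. Here I would invoke the form of the mating-of-trees theorem applicable to $\SLE_{\kappa'}(\rho';\kappa-6-\rho')$ drawn on $\CH$: for every quantum natural time $t$, the quantum surface parameterized by the unbounded component of $\h \setminus \eta'([0,t])$ is again a quantum half-plane, it is independent of the quantum surface carrying $\eta'|_{[0,t]}$ together with a boundary-length-marked quantum disk in each swallowed bubble, and the two processes $(L_t',R_t')$ recording on each side the quantum length of the frontier of $\eta'([0,t])$ minus the quantum length of the swallowed arc of $\partial\h$ on that side evolve as independent $\kappa'/4$-stable (equivalently $4/\kappa$-stable) L\'evy processes. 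The identification of the clock and the fact that the swallowed bubbles are precisely the jumps of $(L',R')$ is where the quantum natural time of \cite{dms2014mating} is used; the statement upgrades to stopping times of the filtration generated by the explored surface by combining the strong Markov property of L\'evy processes with the shift-invariance of the quantum half-plane under moving its root a fixed amount of quantum boundary length (Section~\ref{subsec:lqg}).

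It then remains to put the $\CLE_\kappa$'s back into the bubbles. Conditionally on their boundary lengths the bubbles are independent quantum disks, and filling a quantum disk with an independent $\CLE_\kappa$ and recording the carpet and loops yields a decorated quantum surface depending only on the boundary length; this is the companion statement for $\CLE_\kappa$ on a quantum disk, which one establishes alongside this theorem by iterating the $\BCLE$ construction and the $\kappa'$ mating-of-trees input. Since the loops and bubbles all lie behind the frontier of $\eta'([0,t])$, the boundary of $\CK_t$ facing the unexplored region is the same whether one uses $\eta$ or $\eta'$, so $(L_t,R_t) = (L_t',R_t')$; and the explored surface $\CK_t$ for $\eta$ is obtained from that for $\eta'$ by replacing the marked quantum disks in the bubbles by their $\CLE_\kappa$-decorated versions, which preserves independence from $\CH_t = \CH_t'$ because the replacement only uses extra independent randomness. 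Assembling these facts gives that $\CH_t$ is a quantum half-plane, that $\CH_t$ is independent of $\CK_t$, and that $(L_t,R_t)$ are independent $4/\kappa$-stable L\'evy processes, with the stopping-time version following as above.

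The main obstacle is the core step: establishing the mating-of-trees statement for the trunk on the quantum half-plane in exactly this form. This requires matching the weights $\rho'$ with the relevant wedge weights, tracking both the frontier and the swallowed boundary (rather than a single boundary length) because $\eta'$ fills intervals of $\partial\h$, passing from the two-marked-point quantum disk statements of \cite{dms2014mating} to the infinite-volume quantum half-plane, and verifying that the quantum natural time of the trunk is the correct clock -- in particular that it does not terminate and that the cut-off bubbles are exactly its jumps. A secondary point is the careful check that re-inserting the $\CLE_\kappa$ loops does not alter the frontier, hence does not alter $(L_t,R_t)$; this follows from the $\CLE_\kappa$ loops and their interiors being separated from $\CH_t$ by the already-drawn part of the trunk's frontier, but it must be stated precisely, e.g.\ by first checking it at the times when $\eta$ is not in the middle of drawing a loop and then by continuity.
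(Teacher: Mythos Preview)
The paper does not prove this theorem at all: it is quoted as a preliminary result, with the sentence ``The following is proved in \cite{msw2020simplecle}'' immediately preceding the statement, and the only follow-up is the remark that the positivity parameter and jump intensities of $L_t,R_t$ depend on $\beta$ and were computed in \cite{msw2020simplecle}. So there is nothing in the paper for your proposal to be compared against.

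Your outline is a plausible high-level summary of how the result is established in the cited reference---trunk as $\SLE_{\kappa'}(\rho';\kappa-6-\rho')$ via \cite{msw2017clepercolations}, the mating-of-trees input from \cite{dms2014mating} for the trunk on a weight-$2$ wedge giving the Markov property and the stable L\'evy boundary-length processes, then reinserting the $\CLE_\kappa$ loops behind the frontier---but be aware that you are sketching the proof of a theorem from \cite{msw2020simplecle}, not a proof the present paper supplies. If the task was to reproduce the paper's own argument, the correct answer is simply that the paper cites the result rather than proving it.
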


In the statement of Theorem~\ref{thm:cpi_wedge_explore}, the positivity parameter (which determines the relative intensity of upward vs downward jumps) and relative overall intensity of jumps for $L_t$ and $R_t$ depend on $\beta$ and were determined in \cite{msw2020simplecle}.  We will have some further discussion of this in Appendix~\ref{app:levy_process}.

\section{Crossing estimates}
\label{sec:chunk_estimates}

The purpose of this section is to start to set up the percolation argument that will be used in order to establish Theorem~\ref{thm:cle_loop}.  In Section~\ref{subsec:setup}, we will describe the setup that we will use throughout this section as well as give the main two statements.  The first main statement (Lemma~\ref{lem:one_chunk_quantile}) gives that it is very likely that a quantum surface $\CN$ cut out of a quantum half-plane by an independent $\SLE_\kappa^0(\kappa-6)$ process run until the first time it hits the boundary after time $\delta^{4/\kappa}$ contains paths $\cpath$ in the $\CLE_\kappa$ carpet with $\lebneb{\epsilon}(\cpath) \leq \quantHP{p}{\epsilon}$ (as defined in Section~\ref{subsec:outline}; we will also recall the definition of $\quantHP{p}{\epsilon}$ just below) which connect the bottom right corner of $\CN$ to any sufficiently large interval on the top of $\CN$ which intersects the $\CLE_\kappa$ carpet and otherwise stays away from $\partial \CN$, provided $\delta > 0$ is sufficiently small.  The second main statement (Lemma~\ref{lem:two_chunks_quantile}) is of a similar flavor except with two chunks of quantum surface cut out by successive $\SLE_\kappa^0(\kappa-6)$ processes.  In order to read the rest of this article, one only needs these two main statements, so the rest of the section can be skipped on a first reading.  In Section~\ref{subsec:cpi_close_to_path}, we will show that a CPI in a $\CLE_\kappa$ carpet $\Upsilon$ has positive conditional probability given~$\Upsilon$ of being close to any fixed path in~$\Upsilon$.  The focus of Section~\ref{subsec:crossing_between_disks} is to prove an intermediate result for the likelihood of such a crossing $\omega$ in $\Upsilon$ as above but in a slightly different setting.  In Section~\ref{subsec:chunk_proofs} we will use this intermediate result in order to complete the proofs of the two main statements.

\subsection{Setup and main statements}
\label{subsec:setup}

Throughout this section, we assume that we have the following setup.  We assume that $\CC = (\C,h,0,\infty)$ has law $\qconeW{\gamma}{4}$ with the circle average embedding.  We let~$h^w$ be a whole-plane GFF with values modulo $2\pi(\chi + \sqrt{\kappa}/2) = 4\pi/\sqrt{\kappa}$ and set $h^\IG = h^w - \tfrac{\sqrt{\kappa}}{2} \arg(\cdot)$, also with values modulo $4\pi/\sqrt{\kappa}$.  We assume that $h^\IG$ is independent of $\CC$.  We let $\eta_-$, $\eta_+$ be the flow lines of $h^\IG$ from $0$ to $\infty$ with angles $2\pi/(2-\kappa/2)$, $0$, respectively.  Then $\eta_\pm$ are each whole-plane $\SLE_\kappa(2)$ processes in $\C$ from $0$ to $\infty$ \cite[Theorem~1.4]{ms2017ig4}.  Moreover, the conditional law of $\eta_-$ (resp.\ $\eta_+$) given $\eta_+$ (resp.\ $\eta_-$) is that of a chordal $\SLE_\kappa$ process in $\C \setminus \eta_+$ (resp.\ $\C \setminus \eta_-$) from $0$ to $\infty$.  That is, the concatenation of the time-reversal of $\eta_-$ together with $\eta_+$ is a two-sided whole-plane $\SLE_\kappa$ process from $\infty$ to $\infty$ through $0$ and the same is true with the roles of $\eta_\pm$ swapped.  We assume that $\eta_\pm$ are parameterized according to quantum length according to $h$.  Let $\CH_+$ (resp.\ $\CH_-$) be the quantum surface parameterized by the component of $\C \setminus (\eta_- \cup \eta_+)$ part of whose boundary consists of the left side of $\eta_+$.  By Theorems~\ref{thm:wedge_cutting}, \ref{thm:cone_cutting} we have that $\CH_\pm$ are independent quantum half-planes, which we assume to be marked by $0$ and $\infty$.  Let $\Gamma_+$ be a $\CLE_\kappa$ in $\CH_+$ and let~$\Upsilon_+$ be the carpet of~$\Gamma_+$.  As in Section~\ref{subsec:outline}, for each $\epsilon > 0$ and $p \in (0,1)$ we let $\quantHP{p}{\epsilon}$ denote the $p$th quantile of $\sup_{z,w \in \eta_+([0,1])} \metapprox{\epsilon}{z}{w}{\Gamma_+}$ and let $\medianHP{\epsilon} = \quantHP{1/2}{\epsilon}$

\begin{figure}[ht!]
\includegraphics[scale=1]{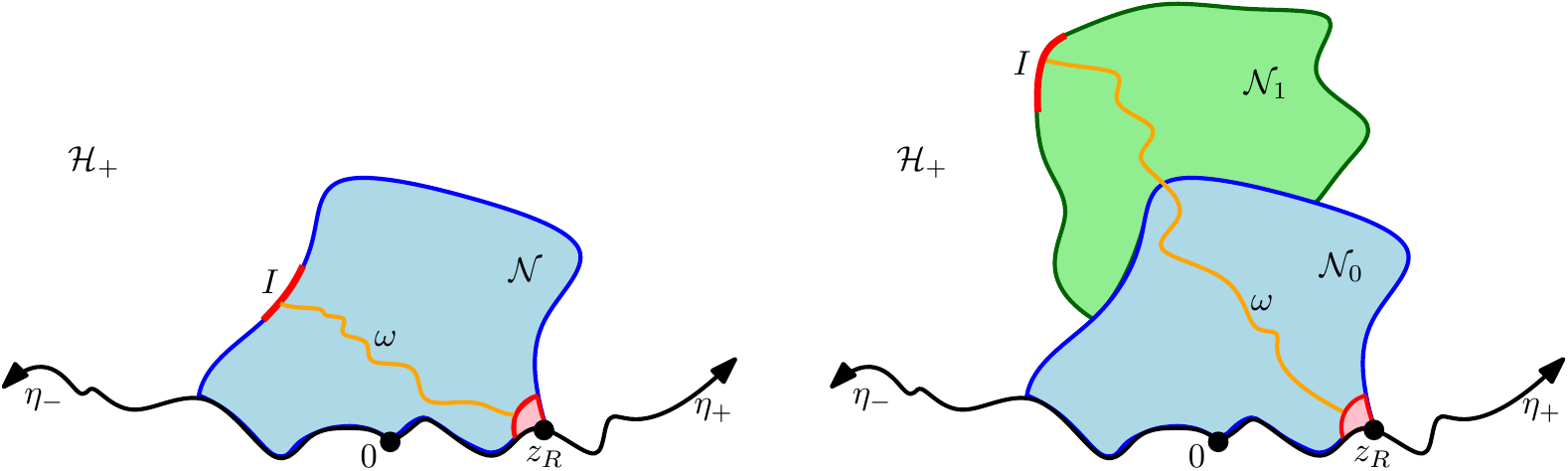}
\caption{\label{fig:crossssing_chunks} Illustration of the setup and statement of Lemma~\ref{lem:one_chunk_quantile} (left) and Lemma~\ref{lem:two_chunks_quantile} (right).  Not shown are the loops of $\Gamma$, which are in particular dense in $\CH_+ \cap \partial \CN$, $\CH_+ \cap \partial \CN_0$, and $\CH_+ \cap \partial \CN_1$.}
\end{figure}

\begin{lemma}
\label{lem:one_chunk_quantile}
For each $p_0,\zeta_0,p \in (0,1)$ there exists $\xi_0, \delta_0 > 0$ so that for all $\delta \in (0,\delta_0)$ there exists $\epsilon_0 > 0$ so that for all $\epsilon \in (0,\epsilon_0)$ the following is true.  Suppose that~$\eta$ is an $\SLE_\kappa^0(\kappa-6)$ in $\CH_+$ from~$0$ to~$\infty$ which is coupled with $\Gamma_+$ as a CPI and parameterized by the quantum natural time of its trunk.  Let $\sigma = \inf\{t \geq \delta^{4/\kappa} : \eta(t) \in \partial \CH_+\}$ and let~$\CN$ be the quantum surface disconnected from~$\infty$ by $\eta|_{[0,\sigma]}$.  Let $d_0 = \diam(\eta([0,\sigma]))$ and let $z_R$ be the rightmost point on the bottom of $\CN$.  Let $G$ be the event that for each interval $I$ on the top of $\partial \CN$ with quantum length at least $\zeta_0 \delta$ and which intersects $\Upsilon_+$ there is a path $\cpath$ in $\Upsilon_+ \cap \CN$ which
\begin{enumerate}[(i)]
\item connects $\partial B(z_R, \zeta_0 d_0)$ to $\wt{I}$ where $\wt{I}$ is the interval on the top of $\partial \CN$ with the same center as $I$ but with twice the quantum length,
\item $\dist(\cpath,\partial \CN \setminus \wt{I}) \geq \xi_0 d_0$, and
\item satisfies $\lebneb{\epsilon}(\cpath) \leq \quantHP{p}{\epsilon}$.
\end{enumerate}
Then $\p[G] \geq p_0$.
\end{lemma}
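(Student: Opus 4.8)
The plan is to run the CPI $\eta$ up to time $\sigma$ to fix the chunk $\CN$, to condition so that the part of $\Gamma_+$ still unexplored inside $\CN$ is a collection of independent $\CLE_\kappa$'s, to reduce the event $G$ to a bounded (given $\zeta_0$) number of crossing events, and then to build each crossing by concatenating cheap sub-crossings. First I would condition on the decorated surface $\CK_\sigma$ cut out by $\eta|_{[0,\sigma]}$; by Theorem~\ref{thm:cpi_wedge_explore} and the Markov property of the $\CLE$--CPI coupling, the loops of $\Gamma_+$ in the complementary components inside $\CN$ are then conditionally independent $\CLE_\kappa$'s, so further CPI explorations may be run inside $\CN$. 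Since the boundary lengths along $\eta$ evolve as $4/\kappa$-stable L\'evy processes, on an event whose probability is as close to $1$ as we like the top of $\partial\CN$ has quantum length at most a fixed multiple of $\delta$, $d_0$ is comparable to a fixed power of $\delta$, and $\CN$ is macroscopically fat at scale $\zeta_0 d_0$ near the trunk boundary-touch point $z_R$ (by regularity of the $\SLE_{\kappa'}$-type trunk), so that starting on $\partial B(z_R,\zeta_0 d_0)$ is compatible with the clearance requirement once $\xi_0<\zeta_0$. Parameterizing the top of $\partial\CN$ by quantum length and tiling it by $O(1/\zeta_0)$ sub-intervals of quantum length comparable to $\zeta_0\delta$, every admissible $I$ contains such a sub-interval $J$ meeting $\Upsilon_+$ whose threefold enlargement lies inside $\wt{I}$; thus it suffices to produce, for each of the $O(1/\zeta_0)$ sub-intervals $J$ that meet $\Upsilon_+$, a path $\cpath$ in $\Upsilon_+\cap\CN$ from $\partial B(z_R,\zeta_0 d_0)$ to the enlargement of $J$ with $\dist(\cpath,\partial\CN\setminus\wt{I})\geq\xi_0 d_0$ and $\lebneb{\epsilon}(\cpath)\leq\quantHP{p}{\epsilon}$ with probability close enough to $1$ that a union bound over the sub-intervals, together with the event that $\CN$ is regular, leaves probability at least $p_0$.

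To build one such crossing, I would first use regularity of the $\SLE_{\kappa'}$-type trunk of $\eta$ and of space-filling $\SLE$ to show that, on an event whose probability tends to $1$ as $\xi_0\to 0$ uniformly in the other parameters, $\CN$ contains a corridor of Euclidean width at least $2\xi_0 d_0$ joining $B(z_R,\zeta_0 d_0)$ to a neighborhood of $J$ inside $\CN$ and staying clear of the rest of $\partial\CN$. I would then run a further CPI inside $\CN$ and use Section~\ref{subsec:cpi_close_to_path} (a CPI in $\Upsilon_+$ has positive conditional probability, given $\Upsilon_+$, of following any prescribed path) to steer it along the core of the corridor; stopping it at a bounded (in terms of $\zeta_0$) number of times realizes the corridor as a chain of overlapping quantum-half-plane-type pieces, in each of which the intermediate crossing estimate of Section~\ref{subsec:crossing_between_disks} supplies a carpet crossing between its two designated boundary arcs whose $\lebneb{\epsilon}$-cost is at most a small fixed fraction of $\quantHP{p}{\epsilon}$. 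Concatenating these sub-crossings and using subadditivity of $\lebneb{\epsilon}$ over the bounded number of pieces produces $\cpath$ lying in the corridor with $\lebneb{\epsilon}(\cpath)\leq\quantHP{p}{\epsilon}$, hence satisfying the clearance requirement; choosing $\xi_0$ and then $\delta_0$ small enough makes each of the events above as likely as needed. This completes the proof, and the proof of Lemma~\ref{lem:two_chunks_quantile} in Section~\ref{subsec:chunk_proofs} is the same with two successive chunks glued along $\eta$.

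The hard part will be ensuring that every probability bound is uniform in $\epsilon\in(0,\epsilon_0)$, and in particular that a quantum-half-plane-type piece of quantum size $\asymp\delta$ can be crossed at $\lebneb{\epsilon}$-cost a small fraction of $\quantHP{p}{\epsilon}$ with probability close to $1$ rather than merely close to $p$. This is where the self-similarity of $\eta_+([0,1])$ enters, along the lines of the reachable-set heuristic sketched in Section~\ref{subsec:outline}: translation invariance of the quantum half-plane under re-rooting its marked boundary point by quantum length implies that the law of the set of carpet points reachable from a boundary point by a path of $\lebneb{\epsilon}$-cost at most $\quantHP{p}{\epsilon}$ is, up to the distortion of the circle average embedding, the same all along $\eta_+([0,1])$; so if with positive probability this reachable set had diameter below a small threshold, the same would occur at a positive density of locations along $\eta_+([0,1])$, contradicting that $\quantHP{p}{\epsilon}$ is the $p$th quantile of $\sup_{z,w\in\eta_+([0,1])}\metapprox{\epsilon}{z}{w}{\Gamma_+}$. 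Turning this into a quantitative lower bound, uniform in $\epsilon$, on the probability that the reachable set is not too small is the crux: one must control how the circle average embedding distorts Euclidean lengths and areas near $\eta_+(r)$ for $r\in[0,1]$ and combine this with the covering bound~\eqref{eqn:quant_comparison} (Lemma~\ref{lem:covering_lemma}), so that rescaling $\epsilon$ by a bounded factor costs only a bounded factor in $\lebneb{\epsilon}$; once this is in place, making $\delta$ small forces each piece to be tiny and upgrades the probability from $p$ to arbitrarily close to $1$. A secondary technical difficulty is the corridor construction when $\Gamma_+$-loops come close to or touch $\partial\CN$.
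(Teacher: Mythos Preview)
Your plan has a real gap at the concatenation step. You propose to break the crossing of $\CN$ into a bounded number $M=M(\zeta_0)$ of sub-crossings, each supplied by the intermediate estimate of Section~\ref{subsec:crossing_between_disks}, and you assert that each sub-crossing has $\lebneb{\epsilon}$-cost at most ``a small fixed fraction of $\quantHP{p}{\epsilon}$''. But Lemma~\ref{lem:disk_good_connection_intermediate} (and its engine, Lemma~\ref{lem:chunk_exit}) only yields a crossing with $\lebneb{\epsilon}(\cpath)\le \quantHP{p}{\epsilon}$, not a fraction thereof; shrinking the piece raises the \emph{probability} that such a crossing exists, but does not shrink the cost bound. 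Concatenating $M$ pieces therefore gives $\lebneb{\epsilon}(\cpath)\le M\,\quantHP{p}{\epsilon}$, which is too weak. Replacing $p$ by some $p'<p$ in each piece does not help either, because relating $\quantHP{p'}{\epsilon}$ to a small multiple of $\quantHP{p}{\epsilon}$ is exactly the comparability of quantiles that is one of the main conclusions of the paper and is only established in Section~\ref{sec:boundary_tightness}; invoking it here would be circular. Your ``hard part'' paragraph correctly locates the self-similarity argument behind Lemma~\ref{lem:chunk_exit}, but that argument upgrades probability, not cost.

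The paper avoids concatenation entirely by a transfer trick. It takes the auxiliary CPIs $\wt\eta_0,\wt\eta_1$ of Lemma~\ref{lem:disk_good_connection_intermediate} to be conditionally independent of $\eta$ given $\Gamma_+$, so the resulting domain $\CD$ and the event $Q$ that no cheap crossing of $\CD$ exists are decoupled from the chunk $\CN$. Proposition~\ref{prop:cpi_path_close} then shows that, conditionally on $\CF=\sigma(\Gamma_+,\CN,Z)$ and on $E=\{I\cap\Upsilon_+\neq\emptyset\}$, the shaping event $F$ that $\eta_0(Z_1)\in B(z_R,\zeta_0 d_0)$, every $\Upsilon_+\cap\CD$-path from $\eta_0(Z_1)$ to $\eta_1$ passes through $I$, and $\dist(\partial\CD,\partial\CN\setminus\wt I)\ge\xi_0 d_0$, has conditional probability at least some $p_0>0$ once $\xi_0$ is small. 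The failure event $H$ for the chunk (no admissible $\cpath$ in $\Upsilon_+\cap\CN$) is $\CF$-measurable, and on $E\cap H\cap F$ a single cheap crossing of $\CD$ would already be an admissible $\cpath$ in $\CN$; hence $E\cap H\cap F\subseteq Q$. This gives
\[
\p[H]\le \frac{1}{p_0}\,\E\bigl[\one_E\one_H\,\p[F\mid\CF]\bigr]+\p[G^c\cap E]=\frac{1}{p_0}\,\p[E\cap H\cap F]+\p[G^c\cap E]\le \frac{1}{p_0}\,\p[Q]+\zeta_2,
\]
and $\p[Q]$ is made small by Lemma~\ref{lem:disk_good_connection_intermediate} by taking $\delta$ small. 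In short: you need one crossing, not many, and the missing idea is to realise that crossing in an auxiliary, $\CN$-independent CPI domain and then use Proposition~\ref{prop:cpi_path_close} plus the measurability of $E,H$ in $\CF$ to transfer it into $\CN$.
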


\begin{lemma}
\label{lem:two_chunks_quantile}
For each $p_0,\zeta_0,p \in (0,1)$ there exists $\xi_0, \delta_0 > 0$ so that for all $\delta \in (0,\delta_0)$ there exists $\epsilon_0 > 0$ so that for all $\epsilon \in (0,\epsilon_0)$ the following is true.  Suppose that $\eta_0$ is an $\SLE_\kappa^0(\kappa-6)$ in $\CH_+$ from~$0$ to~$\infty$ which is coupled with~$\Gamma_+$ as a CPI and parameterized by the quantum natural time of its trunk.  Let $\sigma_0 = \inf\{t \geq \delta^{4/\kappa} : \eta_0(t) \in \partial \CH_+\}$.  Let $\CN_0$ be the quantum surface disconnected from $\infty$ by $\eta_0|_{[0,\sigma_0]}$.  Let $\eta_1$ be an $\SLE_\kappa^0(\kappa-6)$ process in $\CH_{+,1} = \CH_+ \setminus \CN_0$ starting from a uniformly random point chosen from the set of points on the top $\CH_+ \cap \partial \CN_0$ of $\partial \CN_0$ whose clockwise boundary length distance to~$\partial \CH_+$ is an integer multiple of~$\zeta_0 \delta$ and are within boundary length distance $\zeta_0 \delta$ of a point in $\Upsilon_+ \cap \partial \CN_0$ and coupled with the loops of $\Gamma_+$ contained in $\CH_{+,1}$ as a CPI.  We assume that $\eta_1$ is parameterized according to the quantum natural time of its trunk.  Let $\sigma_1 = \inf\{ t \geq \delta^{4/\kappa} : \eta_1(t) \in \partial \CH_{+,1}\}$.  Let $d_0 = \diam(\eta_0([0,\sigma_0]))$ and let $z_R$ be the rightmost point on the bottom of $\CN_0$.  Let $\CN$ be the quantum surface parameterized by $\interior{\closure{\CN_0 \cup \CN_1}}$.  Let $G$ be the event that for each interval $I$ on the top of $\partial \CN_1$ with quantum length at least $\zeta_0 \delta$ and which intersects $\Upsilon_+$ there is a path $\cpath$ in $\Upsilon_+ \cap \CN$ which
\begin{enumerate}[(i)]
\item connects $\partial B(z_R, \zeta_0 d_0)$ to $\wt{I}$ where $\wt{I}$ is the interval on the top of $\partial \CN$ with the same center as $I$ but with twice the quantum length,
\item $\dist(\cpath, \partial \CN \setminus \wt{I}) \geq \xi_0 d_0$, and
\item satisfies $\lebneb{\epsilon}(\cpath) \leq \quantHP{p}{\epsilon}$.
\end{enumerate}
Then $\p[G] \geq p_0$.
\end{lemma}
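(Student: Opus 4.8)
The plan is to \textbf{bootstrap Lemma~\ref{lem:two_chunks_quantile} from Lemma~\ref{lem:one_chunk_quantile}}, which handles a single chunk, together with a gluing argument that joins a crossing of the first chunk $\CN_0$ to a crossing of the second chunk $\CN_1$. The key point is that by Theorem~\ref{thm:cpi_wedge_explore}, after running $\eta_0$ up to the stopping time $\sigma_0$, the quantum surface $\CH_{+,1} = \CH_+ \setminus \CN_0$ is itself a quantum half-plane, independent of $\CN_0$ (decorated by $\eta_0|_{[0,\sigma_0]}$) and in particular independent of the $\CLE_\kappa$ loops inside it. Hence conditionally on the first step, the pair $(\CH_{+,1}, \eta_1)$ has exactly the law appearing in Lemma~\ref{lem:one_chunk_quantile}, up to the fact that $\eta_1$ starts from a point on the top of $\partial \CN_0$ rather than at the ``corner'' $0$ of an abstractly embedded quantum half-plane --- but the quantum half-plane enjoys the translation invariance (in quantum boundary length) along its boundary recorded in Section~\ref{subsec:lqg} (the property distinguishing it from other wedges), so after re-rooting at that point one is again in the Lemma~\ref{lem:one_chunk_quantile} setting.

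The steps I would carry out, in order, are the following. First, apply Lemma~\ref{lem:one_chunk_quantile} to the first chunk $\CN_0$ (with parameters $p_0, \zeta_0, p$ replaced by slightly better ones) to get, on an event $G_0$ of probability close to $1$, a path $\cpath_0$ in $\Upsilon_+ \cap \CN_0$ from $\partial B(z_R, \zeta_0 d_0)$ to a large sub-interval $\wt I_0$ of the top of $\partial \CN_0$, staying $\xi_0 d_0$-away from the rest of $\partial \CN_0$ and with $\lebneb{\epsilon}(\cpath_0) \le \tfrac12 \quantHP{p}{\epsilon}$ (using~\eqref{eqn:quant_comparison} and adjusting $p$ to absorb the factor $2$, or simply applying the one-chunk lemma with $\quantHP{p'}{\epsilon}$ for $p'<p$ and then a covering-type comparison as in Lemma~\ref{lem:covering_lemma}). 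Second, condition on $(\CN_0, \eta_0|_{[0,\sigma_0]})$ and use the independence and re-rooting just described to apply Lemma~\ref{lem:one_chunk_quantile} to $\CN_1$ inside $\CH_{+,1}$: this produces, on an event $G_1$ of conditional probability close to $1$, for every admissible interval $I$ on the top of $\partial \CN_1$ a path $\cpath_1$ in $\Upsilon_+ \cap \CN_1$ connecting $\partial B(z_1, \zeta_0 d_1)$ (where $z_1$ is the corner of $\CN_1$ on $\partial \CN_0$ and $d_1 = \diam(\eta_1([0,\sigma_1]))$) to $\wt I$, staying $\xi_0 d_1$-away from the rest of $\partial \CN_1$, with $\lebneb{\epsilon}(\cpath_1) \le \tfrac12 \quantHP{p}{\epsilon}$. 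Third, I would intersect with a further positive-probability ``geometry'' event $G_2$, independent of the $\CLE$ decorations, on which: (a) $d_1$ is comparable to $d_0$ from above and below; (b) the starting disk $\partial B(z_1,\zeta_0 d_1)$ of $\cpath_1$ and a controlled neighborhood of the terminal interval $\wt I_0$ of $\cpath_0$ both lie well inside $\CN_0 \cap \CN_1$'s common boundary region, so that $\wt I_0$ can be chosen to be exactly (a large sub-interval of) the region on $\partial\CN_0$ through which $\cpath_1$ must exit $\CN_1$; and (c) the concatenation point can be taken in $\Upsilon_+$. This is arranged by picking $\wt I_0$ (equivalently the interval $I_0$ fed to the one-chunk lemma for $\CN_0$) to be a neighborhood of the base of $\cpath_1$ on $\partial \CN_0$; note that $\cpath_1$ starts from $\partial B(z_1,\zeta_0 d_1)$, a ball of macroscopic radius, and near-$\partial\CN_0$ part of it must cross an interval of $\partial\CN_0$ of quantum length comparable to $\zeta_0 \delta$ which intersects $\Upsilon_+$ (density of loops), so such an $I_0$ exists. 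Fourth, concatenate: $\cpath := \cpath_0 \cup (\text{a short crossing joining them inside } \CN_0 \cap \CN_1) \cup \cpath_1$, check the three required properties for $\CN = \interior{\closure{\CN_0 \cup \CN_1}}$ --- (i) it runs from $\partial B(z_R, \zeta_0 d_0)$ (the base of $\cpath_0$) to $\wt I$ (the top of $\cpath_1$); (ii) $\dist(\cpath,\partial\CN\setminus\wt I)\ge \xi_0 d_0$ after possibly shrinking $\xi_0$, using the distance bounds on $\cpath_0,\cpath_1$ and the comparability $d_1\asymp d_0$; and (iii) $\lebneb{\epsilon}(\cpath) \le \lebneb{\epsilon}(\cpath_0) + \lebneb{\epsilon}(\text{short piece}) + \lebneb{\epsilon}(\cpath_1) \le \quantHP{p}{\epsilon}$, where the short joining piece contributes negligibly because it can be taken with Euclidean length $o(d_0)$ and hence (for $\epsilon$ small relative to $\delta$) with $\lebneb{\epsilon}$ much smaller than $\quantHP{p}{\epsilon}$, e.g.\ using Lemma~\ref{lem:covering_lemma} / the scaling bound together with a crossing estimate from Section~\ref{subsec:cpi_close_to_path} to guarantee the existence of such a short piece in $\Upsilon_+$. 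Finally, $\p[G] \ge \p[G_0 \cap G_1 \cap G_2] \ge p_0$ after choosing $\delta_0$ small and the auxiliary probabilities close enough to $1$ (and to a fixed positive lower bound for $G_2$).

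The main obstacle is \textbf{step three: controlling the matching between the exit interval of the first crossing and the entrance disk of the second}, and more precisely guaranteeing simultaneously that (i) $\cpath_0$ exits $\CN_0$ through an interval $\wt I_0$ which the one-chunk lemma is allowed to specify in advance (the one-chunk lemma is stated uniformly over \emph{all} sufficiently long intervals $I$ on the top, so this is fine, provided the target interval has quantum length $\gtrsim \zeta_0 \delta$ and meets $\Upsilon_+$), and (ii) this same interval is where the \emph{second} crossing $\cpath_1$, which is produced only after $\CN_1$ has been explored, crosses $\partial \CN_0$. The subtlety is that the location where $\cpath_1$ meets $\partial\CN_0$ is random and depends on the second exploration, so one cannot fix $I_0$ before exploring $\CN_1$. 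The fix is to reverse the order of quantification: first explore $\CN_0$ and $\CN_1$ and run the one-chunk estimate inside $\CN_1$ to produce $\cpath_1$ (whose base disk $\partial B(z_1,\zeta_0 d_1)$ and whose crossing location on $\partial\CN_0$ are then determined), \emph{then} apply the one-chunk estimate inside $\CN_0$ with the target interval $I_0$ taken to be a neighborhood of that crossing location --- this is legitimate because $\CN_0$ and its $\CLE_\kappa$ loops are independent of the decorations of $\CH_{+,1}$, so conditioning on everything in $\CH_{+,1}$ does not disturb the conditional law of $\Gamma_+$ restricted to $\CN_0$, and the one-chunk lemma applies verbatim to $\CN_0$ with this (now measurable with respect to the $\CH_{+,1}$-data, hence ``external'') choice of interval. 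The remaining routine points are the comparability $d_1 \asymp d_0$ (a scaling/absolute-continuity estimate for the two successive CPIs, which I would package into the geometry event $G_2$) and the negligibility of the short joining path (handled by Section~\ref{subsec:cpi_close_to_path} together with the quantile comparison~\eqref{eqn:quant_comparison}).
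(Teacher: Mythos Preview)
Your approach is quite different from the paper's and has genuine gaps.

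The paper does \emph{not} concatenate two one-chunk crossings. Its proof of Lemma~\ref{lem:two_chunks_quantile} is literally ``the same argument as Lemma~\ref{lem:one_chunk_quantile}'', and that argument proceeds as follows: auxiliary CPIs $\wt\eta_0,\wt\eta_1$ (as in Lemma~\ref{lem:disk_good_connection_intermediate}) are sampled conditionally independently of the chunk $\CN$ given $\Gamma_+$; by Proposition~\ref{prop:cpi_path_close}, given $(\CN,\Upsilon_+,I)$ there is positive conditional probability of an event $F$ on which these CPIs cut out a disk $\CD$ shaped so that \emph{every} carpet crossing of $\CD$ from $\eta_0(Z_1)$ to $\eta_1$ must connect $\partial B(z_R,\zeta_0 d_0)$ to $\wt I$ while staying $\xi_0 d_0$ away from $\partial\CN\setminus\wt I$. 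A \emph{single} crossing from Lemma~\ref{lem:disk_good_connection_intermediate} with $\lebneb{\epsilon}\le\quantHP{p}{\epsilon}$ then does everything. For two chunks one simply replaces $\CN$ by $\interior{\closure{\CN_0\cup\CN_1}}$; steering the auxiliary CPIs through $\Upsilon_+\cap\CN$ via Proposition~\ref{prop:cpi_path_close} is unchanged, and nothing needs to be glued.

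Your concatenation runs into two concrete problems. First, the ``short joining piece'': the path $\cpath_1$ produced by Lemma~\ref{lem:one_chunk_quantile} in $\CN_1$ must stay at distance $\ge\xi_0 d_1$ from $\partial\CN_1\setminus\wt I$, and the bottom of $\CN_1$ (the interface with $\CN_0$) is part of that set. So $\cpath_1$ is bounded \emph{away} from the interface by $\xi_0 d_1$, while $\cpath_0$ terminates on it. The joining piece is therefore not short at all---it has macroscopic Euclidean diameter---and none of the tools you cite (equation~\eqref{eqn:quant_comparison}, Lemma~\ref{lem:covering_lemma}, Section~\ref{subsec:cpi_close_to_path}) produce a carpet path across that gap with $\lebneb{\epsilon}$ small relative to $\quantHP{p}{\epsilon}$; that kind of control is precisely what the machinery is building toward. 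Second, ``adjusting $p$ to absorb the factor $2$'' requires $2\quantHP{p'}{\epsilon}\le\quantHP{p}{\epsilon}$ for some $p'<p$ uniformly in $\epsilon$, i.e.\ comparability of the quantiles across $p$. That comparability is established only in Section~\ref{sec:boundary_tightness}, using Lemmas~\ref{lem:one_chunk_quantile} and~\ref{lem:two_chunks_quantile} as input, so invoking it here would be circular; equation~\eqref{eqn:quant_comparison} compares quantiles at different $\epsilon$, not different $p$.
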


Let us now explain the main steps used to prove Lemmas~\ref{lem:one_chunk_quantile} and~\ref{lem:two_chunks_quantile}.
\begin{enumerate}
\item[Step 1.] Show that a CPI in $\Gamma_+$ has a positive chance given $\Upsilon_+$ of being close to any fixed path in $\Upsilon_+$ (Proposition~\ref{prop:cpi_path_close}).  This result will be used several other times in this article.  Its proof does not need to be read in order to understand the proofs of Lemmas~\ref{lem:one_chunk_quantile} and~\ref{lem:two_chunks_quantile} or the rest of the article.
\item[Step 2.] We will construct a domain in $\CH_+$ using a pair of CPIs of $\Gamma_+$ whose boundary has positive distance from $\partial \CH_+$.  We will argue that if we make this domain sufficiently small then it is likely to contain a crossing $\omega$ in $\Upsilon_+$ satisfying $\lebneb{\epsilon}(\omega) \leq \quantHP{p}{\epsilon}$.  As we mentioned in Section~\ref{subsec:outline}, the key point in the proof where we define the quantiles using $\sup_{z,w \in \eta_+([0,1])} \metapprox{\epsilon}{z}{w}{\Gamma_+}$ (rather than using the length of a crossing in $\Upsilon_+$ connecting two disjoint boundary arcs) will be contained in this step and it is used to show that the probability that the (Euclidean) diameter of the set of points $z \in \Upsilon_+$ with $\metapprox{\epsilon}{0}{z}{\Gamma_+} \leq \quantHP{p}{\epsilon}$ is at least $\delta > 0$ tends to $1$ as $\delta \to 0$ provided $\epsilon > 0$ is sufficiently small relative to $\delta$.
\item[Step 3.] Complete the proofs of Lemmas~\ref{lem:one_chunk_quantile} and~\ref{lem:two_chunks_quantile}.  In the case of Lemma~\ref{lem:one_chunk_quantile}, we will use Proposition~\ref{prop:cpi_path_close} to argue that given $\CN$ there is a positive chance that some additional CPIs construct a domain as in Step 2 which contains a crossing $\omega$ in $\Upsilon_+$ satisfying  $\lebneb{\epsilon}(\omega) \leq \quantHP{p}{\epsilon}$ where the shape of the auxiliary domain is such that the crossing is forced to connect $\partial B(z_R, \zeta_0 d_0)$ to $\wt{I}$ while staying at distance at least $\xi_0 d_0$ from $\partial \CN \setminus \wt{I}$.  The proof of Lemma~\ref{lem:two_chunks_quantile} is the same except we want the crossing to traverse $\interior{\closure{\CN_0 \cup \CN_1}}$ instead of a single $\SLE_\kappa^0(\kappa-6)$ chunk so we will not explain the argument in this case in detail.
\end{enumerate}

\subsection{A CPI has a positive chance of being close to any path in the carpet}
\label{subsec:cpi_close_to_path}

In \cite{msw2020nonsimplenotdetermined} it was proved that a CPI inside of a $\CLE_\kappa$ carpet~$\Upsilon$ is a.s.\ not determined by~$\Upsilon$.  That is, the conditional law of a CPI in $\Upsilon$ given $\Upsilon$ is a.s.\ non-atomic.  In what follows, we will upgrade this statement to show that if~$\omega$ is any continuous path in~$\Upsilon$ connecting distinct boundary points then a CPI connecting the same boundary points has positive conditional probability given~$\Upsilon$ of staying in the $\epsilon$-neighborhood of $\omega$.  We will phrase the following statement for a deterministic path $\omega$ but by considering the finite collection of piecewise polygonal paths which connect the vertices in $(\epsilon \Z^2) \cap \D$ the same statement a.s.\ holds for all $\omega$ simultaneously.

\begin{proposition}
\label{prop:cpi_path_close}
Let $\omega \colon [0,1] \to \D$ be a continuous curve with $\omega(0) = -i$ and $\omega(1) = i$.  Fix $\epsilon > 0$, let $\omega^\epsilon$ be the $\epsilon$-neighborhood of $\omega$, and let $K = \D \setminus \omega^\epsilon$.  Suppose that $\Gamma$ is a $\CLE_\kappa$ on $\D$, $\Upsilon$ its carpet, and let $K^*$ be the closure of the union of $K$ and the loops of $\Gamma$ which intersect $K$.  Let $\eta$ be an $\SLE_\kappa^1(\kappa-6)$ in $\D$ from $-i$ to $i$ coupled with $\Gamma$ as a CPI and let $\eta'$ be its trunk.  Let $E$ be the event that $K^*$ does not disconnect $-i$ from $i$ in $\omega^\epsilon$.  Then 
\[ \p[ \eta' \subseteq \omega^\epsilon \giv \Upsilon] > 0 \quad\text{a.s.\ on}\quad E.\]
\end{proposition}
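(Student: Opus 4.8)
The plan is to deduce this from the corresponding qualitative statement of \cite{msw2020nonsimplenotdetermined} (that a CPI is not determined by $\Upsilon$, i.e.\ its conditional law given $\Upsilon$ is non-atomic) together with a suitable zero-one / support argument. First I would reduce the proposition to a statement about the trunk $\eta'$ alone: conditionally on $\Upsilon$, the CPI $\eta$ follows its trunk $\eta'$ except that whenever it meets a loop of $\Gamma$ it traverses that loop, so the event $\{\eta' \subseteq \omega^\epsilon\}$ together with knowledge of which loops of $\Gamma$ lie inside $\omega^\epsilon$ controls the whole curve; on the event $E$ it is precisely the fact that $K^*$ does not disconnect $-i$ from $i$ in $\omega^\epsilon$ that guarantees $\omega^\epsilon$ actually supports such a trunk. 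So the real content is: on $E$, the $\SLE_\kappa(\kappa-6)$-trunk branch targeted at $i$ has positive conditional probability given $\Upsilon$ of staying in $\omega^\epsilon$.

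The main step is to run the CPI as a Markovian exploration of $\Upsilon$, using the fact recalled in Section~\ref{subsec:cle} that after drawing $\eta$ up to a stopping time at which it is not traversing a loop, the conditional law of the loops of $\Gamma$ in each unexplored complementary component is an independent $\CLE_\kappa$ in that component. I would fix a finite ``tube'' of complementary-domain steps inside $\omega^\epsilon$ from $-i$ to $i$ — i.e.\ a finite chain of sub-domains of $\omega^\epsilon$ that can be realized as successive CPI steps staying in $\omega^\epsilon$ and not disconnecting $i$ — whose existence on $E$ follows from the connectivity hypothesis (after possibly shrinking to $\omega^{\epsilon/2}$ to leave room, and using that $\Upsilon$ is a.s.\ connected and dense so such sub-domains meet $\Upsilon$). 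At each step, the probability that the next excursion of the trunk branch stays in the prescribed sub-tube and that the relevant loops of $\Gamma$ there do not block it is positive, by a Radon--Nikodym / absolute continuity comparison of the $\SLE_\kappa(\kappa-6)$ branch with ordinary $\SLE_\kappa$ away from the force points (Section~\ref{subsec:sle}) combined with a positive-probability estimate for $\CLE_\kappa$ in a domain (loops staying small or away from a given crossing region). Crucially these per-step lower bounds can be taken measurable with respect to $\Upsilon$ and bounded below by a strictly positive $\Upsilon$-measurable quantity on $E$, because the geometry that enters — essentially $\mathrm{dist}(\omega,\partial\D)$, the modulus of the tube, and the loop configuration near $\omega$ — is $\Upsilon$-measurable; multiplying finitely many of them gives the claim.

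The hard part, and where I expect the real work to be, is making the per-step positive-probability estimates \emph{conditionally on $\Upsilon$} rather than unconditionally: one cannot simply use the unconditional positivity of a crossing event for $\CLE_\kappa$ plus the CPI coupling, because the crossing event and the trunk are both functions of the joint $(\Gamma,\eta)$ coupling and conditioning on $\Upsilon=\Gamma$ could in principle kill the favorable trunk behavior. This is exactly the obstruction that \cite{msw2020nonsimplenotdetermined} overcomes to show non-determination, so the natural route is to \emph{quote} their result in the appropriate local form: conditionally on $\Upsilon$ (and on the explored part of the exploration), the law of the next trunk excursion into a given complementary component is a.s.\ non-degenerate and in fact mutually absolutely continuous with its unconditional law restricted to that component, which upgrades ``non-atomic'' to ``charges every open set.'' Granting that, the tube argument above closes. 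A secondary technical point is the measurable-selection / uniformity issue needed to get the ``for all $\omega$ simultaneously'' strengthening, but as the authors note this is handled by restricting to the countable family of polygonal paths through $(\epsilon\Z^2)\cap\D$ and taking a countable intersection of a.s.\ events.
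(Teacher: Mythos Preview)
Your proposal has a genuine gap at precisely the point you flag as ``the hard part.'' You write that the natural route is to quote \cite{msw2020nonsimplenotdetermined} ``in the appropriate local form: conditionally on $\Upsilon$ \ldots\ the law of the next trunk excursion \ldots\ is a.s.\ non-degenerate and in fact mutually absolutely continuous with its unconditional law restricted to that component, which upgrades `non-atomic' to `charges every open set.' '' But \cite{msw2020nonsimplenotdetermined} does \emph{not} prove mutual absolute continuity or full support; it proves only that the conditional law of the CPI given $\Upsilon$ is non-atomic. Non-atomicity is far weaker than full support: a non-atomic law could in principle be concentrated on a thin subset of curve space that never enters the tube $\omega^\epsilon$. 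The upgrade you are ``granting'' is exactly the content of the proposition, so the tube argument as stated is circular.

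The paper's proof is structurally different and does not proceed by a step-by-step tube construction. Instead it uses an explicit Markovian resampling operation on the full iterated-$\BCLE$ exploration path $\Lambda$: one picks a random point and scale, isolates a four-arm configuration of $\CLE_{\kappa'}$ loop pieces, and resamples the way these loops hook up given their \emph{ranges}. The input from \cite{msw2020nonsimplenotdetermined} is the precise statement that, conditional on the ranges of the four path pieces and on the event that the relevant $\CLE_{\kappa'}$ loops intersect, both ways of hooking up have positive probability. Crucially, this resampling preserves $\Upsilon$. The argument then shows that by performing this resampling a geometric number of times one can, with positive conditional probability given $\Upsilon$, successively eliminate the excursions of $\eta'$ from its right boundary that leave $\omega^\epsilon$, and finally merge the remaining $\CLE_{\kappa'}$ loops into a single loop confined to the tube. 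This directly produces a CPI trunk inside $\omega^\epsilon$ without ever invoking an absolute-continuity statement for the conditional law given $\Upsilon$.
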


In order to complete the proof of Proposition~\ref{prop:cpi_path_close} we need to recall a few things from \cite{msw2020nonsimplenotdetermined}.  Suppose that $\Gamma'$ is a $\CLE_{\kappa'}$ on $\D$.  Let $\eta'$ be the branch of the exploration tree of $\Gamma'$ from $-i$ to $i$ and let $\ol{\eta}'$ be its time-reversal.  Suppose that $t > 0$.  On the event that $\eta'|_{[0,t]}$ has not yet reached $i$,  we have that~$\eta'$ is a.s.\ exploring some loop $\CL' \in \Gamma'$ at the time $t$.  Let $t_0$ be such that $[t_0,t]$ is the interval of time in which $\eta'|_{[0,t]}$ is exploring $\CL'$.  Suppose that $\ol{t} > 0$ and we are on the event that $\ol{\eta}'|_{[0,\ol{t}]}$ has not hit $\eta'([0,t])$.  Then $\ol{\eta}'$ is a.s.\ exploring some loop $\ol{\CL}' \in \Gamma'$ at the time $\ol{t}$.  Let $\ol{t}_0$ be such that $[\ol{t}_0,\ol{t}]$ is the interval of time in which $\ol{\eta}'|_{[0,\ol{t}]}$ is exploring $\ol{\CL}'$.  Let $D_{t,\ol{t}}$ be the component of $\D \setminus (\eta'([0,t]) \cup \ol{\eta}'([0,\ol{t}]))$ with $\eta'(t)$, $\ol{\eta}'(\ol{t})$ on its boundary.  Then it is shown in \cite[Lemma~3.1]{msw2020nonsimplenotdetermined} that the conditional law of the part of $\Gamma'$ contained in $D_{t,\ol{t}}$ is a conformally invariant function of $D_{t,\ol{t}}$ and the marked points $\eta'(t)$, $\eta'(t_0)$, $\ol{\eta}'(\ol{t})$, $\ol{\eta}'(\ol{t}_0)$.  Moreover, let $\eta_0'$ be equal to $\eta'$ starting from $\eta'(t)$, $\eta_1'$ the time-reversal of $\eta_0'$ starting from $\eta'(t_0)$, $\ol{\eta}_0'$ be equal to $\ol{\eta}'$ starting from $\ol{\eta}'(\ol{t})$, and $\ol{\eta}_1'$ the time-reversal of $\ol{\eta}_0'$ starting from $\ol{\eta}'(\ol{t}_0)$.  Then $\eta_0'$ can terminate at either $\eta'(t_0)$, in which case $\CL' \neq \ol{\CL}'$, or at $\ol{\eta}'(\ol{t})$, in which case $\CL' = \ol{\CL}'$.  (The probability of the two ways of the paths hooking up was subsequently computed explicitly in \cite{mw2018connection}.)  Then it is shown in \cite[Section~4.1]{msw2020nonsimplenotdetermined} that conditionally on the ranges of $\eta_0'$, $\eta_1'$, $\ol{\eta}_0'$, $\ol{\eta}_1'$ and the event $\CL' \cap \ol{\CL}' \neq \emptyset$, either possibility has positive probability.

We recall that there is a natural exploration path $\Lambda$ associated with the iterated $\BCLE$ construction of a $\CLE_\kappa$ \cite{msw2017clepercolations} which we will now describe.  For simplicity, we will assume that the $\CLE_\kappa$ is in $\D$.  We first let $\Lambda_0$ be given by the path which follows $\partial \D$ counterclockwise starting from $-i$.  Let $\Lambda_1$ be given by $\Lambda_0$ except whenever $\Lambda_0$ hits a $\cwBCLE_{\kappa'}(0)$ loop then it follows it clockwise in its entirety before continuing along $\Lambda_0$.  We then let $\Lambda_2$ be given by $\Lambda_1$ except whenever it hits a $\ccwBCLE_\kappa(-\kappa/2)$ loop it follows it counterclockwise in its entirety before continuing along $\Lambda_1$.  We then continue iterating this construction in the false loops of the $\cwBCLE_{\kappa'}(0)$ and of the $\ccwBCLE_\kappa(-\kappa/2)$'s to obtain a path which follows the entire iterated $\BCLE$ construction of the $\CLE_\kappa$ instance.  It is explained in \cite{msw2017clepercolations} that this a.s.\ defines a continuous path, which we call $\Lambda$.  We note that if we take $\Lambda$ and target it at any point in $\partial \D$ then we obtain a CPI in $\Gamma$ starting from~$-i$ (and which has the law of an $\SLE_\kappa^1(\kappa-6)$ process).

Finally, let us recall that a trunk $\eta'$ of an $\SLE_\kappa^1(\kappa-6)$ process in $\h$ from $0$ to $\infty$ is an $\SLE_{\kappa'}(\kappa'-6)$ process where the force point is at $0^+$.  Moreover, by \cite[Theorem~1.4]{ms2016ig1} (see also \cite[Figure~2.5]{mw2017intersections}) the law of the left (resp.\ right) boundary of the trunk is that of an $\SLE_\kappa(\kappa-4;2-\kappa)$ (resp.\ $\SLE_\kappa(\kappa/2-2;-\kappa/2)$) process.  In particular, the right boundary a.s.\ does not hit $(-\infty,0)$ since $\kappa/2-2$ is the critical value at or above which an $\SLE_\kappa(\rho)$ process does not hit the boundary.  On the other hand, since $\kappa \in (8/3,4)$ we have that $2-\kappa < \kappa/2-2$ so that the left boundary a.s.\ does hit $(0,\infty)$.  Finally, $\eta'$ hits the points in its left (resp.\ right) boundary in order so can be decomposed into a collection of it excursions from the left (resp.\ right) boundary.

\begin{proof}[Proof of Proposition~\ref{prop:cpi_path_close}]
We consider the following Markovian resampling operation which preserves the law of $\Lambda$.  Let $z \in \D$ and $U \in [0,1]$ be independently sampled from Lebesgue measure and independently of $\Lambda$.  For each $t \geq 0$, we let $D_t$ be the component of $\D \setminus \Lambda([0,t])$ which contains $z$.  We let~$\tau_0$ be the first time~$t$ that the conformal radius $\confrad(z,D_t)$ of~$D_t$ as seen from~$z$ is at most~$U$ or $\partial D_t$ is a loop of $\Gamma$.  In the latter case we set $\tau = \tau_0$.  In the former case, if $\Lambda$ is not drawing a $\CLE_\kappa$ loop at time $\tau_0$ then we also set $\tau = \tau_0$.  If $\Lambda$ is drawing a $\CLE_\kappa$ loop at time $\tau_0$, we let $\tau$ be the time after $\tau_0$ that $\Lambda$ completes the $\CLE_\kappa$ loop.  Suppose that we are working on the event that $\partial D_\tau$ is not a loop of $\Gamma$.  Let $\varphi \colon D_\tau \to \D$ be the unique conformal map with $\varphi(z) = 0$ and $\varphi(\Lambda(\tau)) = -i$.  Let $\sigma > \tau$ be such that $[\tau,\sigma]$ is the interval of time in which $\Lambda$ is in $D_\tau$.  By the definition of $\Lambda$ and the iterated $\BCLE$ construction, modulo time parameterization we have that $\varphi(\Lambda|_{[\tau,\sigma]})$ has the same law as $\Lambda$.  Let $\ol{x} \in \partial \D$ be sampled from Lebesgue measure and $t, \ol{t} \sim \exp(1)$ independently of each other and everything else.  Let $\eta_0'$, $\eta_1'$, $\ol{\eta}_0'$, $\ol{\eta}_1'$ be the $4$-tuple of paths as described above defined in terms of the branch of the exploration tree of the $\CLE_{\kappa'}$ in $\D$ from $-i$ to $\ol{x}$ described by $\varphi(\Lambda|_{[\tau,\sigma]})$ and the times $t$, $\ol{t}$.  We then resample $\eta_0'$, $\eta_1'$, $\ol{\eta}_0'$, $\ol{\eta}_1'$ from their conditional law given their range.  As we have explained above, by \cite[Section~4.1]{msw2020nonsimplenotdetermined} we have that on the event that the $\CLE_{\kappa'}$ loops described by $\eta_0'$, $\eta_1'$, $\ol{\eta}_0'$, $\ol{\eta}_1'$ intersect it is a positive conditional probability event that after resampling $\eta_0'$, $\ol{\eta}_0'$ describe part of the same $\CLE_{\kappa'}$ loop or a different $\CLE_{\kappa'}$ loop.

Suppose that $N$ is sampled from the geometric distribution with parameter $1/2$ independently of everything else.  In the remainder of the proof, we will show that if we run this Markov resampling step $N$ times then on the event $E$ from the statement it is a positive probability event given $\Upsilon$ that a CPI in $\Upsilon$ from $-i$ to $i$ reaches $i$ before leaving $\omega^\epsilon$.  Upon showing this, the proof of the lemma will be complete.

We assume that we are working on $E$.  Let $V$ be the component of $\D \setminus K^*$ with $-i,i$ on its boundary, $\Gamma_V$ the loops of $\Gamma$ contained in~$V$, and let $I_- = \ccwBoundary{a_-}{b_-}{\partial \D}$ (resp.\ $I_+ = \cwBoundary{a_+}{b_+}{\partial \D}$) be the arc of $\partial \D \cap \partial V$ which contains $-i$ (resp.\ $i$).  Fix $x_-$ (resp.\ $x_+$) in the interior of $I_-$ (resp.\ $I_+$).  Let $\Gamma_V'$ be a $\CLE_{\kappa'}$ in $V$ coupled with $\Gamma_V$ as in the $\BCLE$ construction of $\CLE$.  Let $\eta'$ be the branch of the exploration tree of $\Gamma_V'$ from $x_-$ to $x_+$.  Then $\eta'$ is the trunk of an $\SLE_\kappa^1(\kappa-6)$ process from $x_-$ to $x_+$ in $V$ coupled with $\Gamma_V$ as a CPI.  As explained above, the right boundary of $\eta'$ a.s.\ does not hit $\cwBoundaryOpen{x_-}{x_+}{\partial V}$.  Let $x_-' \in \ccwBoundaryOpen{a_-}{x_-}{\partial \D}$ and $x_+' \in \cwBoundaryOpen{a_+}{x_+}{\partial \D}$.  Let $n$ be the number of excursions that $\eta'$ makes from its right boundary which intersect $\cwBoundary{x_-'}{x_+'}{\partial V}$ and note that $n$ is a.s.\ finite by the continuity of $\eta'$.

More generally, let $\eta_j$, $\eta_j'$ be the paths after performing the resampling step $j$ times and let $n_j$ be the number of excursions that $\eta_j'$ makes from its right boundary which intersect $\cwBoundary{x_-'}{x_+'}{\partial V}$.  Let us assume that $n \geq 1$ and let $\eta_{0,1}',\ldots,\eta_{0,n}'$ be the $n$ excursions that $\eta' = \eta_0'$ makes from its right boundary which hit $\cwBoundary{x_-'}{x_+'}{\partial V}$ ordered chronologically.  We assume that $\eta_{0,1}'$ starts from where $\eta'$ first starts drawing it and let $\ol{\eta}_{0,1}'$ be its time-reversal.  Let $\tau_{0,1}$ (resp.\ $\ol{\tau}_{0,1}$) be the first time that $\eta_{0,1}'$ (resp.\ $\ol{\eta}_{0,1}'$) hits $\cwBoundary{x_-'}{x_+'}{\partial V}$ and let $W$ be a component of $V \setminus (\eta_{0,1}'([0,\tau_{0,1}]) \cup \ol{\eta}_{0,1}'([0,\ol{\tau}_{0,1}]))$ whose boundary is contained in and has non-empty intersection with both $\eta_{0,1}'([0,\tau_{0,1}])$ and in $\ol{\eta}_{0,1}'([0,\ol{\tau}_{0,1}])$.  We note that such a component a.s.\ exists.

Given $\eta$ and $\Upsilon$, it is a positive probability event that $z \in W$ and $U$ is such that $\Lambda|_{[0,\tau]}$ is in $\partial W \setminus \eta_{0,1}'$ and has not drawn all of $\partial W \cap \eta_{0,1}'$.  Given this, there is a positive chance that $\ol{x}$, $t$, $\ol{t}$ are such that $\varphi^{-1}(\eta_0' \cup \eta_1' \cup \ol{\eta}_0' \cup \ol{\eta}_1')$ and the loops of $\Gamma$ it intersects contains $\partial W$.  On this event, as we have explained above, the Markovian resampling step has positive chance of making it so that $n_1 = n-1$.  By repeating this $n-1$ further times, we see that there is a positive chance that $\eta_n'$ does not hit $\cwBoundary{x_-'}{x_+'}{\partial V}$.

The above implies that on $E$ it is a positive conditional probability event given $\Upsilon$ that $\eta'$ does not hit $\cwBoundary{x_-'}{x_+'}{\partial V}$.  We note that by definition $\eta'$ is given by targeting at $x_+$ the path which visits the loops of $\Gamma_V'$ ordered according to when they are hit by $\ccwBoundary{x_-}{x_+}{\partial V}$.  Thus what we have shown is that on $E$ it is a positive conditional probability event given $\Upsilon$ that the loops of $\Gamma_V'$ which hit $\ccwBoundary{x_-}{x_+}{\partial V}$ do not hit $\cwBoundary{x_-'}{x_+'}{\partial V}$.  Applying this principle a second time implies that the following is true.  Suppose that $x_-'' \in \ccwBoundaryOpen{x_-}{b_-}{\partial \D}$ and $x_+'' \in \cwBoundaryOpen{x_+}{b_+}{\partial \D}$.  Then on $E$ it is a positive conditional probability event given $\Upsilon$ that the loops of $\Gamma_V'$ which hit either $\cwBoundary{x_-'}{x_+'}{\partial V}$ or $\ccwBoundary{x_-''}{x_+''}{\partial V}$ do not disconnect $x_-$ from $x_+$ in $V$.  Let $W$ be the complementary component in~$V$ of these loops which has $x_-,x_+$ on its boundary.  We note that the loops $\Gamma_W'$ of $\Gamma_V'$ which are contained in $\closure{W}$ have the law of a $\CLE_{\kappa'}$ in $W$.

On the above event, we note that there a.s.\ exists a loop $\CL_-'$ (resp.\ $\CL_+'$) of $\Gamma_W'$ which disconnects $x_-$ (resp.\ $x_+$) from $x_+$ (resp.\ $x_-$) and does not hit either $\cwBoundary{x_-'}{x_+'}{\partial V}$ or $\ccwBoundary{x_-''}{x_+''}{\partial V}$.  Since the graph of $\CLE_{\kappa'}$ loops is a.s.\ connected, there a.s.\ exists $n$ loops $\CL_1',\ldots,\CL_n'$ in $\Gamma_W'$ so that $\CL_1' \cap \CL_-' \neq \emptyset$, $\CL_n' \cap \CL_+' \neq \emptyset$, and $\CL_j' \cap \CL_{j-1}' \neq \emptyset$ for each $2 \leq j \leq n$.  Thus applying the resampling procedure one more time, it is a positive probability event that $\CL_1',\ldots,\CL_n'$ get connected into a single~$\CLE_{\kappa'}$ loop in~$\closure{W}$ which disconnects~$x_-$ from~$x_+$ and vice-versa.  On this event, the branch of the exploration tree of the resulting $\CLE_{\kappa'}$ does not hit either $\cwBoundary{x_-'}{x_+'}{\partial V}$ or $\ccwBoundary{x_-''}{x_+''}{\partial V}$ and is a CPI of~$\Gamma_V$.  Altogether, this completes the proof.
\end{proof}

\subsection{Crossing a disk bounded by CPIs away from the boundary}
\label{subsec:crossing_between_disks}

\begin{figure}[ht!]
\begin{center}
\includegraphics[scale=1]{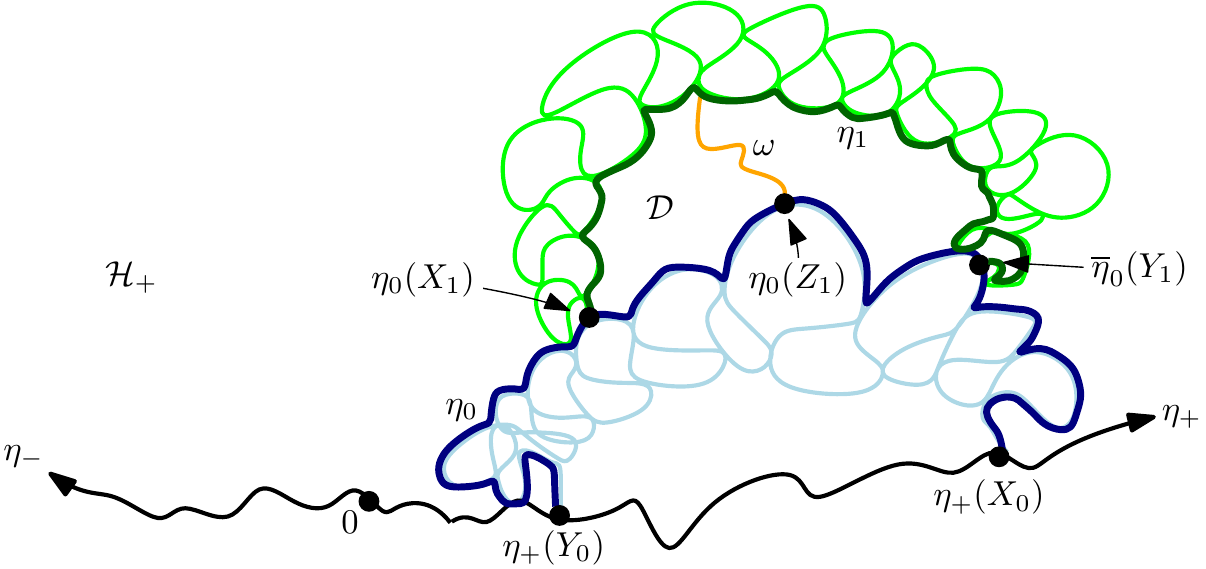}	
\end{center}
\caption{Illustration of the statement of Lemma~\ref{lem:disk_good_connection_intermediate}.  The light blue curve is the trunk of $\wt{\eta}_0$ and the green curve is the trunk of $\wt{\eta}_1$.  The left boundary ($\eta_0$ and its time-reversal $\ol{\eta}_0$) of $\wt{\eta}_0$ (as viewed from $\eta_+(Y_0)$) is shown in dark blue and the left boundary ($\eta_1$) of $\wt{\eta}_1$ (as viewed from $\ol{\eta}_0(Y_1)$) is shown in dark green.  The $\CLE_\kappa$ loops visited by $\wt{\eta}_0$, $\wt{\eta}_1$ are not shown as we have taken them to be $\SLE_\kappa^{-1}(\kappa-6)$ processes (i.e., $\CLE_\kappa$ loops are only on the left side of the trunk) so that $\partial \CD$ is only composed of parts of the boundaries of the trunks of $\wt{\eta}_0$, $\wt{\eta}_1$ and not of $\CLE_\kappa$ loops.}	
\end{figure}

\begin{lemma}
\label{lem:disk_good_connection_intermediate}
For each $\zeta_0, p \in (0,1)$ there exists $\delta_0 > 0$ so that for all $\delta \in (0,\delta_0)$ there exists $\epsilon_0 > 0$ so that for all $\epsilon \in (0,\epsilon_0)$ the following is true.  Let $X_0$ be an exponential random variable with mean~$\delta$ and let $Y_0$ be uniform in $[0,X_0]$, independently of everything else.  Let $\wt{\eta}_0$ be an $\SLE_\kappa^{-1}(\kappa-6)$ in~$\CH_+$ from $\eta_+(X_0)$ to $\eta_+(Y_0)$ coupled with $\Gamma_+$ as a CPI, let $\eta_0$ be its left boundary, assume that $\eta_0$ is parameterized according to quantum length, and let $L$ be the total quantum length of $\eta_0$.  Given $L$, let $X_1,Y_1$ be i.i.d.\ uniform in $[0,L]$ independently of everything else.  Let $\ol{\eta}_0$ be the time-reversal of~$\eta_0$.  On the event that $\eta_0([0,X_1]) \cap \ol{\eta}_0([0,Y_1]) = \emptyset$ and the part of $\eta_0$ from $\eta_0(X_1)$ to $\ol{\eta}_0(Y_1)$ (i.e., $\eta_0|_{[X_1,L-Y_1]}$) does not hit $\partial \CH_+$, let $\wt{\eta}_1$ be an $\SLE_\kappa^{-1}(\kappa-6)$ process in the unbounded component of $\CH_+ \setminus \eta_0$ from $\eta_0(X_1)$ to $\ol{\eta}_0(Y_1)$, coupled as a CPI of the loops of $\Gamma_+$ in this component.  Let $\eta_1$ be the left boundary of $\wt{\eta}_1$.  Let $Z_1$ be uniform in $[X_1, L-Y_1]$ and let $\CD$ be the component of $\CH_+ \setminus (\eta_0 \cup \eta_1)$ with $\eta_0(Z_1)$ on its boundary.  Let $E$ be the event that there does not exist a path $\cpath$ in $\CD \cap \Upsilon_+$ from $\eta_0(Z_1)$ to $\eta_1$ with $\lebneb{\epsilon}(\cpath) \leq \quantHP{p}{\epsilon}$.  Then $\p[E] \leq \zeta_0$.
\end{lemma}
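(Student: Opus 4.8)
The plan is to reduce the statement to Step~2 of the outline — i.e.\ to the fact that the Euclidean diameter of the set $A_\epsilon := \{z \in \Upsilon_+ : \metapprox{\epsilon}{0}{z}{\Gamma_+} \leq \quantHP{p}{\epsilon}\}$ is at least $\delta$ with probability tending to $1$ as $\delta \to 0$ (uniformly in small $\epsilon$), combined with Proposition~\ref{prop:cpi_path_close} to control the shape of the region $\CD$ bounded by the two CPI left-boundaries. First I would record the probabilistic input that makes $\CD$ a well-defined and typically ``large'' domain: since $\wt\eta_0$ is a CPI and $\CH_+$ is a quantum half-plane, the trunk of $\wt\eta_0$ is an $\SLE_{\kappa'}(\kappa'-6)$ and its left boundary $\eta_0$ is (by the imaginary geometry description recalled at the end of Section~\ref{subsec:cpi_close_to_path}, with the roles of left/right swapped for the $\SLE_\kappa^{-1}$ convention) an $\SLE_\kappa$-type curve which does hit $\partial\CH_+$. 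The conditioning event $\{\eta_0([0,X_1]) \cap \ol\eta_0([0,Y_1]) = \emptyset\} \cap \{\eta_0|_{[X_1,L-Y_1]} \cap \partial\CH_+ = \emptyset\}$ has positive probability, and on it $\wt\eta_1$ is well-defined; one then shows that, after possibly shrinking $\delta$, the $\sigma$-algebra generated by $(\eta_0,\eta_1)$ together with $Z_1$ makes $\CD$ contain a Euclidean ball of some (random, positive) radius $r$ around $\eta_0(Z_1)$, with $r \geq \delta^{1+o(1)}$ with high probability — this uses only the self-similarity/scaling of the $\SLE$ curves and standard one-point estimates for the conformal radius seen from a ``typical'' boundary point, analogous to the use of $\eta_+$'s self-similarity invoked in Section~\ref{subsec:outline}.

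The core of the argument is then the following. Work on the high-probability event from Step~2 that $\diam(A_\epsilon) \geq \delta_1$ for a suitable $\delta_1$, and on the event just described that $\CD \supseteq B(\eta_0(Z_1), r)$. By the conformal covariance of $\CLE_\kappa$ and of the chemical-distance approximations under the map that sends a neighborhood of $\eta_0(Z_1)$ in $\CD$ to a neighborhood of $0$ in a reference domain (here one uses that, conditionally on $(\eta_0,\eta_1)$, the loops of $\Gamma_+$ inside $\CD$ form a fresh $\CLE_\kappa$, which is the Markov property of CPI explorations stated in Section~\ref{subsec:cle}), the conditional law of the carpet in $\CD$ near $\eta_0(Z_1)$ agrees (up to the conformal distortion, which is bounded on the relevant scale because $\CD$ contains a definite ball) with the half-planar reference picture for which Step~2 applies. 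Hence with conditional probability $\geq 1 - \zeta_0/2$ there is a path $\cpath_0$ in $\Upsilon_+ \cap \CD$ starting at $\eta_0(Z_1)$, staying within $B(\eta_0(Z_1), r/2)$ say, reaching Euclidean distance $\geq c\, r$ from $\eta_0(Z_1)$, and with $\lebneb{\epsilon}(\cpath_0) \leq \quantHP{p/2}{\epsilon}$ (allowing a slightly smaller quantile here so there is room to spare, then using~\eqref{eqn:quant_comparison}-type comparability of quantiles to absorb the loss). The point $\eta_0(Z_1)$ sits on $\partial\CD$ between $\eta_0$ and $\eta_1$, so any path in $\CD$ from $\eta_0(Z_1)$ of definite Euclidean length must approach $\eta_1$; more carefully, I would use Proposition~\ref{prop:cpi_path_close} applied to the CPI $\wt\eta_1$ (equivalently, a second CPI inside $\CD$) to guarantee, with positive conditional probability that we can boost to $\geq 1-\zeta_0/2$ by iterating over a geometrically-distributed number of independent resampling attempts as in the proof of that proposition, that a further CPI — and hence, by Proposition~\ref{prop:cpi_path_close}, a path in $\Upsilon_+ \cap \CD$ — connects the endpoint of $\cpath_0$ to a point of $\eta_1$ while staying in a thin tube, so that its $\epsilon$-neighborhood adds only $o(\quantHP{p}{\epsilon})$ to the Lebesgue measure. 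Concatenating $\cpath_0$ with this connecting path and using subadditivity of $\lebneb{\epsilon}$ under concatenation (the $\epsilon$-neighborhood of a concatenation is contained in the union of the $\epsilon$-neighborhoods) yields a path $\cpath$ in $\CD \cap \Upsilon_+$ from $\eta_0(Z_1)$ to $\eta_1$ with $\lebneb{\epsilon}(\cpath) \leq \quantHP{p}{\epsilon}$, for all $\epsilon$ small enough depending on $\delta$. A union bound over the (at most three) bad events then gives $\p[E] \leq \zeta_0$.

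The main obstacle I expect is making the ``reference picture'' comparison in the core step genuinely uniform in $\epsilon$: one needs that the chemical-distance approximation $\metapprox{\epsilon}{\cdot}{\cdot}{\Gamma_+}$, which is defined via Euclidean Lebesgue measure and is therefore \emph{not} conformally invariant, transforms in a controlled way under the conformal map from $\CD$ (near $\eta_0(Z_1)$) to the half-plane. The resolution is that on the scale $r$ the conformal map has derivative bounded above and below by universal constants (this is where one needs $\CD$ to contain a definite ball, hence the care taken with the event on $(\eta_0,\eta_1,Z_1)$), so Lebesgue measure is distorted by a bounded factor and the quantile comparison~\eqref{eqn:quant_comparison} absorbs this, at the cost of replacing $p$ by a slightly smaller value throughout and invoking Step~2 with that value. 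A secondary technical point is that the conditioning event defining when $\wt\eta_1$ exists has probability bounded away from $0$ but not close to $1$; however, $E$ is defined on that event, so we only ever need conditional statements given it, and the positive-probability lower bound on the event is exactly what lets the geometric-number-of-attempts boosting of Proposition~\ref{prop:cpi_path_close} push the connecting-path probability up to $1-\zeta_0/2$.
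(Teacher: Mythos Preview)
Your proposal has two genuine gaps, and together they mean the argument as written does not go through.

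\textbf{The comparison with the half-planar reference picture is unjustified.} Your core step asserts that, conditionally on $(\eta_0,\eta_1)$, the local picture of the carpet in $\CD$ near $\eta_0(Z_1)$ agrees (up to bounded conformal distortion) with the half-planar reference near $0$ on $\partial\CH_+$, so that the diameter lower bound on $A_\epsilon$ transfers. But the diameter statement from Step~2 is proved specifically for the marked point $0$ on the interface $\eta_+$; to transfer it to $\eta_0(Z_1)$ you must know that the law of $\partial\CD$ near $\eta_0(Z_1)$ is absolutely continuous with respect to the law of $\partial\CH_+$ near $0$. This is precisely the content of Lemma~\ref{lem:middle_part_abs_cont}: on the conditioning event, the middle segment $\eta_0|_{[X_1,L-Y_1]}$ has law absolutely continuous with respect to a chordal $\SLE_\kappa$, and (together with identifying the conditional law of $\eta_1$ as an $\SLE_\kappa(\kappa-4;2-\kappa)$) this lets one pull the entire pair $(\eta_0,\eta_1)$ back to $(\varphi(\eta_+),\varphi(\eta_R))$ for an explicit conformal map $\varphi$ with $|\varphi'|$ bounded on a neighbourhood of the relevant component. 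Your ``conformal covariance'' invocation does not supply this absolute continuity; the fact that loops in $\CD$ form a fresh $\CLE_\kappa$ is not enough, because the geometry of $\partial\CD$ at the marked boundary point is what governs whether the Step~2 lower bound applies there.

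\textbf{The connecting step via Proposition~\ref{prop:cpi_path_close} gives no control on $\lebneb{\epsilon}$.} You propose to join the endpoint of $\cpath_0$ to $\eta_1$ by a further CPI forced (via Proposition~\ref{prop:cpi_path_close}) into a thin tube, claiming its $\epsilon$-neighbourhood adds only $o(\quantHP{p}{\epsilon})$. But Proposition~\ref{prop:cpi_path_close} only gives positive conditional probability that the trunk stays in the tube; it says nothing about $\lebneb{\epsilon}$. A path in a tube of fixed width $w$ and length $\ell$ has $\epsilon$-neighbourhood of Lebesgue measure of order $\ell w$ as $\epsilon\to 0$, which is a positive constant independent of $\epsilon$; since at this point in the paper no scaling of $\quantHP{p}{\epsilon}$ in $\epsilon$ has been established, you cannot conclude this is $o(\quantHP{p}{\epsilon})$. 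Boosting the probability by geometric retries does not help: each retry is another CPI with the same lack of $\lebneb{\epsilon}$ control.

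The paper's proof avoids both issues simultaneously. It first proves (essentially Lemma~\ref{lem:chunk_exit}) that in the reference picture a short crossing already reaches all the way from $\eta_+(X\delta)$ to the CPI boundary, and then transports that \emph{entire} crossing through the absolute-continuity comparison of Lemma~\ref{lem:middle_part_abs_cont}. No separate connecting segment is needed, and the bounded-derivative property of the conformal map $\varphi$ (together with~\eqref{eqn:quant_comparison}) is exactly what converts $\lebneb{\epsilon}$-bounds on one side to the other.
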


The strategy to prove Lemma~\ref{lem:disk_good_connection_intermediate} is the following.  First, we will prove a technical result (Lemma~\ref{lem:middle_part_abs_cont}) which gives that the conditional law of the intermediate part of an $\SLE_\kappa(\rho_1;\rho_2)$ process is absolutely continuous with respect to ordinary $\SLE_\kappa$ on the event it does not intersect the domain boundary.  This is purely a statement about $\SLE_\kappa(\rho_1;\rho_2)$ processes whose proof can be skipped on a first reading of the proof of Lemma~\ref{lem:disk_good_connection_intermediate}.  Next, we will prove in Lemma~\ref{lem:chunk_exit} that if we consider an $\SLE_\kappa^{-1}(\kappa-6)$ process $\eta$ on $\CH_+$ from $\eta_-(\delta)$ to $\infty$ then provided $\delta > 0$ is sufficiently small it is very likely that there is a path $\cpath$ in $\CH_+$ from $0$ to the top of the quantum surface disconnected by $\eta$ from $\infty$ with $0$ on its boundary satisfying $\lebneb{\epsilon}(\cpath) \leq \quantHP{p}{\epsilon}$.  We will then complete the proof of Lemma~\ref{lem:disk_good_connection_intermediate} by using that the law of the part of $\partial \CD$ on $\eta_0$ near $\eta_0(Z_1)$ is comparable to the law of~$\partial \CH_+$ near~$0$.

\begin{lemma}
\label{lem:middle_part_abs_cont}
Suppose that $D \subseteq \C$ is a simply connected domain and $x,y \in \partial D$ are distinct.  Fix $\rho_1,\rho_2 > -2$.  Let $\eta$ be an $\SLE_\kappa(\rho_1; \rho_2)$ process in $D$ from $x$ to $y$.  Let $\tau$ (resp.\ $\ol{\tau}$) be a stopping time (resp.\ reverse stopping time) for $\eta$.  The conditional law of $\eta|_{[\tau,\ol{\tau}]}$ given $\eta|_{[0,\tau]}$ and $\eta|_{[\ol{\tau},\infty)}$ on the event that $\eta|_{[\tau,\ol{\tau}]}$ does not intersect $\partial D$ is absolutely continuous with respect to that of an $\SLE_\kappa$ from $\eta(\tau)$ to $\eta(\ol{\tau})$ in the component of $D \setminus (\eta([0,\tau]) \cup \eta([\ol{\tau},\infty))$ with $\eta(\tau)$ on its boundary.
\end{lemma}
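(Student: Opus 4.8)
The plan is to use the imaginary geometry description of $\SLE_\kappa(\rho_1;\rho_2)$ as a flow line of a GFF, and to exploit the fact that on the event that the middle piece $\eta|_{[\tau,\ol\tau]}$ avoids $\partial D$, the relevant boundary data is "seen" by the flow line only through the two marked points $\eta(\tau)$, $\eta(\ol\tau)$ and the two sides of the already-drawn curves $\eta|_{[0,\tau]}$ and $\eta|_{[\ol\tau,\infty)}$. First I would reduce to the half-plane by a conformal map, so that $D = \h$, $x = 0$, $y = \infty$, and couple $\eta$ with a GFF $h$ on $\h$ whose boundary data is determined by $\rho_1,\rho_2$ as in Section~\ref{subsec:ig}. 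Conditioning on $\eta|_{[0,\tau]}$ is then the same as conditioning on the restriction of the GFF to the component $U$ of $\h\setminus\eta([0,\tau])$ containing $\eta(\tau)$ on its boundary, by the fact (recalled in Section~\ref{subsec:ig}) that the flow line is a deterministic function of the field; similarly for $\eta|_{[\ol\tau,\infty)}$, using that $\ol\tau$ is a reverse stopping time so the "tail" of $\eta$ is again measurable with respect to the field in the appropriate sub-domain.

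Next I would identify the domain $V$ of the middle piece as the component of $\h\setminus(\eta([0,\tau])\cup\eta([\ol\tau,\infty)))$ with $\eta(\tau)$ on its boundary, and describe the conditional law of $h|_V$ given the two end pieces: it is a GFF on $V$ with boundary data equal to $\pm\lambda$ (the flow-line values) along the two sides of $\eta([0,\tau])$ and of $\eta([\ol\tau,\infty))$ that bound $V$, and with the original $\rho_1,\rho_2$-boundary data along the portions of $\partial V$ that lie on $\partial \h$. Call this portion of the real boundary $A \subseteq \partial\h$. On the event that $\eta|_{[\tau,\ol\tau]}$ does not intersect $\partial D$, i.e., does not hit $A$, the flow line from $\eta(\tau)$ to $\eta(\ol\tau)$ in $V$ never comes near $A$, so changing the boundary data on $A$ does not change the law of the flow line on that event. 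Concretely, I would compare the conditional law of $h|_V$ just described with that of a GFF $h^0$ on $V$ whose boundary data is $-\lambda$ on one full arc of $\partial V$ from $\eta(\tau)$ to $\eta(\ol\tau)$ and $+\lambda$ on the complementary arc — which is exactly the boundary data whose flow line from $\eta(\tau)$ to $\eta(\ol\tau)$ is an ordinary (chordal) $\SLE_\kappa$ in $V$. The two fields $h|_V$ and $h^0$ differ by a (random but, given the conditioning, deterministic) harmonic function $\mathfrak h$ supported — in the sense of boundary data — on $A$; mutual absolute continuity then follows from the Cameron–Martin theorem provided $\mathfrak h$ has finite Dirichlet energy near the interior of $V$, which it does because $A$ has positive distance from any compact subset of $V\setminus\partial A$ that the flow line can reach on the no-hitting event.

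The technical heart, and the main obstacle, is making the absolute continuity \emph{quantitative enough} and \emph{localized} enough: the harmonic function $\mathfrak h$ may blow up near the points of $\partial A$ (where the boundary data jumps), so one cannot directly apply Girsanov/Cameron–Martin to the whole field on $V$. The standard fix, which I would follow, is to work on the event $\{\eta|_{[\tau,\ol\tau]} \text{ stays in } V_\epsilon\}$ for a slightly smaller domain $V_\epsilon$ keeping a definite distance from $A$, write the no-hitting event as an increasing union of such events, note that on each the difference of the two GFF laws restricted to a neighborhood of $V_\epsilon$ is given by a bounded-Dirichlet-energy shift, and apply the local absolute continuity of GFF flow lines away from the boundary (exactly as in \cite{ms2016ig1}). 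Since absolute continuity of the law of $\eta|_{[\tau,\ol\tau]}$ on each such event, together with the fact that the no-hitting event is their union (up to null sets), yields absolute continuity of the conditional law on the full no-hitting event, this finishes the proof. I would also note that one does not need equivalence of laws, only one-sided absolute continuity of the $\SLE_\kappa(\rho_1;\rho_2)$ middle piece with respect to $\SLE_\kappa$, which is all that is used in the proof of Lemma~\ref{lem:disk_good_connection_intermediate}; this makes the Cameron–Martin bookkeeping slightly easier since only one direction of the Radon–Nikodym derivative needs to be controlled.
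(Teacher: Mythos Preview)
Your high-level strategy is sound, but you have glossed over the main technical difficulty, which is not the blow-up of the harmonic correction but rather: \emph{why does conditioning on $\eta|_{[\ol\tau,\infty)}$ produce a GFF in $V$ with the claimed flow-line boundary data?} For the initial segment $\eta([0,\tau])$ this is standard --- $\tau$ is a forward stopping time and flow lines stopped at forward stopping times are local sets for $h$. But $\ol\tau$ is a \emph{reverse} stopping time, and the tail $\eta([\ol\tau,\infty))$ is not a priori local in any obvious sense. The fact that ``the tail is measurable with respect to the field'' (true, since the whole curve is $h$-determined) does not by itself yield the Markov-type decomposition you assert in your second paragraph.

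The paper's proof is structured precisely to avoid this point. Rather than condition on $\eta([\ol\tau,\infty))$ directly, it introduces the counterflow lines $\eta_L'$, $\eta_R'$ of $h\pm\pi\chi/2$ run \emph{forward} from $\infty$ and stopped at forward hitting times of $\eta(\ol\tau)$; their union $A_{\ol\tau}$ is a genuine local set (by \cite[Lemma~5.13]{ms2016ig2}) and its common boundary recovers $\eta([\ol\tau,\infty))$. The price is that one has conditioned on strictly more: the conditional law of the middle piece becomes an $\SLE_\kappa(\rho_1,\tfrac{\kappa}{2}-2-\rho_1;\rho_2,\tfrac{\kappa}{2}-2-\rho_2)$ in the smaller domain $D_{\tau,\ol\tau}\subsetneq U_{\tau,\ol\tau}$. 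The rest of the paper's argument is devoted to undoing this over-conditioning: passing from $D_{\tau,\ol\tau}$ to $U_{\tau,\ol\tau}$ via conformal restriction \cite[Proposition~5.3]{lsw2003confres}, running a symmetric $\epsilon$-stopping argument from the $\eta(\tau)$ end using reversibility of $\SLE_\kappa(\rho_1;\rho_2)$, and finally integrating out the auxiliary flow lines $\eta_L,\eta_R$ through an $\epsilon,\delta$ limiting scheme. Your Cameron--Martin shortcut would bypass all of this, but only if you first establish that $\eta([0,\tau])\cup\eta([\ol\tau,\infty))$ is itself local for $h$ --- for instance by invoking reversibility to realize the time-reversal of $\eta$ as a flow line of the same $h$ (with shifted angle), so that $\ol\tau$ becomes a forward stopping time for that coupling. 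This is plausible, but it must be argued, not asserted.
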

\begin{proof}
It suffices to prove the result in the case that $D = \h$, $x = 0$, and $y = \infty$.  We can view $\eta$ as a flow line of a GFF $h$ on $\h$ with boundary conditions given by $-\lambda(1+\rho_1)$ on $\R_-$ and $\lambda(1+\rho_2)$ on~$\R_+$.  Let $\eta_L'$ be the counterflow line of $h+\pi \chi/2$ from $\infty$ to $0$.  Then $\eta$ is equal to the right boundary of $\eta_L'$.  Let $\eta_R'$ be the counterflow line of $h-\pi \chi/2$ from $\infty$ to $0$.  Then $\eta$ is also equal to the left boundary of $\eta_R'$.  In other words, $\eta$ is equal to the common boundary of $\eta_L'$ and $\eta_R'$.

Let $\tau_L$ (resp.\ $\tau_R$) be the first time that $\eta_L'$ (resp.\ $\eta_R'$) hits $\eta(\ol{\tau})$.  Then the common boundary of $\eta_L'([0,\tau_L])$ and $\eta_R'([0,\tau_R])$ is equal to $\eta([\ol{\tau},\infty))$.  It is shown in the proof of \cite[Lemma~5.13]{ms2016ig2} that $A_{\ol{\tau}} = \eta_L'([0,\tau_L]) \cup \eta_R'([0,\tau_R])$ is a local set for $h$.  Let $\CF_{\tau,\ol{\tau}}$ be the $\sigma$-algebra generated by $\eta|_{[0,\tau]}$, $\eta_L'|_{[0,\tau_L]}$, $\eta_R'|_{[0,\tau_R]}$.  Then we moreover have that the conditional law of $\eta|_{[\tau,\ol{\tau}]}$ given $\CF_{\tau,\ol{\tau}}$ is that of an $\SLE_\kappa(\rho_1, \tfrac{\kappa}{2}-2-\rho_1; \rho_2, \tfrac{\kappa}{2}-2-\rho_2)$ process in the component $D_{\tau,\ol{\tau}}$ of $\h \setminus (\eta([0,\tau]) \cup A_{\ol{\tau}})$ with $\eta(\tau)$ on its boundary from $\eta(\tau)$ to $\eta(\ol{\tau})$.  The force points are located at the leftmost and rightmost intersections of $\eta|_{[0,\tau]}$ with $\partial \h$ and the intersections of $\eta_L'|_{[0,\tau_L]}$ and $\eta_R'|_{[0,\tau_R]}$ on~$\R_+$ and~$\R_-$, respectively, which are closest to~$0$.

Let $D_L$ (resp.\ $D_R$) be the component of $\h \setminus \eta$ which is to the left (resp.\ right) of $\eta$ with $\eta(\ol{\tau})$ on its boundary.  Let $\eta_L$ (resp.\ $\eta_R$) denote the left (resp.\ right) boundary of $\eta_L'([0,\tau_L])$ (resp.\ $\eta_R'([0,\tau_R])$).  Then $\eta_L$ (resp.\ $\eta_R$) is the flow line of $h$ restricted to $D_L$ (resp.\ $D_R$) starting from $\eta(\ol{\tau})$ with angle $\pi$ (resp.\ $-\pi$).  In particular, the conditional law of $\eta_L$ given $\eta$ is that of an $\SLE_\kappa(\kappa/2-2,\rho_1;-\kappa/2)$ in $D_L$ starting from $\eta(\ol{\tau})$ and the conditional law of $\eta_R$ given $\eta$ is that of an $\SLE_\kappa(-\kappa/2;\kappa/2-2,\rho_2)$ in $D_R$ starting from $\eta(\ol{\tau})$.  Let $\CG_{\tau,\ol{\tau}}$ be the $\sigma$-algebra generated by $\eta|_{[0,\tau]}$, $\ol{\eta}|_{[\ol{\tau},\infty)}$, $\eta_L$, and $\eta_R$.  It is moreover shown in \cite{ms2016ig2} that the conditional law of $\eta$ given $\CG_{\tau,\ol{\tau}}$ is the same as the conditional law of~$\eta$ given $\CF_{\tau,\ol{\tau}}$.

Fix $\epsilon > 0$.  Let $\ol{\tau}_\epsilon$ be the first time $t$ that $\eta$ gets within distance $\epsilon$ of $A_{\ol{\tau}}$.  Then it follows that the law of $\eta|_{[\tau,\ol{\tau}_\epsilon]}$ on the event that it does not hit $\partial D$ is absolutely continuous with respect to the law of an $\SLE_\kappa$ in $D_{\tau,\ol{\tau}}$ from $\eta(\tau)$ to $\eta(\ol{\tau})$ stopped upon getting within distance $\epsilon$ of $A_{\ol{\tau}}$.  Since $\eta$ hits $A_{\ol{\tau}}$ exactly at $\eta(\ol{\tau})$, it follows that if we let $\tau_\epsilon$ be the first time $t$ that $\eta$ gets within distance $\epsilon$ of $\eta(\ol{\tau})$ then the law of $\eta|_{[\tau,\tau_\epsilon]}$ on the event that it does not hit $\partial D$ is absolutely continuous with respect to the law of an $\SLE_\kappa$ in $D_{\tau,\ol{\tau}}$ from $\eta(\tau)$ to $\eta(\ol{\tau})$ stopped upon getting within distance $\epsilon$ of $\eta(\ol{\tau})$.  Let $U_{\tau,\ol{\tau}}$ be the component of $\h \setminus (\eta([0,\tau]) \cup \eta([\ol{\tau},\infty))$ with $\eta(\tau)$ on its boundary.  Then it follows from \cite[Proposition~5.3]{lsw2003confres} that the law of $\eta|_{[\tau,\tau_\epsilon]}$ on the event that it does not hit $\partial D$ is absolutely continuous with respect to the law of an $\SLE_\kappa$ in $U_{\tau,\ol{\tau}}$ from $\eta(\tau)$ to $\eta(\ol{\tau})$ stopped at the first time it gets within distance $\epsilon$ of $\eta(\ol{\tau})$. 

By reversing the roles of $\tau$ and $\ol{\tau}$ and using the reversibility of $\SLE_\kappa(\rho_1;\rho_2)$ established in \cite{ms2016ig2}, this implies that the following is true.  Let $\ol{\eta}$ be the time-reversal of $\eta$ and let $\ol{\sigma},\sigma$, respectively, be the times for $\ol{\eta}$ which correspond to $\ol{\tau},\tau$, respectively, for $\eta$.  Let $\ol{\sigma}_\epsilon$ be the first time that $\ol{\eta}$ gets within distance $\epsilon$ of $\eta(\tau)$.  Then the law of $\ol{\eta}|_{[\ol{\sigma},\ol{\sigma}_\epsilon]}$ on the event that it does not hit $\partial D$ is absolutely continuous with respect to the law of an $\SLE_\kappa$ in $U_{\tau,\ol{\tau}}$ from $\eta(\ol{\tau})$ to $\eta(\tau)$ stopped at the first time it gets within distance $\epsilon$ of $\eta(\tau)$.

For $\epsilon, \delta > 0$, we let $E_{\epsilon,\delta}$ be the event that $\ol{\eta}|_{[\ol{\sigma}_\epsilon,\sigma]}$ does not leave $B(\eta(\tau),\delta)$ and $\eta_L,\eta_R$ do not hit $B(\eta(\tau),2\delta)$.  Then for every $\zeta > 0$ there exists $\delta_0 > 0$ so that for all $\delta \in (0,\delta_0)$ there exists $\epsilon_0 > 0$ so that for all $\epsilon \in (0,\epsilon_0)$ we have that the probability of $E_{\epsilon,\delta}$ is at least $1-\zeta$.  Let $(\wt{\eta},\wt{\eta}_L,\wt{\eta}_R)$ be a triple where
\begin{enumerate}[(i)]
\item $\wt{\eta}$ is an $\SLE_\kappa$ in $U_{\tau,\ol{\tau}}$ from $\eta(\ol{\tau})$ to $\eta(\tau)$ and
\item the conditional law of $(\wt{\eta}_L,\wt{\eta}_R)$ given $\wt{\eta}$ is the same as the conditional law of $(\eta_L,\eta_R)$ given $\eta$.
\end{enumerate}
On $E_{\epsilon,\delta}$ and the event that $\ol{\eta}|_{[\ol{\sigma},\ol{\sigma}_\epsilon]}$ does not hit $\partial D$, we have that the joint law of the triple consisting of $\ol{\eta}|_{[\ol{\sigma},\ol{\sigma}_\epsilon]}$, $\eta_L$, and $\eta_R$ is absolutely continuous with respect to the triple consisting of $\wt{\eta}$ stopped upon getting within distance $\epsilon$ of $\eta(\tau)$, $\wt{\eta}_L$, and $\wt{\eta}_R$.

Let $\sigma_\epsilon^L$ (resp.\ $\sigma_\epsilon^R$) be the first time that $\eta_L$ (resp.\ $\eta_R$) gets within distance $\epsilon$ of $\partial \h$.  Then the conditional law of $\eta_L|_{[0,\sigma_\epsilon^L]}$ given $\eta$ is absolutely continuous with respect to the law of an $\SLE_\kappa(\kappa/2-2;-\kappa/2)$ stopped at the corresponding time.  Likewise, the conditional law of $\eta_R|_{[0,\sigma_\epsilon^R]}$ given $\eta$ is absolutely continuous with respect to the law of an $\SLE_\kappa(-\kappa/2;\kappa/2-2)$ stopped at the corresponding time.   Let $(\wh{\eta}, \wh{\eta}_L,\wh{\eta}_R)$ be a triple where
\begin{enumerate}[(i)]
\item $\wh{\eta}$ is an $\SLE_\kappa$ in $U_{\tau,\ol{\tau}}$ from $\eta(\ol{\tau})$ to $\eta(\tau)$,
\item the conditional law of $\wh{\eta}_L$ given $\wh{\eta}$ is that of an $\SLE_\kappa(-\kappa/2;\kappa/2-2)$ starting at $\ol{\eta}(\ol{\tau})$, and
\item the conditional law of $\wh{\eta}_R$ given $\wh{\eta}$ is that of an $\SLE_\kappa(\kappa/2-2;-\kappa/2)$ starting at $\ol{\eta}(\ol{\tau})$.
\end{enumerate}
On the event that $\ol{\eta}|_{[\ol{\sigma},\ol{\sigma}_\epsilon]}$ does not hit $\partial \h$, we have that the law of the triple consisting of $\ol{\eta}|_{[\ol{\sigma},\ol{\sigma}_\epsilon]}$, $\eta_L|_{[0,\sigma_\epsilon^L]}$, and $\eta_R|_{[0,\sigma_\epsilon^R]}$ is absolutely continuous with respect to the law of the triple consisting of $\wh{\eta}$ stopped at the first time it gets within distance $\epsilon$ of $\eta(\tau)$ and $\wh{\eta}_L, \wh{\eta}_R$ both stopped at the first time they get within distance $\epsilon$ of $\partial \h$.  In particular, on the event that $\ol{\eta}|_{[\ol{\sigma},\ol{\sigma}_\epsilon]}$ does not hit $\partial \h$, the law of the pair consisting of $\eta_L|_{[0,\sigma_\epsilon^L]}$ and $\eta_R|_{[0,\sigma_\epsilon^R]}$ is absolutely continuous with respect to the law of the pair $(\wh{\eta}_L,\wh{\eta}_R)$ both stopped at the corresponding time.

Let $F_{\epsilon,\delta}$ be the event that $\eta$ does not get within distance $\delta$ of $\eta_L([\sigma_\epsilon^L,\infty))$ or $\eta_R([\sigma_\epsilon^R,\infty))$.  Then for every $\zeta > 0$ there exists $\delta_0 > 0$ so that for all $\delta \in (0,\delta_0)$ there exists $\epsilon_0 > 0$ so that for all $\epsilon \in (0,\epsilon_0)$ we have that the probability of $F_{\epsilon,\delta}$ is at least $1-\zeta$.  On $F_{\epsilon,\delta}$, it follows that the conditional law of $\eta|_{[\tau,\ol{\tau}]}$ given $\eta_L$, $\eta_R$ and on the event that $\eta|_{[\tau,\ol{\tau}]}$ does not hit $\partial \h$ is absolutely continuous with respect to the conditional law of $\wh{\eta}$ given $\wh{\eta}_L$, $\wh{\eta}_R$.  Altogether, this implies that the conditional law of $\eta|_{[\tau,\ol{\tau}]}$ given $\eta_L$, $\eta_R$ and on the event that $\eta|_{[\tau,\ol{\tau}]}$ does not hit $\partial \h$ is absolutely continuous with respect to the conditional law of $\wh{\eta}$ given $\wh{\eta}_L$, $\wh{\eta}_R$.  Therefore the law of $\eta|_{[\tau,\ol{\tau}]}$ on the event that it does not hit $\partial \h$ is absolutely continuous with respect to the law of $\wh{\eta}$.
\end{proof}

\begin{figure}[ht!]
\begin{center}
\includegraphics[scale=1]{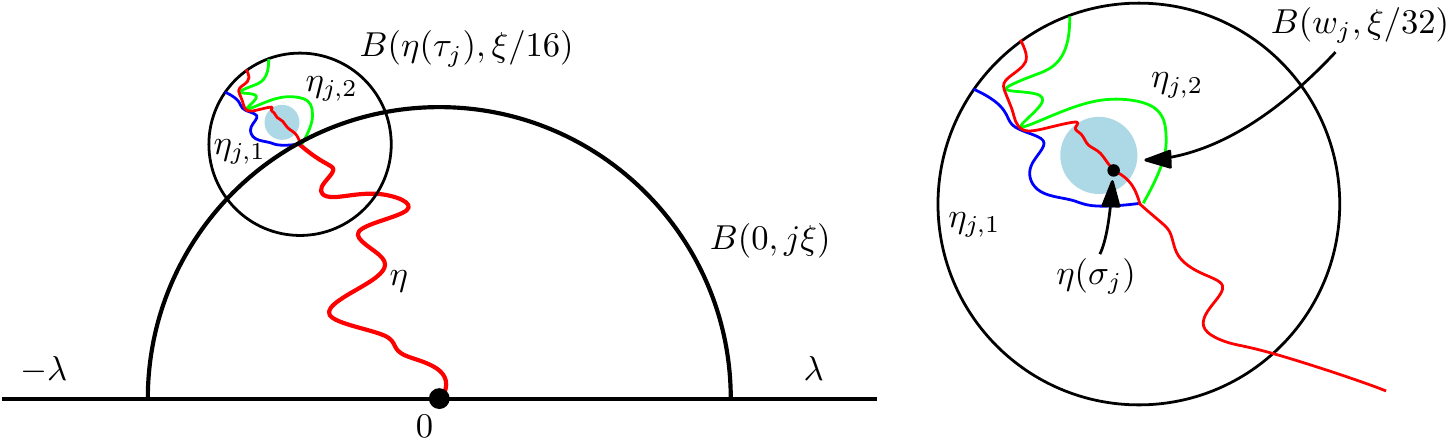}
\end{center}
\caption{\label{fig:chunk_exit_proof}  Illustration of the proof of Lemma~\ref{lem:chunk_exit}; the light blue ball is $B(w_j,\xi/32)$.}
\end{figure}

\begin{lemma}
\label{lem:chunk_exit}
Suppose that $\eta$ is an $\SLE_\kappa^{-1}(\kappa-6)$ in $\CH_+$ from $\eta_-(\delta)$ to $\infty$ which is coupled with $\Gamma_+$ as a CPI and parameterized by the quantum natural time of its trunk.  For each $p, \zeta_0 \in (0,1)$ there exists $\delta_0, \epsilon_0 > 0$ so that for all $\delta \in (0,\delta_0)$ and $\epsilon \in (0,\epsilon_0)$ the following is true.  Let $\CD$ be the component of $\CH_+ \setminus \eta$ with $\eta_+(0) = 0$ on its boundary.  Let $X_\epsilon = \inf_\cpath \lebneb{\epsilon}(\cpath)$ where the infimum is over all paths $\cpath$ in $\CH_+$ contained in $\Upsilon_+ \cap \CD$ and which connect~$0$ to $\eta \cap \partial \CD$.  Then $\p[X_\epsilon \geq \quantHP{p}{\epsilon}] \leq \zeta_0$.
\end{lemma}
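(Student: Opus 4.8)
The plan is to first reduce the lemma, using the geometry of $\CD$ and a symmetry of the quantum half-plane, to an estimate on the chemical-distance approximation between two nearby points on the boundary arc $\eta_+$, and then to control that quantity using the self-similarity of the $(\CH_+,\Gamma_+,\eta_+,\eta_-)$ configuration together with the defining property of $\quantHP{p}{\epsilon}$ and the quantile comparison \eqref{eqn:quant_comparison}.

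\emph{Step 1: reduction to a boundary estimate.} Since $\CD$ is a connected component of $\CH_+\setminus\eta$ with $\partial\CD\subseteq\eta\cup\partial\CH_+$, any path $\cpath$ in $\Upsilon_+\subseteq\CH_+$ started from $0\in\partial\CD$ stays in $\CD$ until it first meets $\eta$, and truncating $\cpath$ at that time does not increase $\lebneb{\epsilon}(\cpath)$; hence $X_\epsilon$ is unchanged if the infimum is taken over all paths in $\Upsilon_+$ from $0$ to $\eta$. As $\eta_-(\delta)$ is the initial point of $\eta$ and the boundary arc $\eta_-([0,\delta])$ carries no loop of $\Gamma_+$ (loops of a simple $\CLE_\kappa$ do not touch the boundary), this gives $X_\epsilon\le\metapprox{\epsilon}{0}{\eta_-(\delta)}{\Gamma_+}$. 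Finally, the quantum half-plane $\CH_+$ together with $\Gamma_+$ and with the labelling of its two boundary rays as $\eta_+$ and $\eta_-$ is invariant in law under exchanging the two labels (a quantum half-plane has no distinguished boundary direction, $\CLE_\kappa$ is conformally invariant, and the pair of flow lines $(\eta_-,\eta_+)$ is exchangeable), and this exchange fixes $0$, sends $\eta_-(\delta)$ to $\eta_+(\delta)$, and leaves the deterministic constant $\quantHP{p}{\epsilon}$ unchanged. Therefore it suffices to show
\[ \p\big[\metapprox{\epsilon}{0}{\eta_+(\delta)}{\Gamma_+}>\quantHP{p}{\epsilon}\big]\le\zeta_0 \]
for all small $\delta$ and all $\epsilon$ small relative to $\delta$.

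\emph{Step 2: the core estimate.} Since $0=\eta_+(0)$ and $\eta_+(\delta)$ both lie on $\eta_+([0,1])$ for $\delta<1$, the event $\{\sup_{z,w\in\eta_+([0,1])}\metapprox{\epsilon}{z}{w}{\Gamma_+}\le\quantHP{p}{\epsilon}\}$ — which has probability at least $p$ by the definition of the $p$th quantile — is contained in $\{\metapprox{\epsilon}{0}{\eta_+(\delta)}{\Gamma_+}\le\quantHP{p}{\epsilon}\}$, so the displayed probability is at most $1-p$. To improve this to $\zeta_0$ we exploit self-similarity. Zooming in near the sub-arc $\eta_+([0,\delta])$ and rescaling it to unit quantum length amounts to a random Euclidean magnification by $\lambda_\delta^{-1}$ with $\lambda_\delta<1$ and $\lambda_\delta\to 0$ a.s.\ as $\delta\to 0$; by the scale invariance in law of the whole-plane $\SLE_\kappa(2)$ pair $(\eta_-,\eta_+)$, of the weight-$2$ quantum wedge $\CH_+$, and of $\Gamma_+$, the rescaled configuration near $\eta_+([0,\delta])$ agrees in law with the original one near $\eta_+([0,1])$. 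Thus $\metapprox{\epsilon}{0}{\eta_+(\delta)}{\Gamma_+}$ is, up to the effect of this rescaling, distributed as $\lambda_\delta^2$ times $\metapprox{\epsilon/\lambda_\delta}{0}{\eta_+(1)}{\Gamma_+}$ for an i.i.d.\ copy, which with probability at least $p$ is at most $\lambda_\delta^2\,\quantHP{p}{\epsilon/\lambda_\delta}$; and \eqref{eqn:quant_comparison}, in the quantitative form of Lemma~\ref{lem:covering_lemma}, gives that $s\mapsto s^{-2}\quantHP{p}{s}$ has at most polynomial growth of sub-quadratic order, so that $\lambda_\delta^2\,\quantHP{p}{\epsilon/\lambda_\delta}\le\quantHP{p}{\epsilon}$ when $\lambda_\delta<1$. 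Hence each level of zooming in succeeds with probability at least (a fixed constant close to) $p$, and performing this at a number of scales that tends to infinity as $\delta\to 0$ should drive the failure probability to $0$.

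\emph{Main obstacle.} The essential difficulty is this last upgrade from probability $p$ to probability $1-\zeta_0$: the events ``a short path is produced at zoom-in scale $2^{-k}$'' (equivalently, obtained by re-rooting the quantum half-plane at a sequence of points along $\eta_+$) are not independent, because the nested configurations share randomness, so one must genuinely extract independence. One route, closest to the way the rest of the paper argues, is to use the conformal Markov property of CPI explorations (Theorem~\ref{thm:cpi_wedge_explore}): running a CPI inside $\CD$ a little way cuts off a sub-region whose complement in $\CD$ is, conditionally on its quantum boundary length, a fresh quantum disk independent of everything revealed so far, and in this fresh surface one may retry the construction of a short path from $0$ to $\eta\cap\partial\CD$; iterating, the union of the corresponding good events has probability tending to $1$. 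A second route is to show that the re-rooted copies along $\eta_+([0,1])$ decorrelate sufficiently that a short path fails to exist near some point of $\eta_+([0,1])$ with probability close to one on the bad event, directly contradicting $\p[\sup_{z,w\in\eta_+([0,1])}\metapprox{\epsilon}{z}{w}{\Gamma_+}\le\quantHP{p}{\epsilon}]\ge p$. In either case the bookkeeping of chunks, quantum boundary lengths and Euclidean scales — and the verification that each independent attempt still terminates on $\eta$ — is the technical heart; everything else is routine once \eqref{eqn:quant_comparison} and the scale invariance are in hand.
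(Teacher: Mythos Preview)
Your reduction in Step~1 is essentially fine up to the point where you invoke a symmetry swapping $\eta_-$ and $\eta_+$ while fixing $(\CH_+,\Gamma_+)$. That symmetry does not obviously exist for the \emph{embedded} picture: time-reversal of the two-sided whole-plane $\SLE_\kappa$ swaps $\eta_\pm$ but also swaps $\CH_+$ with $\CH_-$, and the quantum-surface reflection that swaps the two boundary rays of $\CH_+$ is a conformal (not Euclidean) map, so it does not preserve $\lebneb{\epsilon}$. You therefore have not reduced to the $\eta_+$ side.

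More seriously, Step~2 does not work as written. The scaling factor $\lambda_\delta$ is random (it depends on the quantum field $h$), so the rescaled configuration is \emph{not} an i.i.d.\ copy independent of $\lambda_\delta$; consequently the event $\{\metapprox{\epsilon/\lambda_\delta}{0}{\eta_+(1)}{\wt\Gamma_+}\le\quantHP{p}{\epsilon/\lambda_\delta}\}$ does not have probability~$\ge p$ conditionally on $\lambda_\delta$, and the comparison with $\quantHP{p}{\epsilon}$ via~\eqref{eqn:quant_comparison} breaks down. Even granting all of this, you correctly identify that the real work is the boost from probability~$p$ to $1-\zeta_0$, and here you only sketch two possible routes without carrying either out; neither is routine, and the ``re-rooting decorrelation'' route in particular requires producing genuinely independent trials along $\eta_+$, which is exactly the hard part.

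The paper's proof is structurally different. It argues by contradiction: if the Euclidean diameter $Y_\epsilon$ of the set $\{z\in\Upsilon_+:\metapprox{\epsilon}{0}{z}{\Gamma_+}\le\quantHP{p}{\epsilon}\}$ were small with probability~$\ge\zeta_0$ along a sequence $\epsilon_n\to 0$, then one builds, using auxiliary GFF flow lines at successive radii $\xi j$ along $\eta_+$, a sequence of pockets $V_j$ in which the restricted $\CLE$ is a fresh $\CLE_\kappa$ and the small-ball event has uniformly positive conditional probability (this is where \cite[Lemmas~2.3--2.5]{mw2017intersections} enter to control the geometry and give the lower bound $p_0>0$). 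One then counts: with high probability some $V_j$ with $j\le\xi^{-1/2}$ has the bad event, forcing $\sup_{z,w\in\eta_+([0,1])}\metapprox{\epsilon_n}{z}{w}{\Gamma_+}>\quantHP{p}{\epsilon_n}$ with probability tending to~$1$, contradicting the definition of the quantile. The independence you were looking for is manufactured not by CPI retries or quantum re-rooting, but by this imaginary-geometry pocket construction.
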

\begin{proof}
\noindent{\it Step 1. Setup.} Fix $p \in (0,1)$.  It suffices to show that for every $\zeta_0 > 0$ there exists $\xi, \epsilon_0 > 0$ so that for all $\epsilon \in (0,\epsilon_0)$ the following is true.  Let $Y_\epsilon$ be the Euclidean diameter of the set of points in~$\CH_+$ which are connected to $0$ by paths $\cpath$ in $\Upsilon_+$ with $\lebneb{\epsilon}(\cpath) \leq \quantHP{p}{\epsilon}$.  Then $\p[ Y_\epsilon \leq \xi] \leq \zeta_0$.  Suppose that this is not true.  Then there exists $\zeta_0 > 0$ and a sequence $(\epsilon_n)$ of positive numbers with $\epsilon_n \to 0$ as $n \to \infty$ so that $\p[Y_{\epsilon_n} \leq 1/n] \geq \zeta_0$ for every $n \in \N$.  We are going to assume that this is true throughout the rest of the proof in order to show that
\[ \p\!\left[ \sup_{z,w \in \eta_+([0,1])} \metapprox{\epsilon_n}{z}{w}{\Gamma_+} \geq \quantHP{p}{\epsilon_n} \right] \to 1 \quad\text{as}\quad n \to \infty.\]
This, in turn, will contradict the definition of $\quantHP{p}{\epsilon}$.

\noindent{\it Step 2. First observation.}  Fix $\rho_L, \rho_R > -2$ and suppose that $\wt{\eta}$ is an $\SLE_\kappa(\rho_L ; \rho_R)$ in $\D$ from $-i$ to $i$.  Let $\wt{\Gamma}$ be a collection of loops in the components of $\D \setminus \wt{\eta}$ which are to the left of $\wt{\eta}$ which in each such component is a conditionally independent $\CLE_\kappa$.  Let $\wt{\Upsilon}$ be the closure of the union of the loops in $\wt{\Gamma}$.  Fix $z \in \D$ and $r > 0$ so that $B(z,2r) \subseteq \D$.  Then it follows from our assumption above that there exists a sequence $(\epsilon_n)$ of positive numbers with $\epsilon_n \to 0$ as $n \to \infty$ so that the following is true.  Let $\wt{\sigma} = \inf\{t \geq 0 : \wt{\eta}(t) \in B(z,r)\}$.  On $\wt{\sigma} < \infty$, let $\wt{Y}_{\epsilon_n}$ be the Euclidean diameter of the set of points to the left of $\wt{\eta}$ which are accessible by paths $\cpath$ in $\wt{\Upsilon}$ starting from $\wt{\eta}(\wt{\sigma})$ with $\lebneb{\epsilon_n}(\cpath) \leq \quantHP{p}{\epsilon_n}$.  Then $\p[ \wt{Y}_{\epsilon_n} \leq 1/n,\ \wt{\sigma} < \infty]$ is bounded from below by a positive constant which depends only on $\rho_L$, $\rho_R$, $z$, $r$, and $\zeta_0$ from Step 1.  Indeed, this follows from absolute continuity and our assumption in Step 1.

\noindent{\it Step 3. Approximate $\CLE_\kappa$-metric ball has small Euclidean diameter in many places on the interface.}  We are now going to show that the event that the ``$\metapprox{\epsilon_n}{\cdot}{\cdot}{\Gamma_+}$-metric ball'' at a point along $\eta_+$ has small diameter is likely to happen at many points.  In this step, we are going to focus on proving a version of this statement in the setting of~$\h$; in the next step we will subsequently transfer the result to the setting of $\eta_+$.  To this end, we suppose that~$h$ is a GFF on~$\h$ with boundary conditions given by $-\lambda$ on~$\R_-$ and~$\lambda$ on~$\R_+$.  Let~$\eta$ be the flow line of $h$ from $0$ to $\infty$.  Then $\eta$ is an $\SLE_\kappa$ curve in $\h$ from $0$ to $\infty$.  Fix $\xi > 0$ and $\theta \in (0,\pi)$.  For each $j$, we let $\tau_j = \inf\{t \geq 0 : \eta(t) \notin B(0, \xi j)\}$.  We then let $\eta_{j,1}$ (resp.\ $\eta_{j,2}$) be the flow line of $h$ starting from $\eta_j(\tau_j)$ with angle $2\lambda'/\chi - \pi$ (resp.\ $-\theta$).  Then $\eta_{j,1}$ is equal to the right boundary of the counterflow line $\eta_{j,1}'$ of $h + 2\lambda' - \pi \chi/2$ from $\infty$ to $\eta(\tau_j)$.  We note that the angle difference between $\eta_{j,1}$ and $\eta_{j,2}$ is equal to $2\lambda'/\chi - \pi + \theta$ and that flow lines intersect whenever the angle difference is between $0$ and $2\lambda'/\chi$.  That is, with any choice of $\theta \in (0,\pi)$ we have that $\eta_{j,1}$, $\eta_{j,2}$ a.s.\ intersect each other.

Given $\eta$, we let $\Gamma_L$ be a $\CLE_\kappa$ in the component of $\h \setminus \eta$ which is to the left of $\eta$.  We assume that $\Gamma_L$ is generated from $h$ using the coupling of $\SLE_\kappa(\kappa-6)$ with the GFF so that $\eta_{j,1}'$ is a CPI in $\Gamma_L$ for each $j$.  Then the conditional law of $\Gamma_L$ in any component $U$ of $\h \setminus (\eta \cup \eta_{1,j})$ which is to the left of $\eta$ and to the right of $\eta_{1,j}$ is that of a $\CLE_\kappa$ in $U$.

For each $j \in \N$ and $k \in \{1,2\}$, we let $\tau_{j,k}$ be the first time that $\eta_{j,k}$ leaves $B(\eta_j(\tau_j),\xi/16)$.  We let $\CF_j$ be the $\sigma$-algebra generated by $\eta|_{[0,\tau_j]}$ and $\eta_{i,k}|_{[0,\tau_{i,k}]}$ for $1 \leq i \leq j$ and $k \in \{1,2\}$.  We also let $E_j$ be the event that:
\begin{enumerate}[(i)]
\item\label{it:ej1} There exists $w \in \h$ so that $\eta_{j,1}|_{[0,\tau_{j,1}]}$, $\eta_{j,2}|_{[0,\tau_{j,2}]}$ do not intersect $B(w,\xi/32)$ and disconnect $B(w,\xi/32)$ from $\infty$ and the harmonic measure of $\eta_{j,k}([0,\tau_{j,k}])$ in $\h \setminus \cup_{k=1}^2 \eta_{j,k}([0,\tau_{j,k}])$ as seen from $w$ is at least $1/4$ for $k \in \{1,2\}$.
\item\label{it:ej2} Let $V_j$ be the component of $\h \setminus \cup_{k=1}^2 \eta_{j,k}([0,\tau_{j,k}])$ disconnected from $\infty$ as in~\eqref{it:ej1} which is first visited by $\eta_{j,1}$.  Let $w_j$ be the point $w$ as in~\eqref{it:ej1} with the smallest real part, breaking ties by taking the point with smallest imaginary part.  Let $\Gamma_{j,L}$ be the loops of $\Gamma_L$ which are contained in $V_j$.  Let $\sigma_j$ be the first time that $\eta$ visits $B(w_j,\xi/64)$.  Then $\sigma_j < \infty$ and $\metapprox{\epsilon_n}{\eta(\sigma_j)}{\partial B(w_j,\xi/32)}{\Gamma_{j,L}} \geq \quantHP{p}{\epsilon_n}$.
\end{enumerate}

We claim that there exists $p_0 > 0$ which does not depend on $j$ so that
\begin{align}
\label{eqn:ejlbd}
\p[ E_j \giv \CF_{j-1}] \geq p_0 \quad\text{for each}\quad j \in \N.
\end{align}
To start to prove~\eqref{eqn:ejlbd}, for each $j$ we let $z_j$ be the point on $\h \cap \partial B(0, \xi j)$ with $\arg(z_j) = ((\im(\eta(\tau_{j-1})) \wedge (\pi-\xi)) \vee \xi$.  Let $A_j$ be the union of $\eta([0,\tau_j])$ and $\eta_{i,k}([0,\tau_{i,k}])$ for $1 \leq i \leq j$ and $k \in \{1,2\}$.  Let $\varphi_j$ be the unique conformal map from the unbounded component of $\h \setminus A_{j-1}$ to $\h$ which fixes $\infty$ and takes $z_j$ to $i$.  Then there exists a constant $M  > 0$ so that $\varphi(\eta(\tau_{j-1}))$ is contained in $[-M,M]$ (as the harmonic measure in $\h \setminus \eta([0,\tau_{j-1}])$ as seen from $z_j$ of the left side of $\eta$ and $\R_-$ is bounded from below and the same is also true for the and right side of $\eta$ and $\R_+$). \cite[Lemmas~2.3--2.5]{mw2017intersections} thus imply that there exists $p_0 > 0$ which does not depend on $j$ such that the conditional probability given $\CF_{j-1}$ of part~\eqref{it:ej1} of $E_j$ is at least $p_0$.

We are now going to show that, by possibly decreasing the value of $p_0 > 0$, we have that~\eqref{eqn:ejlbd} holds.  Let $w_j$, $V_j$ be as in~\eqref{it:ej2}.  Let $\psi_j$ be the unique conformal transformation $V_j \to \D$ which takes the first (resp.\ last) point on $\partial V$ visited by $\eta$ to $-i$ (resp.\ $i$) and is such that $\im(\psi_j(w_j)) = 0$.  Then there exists $\rho_L, \rho_R > -2$ so that $\wt{\eta}_j = \psi_j(\eta)$ is an $\SLE_\kappa(\rho_L; \rho_R)$ in $\D$ from $-i$ to $i$. Since the harmonic measure of the clockwise (resp.\ counterclockwise) arc of $\partial \D$ from $-i$ to $i$ as seen from $\wt{w}_j = \psi_j(w_j)$ is at least $1/4$, it follows that there exists $r_0 > 0$ so that $B(\wt{w}_j,2r_0) \subseteq \D$.  Let $\wt{\sigma}_j$ be the first time that $\wt{\eta}_j$ visits $B(\wt{w}_j,r_0)$.  Let $\wt{Y}_{\epsilon_n}$ be the Euclidean diameter of the set of points to the left of $\wt{\eta}_j$ which are accessible by paths in $\cpath$ in $\psi_j(\Gamma_{j,L})$ starting from $\wt{\eta}_j(\wt{\sigma}_j)$ with $\lebneb{\epsilon_n}(\cpath) \leq \quantHP{p}{\epsilon_n}$.  Step 2 implies that there exists $q_0 > 0$ so that $\p[ \wt{Y}_{\epsilon_n} \leq 1/n,\ \wt{\sigma}_j < \infty] \geq q_0$.  We note that the restriction of $(\psi_j^{-1})'$ to $\psi_j(B(w_j,\xi/32))$ is bounded from below by a constant times $\xi$.  Therefore~\eqref{eqn:ejlbd} follows by conformally mapping back.

\noindent{\it Step 4.  Contradiction to the definition of $\quantHP{p}{\epsilon}$.}  Suppose that we have the setup described in Step~3.  Fix $N \in \N$ and let $F_N = \cup_{j=1}^N E_j$ be the event that $E_j$ occurs for some $1 \leq j \leq N$.  It follows from~\eqref{eqn:ejlbd} that $\p[F_N] \to 1$ as $N \to \infty$.  Let $K_\xi = [-\xi^{1/2}, \xi^{1/2}] \times [\xi/128,\xi^{1/2}]$ and let $G_N$ be the event that $E_j$ occurs and $\eta(\tau_j) \in K_\xi$.  Then it further follows that for each $p_0 \in (0,1)$ we can choose $\xi > 0$ sufficiently small so that with $N = \xi^{-1/2}$ we have that $\p[G_N] \geq p_0$.  Let $\varphi$ be the unique conformal transformation from $\h$ to $\C \setminus \eta_-$ which fixes $0$ and $\infty$ and takes $-1$ to the prime end corresponding to the point $\eta_-(1)$ which is on $\partial \CH_+$.  Then there exists a constant $c \geq 1$ depending only on $\eta_-$ so that the restriction of $|\varphi'|$ to $K_\xi$ is between $c^{-1} \xi^{-1}$ and $c \xi^{-1/2}$.  We assume that $\eta_+ = \varphi(\eta)$, modulo parameterization.  It therefore follows that on $G_N$ we have that the ``$\metapprox{\epsilon_n}{\cdot}{\cdot}{\Gamma_+}$-metric ball'' centered at $0$ of radius $\quantHP{p}{\epsilon_n}$ does not contain all of the image under $\varphi$ of $\eta$ up until time $\tau_N$.  As the probability that $\varphi(\eta([0,\tau_N]))$ is contained in $\eta_+([0,1])$ (recall that $\eta_+$ is parameterized by quantum length) tends to $1$ as $\xi \to 0$, the desired contradiction to the definition of $\quantHP{p}{\epsilon_n}$ follows.
\end{proof}

\begin{proof}[Proof of Lemma~\ref{lem:disk_good_connection_intermediate}]
In order to start to prove the lemma, we will first explain why a simpler version of the statement holds.  Let $\eta$ be an $\SLE_\kappa^{-1}(\kappa-6)$ process from $0$ to $\infty$ in $\CH_+$ coupled as a CPI of $\Gamma_+$.  Let $X$ be an exponential random variable with mean $1$ which is independent of everything else and let~$\CD_0$ be the component of $\CH_+ \setminus \eta$ with $\eta_+(X \delta)$ on its boundary.  It follows from the argument used to prove Lemma~\ref{lem:chunk_exit} that for each $\zeta_0 \in (0,1)$ there exists $\delta_0 \in (0,1)$ so that for each $\delta \in (0,\delta_0)$ there exists $\epsilon_0 \in (0,1)$ so that for every $\epsilon \in (0,\epsilon_0)$ the probability that there is a path $\cpath$ in $\Upsilon_+$ from $\eta_+(X\delta)$ to the part of $\partial \CD_0$ which is contained in $\eta$ with $\lebneb{\epsilon}(\cpath) \leq \quantHP{p}{\epsilon}$ is at least $1-\zeta_0$.

Let now describe the law of the ensemble of paths which make up $\partial \CD_0$.  We know that the conditional law of $\eta_+$ given~$\eta_-$ is that of an $\SLE_\kappa$ curve in $\C \setminus \eta_-$ from~$0$ to $\infty$.  Let $\eta_R$ be the right boundary of~$\eta$.  Note that~$\eta_R$ is also the right boundary of the trunk of~$\eta$.  Recall that the trunk is an $\SLE_{\kappa'}(\kappa'-6)$ process in~$\CH_+$ from~$0$ to~$\infty$ with its force point located at~$0^-$.   We thus know from \cite[Theorem~1.5]{ms2016ig1} that the law of~$\eta_R$ is equal to that of an $\SLE_\kappa(\kappa-4;2-\kappa)$ process from~$\infty$ to~$0$ in~$\CH_+$.  Also, note that~$\CD_0$ is the component of $\CH_+ \setminus \eta_R$ with $\eta_+(X\delta)$ on its boundary.

We are now going to make a comparison between the law of $\CD$ as defined in the lemma statement to the law of $\CD_0$ as described just above.  Let $L$ be the quantum length of $\eta_0$.  Let $G$ be the event that $\eta_0([0,X_1]) \cap \ol{\eta}_0([0,Y_1]) = \emptyset$ and the part of $\eta_0$ from $\eta_0(X_1)$ to $\ol{\eta}_0(Y_1)$, i.e., $\eta_0|_{[X_1,L-Y_1]}$ does not hit $\partial \CH_+$.  On $G$, Lemma~\ref{lem:middle_part_abs_cont} implies that the conditional law of $\eta_0|_{[X_1,L-Y_1]}$ is absolutely continuous with respect to the law of an $\SLE_\kappa$ in the unbounded component $U$ of $\CH_+ \setminus (\eta_0([0,X_1]) \cup \ol{\eta}_0([0,Y_1]))$ from $\eta_0(X_1)$ to $\ol{\eta}_0(Y_1)$.  We also have that the law of $\eta_1$ given $\eta_0$ is that of an $\SLE_\kappa(\kappa-4;2-\kappa)$ from $\ol{\eta}_0(Y_1)$ to $\eta_0(X_1)$.

The previous two paragraphs imply that the following is true.  Let $\varphi$ be the unique conformal transformation from $\C \setminus \eta_-$ to $U$ which takes $0$ to $\eta_0(X_1)$ and $\infty$ to $\ol{\eta}_0(Y_1)$.  Then it follows from the above that the law of the pair $(\eta_0, \eta_1)$ on $G$ is absolutely continuous with respect to the law of the pair $(\varphi(\eta_+), \varphi(\eta_R))$.  Moreover, since $\eta_+(X \delta)$ has a positive chance of being in the boundary of any fixed component of $\CH_+ \setminus \eta$ which intersects $\eta_+$, it follows that the law of $\CD$ is absolutely continuous with respect to the law of $\varphi(\CD_0)$.  The result then follows since we know that $|\varphi'|$ is a.s.\ bounded on a neighborhood of the closure of $\CD_0$ (recall~\eqref{eqn:quant_comparison}).
\end{proof}

\subsection{Proofs of main crossing statements}
\label{subsec:chunk_proofs}

\begin{figure}[ht!]
\begin{center}
\includegraphics[scale=1]{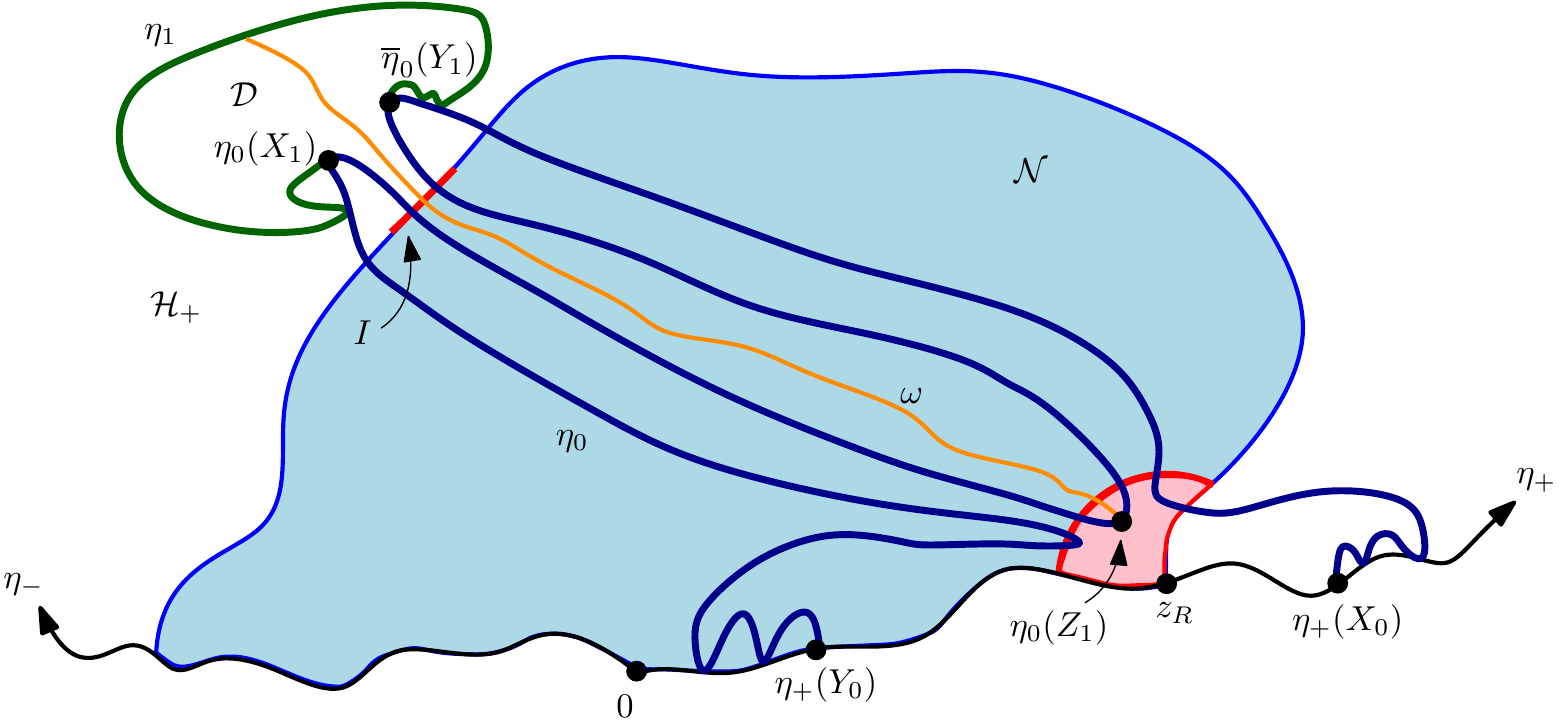}	
\end{center}
\caption{\label{fig:chunk_quantile_proof} Illustration of the proof of Lemma~\ref{lem:one_chunk_quantile}.  Shown are just the left boundaries of $\wt{\eta}_0$ ($\eta_0$, dark blue) as viewed from $\eta_+(Y_0)$ and $\wt{\eta}_1$ ($\eta_1$, dark green) as viewed from $\eta_0(X_1)$, using the notation from Lemma~\ref{lem:disk_good_connection_intermediate}.  Proposition~\ref{prop:cpi_path_close} implies that on the event that $I \cap \Upsilon \neq \emptyset$, it is a positive conditional probability event given $\CN$ that a path in $\CD$ from $\eta_0(Z_1)$ to $\eta_1$ in $\Upsilon$ passes through $I$ and $\eta_1(Z_1) \in B(z_R,\zeta_0 d_0)$, as shown.}
\end{figure}

\begin{proof}[Proof of Lemma~\ref{lem:one_chunk_quantile}]
See Figure~\ref{fig:chunk_quantile_proof} for an illustration of the argument.  Let $\eta$ be an $\SLE_\kappa^0(\kappa-6)$ process in $\CH_+$ from $0$ to $\infty$ coupled with $\Gamma_+$ as a CPI.  We assume that $\eta$ is parameterized according to the quantum natural time of its trunk.  Let $\tau_\delta = \inf\{t \geq \delta^{4/\kappa} : \eta(t) \in \partial \CH_+\}$ and let $\CN$ be the quantum surface parameterized by the region disconnected from $\infty$ by $\eta([0,\tau_\delta])$.   Let also $d_0 = \diam(\CN)$.

Let $\wt{\eta}_0$, $\wt{\eta}_1$, $\eta_0$, $\eta_1$, $X_0$, $Y_0$, $X_1$, $Y_1$, $Z_1$ be as in the statement of Lemma~\ref{lem:disk_good_connection_intermediate}, taken to be conditionally independent of everything else given $\Gamma_+$.  On $\eta_0([0,X_1]) \cap \ol{\eta}_0([0,Y_1]) = \emptyset$ let also $\CD$ be the component of $\CH_+ \setminus (\eta_0 \cup \eta_1)$ between $\eta_0$ and $\eta_1$ with $\eta_0(Z_1)$ on its boundary as in Lemma~\ref{lem:disk_good_connection_intermediate}.  Let $Q$ be the event that $\eta_0([0,X_1]) \cap \ol{\eta}_0([0,Y_1]) \neq \emptyset$, the part of $\eta_0$ from $\eta_0(X_1)$ to $\ol{\eta}_0(Y_1)$ does not hit $\partial \CH_+$, and there does not exist a path $\cpath$ in $\Upsilon_+ \cap \CD$ which connects $\eta_0(Z_1)$ to $\eta_1$ with $\lebneb{\epsilon}(\cpath) \leq \quantHP{p}{\epsilon}$.  Fix $\zeta_1 > 0$ and assume that we have chosen $\delta > 0$ sufficiently small so that Lemma~\ref{lem:disk_good_connection_intermediate} implies that $\p[Q] \leq \zeta_1$.

Let $Z$ be chosen uniformly from the boundary measure on the top of $\CN$ and let $I$ (resp.\ $\wt{I}$) be the interval of quantum length $\zeta_0 \delta$ (resp.\ $3\zeta_0 \delta /2$) on the top of $\CN$ centered at $Z$.  Let $E = \{I \cap \Upsilon_+ \neq \emptyset\}$.  Let $F$ be the event that $\eta_0([0,X_1]) \cap \ol{\eta}_0([0,Y_1]) = \emptyset$, the part of $\eta_0$ from $\eta_0(X_1)$ to $\ol{\eta}_0(Y_1)$ does not hit $\partial \CH_+$, and every path in $\Upsilon_+ \cap \CD$ from $\eta_0(Z_1)$ to $\eta_1$ passes through $I$, $\eta_0(Z_1) \in B(z_R,\zeta_0 d_0)$, and $\partial \CD$ has distance at least $\xi_0 d_0$ from $\partial \CN \setminus \wt{I}$.  Let $\CF$ be the $\sigma$-algebra generated by $\Gamma_+$, $\CN$, and $Z$.  Note that $E$ is $\CF$-measurable.  It follows from Proposition~\ref{prop:cpi_path_close} that
\[ \p\big[ \p[ F \giv \CF] > 0 \giv E \big] \to 1 \quad\text{as}\quad \xi_0 \to 0.\]
Fix $p_0 > 0$ and let $G = \{ \p[ F \giv \CF] \one_E \geq p_0 \one_E \}$.  Fix $\zeta_2 > 0$.  By making $p_0, \xi_0 > 0$ sufficiently small we have that $\p[G^c \cap E] \leq \zeta_2$.  We note that by scale invariance the choice of~$p_0$ does not depend on~$\delta$.  Therefore we can take $\zeta_1 = p_0 \zeta_2$ and assume that $\delta > 0$ is sufficiently small so that $\p[Q] \leq \zeta_1$.

Let $H$ be the event that $E$ occurs and every path $\cpath$ in $\Upsilon_+ \cap \CN$ connecting $B(z_R, \zeta_0 d_0)$ to $\wt{I}$ with distance at least $\xi_0 d_0$ from $\partial \CN \setminus \wt{I}$ satisfies $\lebneb{\epsilon}(\cpath) \geq \quantHP{p}{\epsilon}$.  Then we have that
\begin{align*}
   \p[ H ]
&= \E[ \one_E \one_H]
 \leq \frac{1}{p_0} \E[ \one_E \one_H \p[ F \giv \CF]] + \p[G^c \cap E]\\
&= \frac{1}{p_0} \p[E,\ H,\ F] + \p[G^c \cap E] \quad\text{($E,H \in \CF$)}\\
&\leq \frac{1}{p_0} \p[Q] + \p[G^c]
 \leq \frac{\zeta_1}{p_0} + \zeta_2 \leq 2 \zeta_2.
\end{align*}

Consider the event that there exists $J$ on the top of $\partial \CN$ with quantum length $\zeta_0$, intersects $\Upsilon_+$, and such that if $\wt{J}$ is the interval on the top of $\partial \CN$ with the same center as $J$ but twice the quantum length every path $\cpath$ from $B(z_R,\zeta_0 d_0)$ to $\wt{J}$ in $\Upsilon_+ \cap \CN$ and with distance at least $\xi_0 d_0$ to $\partial \CN \setminus \wt{J}$ satisfies $\lebneb{\epsilon}(\cpath) \geq \quantHP{p}{\epsilon}$.  On this event, there is a positive chance that $\wt{I} \subseteq \wt{J}$.  Therefore the probability of this event is at most a constant times $\zeta_2$, from which the result follows.
\end{proof}

\begin{proof}[Proof of Lemma~\ref{lem:two_chunks_quantile}]
This follows from the same argument used to prove Lemma~\ref{lem:one_chunk_quantile}.
\end{proof}

\section{Percolation exploration}
\label{sec:percolation_exploration}

The purpose of this section is to describe a type of supercritical percolation exploration inside of a $\CLE_\kappa$ carpet drawn on top of a quantum half-plane.  In particular, we will decompose the $\CLE_\kappa$ carpet using ``chunks'' of $\SLE_\kappa^0(\kappa-6)$ processes coupled as CPIs.  Our ultimate aim is to show that two points on the boundary of the quantum half-plane are likely to be connected by a collection of such chunks which are in a certain sense ``good''.  The first step in this (Section~\ref{subsec:exploration_def}) is to define a version of the exploration where we start with chunks which each consist of order $2^{-K}$ units of quantum natural time where $K \in \N$ large, with the chunk size growing larger until we see chunks with of order $2^{-J}$ units of quantum natural time where $J \in \N$ with $J \leq K$ is fixed, and then getting smaller with chunks with quantum natural time tending to $0$.  One can think of the chunks in this path as being analogous to a path between two boundary points on $\partial \h$ which passes through at most a fixed number of squares of each scale in a Whitney cube decomposition of $\h$.  We will then explain in Section~\ref{subsec:perc_estimates} that it is likely that this exploration ``succeeds'' in the sense that it is likely that we can construct a path of such chunks all of which satisfy an event which allows us to say that the chunk is ``good''.  Finally, in Section~\ref{subsec:limiting_exploration} we will explain how one can take a (subsequential) limit as $K \to \infty$ to construct an exploration which starts with arbitrarily small chunks and connects two boundary points.

\subsection{Definition of the exploration}
\label{subsec:exploration_def}

Suppose that $\CH = (\h,h,0,\infty)$ is a quantum half-plane.  Suppose that we have fixed $\delta_0,a_0 \in (0,1)$ and $\exploreExp \in (1-\kappa/4,1)$.  Let $x_{-1} < 0 < x_1$ be such that $\qbmeasure{h}([x_{-1},0]) = \qbmeasure{h}([0,x_1]) = 1$.  We assume that $\Gamma$ is a $\CLE_\kappa$ on $\h$.  We will describe below an adaptive exploration of $\Gamma$ and $\CH$.  We will assume that the various $\SLE_\kappa^0(\kappa-6)$ curves in what follows are coupled with $\Gamma$ as CPIs.  Suppose that $\eta$ is an $\SLE_\kappa^0(\kappa-6)$ in $\h$ from $0$ to $\infty$ which is parameterized by the quantum natural time of its trunk, $\CN_t$ is the quantum surface parameterized by the domain disconnected by $\eta|_{[0,t]}$ from $\infty$, and $\sigma$ is a stopping time for the filtration generated by $\CN_t$.  Then the \emph{top} (resp.\ \emph{bottom}) of $\CN_t$ is $\partial \CN_t \cap \h$ (resp.\ $\partial \CN_t \cap \partial \h$).

\begin{figure}[ht!]
\begin{center}
\includegraphics[scale=1]{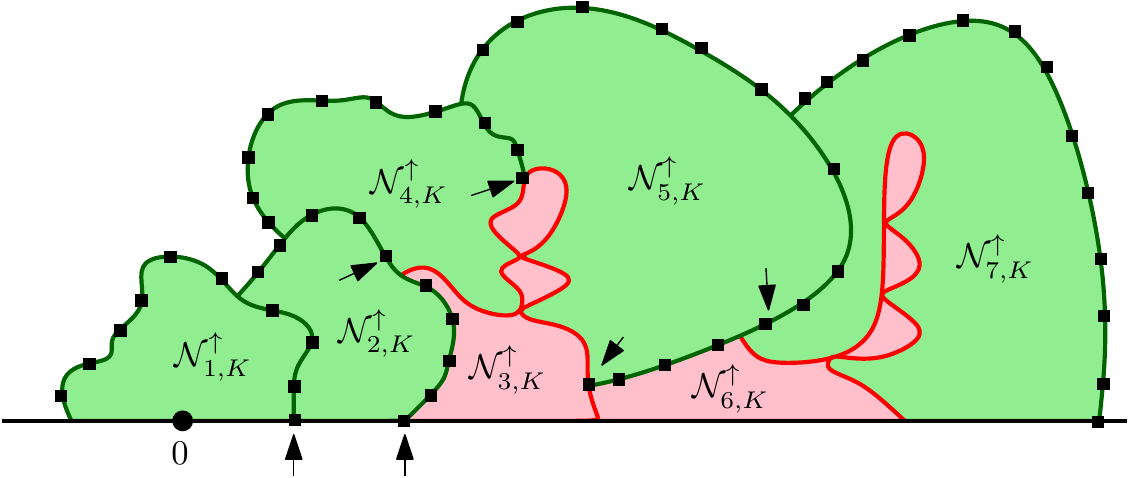}	
\end{center}
\caption{\label{fig:perc_illustration} Illustration of the first few chunks in the percolation exploration of a $\CLE_\kappa$ decorated quantum half-plane.  The chunks which are ``good'' are indicated in green and the chunks which are ``bad'' are indicated in red.  Part of our definition of a chunk being good is that the quantum surface it disconnects from $\infty$ is simply connected, which is why the green chunks are homeomorphic to $\D$ while the bad chunks are not (there are also other ways in which a chunk can be bad).  The arrows indicate the starting points of the $\SLE_\kappa^0(\kappa-6)$'s which cut out the chunks and the squares indicate the points on the tops of the good chunks whose clockwise boundary length distance to the wedge boundary in which they are drawn is an integer multiple of $a_0 2^{-(\kappa/4)K}$; we always take chunks to start at such points.}
\end{figure}

\subsubsection{Increasing chunk sizes}

Fix $J, K \in \N$ with $J \leq K$.  We consider the following exploration of~$\CH$ by $\SLE_\kappa^0(\kappa-6)$ curves.  Let $\eta_{1,K}^\uparrow$ be an independent $\SLE_\kappa^0(\kappa-6)$ curve in $\CH$ from $0$ to $\infty$ and let $\sigma_{1,K}^\uparrow = \inf\{t \geq 2^{-K} \delta_0 : \eta_{1,K}^\uparrow(t) \in \partial \CH\} \wedge 2^{-K}$.  Let $\CN_{1,K}^\uparrow$ be the quantum surface disconnected from $\infty$ by $\eta_{1,K}^\uparrow([0,\sigma_{1,K}^\uparrow])$ and let $\CT_{1,K}^\uparrow$ be the top of $\CN_{1,K}^\uparrow$.   We also let $\CF_{1,K}^\uparrow$ be the $\sigma$-algebra generated by the quantum surface $\CN_{1,K}^\uparrow$ decorated by the path $\eta_{1,K}^\uparrow|_{[0,\sigma_{1,K}^\uparrow]}$ and let $E_{1,K}^\uparrow$ be an $\CF_{1,K}^\uparrow$-measurable event.  We then inductively define $\sigma$-algebras~$\CF_{j,K}^\uparrow$, quantum half-planes~$\CH_{j,K}^\uparrow$, $\SLE_\kappa^0(\kappa-6)$ curves~$\eta_{j,K}^\uparrow$, events~$E_{j,K}^\uparrow$, stopping times $\sigma_{j,K}^\uparrow$, and quantum surfaces $\CN_{j,K}^\uparrow$ with tops $\CT_{j,K}^\uparrow$ as follows.  We let $\CH_{j+1,K}^\uparrow$ be the quantum surface parameterized by the unbounded component of $\CH_{j,K}^\uparrow \setminus \eta_{j,K}^\uparrow([0,\sigma_{j,K}^\uparrow])$ and marked by the points $\eta_{j,K}^\uparrow(\sigma_{j,K}^\uparrow)$ and $\infty$.  If $E_{j,K}^\uparrow$ occurs, we let $\eta_{j+1,K}^\uparrow$ be an $\SLE_\kappa^0(\kappa-6)$ process in $\CH_{j+1,K}^\uparrow$ starting from the rightmost intersection of $\eta_{j,K}^\uparrow([0,\sigma_{j,K}^\uparrow])$ with $\partial \CH_{j,K}^\uparrow$.  For $1 \leq i \leq j$, let $\CX_{i,K}^\uparrow$ be the set of points in $\CT_{i,K}^\uparrow$ whose clockwise boundary length distance to $\partial \CH_{i,K}^\uparrow$ is an integer multiple of $a_0 2^{-(\kappa/4) K}$.  If $E_{j,K}^\uparrow$ does not occur, we let $\eta_{j+1,K}^\uparrow$ be an $\SLE_\kappa^0(\kappa-6)$ process in $\CH_{j+1,K}^\uparrow$ starting from the rightmost point $z_{j,K}^\uparrow$ on $\partial \CH_{j+1,K}^\uparrow$ which is:
\begin{itemize}
\item to the left of the leftmost point of $\CT_{j,K}^\uparrow$,
\item in $\CX_{i,K}^\uparrow$ with $i < j$ such that the part of $\CT_{i,K}^\uparrow \cap \partial \CH_{j+1,K}^\uparrow$ to the left of $z_{j,K}^\uparrow$ has quantum length at least $a_0 2^{-(\kappa/4) K}$ and with boundary length distance at most $a_0 2^{-(\kappa/4)K}$ from $\Upsilon_+$.
\end{itemize}
In the case that there is no such point, we take the starting point of $\eta_{j+1,K}^\uparrow$ to be $\eta_{j,K}^\uparrow(\sigma_{j,K}^\uparrow)$.  We let $\sigma_{j+1,K}^\uparrow = \inf\{t \geq 2^{-K} \delta_0 : \eta_{j+1,K}^\uparrow(t) \in \partial \CH_{j+1,K}^\uparrow \} \wedge 2^{-K}$, let $\CN_{j+1,K}^\uparrow$ be the quantum surface in $\CH_{j+1,K}^\uparrow$ disconnected by $\eta_{j+1,K}^\uparrow([0,\sigma_{j+1,K}^\uparrow])$ from $\infty$, $\CF_{j+1,K}^\uparrow$ be the $\sigma$-algebra generated by the quantum surface parameterized by $\interior{\closure{\cup_{i=1}^{j+1} \CN_{i,K}^\uparrow}}$ and decorated by the paths $\eta_{i,K}^\uparrow|_{[0,\sigma_{i,K}^\uparrow]}$ for $1 \leq i \leq j+1$.  Finally, we let $E_{j+1,K}^\uparrow$ be an $\CF_{j+1,K}^\uparrow$-measurable event.  Let $n_K^\uparrow$ be the first $j$ so that either
\begin{itemize}
\item $x_1$ is disconnected from $\infty$, or
\item $E_{j,K}^\uparrow$ occurs and the boundary length distance along $\partial \CH_{j+1,K}^\uparrow$ from $\eta_{j+1,K}^\uparrow(0)$ to $x_1$ is at most $1-2^{-\exploreExp(\kappa/4) K}$.
\end{itemize}

Suppose that we have defined the exploration procedure for some $J+1 \leq M \leq K$ and we are on the event that $x_1$ has not been disconnected from $\infty$.  We then define it for $M-1$ as follows.  Let $\eta_{1,M-1}^\uparrow$ be an independent $\SLE_\kappa^0(\kappa-6)$ curve in $\CH_{1,M-1}^\uparrow = \CH_{n_M^\uparrow+1,M}^\uparrow$ from the rightmost intersection of $\eta_{n_M,M}^\uparrow([0,\sigma_{n_M,M}^\uparrow])$ with $\partial \CH_{n_M,M}^\uparrow$ to $\infty$.  Let $\sigma_{1,M-1}^\uparrow = \inf\{t \geq 2^{-(M-1)} \delta_0 : \eta_{1,M-1}^\uparrow(t) \in \partial \CH_{1,M-1}^\uparrow \} \wedge 2^{-(M-1)}$.  We then inductively define  quantum half-planes $\CH_{j,M-1}^\uparrow$, $\SLE_\kappa^0(\kappa-6)$ curves $\eta_{j,M-1}^\uparrow$, events $E_{j,M-1}^\uparrow$, stopping times $\sigma_{j,M-1}^\uparrow$, and quantum surfaces $\CN_{j,M-1}^\uparrow$ with tops $\CT_{j,M-1}^\uparrow$ as follows.  We let $\CH_{j+1,M-1}^\uparrow$ be the quantum surface parameterized by the unbounded component of $\CH_{j,M-1}^\uparrow \setminus \eta_{j,M-1}^\uparrow([0,\sigma_{j,M-1}^\uparrow])$ and marked by the points $\eta_{j,M-1}^\uparrow(\sigma_{j,M-1}^\uparrow)$ and $\infty$.  If $E_{j,M-1}^\uparrow$ occurs, we let $\eta_{j+1,M-1}^\uparrow$ be an $\SLE_\kappa^0(\kappa-6)$ process in $\CH_{j+1,M-1}^\uparrow$ starting from the rightmost intersection of $\eta_{j,M-1}^\uparrow([0,\sigma_{j,M-1}^\uparrow])$ with $\partial \CH_{j,M-1}^\uparrow$.  For $1 \leq i \leq j$, let $\CX_{i,M-1}^\uparrow$ be the set of points in $\CT_{i,M-1}^\uparrow$ whose clockwise boundary length distance to $\partial \CH_{i,M-1}^\uparrow$ is an integer multiple of $a_0 2^{-(\kappa/4) (M-1)}$.  If $E_{j,M-1}^\uparrow$ does not occur, we let $\eta_{j+1,M-1}^\uparrow$ be an $\SLE_\kappa^0(\kappa-6)$ process in $\CH_{j+1,M-1}^\uparrow$ starting from the rightmost point $z_{j,M-1}^\uparrow$ on $\CH_{j+1,M-1}^\uparrow$ which is:
\begin{itemize}
\item to the left of the leftmost point of $\CT_{j,M-1}^\uparrow$ and
\item in $\CX_{i,N}^\uparrow$ with $M-1 \leq N \leq K$ such that the part of $\CT_{i,N}^\uparrow \cap \partial \CH_{j+1,N}^\uparrow$ to the left of $z_{j,M-1}^\uparrow$ has quantum length at least $a_0 2^{-(\kappa/4) N}$ and with boundary length distance at most $a_0 2^{-(\kappa/4)N}$ from $\Upsilon_+$.
\end{itemize}
In the case that there is no such point, we take the starting point of $\eta_{j+1,M-1}^\uparrow$ to be $\eta_{j,M-1}^\uparrow(\sigma_{j,M-1}^\uparrow)$.  We let $\sigma_{j+1,M-1}^\uparrow = \inf\{t \geq 2^{-(M-1)} \delta_0 : \eta_{j+1,M-1}^\uparrow(t) \in \partial \CH_{j+1,M-1}^\uparrow \} \wedge 2^{-(M-1)}$, let $\CN_{j+1,M-1}^\uparrow$ be the quantum surface in $\CH_{j+1,M-1}^\uparrow$ disconnected by $\eta_{j+1,M-1}^\uparrow([0,\sigma_{j+1,M-1}^\uparrow])$ from $\infty$, $\CF_{j+1,M-1}^\uparrow$ the $\sigma$-algebra generated by $\CF_{j,M}^\uparrow$ and the quantum surface parameterized by $\interior{\closure{\cup_{i=1}^{j+1} \CN_{i,M-1}^\uparrow}}$ and decorated by the paths $\eta_{i,M-1}^\uparrow|_{[0,\sigma_{i,M-1}^\uparrow]}$ for $1 \leq i \leq j+1$.  Finally, we let $E_{j+1,M-1}^\uparrow$ be an $\CF_{j+1,M-1}^\uparrow$-measurable event.  Let $n_{M-1}^\uparrow$ be the first $j$ so that either
\begin{itemize}
\item $x_1$ is disconnected from $\infty$, or
\item $E_{j,M-1}^\uparrow$ occurs and the boundary length distance in $\CH_{j+1,M-1}^\uparrow$ from $\eta_{j+1,M-1}^\uparrow(0)$ to $x_1$ is at most $1-2^{-\exploreExp (\kappa/4) (M-1)}$.
\end{itemize}

\subsubsection{Decreasing chunk sizes}

We will now define the continuation of the exploration after scale $J$ as above.  The definition will be similar except the chunk sizes will be decreasing (rather than increasing) as the exploration gets closer to $x_1$.

Let $\eta_{1,J}^\downarrow$ be an independent $\SLE_\kappa^0(\kappa-6)$ curve in $\CH_{1,J}^\downarrow = \CH_{n_J^\uparrow+1,J}^\uparrow$ from the rightmost intersection of $\eta_{n_J,J}^\uparrow([0,\sigma_{n_J,J}^\uparrow])$ with $\partial \CH_{n_J,J}^\uparrow$ to $\infty$.  Let $\sigma_{1,J}^\downarrow = \inf\{t \geq 2^{-J} \delta_0 : \eta_{1,J}^\downarrow(t) \in \partial \CH_{1,J}^\downarrow \} \wedge 2^{-J}$.  Let $\CN_{1,J}^\uparrow$ be the quantum surface disconnected from $\infty$ by $\eta_{1,J}^\uparrow([0,\sigma_{1,J}^\uparrow])$ and let $\CT_{1,J}^\uparrow$ be the top of $\CN_{1,J}^\uparrow$.  We also let $\CF_{1,J}^\downarrow$ be the $\sigma$-algebra generated by $\CF_{n_J^\uparrow,J}^\uparrow$ and the quantum surface $\CN_{1,J}^\downarrow$ decorated by the path $\eta_{1,J}^\downarrow|_{[0,\sigma_{1,J}^\downarrow]}$ and let $E_{1,J}^\downarrow$ be an $\CF_{1,J}^\downarrow$-measurable event.  We then inductively define $\sigma$-algebras $\CF_{j,J}^\downarrow$, quantum half-planes $\CH_{j,J}^\downarrow$, $\SLE_\kappa^0(\kappa-6)$ curves $\eta_{j,J}^\downarrow$, events $E_{j,J}^\downarrow$, stopping times $\sigma_{j,J}^\downarrow$, and quantum surfaces $\CN_{j,J}^\downarrow$ with tops $\CT_{j,J}^\downarrow$ as follows.  We let $\CH_{j+1,J}^\downarrow$ be the quantum surface parameterized by the unbounded component of $\CH_{j,J}^\downarrow \setminus \eta_{j,J}^\downarrow([0,\sigma_{j,J}^\downarrow])$ and marked by the points $\eta_{j,J}^\downarrow(\sigma_{j,J}^\downarrow)$ and $\infty$.  If $E_{j,J}^\downarrow$ occurs, we let $\eta_{j+1,J}^\downarrow$ be an $\SLE_\kappa^0(\kappa-6)$ process in $\CH_{j+1,J}^\downarrow$ starting from the rightmost intersection of $\eta_{j,J}^\downarrow([0,\sigma_{j,J}^\downarrow])$ with $\partial \CH_{j,J}^\downarrow$.  For $1 \leq i \leq j$, let $\CX_{i,J}^\downarrow$ be the set of points in $\CT_{i,J}^\downarrow$ whose clockwise boundary length distance to $\partial \CH_{i,J}^\downarrow$ is an integer multiple of $a_0 2^{-(\kappa/4) J}$.  If $E_{j,J}^\downarrow$ does not occur, we let $\eta_{j+1,J}^\downarrow$ be an $\SLE_\kappa^0(\kappa-6)$ process in $\CH_{j+1,J}^\downarrow$ starting from the rightmost point $z_{j,J}^\downarrow$ on $\partial \CH_{j+1,J}^\downarrow$ which is:
\begin{itemize}
\item to the left of the leftmost point of $\CT_{j,J}^\downarrow$ and
\item in $\CX_{i,N}^\bullet$ for $\bullet \in \{\uparrow, \downarrow\}$ where $K \leq N \leq J$ ($\bullet = \uparrow$) or $N = J$ ($\bullet = \downarrow$) such that the part of $\CT_{i,N}^\bullet \cap \partial \CH_{j+1,J}^\downarrow$ to the left of $z_{j,J}^\downarrow$ has quantum length at least $a_0 2^{-(\kappa/4) N}$ and with boundary length distance at most $a_0 2^{-(\kappa/4)N}$ from $\Upsilon_+$.
\end{itemize}
In the case that there is no such point, we take the starting point of $\eta_{j+1,J}^\downarrow$ to be $\eta_{j,J}^\downarrow(\sigma_{j,J}^\downarrow)$.  We let $\sigma_{j+1,J}^\downarrow = \inf\{t \geq 2^{-J} \delta_0 : \eta_{j+1,J}^\downarrow(t) \in \partial \CH_{j+1,J}^\downarrow \} \wedge 2^{-J}$, let $\CN_{j+1,J}^\downarrow$ be the quantum surface in $\CH_{j+1,J}^\downarrow$ disconnected by $\eta_{j+1,J}^\downarrow([0,\sigma_{j+1,J}^\downarrow])$ from $\infty$, $\CF_{j+1,J}^\downarrow$ the $\sigma$-algebra generated by $\CF_{1,J}^\downarrow$ and the quantum surface parameterized by $\interior{\closure{\cup_{i=1}^{j+1} \CN_{i,J}^\downarrow}}$ and decorated by the paths $\eta_{i,J}^\downarrow|_{[0,\sigma_{i,J}^\downarrow]}$ for $1 \leq i \leq j+1$.  Finally, we let $E_{j+1,J}^\downarrow$ be an $\CF_{j+1,J}^\downarrow$-measurable event.  Let $n_J^\downarrow$ be the first $j$ so that either
\begin{itemize}
\item $x_1$ is disconnected from $\infty$, or
\item $E_{j,J}^\downarrow$ occurs and the boundary length distance in $\CH_{j+1,J}^\downarrow$ from $\eta_{j+1,J}^\downarrow(0)$ to $x_1$ is at most $2^{-\exploreExp (\kappa/4) J}$.
\end{itemize}

Suppose that we have defined the downward exploration procedure for some $M \geq J$.  We then define it for $M+1$ as follows.  Let $\eta_{1,M+1}^\downarrow$ be an independent $\SLE_\kappa^0(\kappa-6)$ curve in $\CH_{1,M+1}^\downarrow = \CH_{n_M^\downarrow+1,M}^\downarrow$ from the rightmost intersection of $\eta_{j,M}^\downarrow([0,\sigma_{j,M}^\downarrow])$ with $\partial \CH_{j,M}^\uparrow$ to $\infty$.  Let $\sigma_{1,M+1}^\downarrow = \inf\{t \geq 2^{-(M+1)} \delta_0 : \eta_{1,M+1}^\downarrow(t) \in \partial \CH_{1,M+1}^\downarrow \} \wedge 2^{-(M+1)}$.  We then inductively define  quantum half-planes $\CH_{j,M+1}^\downarrow$, $\SLE_\kappa^0(\kappa-6)$ curves $\eta_{j,M+1}^\downarrow$, events $E_{j,M+1}^\downarrow$, stopping times $\sigma_{j,M+1}^\downarrow$, and quantum surfaces $\CN_{j,M+1}^\downarrow$ with tops $\CT_{j,M+1}^\downarrow$ as follows.  We let $\CH_{j+1,M+1}^\downarrow$ be the quantum surface parameterized by the unbounded component of $\CH_{j,M+1}^\downarrow \setminus \eta_{j,M+1}^\downarrow([0,\sigma_{j,M+1}^\downarrow])$ and marked by the points $\eta_{j,M+1}^\uparrow(\sigma_{j,M+1}^\downarrow)$ and $\infty$.  If $E_{j,M+1}^\downarrow$ occurs, we let $\eta_{j+1,M+1}^\downarrow$ be an $\SLE_\kappa^0(\kappa-6)$ process in $\CH_{j+1,M+1}^\downarrow$ starting from the rightmost intersection of $\eta_{j,M+1}^\downarrow([0,\sigma_{j,M+1}^\downarrow])$ with $\partial \CH_{j,M+1}^\downarrow$.  For $1 \leq i \leq j$, let $\CX_{i,M+1}^\downarrow$ be the set of points in $\CT_{i,M+1}^\downarrow$ whose clockwise boundary length distance to $\partial \CH_{i,M+1}^\downarrow$ is an integer multiple of $a_0 2^{-(\kappa/4) (M+1)}$.  If $E_{j,M+1}^\downarrow$ does not occur, we let $\eta_{j+1,M+1}^\downarrow$ be an $\SLE_\kappa^0(\kappa-6)$ process in $\CH_{j+1,M+1}^\downarrow$ starting from the rightmost point $z_{j,M+1}^\downarrow$ on $\partial \CH_{j+1,M+1}^\downarrow$ which is:
\begin{itemize}
\item to the left of the leftmost point of $\CT_{j,M+1}^\downarrow$ and
\item in $\CX_{i,N}^\bullet$ for $\bullet \in \{\uparrow, \downarrow\}$ where $K \leq N \leq J$ ($\bullet = \uparrow$) or $M+1 \leq N \leq J$ ($\bullet = \downarrow$) such that the part of $\CT_{i,N}^\bullet \cap \partial \CH_{j+1,M+1}^\downarrow$ to the left of $z_{j,M+1}^\downarrow$ has quantum length at least $a_0 2^{-(\kappa/4) N}$ and with boundary length distance at most $a_0 2^{-(\kappa/4)N}$ from $\Upsilon_+$.
\end{itemize}
In the case that there is no such point, we take the starting point of $\eta_{j+1,M+1}^\downarrow$ to be $\eta_{j,M}^\downarrow(\sigma_{j,M}^\downarrow)$.  We let $\sigma_{j+1,M+1}^\downarrow = \inf\{t \geq 2^{-(M+1)} \delta_0 : \eta_{j+1,M+1}^\downarrow(t) \in \partial \CH_{j+1,M+1}^\downarrow \} \wedge 2^{-(M+1)}$, let $\CN_{j+1,M+1}^\downarrow$ be the quantum surface in $\CH_{j+1,M+1}^\downarrow$ disconnected by $\eta_{j+1,M+1}^\downarrow([0,\sigma_{j+1,M+1}^\downarrow])$ from $\infty$, $\CF_{j+1,M+1}^\downarrow$ the $\sigma$-algebra generated by $\CF_{j+1,M}^\downarrow$ and the quantum surface parameterized by $\interior{\closure{\cup_{i=1}^{j+1} \CN_{i,M+1}^\downarrow}}$ and decorated by the paths $\eta_{i,M+1}^\downarrow|_{[0,\sigma_{i,M+1}^\downarrow]}$ for $1 \leq i \leq j+1$.  Finally, we let $E_{j+1,M+1}^\downarrow$ be an $\CF_{j+1,M+1}^\downarrow$-measurable event. Let $n_{M+1}^\downarrow$ be the first $j$ so that either
\begin{itemize}
\item $x_1$ is disconnected from $\infty$, or
\item $E_{j,M-1}^\downarrow$ occurs and the boundary length distance in $\CH_{j+1,M+1}^\downarrow$ from $\eta_{j+1,M+1}^\downarrow(0)$ to $x_1$ is at most $2^{-\exploreExp (\kappa/4) (M+1)}$.
\end{itemize}

\subsubsection{Definition of events}

\begin{figure}[ht!]
\begin{center}
\includegraphics[scale=1]{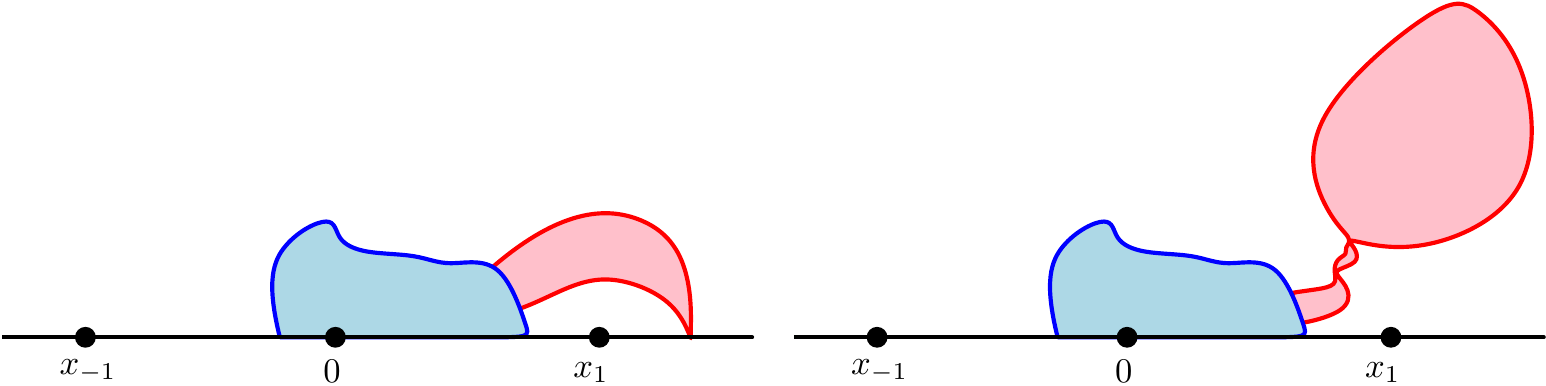}	
\end{center}
\caption{\label{fig:chunk_failures} Illustration of two of the ways in which the exploration can fail.  The blue region illustrates the quantum surface parameterized by the exploration before discovering the chunk which led to the failure.  On the left, the exploration failed due to a large downward jump (which also disconnected $x_1$ from $\infty$.  On the right, the exploration failed due to a large upward jump (corresponding to the discovery of a large $\CLE_\kappa$ loop).}
\end{figure}

Fix $c_F > 1$; we will choose its value later.  We let $F_K^\uparrow$ be the event that 
\begin{itemize}
\item $x_{-1}$ or $x_1$ is disconnected from $\infty$ by one of $\eta_{j,K}^\uparrow|_{[0,\sigma_{j,K}^\uparrow]}$ for $1 \leq j \leq n_K^\uparrow$,
\item there exists $1 \leq j \leq n_K^\uparrow$ so that $\eta_{j,K}^\uparrow|_{[0,\sigma_{j,K}^\uparrow]}$ makes a jump (downward or upward) of size at least $2^{-\exploreExp (\kappa/4) K}$, or
\item $n_K^\uparrow \geq c_F 2^{(1-\exploreExp)(\kappa/4) K}$.
\end{itemize}

For each $J \leq M \leq K-1$, we let $F_M^\uparrow$ be the event that
\begin{itemize}
\item $x_{-1}$ or $x_1$ is disconnected from $\infty$ by one of $\eta_{j,M}^\uparrow|_{[0,\sigma_{j,M}^\uparrow]}$ for $1 \leq j \leq n_M^\uparrow$,
\item there exists $1 \leq j \leq n_M^\uparrow$ so that $\eta_{j,M}^\uparrow|_{[0,\sigma_{j,M}^\uparrow]}$ makes a jump (downward or upward) of size at least $2^{-\exploreExp (\kappa/4) M}$,
\item $n_M^\uparrow \geq c_F 2^{(1-\exploreExp) (\kappa/4) M}$, or
\item the curves $\eta_{j,M}^\uparrow|_{[0,\sigma_{j,M}^\uparrow]}$ for $1 \leq j \leq n_M^\uparrow$ disconnect all of the tops $\CT_{j,M+1}^\uparrow$ for $1 \leq j \leq n_{M+1}^\uparrow$ from $\infty$.
\end{itemize}

For each $J \leq M$, we let $F_M^\downarrow$ be the event that
\begin{itemize}
\item $x_{-1}$ or $x_1$ is disconnected from $\infty$ by one of $\eta_{j,M}^\downarrow|_{[0,\sigma_{j,M}^\downarrow]}$ for $1 \leq j \leq n_M^\downarrow$,
\item there exists $1 \leq j \leq n_M^\downarrow$ so that $\eta_{j,M}^\downarrow|_{[0,\sigma_{j,M}^\downarrow]}$ makes a jump (downward or upward) of size at least $2^{-\exploreExp (\kappa/4) M}$,
\item $n_M^\downarrow \geq c_F 2^{(1-\exploreExp)(\kappa/4) M}$ ($M \geq J+1$) or $n_M^\downarrow \geq c_F 2^{(\kappa/4) M}$ ($M=J$), or
\item the curves $\eta_{j,M}^\downarrow|_{[0,\sigma_{j,M}^\downarrow]}$ for $1 \leq j \leq n_M^\downarrow$ disconnect all of the tops 
 	\begin{itemize}
 	\item (if $M \geq J+1$) $\CT_{j,M-1}^\downarrow$ for $1 \leq j \leq n_{M-1}^\downarrow$
 	\item (if $M = J$) $\CT_{j,J}^\uparrow$ for $1 \leq j \leq n_J^\uparrow$
 	\end{itemize}
 	from $\infty$.
\end{itemize}

We say that the exploration \emph{fails} if one of $F_M^\uparrow$ for $J \leq M \leq K$ or $F_M^\downarrow$ for $J \leq M$ occurs.

\subsection{Estimates}
\label{subsec:perc_estimates}

\begin{lemma}
\label{lem:point_to_point_exploration}
Fix $C_0 > 0$.  There exist constants $c,\delta_0, a_0 \in (0,1)$ and $c_F > 0$ so that with $p_0 = 1-a_0$ the following is true for every $J \in \N$.  Assume that
\begin{enumerate}[(i)]
\item $\p[E_{j,M}^\uparrow \giv \CF_{j-1,M}^\uparrow] \geq p_0$ for all $j \geq 1$ and $J \leq M \leq K$,
\item $\p[E_{j,M}^\downarrow \giv \CF_{j-1,M}^\downarrow] \geq p_0$ for all $j \geq 1$ and $J \leq M$ and
\item $E_{j,M}^\uparrow$ (resp.\ $E_{j,M}^\downarrow$) implies that the top boundary length of $\CN_{j,M}^\uparrow$ (resp.\ $\CN_{j,M}^\downarrow$) is at most $C_0 2^{-(\kappa/4) M}$ for all $j \geq 1$ and $J \leq M \leq K$ (resp.\ $J \leq M$).
\end{enumerate}
Then the probability that the exploration as defined in Section~\ref{subsec:exploration_def} fails is $O(2^{-c J})$.
\end{lemma}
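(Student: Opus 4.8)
The plan is to show that each of the finitely many "failure events" $F_M^\uparrow$ ($J \le M \le K$) and $F_M^\downarrow$ ($M \ge J$) has probability $O(2^{-cM})$ for a suitable constant $c > 0$, and then to sum over $M$. Since the bound decays geometrically in $M$ and all scales $M$ satisfy $M \ge J$, the sum is $O(2^{-cJ})$, which is the claimed bound. So the real content is a single-scale estimate: for a fixed scale $M$, each of the bulleted conditions defining $F_M^\uparrow$ or $F_M^\downarrow$ fails with probability $O(2^{-cM})$ (possibly after shrinking $c$, $\delta_0$, $a_0$ and enlarging $c_F$).

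The key step is to identify the relevant process. By Theorem~\ref{thm:cpi_wedge_explore}, when we run an $\SLE_\kappa^0(\kappa-6)$ as a CPI in a quantum half-plane parameterized by quantum natural time, the left/right boundary-length processes $(L_t, R_t)$ evolve as independent $4/\kappa$-stable Lévy processes; the hypotheses (i)--(iii) guarantee that, with probability at least $p_0 = 1-a_0$ at each step, the chunk at scale $M$ consumes $\asymp 2^{-(\kappa/4)M}$ units of quantum boundary length and is "nice" (simply connected, top boundary length $\le C_0 2^{-(\kappa/4)M}$). I would first condition on all the events $E_{j,M}^\bullet$ occurring and control the bad event on that conditioning, then separately absorb the (geometrically small, by a standard domination argument) contribution of the chunks where $E_{j,M}^\bullet$ fails — here one uses that the starting-point relocation rule was designed precisely so that a failed chunk only costs $O(a_0 2^{-(\kappa/4)M})$ of extra boundary length and the exploration continues. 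Concretely:
\begin{itemize}
\item \emph{No large jump.} Each chunk's boundary-length increment has the law of (a piece of) a $4/\kappa$-stable process run for time $\le 2^{-M}$, hence up to time $2^{-M}$ the probability of a single jump exceeding $2^{-\exploreExp(\kappa/4)M}$ is, by the $\alpha$-stable jump measure, of order $2^{-M} \cdot (2^{-\exploreExp(\kappa/4)M})^{-4/\kappa} = 2^{-M(1 - \exploreExp)}$ per chunk; multiplying by the number of chunks (at most $c_F 2^{(1-\exploreExp)(\kappa/4)M}$ on the complement of the cardinality-failure event) and optimizing in $\exploreExp \in (1-\kappa/4, 1)$ gives a net bound $O(2^{-cM})$. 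This is the point where the constraint $\exploreExp > 1-\kappa/4$ is used. The relevant Lévy-process tail estimates are collected in Appendix~\ref{app:levy_process}.
\item \emph{Not too many chunks.} On the event $E_{j,M}^\bullet$ the consumed boundary length is $\gtrsim 2^{-(\kappa/4)M}$ with good lower-tail bounds (again a stable-process estimate), and there is $\asymp 1$ total boundary length to traverse, so the number $n_M^\bullet$ of chunks is a sum of $\asymp 2^{(\kappa/4)M}$ essentially i.i.d.\ positive contributions; a Chernoff/large-deviations bound shows $\p[n_M^\bullet \ge c_F 2^{(1-\exploreExp)(\kappa/4)M}]$ — wait, the threshold is larger than the typical value $\asymp 2^{(\kappa/4)M}$ only when $1-\exploreExp < 1$, which always holds; here one instead notes that $c_F 2^{(1-\exploreExp)(\kappa/4)M} \ge 2^{(\kappa/4)M}$ fails, so the correct reading is that we only need the number of \emph{good} chunks traversed before reaching within $2^{-\exploreExp(\kappa/4)M}$ of $x_1$, which is $O(2^{(\kappa/4)M})$, together with the number of bad chunks, which by the $p_0$-lower-bound and an elementary geometric-random-variable comparison is stochastically dominated and also $O(2^{(\kappa/4)M})$ except with probability $O(2^{-cM})$; enlarging $c_F$ then makes the cardinality-failure event have probability $O(2^{-cM})$ by a standard Chernoff bound for a negative-binomial-type sum. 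The small discrepancy with the exponent $(1-\exploreExp)(\kappa/4)M$ in the statement is reconciled by noting that after each successful chunk the remaining boundary length to $x_1$ shrinks, so effectively only $O(2^{(1-\exploreExp)(\kappa/4)M})$ chunks of scale $M$ are needed before the termination criterion triggers.
\item \emph{No disconnection of the previous scale's tops, and $x_{\pm1}$ not disconnected.} On the complement of the no-large-jump event, a chunk can only disconnect a boundary arc of length $\le 2^{-\exploreExp(\kappa/4)M}$ from $\infty$; since the tops $\CT_{j,M+1}^\uparrow$ (or $\CT_{j,M-1}^\downarrow$, etc.) at the adjacent scale have total length $\asymp 1$ and individual lengths $\gtrsim 2^{-(\kappa/4)(M\pm1)} \gg 2^{-\exploreExp(\kappa/4)M}$ for $\exploreExp < 1$ and $M$ large, no single chunk can swallow one, and the relocation rule ensures the exploration keeps moving rightward past them; the termination criterion at boundary-length gap $2^{-\exploreExp(\kappa/4)M}$ means $x_1$ is not reached prematurely, and $x_{-1}$ is to the left of $0$ hence never disconnected unless a jump of size $\ge 1 \gg 2^{-\exploreExp(\kappa/4)M}$ occurs, already excluded. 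Thus this piece is contained in the no-large-jump event up to probability $O(2^{-cM})$.
\end{itemize}

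Finally I would combine the scales: summing the single-scale bounds $O(2^{-cM})$ over $J \le M \le K$ (upward part) and $M \ge J$ (downward part) gives a total of $O(2^{-cJ})$, uniformly in $K$. I expect the main obstacle to be the bookkeeping in the cardinality estimate for $n_M^\bullet$ — one has to carefully separate good chunks from bad ones, show the bad chunks do not accumulate (using hypothesis (i)--(ii) and the design of the relocation rule so a bad chunk forfeits at most $O(a_0 2^{-(\kappa/4)M})$ of progress), and then run a concentration argument for the resulting sum of nearly i.i.d.\ boundary-length increments against the target length $\asymp 1$; choosing $a_0$ small enough that $1-a_0 = p_0$ is close to $1$ is what makes this concentration work, and choosing $c_F$ large enough absorbs the fluctuations. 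The stable-process inputs (jump tails, lower tails for the time to accumulate a given length, and the disconnection estimate) are exactly the content deferred to Appendix~\ref{app:levy_process}, so modulo those this is a soft argument. Everything else is a union bound over the $O(K)$ scales, which is harmless because of the geometric decay.
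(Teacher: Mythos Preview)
Your overall architecture---bound each $F_M^\uparrow$, $F_M^\downarrow$ by $O(2^{-cM})$ and sum geometrically over $M\ge J$---is exactly right, and your large-jump computation matches the paper's. But two of the single-scale ingredients have real gaps.

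\textbf{Heavy tails kill the Chernoff argument.} The per-chunk boundary-length increments come from a $4/\kappa$-stable L\'evy process, so they only have finite $p$th moments for $p<4/\kappa<3/2$. There are no exponential moments, so a Chernoff/negative-binomial bound does not apply to $n_M^\bullet$. The paper instead introduces explicit dominating processes $\wt R_{j,M}^\uparrow$, $\wt L_{j,M}^\uparrow$, shows (via Lemmas~\ref{lem:bottom_length_moment_bound} and~\ref{lem:top_length_moment_bound} and a H\"older argument that uses $a_0$ small) that $\E[\wt R_{j}-\wt R_{j-1}\mid\CF_{j-1}]\le -d_0 2^{-(\kappa/4)M}$, and then controls the martingale part with the $L^p$ martingale-difference inequality of Chatterji, yielding $\p[\wt R_N\notin[-c_0,-c_1]\cdot 2^{-(\kappa/4)M}N]\le c_2 N^{1-p}$. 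With $N\asymp 2^{(1-\exploreExp)(\kappa/4)M}$ this is $O(2^{-cM})$, which is all that is needed---but it is polynomial concentration, not exponential.

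\textbf{The relocation does not forfeit only $O(a_0 2^{-(\kappa/4)M})$.} When $E_{j,M}$ fails, the next starting point slides left past the entire top of the bad chunk \emph{and past any previously discovered $\CLE$ loops in the way}; these loops correspond to upward jumps of the L\'evy process and can be much larger than $a_0 2^{-(\kappa/4)M}$. The paper handles this by comparing $L$, $R$ to the dominating processes up to a correction $\wt\Delta_{j,M}^\uparrow$ equal to the sum of the $N_{j,M}^\uparrow$ largest upward jumps (plus the $a_0$ slivers), where $N_{j,M}^\uparrow=\sum_{i\le j}\one_{(E_{i,M})^c}$; this sum is then shown to be $O(2^{-(\kappa/4)M}N^{1-\kappa/4})$ in expectation by Lemma~\ref{lem:stablejumpsum}, which is enough to transfer the bounds from $\wt L,\wt R$ to $L,R$. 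Without this device your domination of $L$, $R$ is not justified, and the argument that the previous scale's tops are not all swallowed (the paper's Step~5, counting chunks with top length $>a_0 2^{-(\kappa/4)M}$ against the surviving left boundary length) also needs it.
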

\begin{proof}

We are going to show that the following is true.  There exists a constant $c > 0$ so that for each $J \leq M \leq K$ we have $\p[F_M^\uparrow] = O(2^{-c M})$ and for $M \geq J$ we have $\p[F_M^\downarrow] = O(2^{-c M})$.  Then the result follows by applying a union bound.  We will explain carefully how to obtain the upper bound for $\p[F_K^\uparrow]$ and then subsequently explain the necessary modifications to bound $\p[F_M^\uparrow]$ for $J \leq M \leq K$ and $\p[F_M^\downarrow]$ for $J \leq M$.

\noindent{\it Step 1.  Definition of boundary length processes.}  For each $j$ and $M$, we let $T_{j,M}^\uparrow$ denote the quantum length of the top $\CT_{j,M}^\uparrow$ of $\CN_{j,M}^\uparrow$.  We also let $B_{L,j,M}^\uparrow$ (resp.\ $B_{R,j,M}^\uparrow$) denote the quantum length of the part of the bottom of $\CN_{j,M}^\uparrow$ which is to the left (resp.\ right) of $\eta_{j,M}^\uparrow(0)$.  Finally, we define $L_{j,M}^\uparrow$ (resp.\ $R_{j,M}^\uparrow$) to be equal to the quantum length of the part of $\partial \CH_{j,M}^\uparrow \setminus \partial \CH$ to the left (resp.\ right) of $\eta_{j,M}^\uparrow(0)$ minus the quantum length of the part of $\partial \CH \setminus \partial \CH_{j,M}^\uparrow$ to the left (resp.\ right) of $0$.  We define $L_{0,M}^\uparrow = L_{n_{M-1},M-1}^\uparrow$ and $R_{0,M}^\uparrow = R_{n_{M-1},M-1}^\uparrow$.  On $(E_{j,M}^\uparrow)^c$, we let $S_{j,M}^\uparrow$ be the quantum length of the interval on $\partial \CH_{j+1,M}^\uparrow$ from $\eta_{j+1,M}^\uparrow(0)$ to the leftmost point on the top of $\CN_{j,M}^\uparrow$.  For each $j \geq 0$, we then have that
\begin{equation}
\label{eqn:l_change}
L_{j+1,M}^\uparrow - L_{j,M}^\uparrow = (T_{j,M}^\uparrow - B_{L,j,M}^\uparrow) \one_{E_{j,M}^\uparrow} - (B_{L,j,M}^\uparrow + S_{j,M}^\uparrow) \one_{(E_{j,M}^\uparrow)^c}.	
\end{equation}
We similarly have that
\begin{equation}
\label{eqn:r_change}
R_{j+1,M}^\uparrow - R_{j,M}^\uparrow = -B_{R,j,M}^\uparrow \one_{E_{j,M}^\uparrow} + (T_{j,M}^\uparrow - B_{R,j,M}^\uparrow + S_{j,M}^\uparrow) \one_{(E_{j,M}^\uparrow)^c}.
\end{equation}

\noindent{\it Step 2. Dominating the boundary length processes.}  We are now going to define a sequence $\wt{L}_{j,M}^\uparrow$ which we can use to dominate $L_{j,M}^\uparrow$ from above and a sequence $\wt{R}_{j,M}^\uparrow$ which we can use to dominate $R_{j,M}^\uparrow$ from below.

We set $\wt{L}_{0,K}^\uparrow = 0$.  We define $\wt{L}_{j,K}^\uparrow$ using the recurrence relation for $j \geq 0$
\begin{equation}
\label{eqn:l_k_dom_change}
\wt{L}_{j+1,K}^\uparrow - \wt{L}_{j,K}^\uparrow = (T_{j,K}^\uparrow - B_{L,j,K}^\uparrow - a_0 2^{-(\kappa/4)K}) \one_{E_{j,K}^\uparrow} - B_{L,j,K}^\uparrow \one_{(E_{j,K}^\uparrow)^c}.	
\end{equation}
We then inductively define $\wt{L}_{j,M}^\uparrow$ for $M \leq K-1$ by setting $\wt{L}_{0,M}^\uparrow = \wt{L}_{n_{M-1}^\uparrow,M-1}^\uparrow$ and we define $\wt{L}_{j,M}^\uparrow$ using the recurrence relation for $j \geq 0$
\begin{equation}
\label{eqn:l_m_dom_change}
\wt{L}_{j+1,M}^\uparrow - \wt{L}_{j,M}^\uparrow = (T_{j,M}^\uparrow - B_{L,j,M}^\uparrow - a_0 2^{-(\kappa/4)M}) \one_{E_{j,M}^\uparrow} - B_{L,j,M}^\uparrow \one_{(E_{j,M}^\uparrow)^c}.	
\end{equation}
Let $N_{j,M}^\uparrow = \sum_{i=1}^j \one_{(E_{i,M}^\uparrow)^c}$ and let $\wt{\Delta}_{j,M}^\uparrow$ be the sum of $N_{j,M}^\uparrow (a_0 2^{-(\kappa/4) M})$ and the $N_{j,M}^\uparrow$ largest upward jumps made by either the left or the right boundary length processes up to the time that $\CN_{j,M}^\uparrow$ has been explored.  One can see that $\wt{L}_{j,M}^\uparrow - \wt{\Delta}_{j,M}^\uparrow \leq L_{j,M}^\uparrow$ for all $j,M$ because in the definition of the exploration, we have a slide to the left of size at most $a_0 2^{-(\kappa/4)M}$ on the top of each good chunk while in the definition of $\wt{L}_{j,M}^\uparrow$ we always have such a shift.  Moreover, in the definition of $L_{j,M}^\uparrow$ we slide to the left over $N_{j,M}^\uparrow$ loops of $\Gamma$ previous discovered by the exploration as well as at most $N_{j,M}^\uparrow (a_0 2^{-(\kappa/4) M})$ units of extra length while $\wt{\Delta}_{j,M}^\uparrow$ gives the sum of the lengths of the longest $N_{j,M}^\uparrow$ loops discovered by the exploration together with this extra length.

We also set $\wt{R}_{0,K}^\uparrow = 0$.  We define $\wt{R}_{j,K}^\uparrow$ using the recurrence relation for $j \geq 0$
\begin{equation}
\label{eqn:r_k_dom_change}
\wt{R}_{j+1,K}^\uparrow - \wt{R}_{j,K}^\uparrow = (a_0 2^{-(\kappa/4)K} - B_{R,j,K}^\uparrow) \one_{E_{j,K}^\uparrow} + (T_{j,K}^\uparrow - B_{R,j,K}^\uparrow) \one_{(E_{j,K}^\uparrow)^c}.
\end{equation}
We then inductively define $\wt{R}_{j,M}^\uparrow$ for $M \leq K-1$ by setting $\wt{R}_{0,M}^\uparrow = \wt{R}_{n_{M-1}^\uparrow,M-1}^\uparrow$ and we define $\wt{R}_{j,M}^\uparrow$ using the recurrence relation for $j \geq 0$
\begin{equation}
\label{eqn:r_m_dom_change}
\wt{R}_{j+1,M}^\uparrow - \wt{R}_{j,M}^\uparrow = (a_0 2^{-(\kappa/4)M} - B_{R,j,M}^\uparrow) \one_{E_{j,M}^\uparrow} + (T_{j,M}^\uparrow - B_{R,j,M}^\uparrow) \one_{(E_{j,M}^\uparrow)^c}.
\end{equation}
One can similarly see that $\wt{R}_{j,M}^\uparrow + \wt{\Delta}_{j,M}^\uparrow \geq R_{j,M}^\uparrow$ for all $j,M$.

\noindent{\it Step 3.  Analysis of dominating boundary length processes.}  

Lemma~\ref{lem:bottom_length_moment_bound} implies that $\E[B_{R,j,K}^\uparrow \giv \CF_{j-1,K}^\uparrow] \gtrsim (\log \delta_0^{-1}) \delta_0^{\kappa/4} 2^{-(\kappa/4)K}$.  We also have that 
\begin{align*}
	\E[T_{j,K}^\uparrow \one_{(E_{j,K}^\uparrow)^c} \giv \CF_{j-1,K}^\uparrow]
&\leq \E[T_{j,K}^\uparrow \one_{\{\sigma_{j,K}^\uparrow = 2^{-K}\}}  \giv \CF_{j-1,K}^\uparrow] + \E[T_{j,K}^\uparrow \one_{(E_{j,K}^\uparrow)^c} \one_{\{\sigma_{j,K}^\uparrow < 2^{-K}\}}  \giv \CF_{j-1,K}^\uparrow].
\end{align*}
Lemma~\ref{lem:top_length_moment_bound} implies that the first term on the right hand side is at most a constant times $\delta_0^{\kappa/4} 2^{-(\kappa/4) K}$.  For the second term on the right hand side, fix $p \in (1,4/\kappa)$ and let $q > 1$ be such that $p^{-1} + q^{-1} = 1$.  Then
\begin{align*}
\E[T_{j,K}^\uparrow \one_{(E_{j,K}^\uparrow)^c} \one_{\{\sigma_{j,K}^\uparrow < 2^{-K}\}}  \giv \CF_{j-1,K}^\uparrow]
&\leq \E[ (T_{jK}^\uparrow)^p \one_{\{\sigma_{j,K}^\uparrow < 2^{-K}\}} \giv \CF_{j-1,K}^\uparrow]^{1/p} \p[ (E_{j,K}^\uparrow)^c\giv \CF_{j-1,K}^\uparrow]^{1/q}.
\end{align*}
The first factor on the right hand side is at most a constant times $2^{-(\kappa/4) K}$ by \cite[Chapter VIII, Proposition~4]{bertoin1996levy}.  The second term can be made arbitrarily small by making $a_0 > 0$ sufficiently small.  In particular, we choose $a_0 > 0$ sufficiently small so that the second term is at most $\delta_0^{\kappa/4}$.

Therefore by choosing $\delta_0 > 0$ sufficiently small and then $a_0 > 0$ sufficiently small given our choice of~$\delta_0$ we see for a constant $d_0 > 0$ that 
\begin{align}
\label{eqn:r_k_dom_change_cond_mean}
\E[ \wt{R}_{j,K}^\uparrow - \wt{R}_{j-1,K}^\uparrow \giv \CF_{j-1,K}^\uparrow] \leq -d_0 2^{-(\kappa/4)K}.
\end{align}
As the increments $\wt{R}_{j,K}^\uparrow - \wt{R}_{j-1,K}^\uparrow - \E[ \wt{R}_{j,K}^\uparrow - \wt{R}_{j-1,K}^\uparrow \giv \CF_{j-1,K}^\uparrow]$ form a martingale difference sequence with a bounded $p$th moment for every $p \in (1,4/\kappa)$, it follows from~\eqref{eqn:r_k_dom_change_cond_mean} and \cite{chatterji1969mgdiff} that for every $p \in (1,4/\kappa)$ there exist constants $c_0,c_1,c_2 > 0$ so that
\begin{equation}
\label{eqn:right_move}
\p[ \wt{R}_{N,K}^\uparrow \notin [-c_0 2^{-(\kappa/4)K} N, -c_1 2^{-(\kappa/4)K} N]] \leq c_2 N^{1-p}
\end{equation}
and by Doob's maximal inequality the same holds $\min_{1 \leq n \leq N} \wt{R}_{n,K}^\uparrow$ in place of $\wt{R}_{N,K}^\uparrow$.  A similar analysis implies that there exist constants $c_3,c_4,c_5 > 0$ so that
\begin{equation}
\label{eqn:left_move}
\p[ \wt{L}_{N,K}^\uparrow \notin [c_3 2^{-(\kappa/4)K} N, c_4 2^{-(\kappa/4)K} N] ] \leq c_5 N^{1-p}	
\end{equation}
and by Doob's maximal inequality the same holds $\max_{1 \leq n \leq N} \wt{L}_{n,K}^\uparrow$ in place of $\wt{L}_{n,K}^\uparrow$.

Let us now collect a few observations from the above.  Let $\wt{n}_K^\uparrow$ be the first $j$ so that $\wt{R}_{j,K}^\uparrow$ is at most $-2^{-\exploreExp(\kappa/4)K}$.
\begin{enumerate}[(i)]
\item\label{it:tilde_n_bound} By~\eqref{eqn:right_move}, we have that
\[ \p[ c_0 2^{(1-\exploreExp)(\kappa/4) K} \leq \wt{n}_K^\uparrow \leq c_1 2^{(1-\exploreExp)(\kappa/4) K} ] = 1- O(2^{-cK}).\]
\item\label{it:tilde_l_bound} By~\eqref{eqn:right_move} and~\eqref{eqn:left_move}, we have that
\[ \p[ \wt{L}_{\wt{n}_K^\uparrow,K}^\uparrow \geq c_3 2^{-\exploreExp(\kappa/4) K}] = 1-O(2^{-cK}).\]
\item\label{it:tilde_jump_bound} On $\wt{n}_K^\uparrow \leq c_1 2^{(1-\exploreExp)(\kappa/4) K}$, the amount of quantum natural time elapsed by the first $\wt{n}_K^\uparrow$ chunks is at most $c_1 2^{((1-\exploreExp)(\kappa/4)-1) K}$.  Therefore (from the explicit form of the L\'evy measure) on $\wt{n}_K^\uparrow \leq c_1 2^{(1-\exploreExp)(\kappa/4) K}$, the number of jumps that either the left or right boundary length processes make of size at least $2^{-\exploreExp(\kappa/4) K}$ is stochastically dominated by a Poisson random variable with mean a constant times
\[ (2^{-\exploreExp(\kappa/4) K})^{-4/\kappa} \times 2^{((1-\exploreExp)(\kappa/4)-1) K} = 2^{(1-\exploreExp)(\kappa/4-1) K}.\]
As $\kappa \in (8/3,4)$ we note that $(1-\exploreExp)(\kappa/4-1) < 0$.  Altogether, we conclude that the probability that either the left or right boundary length processes make a jump of size at least $2^{-\exploreExp(\kappa/4) K}$ is $O(2^{-cK})$.
\end{enumerate}

\noindent{\it Step 4.  Analysis of boundary length processes.}  Since we have that $\wt{L}_{j,M}^\uparrow - \wt{\Delta}_{j,M}^\uparrow \leq L_{j,M}^\uparrow$ and $\wt{R}_{j,M}^\uparrow + \wt{\Delta}_{j,M}^\uparrow \geq R_{j,M}^\uparrow$, it follows from the above and Lemma~\ref{lem:stablejumpsum} that (possibly changing the values of $c,c_0,c_1$) we have
\begin{enumerate}[(i)]
\item\label{it:n_bound} 
\[ \p[ c_0 2^{(1-\exploreExp)(\kappa/4) K} \leq n_K^\uparrow \leq c_1 2^{(1-\exploreExp)(\kappa/4) K} ] = 1- O(2^{-cK}).\]
\item\label{it:l_bound} We have that
\[ \p[ L_{n_K^\uparrow,K}^\uparrow \geq c_3 2^{-\exploreExp(\kappa/4) K}] = 1-O(2^{-cK}).\]
\item\label{it:jump_bound} The probability that either the left or right boundary length processes make a jump of size at least $2^{-\exploreExp(\kappa/4) K}$ is $O(2^{-cK})$.
\end{enumerate}

\noindent{\it Step 5. Completion of the proof.}  On the event that the above hold, we consider the exploration starting at the next scale.  As in the case $M=K$, we consider the dominating boundary length processes.  We note that when we start the exploration for $M=K-1$, on the event described in~\eqref{it:tilde_l_bound}, the amount of boundary length to the left of the starting point which is contained in chunks for which $E_{j,K}^\uparrow$ occurs is at least a constant times $2^{-\exploreExp(\kappa/4)K}$.  On the event described in~\eqref{it:tilde_n_bound}, the number of such chunks is at most $c_1 2^{(1-\exploreExp)(\kappa/4)K}$.  Since the top boundary length of each such chunk is at most $C_0 2^{-(\kappa/4)K}$, if $A$ is the number of chunks with top boundary length at most $a_0 2^{-(\kappa/4)K}$ and $B$ is the number of the remaining chunks, we have that
\[ c_3 2^{-\exploreExp(\kappa/4)K} \leq a_0 2^{-(\kappa/4)K} \times A + C_0 2^{-(\kappa/4)K} \times B.\]
By decreasing the value of $a_0 > 0$ if necessary, the only way that this inequality can hold is if $B$ is at least a constant times $2^{(1-\exploreExp)(\kappa/4) K}$.

The same analysis as for $M=K$ thus implies for $M=K-1$ that
\begin{enumerate}[(i)]
\item By~\eqref{eqn:right_move}, we have that
\[ \p[ c_0 2^{(1-\exploreExp)(\kappa/4) M} \leq n_M^\uparrow \leq c_1 2^{(1-\exploreExp)(\kappa/4) M}] =  1- O(2^{-cK}).\]
\item By~\eqref{eqn:right_move} and~\eqref{eqn:left_move}, we have that
\[ \p[ L_{n_M^\uparrow,M}^\uparrow \geq c_3 2^{(1-\exploreExp)(\kappa/4) M}] = 1- O(2^{-cM}).\]
\item On $n_M^\uparrow \leq c_1 2^{(1-\exploreExp)(\kappa/4) M}$, the amount of quantum natural time elapsed by the chunks is at most $c_1 2^{((1-\exploreExp)(\kappa/4)-1) M}$.  Therefore (from the explicit form of the L\'evy measure) the probability that either the left or right boundary length processes make a jump of size at least a constant times $2^{-\exploreExp(\kappa/4) M}$ is $O(2^{-cM})$.
\end{enumerate}
We can therefore iterate this for the rest of the steps in the upward exploration and likewise for the downward exploration to obtain the result.
\end{proof}

\subsection{Limiting exploration}
\label{subsec:limiting_exploration}

The exploration described in Section~\ref{subsec:exploration_def} and the result of Lemma~\ref{lem:point_to_point_exploration} from Section~\ref{subsec:perc_estimates} shows that with probability $1-O(e^{-c J})$ there exists a path of ``good chunks'' in a percolation exploration which starts with a chunk corresponding to an $\SLE_\kappa^0(\kappa-6)$ run for time of order $2^{-K}$ which forms a neighborhood of $0$ and terminates precisely at $x_1$, the point on $\R_+$ so that $\qbmeasure{h}([0,x_1]) = 1$.  The purpose of this section is to show that one can construct such an exploration which starts precisely at $0$ and ends precisely at $x_1$ by taking a limit as $K \to \infty$ (Lemma~\ref{lem:point_to_point_half_plane_exploration}).  We will then extend the construction to the setting of a quantum disk in Lemma~\ref{lem:point_to_point_disk_exploration}.

\begin{lemma}
\label{lem:point_to_point_half_plane_exploration}
Suppose that $\CH = (\h,h,0,\infty)$ is a quantum half-plane.  Let $x_1 \in \R_+$ be such that $\qbmeasure{h}([0,x_1]) = 1$.  Fix $J \in \N$ and suppose that we have families of events $E_{i,M}^\uparrow$, $E_{i,M}^\downarrow$ for path-decorated quantum surfaces for each $i \in \N$ and $M \geq J$ so that if we fix $K \geq J$ and perform the exploration as in Section~\ref{subsec:exploration_def} with $J \leq M \leq K$ then the conditions of Lemma~\ref{lem:point_to_point_exploration} are satisfied.

There exist families of quantum surfaces $(\CE_{M,J}^\uparrow)_{M \geq J}$ and $(\CE_{M,J}^\downarrow)_{M \geq J}$ coupled with $\CH$ so that the following are true.	
\begin{enumerate}[(i)]
\item $\CE_{M,J}^\uparrow$ is decreasing in $M$ and $\CE_{M,J}^\downarrow$ is increasing in $M$.
\item\label{it:unexplored} For each $M \geq J$, the conditional law of $\CH \setminus \CE_{M,J}^\uparrow$ given $\CE_{M,J}^\uparrow$ is a quantum half-plane.  The same holds with $\CE_{M,J}^\downarrow$ in place of $\CE_{M,J}^\uparrow$.
\item\label{it:cond_law} Fix $K \geq J$.  The conditional law of the surfaces $\CE_{M,J}^\uparrow \setminus \CE_{K,J}^\uparrow$ for $J \leq M \leq K$ and $\CE_{M,J}^\downarrow \setminus \CE_{K,J}^\uparrow$ for $J \leq M$ given $\CE_{K,J}^\uparrow$ is described by the exploration procedure from Section~\ref{subsec:exploration_def} where we abuse notation and write $E_{i,M}^\uparrow$ for the event that $E_{i,M}^\uparrow$ holds for the path-decorated quantum surface up until when the $i$th $\SLE_\kappa^0(\kappa-6)$ chunk is explored in $\CE_{M,J}^\uparrow$.  (In particular, each $\CE_{M,J}^\uparrow \setminus \CE_{M+1,J}^\uparrow$ and $\CE_{M,J}^\downarrow \setminus \CE_{M-1,J}^\downarrow$ can be further decomposed into a collection of $\SLE_\kappa^0(\kappa-6)$ chunks.)
\end{enumerate}
Moreover, there exists a constant $c > 0$ (which does not depend on $J$) so that the probability that any of the following occur is $O(2^{-c J})$:
\begin{itemize}
\item There exists $M \geq J$ so that the number of $\SLE_\kappa^0(\kappa-6)$ chunks which make up $\CE_{M,J}^\uparrow \setminus \CE_{M+1,J}^\uparrow$ for $J \leq M$ or $\CE_{M,J}^\downarrow \setminus \CE_{M-1,J}^\downarrow$ for $J \leq M+2$ is at least $c_F 2^{(1-\exploreExp)(\kappa/4) M}$.  The number of $\SLE_\kappa^0(\kappa-6)$ chunks which make up $\CE_{J,J}^\downarrow \setminus \CE_{J,J}^\uparrow$ is at least $c_F 2^{(\kappa/4) J}$.
\item There exists $M \geq J$ so that the boundary length process for an $\SLE_\kappa^0(\kappa-6)$ chunk which is part of either $\CE_{M,J}^\uparrow \setminus \CE_{M+1,J}^\uparrow$ or $\CE_{M,J}^\downarrow \setminus \CE_{M-1,J}^\downarrow$ makes a jump of size at least $2^{-\exploreExp (\kappa/4)M}$.
\item Either $0$ or $x_1$ is disconnected from $\infty$ by $\CE_{M,J}^\uparrow \setminus \CE_{M+1,J}^\uparrow$ or $\CE_{M,J}^\downarrow \setminus \CE_{M-1,J}^\downarrow$.
\item The difference in the boundary length of $\partial \CE_{M,J}^\uparrow \setminus \partial \h$ and $\partial \CE_{M,J}^\uparrow \cap \partial \h$ is at least $2^{-c M}$.  The difference in the boundary length of $\partial \CE_{M,J}^\downarrow \setminus \partial \h$ and $\partial \CE_{M,J}^\downarrow \cap \partial \h$ is at least $2^{-c J}$.
\end{itemize}
Moreover, the probability that any of the above three items occur for some fixed value of $M \geq J$ is $O(2^{-c M})$ (for the same constant $c$).
\end{lemma}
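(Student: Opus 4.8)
The plan is to construct the limiting family $(\CE_{M,J}^\uparrow)_{M \geq J}$, $(\CE_{M,J}^\downarrow)_{M \geq J}$ as a projective limit of the finite-$K$ explorations of Section~\ref{subsec:exploration_def}, using the Markov property of CPI explorations of quantum half-planes (Theorem~\ref{thm:cpi_wedge_explore}) to set up the consistency maps and the uniform-in-$K$ estimates of Lemma~\ref{lem:point_to_point_exploration} to control the limit. Concretely, for $K' > K \geq J$ I would first establish the consistency statement: if one runs the $(J,K')$-exploration and conditions on the collection of $\SLE_\kappa^0(\kappa-6)$ chunks explored at the finest scales $K',K'-1,\ldots,K+1$ of the upward part, then iterating Theorem~\ref{thm:cpi_wedge_explore} shows that the unexplored complement is again a quantum half-plane, conditionally independent of the explored chunks decorated by the curves and by the marked points recording how they are glued in, and that the conditional law of the remainder of the $(J,K')$-exploration agrees with the $(J,K)$-exploration carried out in this quantum half-plane. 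The point is that every ingredient of the procedure --- the stopping rule $\sigma_{j,M}$ (a first exit/hitting time measured in the quantum natural time of the trunk), the rule for choosing the starting point of each new chunk (measured in quantum boundary length along the current boundary), and the scale-termination indices $n_M$ (measured in quantum boundary length to the image of $x_1$) --- is intrinsic to the decorated quantum surface, so it is unaffected by the conditioning. The one point needing care is the bookkeeping of $x_1$: the chunks consume boundary length so the image of $x_1$ is displaced, but on the non-failure event only $O(2^{-\exploreExp(\kappa/4)(K+1)})$ units of boundary length are consumed at scales $\geq K+1$; I would absorb this by \emph{defining} $\CE_{K,J}^\uparrow$ to be the region explored before the boundary length distance to (the image of) $x_1$ first drops to $1-2^{-\exploreExp(\kappa/4)K}$, so that property~(iii) holds on the nose.

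Given this consistency, I would apply Kolmogorov's extension theorem to produce a single probability space carrying all of the $(J,K)$-explorations simultaneously and consistently coupled, realized as path-decorated quantum surfaces nested inside $\CH$; the surfaces $\CE_{M,J}^\uparrow$ and $\CE_{M,J}^\downarrow$ are then the increasing (in $K$) limits of their finite-$K$ counterparts, so $\CE_{M,J}^\uparrow \supseteq \CE_{M+1,J}^\uparrow$ and $\CE_{M,J}^\downarrow \subseteq \CE_{M+1,J}^\downarrow$, which is~(i). For~(ii): iterating Theorem~\ref{thm:cpi_wedge_explore} shows that $\CH$ minus the chunks at scales $M,\ldots,K$ is conditionally a quantum half-plane for each $K \geq M$; these are nested decreasing in $K$, and by the boundary-length control from Lemma~\ref{lem:point_to_point_exploration} (only $O(2^{-cM})$ units of boundary length are consumed at scales $\geq M$) the relevant marked boundary points converge, so the limit $\CH \setminus \CE_{M,J}^\uparrow$ is again a quantum half-plane; the same applies to $\CE_{M,J}^\downarrow$. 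Property~(iii) is precisely the consistency statement above after conditioning on $\CE_{K,J}^\uparrow$. The same boundary-length control also shows $\bigcap_M \CE_{M,J}^\uparrow = \{0\}$, so the limiting surfaces are honest, non-degenerate quantum surfaces and the infinitely many chunks accumulating at $0$ cause no blow-up.

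Finally, the displayed probability bounds are inherited directly from Lemma~\ref{lem:point_to_point_exploration} and the scale-by-scale estimates in its proof, where it is shown --- \emph{uniformly in $K$} --- that $\p[F_M^\uparrow] = O(2^{-cM})$ for $J \leq M \leq K$ and $\p[F_M^\downarrow] = O(2^{-cM})$ for $M \geq J$; these control exactly the four listed failure modes (too many chunks at a given scale, a boundary-length jump of size at least $2^{-\exploreExp(\kappa/4)M}$, disconnection of $0$ or $x_1$ from $\infty$, and the discrepancy between the two boundary-length measures of $\CE_{M,J}^\bullet$ exceeding $2^{-cM}$, this last one from the two-sided control on $L_{n_M^\uparrow,M}^\uparrow$ and $R_{n_M^\uparrow,M}^\uparrow$ together with the jump bound). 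Since these estimates do not depend on $K$ they pass to the projective limit; summing the per-scale bound over $M \geq J$ gives the $O(2^{-cJ})$ bound for the union over all scales, and retaining a single $M$ gives the per-scale $O(2^{-cM})$ statement. The main obstacle, in my view, is the first paragraph: establishing the consistency cleanly requires pinning down the bookkeeping of $x_1$ and of the embeddings so that Theorem~\ref{thm:cpi_wedge_explore} applies verbatim and the finite-$K$ laws genuinely form a projective system; once that is in place, everything else is a matter of passing uniform estimates to a limit.
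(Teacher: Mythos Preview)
Your projective-limit approach is genuinely different from the paper's, and there is a gap in the consistency step. The claim that the remainder of the $(J,K')$-exploration after scales $K',\ldots,K+1$ agrees in law with a fresh $(J,K)$-exploration in the complementary half-plane is not correct as stated: in that complement the target $x_1$ sits at boundary-length distance $d' \le 1-2^{-\exploreExp(\kappa/4)(K+1)} < 1$ from the new origin, whereas the $(J,K)$-exploration of Section~\ref{subsec:exploration_def} is defined with $x_1$ at distance exactly $1$. Consequently the standalone $(J,K)$-explorations do not form a projective system --- the $(J,K')$-law does not marginalize to the $(J,K)$-law in any natural sense --- and Kolmogorov extension does not apply. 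Your proposed fix, redefining $\CE_{K,J}^\uparrow$ via the stopping rule ``distance to $x_1$ first drops to $1-2^{-\exploreExp(\kappa/4)K}$'', presupposes the limiting object; it does not repair the inconsistency between the finite-$K$ laws, because the regions so defined in the $(J,K')$- and $(J,K'')$-explorations contain chunks at different sets of fine scales and hence are different decorated quantum surfaces. What you actually need is that the fine-scale chunks shrink to a point of $\h$ as $K \to \infty$, and this is a tightness statement, not a consistency one.

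The paper supplies exactly this tightness and takes a subsequential weak limit. It first bounds the top boundary length of $\CE_{M,J,K}^\uparrow$ uniformly in $K$ by controlling the running supremum of the boundary-length L\'evy process over the (uniformly-in-$K$ bounded) total quantum natural time. It then proves --- via the separately established Lemma~\ref{lem:sle_hitting_diam_bound} --- that bounded top boundary length forces bounded \emph{Euclidean diameter}, by comparing with the hitting probability of an auxiliary $\SLE_\kappa^0(\kappa-6)$ started to the left of the hull. Only with diameter tightness in hand do the uniformizing maps $g_{M,J,K}$ converge subsequentially in the Carath\'eodory topology, after which one checks that $h \circ g_{M,J}^{-1} + Q\log|(g_{M,J}^{-1})'|$ again describes a quantum half-plane. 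Your argument for~(ii) (``boundary-length control makes the marked points converge, so the limit is a quantum half-plane'') skips this Euclidean-diameter step entirely; boundary-length control alone does not yield convergence of the hulls as subsets of $\h$, which is what is needed to make sense of $\CH \setminus \CE_{M,J}^\uparrow$ as an embedded surface.
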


Before we proceed to the proof of Lemma~\ref{lem:point_to_point_half_plane_exploration}, we first need to collect the following lemma which will be used in order to show that the law of the diameter of the approximations to the $(\CE_{M,J}^\uparrow)_{M \geq J}$ and $(\CE_{M,J}^\downarrow)_{M \geq J}$ is tight.  The proof of this lemma can be skipped on a first reading.

\begin{figure}[ht!]
\begin{center}
\includegraphics[scale=1]{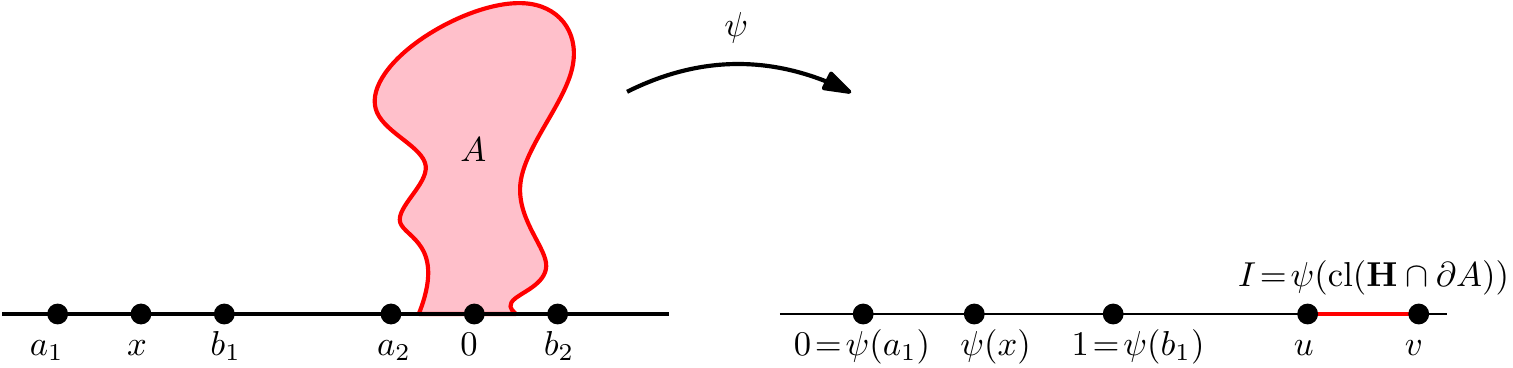}	
\end{center}
\caption{\label{fig:sle_hitting} Illustration of the setup and proof of Lemma~\ref{lem:sle_hitting_diam_bound}.}
\end{figure}

\begin{lemma}
\label{lem:sle_hitting_diam_bound}
Suppose that $-\infty < a_1 < b_1 < a_2 < 0 < b_2 < \infty$.  For every $p \in (0,1)$ there exists $d_0 \in (0,\infty)$ so that the following is true.  Let $A$ be a compact $\h$-hull (i.e., $A \subseteq \h$, $A = \closure{A} \cap \h$, $\closure{A}$ is compact, and $\h \setminus A$ is simply connected) so that $\closure{A} \cap \partial \h \subseteq [a_2,b_2]$.  Let $\eta$ be an $\SLE_\kappa^0(\kappa-6)$ process in $\h \setminus A$ from $x \in [a_1,b_1]$ to $\infty$.  If the probability that $\eta$ hits $A$ is at most $p$ then $\diam(A) \leq d_0$.
\end{lemma}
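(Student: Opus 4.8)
The plan is to prove the contrapositive: assuming $\diam(A)$ is large, I will show that the $\SLE_\kappa^0(\kappa-6)$ process $\eta$ from $x \in [a_1,b_1]$ to $\infty$ in $\h \setminus A$ hits $A$ with probability close to $1$ (in particular larger than $p$). The key structural fact to exploit is that $\eta$ is a CPI: its trunk is an $\SLE_{\kappa'}(\kappa'-6)$ process with a single force point, and hitting $A$ happens whenever the trunk (or one of the $\CLE_\kappa$ loops it absorbs) comes within reach of $A$. Since $\closure{A} \cap \partial \h \subseteq [a_2, b_2]$ is confined to a fixed bounded interval while $\diam(A)$ is large, the hull $A$ must extend far into $\h$ (i.e., $A$ contains points of large modulus while its base stays in $[a_2,b_2]$), so $A$ nearly surrounds a large region and comes close to the real line near the fixed interval. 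Thus $\eta$, started at a nearby boundary point $x$, is essentially forced to interact with $A$ before it can escape to $\infty$.

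The concrete steps I would carry out are as follows. First, I would reduce to a statement about the trunk: it suffices to show the trunk $\eta'$ of $\eta$ (an $\SLE_{\kappa'}(\kappa'-6)$ in $\h \setminus A$ from $x$ to $\infty$, which for $\kappa' \in (4,6)$ is a boundary-touching, non-simple curve that does hit the real line) hits $A$ with high probability, since $A \subseteq \partial(\h\setminus A)$ and if the trunk hits a point of $\partial A$ then $\eta$ hits $A$. Second, I would use a standard harmonic-measure / extremal-length argument: if $\diam(A) \geq d$ with $d$ large, then by the Beurling estimate, the harmonic measure of $A$ from the point $x \in [a_1,b_1]$ inside $\h \setminus A$ is bounded below by a positive constant depending only on $a_1,b_1,a_2,b_2$ (not on $d$), because $A$ forms a large obstacle whose base is a bounded distance from $x$; in fact, one gets that $A$ separates $x$ from $\infty$ along many scales, so the extremal distance from a neighborhood of $x$ to $\infty$ in $\h\setminus A$ is large. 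Third, I would invoke the fact that an $\SLE_{\kappa'}(\kappa'-6)$ curve from $x$ to $\infty$ in a domain, targeted at $\infty$, must travel through the full "conformal width" of the domain; concretely, a Loewner chain in $\h\setminus A$ from $x$ to $\infty$ has capacity tending to infinity, and on each unit of capacity the curve has a uniformly positive chance of hitting the boundary component $A$ (because, after conformally mapping $\h\setminus A$ to $\h$, the image of $A$ is a boundary arc whose harmonic measure from the tip is bounded below, and an $\SLE_{\kappa'}(\kappa'-6)$, which hits the boundary, has positive probability of hitting a given boundary arc of definite harmonic measure within bounded capacity time by \cite[Theorem 1.4]{ms2016ig1} type estimates / the one-sided-restriction and hitting estimates). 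Iterating over the many units of capacity that $\eta$ must traverse before disconnecting $\infty$ from $A$ gives that $\p[\eta \text{ hits } A] \geq 1 - (1-c)^{N(d)}$ where $N(d) \to \infty$ as $d \to \infty$; choosing $d_0$ so that this exceeds $p$ completes the proof.

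The main obstacle I anticipate is making precise the claim that $\diam(A)$ large forces $\eta$ to traverse many units of capacity (or many "obstacle scales") before it can reach $\infty$ without touching $A$, uniformly over all admissible hulls $A$. The subtlety is that a hull can have large diameter while being "thin" — e.g., a long thin tentacle — so large diameter alone does not literally force $A$ to surround $x$. The resolution is that the hypothesis $\closure{A} \cap \partial \h \subseteq [a_2,b_2]$ pins the base of $A$: combined with $a_1 < b_1 < a_2$, the point $x$ lies strictly to the left of the entire real-line footprint of $A$, so any path in $\h\setminus A$ from $x$ to $\infty$ must go "around" $A$; if $\diam(A)$ is large, it must go around a large obstacle, and this genuinely costs a large amount of extremal distance by a Beurling-type / quantitative $\pi/2$-theorem estimate. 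I would make this rigorous by observing that $\h \setminus A$ contains, for each dyadic scale $2^k \leq \diam(A)$, an annulus-type region separating $x$ from $\infty$ (the portion of $\h$ at modulus $\approx 2^k$ that is not in $A$ but whose "inner" complement meets $A$), apply the strong Markov property and the conformal covariance of $\SLE_{\kappa'}(\kappa'-6)$ to get an independent-in-$k$ lower bound on the chance of hitting $A$ in each such region, and conclude. The remaining routine points — verifying the harmonic measure lower bounds are uniform in $A$ given the fixed constraint on the base, and that the $\SLE_{\kappa'}(\kappa'-6)$ boundary-hitting probabilities are uniform — follow from Beurling's estimate and the imaginary geometry hitting estimates already cited in the paper.
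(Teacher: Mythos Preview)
Your contrapositive strategy and multi-scale idea match the paper's approach, but two of your heuristics are wrong and you miss the device that makes the annuli step go through. The claims that ``the extremal distance from a neighborhood of $x$ to $\infty$ in $\h\setminus A$ is large'' and that ``any path in $\h\setminus A$ from $x$ to $\infty$ must go around $A$'' are both false: if $A$ is the thin vertical segment $\{0\}\times(0,R]$, a path from $x\in[a_1,b_1]$ can escape to $\infty$ through the left half-plane without ever approaching $A$, and the extremal distance stays bounded as $R\to\infty$. So neither extremal length nor topology is the mechanism; the conclusion genuinely depends on $\eta$ being a boundary-touching $\SLE$-type curve.

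The paper does not run the annuli argument with the trunk in $\h\setminus A$. It first applies the conformal map $\psi:\h\setminus A\to\h$ (normalized by $a_1\mapsto 0$, $b_1\mapsto 1$, $\infty\mapsto\infty$), turning $\partial A$ into an interval $[u,v]$ and $\eta$ into a standard $\SLE_\kappa^0(\kappa-6)$ in $\h$; the lemma then reduces to the conformal-geometric facts (i) $u\leq M$ for some $M=M(a_1,b_1,a_2,b_2)$ and (ii) $v\to\infty$ as $\diam(A)\to\infty$. Both are proved using an auxiliary $\SLE_6$ from $b_1$ and its \emph{locality}: one runs the $\SLE_6$ in $\h$ rather than $\h\setminus A$, and at each dyadic scale $2^k\lesssim\diam(A)$ it has uniformly positive conditional probability of touching $[-2^k,-2^{k-1}]$ and then $[2^{k-1},2^k]$ before leaving the half-annulus; any such excursion must cross $A$ since $A$ connects the inner and outer circles of the half-annulus. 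Locality then transfers this to $\h\setminus A$ and forces $v\to\infty$. Your proposal to carry out the iteration directly with the trunk in $\h\setminus A$ runs into trouble precisely because the trunk (an $\SLE_{\kappa'}(\kappa'/2-3;\kappa'/2-3)$) lacks locality: after conditioning on an initial segment you are in a complicated subdomain of $\h\setminus A$, and the uniform-in-$k$ lower bound on hitting $A$ is no longer apparent.
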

\begin{proof}
See Figure~\ref{fig:sle_hitting} for an illustration of the setup. Let $\psi$ be the unique conformal transformation from $\h \setminus A$ to $\h$ which fixes $\infty$, takes $a_1$ to $0$, and $b_1$ to $1$. Then $\psi(\eta)$ is an $\SLE_\kappa^0(\kappa-6)$ process in $\h$ from $\psi(x) \in [0,1]$ to $\infty$. Let $I = \psi(\closure{\h \cap \partial A}) = [u,v]$.  By assumption, the probability that $\psi(\eta)$ hits $I$ is at most $p$.

We claim that there exists $M \in (0,\infty)$ depending only on $a_1$, $b_1$, $a_2$, $b_2$ so that $u \leq M$.  Indeed, suppose that $\eta'$ is an $\SLE_6$ in $\h$ from $b_1$ to $\infty$.  Then there exists $q \in (0,1)$ so that the probability that $\eta'$ hits $[b_2,\infty)$ before hitting $(-\infty,a_1]$ is at least $q$.  Therefore the probability that $\eta'$ hits $A$ or $[b_2,\infty)$ before hitting $(-\infty,a_1]$ is also at least $q$.  Now assume that $\eta'$ is an $\SLE_6$ in $\h \setminus A$ instead of $\h$.  By the locality property for $\SLE_6$, the same statement holds.  Therefore the probability that $\psi(\eta')$, an $\SLE_6$ in $\h$ from $1$ to $\infty$, hits $\psi(\h \cap \partial A) \cup \psi([b_2,\infty)) \subseteq [u,\infty)$ before hitting $(-\infty,0]$ is at least $q$.  This proves the claim. 

Let $N \geq M$ be such that the probability that $\psi(\eta)$ hits $[M,N]$ is at least $p$.  We note that such an $N$ exists as the trunk of $\eta$ is an $\SLE_{\kappa'}(\kappa'/2-3;\kappa'/2-3)$ process in $\h$ from $x$ to $\infty$ and the probability that such a process hits any infinite boundary interval is $1$.  Note that $N$ depends only on $a_1$, $a_2$, $b_1$, $b_2$, and $p$.  Since the probability that $\psi(\eta)$ hits $I$ is at most $p$ and $u \leq M$ it must be that $v \leq N$.

To complete the proof, it suffices to show that for each $R \in (0,\infty)$ there exists $d \in (0,\infty)$ so that if $\diam(A) \geq d$ then $v \geq R$.  To see that this is the case, let $\eta'$ be as above.  Suppose that we have $k \in \N$ so that $A$ crosses the annulus $A_k = B(0,2^k) \setminus B(0,2^{k-1})$.  Let $\tau_k = \inf\{t \geq 0 : \eta'(t) \in \partial B(0,3 \cdot 2^{k-2})\}$ (i.e., $\tau_k$ is the first time that $\eta'$ hits the circle in the middle of $A_k$) and let $\sigma_k = \inf\{t \geq \tau_k : \eta(t) \notin A_k\}$.  If $\eta'|_{[\tau_k,\sigma_k]}$ hits $[-2^k,-2^{k-1}]$ and then hits $[2^{k-1},2^k]$, then $\eta'|_{[\tau_k,\sigma_k]}$ hits $A$.  As there exists $p_0 > 0$ so that the conditional probability of this event given $\eta'|_{[0,\sigma_{k-1}]}$ is at least $p_0$, it follows that if $\diam(A) \geq d$ then the probability that $\eta'$ hits $A$ is at least $1-O(d^{-a})$ for a constant $a > 0$.  Arguing as above, this implies that $v \to \infty$ as $\diam(A) \to \infty$.
\end{proof}

\begin{proof}[Proof of Lemma~\ref{lem:point_to_point_half_plane_exploration}]
We are going to construct the family of quantum surfaces described in the lemma statement using the exploration that we have defined in Section~\ref{subsec:exploration_def} and by taking an appropriate subsequential limit.  The first step is to control the change in the boundary length process (Step 1), then we will obtain a diameter bound for the approximations (Step 2), then deduce the existence of the subsequential limit (Step 3), and then finally describe the form of the conditional law given the exploration (Step 4).

Fix $K \geq J$.  Suppose that we perform the exploration as in Section~\ref{subsec:exploration_def} with $J \leq M \leq K$.  Let $\CE_{M,J,K}^\uparrow$ (resp.\ $\CE_{M,J,K}^\downarrow$) be the quantum surface which consists of all of the $\SLE_\kappa^0(\kappa-6)$ chunks for the exploration up to when the increasing (resp.\ decreasing) part with chunks with quantum natural time in $[\delta_0 2^{-M}, 2^{-M}]$ has been completed.

\noindent{\it Step 1. Change in boundary length.}
Let $F$ be the event that the exploration fails so that $F^c$ is the event that it does not fail.  For each $J \leq M \leq K$, let $T_M^\uparrow$ denote the quantum length of $\partial \CE_{M,J,K}^\uparrow \setminus \partial \CH$.  For each $J \leq M$ we also let $T_M^\downarrow$ denote the quantum length of $\partial \CE_{M,J,K}^\downarrow \setminus \partial \CH$.  We are now going to show that for each $\epsilon > 0$ there exists a constant $C \in (0,\infty)$ so that
\[ \p\!\left[ \left( \left\{\max_{J \leq M\leq K} T_M^\uparrow \geq C \right\} \cup \left\{\sup_{J \leq M} T_M^\downarrow \geq C \right\} \right) \cap F^c \right] \leq \epsilon.\]

Consider the exploration when we have parameterized it by the quantum natural time of the individual $\SLE_\kappa^0(\kappa-6)$ chunks.  Let $X_t$ be the overall boundary length process.  That is, $X_t$ is equal to the quantum length of the top boundary (part in $\h$) of the exploration minus the quantum length of the bottom boundary (part in $\partial \h$) of the exploration when $t$ units of quantum natural time has elapsed.  Note that the quantum length of the top boundary at time $t$ is equal to $X_t$ plus the quantum length of the bottom at time $t$.  On the event that the exploration does not fail, i.e., $F^c$ occurs, we have that the bottom is contained in $[x_{-1},x_1]$.  As $\qbmeasure{h}([x_{-1},x_1]) = 2$, the quantum length of the bottom boundary is at most $2$ on $F^c$.  Therefore to give an upper bound to the quantum length of the top boundary on this event it suffices to give an upper bound to $X_t$.

Recall that $X_t$ evolves as a $4/\kappa$-stable L\'evy process.  On $F^c$, we have that $n_M^\uparrow \leq c_F 2^{(1-\exploreExp)(\kappa/4)M}$ for each $J \leq M \leq K$, $n_M^\downarrow \leq c_F 2^{(1-\exploreExp)(\kappa/4)M}$ for each $M \geq J+1$, and $n_J^\downarrow \leq c_F 2^{(\kappa/4)J}$.  At the $M$th step of the upward or downward exploration (i.e., when exploring chunks with quantum natural time in $[\delta_0 2^{-M},2^{-M}]$), on $F^c$ the amount of quantum natural time involved is thus at most $c_F 2^{((1-\exploreExp)(\kappa/4)-1)M}$ for $J \leq M \leq K$ (upward exploration) or $M \geq J+1$ (downward exploration) or at most $c_F 2^{(\kappa/4-1)J}$ (first part of the downward exploration).  Note that $(1-\exploreExp)(\kappa/4)-1 < 0$ as $\exploreExp \in (0,1)$ and $\kappa \in (8/3,4)$ and also that $\kappa/4-1 < 0$.  By summing over $M \geq J$, we thus see that the total amount of quantum natural time involved in the exploration on $F^c$ is at most
\begin{equation}
\label{eqn:t_bound}
T = c_F 2^{(\kappa/4-1)J} + 2 c_F \sum_{M=J}^\infty 2^{((1-\exploreExp)(\kappa/4)-1)M}.
\end{equation}
We emphasize here that $T$ is be taken to be uniform in $K$.  By \cite[Chapter~VIII, Proposition~4]{bertoin1996levy}, we have that $\sup_{0 \leq t \leq T} X_t$ is a.s.\ finite.  Altogether, what we have shown implies that the law of the top boundary length of the exploration is tight as $K \to \infty$ on $F^c$.

\noindent{\it Step 2.  Diameter bound.}  Fix $M \geq J$.  We are going to show that the diameter of $\CE_{M,J,K}^\uparrow$ is tight on $F^c$ as $K \to \infty$.  Let $x_{-2} < 0$ be so that $\qbmeasure{h}([x_{-2},0]) = 2$.  Let $\eta$ be an $\SLE_\kappa^0(\kappa-6)$ process in $\h$ from $x_{-2}$ to $\infty$ coupled as a CPI in $\Gamma$ conditionally independently of the rest of the exploration.  Then we know that the conditional law of $\eta$ given $\CE_{M,J,K}^\uparrow$ is that of an $\SLE_\kappa^0(\kappa-6)$ process in $\h \setminus \CE_{M,J,K}^\uparrow$ from $x_{-2}$ to $\infty$.  Note that we can write $\partial \h  \cap \partial \CE_{I,J,K}^\uparrow$ as $[a,b]$.  Let $u$ (resp.\ $v$) be the quantum length of $[x_{-2},a]$ (resp.\ $[x_{-2},a]$ and $\h \cap \partial \CE_{I,J,K}^\uparrow$, i.e., the top of $\CE_{I,J,K}^\uparrow$).  As the quantum surface parameterized by $\h \setminus \CE_{I,J,K}^\uparrow$ is a quantum half-plane, it follows that the conditional probability given $\CE_{M,J,K}^\uparrow$ that $\eta$ hits $\CE_{M,J,K}^\uparrow$ is equal to the probability that the running infimum of a $4/\kappa$-stable L\'evy process with both upward and downward jumps starting from $0$ hits $[2-v,2-u]$.  As explained above, the quantum length $v-u$ of the top of $\CE_{I,J,K}^\uparrow$ is tight on $F^c$ as $K \to \infty$.  Therefore the probability that $\eta$ hits $\CE_{M,J,K}^\uparrow$ is bounded away from $1$ on $F^c$ as $K \to \infty$.  By Lemma~\ref{lem:sle_hitting_diam_bound}, this implies that the law of the diameter of $\CE_{M,J,K}^\uparrow$ is tight on $F^c$ as $K \to \infty$.

\noindent{\it Step 3. Subsequential limit and law of unexplored region.}  Let $g_{M,J,K}$ be the unique conformal map from $\h \setminus \CE_{M,J,K}^\uparrow$ to $\h$ with $g_{I,J,K}(z) - z \to 0$ as $z \to \infty$.  Then it follows that the law of $g_{I,J,K}$ (with the Caratheodory topology) on $F^c$ is tight as $K \to \infty$ since the diameter of $\CE_{M,J,K}^\uparrow$ on $F^c$ is tight as $K \to \infty$.  Indeed, this follows from \cite{law2005conformally}.  Suppose that we pass to a sequence $(K_n)$ so that the joint law of any finite collection of the $g_{I,J,K_n}$ together with $h$ converges weakly as $n \to \infty$.  Let $g_{I,J}$ denote the limit as $n \to \infty$.  We abuse notation and write $h$ for the instance of $h$ which is coupled with $g_{I,J}$ as well as with $g_{I,J,K_n}$ for each $n$.  As the quantum surface described by $h \circ g_{I,J,K_n}^{-1} + Q \log| (g_{I,J,K_n}^{-1})'|$ is a quantum half-plane for each $n$, it follows that $h \circ g_{I,J}^{-1} + Q \log |(g_{I,J}^{-1})'|$ is a quantum half-plane as well.  This proves that~\eqref{it:unexplored} holds.

\noindent{\it Step 4.  Form of the conditional law.}  By the construction of the subsequential limit, it follows that~\eqref{it:cond_law} holds.  That the final points hold is clear from the construction.
\end{proof}

We now deduce the analog of Lemma~\ref{lem:point_to_point_half_plane_exploration} in the case of the quantum disk.  We note that the exploration that we have defined in Section~\ref{subsec:exploration_def} makes sense on a doubly marked quantum disk, although the estimates we have previously established for its basic properties are for the exploration in the case of the quantum half-plane.  The purpose of the following lemma is to deduce these properties when we work instead on the disk.

\begin{lemma}
\label{lem:point_to_point_disk_exploration}
Fix $\ell \geq 2$ and suppose that $\CD = (\D,h) \sim \qdiskL{\gamma}{\ell}$.  Suppose that $x \in \partial \CD$ is sampled from $\qbmeasure{h}$ and $y \in \partial \CD$ is such that $\qbmeasure{h}(\ccwBoundary{x}{y}{\partial \CD}) = 1$.  Let $z \in \ccwBoundary{x}{y}{\partial \CD}$ be such that $\qbmeasure{h}(\cwBoundary{x}{z}{\partial \CD}) = \qbmeasure{h}(\cwBoundary{z}{y}{\partial \CD}) = \qbmeasure{h}(\cwBoundary{x}{y}{\partial \CD})/2 = (\ell-1)/2$.  For each $J \in \N$, there is a family of quantum surfaces $\CE_{I,J}^\uparrow$ and $\CE_{I,J}^\downarrow$ so that all of the properties of Lemma~\ref{lem:point_to_point_half_plane_exploration} hold but with the quantum half-plane replaced by $\CD$ and the marked points at $0$, $x_1$, $\infty$ replaced by $x$, $y$, and $z$, respectively.
\end{lemma}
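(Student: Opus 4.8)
The plan is to transfer the quantum half-plane result of Lemma~\ref{lem:point_to_point_half_plane_exploration} to the quantum disk $\CD \sim \qdiskL{\gamma}{\ell}$ via a local absolute continuity argument between the two surfaces near the marked boundary point $x$. The key structural fact is that the exploration of Section~\ref{subsec:exploration_def} is local: it only examines the quantum surface within a bounded number of units of quantum natural time (namely at most $T$ as in~\eqref{eqn:t_bound}, uniformly in $K$), and on the non-failure event the part of the boundary it touches is contained in the segment of $\partial \CD$ of total $\qbmeasure{h}$-length $2$ straddling $x$ (i.e.\ the analog of $[x_{-1},x_1]$). So the exploration only ever sees a quantum-half-plane-sized neighborhood of $x$, and the plan is to run the \emph{same} construction verbatim on $\CD$, using the absolute continuity of the field near $x$ to port over all the probabilistic estimates.

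Concretely, first I would record the absolute continuity statement: if $\CD \sim \qdiskL{\gamma}{\ell}$ with $x$ sampled from $\qbmeasure{h}$, then the law of $(\CD, h, x)$ restricted to (the $\sigma$-algebra generated by the field on) a neighborhood of $x$ of bounded quantum boundary length is mutually absolutely continuous with respect to the corresponding restriction of the quantum half-plane $\CH = (\h, h, 0, \infty)$ marked at $0$, with a Radon--Nikodym derivative that is bounded above and below on the event that the explored region stays within that neighborhood. This is standard from the definition of $\qdiskL{\gamma}{\ell}$ as (a weighted version of) a field built from the GFF plus a $\log$ singularity, together with the fact that the quantum half-plane has the boundary-length-invariance property recorded after the statement of $\qwedgeW{\gamma}{2}$; one localizes using the circle-average or boundary-length embedding and the Cameron--Martin/Girsanov comparison for the GFF. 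Since a quantum boundary length of $2$ around $x$ suffices (this is where the non-failure event puts the bottom of the exploration), and since $\ell \geq 2$ guarantees there is enough boundary, this comparison applies.

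Given this, the argument runs as follows. I would define $\CE_{I,J}^\uparrow$, $\CE_{I,J}^\downarrow$ on $\CD$ by performing exactly the exploration of Section~\ref{subsec:exploration_def} started from $x$ (with the target point playing the role of $x_1$ being $y$, for which $\qbmeasure{h}(\ccwBoundary{x}{y}{\partial \CD}) = 1$, and $z$ the analog of the symmetric midpoint), taking the same subsequential limit in $K$. Conditions (i)--(iii) of Lemma~\ref{lem:point_to_point_exploration} and the events $E_{i,M}^\uparrow, E_{i,M}^\downarrow$ are defined in terms of path-decorated quantum surfaces, so they make literal sense on $\CD$. By Theorem~\ref{thm:cpi_wedge_explore} the CPI exploration of a $\CLE_\kappa$ on any quantum surface locally looks the same (the Lévy boundary-length processes and the independence of the complementary half-plane are features of the quantum half-plane, which reappears after each chunk is cut out), so the per-chunk estimates used inside the proof of Lemma~\ref{lem:point_to_point_exploration} (Lemmas~\ref{lem:bottom_length_moment_bound}, \ref{lem:top_length_moment_bound}, \ref{lem:stablejumpsum}, and the Lévy-process inputs) hold identically. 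The only place the global geometry of the half-plane enters is in controlling events on the full explored region (e.g.\ the diameter bound in Step~2 of the proof of Lemma~\ref{lem:point_to_point_half_plane_exploration}, via Lemma~\ref{lem:sle_hitting_diam_bound}, and the total-quantum-natural-time bound~\eqref{eqn:t_bound}); on the non-failure event the explored region is contained in the bounded-boundary-length neighborhood of $x$, so the absolute continuity above converts each half-plane estimate into the corresponding disk estimate at the cost of a bounded multiplicative constant in the probabilities. In particular the $O(2^{-cJ})$ and $O(2^{-cM})$ bounds survive (with a possibly smaller $c$, or the same $c$ after absorbing constants), which is exactly what the statement asks for; property~\eqref{it:unexplored}, that $\CD \setminus \CE_{M,J}^\uparrow$ is a quantum disk of the appropriate (random) boundary length, follows from the Markovian nature of the CPI exploration and Theorem~\ref{thm:cpi_wedge_explore} as in Step~3 of the half-plane proof.

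The main obstacle is making the local absolute continuity between $\qdiskL{\gamma}{\ell}$ and the quantum half-plane fully rigorous and, more importantly, checking that the ``explored region stays in the neighborhood'' event on which the Radon--Nikodym derivative is controlled is implied by (a slight strengthening of) the non-failure event $F^c$ — one must be careful that the failure event as defined already forbids the exploration from wandering outside the $\qbmeasure{h}$-length-$2$ window around $x$, and if not, add that to the definition of failure and check it still has probability $O(2^{-cJ})$ by the boundary-length estimates in Step~1 of the proof of Lemma~\ref{lem:point_to_point_half_plane_exploration}. A secondary technical point is that the subsequential-limit-in-$K$ construction must be redone on $\CD$; but since the Caratheodory-tightness input (Step~3 of the half-plane proof, via \cite{law2005conformally}) only uses the diameter bound, which we have transferred, this is routine. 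I would therefore present the proof as: (1) state and prove the localized absolute continuity lemma; (2) note the exploration and all its defining events make sense on $\CD$; (3) observe the non-failure event confines the exploration to the good neighborhood; (4) transfer each estimate from Lemma~\ref{lem:point_to_point_half_plane_exploration} through the bounded Radon--Nikodym derivative; (5) redo the subsequential limit. This gives all the asserted properties.
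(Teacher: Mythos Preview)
Your overall strategy---absolute continuity between the disk and half-plane explorations, with the Radon--Nikodym derivative bounded on the non-failure event---is exactly right, but the paper executes it far more directly than your field-level comparison.

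The paper's proof uses Lemma~\ref{lem:rn_derivative} (quoted from \cite[Proposition~5.1]{msw2020simplecle}), which gives an \emph{explicit} formula for the Radon--Nikodym derivative between the boundary-length processes $(L_t,R_t)$ on the disk and on the half-plane: on the event $E_t$ that neither process has hit zero, the derivative is $(\Delta_0/\Delta_t)^{4/\kappa+1}$ where $\Delta_t = L_t + R_t$ is the total boundary length of the remaining component. Since the entire exploration of Section~\ref{subsec:exploration_def} is measurable with respect to the boundary-length processes (the chunks, the good/bad events, the failure event are all defined in terms of $(L,R)$), this single formula compares the disk and half-plane explorations in one stroke. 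On the non-failure event the exploration never disconnects $x_{-1}$ or $x_1$ (which is part of the definition of failure), so $E_t$ holds; moreover the boundary-length deviations are controlled, so $\Delta_t$ stays bounded below (it starts at $\ell \geq 2$ and cannot shrink past the length of the arc through $z$), and hence the Radon--Nikodym derivative is bounded above. That is the entire proof.

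Your route---localizing the \emph{field} in a quantum-boundary-length neighborhood of $x$ and invoking a Cameron--Martin comparison---could in principle be made to work, but it is substantially more roundabout, and the ``main obstacle'' you flag (making the field-level absolute continuity precise and showing the derivative is bounded on the right event) is a genuine technical burden that the paper's approach avoids entirely. Note also that your parenthetical ``the independence of the complementary half-plane \ldots\ reappears after each chunk is cut out'' is not correct on the disk: after cutting a chunk from a quantum disk one gets a quantum disk, and the boundary-length processes are \emph{not} independent stable L\'evy processes there---they are L\'evy processes tilted by exactly the factor $(\Delta_0/\Delta_t)^{4/\kappa+1}$. This is precisely why one needs Lemma~\ref{lem:rn_derivative} rather than a direct appeal to Theorem~\ref{thm:cpi_wedge_explore}, and why the paper's formulation at the level of $(L,R)$ is the natural one.
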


In order to deduce Lemma~\ref{lem:point_to_point_disk_exploration} from Lemma~\ref{lem:point_to_point_half_plane_exploration}, we first record the following Radon-Nikodym derivative (\cite[Proposition~5.1]{msw2020simplecle}) which serves to compare $\SLE_\kappa^0(\kappa-6)$ explorations on quantum disks and on quantum half-planes.

\begin{lemma}
\label{lem:rn_derivative}
Fix $\ell > 0$ and suppose that $\CD = (\D,h) \sim \qdiskL{\gamma}{\ell}$.  Suppose that $x \in \partial \CD$ is sampled from $\qbmeasure{h}$, let $L_0,R_0 > 0$ be such that $\ell = L_0 + R_0$, and let $y \in \partial \CD$ be such that $\qbmeasure{h}(\cwBoundary{x}{y}{\partial \CD}) = L_0$ and $\qbmeasure{h}(\ccwBoundary{x}{y}{\partial \CD}) = R_0$.  Suppose that $\eta$ is an independent $\SLE_\kappa^0(\kappa-6)$ in $\CD$ from $x$ to $y$ which is parameterized by the quantum natural time of its trunk.  For each $t \geq 0$, we let $\CD_t$ denote the component of $\CD \setminus \eta([0,t])$ with $y$ on its boundary and let $L_t$ (resp.\ $R_t$) denote the quantum length of $\cwBoundary{\eta(t)}{y}{\partial \CD_t}$ (resp.\ $\ccwBoundary{\eta(t)}{y}{\partial \CD_t}$).  Let $E_t$ be the event that $\inf_{0 \leq s \leq t} L_s > 0$ and $\inf_{0 \leq s \leq t} R_s > 0$.  On $E_t$, the Radon-Nikodym derivative between the law of $(L,R)|_{[0,t]}$ and a pair of independent $4/\kappa$-stable L\'evy processes starting from $(L_0,R_0)$ on $[0,t]$ is given by $(\Delta_0/\Delta_t)^{4/\kappa+1}$ where $\Delta_t = L_t + R_t$.
\end{lemma}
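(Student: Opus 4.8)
The plan is to compare the $\SLE_\kappa^0(\kappa-6)$ exploration of the quantum disk $\CD\sim\qdiskL{\gamma}{\ell}$ with the corresponding exploration of a quantum half-plane, for which Theorem~\ref{thm:cpi_wedge_explore} already tells us that the boundary-length process $(L,R)$ is a pair of \emph{independent} $4/\kappa$-stable L\'evy processes. The argument has four ingredients. \emph{(a)} A Markov property of the disk exploration: conditionally on $\eta|_{[0,t]}$ and $(L_t,R_t)$, the surface $\CD_t$ is a quantum disk of boundary length $\Delta_t=L_t+R_t$ whose two marked points ($\eta(t)$ and $y$) lie at clockwise and counterclockwise distances $L_t,R_t$, and the continuation of $\eta$ in $\CD_t$ is an independent CPI of it; in particular the disk explorations, started from arbitrary boundary lengths, form a consistent Markov family on $(0,\infty)^2$. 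This is the disk analog of Theorem~\ref{thm:cpi_wedge_explore} and follows from the coupling of $\CLE_\kappa$ with LQG in \cite{msw2020simplecle} together with the fact that the loops of $\Gamma$ in each unexplored complementary component of a CPI form a conditionally independent $\CLE_\kappa$ (Section~\ref{subsec:cle}), which under the LQG coupling says exactly that $\CD_t$ is such a quantum disk. \emph{(b)} Local absolute continuity: the field of a quantum disk of boundary length $\ell$ near a marked boundary point is mutually absolutely continuous with that of a quantum half-plane near its marked point (a standard GFF comparison, cf.\ \cite{dms2014mating}); since the CPI is in either case a chordal $\SLE_\kappa(\kappa-6)$-type curve whose law depends on the surface only through its conformal structure, it follows that, for $t$ small, the law of the increment $(L_t-L_0,R_t-R_0)$ under $\qdiskL{\gamma}{\ell}$ is absolutely continuous with respect to the half-plane increment process, i.e.\ (adding back the constant $(L_0,R_0)$) the law of $(L,R)|_{[0,t]}$ under the disk is absolutely continuous with respect to a pair of independent $4/\kappa$-stable L\'evy processes started from $(L_0,R_0)$.

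\emph{(c)} Combining (a) and (b), the Radon--Nikodym derivative of the disk process with respect to the L\'evy pair, on the event $E_t$ that both coordinates have stayed positive, extends to all $t$ (past the small-$t$ window of (b)) as a positive multiplicative functional; since we are comparing two time-homogeneous Markov families, this derivative must have the form $f(L_t,R_t)/f(L_0,R_0)$ on $E_t$, where $f$ is a single (intrinsic) excessive function for the L\'evy pair killed on leaving $(0,\infty)^2$. \emph{(d)} It remains to identify $f$. The deterministic field shift $\tfrac{2}{\gamma}\log\lambda$, which turns $\qdiskL{\gamma}{\ell}$ (with marked points at distances $L_0,R_0$) into $\qdiskL{\gamma}{\lambda\ell}$ (with marked points at distances $\lambda L_0,\lambda R_0$) while multiplying all boundary lengths by $\lambda$ and quantum natural time by $\lambda^{4/\kappa}$, together with the $4/\kappa$-stable scaling of the L\'evy pair, shows that $f(\lambda L,\lambda R)/f(L,R)$ is independent of $(L,R)$, hence that $f$ is homogeneous of some degree $-\theta$; and $f(L,R)=f(R,L)$ by the left/right symmetry of the $\beta=0$ CPI. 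The degree $\theta$ and the precise form of $f$ are then pinned down by the requirements that $f$ be excessive for the killed pair and that the associated $h$-transform have the terminal behavior of the disk exploration — namely that $L$ and $R$ vanish \emph{simultaneously}, since $\eta$ ends at the single point $y$ — which forces $f(L,R)=c\,(L+R)^{-(4/\kappa+1)}$, with $c=1$ by evaluating at $t=0$, where $\Delta_0=L_0+R_0=\ell$. This gives the stated derivative $(\Delta_0/\Delta_t)^{4/\kappa+1}$ for the $\CF_t$-sigma-algebra; upgrading from a single time to the path $(L,R)|_{[0,t]}$ is then routine, using the identity at a countable dense set of times together with the c\`adl\`ag regularity of $(L,R)$.

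The main obstacle is Step (d): one must genuinely verify that $(L+R)^{-(4/\kappa+1)}$ is excessive for the pair killed on $\partial((0,\infty)^2)$ and that its $h$-transform really is the disk exploration, rather than merely recording that this function has the right homogeneity and symmetry. The subtlety is that the conditioning characterizing the disk exploration — that the two L\'evy coordinates reach $0$ at the same instant, rather than one strictly before the other — is degenerate (of codimension at least one) and so must be realized through an $\epsilon\to 0$ limiting procedure; it is precisely this limit that fixes the exponent $4/\kappa+1$. I would carry it out either by a direct computation with the generators of the (in general asymmetric) $4/\kappa$-stable processes, using that the stable generator maps $u\mapsto u^{-\theta}$ to a multiple of $u^{-\theta-4/\kappa}$ on $(0,\infty)$, or --- more robustly --- by invoking the excursion-theoretic description of the two-pointed quantum disk and its boundary-length process developed in \cite{dms2014mating,msw2020simplecle}.
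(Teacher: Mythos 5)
First, note that the paper does not prove this lemma at all: it is recorded as a direct citation of \cite[Proposition~5.1]{msw2020simplecle}, so there is no internal proof to compare against. Your sketch is therefore an attempt at an independent derivation, and its overall shape (disk Markov property, local absolute continuity with the half-plane, identification of the density as an $h$-transform of the L\'evy pair killed on leaving the quadrant) is the right one in spirit. But two steps have genuine gaps. The more serious is step (c): from the fact that both families are time-homogeneous Markov you may conclude that the Radon--Nikodym derivative $D_t$ is a positive multiplicative functional, but a multiplicative functional need \emph{not} have the form $f(L_t,R_t)/f(L_0,R_0)$ --- it can carry a Feynman--Kac factor $\exp(\int_0^t V(L_s,R_s)\,ds)$, or depend on the whole path. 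To reduce to the stated form you must first show that $D_t$ is measurable with respect to $(L_0,R_0,L_t,R_t)$, which amounts to proving that the bridges of the two processes agree (equivalently, that the conditional law of $(L,R)|_{[0,t]}$ given its endpoints is the same under both measures); only then does the cocycle identity force $D_t=f(L_t,R_t)e^{ct}/f(L_0,R_0)$, after which your scaling argument kills $c$. This bridge identity is not addressed. (A smaller quibble: for $D_t$ to be a martingale you need $f$ to be \emph{invariant} for the killed semigroup, not merely excessive.)

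Second, step (d) --- which you correctly flag as the crux --- is not actually carried out. Homogeneity and left/right symmetry restrict $f$ to functions of the form $\Delta^{-\theta}g(L/\Delta)$, and the heuristic that $L$ and $R$ must vanish simultaneously does not by itself pin down $g\equiv 1$ and $\theta=4/\kappa+1$; one must verify that $\CA^{\mathrm{kill}}\bigl[(l+r)^{-\theta}\bigr]=0$ on the open quadrant for the generator of the (individually asymmetric, positivity parameter $1-\kappa/8$) stable pair with killing on exiting $(0,\infty)^2$, and this is a nontrivial computation that you describe but do not perform. Your alternative of ``invoking the excursion-theoretic description developed in \cite{dms2014mating,msw2020simplecle}'' is essentially a citation of the result being proved. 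Relatedly, be aware that your input (a) --- that the unexplored region of the disk exploration is again a boundary-length-conditioned quantum disk --- is, in \cite{msw2020simplecle}, established together with (indeed via) the weighting of the half-plane exploration by exactly this Radon--Nikodym factor; assuming (a) and then deriving the density risks circularity unless you isolate a proof of (a) that does not already contain the formula.
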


\begin{proof}[Proof of Lemma~\ref{lem:point_to_point_disk_exploration}]
Suppose that $\CD$, $x$, $y$, $z$ are as in the statement of the lemma.  Then we can consider the same approximation scheme using the exploration from Section~\ref{subsec:exploration_def} as in the proof of Lemma~\ref{lem:point_to_point_half_plane_exploration}.  Indeed, even though we defined the exploration in Section~\ref{subsec:exploration_def} in the case of the half-plane, it still makes sense to consider it in the case of the quantum disk.  Let $\Delta_0 = \ell$ be the boundary length of $\partial \CD$.  For each $J \leq M$, we let $\Delta_M^\downarrow$ be the boundary length of the complementary component with $z$ on its boundary after we have completed the part of the exploration where the $\SLE_\kappa^0(\kappa-6)$ chunk sizes are in $[\delta_0 2^{-M},2^{-M}]$.  Note that the event that the exploration has not failed up until this point, the analog of the event $E_t$ from Lemma~\ref{lem:rn_derivative} up to this time holds.  Therefore on this event, Lemma~\ref{lem:rn_derivative} implies that the Radon-Nikodym derivative between the half-planar and disk explorations is of the form $(\Delta_0/\Delta_M^\downarrow)^{4/\kappa+1}$.  Due to the definition of the exploration failing, on the event that has not failed we have that $\Delta_M^\downarrow$ is bounded from below.  This completes the proof of the result.
\end{proof}

\section{Tightness on the boundary}
\label{sec:boundary_tightness}

The purpose of this section is to prove the tightness of $(\medianHP{\epsilon})^{-1} \metapprox{\epsilon}{\cdot}{\cdot}{\Gamma}$ restricted to the domain boundary.  We are going to prove the result in a more general setting than in the statement of Theorem~\ref{thm:cle_loop} in the sense that we will not restrict to the case that the domain boundary is given by a $\CLE_\kappa$ loop.  Instead, we will make an assumption on the behavior of the quantum area measure when we consider a sample from the law $\qdiskL{\gamma}{1}$ parameterized by the domain.  (We note that this is effectively an assumption about the roughness of $\partial D$.)  In what follows, when we write ``quantum length metric on $\partial D$'' for $D \subseteq \C$ simply connected we mean the metric on (the prime ends of) $\partial D$ where the distance between $x,y$ is given by the minimum of the quantum lengths of $\cwBoundary{x}{y}{\partial D}$ and $\ccwBoundary{x}{y}{\partial D}$.

\begin{proposition}
\label{prop:boundary_distance_tail_bounds}
Let $\alpha_\LBD > 2$ and fix $\beta \in (0,2/\alpha_\LBD)$.  Suppose that $\CD = (D,h,x,y) \sim \qdiskL{\gamma}{1}$.  Fix $c_0, \epsilon_0 > 0$ and let $E$ be the event that for every $z \in D$ and $\epsilon \in (0,\epsilon_0)$ so that $B(z,\epsilon) \subseteq D$ we have that $\qmeasure{h}(B(z,\epsilon)) \geq c_0 \epsilon^{\alpha_\LBD}$.  Let $\Gamma$ be an independent $\CLE_\kappa$ in $D$ and let $\Upsilon$ be its carpet.  Let $X_{\epsilon,\beta}$ be the $\beta$-H\"older norm of $(z,w) \mapsto (\medianHP{\epsilon})^{-1} \metapprox{\epsilon}{z}{w}{\Gamma}$ with respect to the quantum length metric on $\partial D$.  For every $p > 0$ we have that
\begin{equation}
\label{eqn:holder_norm_tight}
\p[E,\ X_{\epsilon,\beta} \geq A] = O(A^{-p}) \quad\text{as}\quad A \to \infty
\end{equation}
where the implicit constants depend on $p,\epsilon_0,c_0,\alpha_\LBD,\beta$.

Moreover, let $Y_{\beta}$ be the $\beta$-H\"older norm of the Euclidean metric with respect to the quantum length metric on $\partial D$.  Then we have that
\begin{equation}
\label{eqn:quantum_holder_norm_tight}
\p[E,\ Y_\beta \geq A] = O(A^{-p}) \quad\text{as}\quad A \to \infty
\end{equation}
where the implicit constants depend on $p,\epsilon_0,c_0,\alpha_\LBD,\beta$.
\end{proposition}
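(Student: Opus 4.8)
## Proof proposal for Proposition~\ref{prop:boundary_distance_tail_bounds}

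The plan is to reduce both statements to the chunk-crossing estimates of Section~\ref{sec:chunk_estimates} together with the percolation exploration of Section~\ref{sec:percolation_exploration}, run on the quantum disk $\CD$ via Lemma~\ref{lem:point_to_point_disk_exploration}. The key point is that the percolation exploration produces, with probability $1-O(2^{-cJ})$, a connected chain of ``good'' $\SLE_\kappa^0(\kappa-6)$ chunks linking two prescribed boundary prime ends $x,y$ with $\qbmeasure{h}(\cwBoundary{x}{y}{\partial\CD})=1$, where each chunk at scale $M$ has been chosen so that on the good event $E_{j,M}^\bullet$ (a) its top boundary length is $\leq C_0 2^{-(\kappa/4)M}$, (b) it is simply connected, and (c) — using Lemma~\ref{lem:one_chunk_quantile} and Lemma~\ref{lem:two_chunks_quantile} — it contains a path $\cpath$ in $\Upsilon$ crossing it with $\lebneb{\epsilon}(\cpath)\leq \quantHP{p}{\epsilon}$ which stays at macroscopic distance from the chunk's boundary except near a designated top interval, and connects into the crossing of the next chunk. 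Stringing these crossings together produces a single path $\cpath$ in $\Upsilon$ from $x$ to $y$ with $\lebneb{\epsilon}(\cpath) \leq (\text{number of chunks}) \cdot \quantHP{p}{\epsilon}$. By Lemma~\ref{lem:point_to_point_half_plane_exploration}/Lemma~\ref{lem:point_to_point_disk_exploration}, on the non-failure event the number of chunks at scale $M$ is $O(2^{(1-\exploreExp)(\kappa/4)M})$ (and $O(2^{(\kappa/4)J})$ at the top scale $J$), so summing over $M\geq J$ gives a total $O(2^{(\kappa/4)J})$ chunks; combined with~\eqref{eqn:quant_comparison} this bounds $\metapprox{\epsilon}{x}{y}{\Gamma} \lesssim 2^{(\kappa/4)J}\quantHP{p}{\epsilon} \lesssim 2^{(\kappa/4)J}\medianHP{\epsilon}$ with probability $1-O(2^{-cJ})$, uniformly in $\epsilon$ small.

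First I would set up the deterministic ``many-scale net'' on $\partial\CD$. Using the hypothesis $E$ that $\qmeasure{h}(B(z,\epsilon))\geq c_0\epsilon^{\alpha_\LBD}$, together with the Radon--Nikodym comparison in Lemma~\ref{lem:rn_derivative} and standard LQG estimates, one shows that with overwhelming probability the quantum length metric on $\partial\CD$ and the Euclidean metric are mutually H\"older: more precisely that two prime ends at quantum-length distance $r$ are at Euclidean distance at most $r^{\beta'}$ for suitable $\beta'$ with large probability, and conversely. This already yields~\eqref{eqn:quantum_holder_norm_tight}: $Y_\beta$ is a H\"older norm of the Euclidean metric against the quantum length metric, and the tail bound $\p[E, Y_\beta \geq A]=O(A^{-p})$ follows from a chaining/union bound over a dyadic family of boundary intervals measured in quantum length, using that the probability a given quantum-length-$2^{-J}$ boundary interval has Euclidean diameter $\geq 2^{-\beta J}$ decays like a stretched exponential in $J$ (this is exactly the kind of estimate underlying $\alpha_\LBD$ and is of the same flavour as Lemma~\ref{lem:sle_hitting_diam_bound}).

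Next I would prove~\eqref{eqn:holder_norm_tight} by a Kolmogorov--Centsov-type chaining argument where the role of dyadic squares is played by the boundary prime ends at quantum-length positions $k 2^{-J}$, $0\leq k\leq 2^J$. For a fixed pair of such consecutive points, run the exploration of Lemma~\ref{lem:point_to_point_disk_exploration} on the sub-disk between them (of boundary length $\approx 2^{-J}$), with good chunk events built from Lemmas~\ref{lem:one_chunk_quantile}--\ref{lem:two_chunks_quantile}; the argument above gives $\metapprox{\epsilon}{\cdot}{\cdot}{\Gamma}\leq C 2^{-(\kappa/4)(\text{scale offset})}\cdot 2^{(\kappa/4)J_0}\medianHP{\epsilon}$ — more carefully, the exploration started at boundary-length scale $2^{-J}$ yields a crossing with $\lebneb{\epsilon}$-cost at most a constant times $2^{-(\kappa/4)J}\cdot(\text{quantile ratio})\cdot\medianHP{\epsilon}$ times $2^{(\kappa/4)J}$ — i.e. one must track the scaling of $\quantHP{p}{\cdot}$ under the boundary-length rescaling via the covering Lemma~\ref{lem:covering_lemma} and~\eqref{eqn:quant_comparison}. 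The net effect is a bound $\metapprox{\epsilon}{p_k}{p_{k+1}}{\Gamma}\leq A 2^{-\beta J}\medianHP{\epsilon}$ with failure probability $O(2^{-cJ})$ for each of the $2^J$ pairs at level $J$; choosing $\beta < 2/\alpha_\LBD$ (which governs the interplay between the quantum-length spacing and the number of pairs) and summing the geometric series over $J$ with a union bound over the $\sum_J 2^J$ pairs, a Borel--Cantelli/chaining argument produces a finite $\beta$-H\"older modulus $X_{\epsilon,\beta}$ with $\p[E, X_{\epsilon,\beta}\geq A]=O(A^{-p})$ for every $p$, with constants uniform in $\epsilon$ small (the uniformity in $\epsilon$ is exactly what the normalization by $\medianHP{\epsilon}$ and the quantile comparability~\eqref{eqn:quant_comparison} buy us).

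The main obstacle is keeping the estimate uniform in $\epsilon$ while simultaneously chaining over all scales $J$: one needs the chunk-goodness probability (from Lemmas~\ref{lem:one_chunk_quantile}, \ref{lem:two_chunks_quantile}, \ref{lem:point_to_point_exploration}) to be close to $1$ for a choice of $\delta_0, a_0$ that does \emph{not} depend on $\epsilon$ and does not depend on $J$ — which holds by scale invariance of the quantum half-plane — and then to control how the threshold $\quantHP{p}{\epsilon}$ transforms when the exploration is performed inside a sub-disk of boundary length $2^{-J}$ rather than length $1$. This last point requires invoking the covering/quantile-comparison Lemma~\ref{lem:covering_lemma} to relate $\quantHP{p}{\epsilon}$ for the sub-disk to $\medianHP{\epsilon}$ for the unit-boundary-length reference, and this is where the exponent $\alpha_\LBD$ and the restriction $\beta < 2/\alpha_\LBD$ genuinely enter. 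A secondary technical nuisance is handling the prime-end topology on $\partial\CD$ — that ``good'' chains of chunks actually connect the designated prime ends and not merely nearby Euclidean points — which is dealt with using the absolute-continuity input of Lemma~\ref{lem:middle_part_abs_cont} and Proposition~\ref{prop:cpi_path_close} exactly as in the proof of Lemma~\ref{lem:one_chunk_quantile}.
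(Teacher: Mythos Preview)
Your outline captures the broad architecture but misses the mechanism that delivers $O(A^{-p})$ for \emph{every} $p$. You absorb exploration failures (probability $O(2^{-cJ})$) into a union bound, but on failure you have no bound at all on $\metapprox{\epsilon}{p_k}{p_{k+1}}{\Gamma}$, so at best this yields tightness, not arbitrary polynomial tails. If you let the internal exploration parameter $J$ grow to shrink the failure probability, the success-event bound $\lebneb{\epsilon}(\cpath)\lesssim 2^{\alpha J}\delta^{2/\alpha_\LBD}\medianHP{\epsilon}$ blows up, and the two requirements cannot be balanced to recover all moments. The paper instead runs the exploration \emph{adaptively}: upon failure (large upward or downward jump, too many chunks, top boundary length too large) it restarts in the one or two residual sub-disks, producing a tree $\CT$ of explorations dominated by a subcritical Galton--Watson tree with offspring in $\{0,1,2\}$ and mean $O(2^{-cJ})$. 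The concatenated path then \emph{always} exists and satisfies $\lebneb{\epsilon}(\cpath)\leq c_4\,2^{\alpha J}\,|\CT|\,C^{2|\CT|/\alpha_\LBD}\,\delta^{2/\alpha_\LBD}\,\medianHP{\epsilon}$; since $|\CT|$ has exponential tails with rate improving as $J\to\infty$, for any given $p$ one chooses $J=J(p)$ large enough that $\E\big[(|\CT|\,C^{2|\CT|/\alpha_\LBD})^p\big]<\infty$, giving $\E[((\medianHP{\epsilon})^{-1}\lebneb{\epsilon}(\cpath))^p\one_E]\leq c_5\,\delta^{2p/\alpha_\LBD}$ and hence~\eqref{eqn:holder_norm_tight} via Kolmogorov--Centsov.

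Separately, you misplace the role of $E$ and $\alpha_\LBD$. The good-chunk event (Definition~\ref{def:good_chunk}) is formulated \emph{after} conformally mapping the chunk to $\D$: the crossing satisfies $\lebneb{\epsilon}(\varphi_n(\cpath))\leq R_1\medianHP{\epsilon}$ there and stays at distance $\geq R_1^{-1}$ from $\partial\D$, and the event also records $\qmeasure{h}(\CN_{i,M}^\uparrow)\leq R_1 2^{-M}$. On $E$, the area lower bound forces the embedded chunk's Euclidean diameter to be $\lesssim(\delta^2 2^{-M})^{1/\alpha_\LBD}$, hence $|(\varphi_n^{-1})'|\lesssim\delta^{2/\alpha_\LBD}2^{-M/\alpha_\LBD}$ on the relevant compact; combined with Lemma~\ref{lem:covering_lemma} this yields $\lebneb{\epsilon}(\cpath_n)\lesssim\delta^{2/\alpha_\LBD}2^{-M/\alpha_\LBD}\medianHP{\epsilon}$ in $D$. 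That is where $\beta<2/\alpha_\LBD$ genuinely comes from --- not from rescaling $\quantHP{p}{\epsilon}$ between sub-disks --- and Lemma~\ref{lem:sle_hitting_diam_bound} plays no role here. Finally,~\eqref{eqn:quantum_holder_norm_tight} is obtained by rerunning the same argument with images of straight segments under $\varphi_n^{-1}$ in place of the carpet crossings, not via a separate boundary-regularity estimate.
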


In the setting of Proposition~\ref{prop:boundary_distance_tail_bounds}, truncating on the event that the quantum area measure is bounded from below serves to give a lower bound on the size of the $\SLE_\kappa^0(\kappa-6)$ chunks which make up the exploration described in Lemma~\ref{lem:point_to_point_disk_exploration}.  The reason that this is important is that our definition of the good event for a chunk which is given just below will involve the minimal ``$\lebneb{\epsilon}$-length'' of various crossings when one has parameterized the chunk by $\D$.  Embedding the chunk into $D$ involves applying a conformal transformation which can lead to an effective change in the size of the neighborhood used to approximate the ``length'' of a path.  As we will see in the proof of Proposition~\ref{prop:boundary_distance_tail_bounds}, the lower bound on the quantum area measure will serve to give an upper bound on the size of an embedded chunk hence an upper bound on the ``length'' of an embedded path.

Let us now explain the main steps used to prove Proposition~\ref{prop:boundary_distance_tail_bounds}.
\begin{enumerate}
\item[Step 1.] We will first give the events for the good chunks in Definition~\ref{def:good_chunk} and then show in Lemma~\ref{lem:good_chunk_occurs} that we can adjust the parameters in the definition to make the probability that they occur sufficiently close to $1$ so that the results of Section~\ref{sec:percolation_exploration} apply.
\item[Step 2.] The definition of a good chunk in Definition~\ref{def:good_chunk} will involve quantities of the form $\lebneb{\epsilon}(\omega)$ for paths $\omega$ which arise after we have parameterized the chunk by $\D$.  We then need to control the Lebesgue measure of the $\epsilon$-neighborhood of these paths after embedding them and this will be the usage of Lemma~\ref{lem:covering_lemma}.
\item[Step 3.] We complete the proof of Proposition~\ref{prop:boundary_distance_tail_bounds} using Lemma~\ref{lem:point_to_point_disk_exploration}.  The idea is that we will start with two boundary points $x$, $y$ and perform the exploration as in Lemma~\ref{lem:point_to_point_disk_exploration}.  If the exploration succeeds, then our previous estimates and the definition of a good chunk will allow us to construct a path $\omega$ in $\Upsilon$ with control on $\lebneb{\epsilon}(\omega)$.  If the exploration fails, then we will repeat the exploration as in Lemma~\ref{lem:point_to_point_disk_exploration} in the remaining domains.  The exploration will be repeated either once or twice depending on the type of failure.  We will refer to these new explorations as the ``children'' of the failed exploration so that we get a tree structure on such explorations.  We will show that by properly adjusting the parameters we can dominate the number of times the exploration has to be repeated by the size of a subcritical Galton-Watson tree, which will complete the proof.
\end{enumerate}

Let $\CH = (\h,h,0,\infty)$ be a quantum half-plane.  Suppose that $\eta$ is an $\SLE_\kappa^0(\kappa-6)$ curve on~$\h$ from~$0$ to~$\infty$ which is taken to be independent of $h$ and then parameterized by the quantum natural time of its trunk.  For the quantum surface $\CN_t$ disconnected from~$\infty$ by $\eta|_{[0,t]}$, we let $z_L(\CN_t)$ (resp.\ $z_R(\CN_t)$) denote the leftmost (resp.\ rightmost) point on the bottom of $\CN_t$.  We also let $z_C(\CN_t) = \eta(0)$.

\begin{figure}[ht!]
\begin{center}
\includegraphics[scale=1]{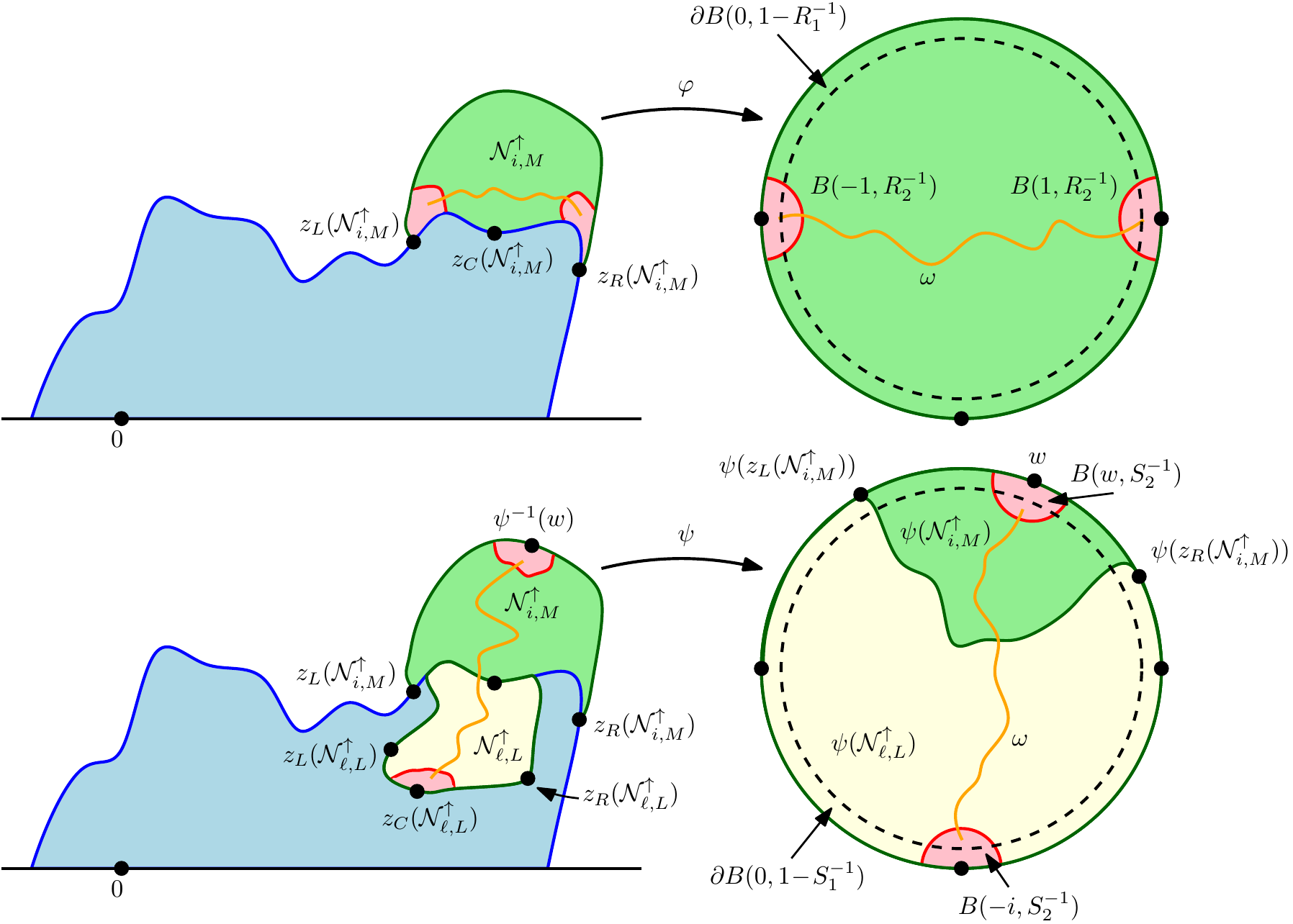}	
\end{center}
\caption{\label{fig:good_boundary_def1} {\bf Top:} Illustration of part~\eqref{it:path_across_chunk} of Definition~\ref{def:good_chunk}.  Shown in blue is the exploration up until $\CN_{i,M}^\uparrow$ is discovered and the images of $z_L(\CN_{i,M}^\uparrow)$, $z_C(\CN_{i,M}^\uparrow)$, and $z_R(\CN_{i,M}^\uparrow)$ under $\varphi$ are respectively given by $-1$, $-i$, and $1$, respectively. {\bf Bottom:} Illustration of part~\eqref{it:path_across_two_chunks} of Definition~\ref{def:good_chunk}.  The chunk $\CN_{\ell,L}^\uparrow$ is shown in light yellow even though it is ``good'' in order to differentiate it from $\CN_{i,M}^\uparrow$.  The images of $z_L(\CN_{\ell,L}^\uparrow)$, $z_C(\CN_{\ell,L}^\uparrow)$, and $z_R(\CN_{\ell,L}^\uparrow)$ under $\psi$ are $-1$, $-i$, and $1$, respectively.}
\end{figure}

\begin{definition}
\label{def:good_chunk}
We suppose that we have the setup described in Section~\ref{subsec:exploration_def}.  Fix $R_1, R_2, S_1, S_2 \geq 1$.  For each $i,M$, we define $E_{i,M}^\uparrow$ to be the event for $\CN_{i,M}^\uparrow$ and $\eta_{i,M}^\uparrow$ that $\eta_{i,M}^\uparrow(\sigma_{i,M}^\uparrow) \in \partial \h$ and the following hold.
\begin{enumerate}[(i)]
\item\label{it:chunk_lengths} The quantum lengths of the top, left side of the bottom, and the right side of the bottom of~$\CN_{i,M}^\uparrow$ are in $[R_1^{-1} 2^{-(\kappa/4)M}, R_1 2^{-(\kappa/4)M}]$.
\item\label{it:path_across_chunk} Let $\varphi \colon \CN_{i,M}^\uparrow \to \D$ be the unique conformal transformation which takes $z_L(\CN_{i,M}^\uparrow)$ to $-1$, $z_C(\CN_{i,M}^\uparrow)$ to $-i$, and $z_R(\CN_{i,M}^\uparrow)$ to $1$.  There is a path $\cpath$ connecting $B(-1,R_2^{-1})$ to $B(1,R_2^{-1})$ in $\varphi(\Upsilon \cap \CN_{i,M}^\uparrow)$ which has distance at least $R_1^{-1}$ from $\partial \D$ and satisfies $\lebneb{\epsilon}(\cpath) \leq R_1 \medianHP{\epsilon}$.
\item\label{it:chunk_area_ubd} $\qmeasure{h}(\CN_{i,M}^\uparrow)\leq  R_1 2^{-M}$.
\item\label{it:path_across_two_chunks} Let $\CN_{\ell,L}^\uparrow$ be the chunk whose top contains $\eta_{i,M}^\uparrow(0)$ (so that the left side of the bottom of $\CN_{i,M}^\uparrow$ is glued to the top of $\CN_{\ell,L}^\uparrow$) if it exists, otherwise we set $\CN_{\ell,L}^\uparrow = \emptyset$.  If $\CN_{\ell,L}^\uparrow \neq \emptyset$, let $\psi$ be the unique conformal map which takes $\CN = \interior{\closure{\CN_{\ell,L}^\uparrow \cup \CN_{i,M}^\uparrow}}$ to $\D$ so that $z_L(\CN_{\ell,L}^\uparrow)$ is taken to $-1$, $z_C(\CN_{\ell,L}^\uparrow)$ is taken to $-i$, and $z_R(\CN_{\ell,L}^\uparrow)$ is taken to $1$.  The harmonic measure of each of $\cwBoundary{-1}{\psi(z_L(\CN_{i,M}^\uparrow))}{\partial \D}$, $\cwBoundary{\psi(z_L(\CN_{i,M}^\uparrow))}{\psi(z_R(\CN_{i,M}^\uparrow))}{\partial \D}$, and $\cwBoundary{\psi(z_R(\CN_{i,M}^\uparrow))}{1}{\partial \D}$ as seen from $0$ is at least $R_2^{-1}$.  Let $w$ be the midpoint of $\cwBoundary{\psi(z_L(\CN_{i,M}^\uparrow))}{\psi(z_R(\CN_{i,M}^\uparrow))}{\partial \D}$.  Then there moreover is a path $\cpath$ in $\psi(\Upsilon \cap \CN)$ which connects $B(-i,S_2^{-1})$ to $B(w,S_2^{-1})$ which has distance at least $S_1^{-1}$ from $\partial \D$ and satisfies $\lebneb{\epsilon}(\cpath) \leq S_1 \medianHP{\epsilon}$.
\item\label{it:path_across_future_chunk} If $\wh{\CN}$ is another $\SLE_\kappa^0(\kappa-6)$ chunk which starts from a point on the top of $\CN_{i,M}^\uparrow$ with clockwise boundary length distance from $\partial \CH_{i,M}^\uparrow$ given by an integer multiple of $a_0 2^{-(\kappa/4)M}$ and with boundary length distance from a point in $\Upsilon$ at most $a_0 2^{-(\kappa/4)M}$ and is run until the first time it hits the domain boundary after $\delta_0 2^{-J}$, $J \in \{M-1,M,M+1\}$, then the conditional probability given $\CF_{i,M}^\uparrow$ that the previous item holds for the pair $(\CN_{i,M}^\uparrow,\wt{\CN})$ in place of $(\CN_{\ell,L}^\uparrow,\CN_{i,M}^\uparrow)$ is at least $1-R_2^{-1}$.
\end{enumerate}
We define $E_{i,M}^\downarrow$ analogously.
\end{definition}

\begin{lemma}
\label{lem:good_chunk_occurs}
Suppose that we have the setup described in Section~\ref{subsec:exploration_def} and the events $E_{i,M}^\uparrow$, $E_{i,M}^\downarrow$ are as in Definition~\ref{def:good_chunk}.  For every $p_0 \in (0,1)$ there exists $R_1 \geq R_2 \geq 1$ and $S_2 \geq S_1 \geq 1$ so that
\begin{enumerate}[(i)]
\item On the event that the exploration has not failed before exploring $\CN_{i,M}^\uparrow$ (resp.\ $\CN_{i,M}^\downarrow$) we have that $\p[E_{i,M}^\uparrow \giv \CF_{i-1,M}^\uparrow] \geq p_0$ (resp.\ $\p[E_{i,M}^\downarrow \giv \CF_{i-1,M}^\downarrow] \geq p_0$) for all $i,M \in \N$.
\item Suppose that $\CN_{\ell,L}^\uparrow$ is as in part~\eqref{it:path_across_two_chunks} of Definition~\ref{def:good_chunk}, $\cpath_{i,M}^\uparrow$ (resp.\ $\cpath_{\ell,L}^\uparrow$) is any path for $\CN_{i,M}^\uparrow$ (resp.\ $\CN_{\ell,L}^\uparrow$) satisfying part~\eqref{it:path_across_chunk} of Definition~\ref{def:good_chunk}.  Then any path as in part~\eqref{it:path_across_two_chunks} of Definition~\ref{def:good_chunk} crosses both $\cpath_{i,M}^\uparrow$ and $\cpath_{\ell,L}^\uparrow$.
\end{enumerate}
\end{lemma}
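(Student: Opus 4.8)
The plan is to handle part~(i) — the substantive assertion — by showing that each of the five clauses \eqref{it:chunk_lengths}--\eqref{it:path_across_future_chunk} in Definition~\ref{def:good_chunk} fails for $\CN_{i,M}^\uparrow$, conditionally on $\CF_{i-1,M}^\uparrow$ and on the event that the exploration has not failed before $\CN_{i,M}^\uparrow$ is explored, with probability at most $(1-p_0)/5$, and then union-bounding (the case of $E_{i,M}^\downarrow$ is identical). The structural point that makes this uniform in $i$ and $M$ is that, on the non-failure event, the starting point of $\eta_{i,M}^\uparrow$ is $\CF_{i-1,M}^\uparrow$-measurable and, by Theorem~\ref{thm:cpi_wedge_explore} together with the Markovian structure of the exploration in Section~\ref{subsec:exploration_def}, the region not yet explored is a quantum half-plane independent of $\CF_{i-1,M}^\uparrow$. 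Applying the deterministic rescaling that sends quantum natural time $[\delta_0 2^{-M},2^{-M}]$ to $[\delta_0,1]$, the conditional law of $\CN_{i,M}^\uparrow$ (together with the auxiliary CPIs used below, which are taken conditionally independent given the carpet as in the proofs of Lemmas~\ref{lem:one_chunk_quantile} and~\ref{lem:two_chunks_quantile}) is that of the scale-$0$ chunk after multiplying its boundary lengths by $2^{-(\kappa/4)M}$ and its quantum area by $2^{-(\kappa/2)M}$; and clauses \eqref{it:path_across_chunk}, \eqref{it:path_across_two_chunks} and \eqref{it:path_across_future_chunk} are left unchanged by this rescaling, since they are stated in terms of conformal images in $\D$ and of the deterministic quantity $\medianHP{\epsilon}$.

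For clause \eqref{it:chunk_lengths}, the quantum lengths of the top and of the two halves of the bottom of $\CN_{i,M}^\uparrow$ are explicit functionals of the pair of independent $4/\kappa$-stable L\'evy processes from Theorem~\ref{thm:cpi_wedge_explore} run to the stopping time $\sigma_{i,M}^\uparrow\in[\delta_0 2^{-M},2^{-M}]$, so by the scaling property of stable processes and the moment and small-ball estimates of Lemmas~\ref{lem:top_length_moment_bound} and~\ref{lem:bottom_length_moment_bound} (see also \cite{bertoin1996levy}) they lie in $[R_1^{-1}2^{-(\kappa/4)M},R_1 2^{-(\kappa/4)M}]$ off an event of probability $o_{R_1}(1)$. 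For clause \eqref{it:chunk_area_ubd}, a moment bound for the quantum area of the chunk (Appendix~\ref{app:carpet_measure}), which by the scaling above has expectation at most a constant times $2^{-(\kappa/2)M}\le 2^{-M}$ since $\kappa>2$, gives $\p[\qmeasure{h}(\CN_{i,M}^\uparrow)>R_1 2^{-M}]=o_{R_1}(1)$. For clause \eqref{it:path_across_chunk} I would follow the method of Lemma~\ref{lem:one_chunk_quantile} with $p=1/2$, so that $\quantHP{p}{\epsilon}=\medianHP{\epsilon}$: the only change is that the auxiliary domain built inside $\CN_{i,M}^\uparrow$ from a pair of CPIs (via Proposition~\ref{prop:cpi_path_close} and Lemma~\ref{lem:disk_good_connection_intermediate}) is shaped so as to force the resulting carpet crossing, after the conformal map $\varphi$, to run from $B(-1,R_2^{-1})$ to $B(1,R_2^{-1})$ at distance at least $R_1^{-1}$ from $\partial\D$; this crossing has $\lebneb{\epsilon}$-length at most $\medianHP{\epsilon}\le R_1\medianHP{\epsilon}$ and exists off an event of probability $o_{R_2}(1)$.

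Clauses \eqref{it:path_across_two_chunks} and \eqref{it:path_across_future_chunk} follow similarly from the method of Lemma~\ref{lem:two_chunks_quantile} with $p=1/2$. Clause \eqref{it:path_across_two_chunks} is the two-chunk crossing event for the pair $(\CN_{\ell,L}^\uparrow,\CN_{i,M}^\uparrow)$; although the lower chunk $\CN_{\ell,L}^\uparrow$ is $\CF_{i-1,M}^\uparrow$-measurable, a conditional version of Lemma~\ref{lem:two_chunks_quantile} applies, because given $\CF_{i-1,M}^\uparrow$ on the non-failure event the chunk $\CN_{i,M}^\uparrow$ is cut from a fresh quantum half-plane glued to the top of $\CN_{\ell,L}^\uparrow$ along an interface of known length, and the proof of Lemma~\ref{lem:two_chunks_quantile} uses only the macroscopic regularity of the lower chunk, which is guaranteed by non-failure (no large jumps); the harmonic-measure lower bounds required in \eqref{it:path_across_two_chunks} hold off an event of probability $o_{R_2}(1)$. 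For clause \eqref{it:path_across_future_chunk}, applying the same estimate with $\CN_{i,M}^\uparrow$ now in the role of the lower chunk and a prospective $\wh{\CN}$ in the role of the upper one gives $\E\big[1-\p[G_{i,M}\giv\CF_{i,M}^\uparrow]\big]\le R_2^{-2}$ once $S_1,S_2$ are large, where $G_{i,M}$ is the event of part~\eqref{it:path_across_two_chunks} for the pair $(\CN_{i,M}^\uparrow,\wh{\CN})$, so Markov's inequality yields clause~\eqref{it:path_across_future_chunk} off an event of probability at most $R_2^{-1}$. Choosing first $R_2$, then $R_1\ge R_2$, then $S_1$, and finally $S_2\ge S_1$ large enough (and the parameters in Lemmas~\ref{lem:one_chunk_quantile} and~\ref{lem:two_chunks_quantile} small enough) makes each of the five failure probabilities at most $(1-p_0)/5$, which proves part~(i).

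Part~(ii) is a deterministic Jordan-curve separation argument. The crossing $\cpath_{\ell,L}^\uparrow$, an arc in $\CN_{\ell,L}^\uparrow$ with endpoints in $B(-1,R_2^{-1})$ and $B(1,R_2^{-1})$ staying at distance at least $R_1^{-1}$ from $\partial\D$ in the coordinates of the map $\varphi$ for $\CN_{\ell,L}^\uparrow$, together with the boundary arc of $\CN_{\ell,L}^\uparrow$ from $z_L(\CN_{\ell,L}^\uparrow)$ through $z_C(\CN_{\ell,L}^\uparrow)$ to $z_R(\CN_{\ell,L}^\uparrow)$, bounds a subdomain $A_\ell\subseteq\CN_{\ell,L}^\uparrow$ of $\CN$ with $z_C(\CN_{\ell,L}^\uparrow)$ on its boundary; because the cyclic order enforced by $\psi$ places the part of $\partial\CN$ coming from $\CN_{i,M}^\uparrow$ on the complementary $z_R$-to-$z_L$ boundary arc of $\CN_{\ell,L}^\uparrow$, all of $\CN_{i,M}^\uparrow$ lies on the far side of $\cpath_{\ell,L}^\uparrow$. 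Similarly $\cpath_{i,M}^\uparrow$ bounds a subdomain $A_i\subseteq\CN_{i,M}^\uparrow$ adjacent to the interface with $\CN_{\ell,L}^\uparrow$, while the point $w$ lies on the exposed boundary arc of $\CN_{i,M}^\uparrow$, which is on the far side of $\cpath_{i,M}^\uparrow$. A path $\omega$ as in part~\eqref{it:path_across_two_chunks} of Definition~\ref{def:good_chunk} starts in $B(-i,S_2^{-1})$ and ends in $B(w,S_2^{-1})$; for $S_2$ large — using that $\cpath_{\ell,L}^\uparrow$ and $\cpath_{i,M}^\uparrow$ avoid fixed neighborhoods of $z_C(\CN_{\ell,L}^\uparrow)$ and of $\psi(z_L(\CN_{i,M}^\uparrow))$ and $\psi(z_R(\CN_{i,M}^\uparrow))$, and the harmonic-measure bounds in \eqref{it:path_across_two_chunks} keep $w$ off $\cpath_{i,M}^\uparrow$ — the first ball lies in $\psi(A_\ell)$ and the second lies on the described side of $\cpath_{i,M}^\uparrow$. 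Since $\omega$ stays in $\CN$, it can exit $\psi(A_\ell)$ only by crossing $\cpath_{\ell,L}^\uparrow$, must then enter $\psi(\CN_{i,M}^\uparrow)$ through the interface into $\psi(A_i)$, and can reach the terminal ball only by crossing $\cpath_{i,M}^\uparrow$; this is the assertion, and it is here that the choice of $S_2$ made for part~(i) gets used. I expect the most delicate step to be the bookkeeping behind the uniformity in $(i,M)$, together with the verification that Lemmas~\ref{lem:one_chunk_quantile} and~\ref{lem:two_chunks_quantile} can be run conditionally on the already-explored path-decorated quantum surface rather than in the clean form in which they are stated.
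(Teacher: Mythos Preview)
Your proposal is correct and follows essentially the same approach as the paper: reduce to a single scale via the Markov property and stable scaling, handle clause~\eqref{it:chunk_lengths} by L\'evy scaling, clause~\eqref{it:chunk_area_ubd} by a quantum-area moment bound and Markov's inequality, clauses~\eqref{it:path_across_chunk}, \eqref{it:path_across_two_chunks}, \eqref{it:path_across_future_chunk} by Lemmas~\ref{lem:one_chunk_quantile} and~\ref{lem:two_chunks_quantile}, and part~(ii) by a topological separation argument once $S_2$ is large. The paper's proof is terser---it simply cites the relevant lemmas and says part~(ii) ``is clear'' for $S_2$ large---whereas you spell out the Jordan-curve argument and the parameter ordering; your caveat about running Lemmas~\ref{lem:one_chunk_quantile}--\ref{lem:two_chunks_quantile} conditionally is exactly the point the paper absorbs into the quantum-half-plane Markov property without further comment.
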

\begin{proof}
It is clear from the scaling properties of $4/\kappa$-stable L\'evy process that~\eqref{it:chunk_lengths} holds with probability tending to $1$ as $R_1 \to \infty$.  Lemma~\ref{lem:one_chunk_quantile} implies that with any choice of $R_2 \geq 1$ we have that~\eqref{it:path_across_chunk} holds with probability tending to $1$ as $R_1 \to \infty$.  We have that $\E[ \qmeasure{h}(\CN_{i,M}^\uparrow) \giv \CF_{i-1,M}^\uparrow] = \E[ \qmeasure{h}(\CN_{i,M}^\uparrow)] = O(2^{-M})$ as $\E[ \qmeasure{h}(\CN_{i,M}^\uparrow)]$ is equal to a constant times the sum of the squares of the jumps made by the associated L\'evy process (which are run for time at most $2^{-M}$).  Indeed, the range of the $\SLE_\kappa^0(\kappa-6)$ itself has zero quantum area so all of the quantum area is in the quantum disks that it disconnects from $\infty$ and the expected quantum area associated with a sample from the law $\qdiskL{\gamma}{\ell}$ is equal to a constant times $\ell^2$.  Therefore Markov's inequality implies that~\eqref{it:chunk_area_ubd} holds with probability tending to $1$ as $R_1 \to \infty$.  That~\eqref{it:path_across_two_chunks} and~\eqref{it:path_across_future_chunk} hold with probability tending to $1$ as $R_1, S_1 \to \infty$ for any fixed choice of $R_2,S_2$ follows from Lemma~\ref{lem:two_chunks_quantile}.  Finally, for any values $R_1 \geq R_2 \geq 1$ it is clear that the second assertion of the lemma holds by making $S_2 \geq 1$ sufficiently large. 
\end{proof}

Before we proceed to the proof of Proposition~\ref{prop:boundary_distance_tail_bounds}, we need to collect the following elementary lemma.  As in the case of paths, for $K \subseteq \C$ and $\epsilon > 0$ we let $\lebneb{\epsilon}(K)$ denote the Lebesgue measure of the $\epsilon$-neighborhood of $K$.

\begin{lemma}
\label{lem:covering_lemma}
There exists a constant $c_0 > 0$ so that the following is true for all $\epsilon > 0$ and $\delta \in (0,1/2)$.  Suppose that $K \subseteq \C$ is connected and compact.  Then we have that
\[ \lebneb{\delta \epsilon}(K) \geq c_0 \delta \lebneb{\epsilon}(K).\] 
\end{lemma}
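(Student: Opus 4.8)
The plan is to reduce the statement to a covering estimate by comparing, up to bounded multiplicative constants, the areas $\lebneb{\delta\epsilon}(K)$ and $\lebneb{\epsilon}(K)$ via a doubling-type argument for connected sets. First I would recall the elementary fact that for a connected compact set $K$ with Euclidean diameter $d$, the $r$-neighborhood $K^r$ has Lebesgue measure comparable to $r^2$ when $r \geq d$ and comparable to $r \cdot d$ when $r \leq d$; more precisely there is a universal $c_1>0$ with $c_1^{-1}(r^2 \vee rd) \leq \lebneb{r}(K) \leq c_1(r^2 \vee rd)$. The lower bound here uses that a connected set of diameter $d$ contains points at pairwise distance $\geq d/2$, so its $r$-neighborhood contains a ``tube'' of width $r$ and length $\gtrsim d$ when $r \leq d$, and contains a ball of radius $r$ always; the upper bound follows from covering $K$ by $O(1 + d/r)$ balls of radius $r$, so that $K^r$ is covered by $O(1+d/r)$ balls of radius $2r$.

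Given this two-sided estimate, the conclusion is immediate. Fix $\epsilon>0$, $\delta\in(0,1/2)$, and let $d=\diam(K)$. Using the lower bound at radius $\delta\epsilon$ and the upper bound at radius $\epsilon$,
\[
\lebneb{\delta\epsilon}(K) \;\geq\; c_1^{-1}\bigl((\delta\epsilon)^2 \vee (\delta\epsilon)d\bigr) \;=\; c_1^{-1}\,\delta\bigl(\delta\epsilon^2 \vee \epsilon d\bigr) \;\geq\; c_1^{-1}\,\delta\bigl(\tfrac{1}{2}\epsilon^2 \cdot \tfrac{2\delta\epsilon^2 \vee 2\epsilon d}{2\epsilon^2 \vee 2\epsilon d}\bigr),
\]
but it is cleaner to argue directly: since $\delta < 1/2 < 1$ we have $\delta\epsilon^2 \vee \epsilon d \geq \delta(\epsilon^2 \vee \epsilon d)$, hence $\lebneb{\delta\epsilon}(K) \geq c_1^{-1}\delta(\epsilon^2 \vee \epsilon d) \geq c_1^{-2}\delta\,\lebneb{\epsilon}(K)$, so the lemma holds with $c_0 = c_1^{-2}$. (One should be slightly careful with the degenerate case $d=0$, i.e.\ $K$ a single point, where both neighborhoods are disks and $\lebneb{\delta\epsilon}(K) = \delta^2 \lebneb{\epsilon}(K) \geq \delta\lebneb{\epsilon}(K)$ trivially; connectedness only rules out totally disconnected $K$ with several points, which is exactly the case where the bound could fail without it.)

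I would carry this out in the order: (1) state and prove the two-sided estimate $\lebneb{r}(K) \asymp r^2 \vee rd$ for connected compact $K$ of diameter $d$, via the covering argument for the upper bound and the tube/ball argument for the lower bound; (2) combine the lower bound at scale $\delta\epsilon$ with the upper bound at scale $\epsilon$, using $\delta<1$ to pull the factor $\delta$ out of the maximum; (3) record the universal constant. The only mildly delicate point is the lower bound on $\lebneb{r}(K)$ in the regime $r\leq d$: one needs that a connected set of diameter $d$ genuinely ``spreads out'' enough that its $r$-neighborhood has area $\gtrsim rd$ and not merely $\gtrsim r^2$. This follows because connectedness forces $K$ to intersect every circle $\partial B(x_0,\rho)$ for $\rho$ between $0$ and $d$ (where $x_0\in K$ and some point of $K$ lies at distance $\geq d/2$ from $x_0$), so projecting $K^r$ radially shows it contains an annular region of inner-to-outer radius spread $\gtrsim d$ and angular width bounded below, giving area $\gtrsim rd$; alternatively, pick a maximal $r$-separated subset of $K$, which by connectedness and diameter $d$ has $\gtrsim d/r$ points whose $r/2$-balls are disjoint and contained in $K^{r}$. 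This is the main (though still routine) obstacle; everything else is bookkeeping.
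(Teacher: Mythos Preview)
Your approach has a genuine gap. The claimed two-sided estimate $\lebneb{r}(K) \asymp r^2 \vee rd$ for connected compact $K$ of diameter $d$ is false in the upper-bound direction: take $K$ to be a filled disk of radius $R$ and let $r \ll R$. Then $\lebneb{r}(K) \asymp R^2$, while $r^2 \vee rd \asymp rR$, and the ratio $R/r$ is unbounded. The covering step you invoke --- ``covering $K$ by $O(1+d/r)$ balls of radius $r$'' --- is exactly where this breaks: a two-dimensional set of diameter $d$ generally needs order $(d/r)^2$ balls of radius $r$ to cover it, not $d/r$. So your reduction to a formula in $r$ and $d$ alone cannot work for arbitrary connected compact $K$. (Your lower bound, by contrast, is fine.) As a smaller matter, your handling of the singleton case is backwards: for $\delta<1$ one has $\delta^2 < \delta$, so $\lebneb{\delta\epsilon}(K) = \delta^2\lebneb{\epsilon}(K)$ is \emph{strictly less} than $\delta\lebneb{\epsilon}(K)$; this actually shows the lemma as stated fails for a point (and the paper's proof tacitly assumes $\diam(K)\gtrsim\epsilon$, which holds in every application).

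The paper's argument avoids any a priori formula in terms of $d$. It takes a Vitali subcollection of $\ell$ disjoint $\epsilon$-balls with centres in $K$, so that $\lebneb{\epsilon}(K) \asymp \ell\epsilon^2$; then, using connectedness, it finds inside each of these $\epsilon$-balls roughly $\delta^{-1}$ pairwise disjoint $(\delta\epsilon)$-balls with centres on $K$. This yields $\sim \ell\delta^{-1}$ disjoint $(\delta\epsilon)$-balls in $K^{\delta\epsilon}$, hence $\lebneb{\delta\epsilon}(K) \gtrsim \ell\delta^{-1}(\delta\epsilon)^2 = \delta\cdot\ell\epsilon^2 \asymp \delta\,\lebneb{\epsilon}(K)$. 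The point is that the packing number $\ell$ automatically captures any ``fatness'' of $K$, which $d$ alone does not.
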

\begin{proof}
Fix $\epsilon > 0$ and $\delta \in (0,1/2)$.  Let $(z_k)$ be a countable, dense subset of $K$.  By the compactness of $K$, there exists $z_{j_1},\ldots,z_{j_n}$ so that $K \subseteq \cup_{i=1}^n B(z_{j_i},\epsilon)$.  By the Vitali covering lemma, there exists $i_1,\ldots,i_\ell \subseteq \{ j_1,\ldots,j_n\}$ so that the balls $B(z_{i_j},\epsilon)$ for $1 \leq j \leq \ell$ are pairwise disjoint and $K \subseteq \cup_{j=1}^\ell B(z_{i_j}, 3\epsilon)$.  Note that
\[ \pi \epsilon^2 \ell \leq \lebneb{\epsilon}(K) \leq 9 \pi \epsilon^2 \ell.\]
Since $K$ is connected, for each $1 \leq j \leq \ell$, we can find at least $\lfloor \delta^{-1} / 2 \rfloor$ disjoint balls with centers in $K \cap B(z_{i_j},\epsilon /2)$ and radius $\epsilon \delta$.  It therefore follows that
\[ \lebneb{\epsilon \delta}(K) \geq \pi (\epsilon \delta)^2 \times \lfloor \delta^{-1} / 2 \rfloor \times \ell \geq \frac{\delta^2 \lfloor \delta^{-1} /2 \rfloor}{9} \lebneb{\epsilon}(K).\]
This proves the result as $\delta^2 \lfloor \delta^{-1} / 2 \rfloor \geq \delta/2$ for all $\delta \in (0,1/2)$.
\end{proof}

We now proceed to give the proof of Proposition~\ref{prop:boundary_distance_tail_bounds}.

\begin{proof}[Proof of Proposition~\ref{prop:boundary_distance_tail_bounds}]
We will first explain the proof of~\eqref{eqn:holder_norm_tight} and then explain at the end how the same argument gives~\eqref{eqn:quantum_holder_norm_tight}.  We assume that we have chosen $\delta_0,p_0 \in (0,1)$ so that the assertion of Lemma~\ref{lem:point_to_point_exploration} holds.  With these parameters, we then assume that we have chosen $R_2 \geq R_1 \geq 1$ and $S_2 \geq S_1 \geq 1$ so that the assertion of Lemma~\ref{lem:good_chunk_occurs} holds.  In what follows, we may therefore view $R_1$, $R_2$, $S_1$, $S_2$ as constants.  We perform the exploration from Lemma~\ref{lem:point_to_point_disk_exploration} using the events defined in Definition~\ref{def:good_chunk} in a recursive manner as follows.

Fix $\delta,\zeta > 0$.  Suppose that $x_0,x_1 \in \partial \CD$ are so that $\qbmeasure{h}(\ccwBoundary{x}{x_0}{\partial \CD}) = \zeta$ and $\qbmeasure{h}(\ccwBoundary{x_0}{x_1}{\partial \CD}) = \delta$.  Let~$\CD_0$ be given by taking $\CD$ and replacing $h$ with $h_0 = h + \tfrac{2}{\gamma} \log \delta^{-1}$.  Let $x_0^0,x_1^0$ be the corresponding points for~$\CD_0$.  Then $\qbmeasure{h_0}(\ccwBoundary{x_0^0}{x_1^0}{\partial \CD_0}) = 1$.  We then perform the exploration as in Lemma~\ref{lem:point_to_point_disk_exploration} in $\CD_0$ from $x_0^0$ to $x_1^0$ until it fails.  We recall that the exploration fails if it makes either a large upward jump (i.e., of size at least $2^{- \exploreExp (\kappa/4) M}$ when exploring chunks with quantum natural time in $[\delta_0 2^{-M}, 2^{-M}]$), a large downward jump (i.e., of size at least $2^{- \exploreExp (\kappa/4) M}$ when exploring chunks with quantum natural time in $[\delta_0 2^{-M}, 2^{-M}]$), requires too many chunks at a particular scale (i.e., at least $c_F 2^{(1-\exploreExp)(\kappa/4) M}$ when exploring chunks of size in $[\delta_0 2^{-M}, 2^{-M}]$ for $M > J$ or at least $c_F 2^{(\kappa/4)J}$ when exploring chunks of size in $[\delta_0 2^{-J},2^{-J}]$), or the deviations in the boundary length process are too large (i.e., the absolute value of the quantum length of the top minus the bottom exceeds $c_F 2^{(1-4/\kappa) M}$ when exploring chunks of size in $[\delta_0 2^{-M}, 2^{-M}]$).  We stop the exploration immediately if either of these possibilities happens, and then proceed as described below.

\begin{figure}[ht!]
\begin{center}
\includegraphics[scale=1]{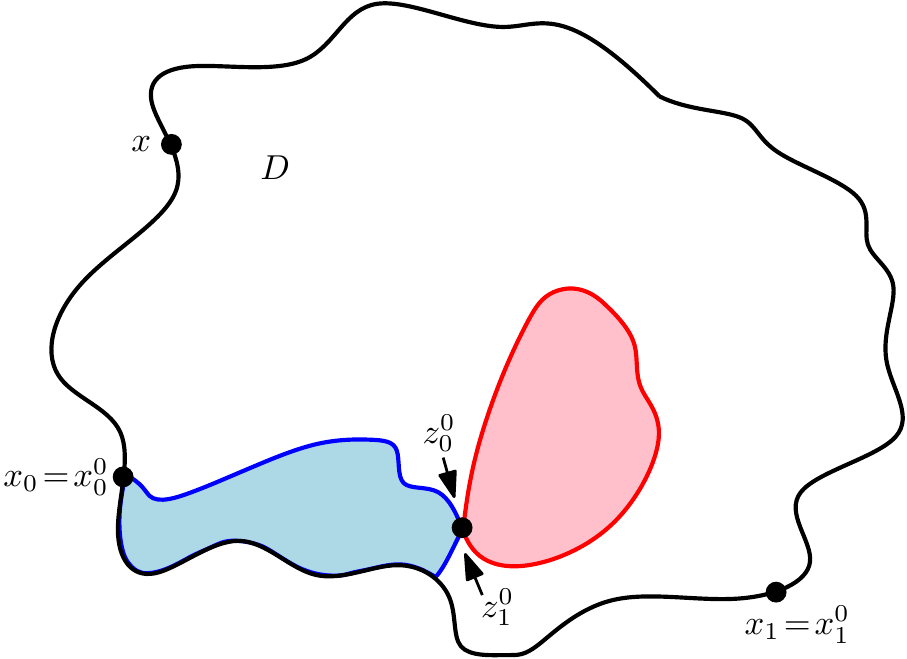}
\end{center}
\caption{\label{fig:upward_jump_failure} Illustration of the exploration in $D$ from $x_0 = x_0^0$ to $x_1 = x_1^0$ (blue) up until it fails due to making a large upward jump (i.e., a large $\CLE_\kappa$ loop is discovered) shown in red.  The prime ends $z_0^0$ and $z_1^0$ correspond to where the loop is rooted on the exploration.  When this happens, two new explorations are started in the remaining domain going from $x_0^0$ to $z_0^0$ and from $z_1^0$ to $x_1^0$.}
\end{figure}

\begin{figure}[ht!]
\begin{center}
\includegraphics[scale=1]{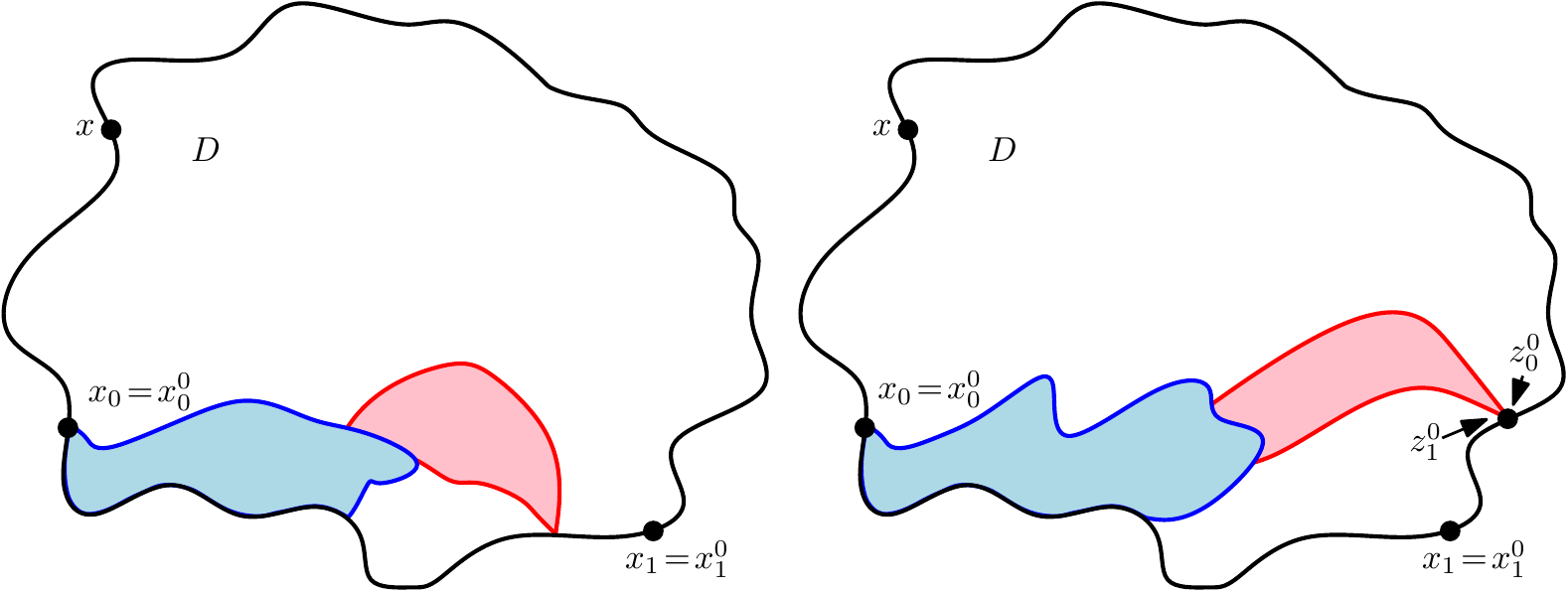}
\end{center}
\caption{\label{fig:downward_jump_failure} Illustration of the exploration in $D$ from $x_0 = x_0^0$ to $x_1 = x_1^0$ (blue) up until it fails due to making a large downward jump with the corresponding chunk shown in red.  {\bf Left:} In the case that $x_0^0$ and $x_1^0$ are not disconnected, a new exploration is started in the remaining domain with $x_0^0$, $x_1^0$ on its boundary from $x_0^0$ to $x_1^0$.  {\bf Right:} In the case that $x_0^0$ and $x_1^0$ are disconnected, we let $z_0^0$ and $z_1^0$ be the two prime ends in the two complementary domains corresponding to where the downward jump occurred.  Two new explorations are started, one from $x_0^0$ to $z_0^0$ and the other from $z_1^0$ to $x_1^0$.}
\end{figure}

\noindent{\it Case 1: large upward jump.} See Figure~\ref{fig:upward_jump_failure} for an illustration.  If the exploration fails by making a large upward jump, we let $\wt{\CD}_{0}$ be the quantum disk in the complement of the exploration with $x_0^0$, $x_1^0$ on its boundary and let $\wt{h}_0$ be the field which describes $\wt{\CD}_{0}$.  We let $z_0^0$, $z_1^0$ be the two points on $\partial \wt{\CD}_{00}$ which correspond to where the loop corresponding to the upward jump is rooted.  We then let $\CD_{00}$ be the surface described by $h_{00} = \wt{h}_0 + \tfrac{2}{\gamma} \log L_{00}^{-1}$ where $L_{00} = \qbmeasure{\wt{h}_0}(\ccwBoundary{x_0^0}{z_0^0}{\partial \wt{\CD}_0})$ and we write $x_0^{00}$, $x_1^{00}$ for the corresponding points.  We also let $\CD_{01}$ be the surface described by $h_{01} = \wt{h}_0 + \tfrac{2}{\gamma} \log L_{01}^{-1}$ where $L_{01} = \qbmeasure{\wt{h}_0}(\ccwBoundary{z_1^0}{x_1^0}{\partial \wt{\CD}_0})$ and we write $x_0^{01}$, $x_1^{01}$ for the corresponding points.  We refer to $(\CD_{0i},x_0^{0i},x_1^{0i})$ for $i=0,1$ as the \emph{children} of $(\CD_0,x_0^0,x_1^0)$. We then perform the exploration inside of $\CD_{0i}$ from $x_0^{0i}$ to $x_1^{0i}$ for $i=0,1$ starting with $i=0$ and then with $i=1$ with the following exception.  If the exploration with $i=0$ fails with a large downward jump which disconnects $x_0^{01} = z_1^0$ from $x_1^{01} = x_1^0$ then we do not perform the exploration from $x_0^{01} = z_1^0$ to $x_1^{01} = x_1^0$ and instead at the next stage perform the exploration from $x_0^{00}$ to $x_1^{01}$ (analogously to how the exploration proceeds in the case of a large downward jump which does not disconnect the initial and terminal points as described just below).

\noindent{\it Case 2: large downward jump.} See Figure~\ref{fig:downward_jump_failure} for an illustration.  If the exploration fails by making a large downward jump, there are two possibilities.  Either $x_0$ and $x_1$ are disconnected or they are not.  If $x_0$, $x_1$ are not disconnected, then we let $\wt{\CD}_0$ be the quantum disk in the complement of the exploration with $x_0^0$, $x_1^0$ on its boundary and let $\wt{h}_0$ be the field which describes $\wt{\CD}_{0}$.  We let $\CD_{00}$ be the quantum surface described by $h_{00} = \wt{h}_0 + \tfrac{2}{\gamma} \log L_{00}^{-1}$ where $L_{00} = \qbmeasure{\wt{h}_0}(\ccwBoundary{x_0^0}{x_1^0}{\partial \wt{\CD}_0})$ and we write $x_0^{00}$, $x_1^{00}$ for the corresponding points.  We refer to $(\CD_{00}, x_{0}^{00}, x_1^{00})$ as the \emph{child} of $(\CD_{0},x_0^0,x_1^0)$ and then continue the exploration in $\CD_{00}$ from $x_0^{00}$ to $x_1^{00}$.

Now suppose that $x_0$, $x_1$ are disconnected.  Let $\wt{\CD}_0$ (resp.\ $\wt{\CD}_1$) be the quantum disk with $x_0$ (resp.\ $x_1$) on its boundary.  Let $\wt{h}_0$ (resp.\ $\wt{h}_1$) be the field which describes $\wt{\CD}_0$ (resp.\ $\wt{\CD}_1$).  Let $z_0$ (resp.\ $z_1$) be the point on $\partial \wt{\CD}_0$ (resp.\ $\partial \wt{\CD}_1$) which corresponds to where the downward jump happened.  We then let $\CD_{00}$ be the quantum surface described by $h_{00} = \wt{h}_0 + \tfrac{2}{\gamma} \log L_{00}^{-1}$ where $L_{00} = \qbmeasure{\wt{h}_0}(\ccwBoundary{x_0}{z_0}{\partial \wt{\CD}_0})$ and call $x_0^{00}$, $x_1^{00}$ the corresponding points.  We also let $\CD_{01}$ be the quantum surface described by $h_{01} = \wt{h}_1 + \tfrac{2}{\gamma} \log L_{01}^{-1}$ where $L_{01} = \qbmeasure{\wt{h}_1}(\ccwBoundary{z_1}{x_1}{\partial \wt{\CD}_1})$ and write $x_0^{01}$, $x_1^{01}$ for the corresponding points.   We refer to $(\CD_{0i},x_0^{0i},x_1^{0i})$ for $i=0,1$ as the children of $(\CD_0,x_0^0,x_1^0)$. We then perform the exploration inside of $\CD_{0i}$ from $x_0^{0i}$ to $x_1^{0i}$ for $i=0,1$.

\noindent{\it Case 3: too many chunks or top boundary length too long.} Finally, we consider the situation in which the exploration fails by either having $n_M^\uparrow$, $n_M^\downarrow$ too large for $M \geq J$ or the quantum length of the top of the exploration exceeds $C 2^{(1-4/\kappa) J}$ for a constant $C > 0$ (we will adjust the value of $C$ later in the proof; note that $1-4/\kappa < 0$ since $\kappa \in (8/3,4)$).  We note that the quantum length of the top can only exceed $C 2^{(1-4/\kappa) J}$ by at most $2^{- \exploreExp (\kappa/4) J}$ for otherwise the exploration would have failed due to a large upward jump.  We let $\wt{\CD}_0$ be the quantum disk in the complement of the exploration with $x_0^0$, $x_1^0$ on its boundary.  Let $\wt{h}_0$ be the field which describes $\wt{\CD}_0$.  We let $\CD_{00}$ be the quantum surface described by $h_{00} = \wt{h}_0 + \tfrac{2}{\gamma} \log L_{00}^{-1}$ where $L_{00} = \qbmeasure{\wt{h}_0}(\ccwBoundary{x_0^0}{x_1^0}{\partial \wt{\CD}_0})$ and we write $x_0^{00}$, $x_1^{00}$ for the corresponding points.  We refer to $(\CD_{00}, x_{0}^{00}, x_1^{00})$ as the child of $(\CD_{0},x_0^0,x_1^0)$ and then continue the exploration in $\CD_{00}$ from $x_0^{00}$ to $x_1^{00}$.

\noindent{\it Completion of the proof.} The above defines a tree structure of explorations $(\CD_o, h_o, x_0^o, x_1^o)$ together with scaling factors $L_o$ for $o \in \{0,1\}^n$ and $n \in \N$ where each node can either have zero children (exploration does not fail), one child (large downward jump which does not separate the endpoints, too many chunks at a fixed stage, or the top boundary length is too large), or two children (large downward jump which separates the endpoints or a large upward jump).  Let $\CT$ be the tree associated with the exploration and for $o \in \CT$ we let $o'$ be the parent of $o$.  Note that $x_0^o, x_1^o$ correspond to points $\wt{x}_0^o$, $\wt{x}_1^o$ on the boundary of the exploration from $x_0^{o'}$ to $x_1^{o'}$ in $\CD_{o'}$.  Moreover, $L_o$ is the quantum length of the counterclockwise arc from $\wt{x}_0^{o'}$ to $\wt{x}_1^{o'}$ along the boundary of the exploration from $x_0^o$ to $x_0^{o'}$.  In particular, $L_0 = \delta$.  On the event $E$ that $\qmeasure{h}(B(z,\epsilon)) \geq c_0 \epsilon^{\alpha_\LBD}$ for all $\epsilon \in (0,\epsilon_0)$ and $z \in D$ so that $B(z,\epsilon) \subseteq D$ we have that $h_0$ satisfies $\qmeasure{h_0}(B(z,\epsilon)) \geq c_0 \delta^{-2} \epsilon^{\alpha_\LBD}$ for all $\epsilon \in (0,\epsilon_0)$ and $z \in D$ so that $B(z,\epsilon) \subseteq D$.  The reason for this is that adding $\tfrac{2}{\gamma} \log \delta^{-1}$ to the field has the effect of multiplying quantum areas by the factor $\delta^{-2}$.

Lemma~\ref{lem:point_to_point_disk_exploration} implies that by choosing $J \in \N$ and $C > 0$ large enough we can ensure that the mean number of children of each node in $\CT$ is strictly smaller than $1$ so that the corresponding Galton-Watson tree is subcritical.  Since the offspring distribution is supported on $\{0,1,2\}$, the size of the Galton-Watson tree also has an exponential tail.

We are now going to use the definition of the $E_{i,M}^\uparrow$, $E_{i,M}^\downarrow$ from Definition~\ref{def:good_chunk} to construct a path $\cpath$ from $x_0$ to $x_1$ contained in $\Upsilon$ and bound the tail of $\lebneb{\epsilon}(\cpath)$.

Let us first consider the exploration from $x_0^0$ to $x_0^1$ in $\CD_0$ in the case that it does not fail.  We say that two chunks $\CN_{i,M}^\uparrow$, $\CN_{\ell,L}^\uparrow$ for which $E_{i,M}^\uparrow$ and $E_{\ell,L}^\uparrow$ occur and $L \geq M$ (also $\ell \geq i$ if $L=M$) are adjacent if $\eta_{\ell,L}^\uparrow(0)$ is on the top of $\CN_{i,M}^\uparrow$ (and the same with $\downarrow$ in place of $\uparrow$).  Let $\CN_n$ for $n \in \Z$ be the bi-infinite sequence of adjacent chunks.  Then parts~\eqref{it:path_across_chunk} and~\eqref{it:path_across_two_chunks} of Definition~\ref{def:good_chunk} give us a way to construct a path connecting $x_0^0$ to $x_1^0$ as follows.  For each $n \in \Z$, we let $\cpath_n$ be the path in~$\CN_n$ corresponding to part~\eqref{it:path_across_chunk} of Definition~\ref{def:good_chunk} and we let~$\cpath_{n-1,n}$ be the path in $\interior{\closure{\CN_{n-1} \cup \CN_n}}$ corresponding to part~\eqref{it:path_across_two_chunks} of Definition~\ref{def:good_chunk}.  Then successively concatenating the paths $\cpath_{n-1,n}$, $\cpath_n$ leads to a path $\cpath$ in $\Upsilon$ connecting $x_0^0$ to $x_1^0$ and we have that
\[ \lebneb{\epsilon}(\cpath) \leq \sum_{n \in \Z} (\lebneb{\epsilon}(\cpath_n) + \lebneb{\epsilon}(\cpath_{n-1,n})).\]
Let $\varphi_n \colon \CN_n \to \D$ be the map as defined in part~\eqref{it:path_across_chunk} of Definition~\ref{def:good_chunk}.  Then we know that $\lebneb{\epsilon}(\varphi_n(\cpath_n)) \leq R_1 \medianHP{\epsilon}$ and $\varphi_n(\cpath_n)$ has distance at least $R_1^{-1}$ from $\partial \D$.  Let 
\[ \ol{d}_n = \sup_{z \in B(0,1-R_1^{-1})} |(\varphi_n^{-1})'(z)| \quad\text{and}\quad \ul{d}_n = \inf_{z \in B(0,1-R_1^{-1})} |(\varphi_n^{-1})'(z)|.\]
For each $\zeta > 0$, let $A_{n,\zeta}$ (resp.\ $B_{n,\zeta}$) be the $\zeta$-neighborhood of $\varphi_n(\cpath_n)$ (resp.\ $\cpath_n$).  Then as $B_{n,\ul{d}_n \epsilon} \subseteq \varphi_n(A_{n,\epsilon})$ we have that
\begin{align}
\label{eqn:n_delta_eps_ubd}
\lebneb{\ul{d}_n \epsilon}(\cpath_n) \leq \int_{A_{n,\epsilon}} |(\varphi_n^{-1})'(z)|^2 dz \leq R_1 \ol{d}_n^2 \medianHP{\epsilon}.
\end{align}
By Lemma~\ref{lem:covering_lemma} we have for a constant $c_0 > 0$ that
\begin{equation}
\label{eqn:n_delta_eps_lbd}
\lebneb{\ul{d}_n \epsilon}(\cpath_n) \geq c_0 \ul{d}_n \lebneb{\epsilon}(\cpath_n).
\end{equation}
Distortion estimates for conformal maps imply that $\ol{d}_n / \ul{d}_n$ is bounded from above by a constant which depends only on $R_1$.  Consequently, by combining~\eqref{eqn:n_delta_eps_ubd} with~\eqref{eqn:n_delta_eps_lbd} we see for a constant $c_1 > 0$ that
\begin{equation}
\label{eqn:n_eps_ubd}
\lebneb{\epsilon}(\cpath_n) \leq c_1 \ol{d}_n \medianHP{\epsilon}.	
\end{equation}
If $\CN_n = \CN_{i,M}^\uparrow$ or $\CN_n = \CN_{i,M}^\downarrow$, then we have that $\qmeasure{h_0}(\CN_n) \leq R_1 2^{-M}$.  Consequently, $\qmeasure{h}(\CN_n) \leq R_1 \delta^2 2^{-M}$.

It therefore follows that on $E$ we have for a constant $c_2 > 0$ that
\begin{equation}
\label{eqn:delta_n_ubd}
\ul{d}_n \leq c_2 \delta^{2/\alpha_\LBD} 2^{-M / \alpha_\LBD}.
\end{equation}
Combining~\eqref{eqn:n_eps_ubd} with~\eqref{eqn:delta_n_ubd} implies for a constant $c_3 > 0$ that
\begin{equation}
\label{eqn:gamma_n_bound}
\lebneb{\epsilon}(\cpath_n) \leq c_3 \delta^{2/\alpha_\LBD} 2^{- M / \alpha_\LBD} \medianHP{\epsilon}.
\end{equation}
By a similar argument, we have (possibly increasing $c_3 > 0$) that
\begin{equation}
\label{eqn:gamma_n_n_bound}
\lebneb{\epsilon}(\cpath_{n-1,n}) \leq c_3 \delta^{2/\alpha_\LBD} 2^{- M / \alpha_\LBD} \medianHP{\epsilon}.
\end{equation}
Recall from the definition of the exploration succeeding that if it does succeed then the number of chunks with quantum natural time in $[\delta_0 2^{-M}, 2^{-M}]$ with $M \geq J+1$ is at most a constant times $2^{(1-\exploreExp)(\kappa/4) M}$ and with $M = J$ is at most a constant times  $2^{(\kappa/4) J}$.  We assume that we have chosen $\exploreExp \in (0,1)$ sufficiently close to $1$ so that $\alpha = (1-\exploreExp)(\kappa/4) - 1 / \alpha_\LBD < 0$.  If we sum \eqref{eqn:gamma_n_bound}, \eqref{eqn:gamma_n_n_bound} over $n$, we thus see that those terms coming from chunks with natural time in $[\delta_0 2^{-M}, 2^{-M}]$ for $M \geq J+1$ contribute a constant times $2^{-\alpha J} \delta^{2/\alpha_\LBD} \medianHP{\epsilon}$.  With $\alpha = (\kappa/4)-\alpha_\LBD^{-1}$, those terms with $M =J$ contribute a constant times $2^{\alpha J} \delta^{2/\alpha_\LBD} \medianHP{\epsilon}$.  Altogether, we have for a constant $c_4 > 0$ that
\[ \lebneb{\epsilon}(\cpath) \leq c_4 2^{\alpha J} \delta^{2/\alpha_\LBD} \medianHP{\epsilon}.\]

Let us now describe how we recursively construct the path $\cpath$ in the case that the exploration fails at some stage.  Consider the tree $\CT$ associated with iterating the exploration as defined above.  We note that $\CT$ has a natural planar structure which comes from the ordering of the exploration.  We let $\wt{\CT}$ consist of the nodes in $\CT$ which either have zero or two children and we view $\wt{\CT}$ as a planar tree with the tree structure and ordering coming from that of $\CT$.  Let $o_1,\ldots,o_n$ be the leaves of $\wt{\CT}$ (given according to their contour order).  For each $1 \leq j \leq n$, we let $\cpath_j$ be the path which is defined in the same manner as $\cpath$ just as above except using the exploration associated with the node $o_j$ (which we emphasize does not fail as $o_j$ is a leaf).  We let $\cpath$ be the concatenation of the~$\cpath_j$.  Then
\[ \lebneb{\epsilon}(\cpath_j) \leq c_4 2^{\alpha J} \left( \prod_{o} L_{o}^{2/\alpha_\LBD} \right) \medianHP{\epsilon}\]
where the product is over those $o \in \CT$ which are ancestors of $o_j$.  The reason for this is that if we add $\tfrac{2}{\gamma} \log L_o^{-1}$ to the field to multiply boundary lengths by $L_o^{-1}$ it has the effect of multiplying areas by $L_o^{-2}$.  Since $L_o \leq C$, $L_0 = \delta$, and the number of ancestors of $o_j$ is trivially at most $|\CT|$, the above is in turn bounded from above by
\[ c_4 2^{\alpha J} C^{2|\CT|/\alpha_\LBD} \delta^{2/\alpha_\LBD} \medianHP{\epsilon}.\]
Therefore
\[ \lebneb{\epsilon}(\cpath) \leq c_4  2^{\alpha J} |\CT| C^{2 |\CT|/\alpha_\LBD} \delta^{2/\alpha_\LBD} \medianHP{\epsilon}.\]
We note that making $J$ large has the effect of improving the tail of $|\CT|$.  Therefore for every value of $p > 0$ we can choose $J$ sufficiently large so that for a constant $c_5 > 0$ we have
\[ \E[ ( (\medianHP{\epsilon})^{-1}\lebneb{\epsilon}(\cpath))^p \one_E] \leq c_5 \delta^{2 p/\alpha_\LBD}.\]
Fix $\beta \in (0,2/\alpha_\LBD)$.  As $p > 0$ was arbitrary, the Kolmogorov-Centsov theorem thus gives that the law of the $\beta$-H\"older norm of $(\medianHP{\epsilon})^{-1} \metapprox{\epsilon}{\cdot}{\cdot}{\Gamma}$ (with respect to the quantum length metric on $\partial D$) satisfies~\eqref{eqn:holder_norm_tight}.

We note that~\eqref{eqn:quantum_holder_norm_tight} follows from the proof of~\eqref{eqn:holder_norm_tight} except instead of using the paths $\omega_n$, $\omega_{n-1,n}$ we instead use the images of straight lines under $\varphi_n^{-1}$.  The same argument used to prove~\eqref{eqn:gamma_n_bound}, \eqref{eqn:gamma_n_n_bound} implies that the diameter of such a path is at most a constant times $\delta^{2/\alpha_\LBD} 2^{-M/\alpha_\LBD}$.  Therefore we can string together these paths in place of the $\omega_n$, $\omega_{n-1,n}$ to complete the proof of~\eqref{eqn:quantum_holder_norm_tight}.
\end{proof}

\section{Tightness in the interior}
\label{sec:interior_tightness}

The aim of this section is to extend the tightness result from Section~\ref{sec:boundary_tightness} to points in the $\CLE_\kappa$ carpet which are contained in the interior of the domain.  As in Section~\ref{sec:boundary_tightness}, we will state and prove the result in a more general setting than described in the statement of Theorem~\ref{thm:cle_loop}.

\newcommand{\approxball}[3]{{\mathfrak B}_{#1}(#2,#3)}

\begin{proposition}
\label{prop:interior_tightness}
For each $\alpha_\LBD > 2 > \alpha_\UBD > 0$ there exists $\alpha_\KC > 0$ so that the following is true.  Suppose that $\CD = (D,h,x,y) \sim \qdiskL{\gamma}{1}$.  Fix $\epsilon_0 > 0$ and let $E$ be the event that for every $z \in D$ and $\epsilon \in (0,\epsilon_0)$ so that $B(z,\epsilon) \subseteq D$ we have that $\epsilon^{\alpha_\LBD} \leq \qmeasure{h}(B(z,\epsilon)) \leq \epsilon^{\alpha_\UBD}$.  Let $\Gamma$ be an independent $\CLE_\kappa$ in $D$ and let $\Upsilon$ be its carpet.  Let $X_{\epsilon}$ be the $\alpha_\KC$-H\"older norm of $(z,w) \mapsto (\medianHP{\epsilon})^{-1} \metapprox{\epsilon}{z}{w}{\Gamma}$ with respect to the Euclidean metric on $D$.  For every $p > 0$ we have that
\[ \p[E,\ X_{\epsilon} \geq A] = O(A^{-p}) \quad\text{as}\quad A \to \infty\]
where the implicit constants depend only on $p,\epsilon_0,\alpha_\LBD,\alpha_\UBD$.
\end{proposition}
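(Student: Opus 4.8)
The plan is to run a Kolmogorov--Centsov argument where the role of the dyadic lattice is played by the nets $P_j$ from Appendix~\ref{app:carpet_measure}, exactly as sketched in Section~\ref{subsec:outline}. First I would fix $\alpha_\LBD > 2 > \alpha_\UBD > 0$ and recall that there is $\alpha_\net > 0$ so that if $|P_j| = 2^{\alpha_\net j}$ with $P_j$ an i.i.d.\ sample from the LQG measure $\mu_h$ on $\Upsilon$ then, on the event $E$ (or a slightly enlarged event of overwhelming probability), $P_j$ is a $2^{-j}$-net of $\Upsilon$ with respect to the Euclidean metric with probability $1 - O(2^{-cj})$ for some $c>0$; here one uses the lower bound $\mu_h(B(z,\epsilon)) \geq \epsilon^{\alpha_\LBD}$ to guarantee that a ball of radius $2^{-j}$ contains a sample point with high probability and the upper bound $\mu_h(B(z,\epsilon)) \leq \epsilon^{\alpha_\UBD}$ to control the total mass so that $P_j$ is not too large. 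The core quantitative input I would then establish is: there exist $\beta > 0$ and $c > 0$ so that for $z, w \in P_j$ with $|z-w| \leq 2^{-j}$,
\[
\p\!\left[ E,\ (\medianHP{\epsilon})^{-1}\metapprox{\epsilon}{z}{w}{\Gamma} \geq 2^{-\beta j} \right] = O(2^{-c j})
\]
uniformly in $\epsilon > 0$ sufficiently small relative to $2^{-j}$. Granting this estimate and a standard chaining/interpolation over the nets $P_j$ (together with the boundary tightness from Proposition~\ref{prop:boundary_distance_tail_bounds} to handle points that reach $\partial D$), the Borel--Cantelli part of the Kolmogorov--Centsov argument yields an a.s.\ finite $\alpha_\KC$-H\"older norm with the stated polynomial tail for any $\alpha_\KC < \beta$; the moment bounds $\E[((\medianHP{\epsilon})^{-1}\metapprox{\epsilon}{z}{w}{\Gamma})^p \one_E]$ needed for Kolmogorov--Centsov follow from the probability tail above by integrating, since the quantities are bounded below by $0$ and the tail decays faster than any power of $2^{-j}$.

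The heart of the matter is the displayed two-point estimate, and here I would proceed in two sub-steps. First, using a CPI exploration started from a suitable point on $\partial D$ (or from an auxiliary loop), I would reduce the interior statement to a statement about a quantum half-plane or quantum disk of controlled boundary length in which $z$ and $w$ both lie: the Markovian nature of the CPI exploration (Theorem~\ref{thm:cpi_wedge_explore}) lets one reveal a neighborhood of $z$ together with the chunk structure around it, after which the conditional law of the unexplored region containing both $z$ and $w$ is again a $\CLE_\kappa$ on a quantum surface to which Proposition~\ref{prop:boundary_distance_tail_bounds} and the chunk estimates of Section~\ref{sec:chunk_estimates} apply. Second — and this is the genuinely new difficulty relative to the boundary case — I must rule out the bad event that $z$ and $w$ are separated from each other by a loop $\CL \in \Gamma$ of Euclidean diameter substantially larger than $2^{-j}$: if such a loop exists, every path in $\Upsilon$ from $z$ to $w$ must go around it and is forced to be long. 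The estimate I need is that, conditionally on $E$, the probability that there is a loop of $\Gamma$ of diameter at least $\lambda 2^{-j}$ which separates a given ball $B(z, 2^{-j})$ from its surroundings (or which comes within distance $2^{-j}$ of $z$ and has large diameter) decays polynomially in $\lambda$ — this is a consequence of the known dimension/scaling properties of $\CLE_\kappa$ loops (the loops of $\CLE_\kappa$, $\kappa \in (8/3,4)$, satisfy a quasi-self-similar nesting structure and the expected number of loops of diameter in $[r, 2r]$ surrounding a fixed point away from the boundary is bounded) combined with the quantum area lower bound from $E$. On the complement of this bad event, the local picture near $z$ and $w$ looks like a CPI-chunk decomposition of a quantum half-plane of roughly unit boundary length after rescaling, and one constructs an explicit path from $z$ to $w$ through good chunks exactly as in the proof of Proposition~\ref{prop:boundary_distance_tail_bounds}, using Lemmas~\ref{lem:one_chunk_quantile} and~\ref{lem:two_chunks_quantile} to bound $\lebneb{\epsilon}$ of each chunk-crossing and Lemma~\ref{lem:covering_lemma} to transfer the bound through the embedding conformal maps (whose distortion is controlled by the quantum area bounds on $E$).

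I expect the main obstacle to be precisely this loop-separation estimate and, more subtly, its \emph{uniformity}: I need it not just for a single pair $(z,w)$ but with a union bound over all $O(2^{2\alpha_\net j})$ pairs in $P_j$ with $|z-w| \leq 2^{-j}$, so the polynomial decay rate in $\lambda$ and $2^{-j}$ has to be fast enough to absorb the entropy, which forces $\alpha_\KC$ to be chosen small depending on $\alpha_\LBD, \alpha_\UBD$ (hence the quantifier structure of the statement). A secondary technical point is that the normalization is by $\medianHP{\epsilon}$, defined via the infinite-volume half-planar setup, so I must invoke the comparability of quantiles $\quantHP{p}{\epsilon}$ uniformly in $\epsilon$ (established in Section~\ref{sec:boundary_tightness} en route to Proposition~\ref{prop:boundary_distance_tail_bounds}) to be sure that the chunk-crossing bounds, which naturally come out in terms of $\quantHP{p}{\epsilon}$, are really bounds in terms of $\medianHP{\epsilon}$ up to a constant; this is where~\eqref{eqn:quant_comparison} and Lemma~\ref{lem:covering_lemma} are used together. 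Everything else — the passage from the two-point tail estimate to the H\"older bound, the handling of the boundary via Proposition~\ref{prop:boundary_distance_tail_bounds}, and the net property of $P_j$ — is a routine adaptation of the Kolmogorov--Centsov scheme and of arguments already carried out for the boundary case.
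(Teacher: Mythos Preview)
Your overall architecture matches the paper's: nets $P_j$ of carpet-typical points, a two-point estimate for nearby net points, and a Kolmogorov--Centsov chaining. The loop-separation issue you flag is indeed handled in the paper, via the pinch-point estimate (Lemma~\ref{lem:disk_in_disk}), which controls near-self-touchings of a single loop rather than ``large separating loops'' directly; this is what guarantees that a CPI exploration targeted at $z$ does not disconnect $z$ from $w$ before the trunk gets to distance $\delta^{1/\alpha_\PP}$ of $z$.

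There is, however, a genuine gap in your core two-point estimate. After the CPI reduction you propose, $z$ and $w$ are still \emph{interior} carpet points of the remaining domain, not boundary points, so you cannot simply ``construct an explicit path from $z$ to $w$ through good chunks exactly as in the proof of Proposition~\ref{prop:boundary_distance_tail_bounds}'': the chunk paths of Section~\ref{sec:boundary_tightness} run between boundary arcs. The paper bridges this with two further ingredients you do not mention. First, Lemma~\ref{lem:disk_neighborhood} uses the percolation exploration again to show that every point near the (new) boundary is enclosed by a carpet path $\omega$ of small $\metapprox{\epsilon}{\cdot}{\cdot}{\Gamma}$-diameter whose enclosed region also has small Euclidean diameter. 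Second, and crucially, Lemma~\ref{lem:quantum_typical_point_diamter_lbd} gives a \emph{lower} bound on the Euclidean diameter of the approximate metric ball $\approxball{\epsilon}{z}{\delta^{\alpha_\ball}}$ around a carpet-typical point: it has diameter at least $\delta$ with probability going to $1$ faster than any power of $\delta$. The two together force $\approxball{\epsilon}{z}{\delta^{\alpha_\ball\xi}}$ to exit the small enclosed region and hit $\omega_z$, which is what actually produces a short path from the interior point $z$ to the CPI boundary; only then does boundary tightness (Proposition~\ref{prop:boundary_distance_tail_bounds}) close the argument (this is assembled in Lemma~\ref{lem:distance_two_quantum_typical}). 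Without a mechanism of this kind your reduction stalls: you have localized $z$ and $w$ in a small quantum disk but have no way to bound $\metapprox{\epsilon}{z}{\partial(\text{disk})}{\Gamma}$ by a power of the scale. Note also that the paper's two-point bound decays faster than any power of $\delta$, not merely $O(2^{-cj})$ for some fixed $c$; this is what lets the union bound over $O(2^{2\alpha_\net j})$ pairs go through without constraining $\alpha_\KC$ via entropy, so your worry about the decay rate absorbing the entropy is resolved differently than you anticipate.
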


The proof of Proposition~\ref{prop:interior_tightness} will involve several steps which we will now describe.
\begin{enumerate}
\item[Step 1.] We fix $\alpha_\net > 0$ and for each $j \in \N$ we let $N_j = 2^{\alpha_\net j}$ then pick $(z_n)$ i.i.d.\ from~$\qcarpet{h}{\Upsilon}$.  By Lemma~\ref{lem:quantum_measure_points_dense}, the points $z_1,\ldots,z_{N_j}$ are very likely to form a $2^{-j}$-net of $\Upsilon$.  We then aim to show in the remaining steps that if we have $1 \leq k,\ell \leq N_j$ such that $|z_\ell - z_k| \leq 2^{-j}$ then $(\medianHP{\epsilon})^{-1} \metapprox{\epsilon}{z_\ell}{z_k}{\Gamma}$ is very likely to be at most $2^{-\alpha_\KC j}$ for some constant $\alpha_\KC > 0$.
\item[Step 2.] We will argue in Lemma~\ref{lem:disk_in_disk} that there is a constant $\alpha_\PP > 0$ so that there does not exist $z \in \Upsilon$ and $\CL \in \Gamma$ so that $\CL \setminus B(z,\delta)$ has more than one component of diameter at least $\delta^{1/\alpha_\PP}$.  (We will also need a version where we rule out there being more than one component of $\CL \setminus B(z,\delta)$ with quantum length at least $\delta^{1/\alpha_\PP}$.)
\item[Step 3.]  We then use the percolation exploration from Section~\ref{sec:percolation_exploration} to show that for every $z \in \Upsilon$ with $\dist(z, \partial D) < \delta$ there exists a path $\eta$ in $\Upsilon$ so that the component $U$ of $D \setminus \eta$ containing~$z$ has diameter at most a power of $\delta$ and every pair of points $u,v \in \eta \cap \partial U$ satisfy that $(\medianHP{\epsilon})^{-1} \metapprox{\epsilon}{u}{v}{\Gamma}$ is also at most a power of $\delta$ (Lemma~\ref{lem:bounds_holder}).  We now aim to combine this with Steps 1 and 2 to bound $(\medianHP{\epsilon})^{-1} \metapprox{\epsilon}{z_\ell}{z_k}{\Gamma}$ as described at the end of Step 1.
\item[Step 4.] For each $z \in \Upsilon$ and $r > 0$ we let $\approxball{\epsilon}{z}{r}$ be the set of $w \in \Upsilon$ such that $(\medianHP{\epsilon})^{-1} \metapprox{\epsilon}{z}{w}{\Gamma} \leq r$.  We next establish a \emph{lower bound} on the Euclidean diameter of $\approxball{\epsilon}{z_\ell}{r}$ for each $1 \leq \ell \leq N_j$ (Lemma~\ref{lem:quantum_typical_point_diamter_lbd}).  The bound will be so that it implies for a constant $\alpha_\ball > 0$ that if $|z_\ell - z_k| \leq 2^{-j}$ then it very likely that $\approxball{\epsilon}{z_\ell}{2^{-\alpha_\ball j}}$, $\approxball{\epsilon}{z_k}{2^{-\alpha_\ball j}}$ have diameter much larger than $2^{-j}$ but it will not be clear at this point that $\approxball{\epsilon}{z_\ell}{2^{-\alpha_\ball j}}$, $\approxball{\epsilon}{z_k}{2^{-\alpha_\ball j}}$ intersect.
\item[Step 5.] We now fix a value of $1 \leq \ell \leq N_j$ and consider an $\SLE_\kappa^0(\kappa-6)$ exploration from a point on $\partial D$ targeted at $z_\ell$ and coupled with $\Gamma$ as a CPI.  We assume we are working on the event that points within distance $2^{-j}$ of each other are not separated from each other by a loop of $\Gamma$ of size at least $2^{-j/\alpha_\PP}$.  It is a consequence of the results in Appendix~\ref{app:mod_of_cont} that if we parameterize $\eta$ according to the amount of quantum area that its trunk $\eta'$ has disconnected from $z_\ell$ then $\eta'$ is H\"older continuous with some exponent $\alpha_\HO$.  For each $m \in \N$ we let $t_m = m 2^{-j/(\alpha_\HO \alpha_\PP)}$.  By Step 3, we have for each $m \in \N$ that the condition of Step 3 holds for the component of $D \setminus \eta([0,t_m])$ which contains $z_\ell$ with very high probability.  Let $\tau$ be the first time $t$ that $\eta'$ gets within distance $2^{-j/\alpha_\PP}$ of $z_\ell$ and let $m_0$ be such that $\tau \in (t_{m_0},t_{m_0+1}]$.  We fix another value of $1 \leq k \leq N_j$.  On the event that $|z_\ell - z_k| \leq 2^{-j}$ we have that $\eta|_{[0,\tau]}$ hence also $\eta|_{[0,t_{m_0}]}$ does not disconnect $z_\ell$, $z_k$ (as this would only be possible if there were a loop which violated Step 2).  Step 3 implies that we can find paths $\omega_\ell, \omega_k$ in the component of $D \setminus \eta([0,t_{m_0}])$ which contains $z_\ell$ which satisfy the properties mentioned above for $z_\ell$, $z_k$.  The sets $\approxball{\epsilon}{z_k}{2^{-\alpha_\ball j}}$, $\approxball{\epsilon}{z_\ell}{2^{-\alpha_\ball j}}$ from Step 4 then have to intersect $\omega_\ell$, $\omega_k$ which gives an overall bound on $\metapprox{\epsilon}{z_\ell}{\eta([0,t_{m_0}])}{\Upsilon}$ and $\metapprox{\epsilon}{z_k}{\eta([0,t_{m_0}])}{\Upsilon}$.  Proposition~\ref{prop:boundary_distance_tail_bounds} then gives an upper bound on $\metapprox{\epsilon}{u_\ell}{u_k}{\Upsilon}$ where $u_\ell$, $u_k$ are where $\omega_\ell$, $\omega_k$, respectively, hit $\eta([0,t_{m_0}])$.  Altogether, this gives an upper bound to $\metapprox{\epsilon}{z_\ell}{z_k}{\Upsilon}$ which is a power of $2^{-j}$.  All of these estimates will be combined carefully in Lemma~\ref{lem:distance_two_quantum_typical}.
\item[Step 6.] We complete the proof by applying an argument analogous to that used to prove the Kolmogorov-Centsov continuity criterion. 
\end{enumerate}

\subsection{$\CLE$ loop pinch point bound}
\label{subsec:cle_loop_pinch_point}

Suppose that $D \subseteq \C$ is a bounded simply connected domain, $(D,h,x,y) \sim \qdiskL{\gamma}{1}$, and $\Gamma$ is an independent $\CLE_\kappa$ in $D$.  Fix $\epsilon, \alpha_\PP > 0$.  We say that $\CL \in \Gamma$ has an $(\epsilon,\alpha_\PP)$-pinch point if one of the following hold.
\begin{enumerate}[(i)]
\item \label{it:pp1} The quantum length of $\CL$ exceeds $2\epsilon$ and there exists $z \in \CL$ so that if $z_1$ (resp.\ $z_2$) is the point in $\CL$ so that $\qbmeasure{h}(\ccwBoundary{z}{z_1}{\CL})= \epsilon$ (resp.\ $\qbmeasure{h}(\cwBoundary{z}{z_2}{\CL}) = \epsilon$) then $\ccwBoundary{z_1}{z_2}{\CL} \cap B(z,\epsilon^{\alpha_\PP}) \neq \emptyset$ or
\item \label{it:pp2} The diameter of $\CL$ exceeds $2\epsilon$ and there exists $z \in \CL$ such that if $z_1$ (resp.\ $z_2$) is the first counterclockwise (resp.\ clockwise) point in $\CL$ starting from $z$ so that $\diam(\ccwBoundary{z}{z_1}{\CL}) = \epsilon$ (resp.\ $\diam(\cwBoundary{z}{z_2}{\CL}) = \epsilon)$ then $\ccwBoundary{z_1}{z_2}{\CL} \cap B(z,\epsilon^{\alpha_\PP}) \neq \emptyset$.
\end{enumerate}

\begin{lemma}
\label{lem:disk_in_disk}
Suppose that $D \subseteq \C$ is a bounded simply connected domain and $(D,h,x,y) \sim \qdiskL{\gamma}{1}$.  Suppose that $\alpha_\UBD, \epsilon_0 > 0$.  There exists $\alpha_\PP, \beta > 0$ depending only on $\alpha_\UBD$ so that the following is true.  Let $E_1$ be the event that for every $\epsilon \in (0,\epsilon_0)$ and $z \in D$ so that $B(z,\epsilon) \subseteq D$ we have that $\qmeasure{h}(B(z,\epsilon)) \leq \epsilon^{\alpha_\UBD}$.  Let $E_2$ be the event that there is $\CL \in \Gamma$ with an $(\epsilon,\alpha_\PP)$-pinch point.  Then $\p[E_1 \cap E_2] = O(\epsilon^\beta)$ for all $\epsilon \in (0,\epsilon_0)$.
\end{lemma}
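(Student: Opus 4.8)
\textbf{Proof plan for Lemma~\ref{lem:disk_in_disk}.}

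The plan is to work entirely on the event $E_1$ and estimate, scale by scale, the probability that some loop of $\Gamma$ has an $(\epsilon,\alpha_\PP)$-pinch point. First I would reduce to a union over dyadic scales: cover the parameter $\epsilon$ by the intervals $[2^{-(n+1)},2^{-n}]$ for $2^{-n} \geq \epsilon$, and for each such scale cover $D$ by $O(2^{2n})$ balls of radius $2^{-n}$. On $E_1$ the total quantum area is at most $1$ (indeed $\qmeasure{h}(D) = \qmeasure{h}(B(0,O(1)))$ is bounded on $E_1$ if we first intersect with the event that $D$ has bounded diameter, which for a $\qdiskL{\gamma}{1}$ sample embedded suitably holds with overwhelming probability; alternatively one works with the circle-average embedding and notes the area is a.s.\ finite). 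The key point is that a pinch point of type~\eqref{it:pp2} forces the existence of a point $z \in \CL$ and a ``bottleneck'': the portion of $\CL$ between $z_1$ and $z_2$ (which has diameter at least $\epsilon$ on each side, so the loop genuinely leaves $B(z,\epsilon)$ on both sides) returns to within $B(z,\epsilon^{\alpha_\PP})$. Such a configuration means that the loop, restricted to an annulus $B(z,\epsilon) \setminus B(z,\epsilon^{\alpha_\PP})$, crosses that annulus and comes back — i.e.\ $\CL$ makes at least two crossings of the annulus $A = B(z,2\epsilon^{\alpha_\PP}) \setminus B(z,\epsilon^{\alpha_\PP}/2)$ (say), or more robustly: the $\CLE_\kappa$ carpet $\Upsilon$, together with the loop $\CL$, disconnects $B(z,\epsilon^{\alpha_\PP})$ from $\partial B(z,\epsilon)$ in a non-trivial way, which has small probability because of the standard $\CLE_\kappa$ annulus/arm estimates.

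The heart of the argument is therefore a one-point estimate: for fixed $z$ and fixed scale $2^{-n}$, the conditional probability given the field of the event ``there is a loop $\CL \in \Gamma$ with an $(\epsilon,\alpha_\PP)$-pinch point witnessed near $z$ with $\epsilon \asymp 2^{-n}$'' is bounded by $C 2^{-n \theta}$ for some exponent $\theta = \theta(\alpha_\PP) > 0$ that can be made as large as we like by taking $\alpha_\PP$ small. I would derive this from the conformal invariance of $\CLE_\kappa$ and the fact that $\CLE_\kappa$ loops are a.s.\ simple (Hölder) curves: a pinch point is a near-self-approach of a single loop, and the probability that an $\SLE_\kappa$-type curve in an annulus of modulus $\asymp \log(\epsilon^{\alpha_\PP - 1})$ (going to $\infty$ as $\alpha_\PP \to 0$) comes back to its starting region after traveling macroscopically away decays as a power of the modulus; quantitatively this is a two-arm-type event for the loop and its complement, with exponent $\to \infty$ as the modulus $\to \infty$. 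Equivalently, one can phrase it via the known modulus-of-continuity / reversibility estimates for $\SLE_\kappa$: with $\kappa \in (8/3,4)$ an $\SLE_\kappa$ curve satisfies $\p[\text{diam}(\eta([s,t])) \geq r,\ |\eta(s) - \eta(t)| \leq r^{1/\alpha}]$ decays like a power of $r$ with exponent growing in $1/\alpha$, and a pinch point of a loop is exactly such an event after mapping the loop to a chordal configuration. For type~\eqref{it:pp1} (quantum-length version) I would combine this geometric estimate with the a.s.\ Hölder continuity of the quantum boundary length parametrization of $\CLE_\kappa$ loops relative to Euclidean length — on $E_1$ the quantum length is controlled by the quantum area of thin neighborhoods, giving quantum length $\lesssim$ (Euclidean length)$^{c}$ and vice versa up to the $\alpha_\UBD$-dependent exponent, which is precisely why $\alpha_\PP$ and $\beta$ are allowed to depend on $\alpha_\UBD$.

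Given the one-point estimate, I would finish by a union bound: summing $C 2^{-n\theta}$ over $O(2^{2n})$ balls at scale $n$ and then over $n$ with $2^{-n} \geq \epsilon$ gives $O\big(\sum_{n: 2^{-n} \geq \epsilon} 2^{(2-\theta)n}\big) = O(\epsilon^{\theta - 2})$ provided $\theta > 2$, so we set $\beta = \theta - 2$, which is positive and in fact large once $\alpha_\PP$ is small. I would need to be slightly careful that the witness point $z$ in the definition need not lie at a dyadic location, but this is handled as usual by the standard net argument (replace $z$ by the nearest ball center, losing only constants in the radii $\epsilon$ and $\epsilon^{\alpha_\PP}$). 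I expect the main obstacle to be making the geometric ``near self-approach'' estimate for $\CLE_\kappa$ loops fully quantitative with an exponent that genuinely blows up as $\alpha_\PP \to 0$: one wants a clean statement that the probability a $\CLE_\kappa$ loop passing through a ball $B(z,\epsilon^{\alpha_\PP})$ also exits $B(z,\epsilon)$ and returns is at most a power of $\epsilon$ with arbitrarily large exponent. This should follow from iterating a single-annulus crossing estimate across the $\asymp \log_2(\epsilon^{\alpha_\PP-1}/\epsilon^{\alpha_\PP}) \asymp (1-\alpha_\PP)\log_2 \epsilon^{-1}$ many disjoint dyadic annuli separating $\partial B(z,\epsilon^{\alpha_\PP})$ from $\partial B(z,\epsilon)$, using conformal invariance and the (uniformly positive) probability that a loop crossing one annulus does \emph{not} return, but setting it up so that the resulting product bound is genuinely uniform over the realization of the rest of $\Gamma$ and over which loop is involved will require some care — most cleanly via the exploration of $\Gamma$ by nested $\CLE_\kappa$'s or by appealing to the imaginary-geometry/$\SLE$ estimates already cited in the paper.
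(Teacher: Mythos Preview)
Your approach is genuinely different from the paper's and has two real problems.

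First, you have the direction of $\alpha_\PP$ inverted. For the pinch event to be rare the return ball $B(z,\epsilon^{\alpha_\PP})$ must be \emph{much smaller} than $B(z,\epsilon)$, which forces $\alpha_\PP$ large; with $\alpha_\PP<1$ the ball is macroscopic and the condition is essentially automatic, so $E_2$ holds a.s.\ and the lemma is false. The modulus of the relevant annulus is $\asymp(\alpha_\PP-1)\log(1/\epsilon)$, large when $\alpha_\PP$ is large. This is fixable, but it signals a misreading of the definition.

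Second, and more seriously, your handling of the type~(i) (quantum--length) pinch does not go through. You write that ``on $E_1$ the quantum length is controlled by the quantum area of thin neighborhoods,'' but $E_1$ is only the \emph{upper} bound $\qmeasure{h}(B(z,\epsilon))\le\epsilon^{\alpha_\UBD}$. This yields a lower bound on the Euclidean diameter of a set of given quantum area; it gives no comparison in either direction between quantum \emph{boundary length} of an arc and its Euclidean diameter. So you cannot reduce type~(i) to the geometric type~(ii) picture using $E_1$ alone, and your arm estimate never sees the quantum length.

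The paper avoids arm estimates entirely via a reweighting trick. Pass to the area--weighted law $\qdiskWeighted{\gamma}{1}$ and sample a quantum--typical point $z$ from $\qmeasure{h}$. On the bad event $G$ (some loop $\CL$ has a type~(i) pinch at $w$ and surrounds quantum area $\ge\epsilon^{2\xi}$), the point $z$ lands in the region $U$ enclosed by $\CL$ with probability $\gtrsim\epsilon^{2\xi+\sigma}$. Given this and $z\notin B(w,\delta)$, uniformize $\varphi:U\to\D$ with $\varphi(z)=0$. The pinch creates a bottleneck: one component $L$ of $\CL\setminus B(w,\epsilon^{\alpha_\PP})$ carrying quantum length $\ge\epsilon$ is separated from $z$ through a neck of width $\epsilon^{\alpha_\PP}$, so by the Beurling estimate $\varphi(L)$ is an arc of $\partial\D$ of Euclidean length $O((\epsilon^{\alpha_\PP}/\delta)^{1/2})$. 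Thus one lands exactly in the event of Lemma~\ref{lem:quantum_disk_length_bound} (a short Euclidean arc of $\partial\D$ with quantum length $\ge\delta^\beta$), whose probability is $O((\delta/\epsilon)^\alpha)$. The inequality $\p[G]\le\p[F]/\p[F\mid G]$ then gives $\p[G]=O(\delta^\alpha\epsilon^{-\alpha-2\xi-\sigma})$, and choosing $\delta$ a large power of $\epsilon$ (then $\alpha_\PP$ large enough that $(\epsilon^{\alpha_\PP}/\delta)^{1/2}=O(\delta^\beta)$) finishes. The point is that quantum length is conformally covariant, so the uniformization converts the quantum pinch into an elementary boundary--length deviation on the unit disk; your Euclidean arm argument has no mechanism for this.

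Your route might be salvageable for type~(ii) alone with the corrected $\alpha_\PP$, but even there you would need a quantitative near--self--approach bound for a $\CLE_\kappa$ loop at a given location, with exponent exceeding $2$, \emph{uniformly over all loops} of the ensemble. That is not a standard arm estimate and would have to be extracted from the GFF/space--filling $\SLE$ coupling, which is more work than the paper's argument.
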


In what follows, for $\ell > 0$ we let $\qdiskWeighted{\gamma}{\ell}$ be the law on quantum surfaces $(D,h)$ whose Radon-Nikodym derivative with respect to $\qdiskL{\gamma}{\ell}$ is given by a normalizing constant times $\qmeasure{h}(D)$.  As $\E[ \qmeasure{h}(D)] < \infty$ under $\qdiskL{\gamma}{\ell}$, we have that $\qdiskWeighted{\gamma}{\ell}$ indeed makes sense as a probability measure.  We note that it suffices to prove Lemma~\ref{lem:disk_in_disk} under $\qdiskWeighted{\gamma}{1}$ in place of $\qdiskL{\gamma}{1}$ since $\qmeasure{h}(D)$ has finite negative moments of all orders \cite[Theorem~1.2]{ag2019disk}.  Given that $(D,h)$  has law $\qdiskWeighted{\gamma}{\ell}$, it is natural to consider $(D,h)$ marked by a point $z$ chosen from $\qmeasure{h}$ and we will also write $(D,h,z) \sim \qdiskWeighted{\gamma}{\ell}$ in this case.

\begin{lemma}
\label{lem:quantum_disk_length_bound}
There exists $\alpha, \beta, c_0 > 0$ so that the following is true.  Suppose that $(\D,h,0) \sim \qdiskWeighted{\gamma}{1}$.  Then
\[ \p[ \exists \theta \in [0,2\pi) : \qbmeasure{h}(\ccwBoundary{e^{i \theta}}{e^{i (\theta+\epsilon)}}{\partial \D}) \geq \epsilon^\beta] \leq c_0 \epsilon^\alpha \quad\text{for all}\quad \epsilon > 0.\]
\end{lemma}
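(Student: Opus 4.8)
The plan is to reduce the statement to a union bound over a net of boundary arcs plus a tail estimate for the quantum boundary length of a short arc of a $\CLE_\kappa$-type curve, using absolute continuity to transfer to a more tractable setting. First I would note that under $\qdiskWeighted{\gamma}{1}$ marked by an interior point chosen from $\qmeasure{h}$, the field near $\partial \D$ is, after a conformal change of coordinates to the half-plane (or strip), comparable to a quantum wedge or half-plane field, so that the boundary length measure $\qbmeasure{h}$ restricted to a fixed sub-arc of $\partial \D$ away from the two marked points is absolutely continuous with respect to the corresponding measure on $\partial \h$ for a quantum half-plane or weight-$2$ wedge (this is the kind of comparison used, e.g., in Lemma~\ref{lem:rn_derivative} and in \cite{msw2020simplecle}). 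The point of the interior marked point / $\qmeasure{h}$-weighting is precisely to make the total mass finite and the comparison clean; the finite negative moments of $\qmeasure{h}(D)$ from \cite{ag2019disk} let us pass freely between $\qdiskL{\gamma}{1}$ and $\qdiskWeighted{\gamma}{1}$.

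Next I would discretize: cover $[0,2\pi)$ by $O(\epsilon^{-1})$ arcs $I_m$ of the form $\ccwBoundary{e^{i\theta_m}}{e^{i(\theta_m+2\epsilon)}}{\partial \D}$ with $\theta_m$ on a $\epsilon$-net, so that any arc $\ccwBoundary{e^{i\theta}}{e^{i(\theta+\epsilon)}}{\partial \D}$ is contained in some $I_m$. Then
\[
\p[\exists \theta : \qbmeasure{h}(\ccwBoundary{e^{i\theta}}{e^{i(\theta+\epsilon)}}{\partial \D}) \geq \epsilon^\beta] \leq \sum_m \p[\qbmeasure{h}(I_m) \geq \epsilon^\beta].
\]
So it suffices to show that for a single fixed arc of (Euclidean) length $\asymp \epsilon$, the probability that its quantum length exceeds $\epsilon^\beta$ is $O(\epsilon^{1+\alpha'})$ for some $\alpha' > 0$; then summing over the $O(\epsilon^{-1})$ arcs gives the bound with $\alpha = \alpha'$. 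The arcs within, say, Euclidean distance $\epsilon^{1/2}$ of one of the two marked points $x,y$ need separate (and easier) treatment, since there are only $O(\epsilon^{-1/2})$ of them and one only needs a crude per-arc bound there; for the bulk arcs the half-plane/wedge comparison applies with Radon-Nikodym derivative bounded by a quantity with all moments.

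For the single-arc estimate I would use the scaling and moment properties of $\qbmeasure{h}$ near the boundary of a quantum half-plane: by the scaling relation $\qbmeasure{h + \frac{2}{\gamma}\log C}(\cdot) = C \qbmeasure{h}(\cdot)$ together with the fact that $\qbmeasure{h}(I)$ for a unit Euclidean arc has a density with respect to Lebesgue (the GMC measure $e^{\gamma h/2}$ on the boundary) whose negative moments are all finite and whose positive moments of order $p < 4/\gamma^2$ are finite, one gets that the quantum length of a Euclidean-length-$\epsilon$ arc is typically of order $\epsilon^{\gamma^2/4 + o(1)}$ (the boundary KPZ scaling exponent), with polynomial tails on both sides. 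Concretely, Markov's inequality applied to a suitable positive moment: $\p[\qbmeasure{h}(I_m)\geq \epsilon^\beta] \leq \epsilon^{-\beta q}\, \E[\qbmeasure{h}(I_m)^q] \lesssim \epsilon^{-\beta q}\,\epsilon^{\xi q}$ where $\xi$ is the scaling exponent for the $q$th moment (obtained from the exact scaling of $\qbmeasure{h}$ under the rooted/shifted structure, or from a crude chaining/Kahane estimate); choosing $\beta < \xi$ and $q$ large makes $\epsilon^{(\xi-\beta)q}$ beat the $\epsilon^{-1}$ from the union bound. The main obstacle is making the moment bound $\E[\qbmeasure{h}(I_m)^q] \lesssim \epsilon^{\xi q}$ uniform over the $O(\epsilon^{-1})$ arcs and genuinely getting the scaling exponent $\xi$ large enough relative to the cost of the union bound — i.e., verifying that the boundary GMC measure has the standard multifractal moment bounds in this precise (half-plane / disk, weighted by area) setup, and handling the arcs near the two marked points where the field has a $\gamma$-log singularity; both are routine given \cite{ds2011kpz,ag2019disk} and the Radon-Nikodym comparisons, but require care to state cleanly.
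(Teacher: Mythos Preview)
Your proposal is correct and is essentially the same approach as the paper's: reduce to a free boundary GFF (the paper does this via the strip parameterization, as in Lemma~\ref{lem:quantum_disk_multifractal}), use moment upper bounds for the boundary GMC measure on a single short arc, and union bound over $O(\epsilon^{-1})$ arcs; the passage between $\qdiskL{\gamma}{1}$ and $\qdiskWeighted{\gamma}{1}$ is handled by H\"older's inequality using the finite positive moments of $\qmeasure{h}(\D)$ from \cite{ag2019disk}. Two minor remarks: your reference to Lemma~\ref{lem:rn_derivative} is not quite the right comparison (that lemma concerns the boundary-length process under an $\SLE_\kappa^0(\kappa-6)$ exploration, not the disk/half-plane field comparison you need), and in the embedding $(\D,h,0)$ the marked point is interior, so the ``two marked boundary points'' appear only after you reduce to the $\qdiskL{\gamma}{1}$ embedding $(\D,h,-i,i)$---but this does not affect the argument.
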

\begin{proof}
In the case that $(\D,h,-i,i) \sim \qdiskL{\gamma}{1}$, this follows from the same argument used to prove Lemma~\ref{lem:quantum_disk_multifractal} except here we use the moment upper bound for the boundary length measure for a free boundary GFF.  We will thus omit the details in this case.  The result in the case of $\qdiskWeighted{\gamma}{1}$ follows since \cite[Theorem~1.2]{ag2019disk} implies that $\qmeasure{h}(D)$ has a finite $p$th moment for each $p < 4/\gamma^2$ \cite[Theorem~1.2]{ag2019disk} so we can get the result from the case of $\qdiskL{\gamma}{1}$ and apply H\"older's inequality.
\end{proof}

\begin{proof}[Proof of Lemma~\ref{lem:disk_in_disk}]
Let $\alpha$, $\beta$ be as in Lemma~\ref{lem:quantum_disk_length_bound} and let $\epsilon, \delta > 0$.  We assume that $(D,h,z) \sim \qdiskWeighted{\gamma}{1}$.  As remarked after the lemma statement, it suffices to give the proof in this setting.  Note that $z$ is a.s.\ surrounded by a loop $\CL_z$ of $\Gamma$.  Let $D_z$ be the component of $\C \setminus \CL_z$ which contains $z$.  Let $\varphi_z \colon D_z \to \D$ be the unique conformal map with $\varphi_z(z) = 0$, $\varphi_z'(z) > 0$, and let $h_z = h \circ \varphi_z^{-1} + Q \log|(\varphi_z^{-1})'|$.  Let $F$ be the event that $\CL_z$ has quantum length at least $\epsilon$ and there exists $\theta \in [0,2\pi)$ so that $\qbmeasure{h_z}(\ccwBoundary{e^{i \theta}}{e^{i(\theta+\delta)}}{\partial \D}) \geq \delta^\beta$.  Letting $\ell > 0$ be the quantum length of $\CL_z$, we have that the conditional law of $(\D,h_z,0)$ given $\ell$ is equal to $\qdiskWeighted{\gamma}{\ell}$.  Consequently, Lemma~\ref{lem:quantum_disk_length_bound} implies that $\p[F] = O((\delta/\epsilon)^\alpha)$.

Fix $\xi > 1$ and $\sigma > 0$.  Let $G$ be the event that
\begin{enumerate}[(i)]
\item $E_1$ occurs
\item $\qmeasure{h}(D) \leq \epsilon^{-\sigma}$ and
\item there is $\CL \in \Gamma$ with quantum length at least $2\epsilon$ which satisfies~\eqref{it:pp1} in the definition of an $(\epsilon,\alpha_\PP)$-pinch point and the domain surrounded by $\CL$ has quantum area at least $\epsilon^{2\xi}$.
\end{enumerate}
Let us now make a few observations.  First, it follows from Lemma~\ref{lem:number_of_loops} that for each $a > 0$ there exists $b > 0$ so that the probability that the number of loops of $\Gamma$ in $D$ with quantum length at least $\epsilon$ is $O(\epsilon^{-4/\kappa-1/2-a})$ is $1-O(\epsilon^b)$.  By \cite[Theorem~1.2]{ag2019disk}, we have that the total quantum area associated with a sample from $\qdiskWeighted{\gamma}{\ell}$ for $\ell \geq \epsilon$ is at least $\epsilon^{2\xi}$ off an event whose probability decays to $0$ as $\epsilon \to 0$ faster than any power of $\epsilon$.  Combining these two facts implies that off an event whose probability decays to $0$ faster than any power of $\epsilon$ as $\epsilon \to 0$, the quantum area of the region surrounded by each loop of $\Gamma$ with quantum length at least $\epsilon$ is at least $\epsilon^{2\xi}$.  Moreover, by \cite[Theorem~1.2]{ag2019disk} we have that $\p[ \qmeasure{h}(D) \geq \epsilon^{-\sigma}] = O(\epsilon^{(4/\gamma^2-1)\sigma})$.  It thus follows that $\p[E_1 \cap E_2] \leq \p[G] + O(\epsilon^{(4/\gamma^2-1)\sigma})$.

We will argue that there exists a constant $c_0 > 0$ so that
\begin{align}
\label{eqn:p_f_given_g_lbd}
\p[F \giv G] \geq c_0 \epsilon^{2\xi+\sigma}
\end{align}
which, in turn, implies that
\begin{align}
\label{eqn:p_g_bound}
\p[G] \leq \frac{\p[F]}{\p[F \giv G]} = \frac{O( (\delta/\epsilon)^{\alpha})}{c_0 \epsilon^{2\xi+\sigma}} = O( \delta^\alpha \epsilon^{-\alpha-2\xi-\sigma}).
\end{align}
Combining, this will prove the result provided we take $\delta$ to be a sufficiently large power of $\epsilon$.

Suppose that we are working on $G$, that $\CL$ is a loop of $\Gamma$ with quantum length at least $2 \epsilon$ and suppose that $\CL$ satisfies~\eqref{it:pp1} in the definition of an $(\epsilon,\alpha_\PP)$-pinch point.  We assume further that the domain $U$ surrounded by $\CL$ satisfies $\qmeasure{h}(U) \geq \epsilon^{2\xi}$ and that among all such loops $\CL$ is leftmost (smallest real part) and among all such leftmost loops is bottommost (smallest imaginary part).  We let $w$ be the $(\epsilon,\alpha_\PP)$-pinch point of $\CL$ which is leftmost and among all leftmost $(\epsilon,\alpha_\PP)$-pinch points of $\CL$ is bottommost.  We note that if $d \in (0,\epsilon_0)$ and $\diam(U) \leq d$ then on $E_1$ we have that $\qmeasure{h}(U) \leq d^{\alpha_\UBD}$.  That is, we have that $\epsilon^{2\xi} \leq d^{\alpha_\UBD}$.  Therefore $\diam(U) \geq \min(\epsilon^{2\xi/\alpha_\UBD},\epsilon_0)$.  Since $\qmeasure{h}(D) \leq \epsilon^{-\sigma}$ and $\qmeasure{h}(U) \geq \epsilon^{2\xi}$, it follows that $\p[ z \in U \giv G] \geq \epsilon^{2\xi+\sigma}$.  Moreover, $\p[ z \in B(w, \delta) \giv G,\ z \in U] =O(\delta^{\alpha_\UBD} \epsilon^{-2\xi})$.  Suppose that we are working on $G$ and $z \in U \setminus B(w,\delta)$.  Let $\varphi \colon U \to \D$ be the unique conformal transformation with $\varphi(z) = 0$ and $\varphi'(z) > 0$.  Then there exists a component $L$ of $\CL \setminus B(z,\epsilon^{\alpha_\PP})$ which has quantum length at least $\epsilon$ and is not part of the boundary of the component of $U \setminus B(w,\delta)$ which contains~$z$.  The Beurling estimate implies that $\varphi(L)$ is mapped to an interval of $\partial \D$ of (Euclidean) length $O((\epsilon^{\alpha_\PP}/\delta)^{1/2})$.  Altogether, this implies that 
\begin{align*}
\p[F \giv G] 
&\geq \p[ z \in U \setminus B(w,\delta) \giv G] \geq \p[ z \in U \giv G] (1-\p[ z \in B(w, \delta) \giv G,\ z \in U])\\
&\geq \epsilon^{2\xi + \sigma}(1-O(\delta^{\alpha_\UBD} \epsilon^{-2\xi})).
\end{align*}
We assume that $\delta$ is a sufficiently large power of $\epsilon$ so that $O(\delta^{\alpha_\UBD} \epsilon^{-2\xi}) = o(1)$ as $\epsilon \to 0$.  This implies~\eqref{eqn:p_f_given_g_lbd}.  Taking $\delta$ to be an even larger power of $\epsilon$ so that the power of $\epsilon$ in the right side of~\eqref{eqn:p_g_bound} is positive and then $\alpha_\PP > 0$ large enough so that $(\epsilon^{\alpha_\PP}/\delta)^{1/2} = O(\delta^\beta)$ completes the proof in the case of $(\epsilon,\alpha_\PP)$ pinch points satisfying~\eqref{it:pp1}.  The proof is analogous for $(\epsilon,\alpha_\PP)$ pinch points satisfying~\eqref{it:pp2}.
\end{proof}

\subsection{Proof of tightness in the interior}
\label{subsec:proof_of_tightness}

The first step to completing the proof of Proposition~\ref{prop:interior_tightness} is to give the definition of a good chunk for the adaptive $\SLE_\kappa^0(\kappa-6)$ exploration of a quantum half-plane $\CH = (\h,h,0,\infty)$ (see Figure~\ref{fig:good_interior_def_illustration} for an illustration).  The definition that we will give below will be different than in Section~\ref{sec:boundary_tightness} because the percolation exploration will serve a different purpose.  Suppose that $\eta$ is an $\SLE_\kappa^0(\kappa-6)$ from $0$ to $\infty$ sampled independently of $h$ and then subsequently parameterized according to the quantum natural time of its trunk.  As in the beginning of Section~\ref{sec:boundary_tightness}, for the quantum surface $\CN_t$ disconnected from $\infty$ by $\eta|_{[0,t]}$, we let $z_L(\CN_t)$ (resp.\ $z_R(\CN_t)$) denote the leftmost (resp.\ rightmost) point on the bottom of $\CN_t$.  We also let $z_C(\CN_t) = \eta(0)$.

\begin{figure}[ht!]
\begin{center}
\includegraphics[scale=1]{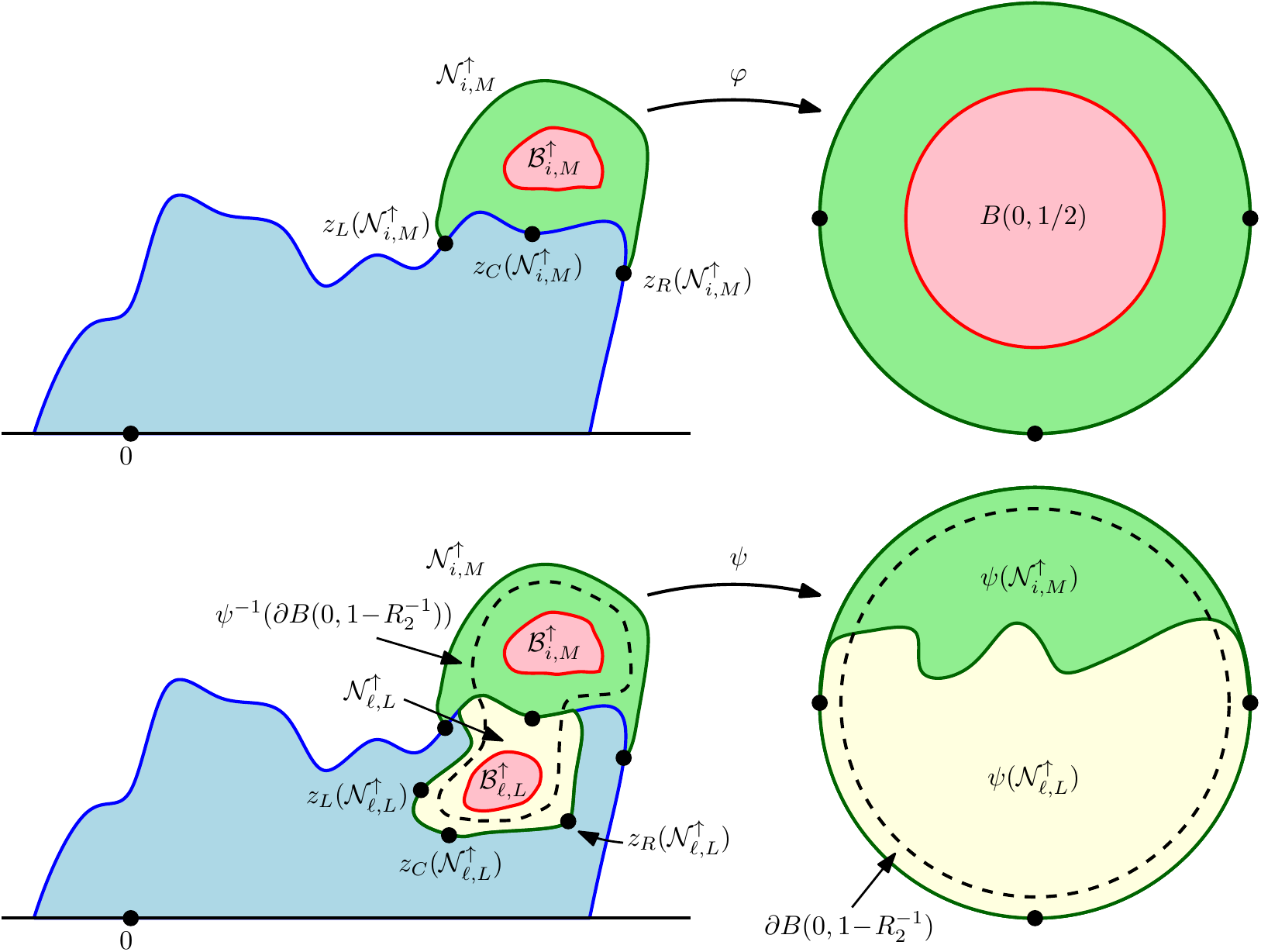}	
\end{center}
\caption{\label{fig:good_interior_def_illustration}  
{\bf Top:} Illustration of part~\eqref{it:int_chunk_mass} of Definition~\ref{def:interior_good_chunk}.  Shown in blue is the exploration up until $\CN_{i,M}^\uparrow$ is discovered and the images of $z_L(\CN_{i,M}^\uparrow)$, $z_C(\CN_{i,M}^\uparrow)$, and $z_R(\CN_{i,M}^\uparrow)$ under $\varphi$ are respectively given by $-1$, $-i$, and $1$. {\bf Bottom:} Illustration of part~\eqref{it:int_chunk_neighbormass} of Definition~\ref{def:interior_good_chunk}.  The chunk $\CN_{\ell,L}^\uparrow$ is shown in light yellow even though it is ``good'' in order to differentiate it from $\CN_{i,M}^\uparrow$.  The images of $z_L(\CN_{\ell,L}^\uparrow)$, $z_C(\CN_{\ell,L}^\uparrow)$, and $z_R(\CN_{\ell,L}^\uparrow)$ under $\psi$ are $-1$, $-i$, and $1$, respectively.}
\end{figure}

\begin{definition}
\label{def:interior_good_chunk}
We suppose that we have the setup described in Section~\ref{subsec:exploration_def}.  Fix $R_1, R_2, R_3 \geq 1$.  For each $i,M$, we define $E_{i,M}^\uparrow$ to be the event for $\CN_{i,M}^\uparrow$ and $\eta_{i,M}^\uparrow$ that $\eta_{i,M}^\uparrow(\sigma_{i,M}^\uparrow) \in \partial \h$ and the following hold.
\begin{enumerate}[(i)]
\item\label{it:int_chunk_qlength} The quantum lengths of the top, the left side of the bottom, and the right side of the bottom of $\CN_{i,M}^\uparrow$ are in $[R_1^{-1} 2^{-(\kappa/4) M}, R_1 2^{-(\kappa/4) M}]$.
\item\label{it:int_chunk_mass} Let $\varphi \colon \CN_{i,M}^\uparrow \to \D$ be the unique conformal transformation with $\varphi(z_L(\CN_{i,M}^\uparrow)) = -1$, $\varphi(z_C(\CN_{i,M}^\uparrow)) = -i$, and $\varphi(z_R(\CN_{i,M}^\uparrow)) = 1$.  Let $\CB_{i,M}^\uparrow = \varphi^{-1}(B(0,1/2))$.  Then we have that
 \[ R_2^{-1} 2^{-M} \leq \qmeasure{h}(\CB_{i,M}^\uparrow) \leq \qmeasure{h}(\CN_{i,M}^\uparrow) \leq R_2 2^{-M}.\]
\item\label{it:int_chunk_neighbormass} Let $\CN_{\ell,L}^\uparrow$ be the chunk whose top contains $\eta_{i,M}^\uparrow(0)$ (so that the left side of the bottom of $\CN_{i,M}^\uparrow$ is glued to the top of $\CN_{\ell,L}^\uparrow$) if it exists, otherwise we set $\CN_{\ell,L}^\uparrow = \emptyset$.  If $\CN_{\ell,L}^\uparrow \neq \emptyset$, let $\wt{\CN}$ be the quantum surface parameterized by $\interior{\closure{\CN_{\ell,L}^\uparrow \cup \CN_{i,M}^\uparrow}}$ and let $\psi \colon \wt{\CN} \to \D$ be the unique conformal map so that $\psi(z_L(\CN_{\ell,L}^\uparrow)) = -1$, $\psi(z_C(\CN_{\ell,L}^\uparrow)) = -i$, and $\psi(z_R(\CN_{\ell,L}^\uparrow))=1$.  Then
\begin{enumerate}[(I)]
 \item $R_2^{-1} 2^{-M} \leq \qmeasure{h}(\psi^{-1}(B(0,1-R_2^{-1}))) \leq \qmeasure{h}(\wt{\CN}) \leq 4 R_2 2^{-M}$ and
  \item $\psi^{-1}(B(0,1-R_2^{-1}))$ contains both $\CB_{i,M}^\uparrow$ and $\CB_{\ell,L}^\uparrow$.
\end{enumerate}
\item\label{it:int_cond_prob} If $\wh{\CN}$ is another $\SLE_\kappa^0(\kappa-6)$ chunk defined using the time-interval $[\delta_0 2^{-j}, 2^{-j}]$ for $j \in \{M-1,M,M+1\}$ which starts from a point on the top of $\CN_{i,M}^\uparrow$ with clockwise boundary length distance from $\partial \CH_{i,M}^\uparrow$ given by an integer multiple of $a_0 2^{-(\kappa/4) M}$ chosen in an $\CF_{i,M}^\uparrow$-measurable manner and with boundary length distance from a point in $\Upsilon$ at most $a_0 2^{-(\kappa/4) M}$, then the conditional probability given $\CF_{i,M}^\uparrow$ that the previous item holds for the pair $(\CN_{i,M}^\uparrow, \wh{N})$ in place of $(\CN_{\ell,L}^\uparrow, \CN_{i,M}^\uparrow)$ is at least $1-R_3^{-1}$.
\end{enumerate}
We define $E_{i,M}^\downarrow$ analogously to $E_{i,M}^\uparrow$.
\end{definition}

\begin{lemma}
\label{lem:good_chunk_occurs_interior}
Suppose that we have the setup described in Section~\ref{subsec:exploration_def} and the events $E_{i,M}^\uparrow$, $E_{i,M}^\downarrow$ are as in Definition~\ref{def:interior_good_chunk}.  For every $p_0 \in (0,1)$ there exists $R_1, R_2, R_3 \geq 1$, $\delta_0 > 0$ so that on the event that the exploration has not failed before exploring $\CN_{i,M}^\uparrow$ (resp.\ $\CN_{i,M}^\downarrow$), we have that $\p[E_{i,M}^\uparrow \giv \CF_{i-1,M}^\uparrow] \geq p_0$ (resp.\ $\p[E_{i,M}^\downarrow \giv \CF_{i-1,M}^\downarrow] \geq p_0$) for all $i,M \in \N$.
\end{lemma}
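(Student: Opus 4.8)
The plan is to show that each of the four conditions in Definition~\ref{def:interior_good_chunk} holds with conditional probability close to $1$ given $\CF_{i-1,M}^\uparrow$ (and likewise for $\downarrow$), uniformly in $i,M$, and then take $p_0$ close enough to $1$ by choosing $R_1,R_2,R_3$ large and $\delta_0$ small. By a union bound it suffices to treat each item separately. Throughout, we use that (by Theorem~\ref{thm:cpi_wedge_explore}) the quantum surface $\CN_{i,M}^\uparrow$ disconnected by the chunk is, conditionally on $\CF_{i-1,M}^\uparrow$ and the event that the exploration has not failed, described by an $\SLE_\kappa^0(\kappa-6)$ run in a quantum half-plane for quantum natural time in $[\delta_0 2^{-M},2^{-M}]$ (stopped when it first returns to the boundary after $\delta_0 2^{-M}$, capped at $2^{-M}$), so all the estimates reduce to statements about this canonical half-planar chunk. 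Since everything scales, it is enough to do the case $M$ corresponding to unit quantum natural time and transfer by the scaling property of $4/\kappa$-stable L\'evy processes and of the quantum half-plane.

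For item~\eqref{it:int_chunk_qlength}, the quantum lengths of the top, left-bottom, and right-bottom of the chunk are read off from the boundary length process, which evolves (Theorem~\ref{thm:cpi_wedge_explore}) as a pair of independent $4/\kappa$-stable L\'evy processes; the left-bottom and right-bottom lengths are the (negatives of) the running infima accumulated over the chunk and the top length is the overall displacement plus the bottom. Their scaling exponent is $\kappa/4$ (the process run for time $t$ has size of order $t^{\kappa/4}$), so by the explicit form and scaling of the stable process each of these three lengths lies in $[R_1^{-1}2^{-(\kappa/4)M},R_1 2^{-(\kappa/4)M}]$ with probability tending to $1$ as $R_1\to\infty$, uniformly in $M$ — here one uses that the infimum of a stable process over a unit time interval has no atom at $0$ and has a finite positive moment, together with the cap at $2^{-M}$ and the lower truncation $\delta_0 2^{-M}$ on the return time. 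For item~\eqref{it:int_chunk_mass}, the upper bound $\qmeasure{h}(\CN_{i,M}^\uparrow)\le R_2 2^{-M}$ follows as in the proof of Lemma~\ref{lem:good_chunk_occurs}: the range of the $\SLE_\kappa^0(\kappa-6)$ has zero quantum area, the area is carried by the quantum disks it cuts off, and a disk of boundary length $\ell$ has expected area a constant times $\ell^2$, so $\E[\qmeasure{h}(\CN_{i,M}^\uparrow)\giv\CF_{i-1,M}^\uparrow]$ is a constant times the sum of squares of jumps of the boundary length process run for time $\le 2^{-M}$, which is $O(2^{-M})$; Markov's inequality gives the upper bound with probability $\to 1$ as $R_2\to\infty$. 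The new ingredient relative to Section~\ref{sec:boundary_tightness} is the matching \emph{lower} bound $\qmeasure{h}(\CB_{i,M}^\uparrow)\ge R_2^{-1}2^{-M}$, where $\CB_{i,M}^\uparrow=\varphi^{-1}(B(0,1/2))$; this is where Appendix~\ref{app:carpet_measure} and the estimates on the natural measure of the carpet / area of sub-surfaces come in — one shows that with probability close to $1$ the chunk, run for quantum natural time at least $\delta_0 2^{-M}$, disconnects a quantum disk whose area is at least a constant times $2^{-M}$ and whose ``middle'' (the preimage of $B(0,1/2)$ under the three-point uniformization) retains a definite fraction of that area; the lower truncation $\delta_0 2^{-M}$ on the elapsed time is essential here to prevent the chunk from being degenerately small.

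For item~\eqref{it:int_chunk_neighbormass}, we work with the pair $\CN=\interior{\closure{\CN_{\ell,L}^\uparrow\cup\CN_{i,M}^\uparrow}}$ where $\CN_{\ell,L}^\uparrow$ is the previously-explored chunk glued along the left-bottom of $\CN_{i,M}^\uparrow$. Conditioning on $\CF_{i-1,M}^\uparrow$ including the surface $\CN_{\ell,L}^\uparrow$, and using again that the unexplored region is a quantum half-plane and the chunk is an independent $\SLE_\kappa^0(\kappa-6)$ run for natural time in $[\delta_0 2^{-M},2^{-M}]$, the area upper bound $\qmeasure{h}(\wt\CN)\le 4R_2 2^{-M}$ is immediate from item~\eqref{it:int_chunk_mass} applied to each of the two chunks (with room for the constant $4$), the lower bound $\qmeasure{h}(\psi^{-1}(B(0,1-R_2^{-1})))\ge R_2^{-1}2^{-M}$ again from the carpet-measure estimates of Appendix~\ref{app:carpet_measure}, and the containment $\psi^{-1}(B(0,1-R_2^{-1}))\supseteq\CB_{i,M}^\uparrow\cup\CB_{\ell,L}^\uparrow$ from a harmonic-measure / distortion argument: since the three marked points $z_L,z_C,z_R$ of each chunk divide the boundary of $\CN$ into arcs of comparable harmonic measure (controlled because each of the top and two bottom pieces has quantum length of the same order, hence — via the Beurling estimate and conformal distortion bounds, as in Lemma~\ref{lem:disk_in_disk} — comparable harmonic measure from an interior point), the preimage of $B(0,1/2)$ under the three-point uniformization of a single chunk sits well inside the preimage of $B(0,1-R_2^{-1})$ under the three-point uniformization of the glued pair, once $R_2$ is large. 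Finally, item~\eqref{it:int_cond_prob} is handled exactly as the analogous item~\eqref{it:path_across_future_chunk} in Lemma~\ref{lem:good_chunk_occurs}: it is an $\CF_{i,M}^\uparrow$-measurable event asserting that a further conditional probability (over the law of the next chunk $\wh\CN$, which by Theorem~\ref{thm:cpi_wedge_explore} is again an independent $\SLE_\kappa^0(\kappa-6)$ in a quantum half-plane) is at least $1-R_3^{-1}$; since by the analysis of items \eqref{it:int_chunk_qlength}--\eqref{it:int_chunk_neighbormass} that inner conditional probability has unconditional expectation at least $1-o_{R_1,R_2}(1)$, Markov's inequality applied to its complement shows the event of item~\eqref{it:int_cond_prob} has probability at least $1-R_3^{-1}\cdot o_{R_1,R_2}(1)$, which we make $\ge p_0$ by first fixing $R_1,R_2$ large and then $R_3$ large. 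Combining the four bounds via a union bound and choosing $\delta_0$ small enough for the lower area bounds to take effect gives $\p[E_{i,M}^\uparrow\giv\CF_{i-1,M}^\uparrow]\ge p_0$ uniformly in $i,M$, and the argument for $E_{i,M}^\downarrow$ is identical.

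The main obstacle is the matching \emph{lower} bounds on quantum area in items~\eqref{it:int_chunk_mass} and~\eqref{it:int_chunk_neighbormass} — i.e., controlling $\qmeasure{h}$ of the preimage of a fixed Euclidean ball under the (random) three-point conformal uniformization of the chunk — since this couples the conformal geometry of the chunk boundary (which is rough, being made of $\SLE_\kappa$-type curves) to the LQG area measure; this is precisely what the estimates collected in Appendix~\ref{app:carpet_measure} are designed to supply, so the proof will invoke them as a black box, but making the uniformity in $M$ transparent (via the scaling of the quantum half-plane and of the stable boundary length process, together with the lower truncation $\delta_0 2^{-M}$ on the chunk's quantum natural time) is the delicate bookkeeping point.
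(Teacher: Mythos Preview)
Your outline is right (handle each item of Definition~\ref{def:interior_good_chunk} separately, union bound), and your treatment of item~\eqref{it:int_chunk_qlength} and the area upper bound in~\eqref{it:int_chunk_mass} matches the paper. But there are several genuine errors.

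First, you never address the requirement $\eta_{i,M}^\uparrow(\sigma_{i,M}^\uparrow)\in\partial\h$ built into $E_{i,M}^\uparrow$. This is exactly what $\delta_0$ is for: by Lemmas~\ref{lem:bottom_length_moment_bound} and~\ref{lem:top_length_moment_bound} one takes $\delta_0$ small so that the chunk returns to the boundary before time $2^{-M}$ with probability at least $1-(1-p_0)/3$. Your closing sentence ``choosing $\delta_0$ small enough for the lower area bounds to take effect'' is backwards---shrinking $\delta_0$ can only make the chunk smaller, hurting lower area bounds; the lower bounds come from the chunk having run for \emph{at least} $\delta_0 2^{-M}$, as you correctly say earlier.

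Second, your Markov argument for item~\eqref{it:int_cond_prob} is miscomputed. If the inner conditional probability $X$ satisfies $\E[1-X]\le\epsilon$, then $\p[X<1-R_3^{-1}]=\p[1-X>R_3^{-1}]\le R_3\,\epsilon$, not $R_3^{-1}\epsilon$. So the bound is $1-R_3\cdot o_{R_2}(1)$, which forces you to fix $R_3$ \emph{first} and then take $R_2$ large---the opposite of the order you state. The paper does exactly this: with $R_3$ fixed, it observes there are at most $R_1 a_0^{-1}$ possible starting points and three scales $j\in\{M-1,M,M+1\}$, and for each the glued surface is simply connected with high probability, so as $R_2\to\infty$ the conditional probability in~\eqref{it:int_cond_prob} tends to $1$ a.s.

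Third, for item~\eqref{it:int_chunk_neighbormass} the paper does not argue directly as you do; it simply notes that since $E_{\ell,L}^\uparrow$ holds for the previous good chunk, item~\eqref{it:int_cond_prob} of that chunk already guarantees item~\eqref{it:int_chunk_neighbormass} for the current one with probability at least $1-R_3^{-1}$. This is the point of including~\eqref{it:int_cond_prob} in the definition, and it sidesteps the need for uniform control over the (already-revealed, possibly bad) realization of $\CN_{\ell,L}^\uparrow$ that your direct approach would require.

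Finally, Appendix~\ref{app:carpet_measure} concerns the carpet measure $\qcarpet{h}{\Upsilon}$, not the area measure $\qmeasure{h}$; the lower bound $\qmeasure{h}(\CB_{i,M}^\uparrow)\ge R_2^{-1}2^{-M}$ follows more simply from scaling: at unit natural time the chunk and the preimage $\varphi^{-1}(B(0,1/2))$ have a.s.\ positive area, so there is an $R_2$ making the bound hold with probability close to $1$.
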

\begin{proof}
Lemmas~\ref{lem:bottom_length_moment_bound} and~\ref{lem:top_length_moment_bound} imply that we can take $\delta_0 \in (0,1)$ sufficiently small so that the probability that $\eta_{i,M}^\uparrow(\sigma_{i,M}^\uparrow) \in \partial \h$ is at least $1-q_0/3$ where $q_0 = 1-p_0$.  The scaling properties of $4/\kappa$-stable L\'evy processes imply that~\eqref{it:int_chunk_qlength} holds with probability tending to $1$ as $R_1 \to \infty$.  Likewise, it is also clear that~\eqref{it:int_chunk_mass} holds with probability tending to $1$ as $R_2 \to \infty$.  By the definition of $E_{\ell,L}^\uparrow$, we have that~\eqref{it:int_chunk_neighbormass} holds with probability at least $1-R_3^{-1}$.  We choose $R_3 \geq 1$ sufficiently large so that $1-R_3^{-1} \geq 1-q_0/3$.  To see that~\eqref{it:int_cond_prob} holds with probability tending to $1$ as $R_2 \to \infty$ (when $R_3$ is fixed), we note that we can construct the gluing of $\CN_{i,M}^\uparrow$ and $\wh{\CN}$ as follows.  Recall that $\CH_{i,M}^\uparrow$ is the quantum half-plane which corresponds to the unexplored region when generating $\eta_{i,M}^\uparrow$.  We suppose that we are working on the event that $\eta_{i,M}^\uparrow(\sigma_{i,M}^\uparrow) \in \partial \h$ and properties~\eqref{it:int_chunk_qlength} and~\eqref{it:int_chunk_mass} of Definition~\ref{def:interior_good_chunk} hold.  Then there are at most $R_1 a_0^{-1}$ points on the top of $\CN_{i,M}^\uparrow$ with clockwise boundary length distance to $\partial \CH_{i,M}^\uparrow$ given by an integer multiple of $a_0 2^{-(\kappa/4) M}$ and with boundary length distance from a point in $\Upsilon$ at most $a_0 2^{-(\kappa/4) M}$.  For a given such point chosen in an $\CF_{i,M}^\uparrow$-measurable manner, we run another $\SLE_\kappa^0(\kappa-6)$ process $\wh{\eta}$ in $\CH_{i+1,M}^\uparrow = \CH_{i,M}^\uparrow \setminus \CN_{i,M}^\uparrow$.  Suppose that we have fixed $j \in \{M-1,M,M+1\}$.  On the event that $\wh{\sigma} = \inf\{t \geq \delta_0 2^{-j} : \wh{\eta}(t) \in \partial \CH_{i+1,M}^\uparrow\} \leq 2^{-j}$ and property~\eqref{it:int_chunk_qlength} of Definition~\ref{def:interior_good_chunk} holds for the surface $\wh{\CN}$ disconnected from $\infty$ by $\wh{\eta}|_{[0,\wh{\sigma}]}$, the surface $\wt{\CN}$ parameterized by $\interior{\closure{\CN_{i,M}^\uparrow \cup \wh{\CN}}}$ is homeomorphic to $\D$.  Therefore it is clear that~\eqref{it:int_cond_prob} holds with probability tending to $1$ as $R_2 \to \infty$ (with $R_1,R_3 \geq 1$ fixed) as well.
\end{proof}

\begin{lemma}
\label{lem:disk_neighborhood}
Suppose that $D$ is a bounded, simply connected domain, $(D,h) \sim \qdiskL{\gamma}{1}$, and $\Gamma$ is a $\CLE_\kappa$ on $D$ which is independent of $h$.  Fix $0 < \alpha_\UBD < 2 < \alpha_\LBD$ and $\alpha_\PP,\epsilon_0 > 0$.  Let $E$ be the event that for every $z \in D$ and $\epsilon \in (0,\epsilon_0)$ we have that $\epsilon^{\alpha_\LBD} \leq \qmeasure{h}(B(z,\epsilon)) \leq \epsilon^{\alpha_\UBD}$ and no loop of $\Gamma$ has an $(\epsilon,\alpha_\PP)$-pinch point for $\epsilon \in (0,\epsilon_0)$.  For each $\beta > 0$ there exists $\xi > 0$ so that the following is true.  Fix $\delta \in (0,\epsilon)$ and let $F$ be the event that for every $z \in D$ with $\dist(z, \partial D) \leq \epsilon$ there exists a curve $\eta$ in $\Upsilon$ so that
\begin{enumerate}[(i)]
\item\label{it:distance_property} $\metapprox{\delta}{u}{v}{\Gamma}/\medianHP{\delta} \leq \epsilon^\xi$ for all $u,v \in \eta$ and
\item\label{it:diameter_property} the component $U$ of $D \setminus \eta$ which contains $z$ satisfies $\diam(U) \leq \epsilon^\xi$ and $\partial U \cap \partial D \neq \emptyset$.
\end{enumerate}
Uniformly in $\delta \in (0,\epsilon)$ we have that $\p[E \cap F^c] = O(\epsilon^\beta)$.
\end{lemma}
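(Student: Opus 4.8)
The plan is to deduce Lemma~\ref{lem:disk_neighborhood} from the percolation exploration machinery of Section~\ref{sec:percolation_exploration} together with the crossing estimates of Section~\ref{sec:chunk_estimates} and the pinch-point bound Lemma~\ref{lem:disk_in_disk}. First I would fix $z \in D$ with $\dist(z,\partial D) \leq \epsilon$ and describe the exploration that produces the curve $\eta$ from~\eqref{it:distance_property}--\eqref{it:diameter_property}. The idea is to pick a boundary prime end $x_0 \in \partial D$ with $\dist(z,x_0) \lesssim \epsilon$, then pick a second prime end $x_1$ at quantum boundary length roughly $\epsilon^{\alpha_\LBD(1+o(1))}$ (some fixed power of $\epsilon$) from $x_0$ so that on $E$ the arc $\ccwBoundary{x_0}{x_1}{\partial D}$ has small Euclidean diameter and, crucially, $z$ is separated from the rest of $D$ by running the point-to-point exploration of Lemma~\ref{lem:point_to_point_disk_exploration} from $x_0$ to $x_1$ with the good-chunk events of Definition~\ref{def:good_chunk}. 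Strictly speaking $z$ need not lie inside the exploration, so I would instead run the exploration from $x_0$ to $x_1$ and let $\eta$ be the concatenation of the good crossings $\cpath_n$, $\cpath_{n-1,n}$ exactly as in the proof of Proposition~\ref{prop:boundary_distance_tail_bounds}; this is a path in $\Upsilon$ joining $x_0$ to $x_1$, and the component $U$ of $D\setminus\eta$ containing $z$ is then the region ``cut off'' near $x_0$.

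Next I would verify the two required properties on the event that the exploration (and all its recursive children, as in the proof of Proposition~\ref{prop:boundary_distance_tail_bounds}) succeeds, intersected with $E$. Property~\eqref{it:distance_property}: the same chain of estimates~\eqref{eqn:n_eps_ubd}--\eqref{eqn:gamma_n_n_bound} used in Proposition~\ref{prop:boundary_distance_tail_bounds} shows that $\metapprox{\delta}{u}{v}{\Gamma}/\medianHP{\delta}$ along the constructed path is bounded by a fixed power of the starting boundary length $\delta_{\mathrm{start}} = \epsilon^{(\text{power})}$, hence by $\epsilon^\xi$ for a suitable $\xi = \xi(\beta) > 0$; the quantum-length-to-Euclidean comparison on $E$ (the lower bound $\qmeasure{h}(B(\cdot,r)) \geq r^{\alpha_\LBD}$) is what converts the small boundary length into a genuinely small approximate distance and also, via distortion estimates for the conformal maps $\varphi_n$, into a small Euclidean diameter. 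Property~\eqref{it:diameter_property}: the diameter of $U$ is controlled by Lemma~\ref{lem:sle_hitting_diam_bound} in the same way as in Step~2 of the proof of Lemma~\ref{lem:point_to_point_half_plane_exploration} — the unexplored complement is a quantum disk whose boundary length is a fixed power of $\epsilon$, so an independent $\SLE_\kappa^0(\kappa-6)$ has probability bounded away from $1$ of hitting the explored region, forcing $\diam(U) \leq \epsilon^\xi$. The condition $\partial U \cap \partial D \neq \emptyset$ is automatic since $x_0 \in \partial U \cap \partial D$.

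Then I would handle the probability bound. By Lemma~\ref{lem:good_chunk_occurs} the good-chunk events hold with conditional probability $\geq p_0$, so Lemma~\ref{lem:point_to_point_exploration} gives that a single exploration fails with probability $O(2^{-cJ})$, and the recursive construction in the proof of Proposition~\ref{prop:boundary_distance_tail_bounds} dominates the failure tree by a subcritical Galton--Watson tree with exponentially-decaying size; choosing $J$ large makes the total failure probability, for a single fixed $z$, of order $O(\epsilon^{\beta'})$ for any prescribed $\beta' $ (after absorbing the $\epsilon$-dependent number of ``attempts'' encoded in the starting length). To upgrade from a single $z$ to \emph{all} $z$ with $\dist(z,\partial D) \leq \epsilon$, I would run explorations from a collection of $\lesssim \epsilon^{-O(1)}$ boundary prime ends forming an $\epsilon^{10}$-net (say) of $\partial D$ in quantum length, take a union bound (each exploration failing with probability $O(\epsilon^{\beta'})$ with $\beta'$ chosen much larger than the net exponent plus $\beta$), and observe that on $E$ together with the pinch-point exclusion (Lemma~\ref{lem:disk_in_disk}) any $z$ with $\dist(z,\partial D)\leq\epsilon$ is handled by a nearby net point: a loop of $\Gamma$ with an $(\epsilon^{\alpha_\PP'},\alpha_\PP)$-pinch point is what would be needed to make the region cut off by the net point's exploration fail to contain $z$, and such loops are excluded on $E$. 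The main obstacle is precisely this last gluing step — making the single-point exploration robust enough (choosing the intermediate boundary points in an $\mathcal{F}$-measurable way, handling the case $z \notin U$ or $z$ close to $\partial U$, and controlling how a loop straddling the cut-off region could leave $z$ on the wrong side) so that the deterministic geometry on $E \cap \{\text{no pinch points}\}$ forces every boundary-nearby $z$ to be captured; the quantitative parts (the power-law estimates, the Galton--Watson domination) are routine repetitions of arguments already carried out in Sections~\ref{sec:percolation_exploration} and~\ref{sec:boundary_tightness}.
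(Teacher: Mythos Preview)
Your overall framework---run a point-to-point exploration between nearby boundary prime ends, then union-bound over a net of boundary arcs---matches the paper's, but the implementation diverges in several load-bearing places and your diameter argument has a genuine gap.

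The paper does \emph{not} use the good-chunk events of Definition~\ref{def:good_chunk}. It introduces a new Definition~\ref{def:interior_good_chunk}, in which a chunk is good if (after the conformal map $\varphi$ to $\D$) the set $\CB_{i,M}^\uparrow = \varphi^{-1}(B(0,1/2))$ has quantum area $\asymp 2^{-M}$, together with a compatibility condition between consecutive chunks. The purpose is not to thread short carpet crossings but to give the exploration Euclidean \emph{thickness}: a path $\omega$ through the images of line segments under the maps $\psi_\ell^{-1}$ stays, on $E$, at distance at least a constant times $2^{-\xi_2 k/\alpha_\UBD}$ from the target arc $I_j$. Choosing $\zeta > \xi_2/\alpha_\UBD$ then forces the entire $2^{-\zeta k}$-neighbourhood of $I_j$---hence every $z$ with $\dist(z,\partial D) \leq \epsilon = 2^{-\zeta k}$ near $I_j$---to lie on the bounded side. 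This is how the ``main obstacle'' you flag is resolved; it comes out of the good-chunk definition itself, not from pinch-point exclusion.

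The curve $\eta$ in the statement is taken to be the \emph{outer boundary} of the exploration as seen from a far boundary point, not the concatenated crossing path. Its quantum length is $O(2^{-k})$, so both~\eqref{it:distance_property} and~\eqref{it:diameter_property} follow from Proposition~\ref{prop:boundary_distance_tail_bounds} (equations~\eqref{eqn:holder_norm_tight} and~\eqref{eqn:quantum_holder_norm_tight}) applied to the complementary quantum disk. Your route to~\eqref{it:diameter_property} via Lemma~\ref{lem:sle_hitting_diam_bound} does not work: that lemma gives $\diam(A) \leq d_0(p)$ when the hitting probability is at most $p$, but inspecting its proof one sees that $d_0(p)$ does \emph{not} tend to $0$ as $p \to 0$ (the proof only shows $\diam(A)\to\infty$ forces the hitting probability to $1$), so no power-of-$\epsilon$ diameter bound is available that way.

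Finally, the no-pinch-point hypothesis enters in Step~2, Case~1 of the paper's proof (large upward jump of size $\geq 3$): when a big loop $\CL$ is discovered, the recovered exploration walks along $\CL$ to points at quantum length $1$ from the root $\wt{z}_0^0$, and the absence of pinch points on $E$ guarantees the far arc of $\CL$ stays outside $B(\wt{z}_0^0, 2^{-\zeta k})$, preserving the separation of $I_j$. It is not used for gluing at net points as you suggest.
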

\begin{proof}

\noindent{\it Step 0: Setup.} Suppose that $x \in \partial D$ is picked from $\qbmeasure{h}$.  For each $k \in \N$, we let $N_k = 2^k$ and let $I_1,\ldots,I_{N_k}$ be arcs of $\partial D$ of equal quantum length $2^{-k}$ with disjoint interior whose union is $\partial D$.  We assume that the $I_j$ are given in counterclockwise order and the left endpoint of $I_1$ is $x$.  Fix $1 \leq j \leq N_k$ and let $I = I_j$.  Fix $\zeta > 0$.  We will adjust the value of $\zeta$ in the proof.  The role of $\epsilon > 0$ in the statement of the lemma will be played by $2^{- \zeta k}$. Let $U = U_j$ be the $2^{-\zeta k}$-neighborhood of $I$ with respect to the internal (Euclidean) metric in $D$.  Suppose that $\delta \in (0,2^{-\zeta k})$.  We are going to show that there exists $\xi > 0$ (which does not depend on $k$) so that there exists a path $\eta$ in $\Upsilon$ which satisfies~\eqref{it:distance_property} and such that $\diam(U) \leq \epsilon^\xi$ off an event of probability $O(2^{-(1+\beta)k})$.  The result will then follow by performing a union bound over $1 \leq j \leq N_k$ and then over $k \in \N$.

Let $x_0$ be the left endpoint of $I_{j-1}$ and let $x_1$ be the right endpoint of $I_{j+1}$, where we take the convention that $I_0 = I_{N_k}$ and $I_{N_k+1} = I_1$.  Then $\qbmeasure{h}(\ccwBoundary{x_0}{x_1}{\partial D}) = 3 \cdot 2^{-k}$ and $I \subseteq \ccwBoundary{x_0}{x_1}{\partial D}$.  We let $h_0 = h + \tfrac{2}{\gamma} \log (2^k/3)$ so that with $x_0^0 = x_0$ and $x_1^0 = x_1$ we have that $\qbmeasure{h_0}(\ccwBoundary{x_0^0}{x_1^0}{\partial D}) = 1$.  We let $\CD_0 = (D,h_0)$ be the resulting (rescaled) quantum disk.  We then perform the exploration as in Lemma~\ref{lem:point_to_point_disk_exploration} from $x_0^0$ to $x_1^0$ where the definition of a good chunk is as in Definition~\ref{def:interior_good_chunk} and where we take $J = \xi_0 k$ and $\xi_0 > 0$ is a fixed small constant whose value we will adjust later in the proof.

\noindent{\it Step 1.  Exploration succeeds.}  We first suppose that we are on the event that the exploration does not fail and we will explain why there exists a path $\eta$ as claimed.  See Figure~\ref{fig:interior_neighborhood} for an illustration of the setup.  Afterwards, we will explain how to iterate the exploration to construct a path $\eta$ in the case that the exploration fails.  Let $\CN_1,\ldots,\CN_n$ be the (good) chunks in the exploration which have (the maximal) quantum natural time in $[\delta_0 2^{-\xi_0 k},2^{-\xi_0 k}]$ (as measured using $h_0$).  Note that if we rescale the boundary length back by the factor $3/2^k$ (to transform $h_0$ back to $h$) then quantum natural time gets scaled by the factor $(3/2^k)^{4/\kappa}$.  Therefore the (maximal) quantum natural time of a good chunk under the exploration as measured by $h$ is equal to a constant times $2^{-\xi_1 k}$ where $\xi_1 = \xi_0 + 4/\kappa$.  For each $1 \leq \ell \leq n-1$, let $\psi_\ell$ be the unique conformal map from $\interior{\closure{\CN_\ell \cup \CN_{\ell+1}}}$ to $\D$ which takes $z_L(\CN_\ell)$, $z_C(\CN_\ell)$, and $z_R(\CN_\ell)$ to $-1$, $-i$, and $1$, respectively.  We then let $\CA_\ell = \psi_\ell^{-1}(B(0,1-R_2^{-1}))$.

\begin{figure}[ht!]
\begin{center}
\includegraphics[scale=1]{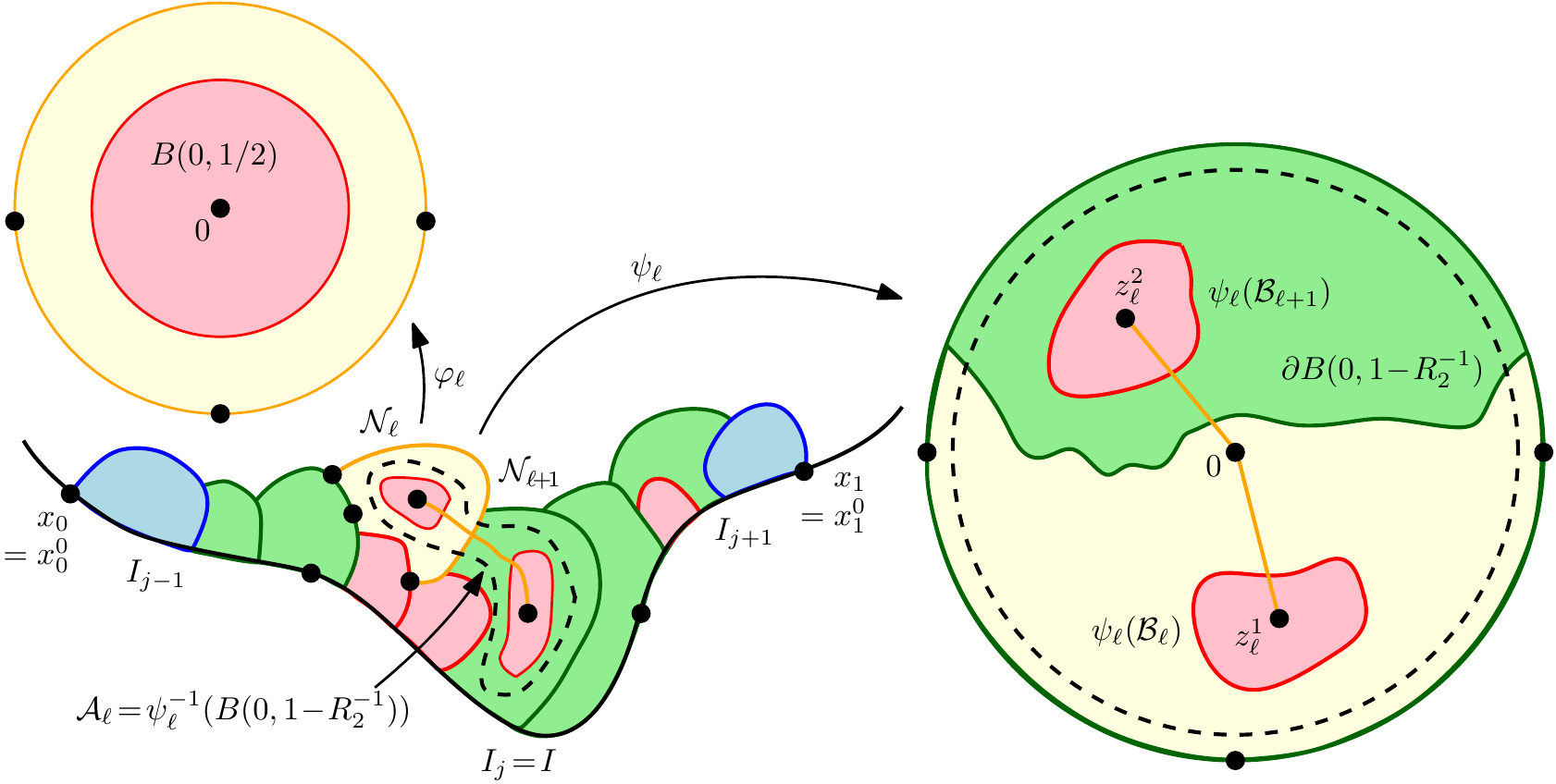}	
\end{center}
\caption{\label{fig:interior_neighborhood} Illustration of the definition of $\omega_\ell$ in the case that the exploration succeeds.  Shown is part of $\partial D$ which contains $I_{j-1}$, $I_j = I$, and $I_{j+1}$.  The blue regions are the initial and terminal parts of the exploration before and after the chunks with the maximal size are discovered.  Among the chunks with maximal size, those in green are good (with the exception of $\CN_\ell$ to differentiate it from $\CN_{\ell+1}$) and those in red are bad.  Recall that $\psi_\ell$ is the unique conformal map from $\interior{\closure{\CN_\ell \cup \CN_{\ell+1}}}$ to $\D$ which takes $z_L(\CN_\ell)$, $z_C(\CN_\ell)$, $z_R(\CN_\ell)$ respectively to $-1$, $-i$, and $1$ and $\varphi_\ell$ is the unique conformal map from $\CN_\ell$ to $\D$ which takes $z_L(\CN_\ell)$, $z_C(\CN_\ell)$, $z_R(\CN_\ell)$ respectively to $-1$, $-i$, and $1$.  Shown also in red are the regions $\CB_\ell = \varphi_\ell^{-1}(B(0,1/2))$ and $\CB_{\ell+1} = \varphi_{\ell+1}^{-1}(B(0,1/2))$.  The dashed region which contains $\CB_\ell$, $\CB_{\ell+1}$ is $\CA_\ell = \psi_\ell^{-1}(B(0,1-R_2^{-1}))$.  The path $\omega_\ell$ (orange) is the image under $\psi_\ell^{-1}$ of the concatenation of the line segment from $z_\ell^1 = \psi_\ell(\varphi_\ell^{-1}(0))$ to $0$ and from $0$ to $z_\ell^2 = \psi_\ell(\varphi_{\ell+1}^{-1}(0))$.}
\end{figure}

We are now going to show that we can adjust the value of $\zeta > 0$ so that $\CT = \CN_1 \cup \CN_n \cup (\cup_{\ell=1}^{n-1} \CA_\ell)$ contains a path~$\cpath$ which connects $I_{j-1}$ to $I_{j+1}$ and has (Euclidean) distance at least $2^{-\zeta k}$ from $I_j$.  This, in turn, will imply that there is a component of $D \setminus \cpath$ which contains $U$.  To see this, for each $1 \leq \ell \leq n-1$ we let $\varphi_\ell \colon \CN_\ell \to \D$ be the unique conformal map which takes $z_L(\CN_\ell)$, $z_C(\CN_\ell)$, and $z_R(\CN_\ell)$ to $-1$, $-i$, and $1$, respectively.  Let $\CB_\ell = \varphi_\ell^{-1}(B(0,1/2))$.  Then we know from property~\eqref{it:int_chunk_neighbormass} of Definition~\ref{def:interior_good_chunk} that $\CA_\ell$ contains $\CB_\ell$ and $\CB_{\ell+1}$.  By property~\eqref{it:int_chunk_mass} of Definition~\ref{def:interior_good_chunk},
\begin{equation}
\label{eqn:b_area_lbd}
\qmeasure{h}(\CB_\ell) = \left(\frac{3}{2^{k}}\right)^2 \qmeasure{h_0}(\CB_\ell) \geq R_2^{-1} 2^{-(\xi_0+2) k} = R_2^{-1} 2^{-\xi_2 k} \quad\text{where}\quad \xi_2 = \xi_0+2.
\end{equation}
From the definition of $E$, we have that
\[ \diam(\CB_\ell)^{\alpha_\UBD} \geq \qmeasure{h}(\CB_\ell) \geq R_2^{-1} 2^{-\xi_2 k}.\]
That is, on $E$ we have that
\begin{equation}
\label{eqn:b_diam_lbd}
\diam(\CB_\ell) \geq R_2^{-1/\alpha_\UBD} 2^{-\xi_2 k /\alpha_\UBD}.	
\end{equation}
Assume that $1 \leq \ell \leq n-1$.  We now define a path $\cpath_\ell$ as follows. We let $z_\ell^1 = \psi_\ell(\varphi_{\ell}^{-1}(0))$ and $z_\ell^2 = \psi_\ell(\varphi_{\ell+1}^{-1}(0))$. We then let $\cpath_\ell$ be the image under $\psi_\ell^{-1}$ of the concatenation of $[z_\ell^1,0]$ with $[0,z_\ell^2]$.  As $B(0,1-R_2^{-1})$ contains $\psi_\ell(\CB_\ell)$ and $\CB_\ell$ has diameter at least $R_2^{-1/\alpha_\UBD} 2^{- \xi_2 k /\alpha_\UBD}$, standard distortion estimates for conformal maps imply that
\[ |(\psi_\ell^{-1})'(z)| \gtrsim 2^{-\xi_2 k/\alpha_\UBD} \quad\text{for all}\quad z \in B(0,1-R_2^{-1})\]
where the implicit constant in $\gtrsim$ depends only on $R_2$. Therefore there exists a ($R_2$-dependent) constant $c_0 > 0$ so that $\CT$ contains the (Euclidean) $c_0 2^{-\xi_2 k/\alpha_\UBD}$ neighborhood of $\cpath_\ell$.  We take $\zeta > \xi_2/\alpha_\UBD$.

In order to finish proving the claim, it is left to explain why:
\begin{itemize}
\item $\CN_1$ (resp.\ $\CN_n$) intersects~$I_{j-1}$ (resp.\ $I_{j+1}$) and
\item We can define a path $\cpath_0$ in $\CN_1 \cup \CA_1$ which connects~$I_{j-1}$ to the starting point of~$\cpath_1$ and a path~$\cpath_n$ in $\CA_{n-1} \cup \CN_n$ which connects the ending point of~$\cpath_{n-1}$ to~$I_{j+1}$ so that both~$\cpath_0$ and~$\cpath_n$ have distance at least $2^{-\zeta k}$ from $I_j$.
\end{itemize}
The first point is in fact a consequence of the definition of the exploration succeeding.  This leaves us to address the second point.

Let us first explain how to construct the path $\cpath_0$.  Consider the set $\CB_1 = \varphi_1^{-1}(B(0,1/2))$.  As explained above, we know that $\diam(\CB_1) \geq R_2^{-1/\alpha_\UBD} 2^{-\xi_2 k/\alpha_\UBD}$.  Let $L_1$ (resp.\ $L_2$) be the line segment in $\D$ from $-1/4$ (resp.\ $1/4$) to the point $z_1 = -i$ (resp.\ the point $z_2$ on the bottom of $\partial \D$ with real part $1/4$).  We claim that there exists a constant $c > 0$ so that $\dist(\varphi_1^{-1}(L_1),\varphi_1^{-1}(L_2)) \geq c 2^{- \xi_2 k/\alpha_\UBD}$.  To see this, let $B$ be a Brownian motion starting from $0$.  Then $B$ has a positive chance of exiting $\partial \D$ in $\ccwBoundary{z_1}{z_2}{\partial \D}$ before hitting $L_1$, $L_2$.  Therefore the same is also true for the Brownian motion $\varphi_1^{-1}(B)$. The claim therefore follows from the Beurling estimate.  We can therefore take $\cpath_0$ to be the image under $\varphi_1^{-1}$ of the line segment from the point on the bottom of $\partial \D$ with real part $1/8$ to $0$ and we see that the distance of $\cpath_0$ to $I_j$ is at least $c 2^{-\xi_2 k/\alpha_\UBD}$ (possibly decreasing $c > 0$).  We define $\cpath_n$ in a similar manner.

We now define the path $\eta$.  Let $x \in \partial D \setminus (I_{j-1} \cup I_j \cup I_{j+1})$ be any fixed point.  On the event that the exploration succeeds, its outer boundary as viewed from $x$ does not depend on the choice of $x$.  We take $\eta$ to be the curve which corresponds to the outer boundary of the exploration as viewed from $x$ and assume that $\eta$ is parameterized by quantum length.  We note that in this case the quantum length of $\eta$ is at most a constant times $2^{-k}$.  On $E$, \eqref{eqn:quantum_holder_norm_tight} from Proposition~\ref{prop:boundary_distance_tail_bounds} thus gives an upper bound on $\diam(I_{j-1} \cup I_j \cup I_{j+1})$ and $\diam(\eta)$ which is a power of $2^{-k}$ off an event which occurs with probability $O(2^{-(1+\beta)k})$, which completes the proof in the case that the exploration does not fail.

\noindent{\it Step 2.  Exploration fails.} We now consider the possibilities which can occur if the exploration described above fails.  Measuring quantities using $h_0$, we recall that the exploration fails if it makes either a large upward jump (i.e., of size at least $2^{- \exploreExp (\kappa/4) M}$ when exploring chunks with quantum natural time in $[\delta_0 2^{-M}, 2^{-M}]$), a large downward jump (i.e., of size at least $2^{- \exploreExp (\kappa/4) M}$ when exploring chunks with quantum natural time in $[\delta_0 2^{-M}, 2^{-M}]$), requires too many chunks at a particular scale (i.e., at least $c_F 2^{(1-\exploreExp)(\kappa/4) M}$ when exploring chunks of size in $[\delta_0 2^{-M}, 2^{-M}]$ for $M > J$ or at least $c_F 2^{(\kappa/4)J}$ when exploring chunks of size in $[\delta_0 2^{-J},2^{-J}]$), or the deviations in the boundary length process are too large (i.e., the absolute value of the quantum length of the top minus the bottom exceeds $c_F 2^{(1-4/\kappa) M}$ when exploring chunks of size in $[\delta_0 2^{-M}, 2^{-M}]$).  We stop the exploration immediately if either of these possibilities happens, and then proceed as described below.

\begin{figure}[ht!]
\begin{center}
\includegraphics[scale=1]{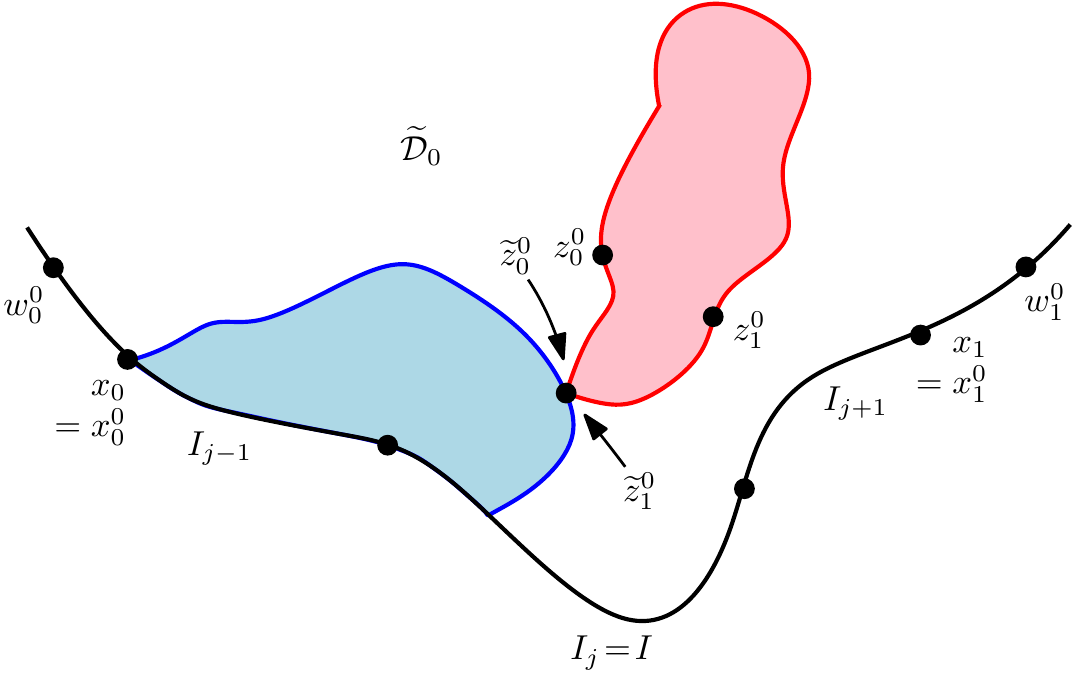}	
\end{center}
\caption{\label{fig:interior_neighborhood_upward_jump} Illustration of the possibility that the exploration fails by making an upward jump whose size measured using $h_0$ is at least $3$.  Shown in blue is the exploration up until it makes the large upward jump and the corresponding $\CLE_\kappa$ loop is shown in red.  The two prime ends which correspond to where the loop is rooted on the previous exploration are $\wt{z}_0^0$, $\wt{z}_1^0$.  The point $z_0^0$ (resp.\ $z_0^1$) is the one on the loop so that the clockwise (resp.\ counterclockwise) boundary length from $\wt{z}_0^0$ to $z_0^0$ (resp.\ $\wt{z}_1^0$ to $z_1^0$) measured using $h_0$ is equal to $1$.  Since we are working on the event that $\Gamma$ does not have any $(\epsilon,\alpha_\PP)$-pinch points for $\epsilon \in (0,\epsilon_0)$, by choosing $\zeta > 0$ sufficiently large (depending only on $\alpha_\PP$) we have that the clockwise arc of the loop from $z_0^0$ to $z_1^0$ does not hit the ball of radius $2^{-\zeta k}$ centered at $\wt{z}_0^0$.  The point $w_0^0$ (resp.\ $w_1^0$) is the one which is $1$ unit of boundary length distance in the clockwise (resp.\ counterclockwise) direction from $x_0^0$ (resp.\ $x_1^0$) measured using $h_0$.  The exploration is then continued from $w_0^0$ to $z_0^0$ and from $z_1^0$ to $w_1^0$.}
\end{figure}

\noindent{\it Case 1: large upward jump.}  We let $\wt{\CD}_0$ be the quantum disk in the complement of the exploration with $x_0^0$ and $x_1^0$ on its boundary.  We let $\wt{z}_0^0$, $\wt{z}_1^0$ be the two prime ends on $\partial \wt{\CD}_0$ which correspond to where the discovered $\CLE_\kappa$ loop $\CL$ is rooted to the boundary.  There are two possibilities.  Either the size of the upward jump (measured using $h_0$) is smaller or larger than $3$.

We first suppose that the upward jump (measured using $h_0$) has size at most $3$.  We let $z_0^0$ (resp.\ $z_1^0$) be the point on $\partial \wt{\CD}_0$ so that $\qbmeasure{h_0}(\ccwBoundary{z_0^0}{x_0^0}{\partial \wt{\CD}_0}) = 1$ (resp.\ $\qbmeasure{h_0}(\ccwBoundary{x_1^0}{z_1^0}{\partial \wt{\CD}_0}) = 1$).  We let $\CD_{00}$ be given by $\wt{\CD}_0$ after adding $\tfrac{2}{\gamma} \log L_{00}^{-1}$ to the field where $L_{00} = \qbmeasure{h_0}(\ccwBoundary{z_0^0}{z_1^0}{\partial \wt{\CD}_0})$ and we write $x_0^{00}$, $x_1^{00}$ for the points which correspond to $z_0^0$, $z_1^0$.  We refer to $(\CD_{00},x_0^{00},x_1^{00})$ as the \emph{child} of $(\CD_0,x_0^0,x_1^0)$.  We then perform the exploration inside of $\CD_{00}$ from $x_0^{00}$ to $x_1^{00}$ with the definition of the good chunk as in Definition~\ref{def:interior_good_chunk}.  We note in this case that 
\begin{equation}
\label{eqn:upward_jump_small_dev_bound}
3 - c_F 2^{(1-4/\kappa) J} \leq L_{00} \leq 6 + c_F 2^{(1-4/\kappa) J}.
\end{equation}

We now suppose that the upward jump (measured using $h_0$) has size at least $3$.  See Figure~\ref{fig:interior_neighborhood_upward_jump} for an illustration in this case.  We let $z_0^0$ (resp.\ $z_1^0$) be the point on $\CL$ so that $\qbmeasure{h_0}(\cwBoundary{\wt{z}_0^0}{z_0^0}{\CL}) = 1$ (resp.\ $\qbmeasure{h_0}(\cwBoundary{z_1^0}{\wt{z}_1^0}{\CL}) = 1$).  This means that
\[ \qbmeasure{h}(\cwBoundary{\wt{z}_0^0}{z_0^0}{\CL}) \geq 2^{-k} \quad\text{and}\quad  \qbmeasure{h}(\cwBoundary{z_1^0}{\wt{z}_1^0}{\CL}) \geq 2^{-k}.\]
It thus follows from the definition of $E$ that if we assume $\zeta > 1/ \alpha_\PP$ then $\ccwBoundary{z_0^0}{z_1^0}{\partial \wt{\CD}_0}$ does not intersect $B(\wt{z}_0^0,2^{-\zeta k})$.
  
We also let $w_0^0$ (resp.\ $w_1^0$) be the point on $\partial \wt{\CD}_0$ so that $\qbmeasure{h_0}(\ccwBoundary{w_0^0}{x_0^0}{\partial \wt{\CD}_0}) = 1$ (resp.\ $\qbmeasure{h_0}(\ccwBoundary{x_1^0}{w_1^0}{\partial \wt{\CD}_0}) = 1$.  We let $\CD_{00}$ be given by $\wt{\CD}_0$ after adding $\tfrac{2}{\gamma} \log L_{00}^{-1}$ to the field where $L_{00} = \qbmeasure{h_0}(\ccwBoundary{w_0^0}{z_0^0}{\partial \wt{\CD}_0})$ and we write $x_0^{00}$, $x_1^{00}$ for the points corresponding to $w_0^0$, $z_0^0$.  We also let $\CD_{01}$ be given by $\wt{\CD}_0$ after adding $\tfrac{2}{\gamma} \log L_{01}^{-1}$ where $L_{01} = \qbmeasure{h_0}(\ccwBoundary{z_1^0}{w_1^0}{\partial \wt{\CD}_0})$ and we write $x_0^{01}$, $x_1^{01}$ for the points which correspond to $z_1^0$, $w_1^0$.  We refer to $(\CD_{0i},x_0^{0i},x_1^{0i})$ for $i=0,1$ as the \emph{children} of $(\CD_0,x_0^0,x_1^0)$.  We then perform the exploration inside of $\CD_{0i}$ from $x_0^{0i}$ to $x_1^{0i}$ for $i=0,1$ with the definition of the good chunk as in Definition~\ref{def:interior_good_chunk}.  We note in this case that
\begin{equation}
\label{eqn:upward_jump_large_dev_bound}
2 \leq L_{0i} \leq 3 + c_F 2^{(1-4/\kappa) J}\quad\text{for}\quad i=0,1.
\end{equation}  

We will now explain how to construct the path $\cpath$ in the case that both of the explorations succeed.  As mentioned just above,  we have that the distance between $\cwBoundary{z_0^0}{z_0^1}{\CL}$ and $\wt{z}_0^0$ is at least $2^{-\zeta k}$.  Let $\CN_1,\ldots,\CN_\ell$ be the good chunks from the exploration in $\CD_{00}$ from $x_0^{00}$ to $x_1^{00}$ which have the maximal chunk size.  Arguing as in the proof that the exploration succeeds to begin with, we can construct a path inside of $\closure{\cup_{j=1}^\ell \CN_j}$ which has distance at least $2^{-\zeta k}$ (possibly increasing $\zeta > 0$) from $I_j$ and connects $I_{j-1}$ to $x_1^{00} = z_0^0$.  We then take the concatenation of this path with $\ccwBoundary{z_0^0}{\wt{z}_0^0}{\CL}$.  Finally, we concatenate this path with $\ccwBoundary{\wt{z}_1^0}{z_1^0}{\CL}$ and the analogous path constructed from the exploration in $\CD_{01}$ from $x_0^{01} = z_1^0$ to $x_1^{01} = x_1^0$.

If one of the two explorations fail, then we iterate the procedure depending on the type of failure in the same manner that we continue the iteration procedure in the case that the initial exploration has failed with the following exception.  If the exploration in $\CD_{00}$ from $x_0^{00}$ to $x_1^{00}$ has a large downward jump whose terminal point disconnects $x_0^{01}$ and $x_1^{01}$, then we do not perform the exploration in $\CD_{01}$ from $x_0^{01}$ to $x_1^{01}$ and only perform an exploration from $x_{0}^{00}$ to $x_1^{01}$ in the component which contains these two points (as in the first type of large downward jump considered just below).

\begin{figure}[ht!]
\begin{center}
\includegraphics[scale=1]{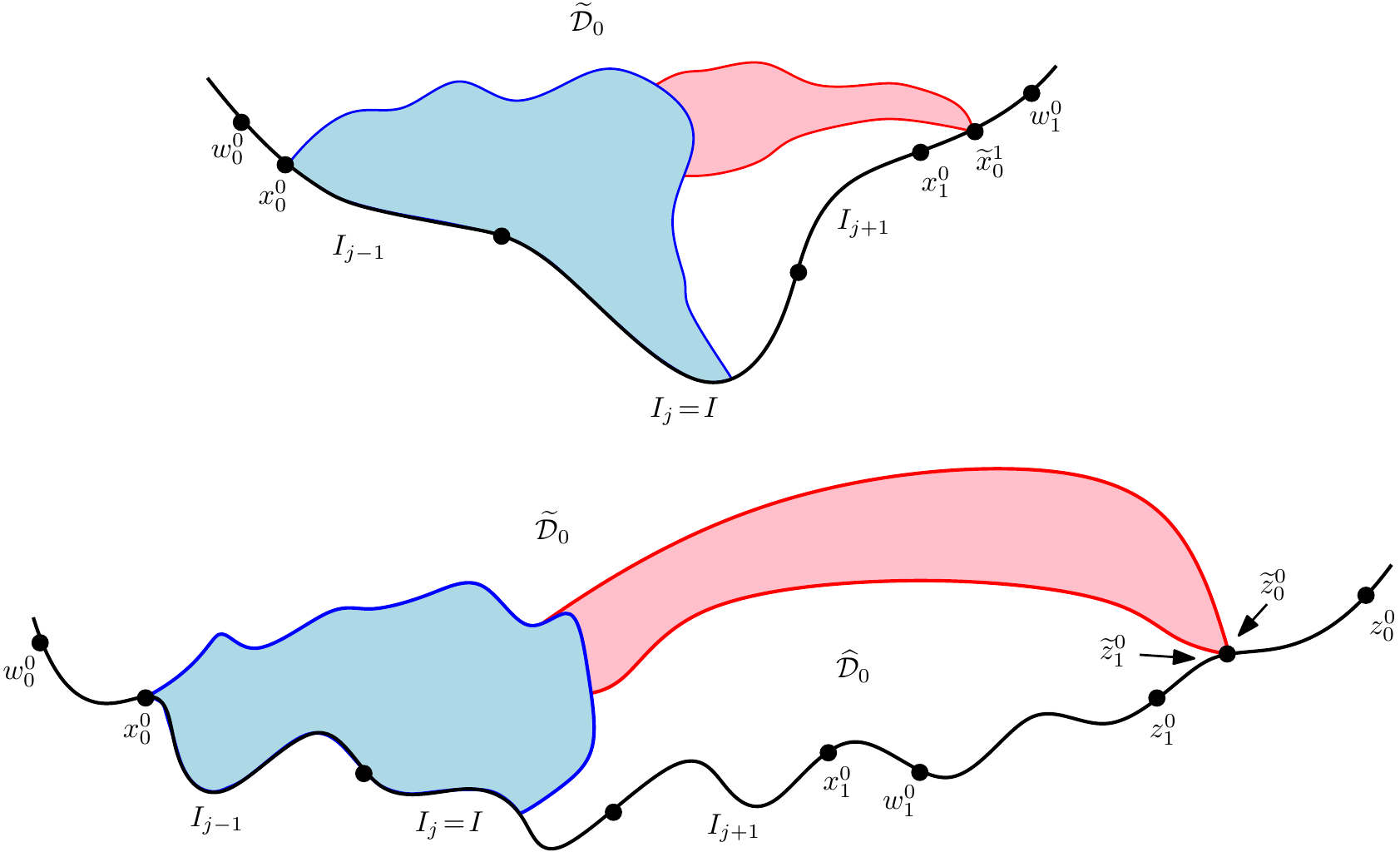}	
\end{center}
\caption{\label{fig:interior_neighborhood_downward_jump} Illustration of the possibility that the exploration fails by making a large downward jump.  {\bf Top:} The terminal point of the downward jump is within $2$ units of boundary length distance (measured using $h_0$) from $I_{j-1} \cup I_j \cup I_{j+1}$.  Shown is the case that the terminal point is not in $I_{j-1} \cup I_j \cup I_{j+1}$ but it is also possible for it to be in $I_{j-1} \cup I_j \cup I_{j+1}$.  {\bf Bottom:} The terminal point of the downward jump is more than $2$ units of boundary length distance (measured using $h_0$) from $I_{j-1} \cup I_j \cup I_{j+1}$.}
\end{figure}

\noindent{\it Case 2: large downward jump.}  There are two possibilities in the case that the exploration makes a large downward jump: either the terminal point of the downward jump is within boundary length distance $2$ (measured using $h_0$) of $I_{j-1} \cup I_j \cup I_{j+1}$ or it is not.  See Figure~\ref{fig:interior_neighborhood_downward_jump} for an illustration of the setup.

Suppose that the terminal point of the downward jump is within boundary length distance $2$ (measured using $h_0$) of $I_{j-1} \cup I_j \cup I_{j+1}$.  Let $w_0^0$ (resp.\ $w_1^0$) be such that $\qbmeasure{h_0}(\cwBoundary{w_0^0}{x_0^0}{\partial \CD}) = 3$ (resp.\ $\qbmeasure{h_0}(\ccwBoundary{x_1^0}{w_1^0}{\partial \CD}) = 3$) and let $\wt{\CD}_0$ be the quantum disk in the complement of the exploration whose boundary contains $\cwBoundary{w_0^0}{w_1^0}{\partial \CD}$.  We then let $\CD_{00}$ be given by $\wt{\CD}_{0}$ after adding $\tfrac{2}{\gamma} \log L_{00}^{-1}$ to the field where $L_{00} = \qbmeasure{h_0}(\ccwBoundary{z_0^0}{z_1^0}{\partial \wt{\CD}_0})$ and we write $x_0^{00}$, $x_1^{00}$ for the points on $\partial \CD_{00}$ which correspond to $w_0^0$, $w_1^0$.  We refer to $(\CD_{00},x_0^{00},x_1^{00})$ as the child of $(\CD_0,x_0^0,x_1^0)$.  We then perform the exploration inside of $\CD_{00}$ from $x_0^{00}$ to $x_1^{00}$.  In the case that this exploration succeeds, we take the path to be defined in an analogous manner as to when the exploration did not have a large downward jump.  We note in this case that
\begin{equation}
\label{eqn:downward_jump_close_dev_bound}
4 \leq L_{00} \leq 7 + c_F 2^{(1-4/\kappa) J}.
\end{equation}

Now suppose that the terminal point of the downward jump has boundary length distance from $I_{j-1} \cup I_j \cup I_{j+1}$ at least $2$ (measured using $h_0$).  We let $\wt{\CD}_0$ be the quantum disk in the complement of the exploration with $x_0^0$ on its boundary.  We let $\wt{z}_0^0$ be the point on $\partial \wt{\CD}_0$ which is the terminal point of the large downward jump and let $z_0^0 \in \partial \wt{\CD}_0$ be such that $\qbmeasure{h_0}(\ccwBoundary{\wt{z}_0^0}{z_0^0}{\partial \wt{\CD}_0}) = 1/2$.  We also let $w_0^0 \in \partial \wt{\CD}_0$ be such that $\qbmeasure{h_0}(\ccwBoundary{w_0^0}{x_0^0}{\partial \wt{\CD}_0}) = 1/2$.  We let $\CD_{00}$ be given by $\wt{\CD}_{00}$ after adding $\tfrac{2}{\gamma} \log L_{00}^{-1}$ to the field where $L_{00} = \qbmeasure{h_0}(\ccwBoundary{w_0^0}{z_0^0}{\partial \wt{\CD}_0})$ and we write $x_0^{00}$, $x_1^{00}$ for the points corresponding to $w_0^0$, $z_0^0$.  We let $\wh{\CD}_0$ be the quantum disk in the complement of the exploration with $x_1^0$ on its boundary, let $\wt{z}_1^0 \in \partial \wh{\CD}_0$ be the terminal point of the large downward jump, and let $z_1^0 \in \partial \wh{\CD}_0$ be such that $\qbmeasure{h_0}(\ccwBoundary{z_1^0}{\wt{z}_1^0}{\partial \wh{\CD}_0}) = 1/2$.  We also let $w_1^0 \in \partial \wh{\CD}_0$ be such that $\qbmeasure{h_0}(\ccwBoundary{x_1^0}{w_1^0}{\partial \wh{\CD}_0}) = 1/2$ and let $\CD_{01}$ be given by $\wh{\CD}_0$ after adding $\tfrac{2}{\gamma} \log L_{01}^{-1}$ where $L_{01} = \qbmeasure{h_0}(\ccwBoundary{z_1^0}{w_1^0}{\partial \wh{\CD}_0})$ and we write $x_0^{01}$, $x_1^{01}$ for the points corresponding to $z_1^0$, $w_1^0$, respectively.  We refer to $(\CD_{0i},x_0^{0i},x_1^{0i})$ for $i=0,1$ as the children of $(\CD_0,x_0^0,x_1^0)$.  We then perform the exploration inside of $\CD_{0i}$ from $x_0^{0i}$ to $x_1^{0i}$ for $i=0,1$.  We note in this case that
\begin{equation}
\label{eqn:downward_jump_far_dev_bound}
1 \leq L_{0i} \leq 2 + c_F 2^{(1-4/\kappa) J} \quad\text{for}\quad i=0,1.
\end{equation} 

In the case that both explorations succeed, we take the path which is given by concatenating the path associated with $(\CD_{00},x_0^{00},x_1^{00})$ followed by the clockwise arc of $\partial D$ from $x_1^{00}$ to $x_0^{01}$ and then the path associated with $(\CD_{01},x_0^{01},x_1^{01})$.  Arguing as in the same way as the exploration succeeds, if both of these explorations succeed then this path will have the desired properties.

If either exploration fails, we continue the exploration as described above in the new quantum disk.

\noindent{\it Case 3: too many chunks or top boundary length too long.}  Finally, we consider the situation in which the exploration fails by either having $n_M^\uparrow$, $n_M^\downarrow$ too large for $M \geq J$ or the absolute value of the difference of the top of the exploration minus the bottom (as measured using $h_0$) exceeds $c_F 2^{(1-4/\kappa) J}$.  We note that the quantum length of the top can only exceed that of the bottom by at most $c_F 2^{(1-4/\kappa) J} + 3$ for otherwise the exploration would have failed due to a large upward jump.  We let $\wt{\CD}_0$ be the quantum disk in the complement of the exploration with $x_0^0$, $x_1^0$ on its boundary.  Let $\wt{h}_0$ be the field which describes $\wt{\CD}_0$ and let $w_0^0$ (resp.\ $w_1^0$) be the point in $\partial \wt{\CD}_0$ so that $\qbmeasure{h_0}(\ccwBoundary{w_0^0}{x_0^0}{\partial \wt{\CD}_0}) = 1$ (resp.\ $\qbmeasure{h_0}(\ccwBoundary{x_1^0}{w_0^0}{\partial \wt{\CD}_0}) = 1$).  We let $\CD_{00}$ be the quantum surface described by $h_{00} = \wt{h}_0 + \tfrac{2}{\gamma} \log L_{00}^{-1}$ where $L_{00} = \qbmeasure{\wt{h}_0}(\ccwBoundary{w_0^0}{w_1^0}{\partial \wt{\CD}_0})$ and we write $x_0^{00}$, $x_1^{00}$ for the points which correspond to $w_0^0$, $w_1^0$.  We refer to $(\CD_{00}, x_{0}^{00}, x_1^{00})$ as the child of $(\CD_{0},x_0^0,x_1^0)$ and then continue the exploration in $\CD_{00}$ from $x_0^{00}$ to $x_1^{00}$.  We note in this case that
\begin{equation}
\label{eqn:too_many_chunk_dev_bound}
2 \leq L_{00} \leq 6 + c_F 2^{(1-4/\kappa) J}.
\end{equation}

We recall that the probability that the exploration fails at given stage is $O(2^{-c J})$ for a constant $c > 0$.  This proves that the number of times that the exploration is restarted is dominated by the size of a subcritical Galton-Watson tree where each node can have at most two children and the probability of having either one or two children is $O(2^{-c J})$.  Recalling that we have taken $J = \xi_0 k$, it therefore follows that there exists $n \in \N$ (which does not grow with $k$) so that the probability that the exploration is restarted more than $n$ times is $O(2^{-(1+\beta) k})$.  The proof is completed by taking a union bound over $1 \leq j \leq N_k$ and then over $k$.  

Suppose that we have some $m \in \N$ and $o \in \{0,1\}^m$ so that $(\CD_o,x_0^o,x_1^o)$ is part of the exploration.  Then the maximal chunk size which makes up chunks in the exploration of $\CD_o$ from $x_0^o$ to $x_1^o$ (relative to the ambient quantum disk $\CD$) is given by $2^{-\xi_0 J} \prod_{o'} L_{o'}^{4/\kappa}$ where the product is over $o'$ which are along the ancestral line of $o$ using the tree structure defined above.  Combining the upper and lower bounds from \eqref{eqn:upward_jump_small_dev_bound}--\eqref{eqn:too_many_chunk_dev_bound} we see that there are constants $c_0, c_1 > 0$ so that
\[ 2^{-c_0 m J} \leq 2^{-\xi_0 J} \prod_{o'} L_{o'}^{4/\kappa} \leq 2^{-c_1 m J}.\]
Arguing as in~\eqref{eqn:b_area_lbd}, \eqref{eqn:b_diam_lbd} it therefore follows that the corresponding path $\omega$ has a neighborhood of size at least a constant times $2^{-c_0 n \xi_2 k/\alpha_\UBD}$ around it which is contained in the aforementioned chunks.  Since we will always take $m \leq n$ where $n$ is chosen as above depending on $\beta$, the result follows by possibly increasing the value of $\zeta > 0$.  
\end{proof}

In what follows, for $L > 0$ we let $\qdiskCarpet{\gamma}{L}$ be the law on quantum surfaces decorated by a loop ensemble and marked point $(D,h,\Gamma,z)$ which can be sampled from as follows.  First suppose that $(D,\wt{h},\wt{x},\wt{y}) \sim \qdiskL{\gamma}{L}$ and $\wt{\Gamma}$ is an independent $\CLE_\kappa$ on $D$ with carpet $\wt{\Upsilon}$.  Then the law of $(h,\Gamma)$ is the one which has Radon-Nikodym derivative with respect to the law of $(\wt{h},\wt{\Gamma})$ given by $\CZ^{-1} \qcarpet{\wt{h}}{\wt{\Upsilon}}$ where $\CZ$ is a normalization constant.  Finally, conditionally on everything else, $z$ has law $\qcarpet{h}{\Upsilon}$.

\begin{lemma}
\label{lem:interior_point_distance_to_boundary_finite}
Suppose that $D$ is a bounded, simply connected domain and $(D,h,\Gamma,z) \sim \qdiskCarpet{\gamma}{1}$.  Fix $0 < \alpha_\UBD < 2 < \alpha_\LBD$ and $\epsilon_0 > 0$.  Let $E$ be the event that for every $w \in D$ and $\epsilon \in (0,\epsilon_0)$ with $B(w,\epsilon) \subseteq D$ we have that $\epsilon^{\alpha_\LBD} \leq \qmeasure{h}(B(w,\epsilon)) \leq \epsilon^{\alpha_\UBD}$.  Then the law of $(\medianHP{\epsilon})^{-1} \metapprox{\epsilon}{z}{\partial D}{\Gamma}$ is tight as $\epsilon \to 0$.
\end{lemma}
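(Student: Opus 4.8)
The plan is to build a path from $z$ to $\partial D$ by combining the interior percolation exploration of Lemma~\ref{lem:disk_neighborhood} with a Markovian $\SLE_\kappa^0(\kappa-6)$ exploration targeted at $z$, and then controlling the total $\lebneb{\epsilon}$-length using the estimates already established. First I would reduce to the event $E$ together with the event that no loop of $\Gamma$ has an $(\epsilon,\alpha_\PP)$-pinch point for $\epsilon \in (0,\epsilon_0)$, which has probability tending to $1$ by Lemma~\ref{lem:disk_in_disk}; this is harmless for a tightness statement since we only need tightness of the conditional law restricted to a high probability event. On this event, the key structural input is Lemma~\ref{lem:disk_neighborhood}: for each dyadic scale $2^{-m}$ there exists (with overwhelming probability, uniformly in $\delta \in (0,\epsilon)$ with $\epsilon = 2^{-\zeta m}$) a curve $\eta_m$ in $\Upsilon$ whose complementary component $U_m$ containing any fixed near-boundary point has diameter at most a power of $2^{-m}$ and along which $(\medianHP{\epsilon})^{-1}\metapprox{\epsilon}{u}{v}{\Gamma}$ is bounded by a power of $2^{-m}$ for all $u,v \in \eta_m$. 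The point is that the union over $m$ of these curves tiles a neighborhood of $\partial D$ by regions of geometrically shrinking diameter, each with small internal $\metapprox{\epsilon}{\cdot}{\cdot}{\Gamma}$-diameter.

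Next I would run an $\SLE_\kappa^0(\kappa-6)$ process $\eta$ from a point on $\partial D$ targeted at $z$, coupled with $\Gamma$ as a CPI, parameterized by the quantum area its trunk $\eta'$ disconnects from $z$; by Appendix~\ref{app:mod_of_cont} this parameterization makes $\eta'$ H\"older continuous with some exponent $\alpha_\HO$. Stopping $\eta$ at the first time $\tau$ that $\eta'$ gets within Euclidean distance $2^{-m/\alpha_\PP}$ of $z$, the component $V$ of $D \setminus \eta([0,\tau])$ containing $z$ satisfies $\dist(z,\partial V) \le 2^{-m/\alpha_\PP}$ and, on the pinch-point-free event, $\partial V$ is not pinched near $z$ so $V$ has small diameter. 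Applying Lemma~\ref{lem:disk_neighborhood} inside $V$ (using the Markov property of the CPI, under which $\Gamma$ restricted to $V$ is a $\CLE_\kappa$ in $V$, and Lemma~\ref{lem:point_to_point_disk_exploration}) produces a curve $\omega$ in $\Upsilon \cap V$ surrounding $z$ with small $\metapprox{\epsilon}{\cdot}{\cdot}{\Gamma}$-diameter, while $z$ is at small distance from $\omega$, giving a bound on $\metapprox{\epsilon}{z}{\eta([0,\tau])}{\Gamma}$ via a short crossing from $z$ to $\omega$ (here one uses that a point in $\Upsilon$ lies on the boundary of the carpet and the chunk-crossing estimates of Section~\ref{sec:chunk_estimates} give a crossing to $\omega$ of controlled length, with the conformal-distortion bookkeeping of Lemma~\ref{lem:covering_lemma}). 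Then $\metapprox{\epsilon}{\eta(\tau)}{\partial D}{\Gamma}$ is controlled by Proposition~\ref{prop:boundary_distance_tail_bounds} applied along the boundary of the region explored by $\eta|_{[0,\tau]}$, whose quantum boundary length is finite (and tight) because $\eta$ is a CPI of a quantum disk of unit boundary length. Summing the geometrically decaying contributions — a bound of order $2^{-c m}\medianHP{\epsilon}$ at the $m$-th nested scale — yields $(\medianHP{\epsilon})^{-1}\metapprox{\epsilon}{z}{\partial D}{\Gamma} \le $ a random variable with tight (indeed exponential) tail, uniformly in $\epsilon$.

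I expect the main obstacle to be the uniformity in $\epsilon$ of the bound coming from combining the nested explorations: one must ensure that at every scale $m$ the approximation parameter $\delta$ (which plays the role of the neighborhood radius $\epsilon$ in $\lebneb{\epsilon}$) is small relative to $2^{-m}$ in the precise sense required by Lemma~\ref{lem:disk_neighborhood} and the chunk estimates, while the normalization is always by $\medianHP{\delta}$ with $\delta$ the \emph{fixed} ambient parameter — so that the quantile comparison~\eqref{eqn:quant_comparison} and Lemma~\ref{lem:covering_lemma} must be invoked to pass between $\medianHP{\delta}$ and $\medianHP{\ul d_m \delta}$ where $\ul d_m$ is the conformal scaling factor of the $m$-th embedded region. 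A secondary technical point is the bookkeeping for the recursive (Galton-Watson) structure when an exploration fails: as in the proof of Proposition~\ref{prop:boundary_distance_tail_bounds}, failures spawn a subcritical branching tree of re-explorations, and one must check that the product of scaling factors along any ancestral line stays bounded below by a power of the starting scale on the event $E$, so that the accumulated length over the whole tree remains controlled by $|\CT|$ times the single-exploration bound. Once these uniformity and recursion issues are handled, tightness follows immediately since a nonnegative random variable with uniformly bounded tails is tight.
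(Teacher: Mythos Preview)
Your approach is substantially more involved than the paper's and has a genuine gap. The paper's argument is direct: run a CPI $\eta$ targeted at $z$ parameterized by quantum natural time, and let $\tau_j$ be the successive times at which $\eta$ disconnects $z$ from $\partial \CD_{j-1}$, producing nested domains $\CD_j \ni z$ (and collar domains $\wt{\CD}_j$) with boundary lengths $L_j$, $\wt{L}_j$. The only analytic input beyond Proposition~\ref{prop:boundary_distance_tail_bounds} is the geometric decay $\E[L_j^p] \le 2^{-cj}$ for $p \in (0,4/\kappa-1/2)$, proved by noting that $X_\tau^{4/\kappa-1/2}$ equals (up to a constant) the conditional carpet mass in $D_\tau$; this bounds the law of $\sup_t X_t$ and then forces $\E[L_1^p]<1$ by dominated convergence. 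One then rescales each $\CD_j$, $\wt{\CD}_j$ to boundary length $1$, applies Proposition~\ref{prop:boundary_distance_tail_bounds} with the scaling factor $R_j = L_j^{2/\alpha_\LBD}$, uses Lemma~\ref{lem:covering_lemma} to convert back to the ambient scale, and sums the geometrically decaying contributions. The key structural point is that $z$ is reached as the limit of the shrinking boundaries $\partial \CD_j$, so the path from $z$ to $\partial D$ is built entirely out of boundary segments; there is no separate ``crossing from $z$'' step.

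The gap in your proposal is exactly that step: you write ``a short crossing from $z$ to $\omega$ [using] the chunk-crossing estimates of Section~\ref{sec:chunk_estimates}''. Those estimates produce crossings between boundary arcs of $\SLE$ chunks, not from a fixed interior carpet point to a nearby curve. Establishing that $\approxball{\epsilon}{z}{r}$ has macroscopic diameter (i.e., that $z$ can be joined to nearby points by short carpet paths) is precisely Lemma~\ref{lem:quantum_typical_point_diamter_lbd}, which is proved \emph{after} the present lemma and whose proof uses it; so your argument, as written, is circular. You also import Lemma~\ref{lem:disk_neighborhood}, the pinch-point bound, and the H\"older continuity of space-filling $\SLE_{\kappa'}$ from Appendix~\ref{app:mod_of_cont}; none of these are needed here. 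In the paper they enter only later (in Lemma~\ref{lem:distance_two_quantum_typical}), precisely because by that point Lemma~\ref{lem:quantum_typical_point_diamter_lbd} is available to handle the last step to $z$.
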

\begin{proof}
Let~$\eta$ be an $\SLE_\kappa^0(\kappa-6)$ coupled with $\Gamma$ as a CPI and which is targeted at $z$ and parameterized by the quantum natural time of its trunk.  Let $\tau_0 = 0$, $\CD_0 = \CD$, and for each $j \geq 1$ we inductively define $\tau_j$ and $\CD_j$ as follows.  We let $\tau_j$ be the first time $t \geq \tau_{j-1}$ that $\eta([\tau_{j-1},\tau_j])$ disconnects~$z$ from $\partial \CD_{j-1}$ and we let $\CD_j$ be the quantum surface parameterized by the component of $\CD \setminus \eta([0,\tau_j])$ containing~$z$.  We also let $\wt{\CD}_j$ be the quantum surface parameterized by the other component of $\CD_{j-1} \setminus \eta([0,\tau_j])$ with $\eta(\tau_j)$ on its boundary.

Let $L_j$ (resp.\ $\wt{L}_j$) be the quantum length of $\partial \CD_j$ (resp.\ $\partial \wt{\CD}_j$).  Fix $p \in (0,4/\kappa-1/2)$; note that this interval is non-empty as $\kappa \in (8/3,4)$.  We claim that there exists a constant $c > 0$ so that $\E[L_j^p] \leq 2^{-c j}$ and $\E[\wt{L}_j^p] \leq c^{-1} 2^{-c j}$ for each $j \in \N$.  To see this, for each $t \geq 0$ we let $D_t$ denote the component of $D \setminus \eta([0,t])$ which contains $z$ and let $X_t$ be the quantum length of $\partial D_t$.  By the construction of $\qcarpet{h}{\Upsilon}$ given in \cite{msw2020simplecle}, for any stopping time $\tau$ for the filtration $\CF_t = \sigma(X_s : s \leq t)$ we have that $\E[ \qcarpet{h}{\Upsilon}(\Upsilon \cap D_\tau) \giv \CF_\tau] = c_0 X_\tau^{4/\kappa-1/2}$ for a constant $c_0 > 0$.  In particular, if we fix $r > 0$ and let $\tau_r = \inf\{ t \geq 0 : X_t \geq r\}$ then we have that
\begin{align*}
\p[ \sup_{t \geq 0} X_t \geq r]
&\leq \p[ \tau_r < \infty]
 \leq r^{-4/\kappa+1/2} \E[ X_{\tau_r}^{4/\kappa-1/2} \one_{\{\tau_r < \infty\}}	 ]\\
&\leq c_0^{-1} r^{-4/\kappa+1/2} \E[\qcarpet{h}{\Upsilon}(\Upsilon)]
 = O(r^{-4/\kappa+1/2}).
\end{align*}
Then the above implies that $\E[ (\sup_{t \geq 0} X_t)^p ] < \infty$ and therefore $\E[L_1^p] < \infty$.  Since $X_t \to 0$ as $t \to \infty$ a.s.\ we have that $L_j \to 0$ as $j \to \infty$ a.s.  Consequently, it follows from the dominated convergence theorem (with dominating function given by $(\sup_{t \geq 0} X_t)^p$) that $\E[L_j^p] \to 0$ as $j \to \infty$.  Thus as $\E[L_j^p] = (\E[L_1^p])^j$, the only way that $\E[ L_j^p ] \to 0$ as $j \to \infty$ can hold is if $\E[L_1^p] < 1$.  This proves that there exists a constant $c > 0$ so that
\begin{equation}
\label{eqn:l_j_moment}
\E[L_j^p] \leq 2^{-c j} \quad\text{for all}\quad j \in \N.
\end{equation}
We note that the law of $\wt{L}_j$ is stochastically dominated by $L_{j-1} \sup_{t \geq 0} \wt{X}_t$ where $\wt{X}$ is an independent copy of $X$.  It therefore follows that by possibly decreasing the value of $c > 0$ we have that
\begin{equation}
\label{eqn:wt_l_j_moment}
\E[ \wt{L}_j^p] \leq c^{-1} \E[ L_{j-1}^p] \leq c^{-1} 2^{-cj} \quad\text{for all}\quad j \in \N.
\end{equation}

Let $h_j$ (resp.\ $\wt{h}_j$) be the field which describes $\CD_j^z = \CD_j-z$ (resp.\ $\wt{\CD}_j^z = \wt{\CD}_j-z$).  Let $\Gamma_j$ be the loops of $\Gamma$ which are contained $\CD_j$ and then translated by $-z$.  Let $\Upsilon_j$ be the carpet of $\Gamma_j$.  Then $h_j$ together with $\Upsilon_j$ and marked by $0$ describes a sample from the law $\qdiskCarpet{\gamma}{L_j}$.  Also, $\wt{h}_j$ describes a sample from the law $\qdiskL{\gamma}{\wt{L}_j}$.  Let $R_j = L_j^{2/\alpha_\LBD}$ and $\wt{R}_j = \wt{L}_j^{2/\alpha_\LBD}$.  Set
\[ H_j(\cdot) = h_j(R_j \cdot) + Q \log R_j - \frac{2}{\gamma} \log L_j \quad\text{and}\quad \wt{H}_j(\cdot) = \wt{h}_j(\wt{R}_j \cdot) + Q \log \wt{R}_j - \frac{2}{\gamma} \log \wt{L}_j.\]
Then $H_j$ together with $R_j^{-1} \Upsilon_j$ and marked by $0$ describes a sample from the law $\qdiskCarpet{\gamma}{1}$.  Also, $\wt{H}_j$ describes a sample from the law~$\qdiskL{\gamma}{1}$.  On $E$, we have that $\epsilon^{\alpha_\LBD} \leq \qmeasure{h_j}(B(w,\epsilon)) \leq \epsilon^{\alpha_\UBD}$ for all $\epsilon \in (0,\epsilon_0)$ and $w \in \CD_j^z$ so that $B(w,\epsilon) \subseteq \CD_j^z$.  We thus have on $E$ that for all $\epsilon \in (0,\epsilon_0 / R_j)$ and $w \in R_j^{-1} \CD_j^z$ with $B(w,\epsilon) \subseteq R_j^{-1} \CD_j^z$ that
\[ \qmeasure{H_j}(B(w,\epsilon)) = L_j^{-2} \qmeasure{h_j}(B(R_j w,R_j \epsilon)) \geq L_j^{-2} R_j^{\alpha_\LBD} \epsilon^{\alpha_\LBD} = \epsilon^{\alpha_\LBD}.\]

Suppose that $\beta \in (0,2/\alpha_\LBD)$.  It therefore follows from Proposition~\ref{prop:boundary_distance_tail_bounds} that on $E$ and $\epsilon \in (0,\epsilon_0)$ the expected $\beta$-H\"older norm of $(\medianHP{R_j^{-1}\epsilon})^{-1} \metapprox{R_j^{-1}\epsilon}{\cdot}{\cdot}{R_j^{-1} \Gamma_j}$ on $\partial R_j^{-1} \CD_j^z$ (with respect to quantum length) is finite.  Likewise, on $E$ and $\epsilon \in (0,\epsilon_0)$ the expected $\beta$-H\"older norm of $(\medianHP{\wt{R}_j^{-1} \epsilon})^{-1} \metapprox{\wt{R}_j^{-1} \epsilon}{\cdot}{\cdot}{\wt{R}_j^{-1} \wt{\Gamma}_j}$ on $\partial \wt{R}_j^{-1}\wt{\CD}_j^z$ (with respect to quantum length) is also finite.  Note that
\begin{align*}
    (\medianHP{R_j^{-1} \epsilon})^{-1} \metapprox{R_j^{-1} \epsilon}{\cdot}{\cdot}{R_j^{-1}\Gamma_j}
&=  \frac{(\medianHP{\epsilon})^{-1}}{(\medianHP{\epsilon})^{-1}} (\medianHP{R_j^{-1} \epsilon})^{-1} \big( R_j^{-2} \metapprox{\epsilon}{\cdot}{\cdot}{\Gamma_j} \big)
 = R_j^{-2} \frac{\medianHP{\epsilon}}{\medianHP{R_j^{-1} \epsilon}}   \big( (\medianHP{\epsilon})^{-1} \metapprox{\epsilon}{\cdot}{\cdot}{\Gamma_j} \big).
\end{align*}
Lemma~\ref{lem:covering_lemma} implies that $\medianHP{\epsilon} \geq R_j \medianHP{R_j^{-1} \epsilon}$.  Inserting this into the above implies that
\begin{equation}
\label{eqn:metrjbound}
(\medianHP{\epsilon})^{-1} \metapprox{\epsilon}{\cdot}{\cdot}{\Gamma_j} \leq R_j (\medianHP{R_j^{-1} \epsilon})^{-1} \metapprox{R_j^{-1} \epsilon}{\cdot}{\cdot}{R_j^{-1}\Gamma_j}.
\end{equation}
We likewise have that
\begin{equation}
\label{eqn:mettrjbound}
 (\medianHP{\epsilon})^{-1} \metapprox{\epsilon}{\cdot}{\cdot}{\wt{\Gamma}_j} \leq \wt{R}_j (\medianHP{\wt{R}_j^{-1} \epsilon})^{-1} \metapprox{\wt{R}_j^{-1} \epsilon}{\cdot}{\cdot}{\wt{R}_j^{-1}\wt{\Gamma}_j}.
\end{equation}
By the Borel-Cantelli lemma and~\eqref{eqn:l_j_moment}, \eqref{eqn:wt_l_j_moment}, we have for a constant $a > 0$ that $R_j \leq 2^{- aj}$ and $\wt{R}_j \leq 2^{- aj}$ for all $j \in \N$ large enough.  On these events and $E$, we thus have that the expectations of $\sup_{u,v \in \partial \CD_j^z} (\medianHP{\epsilon})^{-1} \metapprox{\epsilon}{u}{v}{\Gamma_j}$ and $\sup_{u,v \in \partial \wt{\CD}_j^z} (\medianHP{\epsilon})^{-1} \metapprox{\epsilon}{u}{v}{\Gamma_j}$ are $O(2^{-a j})$.  Combining everything and summing over $j$ completes the proof of the result.
\end{proof}

\begin{lemma}
\label{lem:bounds_holder}
Suppose that $D \subseteq \C$ is a simply connected domain, $(D,h,x,y) \sim \qdiskL{\gamma}{1}$, $0 < \alpha_\UBD < 2 < \alpha_\LBD$, and $\epsilon_0 > 0$.  Let $E$ be the event that for all $\epsilon \in (0,\epsilon_0)$ and $z \in D$ with $B(z,\epsilon) \subseteq D$ we have that $\epsilon^{\alpha_\LBD} \leq \qmeasure{h}(B(z,\epsilon)) \leq \epsilon^{\alpha_\UBD}$.  For each $\beta > 0$ there exists $\beta_\LBD > 0$ so that for each $\epsilon \in (0,\epsilon_0)$ the following is true.  Let $F$ be the event that for all (prime ends) $a, b \in \partial D$ with $\qbmeasure{h}(\ccwBoundary{a}{b}{\partial D}) = \epsilon$ (resp.\ $\qbmeasure{h}(\cwBoundary{a}{b}{\partial D}) = \epsilon$) we have that $\diam(\ccwBoundary{a}{b}{\partial D}) \geq \epsilon^{\beta_\LBD}$ (resp.\ $\diam(\cwBoundary{a}{b}{\partial D}) \geq \epsilon^{\beta_\UBD}$).  Then $\p[F^c \cap E] = O(\epsilon^\beta)$.
\end{lemma}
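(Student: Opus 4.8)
\textbf{Proof proposal for Lemma~\ref{lem:bounds_holder}.}

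The plan is to reduce the statement to a multifractal-type estimate for the quantum boundary length measure associated with a sample from $\qdiskL{\gamma}{1}$. Fix $\beta > 0$ and an exponent $\beta_\LBD > 0$ to be chosen at the end. For a dyadic scale $k \in \N$, let $N_k$ be the smallest integer with $N_k 2^{-k} \geq \qbmeasure{h}(\partial D)$, and consider the $N_k$ arcs $I_1^k, \dots, I_{N_k}^k$ of $\partial D$ of equal $\qbmeasure{h}$-length $2^{-k}$ (say in counterclockwise order, with some $\qbmeasure{h}$-typical starting point). It suffices to show that, off an event of probability $O(2^{-(1+\beta')k})$ for some $\beta' > 0$ (which we can arrange by taking $\beta_\LBD$ small enough relative to $\beta$), every such arc $I_j^k$ as well as the union of two consecutive arcs $I_j^k \cup I_{j+1}^k$ has Euclidean diameter at least $2^{-\beta_\LBD k}$ on $E$; a standard union bound over $j$ and then over $k$, together with the observation that an arbitrary boundary arc of $\qbmeasure{h}$-length $\epsilon \in [2^{-k-1}, 2^{-k})$ is contained in the union of at most two consecutive dyadic arcs at scale $k$ and contains at least one, then yields the claim for all $\epsilon \in (0,\epsilon_0)$. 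This is the same union-bound-over-scales mechanism already used in the proof of Proposition~\ref{prop:boundary_distance_tail_bounds} and in Lemma~\ref{lem:disk_in_disk}.

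The heart of the matter is therefore the single-arc estimate: for a fixed $k$ and a fixed $j$, bound $\p[\diam(I_j^k) < 2^{-\beta_\LBD k}, E]$. The key point is that on $E$ the quantum area measure is bounded below by $\epsilon^{\alpha_\LBD}$ on all balls $B(z,\epsilon) \subseteq D$ with $\epsilon < \epsilon_0$. If $\diam(I_j^k) < r$ for some small $r$, then $I_j^k$ is contained in a ball of radius $r$, so on the event $E$ (and for $r < \epsilon_0$) the region of $D$ within Euclidean distance $r$ of $I_j^k$ has quantum area at most $O(r^{\alpha_\UBD})$ — and crucially this region contains a $\qbmeasure{h}$-mass-$2^{-k}$ arc of $\partial D$. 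The strategy is thus to argue that a boundary arc carrying $\qbmeasure{h}$-mass $2^{-k}$ cannot be squeezed into a region of quantum area $r^{\alpha_\UBD}$ unless $r$ is not too small — i.e., a lower bound of the form $r \gtrsim 2^{-Ck}$ relating the quantum length of a boundary arc to the quantum area of its Euclidean neighborhood. Equivalently, and more directly, I would use the Radon--Nikodym comparison between $\qdiskL{\gamma}{1}$ and the free-boundary GFF (via \cite{ag2019disk} to handle negative moments of the total area, exactly as in Lemma~\ref{lem:quantum_disk_length_bound}) to reduce to a statement about the free-boundary GFF boundary measure: with overwhelming probability, for a free-boundary GFF $h$ on (say) $\D$, there is no boundary interval of length $<2^{-\beta_\LBD k}$ carrying harmonic-measure-normalized $\qbmeasure{h}$-mass $\geq 2^{-k}$. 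This is a standard Gaussian multifractal estimate: the boundary measure $\qbmeasure{h}$ has the property that for any $\xi < \gamma/2$ (roughly), $\qbmeasure{h}([x, x+s]) \leq s^{\gamma^2/4 + o(1)}$ uniformly, so taking $\beta_\LBD$ so that $\beta_\LBD(\gamma^2/4 - \text{error}) > 1$ forces $\qbmeasure{h}$ of an arc of Euclidean length $2^{-\beta_\LBD k}$ to be $o(2^{-k})$, a contradiction; the deviation probability decays faster than any power of $2^{-k}$.

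There is one subtlety I would highlight as the main technical point: the arcs $I_j^k$ are defined in terms of the \emph{quantum} length rather than Euclidean length, and the domain $D$ is abstract (the prime-end boundary), so one must pass from "$\qbmeasure{h}$-length-$2^{-k}$ arc has small Euclidean diameter" to a usable Gaussian estimate. Concretely, I would fix a conformal parametrization of $D$ by $\D$ and transfer $\qbmeasure{h}$ to a boundary measure on $\partial \D$; the event $\{\diam_D(I_j^k) < r\}$ then says the $\partial\D$-arc of $\qbmeasure{h}$-mass $2^{-k}$ has small image diameter, and combined with $E$ — which controls quantum \emph{area} of Euclidean neighborhoods in $D$, hence (pulling back, using the Beurling estimate or Koebe to control the conformal distortion) controls the $\qbmeasure{h}$-mass of short arcs on $\partial\D$ near such a pinched spot — one gets the contradiction. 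This is essentially the argument sketched in the proof of Lemma~\ref{lem:disk_in_disk} (where the Beurling estimate is invoked to relate a component of a loop of small Euclidean diameter near a marked point to a short boundary arc after uniformization), and I would cite the multifractal spectrum of the GFF boundary measure (as in \cite{ds2011kpz} and the boundary-measure analogues used in \cite{ag2019disk}) rather than reprove it. The clockwise version is identical by symmetry, and I expect the whole argument to parallel Lemma~\ref{lem:quantum_disk_length_bound} closely, with the role of the one-point-weighted disk played here by the plain $\qdiskL{\gamma}{1}$ after the negative-moment truncation on the total area.
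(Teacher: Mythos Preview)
Your proposal identifies the right ingredients --- the boundary multifractal estimate in a $\D$-parametrization and the area upper bound in $D$ coming from $E$ --- but the connective step between them is not correct as written, and the missing piece is precisely what makes the paper's argument go through. You write that once the arc $I_j^k$ has Euclidean diameter $< r$ in $D$, the event $E$ bounds the quantum area of its $r$-neighbourhood by $O(r^{\alpha_\UBD})$, and then assert that ``a boundary arc carrying $\qbmeasure{h}$-mass $2^{-k}$ cannot be squeezed into a region of quantum area $r^{\alpha_\UBD}$''. There is no a priori inequality relating the quantum boundary length of an arc to the quantum area of a Euclidean neighbourhood of it; this is exactly what has to be proved. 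Your claim that this is ``equivalently'' the boundary multifractal estimate on $\partial\D$ is where the argument breaks: that estimate only says that an arc of $\qbmeasure{\wt h}$-mass $\epsilon$ has Euclidean length $\geq \epsilon^{\sigma_\LBD}$ \emph{in $\partial\D$}, which says nothing about its diameter in $D$ --- the uniformizing map $\varphi:\D\to D$ can compress a $\partial\D$-arc of Euclidean length $\ell$ to an arbitrarily small image in $\partial D$, and Koebe/Beurling alone cannot rule this out. Your final sentence about $E$ ``controlling the $\qbmeasure{h}$-mass of short arcs on $\partial\D$'' is confused: that mass is fixed at $2^{-k}$ in either parametrization.

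The paper closes this gap by introducing a second high-probability event $\wt E$ on the $\D$ side, one part of which is an area \emph{lower} bound $\qmeasure{\wt h}(B(w,r)) \geq r^{\sigma_\LBD}$ for balls in $\D$. The chain is then: the arc in $\partial\D$ has Euclidean length $\geq \epsilon^{\sigma_\LBD}$ (your multifractal ingredient); a ball $B(w,r)\subseteq\D$ with $r$ comparable to that length and $\dist(w,\partial\D)\asymp r$ has $\qmeasure{\wt h}(B(w,r))\gtrsim \epsilon^{\sigma_\LBD^2}$ by the $\D$-side area lower bound; since quantum area is coordinate-invariant, $\qmeasure{h}(\varphi(B(w,r)))=\qmeasure{\wt h}(B(w,r))\gtrsim \epsilon^{\sigma_\LBD^2}$; now $E$ (the area \emph{upper} bound in $D$) forces $\diam(\varphi(B(w,r)))\gtrsim \epsilon^{\sigma_\LBD^2/\alpha_\UBD}$; and Koebe/harmonic measure transfers this to the diameter of the arc in $\partial D$. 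The conformally invariant intermediary between the two parametrizations is the quantum \emph{area} of a nearby bulk ball, not the boundary length --- this is the idea your sketch is missing.
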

\begin{proof}
Suppose that $z \in \ccwBoundary{x}{y}{\partial D}$ is chosen from $\qbmeasure{h}$.  Let $\varphi \colon \D \to D$ be a conformal transformation which takes $-i$ to $x$, $i$ to $y$, and $1$ to $z$.  Let $\wt{h} = h \circ \varphi + Q \log|\varphi'|$.  Then $(\D,h,-i,i) \sim \qdiskL{\gamma}{1}$.  Fix $\epsilon_0 > 0$, $\sigma_\LBD > 1$, and let $\wt{E}$ be the event that
\begin{enumerate}[(i)]
\item for all $\epsilon \in (0,\epsilon_0)$ and $z \in \D$ so that $B(z,\epsilon) \subseteq \D$ we have that $\qmeasure{\wt{h}}(B(z,\epsilon)) \geq \epsilon^{\sigma_\LBD}$ and
\item for all $\epsilon \in (0,\epsilon_0)$ and intervals $I \subseteq \partial \D$ with $|I| = \epsilon^{\sigma_\LBD}$ we have that $\nu_{\wt{h}}(I) \leq \epsilon$.
\end{enumerate}
Then for each $p > 0$ there exists $\sigma_\LBD > 1$ so that $\p[\wt{E}^c] = O(\epsilon_0^p)$.

Suppose that $\epsilon \in (0,\epsilon_0)$ and $a,b \in \partial \D$ are such that $\qbmeasure{\wt{h}}(\ccwBoundary{a}{b}{\partial \D}) = \epsilon$.  Let $w \in \D$ have the same argument as the center of $\ccwBoundary{a}{b}{\partial \D}$ with $\dist(w,\partial \D) = \diam(\ccwBoundary{a}{b}{\partial \D})$ and let $r = \dist(w,\partial \D)/2$.  On $\wt{E} \cap E$ we have that $r \asymp |\ccwBoundary{a}{b}{\partial \D}| \geq \epsilon^{\sigma_\LBD}$.  Thus if $\diam(\varphi(B(w,r)) \leq \epsilon_0$ we have that
\begin{equation}
\label{eqn:diam_area_bounds}
c_0^{-1} \epsilon^{\sigma_\LBD^2} \leq \qmeasure{\wt{h}}(B(w,r)) = \qmeasure{h}(\varphi(B(w,r))) \leq c_0 (\diam(\varphi(B(w,r))))^{\alpha_\UBD}.
\end{equation}
That is, we either have that $\diam(\varphi(B(w,r))) \gtrsim \epsilon^{\sigma_\LBD^2/\alpha_\UBD}$ or $\diam(\varphi(B(w,r))) \geq \epsilon_0$.  In either case, the assertion of the lemma thus follows from elementary harmonic measure considerations.
\end{proof}

\begin{lemma}
\label{lem:quantum_typical_point_diamter_lbd}
Suppose that $D$ is a bounded, simply connected domain and $(D,h,\Gamma,z) \sim \qdiskCarpet{\gamma}{1}$.  Fix $0 < \alpha_\UBD < 2 < \alpha_\LBD$ and $\epsilon_0 > 0$.  Let $E$ be the event that for every $w \in D$ and $\epsilon \in (0,\epsilon_0)$ so that $B(w,\epsilon) \subseteq D$ we have that $\epsilon^{\alpha_\LBD} \leq \qmeasure{h}(B(w,\epsilon)) \leq \epsilon^{\alpha_\UBD}$.  For each $r > 0$ let
\[ \approxball{\epsilon}{z}{r} = \{ w \in \Upsilon : (\medianHP{\epsilon})^{-1} \metapprox{\epsilon}{z}{w}{\Gamma} \leq r\}.\]
There exists $\alpha_\ball > 0$ depending only on $\alpha_\LBD, \alpha_\UBD$ such that uniformly in $\epsilon \in (0,\delta)$, 
\[ \p[E,\ \diam(\approxball{\epsilon}{z}{\delta^{\alpha_\ball}} ) \leq \delta] \to 0 \quad\text{as}\quad \delta \to 0\]
faster than any power of~$\delta$.
\end{lemma}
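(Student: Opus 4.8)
The plan is to mirror the structure of the proof of Lemma~\ref{lem:chunk_exit}: we argue by contradiction using the self-similarity of $\eta_+([0,1])$ (equivalently, of an $\SLE_\kappa$ curve) together with the normalization by $\medianHP{\epsilon}$. Suppose the conclusion fails. Then there is a constant $c_0 > 0$, a sequence $\delta_n \to 0$, and a sequence $\epsilon_n \in (0,\delta_n)$ so that, writing $\alpha_\ball$ for a constant to be chosen (depending only on $\alpha_\LBD,\alpha_\UBD$), we have $\p[E,\ \diam(\approxball{\epsilon_n}{z}{\delta_n^{\alpha_\ball}}) \leq \delta_n] \geq \delta_n^{m}$ for some fixed $m$ (if no such $m$ exists for arbitrarily small $\delta$, the probability already decays faster than any power and there is nothing to prove). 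The role of the lower bound $\qmeasure{h}(B(w,\epsilon)) \geq \epsilon^{\alpha_\LBD}$ is the same as in Proposition~\ref{prop:boundary_distance_tail_bounds}: on $E$, embedding a chunk into $D$ via a conformal map $\varphi$ scales $\epsilon$-neighborhood Lebesgue measure in a controlled way, so that a path with $\lebneb{\epsilon}$-length of order $\medianHP{\epsilon}$ in the reference half-plane picture becomes a path with $\lebneb{\epsilon}$-length at most a fixed power of the embedding scale times $\medianHP{\epsilon}$ in $D$.

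The key steps, in order, are as follows. First I would use the construction of $\qcarpet{h}{\Upsilon}$ in \cite{msw2020simplecle} and a CPI/exploration argument exactly as in the first two steps of the proof of Lemma~\ref{lem:chunk_exit} to reduce to the following statement about $\SLE_\kappa$: there exists a deterministic $r_0 > 0$ and $q_0 > 0$ so that, for $\wt\eta$ an $\SLE_\kappa(\rho_L;\rho_R)$ in $\D$ from $-i$ to $i$ decorated by conditionally independent $\CLE_\kappa$'s $\wt\Gamma$ in the components to its left (with carpet $\wt\Upsilon$), and for any fixed $z_0 \in \D$, $r > 0$ with $B(z_0,2r)\subseteq \D$, if $\wt\sigma = \inf\{t : \wt\eta(t) \in B(z_0,r)\}$ then $\p[\diam(\approxball{\epsilon_n}{\wt\eta(\wt\sigma)}{\delta_n^{\alpha_\ball}}) \leq \delta_n,\ \wt\sigma<\infty] \geq q_0 \delta_n^{m'}$ for a fixed $m'$ depending on $\rho_L,\rho_R,z_0,r$ — this follows from absolute continuity between the $\SLE_\kappa(\rho_L;\rho_R)$ picture and the reference $\eta_+$ picture together with our standing assumption. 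The point is that, because $z$ is sampled from $\qcarpet{h}{\Upsilon}$ and $\eta_+$ locally looks like an $\SLE_\kappa$ passing near $z$, the bad event ``the approximate metric ball is Euclidean-small'' must occur with at least polynomially large probability when we view the picture near a typical point of $\eta_+([0,1])$.

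Second, I would run the same multi-scale iteration as in Step~3 of the proof of Lemma~\ref{lem:chunk_exit}: using flow lines of a GFF $h^{\IG}$ started from points $\eta(\tau_j)$ along an $\SLE_\kappa$ curve $\eta$ in $\h$ at scales $\xi j$, one builds disjoint ``pockets'' $V_j$ along $\eta$, and in each pocket the above statement gives, conditionally on $\CF_{j-1}$, a probability at least $p_0 \delta_n^{m'}$ that $\eta$ visits a sub-ball $B(w_j,\xi/64)$ and that the $\metapprox{\epsilon_n}{\cdot}{\cdot}{\Gamma_{j,L}}$-metric ball from $\eta(\sigma_j)$ of radius $\quantHP{p}{\epsilon_n}$ — and hence of radius $\medianHP{\epsilon_n}$ — fails to reach $\partial B(w_j,\xi/32)$; equivalently the corresponding approximate metric ball has small Euclidean diameter. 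The $\alpha_\ball$ appears here: the half-plane lower area bound with exponent $\alpha_\LBD$ and the upper area bound with exponent $\alpha_\UBD$ fix how $\delta_n^{\alpha_\ball}$ converts between the intrinsic-metric threshold $\medianHP{\epsilon_n}$ and the Euclidean scale, just as in~\eqref{eqn:b_area_lbd}--\eqref{eqn:b_diam_lbd}. Taking $N \asymp \xi^{-1/2}$ pockets, the events across distinct pockets are conditionally independent enough (they concern disjoint regions and the $E_j$ are defined to be $\CF_{j-1}$-conditionally lower bounded), so with probability $1 - (1 - p_0 \delta_n^{m'})^N \to 1$ as $\xi \to 0$ — no, more carefully: with probability bounded below by a constant (independent of $n$) once $N$ is large compared to $\delta_n^{-m'}$ — at least one $E_j$ occurs. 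But this then forces, on a set of probability bounded away from $0$, that the ``$\metapprox{\epsilon_n}{\cdot}{\cdot}{\Gamma_+}$-metric ball'' of radius $\medianHP{\epsilon_n}$ centered at $0$ does not contain all of $\eta_+([0,1])$, by the same conformal-distortion transfer (via the map $\varphi: \h \to \C\setminus\eta_-$, whose derivative on the relevant compact piece of $\partial\h$ is controlled) used at the end of the proof of Lemma~\ref{lem:chunk_exit}.

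The main obstacle — and the step that needs the most care — is reconciling the quantifiers: the bad event in each pocket has only \emph{polynomially small} probability $\delta_n^{m'}$, whereas to contradict the definition of $\medianHP{\epsilon_n} = \quantHP{1/2}{\epsilon_n}$ we need the aggregate failure to have probability bounded \emph{away from $1/2$ from above} (so that a polynomial-in-$\delta_n$ lower bound on a single-point bad event is amplified to a constant lower bound by summing over $N \to \infty$ pockets while $\epsilon_n,\delta_n$ stay fixed). This is exactly why the statement we are proving allows $\epsilon$ to range only over $(0,\delta)$: for fixed $\delta_n$ we may take $\xi = \xi(n)$ as small as we like, hence $N = N(n)$ as large as we like, and $N \delta_n^{m'} \to \infty$, which is what converts the per-pocket polynomial bound into a contradiction with $\quantHP{1/2}{\epsilon_n}$ being a median. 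One must check that shrinking $\xi$ does not degrade the per-pocket constant $p_0$ (it does not, by scale invariance of the flow-line/pocket construction and of $\SLE_\kappa$, exactly as in \cite[Lemmas~2.3--2.5]{mw2017intersections}) and that the transfer map $\varphi$ to the $\eta_+$ picture keeps $\varphi(\eta([0,\tau_N]))$ inside $\eta_+([0,1])$ with probability tending to $1$ as $\xi \to 0$ (same computation as in the last paragraph of the proof of Lemma~\ref{lem:chunk_exit}). Once these uniformities are in place the contradiction with the definition of $\quantHP{p}{\epsilon_n}$ closes the argument, and since the contradiction holds for every polynomial rate $m$, the stated super-polynomial decay follows.
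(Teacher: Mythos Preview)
Your proposal takes a fundamentally different route from the paper's proof, and unfortunately the contradiction mechanism you describe does not close.

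The paper's argument is direct and constructive, not by contradiction. It runs an $\SLE_\kappa^0(\kappa-6)$ CPI targeted at $z$ and records the nested domains $\CD_j$ containing $z$ at the successive disconnection times, with boundary lengths $L_j$. One shows (as in the proof of Lemma~\ref{lem:interior_point_distance_to_boundary_finite}) that the number of $j$ with $L_j\in(2^{-k-1},2^{-k}]$ is stochastically dominated by a geometric variable, so with super-polynomial probability $L_j$ decays essentially geometrically. Proposition~\ref{prop:boundary_distance_tail_bounds} and the rescaling~\eqref{eqn:metrjbound} then bound the normalized $\metapprox{\epsilon}{\cdot}{\cdot}{\Gamma}$-diameter of each $\partial\CD_j$ by a power of $L_j$, and summing over $j$ shows that $\approxball{\epsilon}{z}{\delta^{a_3}}$ reaches out to $\partial\CD_N$ (where $L_N\sim\delta$) and in fact covers a boundary arc of quantum length $\geq\delta$ there. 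Lemma~\ref{lem:bounds_holder} converts this quantum length into a Euclidean diameter lower bound $\delta^{a_4}$, and a final change of variable yields the stated $\alpha_\ball$. In short, the lemma is deduced \emph{from} the boundary tightness result, not by replaying the self-similarity contradiction of Lemma~\ref{lem:chunk_exit}.

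The specific gap in your argument is the amplification step. Your hypothesis is that the ball of \emph{normalized radius $\delta_n^{\alpha_\ball}$} has Euclidean diameter $\leq\delta_n$ with probability $\geq\delta_n^{m}$. To localize this bad event inside a pocket $B(w_j,\xi/32)$ (so that it becomes $\CF_{j-1}$-conditionally independent across $j$), you need $\xi\gtrsim\delta_n$; but then the number of pockets available along the relevant piece of $\eta_+$ is at most polynomial in $\delta_n^{-1}$, so $N\delta_n^{m'}$ is bounded for $m'$ large and the amplification fails precisely for the super-polynomial regime you need. Moreover, even if some pocket carries the bad event, this only tells you the ball of normalized radius $\delta_n^{\alpha_\ball}$ is trapped there; it says nothing about the ball of normalized radius $1$ (i.e.\ of radius $\medianHP{\epsilon_n}$), which is what would be required to force $\sup_{z,w\in\eta_+([0,1])}\metapprox{\epsilon_n}{z}{w}{\Gamma_+}\geq\medianHP{\epsilon_n}$ and contradict the definition of the median. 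Your sentence equating ``radius $\delta_n^{\alpha_\ball}$ fails to reach $\partial B(w_j,\xi/32)$'' with ``radius $\quantHP{p}{\epsilon_n}$ fails to reach'' is exactly the unjustified jump. The area bounds with exponents $\alpha_\LBD,\alpha_\UBD$ control how Euclidean and quantum scales relate; they do not convert a $\delta_n^{\alpha_\ball}$-radius metric ball into a unit-radius one.
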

\begin{proof}
The proof of this lemma is similar to that of Lemma~\ref{lem:interior_point_distance_to_boundary_finite}.  Indeed, we let $\eta$ be an $\SLE_\kappa^0(\kappa-6)$ coupled with $\Gamma$ as a CPI and which is targeted at $z$.  Let $\tau_0 = 0$, $\CD_0 = \CD$, and for each $j \geq 1$ we inductively define $\tau_j$ and $\CD_j$ as follows.  We let $\tau_j$ be the first time $t \geq \tau_{j-1}$ that $\eta([\tau_{j-1},\tau_j])$ disconnects $z$ from $\partial \CD_{j-1}$ and we let $\CD_j$ be the quantum surface parameterized by the component of $\CD \setminus \eta([0,\tau_j])$ containing $z$.  We let $L_j$ be the quantum length of $\partial \CD_j$.  For each $k$, we let $N_k$ be the number of $j$ so that $L_j \in (2^{-k-1},2^{-k}]$.  Then there exists $p \in (0,1)$ so that $N_k$ is stochastically dominated by a geometric random variable with parameter $p$.  Let $k_0 \in \N$ be such that $k \geq k_0$ holds if and only if $2^{-k} \leq \delta$.  Fix $a_1 \in (0,1)$.  Then a union bound implies that $\p[ \exists k \geq k_0 : N_k \geq 2^{a_1 k}] \to 0$ as $\delta \to 0$ faster than any power of $\delta$.

For each $n,k$ we let $\tau_{n,k}$ be the $n$th value of $j \in \N$ so that $L_j \in (2^{-k-1},2^{-k}]$.  Then it follows from Proposition~\ref{prop:boundary_distance_tail_bounds} as in the proof of Lemma~\ref{lem:interior_point_distance_to_boundary_finite} (recall in particular~\eqref{eqn:metrjbound}) that
\[ \p[E,\ (\medianHP{\epsilon})^{-1} \sup_{u,v \in \partial \CD_{\tau_{n_k}}} \metapprox{\epsilon}{u}{v}{\Upsilon} \geq c 2^{-(2/\alpha_\LBD) k}] \to 0 \quad\text{as}\quad c \to \infty\]
faster than any negative power of $c$.  In particular, there exists $a_2 \in (0,1)$ so that
\[ \p[E,\ (\medianHP{\epsilon})^{-1} \sup_{u,v \in \partial \CD_{\tau_{n_k}}} \metapprox{\epsilon}{u}{v}{\Upsilon} \geq 2^{-a_2 k}] \to 0 \quad\text{as}\quad k \to \infty\]
faster than any power of $2^{-k}$.  Combining this with a union bound, we thus see that the probability of the event that $E$ occurs and either $N_k \geq 2^{a_1 k}$ or $(\medianHP{\epsilon})^{-1} \sup_{u,v \in \partial \CD_{\tau_{n_k}}} \metapprox{\epsilon}{u}{v}{\Upsilon} \geq 2^{-a_2 k}$ for some $k \geq  k_0$ tends to $0$ as $\delta \to 0$ faster than any power of $\delta$.   The same also applies to $(\medianHP{\epsilon})^{-1} \sup_{u,v \in \partial \wt{\CD}_{\tau_{n_k}}} \metapprox{\epsilon}{u}{v}{\Upsilon}$.

Let $N$ be the largest $k \in \N$ so that $L_k \geq \delta$.  Then it follows from the above that there exists $a_3 \in (0,1)$ so that the probability of $E$ and the event that $\approxball{\epsilon}{z}{\delta^{a_3}}$ does not contain $\cup_{j=N+1}^\infty \partial \CD_j$ tends to $0$ as $\delta \to 0$ faster than any power of $\delta$ or does not contain an interval of quantum length at least $\delta$ on $\partial \CD_N$.  To complete the proof, it suffices to show that there exists $a_4 > 0$ so that the diameter of such an interval is very likely to be at least $\delta^{a_4}$.  This, in turn, follows from Lemma~\ref{lem:bounds_holder} and as the Radon-Nikodym derivative between $\qdiskCarpet{\gamma}{L_k}$ and $\qdiskL{\gamma}{L_k}$ has a finite $p$th moment for some $p > 1$.
\end{proof}

\begin{figure}[ht!]
\begin{center}
\includegraphics[scale=1]{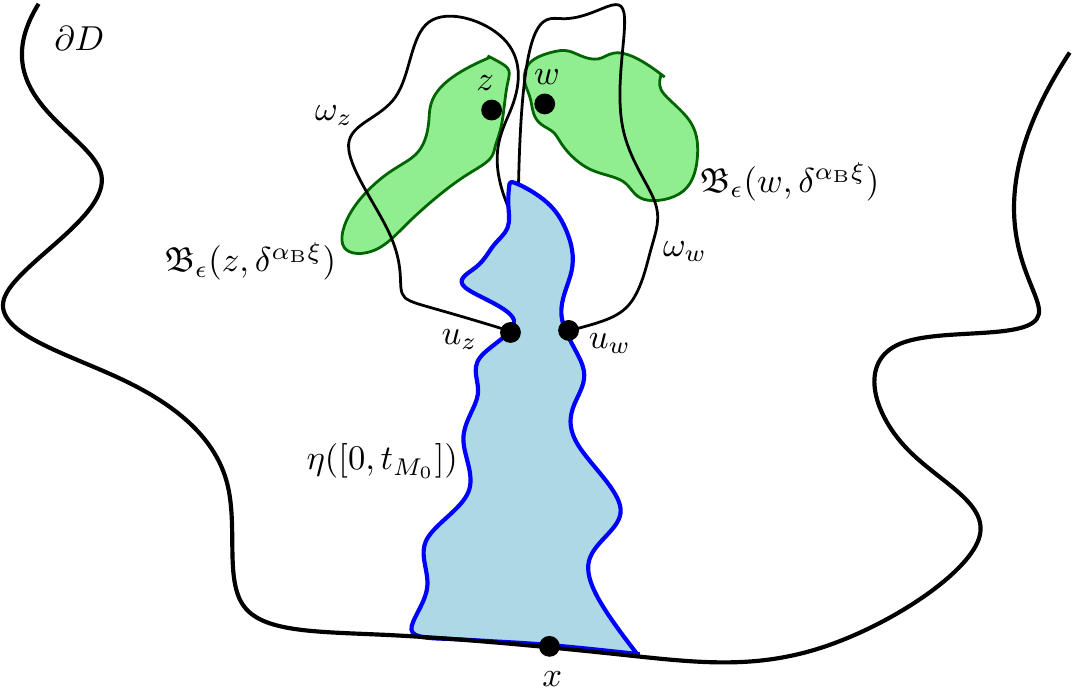}	
\end{center}
\caption{\label{fig:carpet_points_close}  Illustration of the setup to prove Lemma~\ref{lem:distance_two_quantum_typical}.  Shown in blue is $\eta([0,t_{m_0}])$ and $\approxball{\epsilon}{z}{\delta^{\alpha_\ball \xi}}$, $\approxball{\epsilon}{w}{\delta^{\alpha_\ball \xi}}$ in green.}
\end{figure}

\begin{lemma}
\label{lem:distance_two_quantum_typical}
Suppose that $D$ is a bounded, simply connected domain and $(D,h,\Gamma,z) \sim \qdiskCarpet{\gamma}{1}$.  Fix $0 < \alpha_\UBD < 2 < \alpha_\LBD$, $C, \epsilon_0 > 0$, let $\alpha_\PP$ be as in Lemma~\ref{lem:disk_in_disk}, and $\alpha_\HO$ be as in Proposition~\ref{prop:space_filling_on_quantum_disk}.
\begin{itemize}
\item Let $E_1$ be the event that for every $w \in D$ and $\epsilon \in (0,\epsilon_0)$ with $B(w,\epsilon) \subseteq D$ we have that $\epsilon^{\alpha_\LBD} \leq \qmeasure{h}(B(w,\epsilon)) \leq \epsilon^{\alpha_\UBD}$ and also $\qmeasure{h}(D) \leq C$.
\item Assume that $\Gamma$ is generated from a GFF $h^\IG$ on $D$ where the associated exploration tree is rooted at $x \in \partial D$ picked from $\qbmeasure{h}$, let $\eta'$ be the associated space-filling $\SLE_{\kappa'}$, and let $E_2$ be the event that the $\alpha_\HO$-H\"older norm of $\eta'$ (when parameterized by $\qmeasure{h}$) is at most $C$.
\item Let $E_3$ be the event that no loop of $\Gamma$ has an $(\epsilon,\alpha_\PP)$-pinch point for $\epsilon \in (0,\epsilon_0)$.
\end{itemize}
Let $E = E_1 \cap E_2 \cap E_3$.  There exists $\alpha_\KC \in (0,1)$ depending only on $\alpha_\LBD$, $\alpha_\UBD$, $\alpha_\PP$, and $\alpha_\HO$ so that the following is true.  Suppose that $w$ is sampled conditionally independently of $z$ from $\qcarpet{h}{\Upsilon}$ given $h$, $\Gamma$ and let $F = \{|z-w| \leq \delta\}$.  Uniformly in $\epsilon \in (0,\delta)$ we have that
\[ \p[E,\ F,\ (\medianHP{\epsilon})^{-1} \metapprox{\epsilon}{z}{w}{\Upsilon} \geq \delta^{\alpha_\KC}] \to 0 \quad\text{as}\quad \delta \to 0
\]
faster than any power of $\delta$.
\end{lemma}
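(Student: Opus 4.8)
The plan is to combine the five-step outline that precedes the statement into a single union-bound argument, exactly as sketched in Step~5 of Section~\ref{sec:interior_tightness}. First I would set up the CPI exploration: let $\eta$ be an $\SLE_\kappa^0(\kappa-6)$ coupled with $\Gamma$ as a CPI, targeted at $z$, and parameterized by the quantum area that its trunk $\eta'$ has disconnected from $z$. By Proposition~\ref{prop:space_filling_on_quantum_disk} and the event $E_2$, the trunk $\eta'$ is $\alpha_\HO$-H\"older continuous; set $t_m = m\,\delta^{1/(\alpha_\HO \alpha_\PP)}$ and let $\tau$ be the first time $\eta'$ gets within Euclidean distance $\delta^{1/\alpha_\PP}$ of $z$, with $m_0$ the index so that $\tau \in (t_{m_0},t_{m_0+1}]$. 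On $E_3$ (no $(\epsilon,\alpha_\PP)$-pinch points), the curve $\eta|_{[0,\tau]}$, hence also $\eta|_{[0,t_{m_0}]}$, cannot disconnect $z$ from $w$ when $|z-w|\le\delta$: a loop of $\Gamma$ doing so would have to have two components of diameter $\ge \delta^{1/\alpha_\PP}$ after removing a $\delta^{1/\alpha_\PP}$-ball near $z$, contradicting Lemma~\ref{lem:disk_in_disk}. (Here one should use that $\delta$ is a sufficiently large power of $\epsilon$, or rather that $\epsilon<\delta$, so the $(\epsilon,\alpha_\PP)$-pinch-point exclusion from $E_3$ applies at the relevant scales.)

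Next I would invoke Lemma~\ref{lem:bounds_holder} together with Lemma~\ref{lem:disk_neighborhood} (equivalently the ``Step~3'' statement) applied to the quantum disk parameterizing the component $U$ of $D\setminus\eta([0,t_{m_0}])$ containing $z$: with probability failing only with a high negative power of $\delta$, for both $z$ and $w$ (which both lie in $U$) there are paths $\omega_z,\omega_w$ in $\Upsilon\cap U$ connecting small neighborhoods of $z,w$ to $\eta([0,t_{m_0}])$ along which $(\medianHP{\epsilon})^{-1}\metapprox{\epsilon}{\cdot}{\cdot}{\Gamma}$ is at most a power of $\delta$, and the component of $U\setminus\omega_z$ (resp.\ $U\setminus\omega_w$) cut off has diameter at most a power of $\delta$. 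I would then bring in Lemma~\ref{lem:quantum_typical_point_diamter_lbd}: off an event of high-negative-power probability, $\approxball{\epsilon}{z}{\delta^{\alpha_\ball \xi}}$ and $\approxball{\epsilon}{w}{\delta^{\alpha_\ball \xi}}$ each have Euclidean diameter much larger than the diameter of the cut-off component, so each of these approximate balls must intersect the corresponding path $\omega_z,\omega_w$. This forces $(\medianHP{\epsilon})^{-1}\metapprox{\epsilon}{z}{\eta([0,t_{m_0}])}{\Upsilon}$ and $(\medianHP{\epsilon})^{-1}\metapprox{\epsilon}{w}{\eta([0,t_{m_0}])}{\Upsilon}$ to be at most powers of $\delta$.

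Finally I would close the loop via the boundary tightness estimate: letting $u_z,u_w$ be the points where $\omega_z,\omega_w$ meet $\eta([0,t_{m_0}])$, Proposition~\ref{prop:boundary_distance_tail_bounds} (applied to the quantum disk parameterizing $U$, whose boundary length is controlled and which inherits the lower area bound $\alpha_\LBD$ after rescaling, exactly as in~\eqref{eqn:metrjbound} in the proof of Lemma~\ref{lem:interior_point_distance_to_boundary_finite}) gives that $(\medianHP{\epsilon})^{-1}\metapprox{\epsilon}{u_z}{u_w}{\Upsilon}$ is, off an event of high-negative-power probability, at most a power of the quantum-length distance between $u_z$ and $u_w$ along $\partial U$, which in turn is at most a power of $\delta$ because $\eta'$ is H\"older continuous and $t_{m_0}\lesssim \delta^{1/(\alpha_\HO\alpha_\PP)}$. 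Adding the three contributions by the triangle inequality yields $(\medianHP{\epsilon})^{-1}\metapprox{\epsilon}{z}{w}{\Upsilon}\le\delta^{\alpha_\KC}$ on $E\cap F$ off an event whose probability decays faster than any power of $\delta$, where $\alpha_\KC$ is the minimum of the finitely many exponents produced along the way (so it depends only on $\alpha_\LBD,\alpha_\UBD,\alpha_\PP,\alpha_\HO$). The main obstacle I expect is the bookkeeping of scales and rescalings: each application of Proposition~\ref{prop:boundary_distance_tail_bounds} is on a quantum disk obtained by conditioning on the CPI exploration, which must be put in the normalized form $\qdiskL{\gamma}{1}$ or $\qdiskCarpet{\gamma}{1}$ with the event $E_1$ transferring to a lower area bound after the area-by-$R_j^2$ rescaling, and one must track how $\medianHP{\epsilon}$ transforms under that rescaling via Lemma~\ref{lem:covering_lemma} (as in~\eqref{eqn:metrjbound}) so that all the powers of $\delta$ combine with uniform constants independent of $\epsilon\in(0,\delta)$; the verification that $\eta|_{[0,t_{m_0}]}$ does not disconnect $z$ and $w$ using the pinch-point bound is the conceptual crux but is short once Lemma~\ref{lem:disk_in_disk} is in hand.
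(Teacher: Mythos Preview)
Your outline follows the paper's proof almost exactly---the CPI targeted at $z$, the discretization $t_m$, the pinch-point argument via Lemma~\ref{lem:disk_in_disk} to rule out disconnection, Lemma~\ref{lem:disk_neighborhood} for the paths $\omega_z,\omega_w$, Lemma~\ref{lem:quantum_typical_point_diamter_lbd} to force the approximate balls to hit those paths, and Proposition~\ref{prop:boundary_distance_tail_bounds} for the boundary segment---are precisely the ingredients the paper combines.

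There is one genuine slip. You write that the quantum-length distance between $u_z$ and $u_w$ along $\partial U$ is a power of $\delta$ ``because $\eta'$ is H\"older continuous and $t_{m_0}\lesssim \delta^{1/(\alpha_\HO\alpha_\PP)}$.'' The bound on $t_{m_0}$ is false: $t_{m_0}=m_0\,\delta^{1/(\alpha_\HO\alpha_\PP)}$ with $m_0$ ranging up to $M_0\asymp C\,\delta^{-1/(\alpha_\HO\alpha_\PP)}$, so $t_{m_0}$ can be of order the total area $C$. Only the increment $t_{m_0+1}-t_{m_0}$ is small, and that controls the Euclidean distance from $\eta'(t_{m_0})$ to $z$, not any quantum length. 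The correct route (which the paper takes) is: Lemma~\ref{lem:disk_neighborhood} already gives $\diam(U_z),\diam(U_w)\le\delta^\xi$, so together with $|z-w|\le\delta$ the Euclidean diameter of $\partial U_z\cap\partial\CD_{m_0}$, $\partial U_w\cap\partial\CD_{m_0}$, and the shorter arc of $\partial\CD_{m_0}$ between $u_z$ and $u_w$ are all at most a power of $\delta$; then Lemma~\ref{lem:bounds_holder} (which you cite but do not actually use) converts the Euclidean diameter bound into a quantum-length bound, after which Proposition~\ref{prop:boundary_distance_tail_bounds} applies. Also make explicit that since $m_0$ is random you must invoke Lemma~\ref{lem:disk_neighborhood} and Proposition~\ref{prop:boundary_distance_tail_bounds} at each deterministic $t_m$, $1\le m\le M_0$, and union-bound; this is why the failure probabilities in those lemmas need to beat the polynomial growth $M_0\asymp\delta^{-1/(\alpha_\HO\alpha_\PP)}$.
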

\begin{proof}
See Figure~\ref{fig:carpet_points_close} for an illustration of the setup for the proof.  Suppose that $x \in \partial D$ is picked from $\qbmeasure{h}$ and let $\eta'$ be a space-filling $\SLE_{\kappa'}$ in $D$ starting from $x$.  We assume that $\Gamma$ and $\eta'$ are coupled together in that they are both generated from the same instance $h^\IG$ of the GFF on $D$ which, in turn, is taken to be conditionally independent of $h$ given $x$.  We assume that $\eta'$ is subsequently parameterized by $\qmeasure{h}$.  Suppose that $w \in \Upsilon$ is picked from $\qcarpet{h}{\Upsilon}$ conditionally independently of $z$ given everything else.  We let $\eta_z$ be the $\SLE_\kappa(\kappa-6)$ process generated by $h^\IG$ and targeted at $z$ so that its trunk $\eta_z'$ is equal to $\eta'$ targeted at $z$.  In particular, we assume that $\eta$ is parameterized according to the amount of quantum area its trunk $\eta_z'$ has disconnected from $z$.  Note that the total amount of time that it takes $\eta'$ to fill $D$ is equal to $\qmeasure{h}(D)$ which, on $E_1$, is at most $C$.  This implies that on $E_1$ the time interval on which $\eta_z'$ is defined is contained in $[0,C]$.  We note that on $E_2$ we have that $\eta'$ has an $\alpha_\HO$-H\"older norm which is at most $C$ and therefore the same is true for $\eta_z'$.

For each $m \in \N$ we let $t_m = C^{-1} m \delta^{1/(\alpha_\HO \alpha_\PP)}$.  Fix $\beta > (\alpha_\HO \alpha_\PP)^{-1}$ and let $\xi > 0$ be such that the conclusion of Lemma~\ref{lem:disk_neighborhood} holds for this set of parameters.  For each $m \in \N$ we let $F_m$ be the event that the event that the conclusion of Lemma~\ref{lem:disk_neighborhood} holds for the quantum surface $\CD_m$ which is parameterized by the component of $D \setminus \eta([0,t_m])$ which contains $z$.  Lemma~\ref{lem:disk_neighborhood} implies that $\p[E_1 \cap F_m^c] = O(\delta^\beta)$.

Let $\tau = \inf\{t \geq 0 : |\eta_z'(t) - z| \leq \delta^{1/\alpha_\PP}\}$.  Let $m_0 \in \N$ be such that $\tau \in (t_{m_0},t_{m_0+1}]$.  On $F = \{|z-w| \leq \delta\}$ we have that $\eta_{[0,\tau]}$ hence also $\eta|_{[0,t_{m_0}]}$ does not disconnect $z$, $w$ (for otherwise $\Gamma$ would have a loop with a $(\delta^{1/\alpha_\PP},\alpha_\PP)$-pinch point).  On $F \cap F_{m_0}$, there exists a path $\omega_z$ (resp.\ $\omega_w$) in $\Upsilon \cap \CD_{m_0}$ which satisfies the assertions of Lemma~\ref{lem:disk_neighborhood} for $z$ (resp.\ $w$).  In particular, if $U_z$ (resp.\ $U_w$) denotes the component of $\CD_{m_0} \setminus \omega_z$ (resp.\ $\CD_{m_0} \setminus \omega_w$) which contains $z$ (resp.\ $w$) then
\[  \sup_{u \in \partial U_z} \metapprox{\epsilon}{u}{\eta([0,t_{m_0}]) \cup \omega_z}{\Upsilon} \leq \delta^\xi \quad\text{and}\quad \sup_{u \in \partial U_w} \metapprox{\epsilon}{u}{\eta([0,t_{m_0}]) \cup \omega_w}{\Upsilon} \leq \delta^\xi \quad\text{on}\quad F \cap F_{m_0}.\] 

Let $\alpha_\ball > 0$ be as in the statement of Lemma~\ref{lem:quantum_typical_point_diamter_lbd}.  Let $G$ be the event that $\diam(\approxball{\epsilon}{z}{\delta^{\alpha_\ball \xi}}) \geq \delta^{\xi}$ and $\diam(\approxball{\epsilon}{w}{\delta^{\alpha_\ball \xi}}) \geq \delta^{\xi}$.  Lemma~\ref{lem:quantum_typical_point_diamter_lbd} implies $\p[E_1 \cap G^c] \to 0$ as $\delta \to 0$ faster than any power of $\delta$.  On $F \cap F_{m_0} \cap G$, we have that $\approxball{\epsilon}{z}{\delta^{\alpha_\ball \xi}} \cap \partial U_z \neq \emptyset$ and $\approxball{\epsilon}{w}{\delta^{\alpha_\ball \xi}} \cap \partial U_w \neq \emptyset$.  By combining everything we see that
\[  \metapprox{\epsilon}{z}{\eta([0,t_{m_0}])}{\Upsilon} \leq 2\delta^{\xi} \quad\text{and}\quad \metapprox{\epsilon}{w}{\eta([0,t_{m_0}])}{\Upsilon} \leq 2\delta^{\xi} \quad\text{on}\quad F \cap F_{m_0} \cap G.\] 

Moreover, let $u_z$ (resp.\ $u_w$) be a point in $\partial U_z$ (resp.\ $\partial U_w$) where $\omega_z$ (resp.\ $\omega_w$) hits $\eta([0,t_{m_0}])$.  On $F \cap F_{m_0}$, we know that $\diam(U_z) \leq \delta^{\xi}$ and $\diam(U_w) \leq \delta^{\xi}$.  Lemma~\ref{lem:bounds_holder} implies that by possibly decreasing the value of $\xi > 0$ we have that the quantum length of $\partial U_z \cap \eta([0,t_{m_0}])$, $\partial U_w \cap \eta([0,t_{m_0}])$, and the shorter arc of $\partial \CD_{m_0}$ from $u_z$ to $u_w$ are all at most $\delta^{\xi}$.  For each $m \in \N$, we let $H_m$ be the event that the event of Proposition~\ref{prop:boundary_distance_tail_bounds} holds for $\CD_m$.  Using the aforementioned bounds, by possibly decreasing $\xi > 0$ if necessary, we then have that
\[  \metapprox{\epsilon}{z}{u_z}{\Upsilon} \leq 2\delta^{\xi} \quad\text{and}\quad \metapprox{\epsilon}{w}{u_w}{\Upsilon} \leq 2\delta^{\xi} \quad\text{on}\quad F \cap F_{m_0} \cap G \cap H_{m_0}\]
and likewise
\[ \metapprox{\epsilon}{u_z}{u_w}{\Upsilon} \leq \delta^{\xi} \quad\text{on}\quad F \cap F_{m_0} \cap G \cap H_{m_0}.\]
Altogether, this implies that
\[ \metapprox{\epsilon}{z}{w}{\Upsilon} \leq 5\delta^{\xi} \quad\text{on}\quad F \cap F_{m_0} \cap G \cap H_{m_0}.\]
Let $M_0 = C^2 \delta^{-(\alpha_\PP \alpha_\HO)^{-1}}$.  On $E$, we have that $[0,t_{M_0}]$ contains the interval on which $\eta_z$ is defined.  Moreover,
\begin{align*}
  &\p\!\left[ E \cap F \cap \bigcup_{m=1}^{M_0} F_m^c \right] \leq \sum_{m=1}^{M_0} \p[ E \cap F \cap F_m^c] \to 0 \quad\text{as}\quad \delta \to 0,\\
  &\p\!\left[ E \cap F \cap \bigcup_{m=1}^{M_0} H_m^c \right] \leq \sum_{m=1}^{M_0} \p[ E \cap F \cap H_m^c] \to 0 \quad\text{as}\quad \delta \to 0, \quad\text{and}\\
  &\p[ E \cap F \cap G^c] \to 0 \quad\text{as}\quad \delta \to 0
\end{align*}
faster than any power of $\delta$.  Combining everything completes the proof.
\end{proof}

We can now complete the proof of Proposition~\ref{prop:interior_tightness}.

\begin{proof}[Proof of Proposition~\ref{prop:interior_tightness}]
Let $(z_n)$ be an i.i.d.\ sequence chosen from $\qcarpet{h}{\Upsilon}$.  We let $\alpha_\net > 0 $ be as in the statement of Lemma~\ref{lem:quantum_measure_points_dense}.  For each $k \in \N$, we let $N_j = 2^{\alpha_\net j}$.  Then Lemma~\ref{lem:quantum_measure_points_dense} implies that there a.s.\ exists $j_0 \in \N$ so that $j \geq j_0$ implies that $z_1,\ldots,z_{N_j}$ is a $2^{-j}$-net of $\Upsilon$.  That is, $\Upsilon \subseteq \cup_{k=1}^{N_j} B(z_k,2^{-j})$.  Let $N_\epsilon = \lfloor \log_2 \epsilon^{-2} \rfloor$.  Lemma~\ref{lem:distance_two_quantum_typical} and the Borel-Cantelli lemma imply, by possibly increasing the value of $j_0$, we have $j_0 \leq j \leq N_\epsilon$ implies that if $|z_\ell - z_k| \leq 2^{-j}$ for $1 \leq \ell,k \leq N_j$ then 
\begin{align}
\label{eqn:normalized_distance_good}
(\medianHP{\epsilon})^{-1}\metapprox{\epsilon}{z_\ell}{z_k}{\Gamma} \leq 2^{-\alpha_\KC j}.
\end{align}

Suppose that $z,w \in \Upsilon$ and fix $J \in \N$ so that $2^{-J-1} \leq |z-w| < 2^{-J}$.  We assume that $j_0 \leq J \leq N_\epsilon$.  Then we can find sequences $m_j,n_j$ with $1 \leq m_j, n_j \leq N_j$ for every $j \leq J$ so that $z \in B(z_{m_j},2^{-j})$ and $w \in B(z_{n_j},2^{-j})$.  It therefore follows from~\eqref{eqn:normalized_distance_good} that
\begin{align}
\label{eqn:z_bound}
(\medianHP{\epsilon})^{-1}\metapprox{\epsilon}{z}{z_{m_{J}}}{\Gamma}
\leq \sum_{j={J}}^{N_\epsilon} (\medianHP{\epsilon})^{-1} \metapprox{\epsilon}{z_{m_{j+1}}}{z_{m_j}}{\Gamma}
\leq 2^{-\alpha_\KC J}.
\end{align}
We similarly have that
\begin{align}
\label{eqn:w_bound}
(\medianHP{\epsilon})^{-1}\metapprox{\epsilon}{w}{z_{n_{J}}}{\Gamma}
\leq \sum_{j={J}}^{N_\epsilon}(\medianHP{\epsilon})^{-1} \metapprox{\epsilon}{z_{n_{j+1}}}{z_{n_j}}{\Gamma}
\leq 2^{-\alpha_\KC J}.
\end{align}
Note also that~\eqref{eqn:normalized_distance_good} implies
\begin{align}
\label{eqn:typ_bound}
(\medianHP{\epsilon})^{-1}\metapprox{\epsilon}{z_{m_{J}}}{z_{n_J}}{\Gamma} \leq 2^{-\alpha_\KC J}.
\end{align}
Combining~\eqref{eqn:z_bound}, \eqref{eqn:w_bound}, and~\eqref{eqn:typ_bound} implies that
\[ (\medianHP{\epsilon})^{-1}\metapprox{\epsilon}{z}{w}{\Gamma} \leq 2^{-\alpha_\KC J}.\]
This proves what we want in the case that $J \geq j_0$.  The result follows for $J < j_0$ by considering a finite chain of points $x_0 = z,\ldots,x_m = w$ in $\Upsilon$ with $|x_i - x_{i+1}| \leq 2^{-j_0}$ and applying the triangle inequality.
\end{proof}

\section{Positive definiteness and geodesics}
\label{sec:pos_def}

The purpose of this section is to prove the positive definiteness of a subsequential limit $\met{\cdot}{\cdot}{\Gamma}$ constructed in Section~\ref{sec:interior_tightness}.  We will first show in Section~\ref{subsec:simultaneous_limits} that for every sequence $(\epsilon_j)$ of positive real numbers decreasing to $0$ one can find a single subsequence $(\epsilon_{j_k})$ which can be used to define a subsequentially limiting pseudometric for every non-trivial simply connected domain simultaneously, provided one restricts to paths which stay away from the domain boundary.  In Section~\ref{subsec:locality}, we will develop a notion of a ``local set'' for a $\CLE_\kappa$ instance $\Gamma$ equipped with a subsequential limit $\met{\cdot}{\cdot}{\Gamma}$.  We will then prove the positive definiteness of the subsequential limit in Section~\ref{subsec:proof_of_pos_def} (see also the beginning of Section~\ref{subsec:proof_of_pos_def} for an outline of the steps used to prove this result).  Finally, we will deduce the geodesic property of a subsequential limit in Section~\ref{subsec:geodesics} and prove the comparability of quantiles defined in the finite volume and half-planar setting in Section~\ref{subsec:quantiles_comparable}.

\subsection{Simultaneous limits}
\label{subsec:simultaneous_limits}

Throughout, we will need to restrict our attention to a countable family of open sets such that for every open set $D \subseteq \C$ there exists $D_1 \subseteq D_2 \subseteq \cdots$ in the family so that $D = \cup_{n=1}^\infty D_n$.  There are various choices that one could make and the particular one is not important for what follows, but for the sake of concreteness we will consider the set $\dyad$ of domains $U \subseteq \C$ for which there exists $k \in \N$ and closed squares $S_1,\ldots,S_n$ with side length $2^{-k}$ and corners in $(2^{-k} \Z)^2$ so that $U = \interior{\cup_{j=1}^n S_j}$.  For a domain $D \subseteq \C$, we let $\dyad(D)$ be the set of $U \in \dyad$ with $\closure{U} \subseteq D$.  Finally, we let $\dyad_2(D)$ be the collection of pairs $(U_1,U_2)$ with $U_1,U_2 \in \dyad(D)$ and $\closure{U_1} \subseteq U_2$.

\begin{lemma}
\label{lem:joint_subsequential}
For every sequence $(\epsilon_k)$ of positive numbers decreasing to $0$ there exists a subsequence $(\epsilon_{j_k})$ so that the following is true.  Suppose that $D \subseteq \C$ is a simply connected domain and $(U_1,U_2) \in \dyad_2(D)$.  For $z,w \in U_2$, let $\metapproxres{\epsilon}{U_2}{z}{w}{\Gamma} = \inf_\omega \lebneb{\epsilon}(\omega)$ where the infimum is over paths $\omega \colon [0,1] \to \Upsilon \cap U_2$ with $\omega(0) = z$, $\omega(1) = w$.  Let $E$ be the event that there exists a path in $(U_2 \setminus U_1) \cap \Upsilon$ with positive distance from $\partial U_1 \cup \partial U_2$ and which disconnects $\partial U_1$ from $\partial U_2$.  Then $(\medianHP{\epsilon_{j_k}})^{-1} \metapproxres{\epsilon_{j_k}}{U_2}{\cdot}{\cdot}{\Gamma}|_{U_1^2} \one_E$ converges as $k \to \infty$ weakly with respect to $(\funcset_4,\funcmet_4)$.
\end{lemma}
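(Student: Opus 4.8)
The plan is to combine the interior tightness estimate from Section~\ref{sec:interior_tightness} with a diagonal argument over the countable index set $\dyad_2(\C)$, together with the compactness criterion for $(\funcset_4,\funcmet_4)$ recorded in Section~\ref{subsec:main_result}. The key point is that $\dyad_2(\C)$ is countable, so it suffices to extract, for each fixed pair $(U_1,U_2) \in \dyad_2(\C)$, a subsequence along which $(\medianHP{\epsilon})^{-1}\metapproxres{\epsilon}{U_2}{\cdot}{\cdot}{\Gamma}|_{U_1^2}\one_E$ converges in law, and then diagonalize.

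First I would reduce to a single surface. Fix $(U_1,U_2) \in \dyad_2(\C)$ and a $\CLE_\kappa$ $\Gamma$ in $U_2$. On the event $E$ that there is a path in $(U_2\setminus U_1)\cap\Upsilon$ which disconnects $\partial U_1$ from $\partial U_2$ and has positive distance to $\partial U_1\cup\partial U_2$, the metric $\metapproxres{\epsilon}{U_2}{\cdot}{\cdot}{\Gamma}$ restricted to $U_1^2$ is comparable to the analogous quantity for a $\CLE_\kappa$ drawn on a slightly larger piece cut out by such a disconnecting path, and on this event one is genuinely in the setting where the domain boundary is rough (being a piece of $\Upsilon$). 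The estimates in Proposition~\ref{prop:interior_tightness} give, on the relevant lower/upper volume regularity events $E_1$ (which have probability tending to $1$, since the quantum area measure of a $\CLE_\kappa$ carpet embedded in a quantum disk is a.s.\ uniformly Hölder and anti-Hölder away from the boundary by the estimates in Appendix~\ref{app:carpet_measure}), that the $\alpha_\KC$-Hölder norm of $(z,w)\mapsto (\medianHP{\epsilon})^{-1}\metapproxres{\epsilon}{U_2}{z}{w}{\Gamma}$ on $U_1$ has moments of all orders bounded uniformly in $\epsilon$. In particular conditions (i) and (ii) of the Arzela-Ascoli criterion of Section~\ref{subsec:main_result} hold with probability tending to $1$ as the constants $R,S\to\infty$, uniformly in $\epsilon$ small; this gives tightness of the law of $(\medianHP{\epsilon})^{-1}\metapproxres{\epsilon}{U_2}{\cdot}{\cdot}{\Gamma}|_{U_1^2}\one_E$ in $(\funcset_4,\funcmet_4)$. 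Then by Prokhorov's theorem there is a subsequence of $(\epsilon_k)$ along which this law converges weakly.

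Next I would carry out the diagonalization. Enumerate $\dyad_2(\C) = \{(U_1^{(n)},U_2^{(n)})\}_{n\in\N}$. Starting from $(\epsilon_k)$, extract by the previous paragraph a subsequence along which convergence holds for $n=1$; from that, a further subsequence for $n=2$; and so on; then take the diagonal subsequence $(\epsilon_{j_k})$. Along $(\epsilon_{j_k})$ the law of $(\medianHP{\epsilon_{j_k}})^{-1}\metapproxres{\epsilon_{j_k}}{U_2}{\cdot}{\cdot}{\Gamma}|_{U_1^2}\one_E$ converges weakly for every $(U_1,U_2)\in\dyad_2(\C)$ simultaneously. Since the statement of the lemma is for each fixed simply connected $D$ and each $(U_1,U_2)\in\dyad_2(D)$ — and every such $(U_1,U_2)$ already lies in $\dyad_2(\C)$ because $\overline{U_1}\subseteq U_2$ and $\overline{U_2}\subseteq D$ forces nothing extra beyond membership in $\dyad$ — the diagonal subsequence works for all $D$ at once. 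One also needs that the conditional law of the $\CLE_\kappa$ in $U_2$ (as a domain sitting inside $D$, or intrinsically) does not affect $\metapproxres{\epsilon}{U_2}{\cdot}{\cdot}{\Gamma}$ restricted to $U_1^2$ beyond $U_2$ itself: this is immediate since $\metapproxres{\epsilon}{U_2}{\cdot}{\cdot}{\Gamma}$ only involves paths in $\Upsilon\cap U_2$ and the restriction of $\Gamma$ to $U_2$.

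The main obstacle is verifying the uniform-in-$\epsilon$ Hölder control on the event $E$ rather than on the full-volume-regularity events of Section~\ref{sec:interior_tightness}: Proposition~\ref{prop:interior_tightness} is stated for a $\CLE_\kappa$ in a sample from $\qdiskL{\gamma}{1}$, whereas here $\Gamma$ is a $\CLE_\kappa$ in an arbitrary Euclidean domain $U_2$ and we only have the disconnecting-path event $E$. The resolution is to observe that on $E$, the subdomain $\widehat U$ cut out by an innermost disconnecting path $\omega^*$ in $(U_2\setminus U_1)\cap\Upsilon$ contains $U_1$ with $\overline{U_1}\subseteq\widehat U\subseteq U_2$, and the restriction of $\Gamma$ to $\widehat U$ has, by the Markov/conformal-restriction property of $\CLE_\kappa$ and the relation to LQG from \cite{msw2020simplecle}, a law absolutely continuous (after conformally mapping to $\D$ and embedding via a quantum disk) with respect to a $\qdiskCarpet{\gamma}{L}$ sample for some boundary length $L$; then Proposition~\ref{prop:interior_tightness} applies to $\widehat U$. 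Combined with the distortion bounds of Lemma~\ref{lem:covering_lemma} to compare $\lebneb{\epsilon}$ before and after the conformal change of coordinates, and with the fact that paths realizing $\metapproxres{\epsilon}{U_2}{\cdot}{\cdot}{\Gamma}$ on $U_1^2$ may be taken inside $\widehat U$ up to a multiplicative constant, this yields the required uniform moment bounds on the Hölder norm on $E$, which is all that is needed to close the argument.
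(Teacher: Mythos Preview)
Your argument has two genuine gaps, and both concern places where you implicitly assume a Markov or independence property that $\CLE_\kappa$ does not have in the form you need.

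\textbf{Dependence on $D$ is not handled.} In the lemma, $\Gamma$ is a $\CLE_\kappa$ in $D$, not in $U_2$; the law of $\Upsilon\cap U_2$ (and hence of $\metapproxres{\epsilon}{U_2}{\cdot}{\cdot}{\Gamma}|_{U_1^2}\one_E$) depends on the ambient domain $D$. Diagonalizing over $(U_1,U_2)\in\dyad_2(\C)$ handles countably many pairs, but for each pair there are uncountably many possible $D$ and hence uncountably many laws to control. Your sentence ``this is immediate since $\metapproxres{\epsilon}{U_2}{\cdot}{\cdot}{\Gamma}$ only involves paths in $\Upsilon\cap U_2$'' confuses measurability with distribution. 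The paper's proof deals with this in two steps: it first diagonalizes over \emph{both} simply connected $D\in\dyad$ (countable) \emph{and} $(U_1,U_2)\in\dyad_2(D)$, and then, for arbitrary $D$, it approximates by an increasing sequence $D_m\in\dyad$ and shows via the Brownian loop-soup coupling that the law of the loops of $\Gamma$ intersecting $U_2$ converges in total variation to that of the loops of $\Gamma_m$ intersecting $U_2$. That TV step is essential and is absent from your proposal.

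\textbf{The reduction to Proposition~\ref{prop:interior_tightness} via an ``innermost disconnecting path'' does not work.} If $\omega^*$ is an arbitrary path in $\Upsilon$ and $\widehat U$ is the region it cuts out, the conditional law of the loops of $\Gamma$ inside $\widehat U$ given $\omega^*$ is \emph{not} a $\CLE_\kappa$ in $\widehat U$, nor is it absolutely continuous with respect to one in any usable sense; there is no ``Markov/conformal-restriction property of $\CLE_\kappa$'' of this kind. The Markov property for $\CLE_\kappa$ is available only along CPIs. The paper's route is exactly this: it runs independent CPIs $\eta_j$ targeted at points $w_j$ sampled from the quantum area measure, lets $U_{w_j}$ be the component containing $w_j$ when it is first disconnected, and uses Proposition~\ref{prop:cpi_path_close} to show that on $E$ the first index $J$ with $U_1\subseteq U_{w_J}\subseteq U_2$ is a.s.\ finite. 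The loops inside $U_{w_J}$ \emph{are} a conditional $\CLE_\kappa$, so Proposition~\ref{prop:interior_tightness} applies to $(U_{w_J},h)$ and gives $\metapproxres{\epsilon}{U_2}{\cdot}{\cdot}{\Gamma}|_{U_1^2}\le\metapprox{\epsilon}{\cdot}{\cdot}{\Gamma_{w_J}}$, hence tightness on $E$.
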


We emphasize that in the statement of Lemma~\ref{lem:joint_subsequential}, the subsequence $(\epsilon_{j_k})$ of $(\epsilon_k)$ does not depend on the choice of $D$ and $(U_1,U_2) \in \dyad_2(D)$.

\begin{proof}[Proof of Lemma~\ref{lem:joint_subsequential}]
Suppose that $D \subseteq \C$ is a simply connected domain and $\Gamma$ is a $\CLE_\kappa$ on $D$.  Let $(D,h)$ have law $\qdiskWeighted{\gamma}{\ell}$ and let $w \in D$, $x \in \partial D$, respectively, be independently picked from $\qmeasure{h}, \qbmeasure{h}$.  Let $(U_1,U_2) \in \dyad_2(D)$ and let $E$ be as in the statement of the lemma.  Let $\eta$ be an $\SLE_\kappa(\kappa-6)$ in $D$ which is coupled with $\Gamma$ as a CPI starting from $x$ and targeted at $w$.  Let $\tau$ be the first time that $w$ is disconnected from $\partial D$ and let $U_w$ be the component of $D \setminus \eta([0,\tau])$ which contains $w$.  Let $\ell$ be the quantum length of $\partial U_w$.  Given $\ell$, the law of the quantum surface described by $(U_w,h)$ is $\qdiskWeighted{\gamma}{\ell}$.  Since the conditional law given $\eta|_{[0,\tau]}$ of the loops $\Gamma_w$ of $\Gamma$ contained in $U_w$ is that of a $\CLE_\kappa$ in $U_w$ (on the event that $U_w$ is not a loop of $\Gamma$), it follows from Proposition~\ref{prop:interior_tightness} that $(\medianHP{\epsilon})^{-1} \metapprox{\epsilon}{\cdot}{\cdot}{\Gamma_w}$ is tight with respect to $(\funcset_4,\funcmet_4)$.

Let $(w_j)$ be a sequence chosen from $\qmeasure{h}$ and, for each $j$, we let $\eta_j$ be an $\SLE_\kappa(\kappa-6)$ in $D$ coupled with $\Gamma$ as a CPI starting from $x$ and targeted at $w_j$.  We assume that the $w_j,\eta_j$ are taken to be conditionally independent given everything else.  Let $U_{w_j}$ and $\Gamma_{w_j}$ be as above with $w_j,\eta_j$ in place of $w,\eta$.  By Proposition~\ref{prop:cpi_path_close}, the conditional probability given $\Upsilon$ that a CPI is close to any fixed simple path in $\Upsilon$ is positive, hence it follows that on $E$ if we let $J$ be the smallest $j$ so that $U_1 \subseteq U_{w_j} \subseteq U_2$ then $J < \infty$ a.s.  We note on $E$ that $\metapproxres{\epsilon}{U_2}{\cdot}{\cdot}{\Gamma}|_{U_1^2} \leq \metapprox{\epsilon}{\cdot}{\cdot}{\Gamma_{w_J}}$.  From this, it follows that $\metapproxres{\epsilon}{U_2}{\cdot}{\cdot}{\Gamma}|_{U_1^2} \one_E$ is tight with respect to $(\funcset_4,\funcmet_4)$.

What we have explained above implies that for each simply connected domain $D \subseteq \C$, $(U_1,U_2) \in \dyad_2(D)$, and every sequence $(\epsilon_k)$ of positive numbers decreasing to $0$ there exists a subsequence $(\epsilon_{j_k})$ so that $(\medianHP{\epsilon_{j_k}})^{-1}\metapproxres{\epsilon_{j_k}}{U_2}{\cdot}{\cdot}{\Gamma}|_{U_1^2} \one_E$ converges weakly with respect to $(\funcset_4,\funcmet_4)$.  Indeed, this follows since the collection of such $(U_1,U_2)$, $D$ is countable.  Our aim now is to upgrade this statement to show that it is possible to make a choice of subsequence of $(\epsilon_k)$ which does not depend on $D$, $U_1$, or~$U_2$.

Let $(\epsilon_k)$ be any sequence of positive numbers which decrease to $0$.  First of all, by passing to a diagonal subsequence, we can apply the above argument to get a subsequence $(\epsilon_{j_k})$ of $(\epsilon_k)$ so that the following is true.  For every simply connected domain $D \subseteq \C$ which is in $\dyad$ and $(U_1,U_2) \in \dyad_2(D)$ we have that $(\medianHP{\epsilon_{j_k}})^{-1} \metapproxres{\epsilon_{j_k}}{U_2}{\cdot}{\cdot}{\Gamma}|_{U_1^2} \one_{E}$ converges weakly as $k \to \infty$ with respect to $(\funcset_4,\funcmet_4)$.

We now suppose that $D \subseteq \C$ is a simply connected domain and $(U_1,U_2) \in \dyad_2(D)$.  To finish the proof, we will show that $(\medianHP{\epsilon_{j_k}})^{-1} \metapproxres{\epsilon_{j_k}}{U_2}{\cdot}{\cdot}{\Gamma}|_{U_1^2} \one_{E}$ converges weakly as $k \to \infty$ with respect to $(\funcset_4,\funcmet_4)$.

Let $(D_m)$ be a sequence of simply connected domains in $\dyad$ so that $\cup_m D_m = D$.  We assume that $\closure{U_2} \subseteq D_1$.  For each $m$, we let $\Gamma_m$ be a $\CLE_\kappa$ on $D_m$.  In order to prove the result, it suffices to show that the total variation distance between the law of the loops $\Gamma^{U_2}$ of $\Gamma$ which intersect $U_2$ and that of the loops $\Gamma_m^{U_2}$ of $\Gamma_m$ which intersect $U_2$ tends to $0$ as $m \to \infty$.  We assume that $\Gamma$ and $\Gamma_m$ are coupled together so that they all come from the same instance of the Brownian loop-soup on $\C$.  Then we note that each loop $\CL_m \in \Gamma_m^{U_2}$ is contained in a loop $\CL \in \Gamma^{U_2}$.  Moreover, $\CL_m = \CL$ if $\CL$ does not intersect $D \setminus D_m$.  It therefore suffices to show that there a.s.\ exists $m_0 \in \N$ so that $m \geq m_0$ implies that none of the loops of $\Gamma^{U_2}$ intersect $D \setminus D_m$.  Let $\varphi$ be a conformal transformation from~$D$ to~$\D$.  If the above did not hold, then $\varphi(\Gamma^{U_2})$ (the set of loops of the $\CLE_\kappa$ given by $\varphi(\Gamma)$ which intersect $\varphi(U_2)$) would with positive probability contain a loop which intersects $\partial \D$.  This, in turn, cannot happen as $\varphi(U_2) \cap \partial \D = \emptyset$.
\end{proof}

Suppose that we have the setup as in Lemma~\ref{lem:joint_subsequential}.  By taking as limit as $k \to \infty$, we have that the family $\metres{U_2}{\cdot}{\cdot}{\Gamma}$ is defined simultaneously for all $U_2 \in \dyad(D)$.  We note that $\metres{U_2}{\cdot}{\cdot}{\Gamma}$ is decreasing as $U_2$ increases.  Suppose that $(U_n)$ is an increasing sequence of domains in $D$, each of which is in $\dyad$, so that $D = \cup_n U_n$.  We let $\metplus{\cdot}{\cdot}{\Gamma}$ be the limit as $n \to \infty$ of $\metres{U_n}{\cdot}{\cdot}{\Gamma}$.  We note that $\metplus{\cdot}{\cdot}{\Gamma}$ does not depend on $(U_n)$.  Indeed, suppose that $(\wt{U}_n)$ is another sequence of such domains.  Then for every $n$ there exists $m$ so that $U_n \subseteq \wt{U}_m$ hence $\metres{U_n}{\cdot}{\cdot}{\Gamma} \geq \metres{\wt{U}_m}{\cdot}{\cdot}{\Gamma}$ and likewise for every $n$ there exists $m$ so that $\wt{U}_n \subseteq U_m$ hence $\metres{\wt{U}_n}{\cdot}{\cdot}{\Gamma} \geq \metres{U_m}{\cdot}{\cdot}{\Gamma}$.  Moreover, we note that each for each $U_1 \in \dyad(D)$ fixed there a.s.\ exists $n_0 \in \N$ so that $n \geq n_0$ implies that the event $E$ from Lemma~\ref{lem:joint_subsequential} holds for $(U_1,U_{2,n})$ as the loops of a $\CLE_\kappa$ with $\kappa \in (8/3,4)$ a.s.\ do not hit the domain boundary.

\newcommand{\zerop}[1]{\mathrm{Zero}^+(#1)}
\newcommand{\zero}[1]{\mathrm{Zero}(#1)}

For a simply connected domain $D \subseteq \C$ and a $\CLE_\kappa$ process $\Gamma$ on $D$ equipped with $\metplus{\cdot}{\cdot}{\Gamma}$, we let $\zerop{\Gamma}$ be the closure of the set of pairs of points $z,w \in D$ such that there exists $U_2 \in \dyad(D)$ so that $\metres{U_2}{z}{w}{\Gamma} = 0$.  We also set $\zero{\Gamma}$ be the closure of the set of pairs of points in $z,w \in D$ such that $\metplus{z}{w}{\Gamma} = 0$.

\begin{lemma}
\label{lem:zero_length_conf}
Suppose that $D$, $\wt{D}$ are simply connected domains in $\C$ and $\varphi \colon D \to \wt{D}$ is a conformal map.  Suppose that $\Gamma$ (resp.\ $\wt{\Gamma}$) is a $\CLE_\kappa$ on $D$.  Then $\varphi(\zerop{\Gamma}) \stackrel{d}{=} \zerop{\wt{\Gamma}}$.  The same is true with $\zero{\Gamma}$, $\zero{\wt{\Gamma}}$ in place of $\zerop{\Gamma}$, $\zerop{\wt{\Gamma}}$ in the case that $\sup_{z \in D} |\varphi'(z)| < \infty$.  
\end{lemma}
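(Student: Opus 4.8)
The plan is to reduce the statement to the defining property of approximate distances, namely the conformal covariance of the Lebesgue measure of $\epsilon$-neighborhoods of paths, combined with the conformal invariance of $\CLE_\kappa$. First I would recall that $\metapprox{\epsilon}{z}{w}{\Gamma}$ is built from $\lebneb{\epsilon}(\cpath)$, which transforms in a controlled way under a conformal map $\varphi$: if $\cpath$ is a path in $D$ then $\varphi(\cpath)$ is a path in $\wt D$, and the $\epsilon$-neighborhood of $\varphi(\cpath)$ is comparable to the image under $\varphi$ of the $\delta$-neighborhood of $\cpath$, where $\delta$ is $\epsilon$ divided by the local derivative $|\varphi'|$, up to distortion constants. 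The key point for the lemma is qualitative rather than quantitative: $\lebneb{\epsilon}(\varphi(\cpath)) = 0$ if and only if $\lebneb{\delta}(\cpath) = 0$ (both happen precisely when the path is a single point), so the property ``$\metres{U_2}{z}{w}{\Gamma}=0$'' is insensitive to the conformal change of coordinates. More precisely, if $U_2 \in \dyad(D)$ and $z,w \in U_1$ with $\metres{U_2}{z}{w}{\Gamma}=0$, realized as a subsequential limit of $(\medianHP{\epsilon_{j_k}})^{-1}\metapproxres{\epsilon_{j_k}}{U_2}{z}{w}{\Gamma}$, I would pick $\wt U_2 \in \dyad(\wt D)$ with $\closure{U_2} \subseteq \varphi^{-1}(\wt U_2) $ (possible since $\overline{U_2}$ is compact in $D$), and use that $|\varphi'|$ is bounded above and below on $\overline{U_2}$ together with Lemma~\ref{lem:covering_lemma} to transfer the vanishing to the image.

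\textbf{Key steps, in order.} (1) Set up the correspondence: for $\varphi \colon D \to \wt D$ conformal and a path $\cpath$ in a relatively compact subdomain $V \subseteq D$, bound $\lebneb{\epsilon}(\varphi(\cpath))$ above and below by constant multiples of $\medianHP{\cdot}$-rescaled versions of $\lebneb{\epsilon'}(\cpath)$ for $\epsilon'$ comparable to $\epsilon$ (with constants depending only on $\sup_{\overline V}|\varphi'|$ and $\inf_{\overline V}|\varphi'|$), exactly as in the proof of Proposition~\ref{prop:boundary_distance_tail_bounds} where the paths $\cpath_n$ are pushed forward by $\varphi_n^{-1}$; here Lemma~\ref{lem:covering_lemma} controls the lower bound. (2) Conclude that for fixed $(U_1,U_2)\in\dyad_2(D)$ and $(\wt U_1,\wt U_2)\in\dyad_2(\wt D)$ with $\varphi(\closure{U_2})\subseteq \wt U_1$ and $\varphi^{-1}(\closure{\wt U_2})\subseteq$ some larger element, one has $\metres{U_2}{z}{w}{\Gamma}=0 \iff \metres{\wt U_2}{\varphi(z)}{\varphi(w)}{\varphi(\Gamma)}=0$ on the relevant events, since a path achieving the infimum inside $U_2$ maps to a path inside $\wt U_2$ and vice versa, and the vanishing is preserved. (3) Use the conformal invariance of $\CLE_\kappa$: $\varphi(\Gamma) \stackrel{d}{=} \wt\Gamma$ as $\CLE_\kappa$ on $\wt D$. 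Combining with (2), and taking the union over a countable exhausting family of pairs $(U_1,U_2)$, gives $\varphi(\zerop{\Gamma}) \stackrel{d}{=} \zerop{\wt\Gamma}$. (4) For the $\zero{\cdot}$ statement, pass to the limit $U_2 \uparrow D$: when $\sup_{z\in D}|\varphi'(z)| < \infty$, the distortion constants in step (1) stay uniformly bounded as $U_2$ exhausts $D$ on the relevant side (the image side), so the equivalence in (2) survives the limit defining $\metplus{\cdot}{\cdot}{\varphi(\Gamma)}$, and one obtains $\varphi(\zero{\Gamma}) \stackrel{d}{=} \zero{\wt\Gamma}$.

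\textbf{Main obstacle.} The delicate point is step (4): a priori a path achieving small $\metplus{z}{w}{\varphi(\Gamma)}$ could wander near $\partial\wt D$, where $|\varphi^{-1}{}'|$ blows up, so that its $\varphi^{-1}$-preimage has uncontrolled $\epsilon$-neighborhood measure; this is why one needs $\sup_D|\varphi'|<\infty$, which bounds $|\varphi^{-1}{}'|$ away from zero on $\wt D$ and hence gives a two-sided comparison between $\lebneb{\epsilon}$ of a path in $\wt D$ and of its preimage in $D$. Concretely, with $M = \sup_D|\varphi'|$ one has $|\varphi^{-1}{}'| \geq M^{-1}$ everywhere on $\wt D$, so $\lebneb{M^{-1}\epsilon}(\varphi^{-1}(\cpath)) \leq M^{-2}\lebneb{\epsilon}(\cpath)$ for any path $\cpath$ in $\wt D$; combined with the covering estimate of Lemma~\ref{lem:covering_lemma} in the reverse direction and~\eqref{eqn:quant_comparison} to absorb the change of $\epsilon$ by a constant factor in the median normalization, this shows the vanishing of $\metplus{\cdot}{\cdot}{\varphi(\Gamma)}$ transfers to vanishing of $\metplus{\cdot}{\cdot}{\Gamma}$ along $\varphi^{-1}$, and the other direction is immediate since $\varphi$ itself is a fixed conformal map bounded below. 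The upward direction $\varphi \colon D \to \wt D$ never causes trouble because $\overline{U_2}$ stays compactly inside $D$ at every finite stage; only the limiting $\zero{\cdot}$ statement requires the extra hypothesis, and with it the argument closes.
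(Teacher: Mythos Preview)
Your treatment of the $\zerop{\cdot}$ case is essentially the paper's proof: couple $\wt\Gamma=\varphi(\Gamma)$, use compactness of each $U_2\in\dyad(D)$ to get two-sided bounds on $|\varphi'|$ there, and transfer the vanishing of $\metres{U_2}{\cdot}{\cdot}{\Gamma}$ to $\metres{\wt U_2}{\cdot}{\cdot}{\wt\Gamma}$ (and back via $\varphi^{-1}$). The paper adds one technical ingredient you do not name: it invokes Skorokhod representation so that the convergence of the normalized approximate metrics along the subsequence $(\epsilon_{j_k})$ is almost sure, which is what lets one pick the paths $\omega_k$ with $(\medianHP{\epsilon_{j_k}})^{-1}\lebneb{\epsilon_{j_k}}(\omega_k)\le\xi$ along that very subsequence. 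This is minor but worth making explicit.

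Your step~(4), however, contains a genuine error. From $M=\sup_D|\varphi'|<\infty$ you correctly deduce $|(\varphi^{-1})'|\ge M^{-1}$ on $\wt D$, but that is a \emph{lower} bound on the Jacobian of $\varphi^{-1}$, so it gives $|\varphi^{-1}(A)|\ge M^{-2}|A|$, not the inequality you write. Concretely, the claim $\lebneb{M^{-1}\epsilon}(\varphi^{-1}(\cpath))\le M^{-2}\lebneb{\epsilon}(\cpath)$ is false in general: take $\varphi(z)=e^{z}$ on the half-strip $(-\infty,0)\times(0,\pi)$, so $M=1$, and let $\cpath$ be a short segment near $0\in\wt D$; then $\varphi^{-1}(\cpath)$ has length bounded away from zero and its $\epsilon$-neighborhood is much larger than that of $\cpath$. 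What you would need to control $\lebneb{\epsilon}(\varphi^{-1}(\cpath))$ from above is an \emph{upper} bound on $|(\varphi^{-1})'|$, i.e.\ a lower bound on $|\varphi'|$, which is not in the hypothesis. Your remark that ``the other direction is immediate since $\varphi$ itself is a fixed conformal map bounded below'' likewise smuggles in $\inf_D|\varphi'|>0$.

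The paper's own proof for $\zero{\cdot}$ is terse (``the same argument applies''), and the remark immediately after the lemma makes clear that the only intended applications are translations and scalings, for which $|\varphi'|$ is a positive constant and both one-sided bounds are available. With that in hand the $\zerop{\cdot}$ argument carries over verbatim: for \emph{any} $U_n\uparrow D$ the constants in the neighborhood comparison are uniform in $n$, and similarly for $\varphi^{-1}$. If you want to salvage your write-up under the stated hypothesis alone, you should either restrict to maps with $|\varphi'|$ bounded above and below (sufficient for all uses in the paper), or give a different argument; the inequality you wrote does not do the job.
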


We emphasize that in the statement of Lemma~\ref{lem:zero_length_conf} when we consider $\zerop{\Gamma}$, $\zerop{\wt{\Gamma}}$ we have restricted to points that can be connected by a ``zero length path'' which stays away from the domain boundary.  The particular application that we have in mind for the case $\sup_{z \in D} |\varphi'(z)| < \infty$ is when $\varphi$ is a translation or scaling map.

\begin{proof}[Proof of Lemma~\ref{lem:zero_length_conf}]
Let us first handle the case of $\zerop{\Gamma}$, $\zerop{\wt{\Gamma}}$.  We assume that $\wt{\Gamma} = \varphi(\Gamma)$.  Using the Skorokhod representation theorem for weak convergence and for each $k$ writing $\Gamma_k$ (resp.\ $\wt{\Gamma}_k$) for a copy of $\Gamma$ (resp.\ $\wt{\Gamma}$) with $\wt{\Gamma}_k = \varphi(\Gamma_k)$, we can couple everything together onto a common probability space so that $(\medianHP{\epsilon_{j_k}})^{-1} \metapproxres{\epsilon_{j_k}}{U_2}{\cdot}{\cdot}{\Gamma_k}$ and $(\medianHP{\epsilon_{j_k}})^{-1} \metapproxres{\epsilon_{j_k}}{\wt{U}_2}{\cdot}{\cdot}{\wt{\Gamma}_k}$ a.s.\ converge as $k \to \infty$ for each $U_2 \in \dyad(D)$, $\wt{U}_2 \in \dyad(\wt{D})$ where the subsequence is as in Lemma~\ref{lem:joint_subsequential}.  We may further assume that the $\Gamma_k$, $\wt{\Gamma}_k$ converge and call their limit $\Gamma$, $\wt{\Gamma}$ with carpets $\Upsilon$, $\wt{\Upsilon}$, respectively.

Suppose that $U_2 \in \dyad(D)$ and $z,w \in \Upsilon$ satisfy $\metres{U_2}{z}{w}{\Gamma} = 0$.  Fix $\xi > 0$.  Then there a.s.\ exists $k_0 \in \N$ so that for every $k \geq k_0$ there is a path $\omega_k \colon [0,1] \to \Upsilon \cap U_2$ with $\omega_k(0) = z$ and $\omega_k(1) = w$ so that $(\medianHP{\epsilon_{j_k}})^{-1} \lebneb{\epsilon_{j_k}}(\omega_k) \leq \xi$.  Now suppose that $\wt{U}_2 \in \dyad(\wt{D})$ is such that $\varphi(U_2) \subseteq \wt{U}_2$.  As $|\varphi'|$ is bounded on $U_2$ it follows that there exists $C > 0$ depending only on $U_2$ and $\varphi$ so that the $(\medianHP{\epsilon_{j_k}})^{-1} \lebneb{\epsilon_{j_k}}(\varphi(\omega_k)) \leq C \xi$.  As $\varphi(\eta_k)$ connects $\varphi(z)$ to $\varphi(w)$ and $\xi > 0$ was arbitrary, it follows that $\metres{\wt{U}_2}{\varphi(z)}{\varphi(w)}{\wt{\Gamma}} = 0$.  This implies that $\varphi(\zerop{\Gamma}) \subseteq \zerop{\wt{\Gamma}}$.  The same argument applies with $\varphi^{-1}$ in place of $\varphi$ to see that $\zerop{\wt{\Gamma}} \subseteq \varphi(\zerop{\Gamma})$, which proves the result.

In the case that $\sup_{z \in D} |\varphi'(z)| < \infty$, the same argument applies with $\zero{\Gamma}$, $\zero{\wt{\Gamma}}$ in place of $\zerop{\Gamma}$, $\zerop{\wt{\Gamma}}$. 
\end{proof}

\subsection{Locality}
\label{subsec:locality}

We will assume that we have taken a subsequence as in Lemma~\ref{lem:joint_subsequential} so that we have $\metplus{\cdot}{\cdot}{\cdot}$ as described just before Lemma~\ref{lem:zero_length_conf} is defined for every non-trivial simply connected domain simultaneously.  Although we not yet shown that $\metplus{\cdot}{\cdot}{\cdot}$ is a metric as we have not finished proving that it is positive definite, we can still define the $\metplus{\cdot}{\cdot}{\cdot}$-length of a path in the same way that the length is defined for a metric.  For $U \subseteq D$ open, we can also define the interior-internal pseudometric on $\closure{U}$ to be the infimum of the $\metplus{\cdot}{\cdot}{\cdot}$-lengths of paths which are contained in $U$ except possibly at their endpoints.  Throughout, if $K \subseteq \closure{D}$ is closed we let $K^*$ be the closure of the union of $K$ and the points surrounded by loops of $\Gamma$ which intersect $K$.  If $U \subseteq D$ is open, we let $U^* = D \setminus (D \setminus U)^*$.

\begin{definition}
\label{def:local}
Suppose that $D \subseteq \C$ is a simply connected domain and $\Gamma$ is a $\CLE_\kappa$ on $D$.  Let $K \subseteq \closure{D}$ be a random closed set which is coupled with $\Gamma$ so that $K \cup \partial D$ is a.s.\ connected.  We say that $K$ is \emph{local} for $(\Gamma,\metplus{\cdot}{\cdot}{\Gamma})$ if the following is true.  For each $\epsilon > 0$, we let $\CF_K^\epsilon$ be the $\sigma$-algebra which is generated by $K$, the loops of $\Gamma$ which intersect~$K$, and the interior-internal pseudometric associated with $\metplus{\cdot}{\cdot}{\Gamma}$ in the $\epsilon$-neighborhood of $K$.  Let $\CF_K = \cap_{\epsilon > 0} \CF_K^\epsilon$.  Then the conditional law of $\Gamma$ and the interior-internal pseudometric associated with $\metplus{\cdot}{\cdot}{\Gamma}$ in each component $U$ of $D \setminus K^*$ given $\CF_K$ is independently that of $(\Gamma_U, \metplus{\cdot}{\cdot}{\Gamma_U})$ where $\Gamma_U$ is a $\CLE_\kappa$ in $U$.
\end{definition}

We now give a useful criterion for checking locality.

\begin{lemma}
\label{lem:loc_characterization}
Suppose that $D \subseteq \C$ is a simply connected domain and let $\Gamma$ be a $\CLE_\kappa$ on $D$.  Let $K \subseteq \closure{D}$ be a random closed set which is coupled with $\Gamma$ so that $K \cup \partial D$ is a.s.\ connected.  Then $K$ is local for $(\Gamma, \metplus{\cdot}{\cdot}{\Gamma})$ if and only if the following is true.  Suppose that $U \subseteq D$ is open. For each $\epsilon > 0$, let $(D \setminus U)^\epsilon$ be the $\epsilon$-neighborhood of $D \setminus U$ and let $\Gamma_\epsilon$ be the loops of $\Gamma$ contained in $(D \setminus U)^\epsilon$.  Let $\CF_U^\epsilon$ be the $\sigma$-algebra generated by the loops of $\Gamma$ which intersect $D \setminus U$ and $\metplus{\cdot}{\cdot}{\Gamma_\epsilon}$ and let $\CF_U = \cap_{\epsilon > 0} \CF_U^\epsilon$.  Given $\CF_U$, the event that $K^* \cap U = \emptyset$ is independent of the loops $\Gamma_U$ of $\Gamma$ contained in $U^*$ and $\metplus{\cdot}{\cdot}{\Gamma_U}$.
\end{lemma}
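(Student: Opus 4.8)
The plan is to establish the two implications in \Cref{lem:loc_characterization} separately, the reverse implication (the ``if'' direction) being the substantive one.

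For the easy direction, suppose $K$ is local in the sense of \Cref{def:local}.  Given an open set $U\subseteq D$, apply the definition of locality with the closed set $D\setminus U$ in place of $K$: one checks directly that $D\setminus U$ is coupled with $\Gamma$ so that $(D\setminus U)\cup\partial D$ is connected (it equals $D\setminus U^*$ up to the loops it touches, and contains $\partial D$), and that the $\sigma$-algebra $\CF_{D\setminus U}$ from \Cref{def:local} coincides with the $\CF_U$ defined in the lemma statement, since $\CF_U^\epsilon$ records exactly the loops hitting $D\setminus U$ together with $\metplus{\cdot}{\cdot}{\Gamma_\epsilon}$ on the $\epsilon$-neighborhood $(D\setminus U)^\epsilon$, i.e.\ the interior-internal pseudometric data near $D\setminus U$.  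Locality of $D\setminus U$ then gives that, conditionally on $\CF_U$, $(\Gamma_U,\metplus{\cdot}{\cdot}{\Gamma_U})$ is a $\CLE_\kappa$ with its interior-internal pseudometric in each component of $U^*$; in particular the event $\{K^*\cap U=\emptyset\}$, being $\CF_K\subseteq\CF_{D\setminus U}$-measurable... no: here one uses instead that $\{K^*\cap U=\emptyset\}\in\CF_{D\setminus U}$ because on this event $K^*$ is contained in the closure of $D\setminus U$, hence determined by the data recorded in $\CF_{D\setminus U}$ modulo an $\epsilon$-limit; conditioning further on this event cannot destroy the conditional independence of $\Gamma_U$ and $\metplus{\cdot}{\cdot}{\Gamma_U}$ from $\CF_{D\setminus U}$, giving the claimed independence.

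The main work is the converse.  Assume the criterion in the lemma statement.  Fix $\epsilon>0$ and let $\CF_K^\epsilon$ be as in \Cref{def:local}.  The standard strategy, following the proof of the analogous characterization of local sets for the GFF in \cite{ms2016ig1}, is to show that for any component $U$ of $D\setminus K^*$ the conditional law of $(\Gamma_U,\metplus{\cdot}{\cdot}{\Gamma_U})$ given $\CF_K$ is that of a $\CLE_\kappa$ with its metric, and that distinct components are conditionally independent.  First I would reduce to the case where $U$ is taken from the countable family $\dyad(D)$: it suffices to prove, for each fixed $(U_1,U_2)\in\dyad_2(D)$, that on the $\CF_K$-measurable event $\{K^*\cap U_2=\emptyset\}$ (so $U_2$ lies in a single component $U$ of $D\setminus K^*$) the conditional law of the loops of $\Gamma$ hitting $U_1$ together with $\metres{U_2}{\cdot}{\cdot}{\Gamma}|_{U_1^2}$ given $\CF_K$ agrees with the law obtained from a $\CLE_\kappa$ $\Gamma_U$ in $U$; because $\dyad_2(D)$ is countable and exhausts $D$, and because $\metplus{\cdot}{\cdot}{\Gamma_U}$ is the increasing limit of the $\metres{U_{2,n}}{\cdot}{\cdot}{\Gamma_U}$, this yields the full statement, and conditional independence of distinct components follows by the same argument applied to a finite collection of disjoint $U_1$'s.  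Now fix such $(U_1,U_2)$.  Decompose over a countable $\pi$-system generating the $\sigma$-algebra of the data on $D\setminus U_2$: for an event $A$ in that data, $A\cap\{K^*\cap U_2=\emptyset\}$ is, by hypothesis, independent of $(\Gamma_{U_2},\metplus{\cdot}{\cdot}{\Gamma_{U_2}})$ given $\CF_{U_2}$, where $\CF_{U_2}$ is generated by the loops hitting $D\setminus U_2$ and the metric data on $(D\setminus U_2)^\epsilon$.  Since $K$ (hence $K^*$, hence the event $\{K^*\cap U_2=\emptyset\}$ and the data in $\CF_K^\epsilon$ restricted to the part of $K$ outside $U_2$) is measurable with respect to the data on $D\setminus U_2$ once we know $K^*\cap U_2=\emptyset$, the conditional law of $(\Gamma_{U_2},\metplus{\cdot}{\cdot}{\Gamma_{U_2}})$ given $\CF_K$ and $\{K^*\cap U_2=\emptyset\}$ equals its conditional law given $\CF_{U_2}$, which by the spatial Markov property of $\CLE_\kappa$ (exactly as recorded in \Cref{subsec:cle}) is that of a $\CLE_\kappa$ in $U$ together with the corresponding $\metplus{}$, since $\metplus{}$ was defined domain-by-domain and is by construction a functional of the $\CLE$ and its approximations restricted to the relevant domain.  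Letting $\epsilon\to0$ and intersecting the $\CF_K^\epsilon$ upgrades this to the conditioning on $\CF_K=\cap_\epsilon\CF_K^\epsilon$.

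The step I expect to be the main obstacle is the careful bookkeeping of which $\sigma$-algebra controls which piece of the metric data — in particular verifying that the interior-internal pseudometric $\metplus{\cdot}{\cdot}{\Gamma_U}$ in a component $U$ is genuinely a measurable functional of the $\CLE$ data inside $U$ alone (so that its conditional law is not contaminated by the approximation step in \Cref{lem:joint_subsequential}, where $\metapproxres{\epsilon}{U_2}{\cdot}{\cdot}{\Gamma}$ is defined using paths constrained to $U_2$ but $\medianHP{\epsilon}$ is a global normalization), and that the $\epsilon$-neighborhood truncations in $\CF_K^\epsilon$ versus $\CF_{U_2}^\epsilon$ can be matched up on the event $\{K^*\cap U_2=\emptyset\}$ after passing to the limit in $\epsilon$.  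This is exactly the point where one must use that $\metplus{}$ is the monotone limit of the $\metres{U_2}{}$, so that data in an $\epsilon$-neighborhood of $K$ disjoint from $U_2$ does not affect $\metres{U_2}{\cdot}{\cdot}{\cdot}|_{U_1^2}$, and conversely; the monotonicity and domain-independence established right after \Cref{lem:joint_subsequential} are what make this work.
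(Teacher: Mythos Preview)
Your approach to the reverse direction differs substantially from the paper's, and as written it has a genuine gap.

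The paper does not exhaust from the inside by fixing test pairs $(U_1,U_2)$. Instead it thickens $K$ from the outside: for each $n$, let $K_n$ be the closure of the union of $K$ with all dyadic squares of side $2^{-n}$ meeting $K$, and let $\CF_n$ be generated by $K_n$, the loops of $\Gamma$ hitting $K_n$, and the $\metplus{\cdot}{\cdot}{\Gamma}$-length of every path in $K_n$. Because $K_n$ is a union of dyadic squares, it takes countably many values; conditioning on a specific value reduces to conditioning on finitely many events of the form $\{K^*\cap U=\emptyset\}$ and their complements, so the hypothesis gives that the conditional law of $\Gamma$ and the interior-internal pseudometric in each component of $D\setminus K_n^*$ given $\CF_n$ is that of a $\CLE_\kappa$ with its $\metplus{}$. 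Since $(\CF_n)$ is decreasing, the backward martingale convergence theorem then transfers this to $\cap_n\CF_n$, and one concludes using $\CF_K\subseteq\cap_n\CF_n$. This avoids ever having to compare $\CF_K$ directly with a $\sigma$-algebra like $\CF_{U_2}$.

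Your argument breaks precisely at the sentence ``the conditional law of $(\Gamma_{U_2},\metplus{\cdot}{\cdot}{\Gamma_{U_2}})$ given $\CF_K$ and $\{K^*\cap U_2=\emptyset\}$ equals its conditional law given $\CF_{U_2}$.'' On the event $\{K^*\cap U_2=\emptyset\}$, you are right that $\CF_K$-data can be read off from $\CF_{U_2}$-data, so $\CF_K\subseteq\CF_{U_2}$ there. But $\CF_{U_2}$ contains strictly more: all loops hitting $D\setminus U_2$, including those not hitting $K$. The hypothesis only tells you the conditional law given the \emph{larger} $\sigma$-algebra $\CF_{U_2}$ is $\CLE$ in each component of $U_2^*$, and these components depend on $\CF_{U_2}$-information not in $\CF_K$. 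Conditioning on the smaller $\CF_K$ gives an average of these laws, and it is not automatic that this average is the restriction to $U_2$ of a $\CLE_\kappa$ in the component $U$ of $D\setminus K^*$; that identification is exactly what needs proving. Your final paragraph correctly flags the matching of $\sigma$-algebras as the obstacle, but the proposed resolution (monotonicity of $\metres{U_2}{\cdot}{\cdot}{\cdot}$) addresses the metric side only and does not close this gap on the $\CLE$ side. The paper's dyadic thickening plus backward martingale convergence is precisely what sidesteps this difficulty.
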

\begin{proof}
If $K$ is local for $\Gamma$, then it is obvious that the property in the lemma statement holds.

Suppose instead that $K$ satisfies the property in the lemma statement.  For each $n \in \N$, let $K_n$ be the closure of the union of $K$ and the dyadic squares with side length $2^{-n}$ and corners in $(2^{-n} \Z)^2$ which intersect $K$.  Let $\CF_n$ be the $\sigma$-algebra generated by $K_n$, the loops of $\Gamma$ which intersect $K_n$, and the $\metplus{\cdot}{\cdot}{\Gamma}$-length of every path which is contained in $K_n$.  The assumption from the lemma statement implies that the conditional law of $\Gamma$ and the interior-internal pseudometric in each component $U$ of $D \setminus K_n^*$ is given by a $\CLE_\kappa$ in $U$ coupled with $\metplus{\cdot}{\cdot}{\Gamma_U}$.  The backward martingale convergence theorem implies that the conditional law given $\cap_n \CF_n$ of the loops of $\Gamma$ and the interior-internal pseudometric in each component $U$ of $D \setminus K^*$ is given by a $\CLE_\kappa$ $\Gamma_U$ in $U$ and $\metplus{\cdot}{\cdot}{\Gamma_U}$.  This implies the result as $\CF_K \subseteq \cap_n \CF_n$.
\end{proof}

\subsection{Statement and proof of positive definiteness}
\label{subsec:proof_of_pos_def}

\begin{proposition}
\label{prop:overall_no_zero_length}
Suppose that $\Gamma_\D$ is a $\CLE_\kappa$ in $\D$ and let $D$ be the set of points surrounded by the loop $\CL \in \Gamma_D$ which surrounds $0$.  Given $\CL$, let $\Gamma$ be a $\CLE_\kappa$ in $D$.  Almost surely, for all $z,w \in \Upsilon$ distinct we have that $\met{z}{w}{\Gamma} > 0$.  That is, $\met{\cdot}{\cdot}{\Gamma}$ is a.s.\ a metric on $\Upsilon$.
\end{proposition}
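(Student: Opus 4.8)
\textbf{Proof proposal for Proposition~\ref{prop:overall_no_zero_length}.}

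The plan is to prove positive definiteness in three stages, following the outline sketched in Section~\ref{subsec:outline}: first establish that the limiting pseudometric is \emph{not} identically zero, then use a ``shield'' argument to show $\met{I}{J}{\Gamma} > 0$ a.s.\ for disjoint boundary arcs $I,J \subseteq \partial D$, and finally transfer this statement from the boundary to the interior using a CPI exploration. The key inputs are the comparability of the quantiles $\quantHP{p}{\epsilon}$ uniformly in $\epsilon$ (established in Sections~\ref{sec:boundary_tightness}--\ref{sec:interior_tightness}), the scale and translation invariance of the ``zero set'' provided by Lemma~\ref{lem:zero_length_conf}, the locality framework of Section~\ref{subsec:locality}, and Proposition~\ref{prop:cpi_path_close}.

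First, I would argue the non-degeneracy statement: it is not the case that $\met{z}{w}{\Gamma} = 0$ for all $z,w \in \Upsilon$ a.s. Indeed, if this held with positive probability, then (working in the half-planar setup with $\eta_+$ and $\Gamma_+$, passing to the subsequence along which the limit is realized, and using the Skorokhod coupling) the $\metapprox{\epsilon}{\cdot}{\cdot}{\Gamma_+}$ renormalized by $\medianHP{\epsilon}$ would have to converge to $0$ locally along some positive-probability event; by the self-similarity of $\eta_+$ and an argument as in the proof of Lemma~\ref{lem:chunk_exit} this would force $\sup_{z,w \in \eta_+([0,1])}\metapprox{\epsilon}{z}{w}{\Gamma_+}$ to be $o(\medianHP{\epsilon})$ with probability bounded away from $0$, contradicting the definition of $\medianHP{\epsilon}$ as the median of precisely this quantity. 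Hence $\met{\cdot}{\cdot}{\Gamma}$ is not degenerate, and by conformal covariance of the construction (Lemma~\ref{lem:zero_length_conf}) the same is true in every domain; in particular, for fixed disjoint boundary arcs $I,J$ it is a positive-probability event that $\met{I}{J}{\Gamma}>0$.

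Second, the shield argument. On the event $\{\met{I}{J}{\Gamma}>0\}$, set $K = \{z \in D : \met{z}{I}{\Gamma}=0\}$ and let $U$ be the component of $D \setminus K^*$ with $J$ on its boundary; then $D \cap \partial U$ is a set through which no path of zero $\met{\cdot}{\cdot}{\Gamma}$-length can pass. I would first check, using the locality criterion of Lemma~\ref{lem:loc_characterization}, that $K$ is local for $(\Gamma,\metplus{\cdot}{\cdot}{\Gamma})$ — this is where the careful setup of $\metplus{}$ via exhausting domains and the $\sigma$-algebras $\CF_U^\epsilon$ in Section~\ref{subsec:locality} pays off. Locality, together with the translation/scale invariance from Lemma~\ref{lem:zero_length_conf} and a Borel--Cantelli / independence-across-scales argument, then lets me produce, with probability one, a nested family of such shields separating $I$ from $J$ at a sequence of scales accumulating on $I$, so that any path from $I$ to $J$ must cross infinitely many of them and therefore has strictly positive $\met{\cdot}{\cdot}{\Gamma}$-length; hence $\met{I}{J}{\Gamma}>0$ a.s. Since $I,J$ were arbitrary disjoint boundary arcs, $\met{\cdot}{\cdot}{\Gamma}$ is a.s.\ positive definite on $\partial D$. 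For the boundary-to-interior transfer: if with positive probability there were distinct $z,w \in \Upsilon \setminus \partial D$ with $\met{z}{w}{\Gamma}=0$, run a CPI in $\Gamma$ (coupled as in Proposition~\ref{prop:cpi_path_close}) that with positive conditional probability stays in a thin neighborhood of a fixed path through $z$ and $w$; on this event the trunk $\eta'$ cuts out a sub-domain whose boundary passes near both $z$ and $w$, and pushing the zero-length relation through to prime ends on $\partial D$ of this sub-domain (and then to $\partial D$ itself via the conformal covariance of Lemma~\ref{lem:zero_length_conf}, noting the relevant derivative is bounded) produces distinct boundary points at zero distance with positive probability — a contradiction. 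Therefore $\met{\cdot}{\cdot}{\Gamma}>0$ for all distinct $z,w \in \Upsilon$ a.s.

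The main obstacle I anticipate is the second stage: verifying that the set $K = \{z: \met{z}{I}{\Gamma}=0\}$ is genuinely local in the sense of Definition~\ref{def:local} — one must rule out that conditioning on $K$ secretly reveals information about the metric inside $U$ beyond what a fresh $\CLE_\kappa$ with its own limiting pseudometric would carry — and then leveraging this into an actual \emph{percolation of shields} with probability one. The delicate points are (i) that $\metplus{\cdot}{\cdot}{\Gamma}$ is so far only a subsequential limit, so all independence and convergence statements must be phrased along the fixed subsequence from Lemma~\ref{lem:joint_subsequential} and combined with the Skorokhod coupling, and (ii) that the shields must be chosen to have uniformly positive probability at each dyadic scale, which requires the scale invariance of $\zerop{\Gamma}$ from Lemma~\ref{lem:zero_length_conf} together with a quantitative lower bound, coming ultimately from the non-degeneracy in the first stage, on the probability that a shield exists in a given annular region.
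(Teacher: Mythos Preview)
Your three-stage plan (non-degeneracy, shields for boundary arcs, CPI transfer to the interior) matches the paper's architecture, and your third stage is essentially how the paper reduces Proposition~\ref{prop:overall_no_zero_length} to Proposition~\ref{prop:no_zero_crossings}. The substantive gap is in your second stage.

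The sentence ``any path from $I$ to $J$ must cross infinitely many of them and therefore has strictly positive $\met{\cdot}{\cdot}{\Gamma}$-length'' is a non sequitur: your shield $K=\{z:\met{z}{I}{\Gamma}=0\}$ lies in the zero set, so merely crossing it contributes nothing. What a separating shield buys you is that no point of the component $U\ni J$ has zero distance to $I$, and \emph{one} such shield already suffices --- but you only know it separates on the positive-probability event $\{\met{I}{J}{\Gamma}>0\}$, which is precisely what must be upgraded to probability~$1$. The paper does not accomplish this via nested annular shields and Borel--Cantelli; it iterates the exploration of the (local) zero sets $K_1,K_2,\ldots$ in successive complementary domains, showing the number of steps before $K_N$ reaches the opposite arc is stochastically geometric. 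This produces a finite \emph{chain} of shield pieces $A_{k_1},\ldots,A_{k_n}$ rather than a single barrier, and the real obstacle --- not among the difficulties you anticipated --- is ruling out that a zero-length path slips through the junction points $x_j$ where consecutive pieces meet. That is where most of Section~\ref{subsec:proof_of_pos_def} is spent: one needs the second half of Lemma~\ref{lem:zero_metric_ball_does_not_hit} (the zero-distance set from a shrinking arc has diameter $\to 0$ in probability), Lemma~\ref{lem:in_between_stuff_is_small}, Lemma~\ref{lem:no_zero_length_on_boundary} (quantum-a.e.\ loop point $w$ has $\metplus{z}{w}{\Gamma}>0$ for all $z\neq w$), and then a careful placement of auxiliary arc endpoints $a_j,b_j$ on such good loop points near each $x_j$ so that any zero-length connection to $x_j$ is forced through $a_j$ or $b_j$ and hence blocked. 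Without handling these junctions, the shield argument does not close.
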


As we will explain below, using a CPI exploration and Proposition~\ref{prop:cpi_path_close} we will see that in order to prove Proposition~\ref{prop:overall_no_zero_length} it suffices to establish the following proposition.

\begin{proposition}
\label{prop:no_zero_crossings}
Suppose that $\eta$ is an $\SLE_\kappa^1(\kappa-6)$ in $\D$ from $-i$ to $i$ coupled with a $\CLE_\kappa$ process $\Gamma_\D$ on $\D$ as a CPI.  On the event that $\eta$ disconnects $0$ from $\partial \D$ and the component $D$ of $\D \setminus \eta$ is not a loop of $\Gamma_\D$ and is surrounded counterclockwise by $\eta$, we let $\Gamma$ be a $\CLE_\kappa$ in $D$.  Otherwise, we let $D = \emptyset$.  On $D \neq \emptyset$, we almost surely have for all $z,w \in \partial D$ distinct that $\metplus{z}{w}{\Gamma} > 0$.  That is, $\metplus{\cdot}{\cdot}{\Gamma}|_{(\partial D)^2}$ is a.s.\ a metric on $\partial D$.
\end{proposition}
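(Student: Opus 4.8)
\textbf{Proof proposal for Proposition~\ref{prop:no_zero_crossings}.}

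The plan is to run a multi-scale ``shield'' argument exploiting independence, exactly as sketched in the introduction.  Throughout we work on the event $D \neq \emptyset$.  First I would reduce matters to the following statement: for any two disjoint nondegenerate boundary arcs $I, J \subseteq \partial D$ it holds a.s.\ that $\metplus{I}{J}{\Gamma} > 0$ (where $\metplus{I}{J}{\Gamma}$ denotes the infimum of $\metplus{z}{w}{\Gamma}$ over $z \in I$, $w \in J$).  Indeed, $\partial D$ being a Jordan curve (the outer boundary of a counterclockwise $\SLE_\kappa^1(\kappa-6)$-chunk, which is a.s.\ a simple curve), positive definiteness on $\partial D$ follows by covering $(\partial D)^2 \setminus \{\text{diagonal}\}$ by countably many pairs of disjoint arcs with rational endpoints and taking a union bound.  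The first observation, as stated in the introduction, is that $\metplus{\cdot}{\cdot}{\Gamma}$ cannot vanish identically: the comparability of quantiles $\quantHP{p}{\epsilon}$ uniform in $\epsilon$ (which at this point in the paper has been established through the tightness arguments of Sections~\ref{sec:boundary_tightness}--\ref{sec:interior_tightness}, via Proposition~\ref{prop:boundary_distance_tail_bounds}) forces $\p[\metplus{I_0}{J_0}{\Gamma_0} > 0] > 0$ for \emph{some} choice of domain, boundary arcs, and $\CLE_\kappa$.  By Lemma~\ref{lem:zero_length_conf} and the conformal covariance of the construction of $\zerop{\cdot}$ (and the fact that a half-planar or disk setup can always be mapped into a sub-domain), we get that there exist disjoint arcs $I,J$ on $\partial D$ with $\p[\metplus{I}{J}{\Gamma} > 0] =: q > 0$.

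The core of the argument is to promote this positive probability $q$ to probability one, and this is the step I expect to be the main obstacle.  The idea: on the event $\{\metplus{I}{J}{\Gamma} > 0\}$, set $K = \{z \in \overline D : \metplus{z}{I}{\Gamma} = 0\}$, let $U$ be the component of $D \setminus K^*$ with $J$ on its boundary, and let $S = D \cap \partial U$.  Then $S$ is a ``shield'': no path in $\Upsilon$ from a point of $U$ to a point of $K$ can have $\metplus{\cdot}{\cdot}{\Gamma}$-length zero, since otherwise that point of $U$ would lie in $K$.  Now I would use a CPI exploration (the $\SLE_\kappa^1(\kappa-6)$ tree and Proposition~\ref{prop:cpi_path_close}, together with the locality notion of Definition~\ref{def:local} and the criterion Lemma~\ref{lem:loc_characterization}) to show that such a shield, suitably localized between $I$ and $J$, exists with probability bounded below by a constant $q' > 0$ independently across a sequence of nested annular regions separating $I$ from $J$; a Borel--Cantelli argument then produces, a.s., an actual chain of shields fully separating $I$ from $J$ and hence $\metplus{I}{J}{\Gamma} > 0$ a.s.  The delicate points are: (i) verifying that $K$ (or rather an appropriate localized and closed version of it) is a local set for $(\Gamma, \metplus{\cdot}{\cdot}{\Gamma})$ in the sense of Definition~\ref{def:local}, so that the conditional law in the unexplored component is again a clean $\CLE_\kappa$ equipped with its own $\metplus{}{}{}$, which is what makes the independence across scales work; and (ii) ensuring the shield event can be made to occur inside a prescribed small region, using conformal covariance (Lemma~\ref{lem:zero_length_conf}) to transport the positive-probability shield into each annulus at the right location.

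Once boundary positive definiteness is in hand, the extension to interior points $z,w \in \Upsilon \setminus \partial D$ is the argument sketched at the end of Section~\ref{subsec:outline}, and I would carry it out as the final step here only in the form needed for Proposition~\ref{prop:no_zero_crossings} (i.e.\ the boundary statement); the full interior statement is then deduced in Proposition~\ref{prop:overall_no_zero_length}.  Concretely: run a CPI inside $\Gamma$ from a boundary point targeted so as to separate $z$ from $w$; on the positive-probability event that $z,w$ end up on the boundary of a single complementary $\CLE_\kappa$ pocket whose own carpet contains (conformal images of) $z$ and $w$ on its boundary, a zero-length path between $z$ and $w$ would push forward to a zero-length path between two distinct boundary points of that pocket, contradicting the boundary statement already proved (applied in the pocket via locality and conformal covariance).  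By Proposition~\ref{prop:cpi_path_close} this separating-CPI event has positive conditional probability given $\Upsilon$, so if there were a positive-probability event with two distinct interior points at zero distance, there would be a positive-probability event with two distinct \emph{boundary} points at zero distance, a contradiction.  The only routine bookkeeping is to handle the measurability/uncountability (reducing to a countable dense collection of pairs of balls and using the already-established Hölder continuity of the subsequential limit from Proposition~\ref{prop:interior_tightness} to pass from the dense collection to all pairs).
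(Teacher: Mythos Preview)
Your high-level strategy matches the paper's outline: obtain $\p[\metplus{I}{J}{\Gamma}>0]>0$ from quantile comparability, build a shield from the zero-distance set $K$, and iterate to reach probability one. But two things go wrong in your execution. First, the iteration: the paper does not use nested Euclidean annuli with independent trials, and it is not clear your scheme can be made to work. The set $K=\{z:\metplus{z}{I}{\Gamma}=0\}$ depends on the entire carpet, there is no reason it sits inside a prescribed annulus, and $\CLE_\kappa$ restricted to disjoint annuli is not independent (large loops cross). You correctly flag locality of $K$ as a delicate point; the paper proves it (Lemma~\ref{lem:zero_metric_ball_is_local}) and then uses it directly: after revealing $K_1$, the complementary component carries a fresh $\CLE_\kappa$ with its own $\metplus{}{}{}$, and one iterates \emph{in that component}, producing $K_1,K_2,\ldots$ until some $K_N$ meets the opposite arc, with $N<\infty$ a.s.\ by geometric domination (equation~\eqref{eqn:stoch_dom_geom}). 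Your ``delicate point (ii)'' (localizing the shield in a prescribed region) is precisely where an annular scheme would break, and the paper sidesteps it entirely.

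Second, and this is the real gap: even once a finite family of shields geometrically separates $I$ from $J$, it does \emph{not} follow that $\metplus{I}{J}{\Gamma}>0$. A zero-length path could thread through the points where consecutive shields meet. Handling these intersection points is the heart of the argument and occupies most of Sections~\ref{subsubsec:shield_def}--\ref{subsubsec:disconnecting_by_shields}. The paper first refines the picture by introducing an equivalence relation on $\CLE_\kappa$ loops whose classes $A_k$ are themselves shields satisfying structural properties (I)--(III); Lemma~\ref{lem:boundary_good} controls their geometry and Lemma~\ref{lem:ak_cross_finite} gives finiteness of the relevant classes. From these one extracts a finite disconnecting chain $Q_1,\ldots,Q_n$ with intersection points $x_0,\ldots,x_n$, and then around each $x_j$ one builds a \emph{further} small shield $B_j$ whose endpoints $a_j,b_j$ are chosen to be quantum-typical points on $\CLE_\kappa$ loops. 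The decisive input is Lemma~\ref{lem:no_zero_length_on_boundary}: for almost every loop point $a$ (with respect to quantum length, equivalently harmonic measure) one has $\metplus{a}{u}{\Gamma}>0$ for all $u\neq a$; combined with Lemma~\ref{lem:zero_metric_ball_does_not_hit} to make $\diam(B_j)$ arbitrarily small, this forces $\metplus{I}{x_j}{\Gamma}>0$ for each $j$ and closes the argument. Your sentence ``an actual chain of shields fully separating $I$ from $J$ and hence $\metplus{I}{J}{\Gamma}>0$'' hides exactly this difficulty, and nothing in your proposal addresses it. (Your final paragraph is the reduction carried out in the proof of Proposition~\ref{prop:overall_no_zero_length}, not part of Proposition~\ref{prop:no_zero_crossings}.)
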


Let us now describe the main steps to prove Proposition~\ref{prop:no_zero_crossings}.

\begin{enumerate}
\item[Step 1.] We first collect some preliminary lemmas in Section~\ref{subsubsec:prelim_lemmas}.  In particular, in Lemma~\ref{lem:zero_metric_ball_does_not_hit} we will work in the following half-planar setup.  We let $\CC = (\C,h,0,\infty) \sim \qconeW{\gamma}{4}$ which is decorated by independent $\SLE_\kappa$ curves $\eta_\pm$ so that the surface $\CH_+$ parameterized by the component of $\C \setminus (\eta_- \cup \eta_+)$ which is to the left of $\eta_+$ is a quantum half-plane.  We assume that $\eta_\pm$ are parameterized according to quantum length.  We let $\Gamma_+$ be a $\CLE_\kappa$ in $\CH_+$, $\Upsilon_+$ its carpet, and we let $\metplus{\cdot}{\cdot}{\Gamma_+}$ be as above.  Then we will show that for any $u < v < r < s$ fixed we have $\metplus{\eta_+([u,v])}{\eta_+([r,s])}{\Gamma_+} > 0$ with positive probability and $\metplus{\eta_+([u,v])}{\eta_+([r,s])}{\Gamma_+} > 0$ has probability tending to $1$ as $s \downarrow r$.  We next show in Lemma~\ref{lem:zero_metric_ball_is_local} that if $\Gamma$ is a $\CLE_\kappa$ on a simply connected domain $D \subseteq \C$, $x,y \in \partial D$ are distinct, then the closure $K$ of the set of points $z$ in the carpet $\Upsilon$ of $\Gamma$ with $\metplus{z}{\ccwBoundary{x}{y}{\partial D}}{\Gamma} = 0$ is local and in Lemma~\ref{lem:in_between_stuff_is_small} that the non-loop points on $\partial K$ have zero harmonic measure in any component of $D \setminus K^*$.  We also show in Lemma~\ref{lem:no_zero_length_on_boundary} that if $D$ is as in Proposition~\ref{prop:no_zero_crossings} and $(D,h) \sim \qdiskL{\gamma}{1}$ then the quantum length of the set of $z \in \partial D$ for which there exists $w \in \closure{D} \setminus \{z\}$ with $\metplus{z}{w}{\Gamma_+} = 0$ is a.s.\ zero, the analogous statement holds for each loop $\CL \in \Gamma$, and the same statements hold with harmonic measure in place of quantum length.
\item[Step 2.] We next define and establish some properties of the ``shields'' in Section~\ref{subsubsec:shield_def}.  We suppose we are working on a Jordan domain $D$ as in the statement of Proposition~\ref{prop:no_zero_crossings} and we have a $\CLE_\kappa$ process $\Gamma$ in $D$ and let $\Upsilon$ be its carpet.  Roughly speaking, a shield is a closed, connected set $A$ so that the following is true.  If $O \subseteq D$ is open, $a_1,a_2 \in O \cap \Upsilon$, and there exists $\delta > 0$ so that $a_1,a_2$ are in different components of $O^\delta \setminus A$ where $O^\delta$ is the $\delta$-neighborhood of $O$, then $\metplus{a_1}{a_2}{\Gamma_O} > 0$ where $\Gamma_O$ are the loops of $\Gamma$ contained in~$O^*$.  See Figure~\ref{fig:shield_def} for an illustration.  This condition should be thought of as giving that a ``zero-length path'' cannot pass through~$A$.  Such shields exist because~$\partial K$ from above can be used to build a set which has this property.
\item[Step 3.] We fix a countable dense set $(r_k)$ of $D$ and, for each $k$, let $\CL_k$ be the loop which surrounds~$r_k$ (we note that there a.s.\ exists such a loop for all $k$ simultaneously).  For $i,j \in \N$ we say that $i \sim_0 j$ if $\CL_i$, $\CL_j$ can be connected by a shield and let $\sim$ be the finest equivalence relation which contains $\sim_0$. We also let $A_k$ be the closure of the union of the points surrounded by the $\CL_i$ so that $i \sim k$.  We suppose that $z_1,\ldots,z_4 \in \partial D$ are distinct points given in counterclockwise order.  We will show in Lemma~\ref{lem:ak_cross_finite} that for each $\delta > 0$ there can be at most finitely many such sets $A_k$ which are not contained in the $\delta$-neighborhood of $\ccwBoundary{z_2}{z_3}{\partial D}$.  In Section~\ref{subsubsec:disconnecting_by_shields}, we aim to use the sets $A_k$ to disconnect $\ccwBoundary{z_1}{z_2}{\partial D}$ and $\ccwBoundary{z_3}{z_4}{\partial D}$.  If the closure $K$ of the union of the $A_k$ which intersect $\ccwBoundary{z_2}{z_3}{\partial D}$ also intersects $\ccwBoundary{z_4}{z_1}{\partial D}$ then by the aforementioned fact there must exist an $A_k$ which intersects both $\ccwBoundary{z_2}{z_3}{\partial D}$ and $\ccwBoundary{z_4}{z_1}{\partial D}$.  If there is no such $A_k$, then as $K$ is local we can keep repeating the same experiment in the component of $D \setminus K$ with $\ccwBoundary{z_4}{z_1}{\partial D}$ on its boundary.  As such a shield has a positive chance of connecting the two such arcs, this process will eventually discover a finite number of such sets $K$ which disconnect $\ccwBoundary{z_1}{z_2}{\partial D}$ and $\ccwBoundary{z_3}{z_4}{\partial D}$.  Since the part of $\partial K$ with distance at least $\delta > 0$ from $\ccwBoundary{z_2}{z_3}{\partial D}$ is contained in a finite union of the $A_k$'s (and likewise when we iterate the exploration), we can find a finite collection of $A_k$'s which disconnect $\ccwBoundary{z_1}{z_2}{\partial D}$ from $\ccwBoundary{z_3}{z_4}{\partial D}$.
\item[Step 4.] We complete the proof by arguing that $\metplus{\ccwBoundary{z_1}{z_2}{\partial D}}{x}{\Gamma} > 0$ where $x$ is an intersection point between two adjacent shields.  This will complete the proof as a ``zero length path'' cannot pass through any given shield, so can only connect $\ccwBoundary{z_1}{z_2}{\partial D}$ and $\ccwBoundary{z_3}{z_4}{\partial D}$ by passing through such an intersection point.  We will prove that $\metplus{\ccwBoundary{z_1}{z_2}{\partial D}}{x}{\Gamma} > 0$ by arguing that such an intersection point is disconnected from $\ccwBoundary{z_1}{z_2}{\partial D}$ by a further shield connecting two ``typical points'' on two $\CLE_\kappa$ loops (one from the previous shield and one from the next shield) and a typical loop point $w$ has the property that $\metplus{z}{w}{\Gamma} > 0$ for all $z \in \Upsilon \setminus \{w\}$ using the results described in Step 1.
\end{enumerate}

\subsubsection{Preliminary lemmas}
\label{subsubsec:prelim_lemmas}

\begin{lemma}
\label{lem:zero_metric_ball_does_not_hit}
Let $\CC = (\C,h,0,\infty) \sim \qconeW{\gamma}{4}$ which is decorated by independent $\SLE_\kappa$ curves $\eta_\pm$ so that the surface $\CH_+$ parameterized by the component of $\C \setminus (\eta_- \cup \eta_+)$ which is to the left of $\eta_+$ is a quantum half-plane.  We assume that $\eta_\pm$ are parameterized according to quantum length.  We let $\Gamma_+$ be a $\CLE_\kappa$ in $\CH_+$, $\Upsilon_+$ its carpet, and we let $\metplus{\cdot}{\cdot}{\Gamma_+}$ be as above.  For any $u < v < r < s$ we have that $\p[\metplus{\eta_+([u,v])}{\eta_+([r,s])}{\Gamma_+} > 0] > 0$.  Moreover, for $u < v < r$ fixed, $\p[\metplus{\eta_+([u,v])}{\eta_+([r,s])}{\Gamma_+} > 0] \to 1$ as $s \downarrow r$.
\end{lemma}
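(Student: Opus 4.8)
\textbf{Proof proposal for Lemma~\ref{lem:zero_metric_ball_does_not_hit}.}

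The plan is to bootstrap from the (uniform-in-$\epsilon$) comparability of the quantiles $\quantHP{p}{\epsilon}$, which is precisely what prevents the subsequential limit $\metplus{\cdot}{\cdot}{\Gamma_+}$ from being identically zero. Fix $u < v < r < s$. First I would observe that, by the self-similarity and translation-invariance properties of the quantum half-plane and the two-sided whole-plane $\SLE_\kappa$ (Remark~\ref{rem:cutting_gluing_weight_4_cone} and Theorem~\ref{thm:cpi_wedge_explore}), the quantum surface and CLE decoration near a typical point of $\eta_+$ looks locally like our canonical setup; combined with the comparability~\eqref{eqn:quant_comparison} this means it cannot be the case that $\metplus{\cdot}{\cdot}{\Gamma_+}$ vanishes on all of $\eta_+([0,1]) \times \eta_+([0,1])$, for otherwise one could cover $\eta_+([0,1])$ by finitely many pieces each of which is (a scaled copy of) a sub-interval and conclude $\sup_{z,w\in\eta_+([0,1])}\metapprox{\epsilon}{z}{w}{\Gamma_+} = o(\medianHP{\epsilon})$ along the limiting subsequence, contradicting the definition of the median. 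So there exist \emph{some} disjoint intervals $I,J$ on $\eta_+$ with $\p[\metplus{I}{J}{\Gamma_+} > 0] > 0$.

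Next I would upgrade ``some'' to ``any'' $u<v<r<s$. Here the point is that $\metplus{\eta_+([u,v])}{\eta_+([r,s])}{\Gamma_+}$ only depends on $\Gamma_+$ and the metric in the region of $\CH_+$ cut off by (a CPI following) the portion of $\eta_+$ between the two arcs, and by a CPI-exploration/absolute continuity argument (Proposition~\ref{prop:cpi_path_close}, together with the conformal covariance of $\zerop{\cdot}$ recorded in Lemma~\ref{lem:zero_length_conf}) the law of this ``zero-length connectivity'' event for a given pair of disjoint boundary arcs is comparable, up to positive constants, to that for any other pair of disjoint boundary arcs of the same surface. More concretely: given the intervals $I,J$ from the previous paragraph with positive chance of positive distance, I would conformally map the sub-surface of $\CH_+$ cut out between $\eta_+([u,v])$ and $\eta_+([r,s])$ so that $\eta_+([u,v])$ and $\eta_+([r,s])$ play the roles of $I$ and $J$; the map is locally bounded in the relevant region, so the event transfers with positive probability. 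This gives $\p[\metplus{\eta_+([u,v])}{\eta_+([r,s])}{\Gamma_+}>0]>0$ for all $u<v<r<s$.

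Finally, for the last assertion, fix $u<v<r$ and let $s \downarrow r$. I would argue by monotone/continuity considerations: the events $\{\metplus{\eta_+([u,v])}{\eta_+([r,s])}{\Gamma_+}>0\}$ are decreasing in $s$ as $s\downarrow r$, so their probabilities converge to $\p[\bigcap_s \cdots]$; I must rule out that this intersection has probability strictly less than $1$. On the complement, there would exist (with positive probability) for arbitrarily small $s-r$ a ``zero-length path'' in $\Upsilon_+$ connecting $\eta_+([u,v])$ to $\eta_+([r,s])$. But by Lemma~\ref{lem:no_zero_length_on_boundary} (applied in the infinite-volume/half-planar setup, which is covered by the same argument), the set of $z \in \partial\CH_+$ for which there exists $w \in \closure{\CH_+}\setminus\{z\}$ with $\metplus{z}{w}{\Gamma_+}=0$ has zero quantum length (hence zero Lebesgue-typical measure along $\eta_+$), so $\eta_+(r)$ itself is a.s.\ \emph{not} such a point; since $\bigcap_{s > r}\eta_+([r,s]) = \{\eta_+(r)\}$, any zero-length path to all arcs $\eta_+([r,s])$ would have to reach $\eta_+(r)$, contradicting that $\eta_+(r)$ is a.s.\ at positive $\metplus{\cdot}{\cdot}{\Gamma_+}$-distance from $\eta_+([u,v])$ (which holds a.s.\ on the positive-probability event from the previous paragraph, and which I would promote to an a.s.\ statement about $\eta_+(r)$ by a Fubini argument over $r$ together with stationarity of the law of $\eta_+$ near a typical point). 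I expect the main obstacle to be making this last continuity/limiting step fully rigorous --- in particular, ensuring that ``zero-length connectivity to $\eta_+([r,s])$ for all small $s-r$'' really does force a zero-length connection to the single point $\eta_+(r)$, which requires a compactness argument extracting a limiting zero-length path and using lower semicontinuity of $\metplus{\cdot}{\cdot}{\Gamma_+}$-length, together with the a.s.\ statement that a typical $\eta_+$-point is at positive distance from a fixed far-away arc.
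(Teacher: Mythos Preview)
Your approach to the first assertion has the right starting point (non-degeneracy of $\metplus{\cdot}{\cdot}{\Gamma_+}$ on $\eta_+([0,1])$ from the quantile comparability), but the transfer from ``some'' $I,J$ to ``any'' $u<v<r<s$ is not carried out. Invoking Proposition~\ref{prop:cpi_path_close} and Lemma~\ref{lem:zero_length_conf} does not directly show that the probability of $\{\metplus{\eta_+([u,v])}{\eta_+([r,s])}{\Gamma_+}>0\}$ is comparable across different quadruples; the conformal map you describe does not preserve the joint law of $(\CH_+,\eta_+,\Gamma_+)$. The paper argues by contradiction instead: assuming the distance is a.s.\ zero for \emph{some} $(u,v,r,s)$, it switches to the capacity parameterization of $\eta_+$ (so that, conditionally on the curve, the event $\eta_+([r,s])\subseteq\eta_+^{\cp}([\tau+a,\tau+b])$ has positive probability for every fixed $a<b$, and the distance to the capacity interval is measurable with respect to the curve and $\Gamma_+$), concludes the distance is a.s.\ zero to every capacity interval, and then uses scaling and translation (Lemma~\ref{lem:zero_length_conf}) to collapse $\eta_+([0,1])$ to a point and get $\metplus{\eta_+(r)}{\eta_+(s)}{\Gamma_+}=0$ a.s.\ for all $r,s$, contradicting quantile comparability.

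For the second assertion there is a genuine circularity. You invoke Lemma~\ref{lem:no_zero_length_on_boundary} to say that a quantum-length typical boundary point such as $\eta_+(r)$ is a.s.\ not in the bad set, but that lemma is proved \emph{after} the present one and its proof explicitly uses the half-planar setup and conclusion of Lemma~\ref{lem:zero_metric_ball_does_not_hit}: the step ``$\p[\qbmeasure{h}(X_{s,t,u})=0]=1$'' there is precisely the statement you are trying to import. (Also, the monotonicity is reversed: as $s\downarrow r$ the set $\eta_+([r,s])$ shrinks, so the distance increases and the events are \emph{increasing}.) The paper's argument is self-contained and quite different: supposing $\p[\metplus{\eta_+([u,v])}{\eta_+(r)}{\Gamma_+}=0]=p_0>0$, scale invariance makes $p_0$ the same at every scale $R$; one then launches conditionally independent CPIs from $\eta_+(r_1),\eta_+(r_2)$, looks at the top $T_1$ and bottom $B_2$ of the regions they cut out, shows $\p[\metplus{T_1}{B_2}{\Gamma_+}>0]>0$ for $r_2$ large (by another scaling-to-degeneracy contradiction), iterates at geometric scales $r_j=r_2^{\,j-1}$, and applies Birkhoff's ergodic theorem to the conditionally independent layers to get $\p[\metplus{T_1}{B_k}{\Gamma_+}>0]\to 1$, contradicting $p_0>0$.
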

\begin{proof}
We will first prove the first part of the statement.  For contradiction, let us suppose that there exist $u < v < r < s$ so that $\metplus{\eta_+([u,v])}{\eta_+([r,s])}{\Gamma_+} = 0$ a.s.\  By applying Lemma~\ref{lem:zero_length_conf} with a translation and scaling map, we may assume that $u=0$ and $v=1$.  We assume that~$\CC$ is embedded into~$\C$ so that $|\eta_+(1)| = 1$.  Let $\eta_+^\cp$ be $\eta_+$ parameterized by capacity and let $\tau$ be the first time~$t$ that $\eta_+^\cp([0,t])$ has quantum length equal to $1$ so that $\eta_+^\cp([0,\tau]) = \eta_+([0,1])$.  Then for any $0 < a < b$ fixed, there is a positive chance that the quantum length of $\eta_+^\cp([0,\tau+a])$ is at most $r$ and the quantum length of $\eta_+^\cp([0,\tau+b])$ is at least $s$, that is, $\eta_+([r,s]) \subseteq \eta_+^\cp([\tau+a,\tau+b])$.  This holds, moreover, even if we condition on $\eta_+^\cp$.  It therefore follows that $\metplus{\eta_+([0,1])}{\eta_+^\cp([\tau+a,\tau+b])}{\Gamma_+} = 0$ a.s.\  Since $0 < a < b$ were arbitrary, it follows that $\metplus{\eta_+([0,1])}{\eta_+^\cp([\tau+a,\tau+b])}{\Gamma_+} = 0$ a.s.\ for all $0 < a < b$ simultaneously.  Therefore $\metplus{\eta_+([0,1])}{\eta_+([r,s])}{\Gamma_+} = 0$ a.s.\ for all $1 < r < s$.  By the continuity of~$\metplus{\cdot}{\cdot}{\Gamma_+}$, it follows that $\metplus{\eta_+([0,1])}{\eta_+(r)}{\Gamma_+} = 0$ a.s.\ for all $r > 1$.  By applying scaling again (Lemma~\ref{lem:zero_length_conf}), we see that $\metplus{\eta_+(0)}{\eta_+(r)}{\Gamma_+} = 0$ for all $r > 0$.  Finally, applying a translation and scaling map (Lemma~\ref{lem:zero_length_conf}) and the continuity of $\metplus{\cdot}{\cdot}{\Gamma_+}$, we have that $\metplus{\eta_+(r)}{\eta_+(s)}{\Gamma_+} = 0$ a.s.\ for all $r, s > 0$.  This is a contradiction to the comparability of the quantiles $\quantHP{p}{\epsilon}$, hence proves the first statement.

We now prove the second statement.  Suppose that there exist $0 < u < v < r$ such that  $\p[ \metplus{\eta_+([u,v])}{\eta_+([r,s])}{\Gamma_+} > 0]$ does not tend to $1$ as $s \downarrow r$.  Then $\p[ \metplus{\eta_+([u,v])}{\eta_+(r)}{\Gamma_+} = 0] > 0$. By Lemma~\ref{lem:zero_length_conf} and translation and scaling, this implies that there exists $p_0 > 0$ so that $\p[\metplus{\eta_+(0)}{\eta_+([R,2R])}{\Gamma_+} = 0] = p_0$ for all $R > 0$.  Suppose that we have $0 < r_1 < r_2$.  Let $\eta_1$, $\eta_2$ be $\SLE_\kappa^1(\kappa-6)$ processes coupled as conditionally independent CPIs in $\Upsilon_+$ respectively starting from $\eta_+(r_1)$, $\eta_+(r_2)$.  Let~$\tau_i$ be the first time that $\eta_i$ disconnects~$0$ from~$\infty$.  Let also $T_i$ be the part of the boundary of the unbounded component of $\CH_+ \setminus \eta_i([0,\tau_i])$ which is contained in $\eta_i([0,\tau_i])$.  Let $B_i$ be the part of the boundary of the component of $\CH_+ \setminus \eta_i([0,\tau_i])$ with $\eta_+(0)$ on its boundary which is contained in $\eta_i([0,\tau_i])$.

We note that $\p[\metplus{T_1}{B_2}{\Gamma_+} > 0] > 0$ for all $r_2 > r_1$ sufficiently large.  Indeed, if not, then $\p[\metplus{T_1}{B_2}{\Gamma_+} > 0] = 0$ for all $r_2 > r_1$.  By applying scaling (Lemma~\ref{lem:zero_length_conf}), this implies that $\p[ \metplus{\eta_+(0)}{B_1}{\Gamma_+} = 0] = 1$ for all $r_1 > 0$.  This is in fact true by translation for all $\eta_+(r)$ for $r$ in some interval.  With positive probability we also have that $\metplus{\eta_-([a,b])}{\eta_+([a,b])}{\Gamma_+} = 0$.  We then obtain that with positive probability we have that $\metplus{\eta_+(r)}{\eta_+(0)}{\Gamma_+} = 0$ for all $r$ in some interval.  By the scale invariance of the law of $\zero{\Gamma_+}$ (Lemma~\ref{lem:zero_length_conf}), we get that $\Fd^+$ vanishes with positive probability, which we know it cannot by the comparability of the quantiles $\quantHP{p}{\epsilon}$.  This proves our assertion.

We now define sequences of numbers as follows.  Suppose we take $r_1 = 1$ and $r_2$ as above so that $\p[\metplus{T_1}{B_2}{\Gamma_+} > 0] > 0$.  We then let $r_j = r_2^{j-1}$ for all $j \geq 2$.  By scaling, there exists $p_1 > 0$ so that $\p[\metplus{T_j}{B_{j+1}}{\Gamma_+} > 0] = p_1$ for all $j$.  For each $j$ let $U_j$ be the domain bounded by $T_j$, $B_{j+1}$, $\eta_-$, and~$\eta_+$.  For each $k$, we let $N_k$ be the number of $1 \leq j \leq k$ so that $\p[\metplus{T_j}{B_{j+1}}{\Gamma_+} > 0 \giv U_j] \geq p_1/2$.  By Birkhoff's ergodic theorem, we have that $N_k /k$ a.s.\ tends to a positive limit as $k \to \infty$.  It therefore follows that $\p[\metplus{T_1}{B_k}{\Gamma_+} >0] \to 1$ as $k \to \infty$.  This contradicts the statement that there exists $p_0 > 0$ so that $\p[\metplus{\eta_+(0)}{\eta_+([R,2R])}{\Gamma_+} = 0] = p_0$ for all $R > 0$, which completes the proof of the second assertion.
\end{proof}

Suppose that $D \subseteq C$ is a simply connected domain and $\Gamma$ is a $\CLE_\kappa$ on $D$.  We let $Z_{x,y}$ be the closure of the union of $\{z \in \Upsilon : \metplus{z}{\ccwBoundary{x}{y}{\partial D}}{\Gamma} = 0\}$ and the loops of $\Gamma$ which intersect it.  Let also $Z_{x,y}^+$ be the closure of the union of $\{z \in \Upsilon : \exists (w,z) \in \zerop{\Gamma},\ w \in \ccwBoundary{x}{y}{\partial D}\}$ and the loops of $\Gamma$ which intersect it.  If $\metplus{\cdot}{\cdot}{\Gamma}$ is continuous up to $\partial D$, then we note that $Z_{x,y}^+ \subseteq Z_{x,y}$ since if $z \in \Upsilon$ and there exists $w \in \ccwBoundary{x}{y}{\partial D}$ so that $(w,z) \in \zerop{\Gamma}$ then $\metplus{z}{w}{\Gamma} = 0$ hence $\metplus{z}{\ccwBoundary{x}{y}{\partial D}}{\Gamma} = 0$.  We consider both cases since $Z_{x,y}^+$ is useful due to its conformal invariance properties (Lemma~\ref{lem:zero_length_conf}) while $Z_{x,y}$ is useful in settings in which we will not apply a conformal map.

\begin{lemma}
\label{lem:zero_metric_ball_is_local}
Suppose that $D \subseteq \C$ is a simply connected domain, $\Gamma$ is a $\CLE_\kappa$ in $D$, and $x,y \in \partial D$ are distinct.  Then both $Z_{x,y}$ and $Z_{x,y}^+$ are local for $(\Gamma,\metplus{\cdot}{\cdot}{\Gamma})$.
\end{lemma}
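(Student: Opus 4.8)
The plan is to verify the criterion for locality established in Lemma~\ref{lem:loc_characterization}. Fix an open set $U \subseteq D$ and, for $\epsilon > 0$, let $(D \setminus U)^\epsilon$ be the $\epsilon$-neighborhood of $D \setminus U$ and $\Gamma_\epsilon$ the loops of $\Gamma$ contained in $(D \setminus U)^\epsilon$. Let $\CF_U^\epsilon$ be the $\sigma$-algebra generated by the loops of $\Gamma$ which intersect $D \setminus U$ together with $\metplus{\cdot}{\cdot}{\Gamma_\epsilon}$, and let $\CF_U = \cap_{\epsilon > 0} \CF_U^\epsilon$. We must show that, given $\CF_U$, the event $\{Z_{x,y} \cap U = \emptyset\}$ (respectively $\{Z_{x,y}^+ \cap U = \emptyset\}$) is independent of the loops $\Gamma_U$ of $\Gamma$ contained in $U^*$ and of $\metplus{\cdot}{\cdot}{\Gamma_U}$. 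The key observation is that whether $z \in \Upsilon$ satisfies $\metplus{z}{\ccwBoundary{x}{y}{\partial D}}{\Gamma} = 0$ depends only on the $\metplus{\cdot}{\cdot}{\Gamma}$-lengths of paths; and by the definition of $\metplus{\cdot}{\cdot}{\Gamma}$ as a limit of interior-internal pseudometrics $\metres{U_2}{\cdot}{\cdot}{\Gamma}$ with $U_2 \in \dyad(D)$ exhausting $D$, a zero-length path connecting a point of $U$ to $\ccwBoundary{x}{y}{\partial D}$ can be taken to avoid the domain boundary except at its endpoints and hence must cross into $D \setminus U$ (or come arbitrarily close to it).

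First I would argue that $Z_{x,y} \cap U = \emptyset$ is a $\CF_U$-measurable event, which already gives the required (degenerate) conditional independence. The point is this: if $z \in U \cap \Upsilon$ and $\metplus{z}{\ccwBoundary{x}{y}{\partial D}}{\Gamma} = 0$, then for every $\xi > 0$ there is a path $\omega$ from $z$ to $\ccwBoundary{x}{y}{\partial D}$ with $\metplus{\cdot}{\cdot}{\Gamma}$-length at most $\xi$; since $\ccwBoundary{x}{y}{\partial D} \subseteq D \setminus U$, the path $\omega$ must intersect $D \setminus U$, and in fact, by an elementary argument using that $z$ has positive distance to $D \setminus U$, for any $\epsilon > 0$ the portion of $\omega$ inside $(D \setminus U)^\epsilon$ already connects a point in $U$ to $D \setminus U$ through the carpet with $\metplus{\cdot}{\cdot}{\Gamma_\epsilon}$-length at most $\xi + o(1)$. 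Conversely, whether $z \in Z_{x,y}$ is then detectable from the data in $(D \setminus U)^\epsilon$ for every $\epsilon$: one checks that $z \in Z_{x,y}$ iff $z$ lies in the closure of the union of $\{z' : \metplus{z'}{\ccwBoundary{x}{y}{\partial D}}{\Gamma_\epsilon} \to 0\}$ and the loops hitting it. Taking the intersection over $\epsilon$ shows $\{Z_{x,y} \cap U = \emptyset\}$ is $\CF_U$-measurable. The same reasoning handles $Z_{x,y}^+$, replacing $\metplus{z}{\ccwBoundary{x}{y}{\partial D}}{\Gamma} = 0$ by the condition that $(w,z) \in \zerop{\Gamma}$ for some $w \in \ccwBoundary{x}{y}{\partial D}$, and using that $\zerop{\Gamma}$ is built from the $\metres{U_2}{\cdot}{\cdot}{\Gamma}$ for $U_2 \in \dyad(D)$, which are again determined on neighborhoods of $D \setminus U$ once one knows the loops hitting $D \setminus U$ and the pseudometric near $D \setminus U$.

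The main obstacle I anticipate is a subtle issue of measurability and of controlling the relevant paths near $\partial D$: a priori a zero-length path realizing $\metplus{z}{\ccwBoundary{x}{y}{\partial D}}{\Gamma}=0$ need not avoid $\partial D$, and one must argue — using the definition of $\metplus{\cdot}{\cdot}{\Gamma}$ as an increasing limit of the $\metres{U_2}{\cdot}{\cdot}{\Gamma}$, each of which only counts paths with positive distance from $\partial D$ — that the infimum defining $\metplus{z}{\ccwBoundary{x}{y}{\partial D}}{\Gamma}$ is attained in the limit by such interior paths, so that the crossing of $D\setminus U$ (and detectability from $\Gamma_\epsilon$ and the interior-internal pseudometric near $D\setminus U$) is genuine. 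Once this is in place, applying Lemma~\ref{lem:loc_characterization} directly yields that $Z_{x,y}$ and $Z_{x,y}^+$ are local for $(\Gamma,\metplus{\cdot}{\cdot}{\Gamma})$. I would also note that $Z_{x,y} \cup \partial D$ and $Z_{x,y}^+ \cup \partial D$ are a.s.\ connected: $\ccwBoundary{x}{y}{\partial D} \subseteq Z_{x,y}$ by definition (points of $\ccwBoundary{x}{y}{\partial D}$ have zero distance to themselves) and every other piece of $Z_{x,y}$ is, by construction, joined to $\ccwBoundary{x}{y}{\partial D}$ through a chain of loops and carpet points, so the connectedness hypothesis of Definition~\ref{def:local} is satisfied.
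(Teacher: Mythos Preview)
Your approach is essentially the same as the paper's: apply the criterion of Lemma~\ref{lem:loc_characterization} by showing that $\{Z_{x,y}\cap U=\emptyset\}$ (and likewise for $Z_{x,y}^+$) is already $\CF_U$-measurable, which makes the conditional independence trivial. The paper's proof is a terse three sentences asserting exactly this $\CF_U^\epsilon$-measurability without further justification; you have expanded on why it should hold and flagged the one genuine subtlety (that a priori a zero-length connection to $\ccwBoundary{x}{y}{\partial D}\subseteq\partial D$ need not pass through $D\setminus U$, and one must use that $\metplus{\cdot}{\cdot}{\Gamma}$ is built from the $\metres{U_2}{\cdot}{\cdot}{\Gamma}$ with $\closure{U_2}\subseteq D$). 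One small slip: you write $\ccwBoundary{x}{y}{\partial D}\subseteq D\setminus U$, which is false since $\partial D\cap D=\emptyset$; but you immediately identify this as the obstacle and give the correct resolution, so this does not affect the argument. Your verification that $Z_{x,y}\cup\partial D$ is connected is a useful addition the paper omits.
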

\begin{proof}
Suppose that $U \subseteq D$ is open.  Using the notation of the statement of Lemma~\ref{lem:loc_characterization}, whether $Z_{x,y} \cap U = \emptyset$ is determined by $\CF_U^\epsilon$ for each $\epsilon > 0$ and hence by $\CF_U$.  Consequently, the assertion of the lemma for $Z_{x,y}$ follows from Lemma~\ref{lem:loc_characterization}.  The same argument applies for $Z_{x,y}^+$.
\end{proof}

\begin{lemma}
\label{lem:in_between_stuff_is_small}
Suppose that we have the setup described in Lemma~\ref{lem:zero_metric_ball_is_local}.  The following is a.s.\ true.  Suppose that $z$ is in the interior of a component $U$ of $D \setminus Z_{x,y}$.  Then the harmonic measure of the set of points in $\partial U \setminus \partial D$ as seen from $z$ which are not contained in a loop of $\Gamma$ is zero.  The same is true with $Z_{x,y}^+$ in place of $Z_{x,y}$.
\end{lemma}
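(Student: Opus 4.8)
\textbf{Proof plan for Lemma~\ref{lem:in_between_stuff_is_small}.}
The plan is to reduce to a half-planar statement and invoke Lemma~\ref{lem:no_zero_length_on_boundary} (whose conclusion is exactly that the boundary of the relevant domain has zero harmonic measure, up to loop points). First I would fix a component $U$ of $D \setminus Z_{x,y}$ and a point $z$ in its interior. The set $\partial U \setminus \partial D$ naturally decomposes into three pieces: parts that lie on loops of $\Gamma$ (which are permitted by the statement), parts of $\partial U$ that come from $\partial Z_{x,y}^*$, and — a priori — parts that touch $\partial D$ through limits. Since $Z_{x,y}$ is defined as a closure of a union of $\Upsilon$-points together with the loops meeting them, a non-loop point $w \in \partial U \setminus \partial D$ that is not a loop point must be a point of $\Upsilon$ which lies in $\partial Z_{x,y}$, i.e., $w$ can be approximated by points $z_n \in \Upsilon$ with $\metplus{z_n}{\ccwBoundary{x}{y}{\partial D}}{\Gamma} = 0$; by the continuity of $\metplus{\cdot}{\cdot}{\Gamma}$ up to the relevant boundary (established in the interior-tightness section and Lemma~\ref{lem:zero_metric_ball_is_local}), this forces $\metplus{w}{\ccwBoundary{x}{y}{\partial D}}{\Gamma}=0$, so $w$ is itself a "zero-length" point relative to a $\CLE_\kappa$ loop bounding $U$ or relative to $\partial D$.

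The key step is then the following. Because $Z_{x,y}$ is local for $(\Gamma,\metplus{\cdot}{\cdot}{\Gamma})$ (Lemma~\ref{lem:zero_metric_ball_is_local}), conditionally on $\CF_{Z_{x,y}}$ the loops of $\Gamma$ in $U$ together with the interior-internal metric have the law of $(\Gamma_U,\metplus{\cdot}{\cdot}{\Gamma_U})$ for a fresh $\CLE_\kappa$ in $U$. So it suffices to show: for a $\CLE_\kappa$ on a simply connected domain $V$ (here $V=U$) with distinguished boundary set $\partial_0 V$ (here the part of $\partial U$ coming from $Z_{x,y}$), the set of $w \in \partial V \setminus (\text{loop points})$ with $\metplus{w}{\partial_0 V}{\Gamma_V}=0$ has zero harmonic measure as seen from any interior point. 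Via a conformal map $\varphi\colon V \to \D$ and the conformal covariance of $Z^+$ (Lemma~\ref{lem:zero_length_conf}) this becomes a statement about $\zerop{\cdot}$ on $\D$, which is precisely what Lemma~\ref{lem:no_zero_length_on_boundary} asserts once we identify $\partial D$ in that lemma with $\partial \D$ and note that harmonic measure on $\partial \D$ from an interior point is mutually absolutely continuous with the quantum length measure of an independent $\qdiskL{\gamma}{1}$ field (indeed comparable to Lebesgue on $\partial\D$). The only subtlety is that in Lemma~\ref{lem:no_zero_length_on_boundary} the domain is the one from Proposition~\ref{prop:no_zero_crossings}; but the statement there is proved for a generic $\CLE_\kappa$ on a quantum disk and for each $\CLE_\kappa$ loop $\CL$ in it, so it applies to the loops forming $\partial U$ as well, after pushing forward through $\varphi$.

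A secondary technical point is that $\partial U$ may also meet $\partial D$ itself. For the original domain $D$ in the statement of Theorem~\ref{thm:cle_loop}, $\partial D=\CL$ is a $\CLE_\kappa$ loop, so "$w \in \partial U$ with $\metplus{w}{\cdot}{\Gamma}=0$ and $w$ not a loop point" again falls under the per-loop case of Lemma~\ref{lem:no_zero_length_on_boundary}; for a general simply connected $D$ one argues by exhausting $D$ by domains in $\dyad(D)$ and using that the loops of a $\CLE_\kappa$ a.s.\ do not hit $\partial D$, so a component $U$ of $D \setminus Z_{x,y}$ that has positive-harmonic-measure non-loop boundary on $\partial D$ would have to see $\partial D$ through $\Upsilon$-points of zero distance to $\ccwBoundary{x}{y}{\partial D}$, which is handled by the same reduction. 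The $Z_{x,y}^+$ version is identical: replace "$\metplus{z}{\ccwBoundary{x}{y}{\partial D}}{\Gamma}=0$" by the $\zerop{\Gamma}$-statement throughout and use the conformal invariance of $\zerop{\cdot}$ (rather than mere scaling/translation invariance) in the reduction to $\D$.

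\textbf{Main obstacle.} The step I expect to be delicate is the passage from "$w$ is approximated by zero-length points" to "$w$ is itself a zero-length point": this requires continuity of $\metplus{\cdot}{\cdot}{\Gamma}$ \emph{up to the boundary}, which we have (via Proposition~\ref{prop:boundary_distance_tail_bounds} and Lemma~\ref{lem:joint_subsequential}) only after restricting to paths staying away from the boundary, so one must be careful that $Z_{x,y}$ and $\partial U$ interact correctly — in particular that $U$ is locally connected enough near such $w$ that harmonic measure is well-behaved and that the interior-internal pseudometric used to define locality genuinely controls crossings near $\partial U$. I would handle this by working with the $\dyad(D)$-exhaustion and the sets $\metres{U_2}{\cdot}{\cdot}{\Gamma}$ (which are defined simultaneously), choosing $U_2$ to contain a neighborhood of the relevant portion of $\partial U$ away from $\partial D$, so that all the continuity estimates apply; the contribution of $\partial D$ itself is then separated off as above.
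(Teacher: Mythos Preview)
Your reduction to Lemma~\ref{lem:no_zero_length_on_boundary} has a genuine gap. After conditioning on $Z_{x,y}$ and mapping $U$ conformally to $\D$, the set $X \subseteq \partial \D$ you must show is harmonic-measure null is the image of the non-loop points of $\partial U \cap Z_{x,y}$. This set is $\CF_{Z_{x,y}}$-measurable; it is \emph{not} defined via zero-length connections for the fresh $\CLE_\kappa$ $\Gamma_U$ on $U$. The zero-length paths you correctly identify, from $w$ to $\ccwBoundary{x}{y}{\partial D}$, run through $Z_{x,y}$, i.e.\ \emph{outside} $U$, and so carry no information about $\metplus{\cdot}{\cdot}{\Gamma_U}$. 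Your restated sufficiency condition, ``non-loop $w \in \partial V$ with $\metplus{w}{\partial_0 V}{\Gamma_V}=0$'', is vacuous since $w \in \partial_0 V$ already. Nor does the per-loop part of Lemma~\ref{lem:no_zero_length_on_boundary} help: $\partial U$ is the boundary of the zero set $Z_{x,y}$, a priori a wild set with no $\SLE$-type structure, and the claim that it is harmonic-measure-wise covered by loops is exactly what is to be proved, not something you can feed into that lemma.

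The paper's argument is instead self-referential and avoids Lemma~\ref{lem:no_zero_length_on_boundary} entirely. If the conclusion fails then in particular $Z_{x,y}\setminus\partial D\neq\emptyset$ with positive probability, so by Lemma~\ref{lem:zero_length_conf} the event ``some carpet point is connected by $\metres{U_{2,n}}{\cdot}{\cdot}{\cdot}$-length zero to $\partial U_{2,n}$ for all large $n$'' has positive probability for \emph{every} $\CLE_\kappa$ on a simply connected domain. Apply this to the fresh $\CLE_\kappa$ on $U$ (via locality, Lemma~\ref{lem:zero_metric_ball_is_local}), map to $\D$, and let $v\in\partial\D$ be the limit of landing points of such a zero-length connection from some interior $u$. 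Now rotate $\Gamma_\D$ by a uniform independent angle: this preserves the law of $\Gamma_\D$ while being independent of the $\CF_{Z_{x,y}}$-measurable set $X$, so $v\in X$ with positive probability whenever $X$ has positive harmonic measure. On that event $\varphi(u)\in U$ is connected by zero length in $\Gamma$ to a non-loop point of $Z_{x,y}$, hence to $\ccwBoundary{x}{y}{\partial D}$, forcing $\varphi(u)\in Z_{x,y}$ --- a contradiction. The idea you are missing is this rotation trick, which converts ``$X$ has positive harmonic measure'' into ``a fresh internal zero-length path lands in $X$ with positive probability''.
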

\begin{proof}
We will prove the result in the case of $Z_{x,y}$.  The argument in the case of $Z_{x,y}^+$ is analogous.  In order for the assertion of the lemma to be non-trivial, we need to assume
\begin{equation}
\label{eqn:k_non_empty}
\p[Z_{x,y} \setminus \partial D \neq \emptyset] > 0.
\end{equation}
We note that~\eqref{eqn:k_non_empty} implies that with positive probability there exists $w \in \Upsilon$ so that if $(U_{2,n})$ is an increasing sequence in $\dyad(D)$ with $D = \cup_n U_{2,n}$ then there exists $n_0$ so that for all $n \geq n_0$ we have that $\metres{U_{2,n}}{w}{\partial U_{2,n}}{\Gamma} = 0$.  By Lemma~\ref{lem:zero_length_conf}, we note that this statement must hold for every simply connected domain $D \subseteq \C$.  

Suppose that~\eqref{eqn:k_non_empty} holds and the assertion of the lemma does not hold.  Then there exists a point $z \in D$ with rational coordinates so that with positive probability $z \notin Z_{x,y}$ and the harmonic measure of the points in $\partial U \setminus \partial D$, $U$ the component of $D \setminus Z_{x,y}$ which contains $z$, which are not contained in a loop of $\Gamma$ is positive.  By Lemma~\ref{lem:zero_metric_ball_is_local}, we know that the conditional law of the loops of $\Gamma$ contained in $U$ is that of a $\CLE_\kappa$.  That is, we can write these loops as $\varphi(\Gamma_\D)$ where $\varphi \colon \D \to U$ is the unique conformal transformation with $\varphi(0) = z$ and $\varphi'(0) > 0$ and $\Gamma_\D$ is a $\CLE_\kappa$ in $\D$ with carpet $\Upsilon_\D$.  By~\eqref{eqn:k_non_empty}, it is a positive probability event for $\Gamma_\D$ that there exists $u \in \Upsilon_\D$ so that if $(U_{2,n})$ is an increasing sequence in $\dyad(\D)$ with $\D = \cup_n U_{2,n}$ then $\metres{U_{2,n}}{u}{\partial U_{2,n}}{\Gamma_\D} = 0$ for all $n$ large enough.  Consequently, there exists $v_n \in \partial U_{2,n}$ so that $\metres{U_{2,n}}{u}{v_n}{\Gamma_\D} = 0$ for all $n$ large enough.  By passing to a subsequence if necessary, we may assume that $(v_n)$ converges to a limit $v \in \partial \D$.  Let $\Theta$ be uniform in $[0,2\pi]$ independently of everything else.  Then $e^{i \Theta} \Gamma_\D$ has the same law as $\Gamma_\D$.  Let $X$ be such that $\varphi(X)$ is equal to those points in $\partial U \setminus \partial D$ which are not contained in a loop of $\Gamma$.  Then we know that $X$ has positive harmonic measure as seen from $z$.  Therefore there is a positive chance that $e^{i \Theta} v \in X$ and it is a positive probability event that $v \in X$.  This is a contradiction because on the event that $v \in X$ we have by Lemma~\ref{lem:zero_length_conf} that $\metplus{\varphi(u)}{\varphi(X)}{\varphi(\Gamma_\D)} = 0$, which contradicts the definition of~$Z_{x,y}$.
\end{proof}

\begin{figure}[ht!]
\begin{center}
\includegraphics[scale=1]{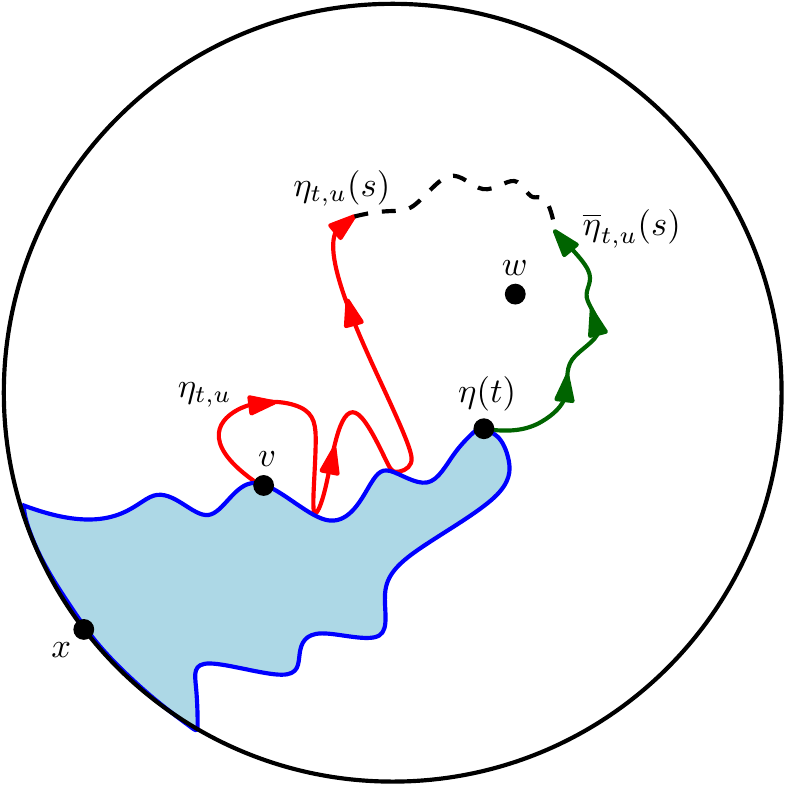}	
\end{center}
\caption{\label{fig:zero_lero_length_on_boundary} Illustration of the proof of Lemma~\ref{lem:no_zero_length_on_boundary}.  The set of points disconnected from $w$ by $\eta|_{[0,t]}$ is shown in blue and recall that $D_t$ is the component of $\D \setminus \eta([0,t])$ containing $w$.  The point $v$ is such that the quantum length of $\ccwBoundary{v}{\eta(t)}{\partial D_t}$ is equal to $u$.  The path $\eta_{t,u}$ (red) is the right boundary of $\eta|_{[t,\infty)}$ stopped when it first hits $v$ as viewed from $v$ and $\ol{\eta}_{t,u}$ its time-reversal.  The dashed curve is the remainder of $\eta_{t,u}$ given $\eta_{t,u}|_{[0,s]}$ and $\ol{\eta}_{t,u}|_{[0,s]}$ shown on the event that it does not hit $\partial D_t$.  On this event, its law is absolutely continuous with respect to an $\SLE_\kappa$ from $\eta_{t,u}(s)$ to $\ol{\eta}_{t,u}(s)$.}
\end{figure}

\begin{lemma}
\label{lem:no_zero_length_on_boundary}
Suppose that we have the setup of Proposition~\ref{prop:no_zero_crossings} and are on the event that $D \neq \emptyset$.  Suppose $h$ is such that $(D,h) \sim \qdiskL{\gamma}{1}$.  Let $\Gamma$ be an independent $\CLE_\kappa$ on $D$.  Let $X$ be the set of points $z \in \partial D$ so that there exists $w \in \closure{D} \setminus \{z\}$ with $\metplus{z}{w}{\Gamma} = 0$.
\begin{enumerate}[(i)]
\item\label{it:boundary_meas_zero} Then $\p[\qbmeasure{h}(X) = 0]=1$ and $X$ is independent of~$\metplus{\cdot}{\cdot}{\Gamma}$.
\item\label{it:loop_boundary_meas_zero} Similarly, it a.s.\ holds that if $\CL \in \Gamma$, $D_\CL$ is the set of points surrounded by $\CL$, and $X_\CL$ is the set of $z \in \CL$ so that there exists $w \in \closure{D} \setminus (D_\CL \cup \{z\})$ with $\metplus{z}{w}{\Gamma} = 0$ then $\qbmeasure{h}(X_\CL) = 0$.
\end{enumerate}
Moreover, the first assertion of the lemma holds with harmonic measure in $D$ in place of $\qbmeasure{h}$ and the second assertion holds with harmonic measure in the unbounded component of $\C \setminus \CL$ in place of $\qbmeasure{h}$.
\end{lemma}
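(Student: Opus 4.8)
The plan is to prove Lemma~\ref{lem:no_zero_length_on_boundary} by a first-moment argument combined with the self-similarity of the quantum boundary length measure and the half-planar estimates of Lemma~\ref{lem:zero_metric_ball_does_not_hit}. The key observation is that, by the definition of the quantum disk marked by a boundary point sampled from $\qbmeasure{h}$, it suffices to show that if $z \in \partial D$ is a sample from $\qbmeasure{h}$ (normalized) then $\p[z \in X] = 0$, i.e., a.s.\ a $\qbmeasure{h}$-typical boundary point is \emph{not} connected to any other point of $\closure{D}$ by a zero-length path. Once this is established, Fubini applied to $\E[\qbmeasure{h}(X)]$ (using that $\qbmeasure{h}(\partial D)=1$) gives $\E[\qbmeasure{h}(X)] = 0$, hence $\qbmeasure{h}(X) = 0$ a.s. The independence of $X$ from $\metplus{\cdot}{\cdot}{\Gamma}$ then follows from the fact that $\qbmeasure{h}(X)=0$ forces $X$ to be $\qbmeasure{h}$-negligible deterministically, so there is nothing further to condition on; more carefully, since the statement ``$\qbmeasure{h}(X)=0$'' holds a.s., one gets independence for free after a standard measure-theoretic argument (every event measurable with respect to $X$ up to $\qbmeasure{h}$-null sets is trivial, and the genuinely relevant object is the $\qbmeasure{h}$-equivalence class).

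First I would set up the half-planar comparison. Using the relationship between the quantum disk and the quantum half-plane (Theorem~\ref{thm:cpi_wedge_explore} and the local absolute continuity in Lemma~\ref{lem:rn_derivative}), together with the fact that an $\SLE_\kappa^1(\kappa-6)$ exploration targeted at a $\qbmeasure{h}$-typical interior point or the CPI exploration from a boundary point reveals the local picture near $z$ as that of $\eta_+$ inside a quantum half-plane $\CH_+$, I would reduce the question to the following: in the setup of Lemma~\ref{lem:zero_metric_ball_does_not_hit}, a.s.\ there is no $w \neq \eta_+(0)$ in $\closure{\CH_+}$ with $\metplus{\eta_+(0)}{w}{\Gamma_+}=0$. (Here $\eta_+(0)$ plays the role of the marked boundary point, since by the special property of the quantum half-plane stated after the definition of $\qdiskL{\gamma}{\cdot}$, shifting the root of $\eta_+$ by a fixed amount of quantum length preserves the law, which is exactly the statement that $\eta_+(0)$ is a $\qbmeasure{h}$-typical boundary point.) Suppose for contradiction that this fails with positive probability. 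Then, as in the proof of the second part of Lemma~\ref{lem:zero_metric_ball_does_not_hit}, by applying the scaling and translation invariance recorded in Lemma~\ref{lem:zero_length_conf} one can promote the event $\{\exists w \neq \eta_+(0):\metplus{\eta_+(0)}{w}{\Gamma_+}=0\}$ to an event about $\metplus{\eta_+(0)}{\eta_+([R,2R])}{\Gamma_+}=0$ holding with a positive probability $p_0$ uniformly in $R>0$; I would then run the same Birkhoff ergodic theorem argument on the nested domains $U_j$ bounded by successive boundaries of CPI explorations started along $\eta_+$ to deduce $\p[\metplus{T_1}{B_k}{\Gamma_+}>0]\to 1$, contradicting the uniform-in-$R$ positivity of $p_0$. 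This is essentially the argument already carried out in Lemma~\ref{lem:zero_metric_ball_does_not_hit}, now applied with $\eta_+([u,v])$ degenerated to the single point $\eta_+(0)$, so I would phrase it as a corollary of that lemma's proof scheme.

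For part~\eqref{it:loop_boundary_meas_zero}, the same reduction applies with the quantum half-plane replaced by the picture near a ``typical'' loop point: given $\CL \in \Gamma$, conditionally on $\CL$ and on the quantum length of $\CL$, the boundary length measure on $\CL$ and the local structure near a $\qbmeasure{h}$-typical point of $\CL$ can again be matched (after conformal mapping and using the conformal covariance of $\zerop{\Gamma}$ from Lemma~\ref{lem:zero_length_conf}) to the half-planar setup of Lemma~\ref{lem:zero_metric_ball_does_not_hit}; the only subtlety is that one must use $\zerop{\Gamma}$ rather than $\zero{\Gamma}$ when applying conformal maps with unbounded derivative, which is exactly why both objects were introduced. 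Summing over the countably many loops and using the first-moment bound loop by loop, together with $\sum_{\CL}\qbmeasure{h}(\CL) \le $ a finite quantity (the total quantum length of all loops is a.s.\ locally finite, or one can just truncate to loops of quantum length at least $\delta$ and let $\delta \downarrow 0$), yields $\qbmeasure{h}(X_\CL)=0$ for all $\CL$ simultaneously. Finally, the assertions with harmonic measure in place of $\qbmeasure{h}$ follow because harmonic measure on $\partial D$ (resp.\ on $\CL$ from the unbounded side) is mutually absolutely continuous with $\qbmeasure{h}$ on the relevant boundary — this is a standard fact for quantum disks and for $\CLE_\kappa$ loops, and a $\qbmeasure{h}$-null set is then also null for harmonic measure.

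The main obstacle I anticipate is making the reduction to the half-planar setup fully rigorous in the degenerate (single point) case: Lemma~\ref{lem:zero_metric_ball_does_not_hit} is stated for intervals $\eta_+([u,v])$ with $u<v$, and one needs the continuity of $\metplus{\cdot}{\cdot}{\Gamma_+}$ (available since we are working with a subsequential limit that is H\"older continuous with respect to the Euclidean metric) to pass to the limit $v\downarrow u$ and conclude that $\metplus{\eta_+(u)}{w}{\Gamma_+}=0$ can be excluded for $w$ bounded away from $\eta_+(u)$. Controlling the regime where $w$ is \emph{close} to $\eta_+(u)$ — that is, ruling out a nontrivial ``zero-length bubble'' attached to a typical boundary point at arbitrarily small Euclidean scale — requires the uniform-in-scale comparability of the quantiles $\quantHP{p}{\epsilon}$ once more, exactly as in the last step of Lemma~\ref{lem:zero_metric_ball_does_not_hit}; I would isolate this as the crux and handle it by the scaling argument above rather than by any new estimate.
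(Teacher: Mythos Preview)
Your first-moment reduction to the half-planar setup is the right overarching idea and close in spirit to the paper's approach, but two steps break down. Most seriously, your final step for the harmonic-measure assertion is wrong: harmonic measure on $\partial D$ (and on a $\CLE_\kappa$ loop from outside) is \emph{not} mutually absolutely continuous with $\qbmeasure{h}$ --- the quantum boundary measure is a Gaussian multiplicative chaos and is a.s.\ singular with respect to harmonic measure for every $\gamma\in(0,2)$. The paper argues the reverse implication: $X$ is determined by $\Gamma$ (via the metric) and hence independent of $h$; if $X$ had positive harmonic measure with positive probability, then conditionally on $X$ one has a fixed boundary set of positive harmonic measure, and any such set a.s.\ receives positive $\qbmeasure{h}$-mass, contradicting $\qbmeasure{h}(X)=0$. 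Your independence argument in~(i) is likewise confused: $\qbmeasure{h}(X)=0$ a.s.\ does not trivialize $X$ as a random closed set, and the independence that is actually used (in the harmonic-measure step and later in the proof of Proposition~\ref{prop:no_zero_crossings}) is that of $X$ from $h$.

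Your disk-to-half-plane reduction is also too vague, and the ``degenerate single-point'' worry you raise is an artifact of that vagueness. The paper's concrete mechanism is to run an $\SLE_\kappa^1(\kappa-6)$ CPI in $D$, identify the right boundary of its trunk targeted at a quantum-length-typical point as an $\SLE_\kappa(2-\kappa;\kappa-4)$, and invoke Lemma~\ref{lem:middle_part_abs_cont} so that its middle segment (on the event it avoids $\partial D_t$) is absolutely continuous with respect to an ordinary $\SLE_\kappa$. This identifies an \emph{interval} of $\partial D$ with a piece of $\eta_+$ in the half-planar model up to absolute continuity, so Lemma~\ref{lem:zero_metric_ball_does_not_hit} applies directly to intervals --- no single-point limit is needed. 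A countable family of rational parameters $(s,t,u)$ then covers $\partial D$. The same mechanism, applied after exploring the first $\delta$ units of each loop discovered by a CPI and letting $\delta\downarrow 0$ (combined with Proposition~\ref{prop:cpi_path_close} to ensure every loop is eventually hit), handles~(ii).
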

\begin{proof}
See Figure~\ref{fig:zero_lero_length_on_boundary} for an illustration of the setup of the proof.  Suppose that $h$ is such that $(\D,h) \sim \qdiskWeighted{\gamma}{1}$.  Let $w$ be sampled from $\qmeasure{h}$ and $x$ from $\qbmeasure{h}$ conditionally independently of everything else.  Given $x$, $w$, let $\eta$ be an $\SLE_\kappa^1(\kappa-6)$ in $\D$ targeted at $w$ sampled conditionally independently of everything else and then parameterized by the quantum natural time of its trunk~$\eta'$.  The $\tau$ be the first time that $\eta$ disconnects $w$ from $\partial \D$ and let $E$ be the event that the component $D$ of $\D \setminus \eta([0,\tau])$ is not a loop of $\Gamma$ and is surrounded counterclockwise by $\eta$.  On $E$ and given the quantum length $\ell$ of $\partial D$ we note that $(D,h) \sim \qdiskWeighted{\gamma}{\ell}$.  It suffices to prove~\eqref{it:boundary_meas_zero} in this setting.

Let $t > 0$ be rational.  Assume we are working on the event that $t < \tau$ and let $D_t$ be the component of $\D \setminus \eta([0,t])$ which contains $w$.  Fix $u > 0$ rational and let $v \in \partial D_t$ be such that $\qbmeasure{h}(\cwBoundary{\eta(t)}{v}{\partial D_t}) = u$.  Then we note that $\eta|_{[t,\infty)}$ targeted at $v$ is an $\SLE_\kappa^1(\kappa-6)$ process in $D_t$ from $\eta(t)$ to $v$.  In particular, its trunk is an $\SLE_{\kappa'}(\kappa'-6)$ process in $D_t$ from $\eta(t)$ to $v$.  It thus follows from \cite[Theorem~1.4]{ms2016ig1} (see also \cite[Figure~2.5]{mw2017intersections}) that its right boundary (as viewed from $v$) $\eta_{t,u}$ is an $\SLE_\kappa(2-\kappa;\kappa-4)$ process.  Assume that $\eta_{t,u}$ is parameterized according to quantum length and let $\ol{\eta}_{t,u}$ be the time-reversal of $\eta_{t,u}$.  Fix $s > 0$ rational and suppose that $\eta_{t,u}([0,s]) \cap \ol{\eta}_{t,u}([0,s]) = \emptyset$.  On the event that the remainder $\wt{\eta}_{t,u}$ of $\eta_{t,u}$ does not hit $\partial D_t$, it follows from Lemma~\ref{lem:middle_part_abs_cont} that it is absolutely continuous with respect to an $\SLE_\kappa$ process in the component $D_{s,t,u}$ of $D_t \setminus (\eta_{t,u}([0,s]) \cup \ol{\eta}_{t,u}([0,s]))$ with $\eta_{t,u}(s)$, $\ol{\eta}_{t,u}(s)$ on its boundary from $\eta_{t,u}(s)$ to $\ol{\eta}_{t,u}(s)$.  By considering the setup from Lemma~\ref{lem:zero_metric_ball_does_not_hit}, it follows that by applying a conformal transformation $\C \setminus \eta_- \to D_{s,t,u}$ which takes $0$ (resp.\ $\infty$) to $\eta_{t,v}(s)$ (resp.\ $\ol{\eta}_{t,v}(s)$) we have that with $\Gamma_{s,t,u}$ a $\CLE_\kappa$ in $D_{s,t,u}$ and $X_{s,t,u}$ the set of $z \in \wt{\eta}_{s,t,u}$ so that there exists $w \in \closure{D_{s,t,u}} \setminus \{z\}$ with $\metplus{z}{w}{\Gamma_{s,t,u}} = 0$ we have that $\p[\qbmeasure{h}(X_{s,t,u}) = 0]=1$.  The first part of~\eqref{it:boundary_meas_zero} follows since this statement a.s.\ holds for all $s,t,u > 0$ rational simultaneously.

 To see that $X$ is independent of~$\metplus{\cdot}{\cdot}{\Gamma}$, we simply note that $X$ is determined by the restriction of $\metplus{\cdot}{\cdot}{\Gamma}$ to the $\epsilon$-neighborhood of $\partial D$ for each $\epsilon > 0$.

Suppose that we are on the event that the harmonic measure of $X$ is positive in $D$.  Then as $X$ is independent of $h$, it is easy to see that $\qbmeasure{h}(X) > 0$.  Thus part~\eqref{it:boundary_meas_zero}  implies that $X$ a.s.\ has zero harmonic measure in $D$.

We now prove~\eqref{it:loop_boundary_meas_zero}.  Let $y \in \partial D$ be picked from $\qbmeasure{h}$ independently of $x$ and let $\eta$ be an $\SLE_\kappa(\kappa-6)$ in $D$ from $x$ to $y$ coupled with $\Gamma$ as a CPI.  Fix $\delta > 0$ and for each $j \in \N$ let $\tau_j$ be the $j$th time that $\eta$ hits a loop of $\Gamma$ with quantum length at least $\delta$.  Let $\CL_j$ be this loop of $\Gamma$.  Assume we have explored $\eta([0,\tau_j^-])$ and the first $\delta$ units of quantum length of $\CL_j$ starting from where $\eta$ first hits $\CL_j$ and let $\eta_j$ be the rest of $\CL_j$.  Then the conditional law of $\eta_j$ given what we have explored is that of an $\SLE_\kappa$ in the remaining domain.  In particular, we can sample from the joint law of $\eta_j$ and the unexplored loops of $\Gamma$ by first sampling from the law in the half-planar setup and then applying a conformal transformation from $\C \setminus \eta_-$ as in the proof of the first assertion of the lemma.  This proves that the quantum length of the set of points $z$ in $\eta_j$ for which there exists $w \in \Upsilon \setminus \{z\}$ with $\metplus{z}{w}{\Gamma_j} = 0$ is a.s.\ $0$.  Since $\delta > 0$ was arbitrary, we see that the quantum length of the set of points $z$ in $\CL_j$ for which there exists $w \in \Upsilon \setminus \{z\}$ with $\metplus{z}{w}{\Gamma_j} = 0$ is a.s.\ $0$.  This does not quite complete the proof as $\eta$ does not hit every loop of $\Gamma$.  However, Proposition~\ref{prop:cpi_path_close} implies that the conditional probability given~$\Upsilon$ that~$\eta$ hits any loop $\CL$ of~$\Gamma$ in a fixed interval of $\CL$ is positive.  Combining, this implies~\eqref{it:loop_boundary_meas_zero}.  The assertion regarding the harmonic measure in the unbounded component of $\C \setminus \CL$ in place of $\qbmeasure{h}$ follows similarly as in the case of $X$ described above.
\end{proof}

\subsubsection{Definition of the shields}
\label{subsubsec:shield_def}

\begin{figure}[ht!]
\begin{center}
\includegraphics[scale=1]{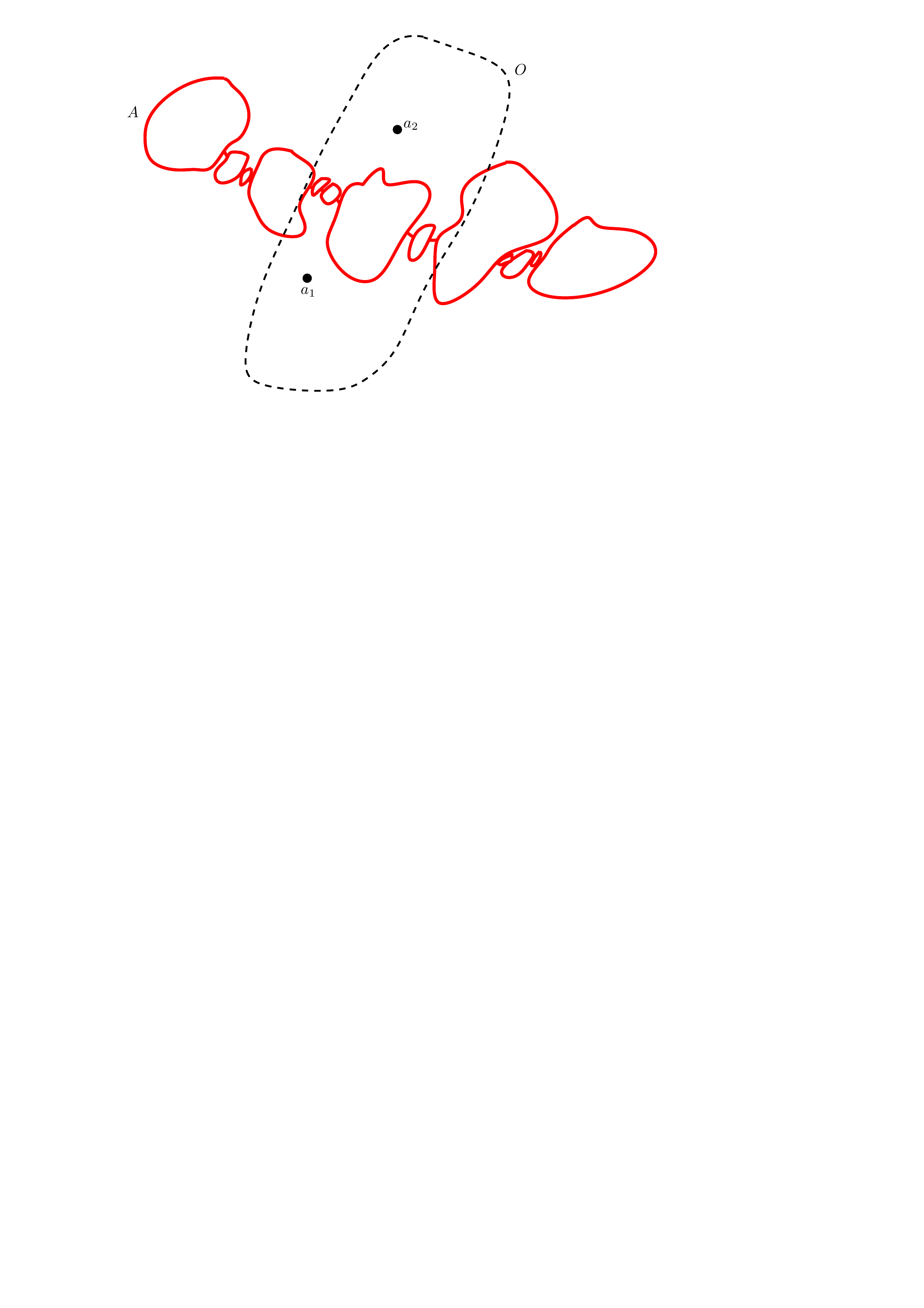}	
\end{center}
\caption{\label{fig:shield_def} Illustration of the definition of a shield $A$, whose boundary is shown in red.  We note that the set of points surrounded by loops contained in $A$ are dense in $A$ but are not equal to all of $A$ since $\CLE_\kappa$ loops with $\kappa \in (8/3,4)$ do not intersect each other.  The region bounded by the dashed line is $O$.}
\end{figure}

We now suppose that we are in the setting of the statement of Proposition~\ref{prop:no_zero_crossings} and we have four distinct marked points $z_1,\ldots,z_4 \in \partial D$ given in counterclockwise order.  Let $(r_k)$ be an enumeration of the points with rational coordinates in $D$.  For each~$k$, we let~$\CL_k$ be the loop of $\Gamma$ which surrounds $r_k$.  (We note that there a.s.\ exists a unique such loop for all $k \in \N$ simultaneously.)  We say that $j \sim_0 k$ if $j = k$ or if there exists a closed and connected set~$A \subseteq D$ (so that $\dist(A,\partial D) > 0$) such that $\CL_j, \CL_k \subseteq A$ and the harmonic measure of both $\CL_j$ and $\CL_k$ in $\C \setminus A$ are positive and so that the following properties hold (see Figure~\ref{fig:shield_def} for an illustration).
\begin{enumerate}[(I)]
\item\label{it:ic1} Let $U$ be the unbounded component of $\C \setminus A$.  Then $A = \C \setminus U$.
\item\label{it:ic2} The set of $z$ points in $\partial A$ so that there exists $\CL \in \Gamma$ with $z \in \CL$ and so that $\CL$ has positive harmonic measure in $\C \setminus A$ is dense in $\partial A$.
\item\label{it:ic3}   If $O \subseteq D$ is open, $a_1,a_2 \in O^*$, $\Gamma_O$ is equal to the loops of $\Gamma$ which are contained in $O^*$, and there exists $\delta > 0$ so that $a_1,a_2$ are in different components of $O^\delta \setminus A$ where $O^\delta$ is the $\delta$-neighborhood of $O$, then $\metplus{a_1}{a_2}{\Gamma_O} > 0$.
\end{enumerate}
We then let $\sim$ be the finest equivalence relation on $\N$ so that $j \sim_0 k$ implies $j \sim k$.  For each $k \in \N$, we let $A_k$ be closure of the union of the points surrounded by $\CL_j$ such that $j \sim k$.  We let
\[ E_k = \{ A_k \cap \ccwBoundary{z_2}{z_3}{\partial D} \neq \emptyset \},\quad \CK = \{k \in \N : E_k \text{ occurs}\},\quad \text{and}\quad K = \closure{\cup_{k \in \CK} A_k}.\]

We are now going to collect some properties of the $A_k$'s.  In particular, we will show in Lemma~\ref{lem:e_k_pos} that $\p[E_k] > 0$ for all $k \in \N$ and in Lemma~\ref{lem:boundary_good} that with $Y_k = \{z \in A_k : \metplus{z}{\partial A_k}{\Gamma} = 0\}$ we have that $U_k = A_k \setminus Y_k$ is simply connected and $\partial U_k = Y_k$.  Lemma~\ref{lem:boundary_shield} is an intermediate step in both of these lemmas as it gives a construction of a set~$A$ satisfying properties~\eqref{it:ic1}--\eqref{it:ic3} from above.  We will then define a clockwise exploration of the~$A_k$'s which hit $\ccwBoundary{z_2}{z_3}{\partial D}$ which we will prove in Lemma~\ref{lem:component_local} is local for $(\Gamma,\metplus{\cdot}{\cdot}{\Gamma})$ and use in Lemma~\ref{lem:ak_cross_finite} to show that the set of $A_k$'s which hit $\ccwBoundary{z_2}{z_3}{\partial D}$ and leave the $\delta$-neighborhood of $\ccwBoundary{z_2}{z_3}{\partial D}$ is finite for each $\delta > 0$.

\begin{lemma}
\label{lem:boundary_shield}
Suppose that we have the setup described above and fix $x,y \in \partial D$ distinct.  Let $W$ be a component of $D \setminus Z_{x,y}$ which is not surrounded by a loop of $\Gamma$, let $I$ be an arc of $\partial W \cap \partial Z_{x,y}$ (in the sense of prime ends in $\partial W$) which has positive distance from $\partial D$, and let $\wt{A}$ be the closure of the union of $I$ and the loops of $\Gamma$ whose intersection with $I$ has positive harmonic measure in $W$.  Let $A$ be the complement of the unbounded component of $\C \setminus \wt{A}$.  On $A \neq \emptyset$, we have that $A$ satisfies properties~\eqref{it:ic1}--\eqref{it:ic3} above.  The same is also true with $Z_{x,y}^+$ in place of $Z_{x,y}$.
\end{lemma}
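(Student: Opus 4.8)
\textbf{Proof proposal for Lemma~\ref{lem:boundary_shield}.}

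The plan is to verify properties~\eqref{it:ic1}--\eqref{it:ic3} one at a time, and the bulk of the work is for~\eqref{it:ic3}. Property~\eqref{it:ic1} is automatic from the definition of $A$: since $A$ is by construction the complement of the unbounded component $U$ of $\C \setminus \wt A$, we have $A = \C \setminus U$ and $U$ is connected and unbounded. (We should also note that $A$ is closed and bounded; the boundedness comes from the fact that $I$ has positive distance from $\partial D$ and $D$ is a Jordan domain, so $\wt A \subseteq \closure D$ is compact, and the loops of $\Gamma$ whose intersection with $I$ has positive harmonic measure in $W$ must themselves lie in $\closure D$.) Property~\eqref{it:ic2} is where we use Lemma~\ref{lem:in_between_stuff_is_small}: the non-loop points on $\partial W \setminus \partial D$ as seen from an interior point of $W$ have zero harmonic measure, so the loop points are dense in $\partial W \cap \partial Z_{x,y}$, and in particular dense in $I$; since $\partial A \subseteq \wt A$ and $\wt A$ is the closure of $I$ together with loops touching $I$ with positive harmonic measure, the loop points which have positive harmonic measure in $\C \setminus A$ are dense in $\partial A$. (The harmonic measure in $\C \setminus A$ and in $W$ are comparable near $I$ since $I \subseteq \partial W$ and $I$ has positive distance from $\partial D$, using a standard harmonic-measure comparison / Harnack argument.)

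For property~\eqref{it:ic3}: fix $O \subseteq D$ open, $a_1, a_2 \in O^*$, $\Gamma_O$ the loops of $\Gamma$ contained in $O^*$, and suppose that for some $\delta > 0$ the points $a_1, a_2$ lie in different components of $O^\delta \setminus A$. We want to conclude $\metplus{a_1}{a_2}{\Gamma_O} > 0$. The key observation is that $Z_{x,y}$ (respectively $Z_{x,y}^+$) is, by its very definition, the locus of points which can be joined to $\ccwBoundary{x}{y}{\partial D}$ by a ``$\metplus{\cdot}{\cdot}{\Gamma}$-length zero path''; hence if $\metplus{a_1}{a_2}{\Gamma_O}$ were zero, then since $a_1$ and $a_2$ are separated by $A \subseteq Z_{x,y}^*$ inside $O^\delta$, a zero-length path from $a_1$ to $a_2$ inside $O^*$ would have to pass through $A$, and therefore would have to pass through a point $z \in \partial A$ which is \emph{not} surrounded by a loop of $\Gamma$ (a zero-length path in $\Upsilon$ cannot enter the interior of a loop). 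But $\partial A \setminus (\text{interiors of loops of }\Gamma)$ is contained in the set of non-loop points of $\partial W \cap \partial Z_{x,y}$ together with the domain boundary, and by Lemma~\ref{lem:in_between_stuff_is_small} those non-loop boundary points have zero harmonic measure in $W$, so a generic path cannot be pinned to them; more precisely, any point $z$ on such a zero-length path which lies on $\partial A$ and is a non-loop point would have $\metplus{z}{\partial W \cap \partial Z_{x,y}}{\Gamma} = 0$ and also $z \in \Upsilon$, forcing $z \in Z_{x,y}$, contradicting that $z$ lies in a component $W$ of $D \setminus Z_{x,y}$ strictly on one side. I would make this rigorous by approximating: take $O^{\delta'} \setminus A$ for $\delta' < \delta$, note $a_1, a_2$ are still separated, and use the fact (from Lemma~\ref{lem:in_between_stuff_is_small} and Lemma~\ref{lem:no_zero_length_on_boundary}) that the portion of $\partial A$ reachable by a zero-length path from $a_2$'s side has zero harmonic measure, hence cannot be crossed.

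The main obstacle I anticipate is making the ``a zero-length path must cross $\partial A$ at a non-loop point, but non-loop points of $\partial A$ cannot be crossed'' argument precise, because $\metplus{\cdot}{\cdot}{\Gamma}$ is only known to be a pseudometric at this stage (positive-definiteness is precisely what the whole section is working toward), so one has to argue entirely in terms of the $\metplus{\cdot}{\cdot}{\cdot}$-length of paths rather than treating it as a genuine metric, and one has to carefully track which loops of $\Gamma$ are ``available'' (only those in $O^*$, via $\Gamma_O$) versus which loops were used to build $A$. The way around this is to use the locality of $Z_{x,y}$ and $Z_{x,y}^+$ established in Lemma~\ref{lem:zero_metric_ball_is_local}: conditionally on $\CF_{Z_{x,y}}$, the field/metric data in $W$ is that of an independent $\CLE_\kappa$ with its own $\metplus{\cdot}{\cdot}{\cdot}$, so one can transfer to a clean reference domain via Lemma~\ref{lem:zero_length_conf} and run the argument there, where $\partial A$ near $I$ is literally a boundary arc of a $\CLE_\kappa$ domain and Lemma~\ref{lem:no_zero_length_on_boundary} applies directly to say that the set of boundary points joined to the rest of the domain by a zero-length path has zero harmonic (and quantum) measure. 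Finally, the statement for $Z_{x,y}^+$ in place of $Z_{x,y}$ follows by the identical argument, since Lemmas~\ref{lem:zero_metric_ball_is_local}, \ref{lem:in_between_stuff_is_small}, and \ref{lem:no_zero_length_on_boundary} are all stated with both versions.
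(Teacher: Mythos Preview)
Your treatment of properties~\eqref{it:ic1} and~\eqref{it:ic2} matches the paper's. The gap is in~\eqref{it:ic3}.

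The paper does \emph{not} use locality (Lemma~\ref{lem:zero_metric_ball_is_local}) or Lemma~\ref{lem:no_zero_length_on_boundary} here at all. Instead it runs a case analysis on the position of $a_1,a_2$ relative to $W$ and appeals only to the \emph{definition} of $Z_{x,y}$ together with Lemma~\ref{lem:in_between_stuff_is_small}. If $a_1\in W$ and $a_2\in O\setminus\closure{W}$ (or both are in $W$), then $\metplus{a_1}{a_2}{\Gamma_O}=0$ would force $a_1$ to be at zero distance from a non-loop point of $A\subseteq Z_{x,y}$, hence $a_1\in Z_{x,y}$, contradicting $a_1\in W$. This is essentially the case you sketched.

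The case you miss entirely is $a_1,a_2\in O\setminus\closure{W}$. Here neither point is in $W$, so the ``$a_1\in Z_{x,y}$, contradiction'' mechanism is unavailable, and your proposed detour through locality cannot help: locality only describes the conditional law \emph{inside} $W$, whereas the points and the metric $\metplus{\cdot}{\cdot}{\Gamma_O}$ live outside. The paper handles this case with a topological argument: since the loops of $\Gamma$ are disjoint, any carpet path in $O^\delta$ joining $a_1$ to $a_2$ must enter $\closure{W}$. One then studies $Z_{a_1}=\{u\in O^*:\metplus{a_1}{u}{\Gamma_O}=0\}$; if $Z_{a_1}\cap W\neq\emptyset$ one is back in the previous case, while if $Z_{a_1}\cap W=\emptyset$ one shows that $Z_{a_1}$ must contain $\CL\cap I$ for some loop $\CL\subseteq A$ with positive harmonic measure in $W$ (because such a loop, together with $\partial W$, separates $a_1$ from $a_2$ in $O^{\delta/2}$), and this contradicts Lemma~\ref{lem:in_between_stuff_is_small}.

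Separately, your proposed use of Lemma~\ref{lem:no_zero_length_on_boundary} would not close the argument even in principle: that lemma says the set of boundary points with a zero-length connection to another point has measure zero, but a zero-length ``path'' could in principle cross $\partial A$ at a single point, which is a measure-zero event that the lemma does not exclude. The paper's argument instead forces $Z_{a_1}$ to contain a set of \emph{positive} harmonic measure on $I$, which is what yields the contradiction.
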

\begin{proof}
We will give the proof in the case of $Z_{x,y}$; the argument in the case of $Z_{x,y}^+$ is analogous.

Property~\eqref{it:ic1} holds by definition.  Lemma~\ref{lem:in_between_stuff_is_small} implies that if $w \in W$ then the harmonic measure as seen from $w$ of the set of points in $I$ which are not in a loop of $\Gamma$ is zero.  Therefore~\eqref{it:ic2} holds.  Suppose that $O \subseteq D$ is an open set, $a_1,a_2 \in O^*$, and $\delta > 0$ is such that $a_1$, $a_2$ are in different components of $O^\delta \setminus A$ where $O^\delta$ is the $\delta$-neighborhood of~$O$.  Let~$\Gamma_O$ be the loops of~$\Gamma$ which are contained in~$O^*$.  If $a_1 \in W$ and $a_2 \in O \setminus \closure{W}$ then by the definition of $A$ we must have that $\metplus{a_1}{a_2}{\Gamma_O} > 0$ (for otherwise $a_1 \in Z_{x,y}$).  Now suppose that $a_1,a_2 \in W$.  If $\metplus{a_1}{a_2}{\Gamma_O} = 0$ then there must exist $u \in A \cap O^*$ which is not in a loop of $\Gamma$ so that $\metplus{a_1}{u}{\Gamma_O} = 0$.  This also contradicts the definition of $Z_{x,y}$ so we must have that $\metplus{a_1}{a_2}{\Gamma_O} > 0$ (for otherwise $a_1 \in Z_{x,y}$).  If $a_1$, $a_2$ are in different components of $O^*$ then $\metplus{a_1}{a_2}{\Gamma_O} = \infty$.  Lastly suppose that $a_1,a_2 \in O \setminus \closure{W}$.  Suppose that $a_1$, $a_2$ are in the same component of $O^*$ (for otherwise $\metplus{a_1}{a_2}{\Gamma_O} = \infty$).  Since the loops of $\Gamma$ do not intersect each other, it follows that if $a_1$, $a_2$ are in different components of $O^\delta \setminus A$ then any path in $\Upsilon \cap O^\delta$ which  connects $a_1$, $a_2$ must enter $\ol{W}$.  Suppose that $\metplus{a_1}{a_2}{\Gamma_O} = 0$ and let $Z_{a_1}$ be the set of $u \in O^*$ so that $\metplus{a_1}{u}{\Gamma_O} = 0$.  If $Z_{a_1} \cap W \neq \emptyset$ then we get a contradiction to the definition of $Z_{x,y}$ because then there exists $u \in W$ with $\metplus{u}{\ccwBoundary{x}{y}{\partial D}}{\Gamma} = 0$.  If $Z_{a_1} \cap W = \emptyset$, then as $Z_{a_1}$ is closed and connected and contains $a_2$ it follows that its intersection with $\partial W$ has positive harmonic measure in $W$.  Indeed, the reason for this is that since the loops of $\Gamma$ do not intersect each other there must exist $\CL \in \Gamma$ which is contained in $A$ so that $\CL \cup \partial W$ disconnects $a_1,a_2$ in~$O^{\delta/2}$.  Since $\CL$ is in $A$, the points disconnected from $\infty$ by $\CL \cup I$ are in $A$ and its intersection with $I$ must have positive harmonic measure in $W$.  Thus as $Z_{a_1}$ must contain $\CL \cap I$ we get a contradiction to Lemma~\ref{lem:in_between_stuff_is_small}.  Altogether, this proves that~\eqref{it:ic3} holds. 
\end{proof}

\begin{lemma}
\label{lem:e_k_pos}
Suppose that we have the setup described just above.  For each $k \in \N$ we have that $\p[E_k] > 0$.
\end{lemma}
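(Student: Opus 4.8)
\textbf{Proof proposal for Lemma~\ref{lem:e_k_pos}.}

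The plan is to show that with positive probability a suitable ``shield'' set $A$ (satisfying properties~\eqref{it:ic1}--\eqref{it:ic3}) containing the loop $\CL_k$ can be constructed so that $A$ intersects $\ccwBoundary{z_2}{z_3}{\partial D}$; this immediately gives $\p[E_k] > 0$ since on that event $A \subseteq A_k$ (because all the loops surrounded by $\CL_j$ with $j \sim k$ are in $A_k$, and in particular the loops making up $A$ are in $A_k$), hence $A_k \cap \ccwBoundary{z_2}{z_3}{\partial D} \neq \emptyset$. The key tool is Lemma~\ref{lem:boundary_shield}, which manufactures a shield from the boundary of $Z_{x,y}$ (or $Z_{x,y}^+$) for a suitable choice of the boundary arc endpoints $x,y$.

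First I would fix $x,y \in \partial D$ so that $\ccwBoundary{x}{y}{\partial D}$ is a small arc contained in the interior of $\ccwBoundary{z_2}{z_3}{\partial D}$, chosen small enough that the comparability of the quantiles (via Lemma~\ref{lem:zero_metric_ball_does_not_hit} applied through an appropriate conformal map, as in the analysis of $Z_{x,y}$) guarantees that $\metplus{\ccwBoundary{x}{y}{\partial D}}{\partial D \setminus \ccwBoundary{x}{y}{\partial D}}{\Gamma} > 0$ with positive probability; equivalently, on a positive-probability event, $Z_{x,y} \setminus \partial D$ is a proper subset of $D$ that does not reach the rest of $\partial D$. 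On this event, let $W$ be the component of $D \setminus Z_{x,y}$ whose boundary contains the complementary arc $\cwBoundary{x}{y}{\partial D}$. Next I would observe that, since $Z_{x,y}$ is local for $(\Gamma,\metplus{\cdot}{\cdot}{\Gamma})$ by Lemma~\ref{lem:zero_metric_ball_is_local}, the conditional law of $\Gamma$ inside any component of $D \setminus Z_{x,y}^*$ is again a $\CLE_\kappa$. The point $r_k$ with rational coordinates is contained in some component; by choosing the arc $\ccwBoundary{x}{y}{\partial D}$ to be close to $r_k$ (more precisely, by first conditioning on a positive-probability configuration of $Z_{x,y}$ that separates $r_k$ from most of $\partial D$ but leaves a component $W'$ containing $r_k$ together with a sub-arc of $\partial Z_{x,y}$ that has positive distance from $\partial D$ and touches $\ccwBoundary{z_2}{z_3}{\partial D}$ in the appropriate sense after we take an arc $I \subseteq \partial W' \cap \partial Z_{x,y}$), Lemma~\ref{lem:boundary_shield} produces a closed connected set $A$ with $\CL_k$-harmonic measure positive in $\C \setminus A$ (after possibly further conditioning inside $W'$ so that $\CL_k$ is one of the loops whose intersection with $I$ has positive harmonic measure in $W'$). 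Then by Proposition~\ref{prop:cpi_path_close}, combined with the repeated resampling argument already used to prove that proposition, the conditional probability given $\Upsilon$ that the loop $\CL_k$ is ``captured'' by $\partial Z_{x,y}$ (i.e.\ that $\CL_k$ is one of the loops of $\Gamma$ whose intersection with the chosen arc $I$ has positive harmonic measure in $W'$, so $\CL_k \subseteq A$) is positive. Chaining these positive-probability events gives $\p[\CL_k \subseteq A] > 0$ for a shield $A$ intersecting $\ccwBoundary{z_2}{z_3}{\partial D}$, and then $A \subseteq A_k$ forces $E_k$.

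The main obstacle I anticipate is making rigorous the step that $\CL_k$ ends up inside the shield $A$ with positive probability, i.e.\ controlling which loops the boundary $\partial Z_{x,y}$ of the zero-distance set ``wraps around.'' This requires combining three ingredients delicately: (i) the comparability of quantiles, which guarantees $Z_{x,y} \neq D$ with positive probability and hence that $\partial Z_{x,y}$ is a nontrivial interface; (ii) the locality of $Z_{x,y}$ (Lemma~\ref{lem:zero_metric_ball_is_local}) and the fact that the in-between non-loop points have zero harmonic measure (Lemma~\ref{lem:in_between_stuff_is_small}), which is what makes $\partial Z_{x,y}$ ``made of loops'' and therefore usable to build $A$ satisfying~\eqref{it:ic2}; and (iii) the CPI-approximation result (Proposition~\ref{prop:cpi_path_close}) together with its resampling machinery, which we use to show that a CPI — and hence, by its relation to the structure of $Z_{x,y}$, the zero-distance set's boundary — can be steered to run close to any prescribed path in $\Upsilon$, in particular a path that encircles $r_k$ and touches $\ccwBoundary{z_2}{z_3}{\partial D}$. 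I would carry out (iii) by the same Markovian resampling scheme as in the proof of Proposition~\ref{prop:cpi_path_close}: start with a CPI, perform the resampling step a geometric number of times, and argue that on a positive-probability event the resulting configuration has the desired separating structure with $\CL_k$ inside. Once $\CL_k \subseteq A$ for a shield $A$ meeting $\ccwBoundary{z_2}{z_3}{\partial D}$, the definition of $A_k$ and of $\sim$ immediately give $A \subseteq A_k$ and thus $E_k$ occurs, completing the proof.
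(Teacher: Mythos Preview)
Your proposal has two genuine gaps that the paper's proof handles differently.

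\textbf{First gap: you assume $Z_{x,y}$ is nontrivial.} You write that Lemma~\ref{lem:zero_metric_ball_does_not_hit} ``guarantees that $\metplus{\ccwBoundary{x}{y}{\partial D}}{\partial D \setminus \ccwBoundary{x}{y}{\partial D}}{\Gamma} > 0$ with positive probability'' and then proceed to build a shield out of $\partial Z_{x,y}$. But that inequality only tells you $Z_{x,y}$ does not reach the rest of $\partial D$; it does not tell you $Z_{x,y}$ contains any interior carpet points at all. If $\metplus{\cdot}{\cdot}{\Gamma}$ happens to be positive definite (which is precisely what the whole section is building towards), then $Z_{x,y}$ is just the arc $\ccwBoundary{x}{y}{\partial D}$ and Lemma~\ref{lem:boundary_shield} gives you an empty set. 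The paper avoids this by a dichotomy: it first tries a naive shield, namely the loop-closure along a smooth curve $\omega$ from $r_k$ to a point of $\ccwBoundaryOpen{z_2}{z_3}{\partial D}$. Properties~\eqref{it:ic1} and~\eqref{it:ic2} hold automatically; if property~\eqref{it:ic3} holds with positive probability one is done. If it fails almost surely, that failure itself produces interior zero-distance pairs, and a CPI argument plus Lemma~\ref{lem:zero_length_conf} then pushes these to the boundary, giving $\p[Z_{x,y}^+ \setminus \partial D \neq \emptyset] > 0$ for every arc. Only then is Lemma~\ref{lem:boundary_shield} invoked.

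\textbf{Second gap: you try to force $\CL_k$ into the shield.} Your plan to ``steer'' $\partial Z_{x,y}$ so that $\CL_k$ lies on it via Proposition~\ref{prop:cpi_path_close} is not justified: that proposition controls the location of a CPI given $\Upsilon$, not the location of the zero-set boundary, and there is no established link between the two. The paper does not attempt this at all. Instead it shows only that \emph{some} $A_i$ meets the arc, and then transfers to $A_k$ by conformal invariance: with $\varphi\colon D\to D$ the M\"obius map sending $r_i$ to $r_k$ and fixing $x$, Lemma~\ref{lem:zero_length_conf} gives $\varphi(A_i)\stackrel{d}{=}A_k$, so $\p[A_k\cap\partial D\neq\emptyset]>0$; a further rotation by a uniform boundary point (again using Lemma~\ref{lem:zero_length_conf}) then lands $A_k$ on the prescribed arc with positive probability. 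This conformal-invariance transfer is the idea your proposal is missing.
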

\begin{proof}
We first note that if we have a smooth and simple curve $\omega$ which connects $r_k$ to a point in $\ccwBoundaryOpen{z_2}{z_3}{\partial D}$ then with $\wt{A}$ the closure of the union of the points surrounded by the loops of $\Gamma$ which intersect $\omega$ and $A$ the complement of the unbounded component of $\C \setminus \wt{A}$ we have that $A$ satisfies properties~\eqref{it:ic1}, \eqref{it:ic2} as above.  If property~\eqref{it:ic3} holds with positive probability, then we have shown that $\p[E_k] > 0$.

Suppose now that property~\eqref{it:ic3} does not hold for this set.  Then we in particular have that with positive probability there exist $z,w \in \Upsilon \setminus \partial D$ distinct with $\metplus{z}{w}{\Gamma} = 0$.  Note that for any $z \in \Upsilon$ the set $\{w \in \Upsilon : \metplus{z}{w}{\Gamma} = 0\}$ is connected.  Consequently, by using a CPI and applying Proposition~\ref{prop:cpi_path_close} and Lemma~\ref{lem:zero_length_conf}, we see that with positive probability the set $\{(z,w) \in \zerop{\Gamma} : z \in \Upsilon,\ w \in \partial D\}$ is non-empty.  Applying Lemma~\ref{lem:zero_length_conf} a second time implies that there exists $p > 0$ so that for all $x,y \in \partial D$ distinct we have that $\p[Z_{x,y}^+ \neq \emptyset] \geq p$.  Moreover, Lemma~\ref{lem:zero_metric_ball_does_not_hit} implies that $\diam(Z_{x,y}^+) \to 0$ in probability as $x \to y$.  Thus by choosing $x,y \in \ccwBoundaryOpen{z_2}{z_3}{\partial D}$ sufficiently close we have both $Z_{x,y}^+ \cap \ccwBoundary{z_2}{z_3}{\partial D} \neq \emptyset$ and $Z_{x,y}^+ \cap \partial D \subseteq \ccwBoundary{z_2}{z_3}{\partial D}$ with positive probability.  On this event, let $W_{x,y}$ be the component of $D \setminus Z_{x,y}^+$ with $z_4$ on its boundary.   Lemma~\ref{lem:boundary_shield} implies that if $\wt{A}$ is the closure of the union of an arc of $\partial W_{x,y} \cap Z_{x,y}^+$ (in the sense of prime ends in $W_{x,y}$) with positive distance to $\partial D$ and the loops of $\Gamma$ whose intersection with it has positive harmonic measure in $W_{x,y}$ then the complement $A$ of the unbounded component of $\C \setminus \wt{A}$ satisfies properties~\eqref{it:ic1}--\eqref{it:ic3}.  If the intersection of the arc with $\CL_i$, some $i \in \N$, has positive harmonic measure then all of the points surrounded by the loops of $\Gamma$ which intersect the arc and have positive harmonic measure are contained in $A_i$ and hence $\partial W_{x,y} \cap Z_{x,y}^+ \subseteq A_i$.  In particular, $A_i \cap \ccwBoundary{x}{y}{\partial D} \neq \emptyset$.

To finish the proof, it is left to explain why $A_k \cap \ccwBoundary{x}{y}{\partial D} \neq \emptyset$ with positive probability.  Let $\varphi$ the unique conformal transformation which takes $r_i$ to $r_k$ and fixes $x$.  Lemma~\ref{lem:zero_length_conf} implies that $\varphi(A_i) \stackrel{d}{=} A_k$.  Consequently $\p[A_k \cap \partial D \neq \emptyset] > 0$.   Let $x_0$ be a point in $\partial D$ chosen from harmonic measure as viewed from $r_k$ independently of everything else and let $y_0$ be such that the harmonic measure of $\ccwBoundary{x_0}{y_0}{\partial D}$ as seen from $r_k$ is equal to that of $\ccwBoundary{x}{y}{\partial D}$.  If $\p[A_k \cap \ccwBoundary{x_0}{y_0}{\partial D} \neq \emptyset] =0$ then $\p[ A_k \cap \partial D \neq \emptyset] = 0$ so we conclude that $\p[A_k \cap \ccwBoundary{x_0}{y_0}{\partial D} \neq \emptyset] > 0$.  Let $\psi \colon D \to D$ be the unique conformal transformation which fixes $r_k$ and sends $x_0$ to $x$.  Then $\psi(A_k) \stackrel{d}{=} A_k$ by Lemma~\ref{lem:zero_length_conf}.  Altogether, this implies that $\p[A_k \cap \ccwBoundary{x}{y}{\partial D} \neq \emptyset] > 0$, which completes the proof.
\end{proof}

\begin{lemma}
\label{lem:boundary_good}
Suppose that we have the setup described above.  Then the following a.s.\ hold.
\begin{enumerate}[(i)]
 \item\label{it:boundary_zero} If $z,w$ are in the same component of $\partial A_k \setminus \partial D$ then $\metplus{z}{w}{\Gamma} = 0$.
 \item\label{it:simply_connected} Let $Y_k$ be the set of $z \in A_k$ such that $\metplus{z}{\partial A_k}{\Gamma} = 0$.  Then $U_k = A_k \setminus Y_k$ is simply connected.
 \item\label{it:boundary_equal} $\partial U_k = Y_k$.
 \end{enumerate}
\end{lemma}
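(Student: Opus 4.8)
The strategy is to deduce all three assertions from the properties that define a shield (namely \eqref{it:ic1}--\eqref{it:ic3} applied to each connecting set~$A$ for the pairs realizing $\sim$) together with Lemma~\ref{lem:in_between_stuff_is_small}, which controls the harmonic measure of non-loop points on the boundary of a component of a ``$\metplus{\cdot}{\cdot}{\Gamma}$-zero set''. The key observation is that $A_k$ is, by construction, a countable increasing union of sets of the form ``closure of the union of the points surrounded by $\CL_j$ with $j \sim k$'' and that any two such loops are joined by a finite chain of shields, so $A_k$ itself inherits a shield-like structure; the subtlety is that $A_k$ is the closure of a union of \emph{open} regions enclosed by loops and one must understand its boundary.

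First I would prove~\eqref{it:boundary_zero}. Fix $z,w$ in the same component $V$ of $\partial A_k \setminus \partial D$. The point is that $V$ lies entirely within a single shield: by property~\eqref{it:ic1} each connecting set~$A$ is the complement of the unbounded component of its complement, so $A_k$ (being the closure of a nested union of such sets, glued along loops as in the definition of $\sim$) has the property that $\partial A_k$ is covered by the boundaries $\partial A$ of the finitely many shields through which one passes in going around $V$; more precisely, $V$ is connected and contained in the union of the $\partial A$'s whose harmonic measure in $\C \setminus A$ is positive, and each such $\partial A$ meets $V$ in a relatively open connected piece. Since property~\eqref{it:ic2} says the loop points are dense in each $\partial A$, and loop points on the same $\CLE_\kappa$ loop are joined by a ``zero-$\metplus{\cdot}{\cdot}{\Gamma}$-length'' path (the loop itself carries no Lebesgue area, so one may run a path along it — this is where we use that $\metplus{\cdot}{\cdot}{\Gamma}$ is defined as a limit of $\lebneb{\epsilon}$-infima and Proposition~\ref{prop:cpi_path_close} is not even needed), it suffices to connect $z$ to a loop point, connect $w$ to a loop point, and connect the two loop points by a chain of loops within $\partial A_k$. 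The density statement~\eqref{it:ic2} and the local connectivity of $\partial A$ (it is the boundary of the unbounded complementary component, hence locally connected by Carathéodory) give that such a chain exists and that each link can be traversed with $\lebneb{\epsilon}$-length tending to $0$; summing a convergent geometric-type series of such bounds (the chain can be taken finite since $V$ is compactly contained in $D$) yields $\metplus{z}{w}{\Gamma} = 0$. The main obstacle here is making rigorous the claim that $\partial A_k \setminus \partial D$ decomposes into components each living inside finitely many shields; this rests on the fact that, on the event $E_k$, only finitely many shields meet any compact subset of $D$ away from $\partial D$, which is essentially Lemma~\ref{lem:ak_cross_finite} (or can be extracted from the same local-exploration argument).

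Next, \eqref{it:simply_connected} and \eqref{it:boundary_equal}. By~\eqref{it:boundary_zero}, the set $Y_k = \{z \in A_k : \metplus{z}{\partial A_k}{\Gamma} = 0\}$ contains all of $\partial A_k \setminus \partial D$ together with every point that can be joined to it by a zero-length path; in particular $Y_k$ is closed (continuity of $\metplus{\cdot}{\cdot}{\Gamma}$) and connected. The claim $\partial U_k = Y_k$ for $U_k = A_k \setminus Y_k$ then amounts to: (a) $\partial U_k \subseteq Y_k$, which holds because $Y_k$ is closed so $U_k$ is open and relatively open in $A_k$, forcing $\partial U_k \cap A_k \subseteq Y_k$, while $\partial U_k \cap \partial A_k \subseteq Y_k$ by~\eqref{it:boundary_zero}; and (b) $Y_k \subseteq \partial U_k$, i.e.\ no point of $Y_k$ is interior to the complement of $U_k$ in $A_k$ — this is where property~\eqref{it:ic3} of shields is used: if some $z \in Y_k$ had a neighborhood in $A_k$ disjoint from $U_k$, then that whole neighborhood would be ``zero-length reachable'' from $\partial A_k$, and applying~\eqref{it:ic3} with an appropriate open set $O$ surrounding a point strictly inside $U_k$ would produce a contradiction to the separation property (a zero-length path could not in fact cross the shield). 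For simple connectivity of $U_k$: $U_k$ is an open subset of the plane and $\C \setminus U_k \supseteq \C \setminus A_k$ is connected (unbounded component of the complement of $A_k$, by~\eqref{it:ic1}) and contains $Y_k$, which is connected and meets $\partial A_k$; hence $\C \setminus U_k$ is connected, so $U_k$ is simply connected. I expect step~\eqref{it:boundary_zero} — specifically, the chaining-of-loops argument and the uniform control of finitely many shields on compacts — to be the principal technical hurdle, with \eqref{it:simply_connected}--\eqref{it:boundary_equal} then following by point-set topology once~\eqref{it:ic3} is invoked.
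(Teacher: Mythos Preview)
Your argument for part~\eqref{it:boundary_zero} has a genuine gap at its core. You assert that ``loop points on the same $\CLE_\kappa$ loop are joined by a zero-$\metplus{\cdot}{\cdot}{\Gamma}$-length path (the loop itself carries no Lebesgue area, so one may run a path along it).'' This is not justified: $\lebneb{\epsilon}(\cpath)$ is the Lebesgue measure of the $\epsilon$-neighborhood of $\cpath$, which for a $\CLE_\kappa$ loop scales like $\epsilon^{1-\kappa/8}$, not like $0$. After dividing by $\medianHP{\epsilon}$---whose scaling is \emph{not} known at this stage of the paper---the limit could be anything. In fact, if loops had zero length in the limiting metric, one could chain them to connect any two carpet points, forcing $\metplus{\cdot}{\cdot}{\Gamma}\equiv 0$ and contradicting the positive definiteness that Section~\ref{sec:pos_def} is trying to establish. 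Your appeal to Lemma~\ref{lem:ak_cross_finite} for local finiteness of shields is also misplaced: that lemma counts the distinct equivalence classes $A_k$, not the constituent shields within a single $A_k$, of which there may be infinitely many.

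The paper's approach to~\eqref{it:boundary_zero} is entirely different and is the idea you are missing: it proceeds by contradiction, exploiting that $A_k$ is \emph{maximal} under the equivalence relation~$\sim$. If there were $z,w$ in the same boundary component with $\metplus{z}{w}{\Gamma}>0$, the paper shows one could build (via Lemma~\ref{lem:boundary_shield} and the sets $Z_{x,y}$) a new shield satisfying \eqref{it:ic1}--\eqref{it:ic3} that links a loop inside $A_k$ to a loop outside $A_k$, contradicting maximality. This ``if the claim fails, $A_k$ would have to be strictly larger'' mechanism is what drives all three parts. Your argument for~\eqref{it:simply_connected} is also incomplete: showing $\C\setminus U_k$ is connected gives only that each component of $U_k$ is simply connected, not that $U_k$ itself is connected; the paper proves connectedness of $U_k$ by another contradiction argument using property~\eqref{it:ic3}.
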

\begin{proof}
We will first prove part~\eqref{it:boundary_zero}.  As a first step, we will argue that $\partial A_k = \partial \interior{A_k}$.  We clearly have that $\partial \interior{A_k} \subseteq \partial A_k$.  Suppose that $z \in \partial A_k$ and fix $\epsilon > 0$.  By the definition of $A_k$, there exists $i \in \N$ with $i \sim k$ and a closed set $A \subseteq D$ with $\dist(A, \partial D) > 0$ satisfying \eqref{it:ic1}--\eqref{it:ic3} which contains $\CL_i$ and $\partial A$ contains some point $w \in B(z,\epsilon) \cap A_k$.  By~\eqref{it:ic2}, there exists $\CL \in \Gamma$ which is contained in $A$, has positive harmonic measure in $\C \setminus A$, and  intersects $B(w,\epsilon)$ hence $B(z,2\epsilon)$.  Since the points surrounded by $\CL$ are in $\interior{A_k}$, it follows that $\interior{A_k} \cap B(z,2\epsilon) \neq \emptyset$.  This proves that $z \in \partial \interior{A_k}$ as $\epsilon > 0$ was arbitrary.

Let $I$ be a component  of $\partial A_k \setminus \partial D$.  We assume that there exists $z,w \in I$ distinct so that $\metplus{z}{w}{\Gamma} > 0$ and we will obtain a contradiction.

Let $X = \{u \in I : \metplus{u}{\partial D}{\Gamma} > 0\}$.  We first claim that $X$ is dense in $I$.  Suppose for contradiction that it is not dense in $I$.  Then there exists $v \in I$ and $\epsilon > 0$ so that with $J = I \cap B(v,\epsilon)$ we have that $a \in J$ implies that $\metplus{a}{\partial D}{\Gamma} = 0$.  We claim that this implies that there exists $x,y \in \partial D$ distinct, $u \in I$, and $\delta > 0$ so that $B(u,\delta) \cap I \subseteq Z_{x,y}$ and $I$ is not contained in $Z_{x,y}$.  To see this, suppose that $x_1,\ldots,x_n$ are distinct points given in counterclockwise order in $\partial D$ and write $x_{n+1} = x_1$.  Let $1 \leq i_1 < \cdots < i_m \leq n$ be a minimal collection so that $B(v,\epsilon) \cap I \subseteq \cup_{\ell=1}^m Z_{x_{i_\ell},x_{i_\ell+1}}$.  Then there exists $u \in B(v,\epsilon) \cap I$ which is not in $\cup_{\ell=1}^{m-1} Z_{x_{i_\ell},x_{i_\ell+1}}$.  Let $\delta$ be the distance of $u$ to $\cup_{\ell=1}^{m-1} Z_{x_{i_\ell},x_{i_\ell+1}}$.  Then $\delta > 0$ as $\cup_{\ell=1}^{m-1} Z_{x_{i_\ell},x_{i_\ell+1}}$ is closed and we must have that $B(u,\delta) \cap I \subseteq Z_{x_{i_m},x_{i_m+1}}$.  If it were true that $I \subseteq Z_{x_{i_m},x_{i_m+1}}$ no matter how close we choose the spacing between the $x_i$'s, then by taking a limit as the spacing tends to $0$ we would obtain that there exists $x \in \partial D$ so that $\metplus{a}{x}{\Gamma} = 0$ for all $a \in I$.  This, in turn, contradicts our assumption that there exists exists $z,w \in I$ distinct so that $\metplus{z}{w}{\Gamma} > 0$.  We can thus find $x,y \in \partial D$ distinct, $u \in I$, and $\delta > 0$ so that $B(u,\delta) \cap I \subseteq Z_{x,y}$ and $I$ is not contained in $Z_{x,y}$.  Then by considering a boundary arc of a component $W$ of $D \setminus Z_{x,y}$ which is not a loop of $\Gamma$, we get a contradiction to the definition of $A_k$ because then Lemma~\ref{lem:boundary_shield} gives that we can use $Z_{x,y}$ to construct a set $A$ which satisfies properties~\eqref{it:ic1}--\eqref{it:ic3} and connects a loop of $\Gamma$ contained in $A_k$ to a loop of $\Gamma$ which is not contained in $A_k$.  This contradicts the definition of $A_k$ and therefore $X$ is dense in $I$.

Since $X$ is dense in $I$ and there exists $z,w \in I$ distinct so that $\metplus{z}{w}{\Gamma} > 0$ we may assume that $z,w \in I$ are such that $\metplus{z}{\partial D}{\Gamma} > 0$ and $\metplus{z}{w}{\Gamma} > 0$.  This implies that there exists $\epsilon > 0$ so that for all $v \in B(z,\epsilon)$ we have that $\metplus{v}{\partial D}{\Gamma} > 0$ and $\metplus{v}{w}{\Gamma} > 0$.  Let $z_1 \in \interior{A_k} \cap B(z,\epsilon)$ and $z_2 \in B(z,\epsilon) \setminus A_k$.  Let $A^0$ be the set of points $u$ so that $\metplus{u}{[z_1,z_2] \cup (\partial A_k \cap B(z,\epsilon))}{\Gamma} = 0$.  Then $A^0$ has positive distance to $\partial D$.  Let $\wt{A}$ be the closure of the union of $A^0$ and the loops of $\Gamma$ which intersect it and let $A$ be the complement of the unbounded component of $\C \setminus \wt{A}$.  Then $A$ satisfies properties~\eqref{it:ic1}--\eqref{it:ic3} and contains a loop inside of $A_k$ and a loop outside of $A_k$.  This also contradicts the definition of $A_k$.  Altogether, this completes the proof of part~\eqref{it:boundary_zero}.

We now turn to prove part~\eqref{it:boundary_zero}. As $Y_k$ is connected, to show that $U_k$ is simply connected it suffices to show that $U_k$ is connected.  Suppose that $U_k$ is not connected.  Let $V_k$ be the component of $U_k$ whose closure contains $\CL_k$ and the set of points surrounded by $\CL_k$.  Let $V$ be any other component of $U_k$ and assume that $V$ contains the points surrounded by some loop $\CL_i \in \Gamma$.  Then there cannot exist a set $A$ which satisfies \eqref{it:ic1}--\eqref{it:ic3} and has a loop $\CL_V$ (resp.\ $\wt{\CL}$) in $V$ (resp.\ $V_k$) whose harmonic measure in $\C \setminus A$ is positive because~\eqref{it:ic3} will in particular be violated.  To explain this point in further detail, suppose that there are such loops $\CL_V$, $\wt{\CL}$ and such a set~$A$.  Assume that $\partial V \cap \partial D = \emptyset$ so that $u,v \in \partial V$ implies $\metplus{u}{v}{\Gamma} = 0$ for simplicity (the case that $\partial V \cap \partial D = \emptyset$ is similar).  As the harmonic measure of $\CL_V$ in $\C \setminus A$ is positive, it follows from Lemma~\ref{lem:no_zero_length_on_boundary} that there is a point in $\CL_V \cap \partial A$ which is in $V$ hence $\partial V \setminus A \neq \emptyset$.  Since $A \cap \partial V$ is closed, this implies that $\partial V \setminus A$ has at least two points.  Let $u,v$ be distinct prime ends in $\partial V \setminus A$.  Then either $\cwBoundary{u}{v}{\partial V}$ or $\ccwBoundary{u}{v}{\partial V}$ is disconnected by $A$.  Assume that we are in the former situation.  Then we can find $O$ open and $\delta > 0$ small so that $u,v \in O$ and $A$ disconnects $u,v$ in the $\delta$-neighborhood of $O$, and $O$ contains $\cwBoundary{u}{v}{\partial V}$.  By the definition of $A$, this would imply that $\metplus{u}{v}{\Gamma_{O}} > 0$, which is a contradiction.  The same argument applies in the case that $A$ disconnects $\ccwBoundary{u}{v}{\partial U}$.  Therefore if $i \sim_0 k$ then the points surrounded by $\CL_i$ are contained in $V_k$, which implies that $U_k$ has only one component.

That~\eqref{it:boundary_equal} holds follows from the same argument used to show that $\partial A_k = \partial \interior{A_k}$ given in the beginning of the proof.
\end{proof}

\begin{figure}[ht!]
\begin{center}
\includegraphics[scale=1]{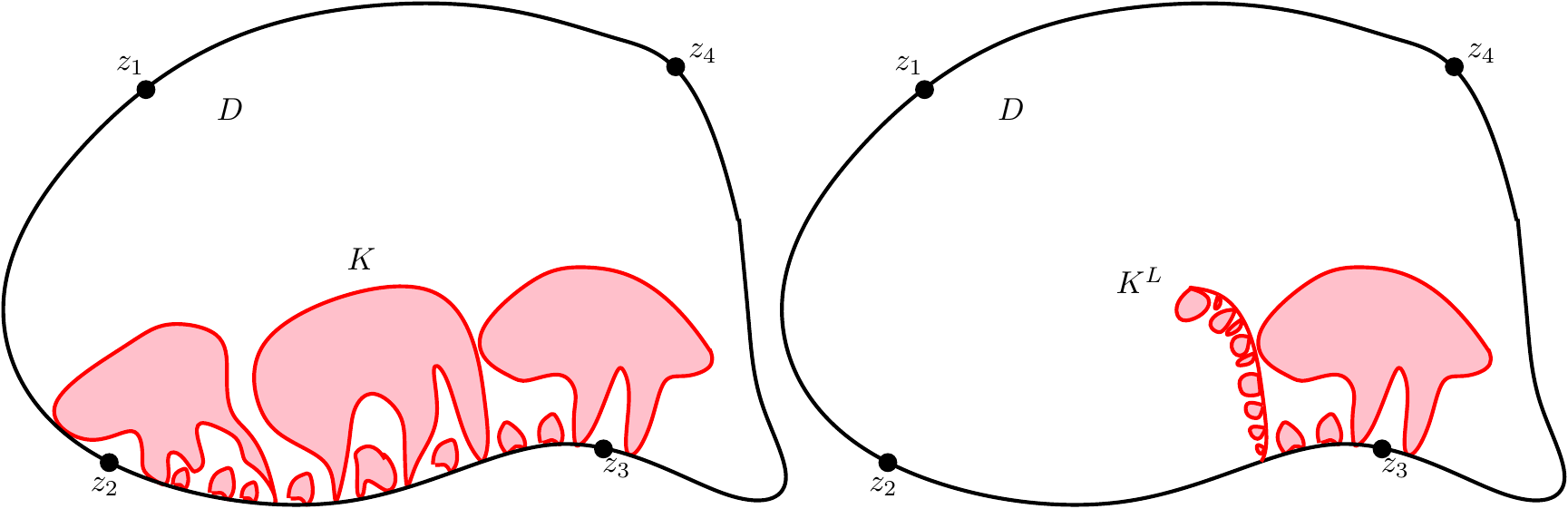}	
\end{center}
\caption{\label{fig:exploration_illustration} Illustration of the exploration.  {\bf Left:} Shown are the sets $A_k$ which intersect $\ccwBoundary{z_2}{z_3}{\partial D}$, i.e., $E_k$ occurs (equivalently $k \in \CK$).  There is a natural clockwise ordering of these sets based on their rightmost intersection with $\ccwBoundary{z_2}{z_3}{\partial D}$ (though one has to be careful in the case that multiple such sets have the same rightmost intersection point).  {\bf Right:} We can explore the $A_k$ for which $k \in \CK$ occurs from right to left using the aforementioned ordering and we can also partially explore an $A_k$ by conditioning on part of the set $Y_k$ of $z$ with $\metplus{z}{\partial A_k}{\Gamma} = 0$ starting from the rightmost intersection with $\ccwBoundary{z_2}{z_3}{\partial D}$ as well as the loops of $\Gamma$ which intersect this part of $Y_k$.  One can think of this exploration as being continuously parameterized by using the extremal length in the remaining domain between $\ccwBoundary{z_4}{z_1}{\partial D}$ and $\ccwBoundary{z_2}{z_3}{\partial D}$ together with the part of the boundary which is in the exploration.}
\end{figure}

We are now going to describe an exploration of the $A_k$'s for $k \in \CK$ which goes in the clockwise direction.  See Figure~\ref{fig:exploration_illustration} for an illustration.  Since we do not have any extra information on the $A_k$'s for $k \in \CK$ (e.g., regularity of $\partial A_k$) beyond the fact that they are closed sets and satisfy the assertions of Lemma~\ref{lem:boundary_good}, some care will be needed in describing this exploration.

We note that there is a natural clockwise ordering of the $A_k$'s for $k \in \CK$.  Indeed, if $i,j \in \CK$ then we say that $i \leq_\CK j$ if the rightmost point of $A_i \cap \ccwBoundary{z_2}{z_3}{\partial D}$ viewed as a prime end in the boundary of the component $W_{i,j}$ of $D \setminus A_j$ containing $A_i$ is to the right of the rightmost prime end in $\partial W_{i,j}$ corresponding to a point in $A_j \cap \ccwBoundary{z_2}{z_3}{\partial D}$.  If $i,j \in \CK$ and $i \sim j$ then we will write $i =_\CK j$.  Finally, if $i,j \in \CK$ and $i \leq_\CK j$ and $i \neq_\CK j$ then we will write $i <_\CK j$.

\newcommand{\EL}[3]{\mathrm{EL}(#1,#2;#3)}

For a simply connected domain $D \subseteq \C$ and arcs $I,J \subseteq \partial D$ we let $\EL{I}{J}{D}$ denote the extremal length between $I$ and $J$ in $D$.  Suppose we are on the event that $K \cap \ccwBoundary{z_4}{z_1}{\partial D} = \emptyset$.  Let $W$ be the component of $D \setminus K$ with $\ccwBoundary{z_4}{z_1}{\partial D}$ on its boundary and let $I = \partial W \cap (K \cup \ccwBoundary{z_2}{z_3}{\partial D})$.  We note that $\EL{I}{\ccwBoundary{z_4}{z_1}{\partial D}}{W} \leq \EL{\ccwBoundary{z_2}{z_3}{\partial D}}{\ccwBoundary{z_4}{z_1}{\partial D}}{D}$.  For each $j \in \CK$ we let $K_j$ be the closure of the union of the $A_k$'s for $k \in \CK$ and with $k \leq_\CK j$.  We also let $W_j$ be the component of $D \setminus K_j$ with $\ccwBoundary{z_4}{z_1}{\partial D}$ on its boundary and let $I_j = \partial W_j \cap (K_j \cup \ccwBoundary{z_2}{z_3}{\partial D})$.  We let $\CK_L$ be the set of $j \in \CK$ with $\EL{I_j}{\ccwBoundary{z_4}{z_1}{\partial D}}{W_j} \leq L$.  Finally, we let $K_0^L = \cap_{j \in \CK_L} K_j$, $W^L$ be the component of $D \setminus K^L$ with $\ccwBoundary{z_4}{z_1}{\partial D}$ on its boundary, and $I^L = \partial W^L \cap (K^L \cup \ccwBoundary{z_2}{z_3}{\partial D})$.  Then we have that $\EL{I^L}{\ccwBoundary{z_4}{z_1}{\partial D}}{W^L} \leq L$.

If $\EL{I^L}{\ccwBoundary{z_4}{z_1}{\partial D}}{W^L} < L$, then there exists $j \in \CK$ and $K_j = K_0^L$.  In this case, we let $Y_j$ and $U_j$ be as in~\eqref{it:simply_connected} of Lemma~\ref{lem:boundary_good} and let $\varphi \colon \D \to U_j$ be a conformal map which sends $1$ to the rightmost point of $Y_j \cap \ccwBoundary{z_2}{z_3}{\partial D}$.  For each $\theta \in [0,2\pi)$, we let $I_j^\theta = \varphi(\ccwBoundary{1}{e^{i\theta}}{\partial \D})$.  We then let $A_j^\theta$ be equal to the closure of the union of $I_j^\theta$ and the loops of $\Gamma$ which intersect $I_j^\theta$.  We then let $K_j^\theta$ be equal to the closure of the union of the $A_k$ with $k \in \CK$ and $k <_\CK j$ together with $A_j^\theta$.  We let $W_j^\theta$ be the component of $D \setminus K_j^\theta$ with $\ccwBoundary{z_4}{z_1}{\partial D}$ on its boundary and let $J_j^\theta = \partial W_j^\theta \cap (K_j^\theta \cup \ccwBoundary{z_2}{z_3}{\partial D})$.  Finally, we let $\theta_L$ be the smallest $\theta \in [0,2\pi)$ so that $\EL{J_j^\theta}{\ccwBoundary{z_4}{z_1}{\partial W_j^\theta}}{W_j^\theta} \leq L$.  We then let $K^L = K_j^{\theta_L}$.

\begin{lemma}
\label{lem:component_local}
For each $L$, we have that $K^L$ is local for $(\Gamma,\metplus{\cdot}{\cdot}{\Gamma})$.  We also have that $K$ is local for $(\Gamma,\metplus{\cdot}{\cdot}{\Gamma})$.
\end{lemma}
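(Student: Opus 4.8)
\textbf{Proof proposal for Lemma~\ref{lem:component_local}.}
The plan is to verify the criterion of Lemma~\ref{lem:loc_characterization} for both $K^L$ and $K$. That is, fixing an open set $U \subseteq D$, I would show that given the $\sigma$-algebra $\CF_U = \cap_{\epsilon > 0} \CF_U^\epsilon$ generated by the loops of $\Gamma$ meeting $D \setminus U$ and the internal pseudometrics $\metplus{\cdot}{\cdot}{\Gamma_\epsilon}$ on neighborhoods of $D \setminus U$, the event $\{(K^L)^* \cap U = \emptyset\}$ is independent of $(\Gamma_U, \metplus{\cdot}{\cdot}{\Gamma_U})$, and similarly for $K$. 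The essential point is that both $K^L$ and $K$ are built out of the sets $A_k$ for $k \in \CK$, and each $A_k$ is — by its very construction — a set whose defining properties (\eqref{it:ic1}--\eqref{it:ic3} in the definition of $\sim_0$, plus the assertions of Lemma~\ref{lem:boundary_good}) are checkable from $\Gamma$ together with its internal pseudometric restricted to arbitrarily small neighborhoods of the $A_k$'s themselves. Hence whether $A_k^* \cap U = \emptyset$, and more generally whether the whole exploration (its ordering $\leq_\CK$, the stopping at extremal-length level $L$, the partial exploration of the last set via the conformal map $\varphi$) produces a set disjoint from $U$, is $\CF_U$-measurable.

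First I would record the measurability statement carefully: the relation $j \sim_0 k$, the equivalence $\sim$, the sets $A_k$, the events $E_k$, the set $\CK$, the ordering $\leq_\CK$, and the extremal-length times $\EL{I_j}{\ccwBoundary{z_4}{z_1}{\partial D}}{W_j}$ are all deterministic functions of $(\Gamma, \metplus{\cdot}{\cdot}{\Gamma})$ — the only metric input is $\metplus{\cdot}{\cdot}{\cdot}$ evaluated on loops-contained-in-$O^*$ type subdomains, which is exactly the interior-internal pseudometric appearing in Definition~\ref{def:local} and Lemma~\ref{lem:loc_characterization}. Next, on the event $(K^L)^* \cap U = \emptyset$ (resp.\ $K^* \cap U = \emptyset$) I would argue that the data needed to determine $\{(K^L)^* \cap U = \emptyset\}$ only involves $\Gamma$ and $\metplus{\cdot}{\cdot}{\cdot}$ in the complement of $U$, together with internal pseudometrics on $\epsilon$-neighborhoods of $D \setminus U$: indeed if a set $A_k$ is at positive distance from $U$ then everything about it (including property~\eqref{it:ic3}, which is phrased in terms of $\metplus{\cdot}{\cdot}{\Gamma_O}$ for $O$ at positive distance from $\partial D$, hence can be tested using a fixed $\epsilon$-neighborhood of $D \setminus U$) is $\CF_U^\epsilon$-measurable for $\epsilon$ small. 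Then Lemma~\ref{lem:loc_characterization} applies verbatim. The statement for $K$ follows from that for the $K^L$ by taking $L \uparrow \infty$, since $K = \closure{\cup_{L} K^L}$ (the extremal length between $\ccwBoundary{z_2}{z_3}{\partial D}$ and $\ccwBoundary{z_4}{z_1}{\partial D}$ across the remaining domain increases to $+\infty$ as more of the $A_k$'s are swallowed, so every $A_k$ with $k \in \CK$ eventually enters some $K^L$) and a countable intersection/union of local sets, with the backward martingale convergence argument exactly as in the proof of Lemma~\ref{lem:loc_characterization}, is again local.

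The step I expect to be the main obstacle is handling the \emph{partial} exploration of the final set $A_j$ via the conformal map $\varphi \colon \D \to U_j$ and the stopping angle $\theta_L$: here one must check that the intermediate object $A_j^\theta$ (the closure of $I_j^\theta = \varphi(\ccwBoundary{1}{e^{i\theta}}{\partial \D})$ union the loops meeting it) together with the induced extremal-length clock is genuinely adapted to the filtration generated by $\Gamma$ and $\metplus{\cdot}{\cdot}{\cdot}$ on the explored region — the subtlety being that $U_j = A_j \setminus Y_k$ and the arc $I_j^\theta \subseteq Y_k$ are themselves defined through $\metplus{\cdot}{\cdot}{\Gamma}$ (via $Y_k = \{z \in A_k : \metplus{z}{\partial A_k}{\Gamma} = 0\}$), so one is conditioning on a metric-measurable stopping configuration and must verify that the conditional law of the unexplored loops is still that of an independent $\CLE_\kappa$ coupled with its own limiting metric. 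I would address this by noting that $Y_k$, $U_k$, and $\partial U_k$ are $\CF_{A_k}$-measurable by Lemma~\ref{lem:boundary_good} (and the measurability recorded above), so the partial-exploration filtration is a refinement of the filtration one gets by revealing $A_k$'s and their internal metrics, and then appealing to the same backward-martingale / Lemma~\ref{lem:loc_characterization} machinery one more time; the argument is structurally identical to how one shows a stopped $\SLE$ exploration inside a larger exploration produces a conditionally independent residual law.
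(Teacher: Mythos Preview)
Your approach matches the paper's: both verify the criterion of Lemma~\ref{lem:loc_characterization} by arguing that the event $\{K^L \cap U^* = \emptyset\}$ (respectively $\{K \cap U^* = \emptyset\}$) is $\CF_U^\epsilon$-measurable for every $\epsilon > 0$, hence $\CF_U$-measurable. The paper's proof is far terser --- it simply asserts this measurability in one line and stops --- whereas you unpack in detail why the data defining the $A_k$, the ordering $\leq_\CK$, the extremal-length clock, and the partial exploration of the last set are all determined by $(\Gamma,\metplus{\cdot}{\cdot}{\Gamma})$ restricted to a neighborhood of $D \setminus U$. Your more careful accounting of the partial exploration (the $\varphi$, $\theta_L$, $Y_k$ issues) is a reasonable elaboration of what the paper leaves implicit.

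There is one concrete slip in your handling of $K$. The paper does \emph{not} deduce locality of $K$ from that of the $K^L$ via a limit; it applies the identical direct measurability argument to $K$ itself. Your limiting route has a monotonicity error: the extremal length $\EL{I^L}{\ccwBoundary{z_4}{z_1}{\partial D}}{W^L}$ \emph{decreases} (not increases) as more of the $A_k$'s are absorbed, so $K^L$ grows as $L$ \emph{decreases} --- the paper states this explicitly at the start of the proof of Lemma~\ref{lem:ak_cross_finite}. Taking $L \uparrow \infty$ therefore shrinks $K^L$ rather than exhausting $K$. This is easily repaired (either send $L$ downward, or better, drop the limit entirely and argue directly for $K$ as the paper does, since $\{K \cap U^* = \emptyset\}$ is just as clearly $\CF_U^\epsilon$-measurable from the construction of the $A_k$'s).
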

\begin{proof}
We will deduce the assertion from Lemma~\ref{lem:loc_characterization}.  Suppose that $U \subseteq D$ is open, $\epsilon > 0$, and we use the notation from the statement of Lemma~\ref{lem:loc_characterization}.  Then the event $K^L \cap U^* = \emptyset$ is determined by~$\CF_U^\epsilon$.  Since $\epsilon > 0$ was arbitrary, we have that $K^L \cap U^* = \emptyset$ is determined by~$\CF_U$, which proves the result for $K^L$.  The same argument gives the locality of $K$ for $(\Gamma,\metplus{\cdot}{\cdot}{\Gamma})$.
\end{proof}

\begin{lemma}
\label{lem:ak_cross_finite}
Assume that there exist $x,y \in \partial D$ distinct so that $\p[ Z_{x,y} \neq \emptyset] > 0$.  For each $\delta > 0$, the number of distinct sets $A_k$ with $k \in \CK$ so that
\begin{enumerate}[(i)]
\item $A_k$ is not disconnected from $\ccwBoundary{z_4}{z_1}{\partial D}$ by $A_j$ for $j \in \CK$ with $j \leq_\CK k$ and
\item $A_k$ is not contained in the $\delta$-neighborhood of $\ccwBoundary{z_2}{z_3}{\partial D}$
\end{enumerate}
is a.s.\ finite.  In particular, $\partial K \cap \partial W \subseteq \cup_{k \in \CK} A_k$ so that for each $z \in \partial K \cap \partial W$ there exists $k \in \CK$ with $z \in A_k$.
\end{lemma}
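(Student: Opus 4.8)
\textbf{Proof proposal for Lemma~\ref{lem:ak_cross_finite}.}

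The plan is to argue by contradiction: if infinitely many such sets $A_k$ existed, then the exploration of the $A_k$'s described just above would have to produce infinitely many ``successes'' in a process whose success probability is bounded below uniformly, which would force the extremal length $\EL{I^L}{\ccwBoundary{z_4}{z_1}{\partial D}}{W^L}$ to decrease past any bound while simultaneously forcing the shields to accumulate away from $\ccwBoundary{z_2}{z_3}{\partial D}$ --- and this cannot happen infinitely often because each time a new $A_k$ with the stated properties appears, it cuts off a region of positive extremal-length cost from the remaining domain $W$. More precisely, first I would fix $\delta>0$ and observe that if $A_k$ is not contained in the $\delta$-neighborhood of $\ccwBoundary{z_2}{z_3}{\partial D}$ and is not already disconnected from $\ccwBoundary{z_4}{z_1}{\partial D}$ by an earlier $A_j$, then when this $A_k$ is added to the running closed set $K_j$ in the exploration it strictly decreases the extremal length $\EL{I_j}{\ccwBoundary{z_4}{z_1}{\partial D}}{W_j}$ by at least some amount depending only on $\delta$ (and $D$, $z_1,\dots,z_4$), because the set $A_k$ separates a definite-modulus annular region out of $W_j$. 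Since the extremal length is bounded above by $\EL{\ccwBoundary{z_2}{z_3}{\partial D}}{\ccwBoundary{z_4}{z_1}{\partial D}}{D}<\infty$ and is nonincreasing along the exploration, there can only be finitely many such steps, which is the desired conclusion.

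The key steps in order would be: (1) Record that the exploration of the $A_k$'s is well-defined and that $K$ (equivalently each $K^L$) is local for $(\Gamma,\metplus{\cdot}{\cdot}{\Gamma})$ by Lemma~\ref{lem:component_local}; in particular, conditionally on the part of the exploration revealed so far, the loops of $\Gamma$ in the remaining domain $W$ form a fresh $\CLE_\kappa$ and the metric there is a fresh copy of $\metplus{\cdot}{\cdot}{\Gamma_W}$, so the conditional probability that a new shield of the desired type is produced is governed by the same mechanism as in Lemma~\ref{lem:e_k_pos}. (2) Use the Koebe quarter theorem / standard modulus-of-annulus estimates together with the fact that $A_k\not\subseteq$ the $\delta$-neighborhood of $\ccwBoundary{z_2}{z_3}{\partial D}$ to produce a quantitative lower bound on how much the extremal length $\EL{I_j}{\ccwBoundary{z_4}{z_1}{\partial D}}{W_j}$ drops when such an $A_k$ is absorbed: the point is that $A_k\cup \ccwBoundary{z_2}{z_3}{\partial D}$ (or $A_k\cup K_{j-1}$) carries a curve that bounds an annular region in the current domain of modulus at least $m(\delta)>0$, and the composition/serial rule for extremal length gives the decrement $\geq m(\delta)$. (3) Conclude that the total number of absorption steps corresponding to $A_k$'s satisfying (i) and (ii) is at most $\EL{\ccwBoundary{z_2}{z_3}{\partial D}}{\ccwBoundary{z_4}{z_1}{\partial D}}{D}/m(\delta)$, hence finite a.s.; since $\delta>0$ was arbitrary, this proves the first assertion. (4) For the ``in particular'' clause: any $z\in\partial K\cap\partial W$ with $\dist(z,\ccwBoundary{z_2}{z_3}{\partial D})>\delta$ lies in some $A_k$ with $k\in\CK$ that is not disconnected from $\ccwBoundary{z_4}{z_1}{\partial D}$ (by maximality/the definition of $W$ as the component with $\ccwBoundary{z_4}{z_1}{\partial D}$ on its boundary), and by the finiteness just proved there are only finitely many such $A_k$, each closed; letting $\delta\downarrow0$ along a countable sequence and using that $\partial K\cap\partial W$ is at positive distance from $\partial D$ on the relevant event (so only $\ccwBoundary{z_2}{z_3}{\partial D}$ is the possible accumulation arc), every such $z$ is covered.

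The main obstacle I expect is Step (2): turning ``$A_k$ leaves the $\delta$-neighborhood of $\ccwBoundary{z_2}{z_3}{\partial D}$'' into a genuine \emph{quantitative} lower bound on the extremal-length decrement, uniformly over the highly irregular shapes that $A_k$ and the previously-explored set may take (we know essentially nothing about the regularity of $\partial A_k$ beyond Lemma~\ref{lem:boundary_good}). The cleanest way around this is probably to phase the argument not in terms of a fixed $\delta$-neighborhood but in terms of a fixed finite collection of disjoint round sub-annuli around points on $\ccwBoundary{z_2}{z_3}{\partial D}$: for a new qualifying $A_k$ there must be at least one such annulus which $A_k$ crosses (separates the two boundary circles of), and crossing a round annulus of fixed modulus $m_0$ inside the current domain forces the extremal length between $\ccwBoundary{z_2}{z_3}{\partial D}$ and $\ccwBoundary{z_4}{z_1}{\partial D}$ in the remaining domain to drop by at least a constant times $m_0$ by the serial composition law. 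Choosing the sub-annuli fine enough that every point of $\ccwBoundary{z_2}{z_3}{\partial D}^{c}$ at distance $\geq\delta$ is ``behind'' one of them reduces the problem to finitely many deterministic extremal-length estimates, which is routine. The probabilistic/locality input (Lemma~\ref{lem:component_local}, Proposition~\ref{prop:cpi_path_close}) is only needed to know the exploration makes sense and never stalls; the finiteness itself is then a purely deterministic consequence of monotonicity of extremal length.
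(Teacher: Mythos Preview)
Your approach attempts a purely deterministic extremal-length monotonicity argument, which is genuinely different from the paper's probabilistic one. Unfortunately, Step~(2) has a real gap that your proposed annulus fix does not close.

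The issue is that a qualifying $A_k$ can be arbitrarily thin. Nothing in the definition of the $A_k$'s or in conditions (i) and (ii) gives a lower bound on their ``width'', so at the level of generality your argument operates, you must allow a comb: countably many pairwise disjoint thin fingers attached to $\ccwBoundary{z_2}{z_3}{\partial D}$, each reaching height $\delta$, placed side by side. Every finger satisfies (i) (none is disconnected by an earlier one) and (ii), yet absorbing a single finger of width $\epsilon$ changes the extremal length by an amount tending to $0$ as $\epsilon\to 0$ --- think of a unit square with a vertical slit of height $\delta$ and width $\epsilon$ added to the bottom side. Thus there is no uniform $m(\delta)>0$. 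Your annulus variant has the same problem: a thin finger \emph{crosses} a round annulus (it connects the inner circle to the outer circle) but it does not \emph{separate} the two boundary circles within the annulus, so there is no serial-rule decrement to harvest; and separation of the circles is simply not implied by (i) and (ii). The finiteness statement is not a consequence of topology plus modulus monotonicity; it genuinely uses the randomness of the $\CLE$ and of the shield construction.

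The paper's argument is instead probabilistic. It fixes an auxiliary $\xi>0$ and uses local finiteness of $\CLE_\kappa$ to see that only finitely many $A_k$ contain a loop of diameter $\geq\xi$, recording the corresponding ``milestone'' values $L_1>L_2>\cdots$ in the extremal-length-parameterized exploration. At each milestone, by locality of $K^{L_m}$ (Lemma~\ref{lem:component_local}) together with the positive-probability existence of shields (Lemma~\ref{lem:e_k_pos}), there is a uniformly positive conditional probability that some $A_k$ caps off a $\delta^2$-scale neighborhood of the current leftmost point $w_m$ of $K^{L_m}\cap\partial D$, after which no further $A_j$ rooted there can escape the $\delta$-neighborhood. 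This yields geometric-tail control, with parameter independent of $\xi$, on the number of escaping $A_j$'s between milestones; letting $\xi\downarrow 0$ finishes. The mechanism that prevents infinitely many thin fingers is a randomly produced blocking shield, not a deterministic modulus budget.
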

\begin{proof}
Fix $\xi, \delta > 0$.  We note that by the local finiteness of $\CLE_\kappa$ there a.s.\ exist at most finitely many distinct sets $A_k$ so that $A_k$ contains a loop with diameter at least $\xi$.  Recall that $K^L$ is increasing as $L$ decreases.  We let $L_1$ be the largest value of $L$ so that $K^L = K_j$ for some $j \in \CK$ and~$A_j$ contains a loop with diameter at least $\xi$.  Given that we have defined $L_1,\ldots,L_m$, we let~$L_{m+1}$ be the largest $L < L_{m+1}$ so that $K^L = K_j$ for some $j \in \CK$ and $A_j$ contains a loop with diameter at least $\xi$.

Fix a value of $m \in \N$ and let $w_m$ be the leftmost point on $K^{L_m} \cap \partial D$ relative to $z_3$.  Let $x_m$ (resp.\ $y_m$) be the first point in the clockwise (resp.\ counterclockwise) direction along $\partial W^{L_m}$ from $w_m$ which has distance at least $\delta^2$ from $w_m$.  It follows from Lemmas~\ref{lem:e_k_pos}, \ref{lem:component_local} that conditionally on $K^{L_m}$, there is a positive chance that there exists $k \in \N$ so that $A_k$ disconnects $\ccwBoundary{x_m}{y_m}{\partial W^{L_m}}$ from $W^{L_m} \cap \partial B(w_m,\delta)$ in $W^{L_m}$.  Let $N_m$ be the smallest value of $n \geq m$ so that $K^{L_n}$ disconnects $\ccwBoundary{x_m}{w_m}{\partial W^{L_m}}$ from $\ccwBoundary{z_4}{z_1}{\partial D}$.  Then the number of sets $A_j$ discovered by the exploration in $[L_{N_m},L_m]$ which leave the $\delta$-neighborhood of $\ccwBoundary{z_2}{z_3}{\partial D}$ is stochastically dominated by a geometric random variable (whose parameter does not depend on $\xi$).  Since $\xi > 0$ was arbitrary and the parameter of the aforementioned geometric does not depend on $\xi$, the assertion of the lemma follows.
\end{proof}

\begin{lemma}
\label{lem:kl_harmonic_zero}
For each $L$ in each component $W$ of $D \setminus K^L$ the harmonic measure as seen from $w \in W$ of the set of those points of $\partial K^L \setminus \partial K$ which are not contained in a loop of $\Gamma$ is a.s.\ zero.
\end{lemma}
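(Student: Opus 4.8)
\textbf{Proof strategy for Lemma~\ref{lem:kl_harmonic_zero}.}
The plan is to build $K^L$ out of two kinds of pieces --- the fully explored sets $A_k$ for $k \in \CK$ with $k <_\CK j$, and (when $K^L \neq K_j$ for any $j$) a partially explored piece $A_j^{\theta_L}$ obtained by conditioning on an initial segment $I_j^{\theta_L}$ of the boundary set $Y_j$ together with the loops of $\Gamma$ meeting it --- and to analyze the harmonic measure contribution of each kind separately.  Throughout I fix a component $W$ of $D \setminus K^L$ and a point $w \in W$ with rational coordinates, and I argue that a.s.\ (for all such $w$ simultaneously, hence for all $w \in W$ by continuity of harmonic measure) the set of non-loop points of $\partial K^L \cap \partial W$ which are not in $\partial K$ has zero harmonic measure from $w$.

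First I would treat the fully explored pieces.  By Lemma~\ref{lem:ak_cross_finite} (applied in $W$ for each $\delta > 0$, using that $\p[Z_{x,y} \neq \emptyset] > 0$ for some $x,y$, which holds by Lemma~\ref{lem:e_k_pos} --- note that if $Z_{x,y} = \emptyset$ a.s.\ for all $x,y$ then $\partial K^L \setminus \partial K$ would be empty and there is nothing to prove), the part of $\partial K^L \cap \partial W$ lying at distance at least $\delta$ from $\ccwBoundary{z_2}{z_3}{\partial D}$ and not disconnected by earlier $A_j$'s is contained in a \emph{finite} union of sets $A_k$.  For each such $A_k$, let $Y_k$, $U_k$ be as in Lemma~\ref{lem:boundary_good}; since $\partial W \cap \partial A_k \subseteq \partial A_k$ and by part~\eqref{it:boundary_zero} of that lemma any two points in the same component of $\partial A_k \setminus \partial D$ have $\metplus{\cdot}{\cdot}{\Gamma} = 0$, the non-loop points of $\partial A_k \cap \partial W$ which are not in $\partial K$ are exactly the non-loop points of $\partial U_k$ lying on $\partial W$ but not on $\partial D$, and Lemma~\ref{lem:in_between_stuff_is_small} (applied to the simply connected $U_k$, whose loops of $\Gamma$ form a conditionally independent $\CLE_\kappa$ by locality of the relevant zero-length sets, Lemma~\ref{lem:zero_metric_ball_is_local}) gives that this non-loop boundary set has zero harmonic measure as seen from any interior point of $U_k$, hence from $w$ after mapping through $W$.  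Summing over the finitely many relevant $A_k$'s and then letting $\delta \downarrow 0$ --- using that the part of $\partial K^L \cap \partial W$ within distance $\delta$ of $\ccwBoundary{z_2}{z_3}{\partial D}$ has harmonic measure tending to $0$ as $\delta \to 0$ by dominated convergence --- handles the fully explored contribution.

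It remains to treat the partially explored piece $A_j^{\theta_L}$ in the case $K^L = K_j^{\theta_L}$ with $\theta_L \in (0,2\pi)$.  Here I would use the conformal map $\varphi \colon \D \to U_j$ and observe that conditionally on $K_j^{\theta_L}$ and the explored loops, the field of loops of $\Gamma$ in each component of $D \setminus K_j^{\theta_L}$ is a fresh $\CLE_\kappa$ (locality of the exploration, Lemma~\ref{lem:component_local}, together with the fact that the exploration is parameterized by extremal length which is a stopping-time parameterization); so in the component $W$ we may write its loops as the image under a conformal map $\psi \colon \D \to W$ of a $\CLE_\kappa$ in $\D$, and the non-loop points of $\partial A_j^{\theta_L} \cap \partial W$ not on $\partial D$ pull back under $\psi^{-1}$ to a subset of $\partial \D$.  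To see this subset has zero Lebesgue measure I would invoke Lemma~\ref{lem:no_zero_length_on_boundary}: those non-loop boundary points $z$ are, by the definition of $A_j^{\theta_L}$ as the closure of $I_j^{\theta_L}$ together with loops meeting it, points for which there exists $w' \in \closure{D} \setminus \{z\}$ with $\metplus{z}{w'}{\Gamma} = 0$ (the point $w'$ lying across the "zero length" connection that put $z$ into a set satisfying~\eqref{it:ic3}), and the independence of this set from $\metplus{\cdot}{\cdot}{\Gamma}$ together with part~\eqref{it:boundary_meas_zero} of Lemma~\ref{lem:no_zero_length_on_boundary} (applied in the appropriate sub-domain via a conformal map from $\C \setminus \eta_-$, exactly as in the proof of that lemma) shows it has zero harmonic measure.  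Combining the two contributions and taking a union over rational $w$ and over the countably many values of $L$ at which $K^L$ changes completes the proof.

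\textbf{Main obstacle.}  The delicate point is the partially explored piece: one must check that after conditioning on the initial arc $I_j^{\theta_L}$ of $Y_j$ \emph{and} the loops touching it, the conditional law in the unexplored components is still a clean $\CLE_\kappa$ decorated by the limiting pseudometric, so that Lemma~\ref{lem:no_zero_length_on_boundary} and Lemma~\ref{lem:in_between_stuff_is_small} genuinely apply --- this requires that the extremal-length parameterization of the exploration is adapted in the sense of Definition~\ref{def:local} and that conditioning on $Y_j \cap I_j^{\theta_L}$ (which is itself defined in terms of $\metplus{\cdot}{\cdot}{\Gamma}$) does not break the independence structure.  I expect this to follow from Lemma~\ref{lem:component_local} and the backward-martingale argument in Lemma~\ref{lem:loc_characterization}, but it is the step that needs the most care.
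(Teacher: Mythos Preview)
The paper's proof is a two-line argument: Lemma~\ref{lem:component_local} gives that $K^L$ is local for $(\Gamma,\metplus{\cdot}{\cdot}{\Gamma})$, and then one reruns the argument of Lemma~\ref{lem:in_between_stuff_is_small} directly in the component $W$.  The contradiction obtained is that if the non-loop points of $\partial K^L \setminus \partial K$ had positive harmonic measure in $W$, the random-rotation trick would produce some $A_j$ and a point $z \in \interior{A_j}$ with $\metplus{z}{Y_j}{\Gamma} = 0$ but $z \notin Y_j$, contradicting the definition of $Y_j$ and $U_j$ from Lemma~\ref{lem:boundary_good}.  There is no decomposition into fully and partially explored pieces, and the ``main obstacle'' you flag --- that the unexplored region carries a fresh $\CLE_\kappa$ with the limiting pseudometric --- is precisely the content of Lemma~\ref{lem:component_local}, which the paper invokes once at the outset.

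Your treatment of the fully explored pieces has a genuine gap.  You claim Lemma~\ref{lem:in_between_stuff_is_small} applies ``to the simply connected $U_k$, whose loops of $\Gamma$ form a conditionally independent $\CLE_\kappa$ by locality of the relevant zero-length sets, Lemma~\ref{lem:zero_metric_ball_is_local}'', but this is not established anywhere: $U_k = A_k \setminus Y_k$ is built from the equivalence relation $\sim$ generated by \emph{all} shields satisfying (I)--(III), it is not a component of any $D \setminus Z_{x,y}$, and nothing in the paper shows that $Y_k$ (as opposed to $Z_{x,y}$ or $K^L$) is local or that the loops inside $U_k$ are a conditional $\CLE_\kappa$.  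More seriously, even if you knew that the non-loop boundary of $U_k$ had zero harmonic measure \emph{from a point inside $U_k$}, this would not yield zero harmonic measure \emph{from $w \in W$}: since $U_k \subseteq A_k \subseteq K^L$ while $W$ is a component of $D \setminus K^L$, the two domains lie on opposite sides of the fractal set $\partial A_k$, and harmonic measures of a boundary set seen from opposite sides of a non-rectifiable curve bear no general relation to one another.  Your clause ``hence from $w$ after mapping through $W$'' is not a valid step.  The paper sidesteps this entirely by working inside $W$ from the start and deriving the contradiction at the level of the definition of $Y_j$, never needing to transfer a harmonic-measure statement across a shield boundary.
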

\begin{proof}
Lemma~\ref{lem:component_local} implies that $K^L$ is local for $(\Gamma,\metplus{\cdot}{\cdot}{\Gamma})$.  Consequently, the result follows from the same argument used to prove Lemma~\ref{lem:in_between_stuff_is_small}.  In particular, if $W$ is a component of $D \setminus K^L$ and the set of those points $z \in \partial K^L \setminus \partial K$ which are not in a loop of $\Gamma$ had positive harmonic measure then with positive probability  there would exist some $A_j$ and $z \in \interior{A_j}$ so that with $Y_j$ as in~\eqref{it:simply_connected} of Lemma~\ref{lem:boundary_good} we have that $\metplus{z}{Y_j}{\Gamma} = 0$ and $z \notin Y_j$.  This, in turn, is a contradiction.
\end{proof}

\subsubsection{Disconnecting the opposing arcs by shields}
\label{subsubsec:disconnecting_by_shields}

We consider the following exploration.  Let $D_1 = D$, $\Gamma_1 = \Gamma$, and let $K_1$ be defined in the same way as $K$ in terms of $D_1$, $\Gamma_1$ as described at the beginning of Section~\ref{subsubsec:shield_def}.  We also let $z_2^1 = z_2$ and $z_3^1 = z_3$.  For each $j \geq 1$, given that we have defined the domain $D_j$, marked boundary points $z_1,z_2^j,z_3^j,z_4$, loops $\Gamma_j$ in $D_j$, on $K_j \cap \ccwBoundary{z_4}{z_1}{\partial D} = \emptyset$ we define $K_{j+1}$, $D_{j+1}$, $z_1,z_2^{j+1},z_3^{j+1},z_4$, and $\Gamma_{j+1}$ as follows.  We let $z_2^{j+1}$ (resp.\ $z_3^{j+1}$) be the leftmost (resp.\ rightmost) point on $\ccwBoundary{z_1}{z_2}{\partial D_j}$ (resp.\ $\ccwBoundary{z_3}{z_4}{\partial D_j}$) relative to $z_2$ (resp.\ $z_3$) which is contained in $K_j$.  We let $D_{j+1}$ be the component of $D_j \setminus K_j$ with $\ccwBoundary{z_4}{z_1}{\partial D_j}$ on its boundary and let $\Gamma_{j+1}$ be the loops of $\Gamma_j$ which are contained in $D_{j+1}$.  We note that by Lemma~\ref{lem:component_local} we have that $\cup_{i=1}^j K_i$ is local for $(\Gamma,\metplus{\cdot}{\cdot}{\Gamma})$.  In particular, the conditional law of $\Gamma_{j+1}$ and the interior-internal pseudometric in $D_{j+1}$ is given by that of a $\CLE_\kappa$ in $D_{j+1}$ equipped with $\metplus{\cdot}{\cdot}{\Gamma_{j+1}}$.  Let $K_{j+1}$ be defined in the same way as $K$ as described at the beginning of Section~\ref{subsubsec:shield_def} but in terms of $D_{j+1}$, $\Gamma_{j+1}$, $z_2^{j+1}$, and $z_3^{j+1}$. As $\EL{\ccwBoundary{z_2^j}{z_3^j}{\partial D_j}}{\ccwBoundary{z_4}{z_1}{\partial D_j}}{D_j}$ is decreasing in $j \in \N$, we see from Lemmas~\ref{lem:zero_length_conf}, \ref{lem:e_k_pos} that there exists $p_0 > 0$ so that
\begin{equation}
\label{eqn:stoch_dom_geom}
\p[ K_{j+1} \cap \ccwBoundary{z_4}{z_1}{\partial D} \neq \emptyset \giv K_j \cap \ccwBoundary{z_4}{z_1}{\partial D} = \emptyset ] \geq p_0 \quad\text{for each}\quad j \in \N.
\end{equation}
Let $N$ be the first $j$ so that $K_j \cap \ccwBoundary{z_4}{z_1}{\partial D} \neq \emptyset$.  Then~\eqref{eqn:stoch_dom_geom} implies that $N$ is stochastically dominated by a geometric random variable with parameter $p_0$.  In particular, $N$ is a.s.\ finite.

\newcommand{\bk}{\mathbf k}

For each $1 \leq j \leq N$ and $k \in \N$ we let $A_k^j$, $E_k^j$ be defined as above using $\metplus{\cdot}{\cdot}{\Gamma_j}$ and $\ccwBoundary{z_2^j}{z_3^j}{\partial D_j}$ in place of $\metplus{\cdot}{\cdot}{\Gamma}$ and $\ccwBoundary{z_2}{z_3}{\partial D}$.  Lemma~\ref{lem:ak_cross_finite} implies that there a.s.\ exist $k_1,\ldots,k_N$ so that the $A_{k_j}^j$ disconnect $\ccwBoundary{z_1}{z_2}{\partial D}$ and $\ccwBoundary{z_3}{z_4}{\partial D}$.  For $\bk = (k_1,\ldots,k_n) \in \N^n$, we let $E_\bk$ be the event that $A_{k_1}^1,\ldots,A_{k_n}^n$ are distinct, disconnect $\ccwBoundary{z_1}{z_2}{\partial D}$ and $\ccwBoundary{z_3}{z_4}{\partial D}$, and that no proper subset of $\{A_{k_1}^1,\ldots,A_{k_n}^n\}$ disconnects $\ccwBoundary{z_1}{z_2}{\partial D}$ and $\ccwBoundary{z_3}{z_4}{\partial D}$.  Then what we have shown so far implies that $\cup_{\bk} E_\bk$ a.s.\ occurs where the union is over $\bk \in \cup_{n \in \N} \N^n$.

Suppose that we are working on $E_\bk$.  We let $x_0$ be the rightmost point in $A_1^1 \cap \partial D$ relative to $z_2$.  For each $1 \leq j \leq n-1$ we let $x_j$ be the rightmost intersection point of $A_{k_{j+1}}^{j+1}$ with $A_{k_j}^j$.  We let $x_n$ be the rightmost intersection point of $A_{k_n}^n$ with $\partial D$.  For each $1 \leq j \leq n$, we let $Y_{k_j}^j$ and $U_{k_j}^j$ be as in~\eqref{it:simply_connected} of Lemma~\ref{lem:boundary_good} and let $C_j = \ccwBoundary{x_{j-1}}{x_j}{\partial U_{k_j}^j}$.  Then we note that $\cup_{j=1}^n C_j$ disconnects $\ccwBoundary{z_1}{z_2}{\partial D}$ from $\ccwBoundary{z_3}{z_4}{\partial D}$.  For each $1 \leq j \leq n$, we let $Q_j$ be the closure of the union of $C_j$ and the loops of $\Gamma$ which intersect it.  We let $Q_\bk = \cup_{j=1}^n Q_j$.  We note that $Q_\bk$ cannot intersect $\ccwBoundary{z_1}{z_2}{\partial D}$.  Indeed, if $Q_j \cap \ccwBoundary{z_1}{z_2}{\partial D} \neq \emptyset$ then as the loops of $\Gamma$ cannot hit $\ccwBoundary{z_1}{z_2}{\partial D}$ we would have that $C_j \cap \ccwBoundary{z_1}{z_2}{\partial D} \neq \emptyset$.  This implies that $C_j$ intersects $\ccwBoundary{z_1}{z_2}{\partial D}$ at one of its endpoints (i.e., $x_{j-1}$ or $x_j$ is in $\ccwBoundary{z_1}{z_2}{\partial D}$) or there exists $y \in \ccwBoundaryOpen{x_{j-1}}{x_j}{\partial U_{k_j}^j}$ which is also in $\ccwBoundary{z_1}{z_2}{\partial D}$.  The latter cannot happen as it would contradict Lemma~\ref{lem:boundary_good} which implies that each $U_{k_j}^j$ is connected.  The former a.s.\ cannot happen because the probability that any fixed boundary point is the rightmost in $A_{k_{j+1}}^{j+1}$ is a.s.\ zero by Lemma~\ref{lem:zero_length_conf}.  Let $W_\bk$ be the component of $D \setminus Q_\bk$ with $\ccwBoundary{z_1}{z_2}{\partial D}$ on its boundary.  We also let $\Gamma_\bk$ be the loops of $\Gamma$ contained in $W_\bk$.

We note that it can be that $Q_1$ does not contain $\CL_{k_1}$.  Since $K^L$ as defined earlier visits $Y_{k_1}^1$ in counterclockwise order, it follows from Lemma~\ref{lem:kl_harmonic_zero} that there exists $i \in \N$ so that~$Q_1$ contains~$\CL_i$.  Applying the same principle to $2 \leq j \leq n$ implies that each~$Q_j$ also contains a loop of $\Gamma$.  In particular, if we let $F_\bk$ be the event that $E_\bk$ occurs and each $Q_j$ contains $\CL_j$ we have that $\cup_{\bk} F_\bk$ a.s.\ occurs where the union is over $\bk \in \cup_{n \in \N} \N^n$.

\begin{lemma}
\label{lem:disconnecting_local}
On $F_\bk$, given $Q_\bk$ the conditional law of $\Gamma_\bk$ and the associated interior-internal pseudometric in $W_\bk$ is given by that of a $\CLE_\kappa$ in $W_\bk$ equipped with $\metplus{\cdot}{\cdot}{\Gamma_\bk}$.	
\end{lemma}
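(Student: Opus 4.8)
The plan is to deduce Lemma~\ref{lem:disconnecting_local} from the locality criterion of Lemma~\ref{lem:loc_characterization}, applied to the set $Q_\bk$ rather than to one of the $K^L$'s directly. The subtlety is that $Q_\bk$ is built out of pieces of the $A_k^j$'s across $N$ successive explorations, so we cannot simply quote Lemma~\ref{lem:component_local}; instead we will reconstruct $Q_\bk$ as the terminal object of a single exploration which is local by the same mechanism.

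First I would set up the exploration that produces $Q_\bk$ adaptively. Working on $F_\bk$, one explores $D_1 = D$ according to the procedure of Section~\ref{subsubsec:shield_def} until the shield $A_{k_1}^1$ is discovered, together with the arc $C_1 = \ccwBoundary{x_0}{x_1}{\partial U_{k_1}^1}$ and the loops of $\Gamma$ meeting it (i.e.\ $Q_1$); then one passes to $D_2$, the component of $D_1 \setminus K_1$ containing $\ccwBoundary{z_4}{z_1}{\partial D}$, and repeats, stopping after the $n$th stage when $\cup_{j=1}^n C_j$ disconnects $\ccwBoundary{z_1}{z_2}{\partial D}$ from $\ccwBoundary{z_3}{z_4}{\partial D}$ in the minimal way. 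The point of carrying out this reduction carefully is that at each stage the stopping rule ``the shield containing the target rational point'' and the arcs $C_j$, $Y_{k_j}^j$, $Q_j$ are all determined, for each $\epsilon > 0$, by the loops of $\Gamma$ meeting the region explored so far and the interior-internal pseudometric associated with $\metplus{\cdot}{\cdot}{\Gamma}$ in an $\epsilon$-neighborhood of that region. This is exactly the content of properties~\eqref{it:ic1}--\eqref{it:ic3} in the definition of a shield, together with Lemma~\ref{lem:boundary_good}\eqref{it:simply_connected}, which identifies $U_{k_j}^j$ and hence $C_j$.

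The key step is then to verify the hypothesis of Lemma~\ref{lem:loc_characterization} for $Q_\bk$: for $U \subseteq D$ open and $\epsilon > 0$, the event $Q_\bk^* \cap U = \emptyset$ is measurable with respect to $\CF_U^\epsilon$ (the $\sigma$-algebra generated by the loops of $\Gamma$ meeting $D \setminus U$ and the pseudometric $\metplus{\cdot}{\cdot}{\Gamma_\epsilon}$ on the $\epsilon$-neighborhood of $D \setminus U$). Indeed, whether $Q_\bk^*$ stays out of $U$ can be read off from the exploration restricted to $D \setminus U$: one runs the adaptive construction above, and the construction only ever reveals loops and pseudometric information in a neighborhood of the already-explored set, which on the event in question is contained in $D \setminus U$. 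Intersecting over $\epsilon > 0$ gives $\CF_U$-measurability, so Lemma~\ref{lem:loc_characterization} applies and $Q_\bk$ is local for $(\Gamma, \metplus{\cdot}{\cdot}{\Gamma})$. Since $W_\bk$ is the component of $D \setminus Q_\bk$ with $\ccwBoundary{z_1}{z_2}{\partial D}$ on its boundary, the conditional law of $\Gamma_\bk$ together with the interior-internal pseudometric in $W_\bk$ given $\CF_{Q_\bk}$ is, by Definition~\ref{def:local}, that of a $\CLE_\kappa$ $\Gamma_\bk$ in $W_\bk$ equipped with $\metplus{\cdot}{\cdot}{\Gamma_\bk}$; and one checks that $Q_\bk$ itself (hence the conditioning in the lemma statement) is $\CF_{Q_\bk}$-measurable, so the stated conditional law follows.

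The main obstacle I anticipate is bookkeeping rather than anything conceptually new: one must be careful that the ``minimal disconnecting subset'' stopping rule and the selection of the points $x_0, \dots, x_n$ (rightmost intersection points, which are a.s.\ well-defined and unique by Lemma~\ref{lem:zero_length_conf}) are genuinely determined by the exploration and do not secretly depend on the unexplored region $W_\bk$ — in particular one uses that $Q_\bk$ does not intersect $\ccwBoundary{z_1}{z_2}{\partial D}$, which was established just before the lemma statement, so that $W_\bk$ really is a single well-defined component whose boundary data is all contained in $Q_\bk$. Granting that, the measurability argument is the same one used in Lemmas~\ref{lem:component_local} and~\ref{lem:zero_metric_ball_is_local}, and the conclusion is immediate from Lemma~\ref{lem:loc_characterization}.
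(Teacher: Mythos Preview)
Your proposal is correct and takes essentially the same approach as the paper: apply the locality criterion of Lemma~\ref{lem:loc_characterization} by checking that (on $F_\bk$) the event $\{Q_\bk \cap U = \emptyset\}$ is $\CF_U^\epsilon$-measurable for every open $U$ and $\epsilon>0$. The paper's proof is terser --- it simply asserts this measurability is ``easy to see from the definition of $F_\bk$ and $Q_\bk$'' --- whereas you spell out the adaptive exploration that justifies it; but the underlying argument is the same.
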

\begin{proof}
This follows from the same argument used to prove Lemma~\ref{lem:loc_characterization}.  In particular, suppose that $U \subseteq D$ is open, $\epsilon > 0$, and we let $\CF_U^\epsilon$, $\CF_U$ be as in the statement of Lemma~\ref{lem:loc_characterization}.  Then it suffices to show that the event $F_\bk \cap \{ Q_\bk \cap U = \emptyset\}$ is determined by $\CF_U^\epsilon$.  This, however, is easy to see from the definition of $F_\bk$ and $Q_\bk$. 
\end{proof}

\begin{lemma}
\label{lem:disconnecting_harmonic_loops}
Suppose we are working on $F_\bk$ and that $z \in W_\bk$.  Then the harmonic measure as seen from $z$ of the points in $D \cap \partial Q_\bk$ which are not in a loop of $\Gamma$ is a.s.\ equal to~$0$.
\end{lemma}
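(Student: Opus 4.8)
\textbf{Proof proposal for Lemma~\ref{lem:disconnecting_harmonic_loops}.}

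The plan is to reduce the statement to the corresponding fact about the building blocks $Q_j$ of $Q_\bk$, and then to each of those to the harmonic-measure statement already established for the sets $A_k^j$ (Lemma~\ref{lem:in_between_stuff_is_small}, and its analog Lemma~\ref{lem:kl_harmonic_zero}) together with the boundary-length/harmonic-measure bounds of Lemma~\ref{lem:no_zero_length_on_boundary}. First I would observe that $D \cap \partial Q_\bk \subseteq \cup_{j=1}^n (D \cap \partial Q_j)$, so it suffices to bound, for each fixed $1 \leq j \leq n$, the harmonic measure as seen from $z \in W_\bk$ of the set of points in $D \cap \partial Q_j$ which are not contained in a loop of $\Gamma$. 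Recall that $Q_j$ is the closure of the union of the arc $C_j = \ccwBoundary{x_{j-1}}{x_j}{\partial U_{k_j}^j}$ and the loops of $\Gamma$ which intersect $C_j$. The non-loop points of $\partial Q_j$ are therefore contained in $C_j$ itself (every point of $\partial Q_j \setminus C_j$ lies on a loop of $\Gamma$ which meets $C_j$, by the definition of the $*$-closure operation), so it is enough to show that the set of non-loop points of $C_j$ visible from $z$ has zero harmonic measure.

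Next I would exploit the locality provided by Lemma~\ref{lem:disconnecting_local}: on $F_\bk$, given $Q_\bk$, the conditional law of $(\Gamma_\bk, \metplus{\cdot}{\cdot}{\Gamma_\bk})$ in $W_\bk$ is that of a $\CLE_\kappa$ equipped with its limiting pseudometric. But this is not quite the right conditioning to apply directly, since $C_j$ is part of $\partial Q_j$ and was constructed from the set $Y_{k_j}^j$ of zero-$\metplus{}{}{}$-distance points of $A_{k_j}^j$; the key point is that $C_j \subseteq Y_{k_j}^j = \partial U_{k_j}^j$ by parts~\eqref{it:boundary_zero} and~\eqref{it:boundary_equal} of Lemma~\ref{lem:boundary_good}. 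So the assertion I need is: the set of non-loop points of $\partial U_{k_j}^j$, as seen from a point in the complementary component $W_\bk$, has zero harmonic measure. This in turn follows from Lemma~\ref{lem:kl_harmonic_zero}, since $\partial A_{k_j}^j$ (hence $Y_{k_j}^j$) is visited by an exploration of the type $K^L$, and the non-loop points of such boundaries have zero harmonic measure in any complementary component. Concretely, I would argue: if the non-loop points of $D \cap \partial Q_j$ had positive harmonic measure from $z$, then in particular the non-loop points of $C_j \subseteq Y_{k_j}^j$ would have positive harmonic measure in the component of $D_j \setminus K_{k_j}^{L}$ containing $z$ for an appropriate $L$, contradicting Lemma~\ref{lem:kl_harmonic_zero}.

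The main obstacle I anticipate is a slight mismatch of the relevant complementary domains: $z \in W_\bk$, which is the component of $D \setminus Q_\bk$ with $\ccwBoundary{z_1}{z_2}{\partial D}$ on its boundary, whereas Lemma~\ref{lem:kl_harmonic_zero} and Lemma~\ref{lem:in_between_stuff_is_small} are phrased for components of $D_j \setminus K^L$ (resp.\ $D \setminus Z_{x,y}$). One resolves this by noting that any path from $z$ to a non-loop point $w \in C_j \subseteq \partial U_{k_j}^j$ inside $W_\bk$, when truncated at its first hit of $\partial U_{k_j}^j$, stays within the appropriate component of $D_j \setminus K_{k_j}^{L}$ (since $Q_1,\ldots,Q_{j-1}$ only shrink the available region), so harmonic measure of non-loop points of $\partial U_{k_j}^j$ from $z$ in $W_\bk$ is dominated by the corresponding harmonic measure in the larger component, which is zero by Lemma~\ref{lem:kl_harmonic_zero}. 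The remaining steps — summing over $j$ and over the countably many possible $\bk$ to conclude the a.s.\ statement on $\cup_\bk F_\bk$ — are routine. I would also invoke Lemma~\ref{lem:no_zero_length_on_boundary} (the harmonic-measure version for loops) if needed to handle the points of $\partial Q_j$ lying on loops $\CL$ of $\Gamma$ that meet $C_j$ but whose other side is accessible from $z$; but since those points \emph{are} loop points, they are excluded from the set in the statement, so this should not actually be required.
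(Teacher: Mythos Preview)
Your approach is correct and aligns with the paper's: both deduce the lemma from Lemma~\ref{lem:kl_harmonic_zero}, and the paper's proof is in fact just the single sentence ``This is a consequence of Lemma~\ref{lem:kl_harmonic_zero}.'' Your decomposition into the pieces $Q_j$, the identification of the non-loop boundary points as lying on $C_j \subseteq Y_{k_j}^j$, and the harmonic-measure domination (the event $\{\text{BM from }z\text{ exits }W_\bk\text{ in }S\}$ is contained in $\{\text{BM exits the larger component in }S\}$ whenever $S$ lies on both boundaries) are all sound and make explicit what the paper leaves implicit.

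One small remark: rather than literally embedding $W_\bk$ inside a component of some $D_j \setminus K^L$ and matching $C_j$ to an arc of the form $I_{k_j}^{j,\theta}$ (which requires a bit of bookkeeping about where the $K^L$ exploration starts on $\partial U_{k_j}^j$), a slightly cleaner route is to rerun the \emph{argument} of Lemma~\ref{lem:in_between_stuff_is_small}/\ref{lem:kl_harmonic_zero} directly in $W_\bk$, using the locality of $Q_\bk$ from Lemma~\ref{lem:disconnecting_local}: if the non-loop points of $\cup_j C_j$ had positive harmonic measure, the conformal-rotation trick produces a point in $W_\bk$ at zero $\metplus$-distance from $Y_{k_j}^j$, which (via the triangle inequality through $Y_{k_j}^j$ to $\partial A_{k_j}^j$) contradicts the maximality of $A_{k_j}^j$. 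Either way the content is the same.
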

\begin{proof}
This is a consequence of Lemma~\ref{lem:kl_harmonic_zero}.
\end{proof}

\begin{figure}[ht!]
\begin{center}
\includegraphics[scale=1]{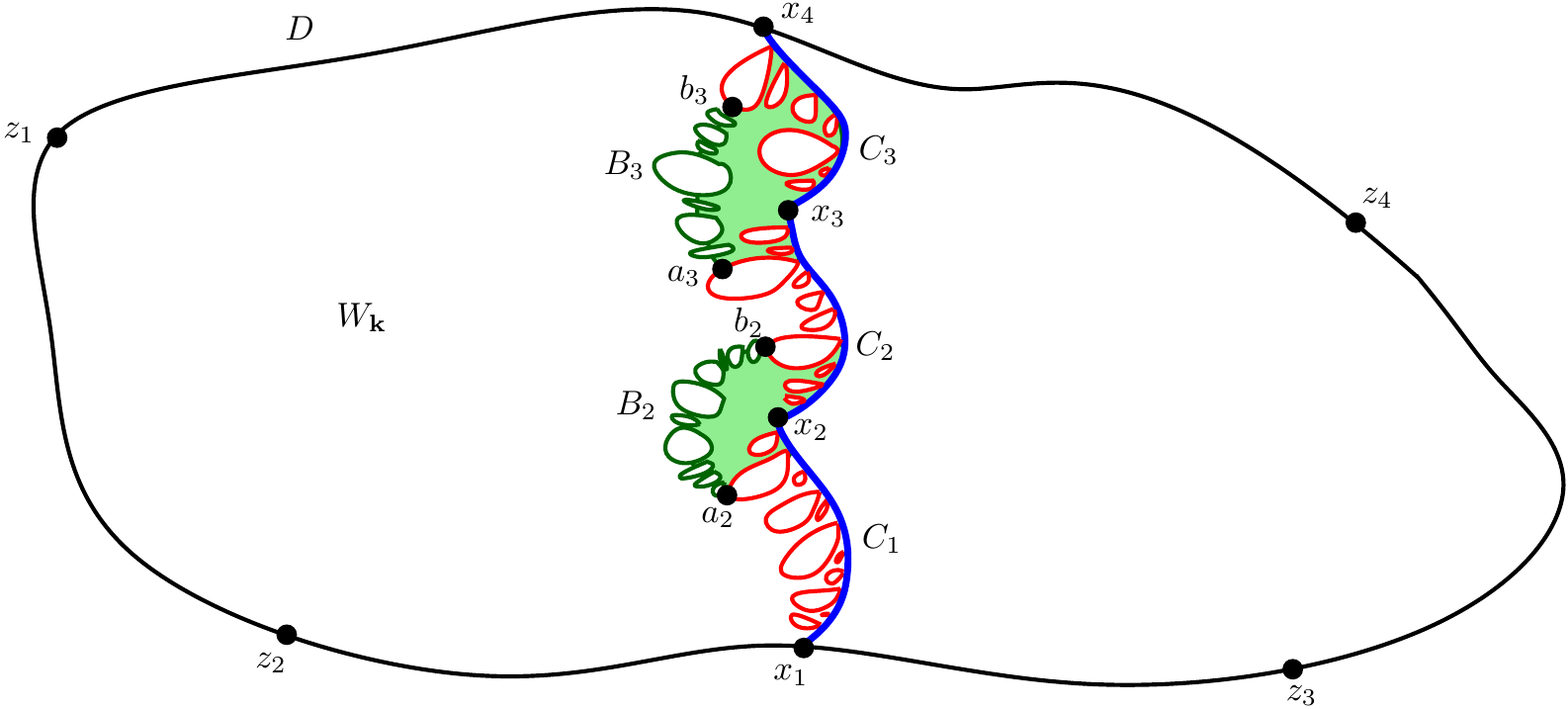}	
\end{center}
\caption{\label{fig:pos_def_proof} Illustration of the proof of Proposition~\ref{prop:no_zero_crossings}.  The sets $C_1$, $C_2$, $C_3$ are shown in blue and the loops of $\Gamma$ which intersect them are shown in red.  The green sets are $B_2$ and $B_3$ and the loops of $\Gamma$ which hit $B_2$, $B_3$ are shown in dark green.  We note that the illustration might suggest that the argument used to prove Proposition~\ref{prop:no_zero_crossings} gives that $\metplus{\ccwBoundary{z_2}{z_3}{\partial D}}{\ccwBoundary{z_4}{z_1}{\partial D}}{\Gamma} = 0$ since if $z,w$ are in the same component of $C_i \setminus \partial D$ for some $i$ then $\metplus{z}{w}{\Gamma} = 0$.  However, there is the possibility that one of the $C_i$'s hits $\ccwBoundary{z_3}{z_4}{\partial D}$ in which case there may be $z,w$ in different components of $C_i \setminus \partial D$ with $\metplus{z}{w}{\Gamma} > 0$.}
\end{figure}

\begin{proof}[Proof of Proposition~\ref{prop:no_zero_crossings}]
See Figure~\ref{fig:pos_def_proof} for an illustration of the proof.  We assume that we have the setup described just before the statement of Lemma~\ref{lem:disconnecting_local}.  Suppose that $\metplus{\ccwBoundary{z_1}{z_2}{\partial D}}{\ccwBoundary{z_3}{z_4}{\partial D}}{\Gamma} = 0$.  Fix $n \in \N$, $\bk \in \N^n$, and suppose that we are working on $F_\bk$.  By the definition of $Q_\bk$, we must have that $\metplus{\ccwBoundary{z_1}{z_2}{\partial D}}{x_j}{\Gamma_\bk} = 0$ for some $j$.  Thus to prove the result it suffices to show that $\metplus{\ccwBoundary{z_1}{z_2}{\partial D}}{x_j}{\Gamma_\bk} > 0$ for all $0 \leq j \leq n$.

Let $(U_{2,m})$ be an increasing sequence in $\dyad(D)$ with $\cup_m U_{2,m} = W_\bk$.  Fix $1 \leq j \leq n$ and let $a_j$, $b_j$ be distinct in $\partial W_\bk$ and distinct from $x_j$ so that $x_j \in I_j = \ccwBoundaryOpen{a_j}{b_j}{\partial W_\bk}$.  For each $\epsilon > 0$ we let $I_j^\epsilon$ be the $\epsilon$-neighborhood of $I_j$.  We let $B_j$ be the set of $u \in W_\bk$ such that for every $\epsilon > 0$ there exists $m_0 \in \N$ so that $m \geq m_0$ implies that $\metres{U_{2,m}}{u}{I_j^\epsilon}{\Gamma_\bk} = 0$.  We note that $\diam(B_j) \to 0$ as $a_j \to x_j$ from the left and $b_j \to x_j$ from the right by Lemmas~\ref{lem:zero_length_conf} and~\ref{lem:zero_metric_ball_does_not_hit}.  Fix $\delta > 0$ and assume we have chosen $a_j,b_j$ sufficiently close to $x_j$ so that the conditional probability that $\diam(B_j) \leq \delta$ given everything else is at least $1-\delta$.

We note that if $u \in W_\bk \setminus B_j$ then $\metres{U_{2,m}}{u}{B_j}{\Gamma_\bk} > 0$ for all $m \in \N$.  In particular, the only way that $\metplus{u}{x_j}{\Gamma_\bk} = 0$ can hold is if $\metplus{u}{a_j}{\Gamma_\bk} = 0$ or $\metplus{u}{b_j}{\Gamma_\bk} = 0$.  By Lemma~\ref{lem:disconnecting_harmonic_loops}, we know that there are loops of $\Gamma$ in $\partial W_\bk$ which are arbitrarily close to $x_j$.  Combining this with Lemma~\ref{lem:no_zero_length_on_boundary} we also know that there exist points $a_j, b_j \in \partial W_\bk$ which are arbitrarily close to $x_j$ with the property that $\metplus{u}{a_j}{\Gamma_\bk} > 0$ for all $u \in \closure{W_\bk} \setminus \{a_j\}$ and $\metplus{u}{b_j}{\Gamma_\bk} > 0$ for all $u \in \closure{W_\bk} \setminus \{b_j\}$.  Moreover, whether $a_j$, $b_j$ have this property is independent of $\metplus{\cdot}{\cdot}{\Gamma_\bk}$.  Therefore we may assume without loss of generality that we have made this choice.  Altogether, we see that the conditional probability that $\metplus{\ccwBoundary{z_1}{z_2}{\partial D}}{x_j}{\Gamma_\bk} > 0$ given everything else is at least $1-\delta$.  Since $\delta > 0$ was arbitrary, we have that $\metplus{\ccwBoundary{z_1}{z_2}{\partial D}}{x_j}{\Gamma_\bk} > 0$ a.s.  Combining completes the proof.
\end{proof}

\begin{proof}[Proof of Proposition~\ref{prop:overall_no_zero_length}]
Suppose that we have the setup of Proposition~\ref{prop:overall_no_zero_length}.  Suppose that with positive probability there exist $z,w \in \Upsilon$ distinct with $\met{z}{w}{\Gamma} = 0$.  We claim that we may assume without loss of generality that $z,w \in \Upsilon \cap D$.  To see this, let $Z = \{ u \in \Upsilon : \met{z}{u}{\Gamma} = 0\}$.  Then~$Z$ is a connected set.  If $Z \subseteq \partial D$, then we can write $Z = \cwBoundary{a}{b}{\partial D}$ for $a,b \in \partial D$ distinct.  This, in turn, contradicts the statement of Lemma~\ref{lem:no_zero_length_on_boundary} which completes the proof of the claim in this case.  For the remainder of the proof, we shall therefore assume that we have $z,w \in \Upsilon \setminus \partial D$ with $\met{z}{w}{\Gamma} = 0$.

Fix $x,y \in \partial D$ distinct and let $\eta$ be an $\SLE_\kappa^1(\kappa-6)$ process in $D$ from $x$ to $y$ coupled with $\Gamma$ as a CPI.  It follows from Proposition~\ref{prop:cpi_path_close} that with positive conditional probability given everything else, $\eta$ surrounds a domain~$U$ counterclockwise (so that there are no loops of $\Gamma$ on its boundary) so that there are points $u,v \in \partial U$ distinct with $\metplus{u}{v}{\Gamma_U} = 0$ where $\Gamma_U$ denotes the loops of $\Gamma$ contained in $U$.  By applying a conformal transformation $D \to \D$ and using Lemma~\ref{lem:zero_length_conf}, we get a contradiction to Proposition~\ref{prop:no_zero_crossings}, which completes the proof.
\end{proof}

\subsection{Subsequential limits are geodesic}
\label{subsec:geodesics}

\begin{proposition}
\label{prop:geodesic_limit}
Suppose that we have the setup of Theorem~\ref{thm:cle_loop} and that $(\epsilon_j)$ is a sequence of positive numbers decreasing to $0$ so that $(\medianHP{\epsilon_j})^{-1} \metapprox{\epsilon_j}{\cdot}{\cdot}{\Gamma}$ converges weakly as $j \to \infty$.  Let  $\met{\cdot}{\cdot}{\Gamma}$ have the law of the subsequentially limiting metric on $\Upsilon$.  Then $\met{\cdot}{\cdot}{\Gamma}$ is a.s.\ geodesic.
\end{proposition}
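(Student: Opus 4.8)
The plan is to deduce the geodesic property from the fact that each approximation $\metapprox{\epsilon}{\cdot}{\cdot}{\Gamma}$ is defined by an infimum over paths, together with an a priori compactness statement for near-optimal paths. First I would recall that, by definition, $\metapprox{\epsilon}{z}{w}{\Gamma} = \inf_\cpath \lebneb{\epsilon}(\cpath)$ over paths $\cpath$ in $\Upsilon$ joining $z$ and $w$. Using the Skorokhod representation theorem, I would couple everything on a common probability space so that $(\medianHP{\epsilon_j})^{-1}\metapprox{\epsilon_j}{\cdot}{\cdot}{\Gamma}$ converges a.s.\ (with respect to $\funcmet_4$) to $\met{\cdot}{\cdot}{\Gamma}$, which by Proposition~\ref{prop:overall_no_zero_length} is a.s.\ a metric on $\Upsilon$ inducing a topology comparable to the Euclidean one (by the H\"older continuity from Section~\ref{sec:interior_tightness}). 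It suffices to show that for any fixed $z,w\in\Upsilon$ there is a path from $z$ to $w$ whose $\met{\cdot}{\cdot}{\Gamma}$-length equals $\met{z}{w}{\Gamma}$; since $\Upsilon$ is separable and $\met{\cdot}{\cdot}{\Gamma}$ is continuous, a standard argument then upgrades this to a.s.\ existence of geodesics between all pairs simultaneously.

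The main step is to produce, for fixed $z,w$, a near-optimal continuous path. For each $j$ pick a path $\cpath_j$ in $\Upsilon$ from $z$ to $w$ with $\lebneb{\epsilon_j}(\cpath_j) \le \metapprox{\epsilon_j}{z}{w}{\Gamma} + \epsilon_j \medianHP{\epsilon_j}$, i.e.\ $(\medianHP{\epsilon_j})^{-1}\lebneb{\epsilon_j}(\cpath_j) \le (\medianHP{\epsilon_j})^{-1}\metapprox{\epsilon_j}{z}{w}{\Gamma} + \epsilon_j$, which converges to $\met{z}{w}{\Gamma}$. The key a priori bound is diameter control: the image of $\cpath_j$ is contained in the (closed) $\epsilon_j$-neighborhood of itself, whose Lebesgue measure is at most $\metapprox{\epsilon_j}{z}{w}{\Gamma} + \epsilon_j\medianHP{\epsilon_j}$, and a connected set of diameter $d$ has $\epsilon$-neighborhood of Lebesgue measure at least a constant times $\epsilon d$ (Lemma~\ref{lem:covering_lemma} with $K = \cpath_j([0,1])$); hence $\diam(\cpath_j) \le C\epsilon_j^{-1}(\metapprox{\epsilon_j}{z}{w}{\Gamma}+\epsilon_j\medianHP{\epsilon_j})$. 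This is not yet uniform, so I would instead argue more carefully: reparametrize each $\cpath_j$ by a constant-speed parametrization with respect to the quantity $\lebneb{\epsilon_j}(\cpath_j|_{[0,t]})$, and use Lemma~\ref{lem:covering_lemma} again to see that the diameter of $\cpath_j([s,t])$ is bounded by $C\epsilon_j^{-1}\lebneb{\epsilon_j}(\cpath_j|_{[s,t]}) = C\epsilon_j^{-1}(t-s)\lebneb{\epsilon_j}(\cpath_j)$, which still degenerates. The correct route is to observe that, after passing to a subsequence, the curves $\cpath_j$ (suitably reparametrized) have a modulus of continuity controlled by $\met{\cdot}{\cdot}{\Gamma}$ itself along dyadic scales: for $u,v$ on $\cpath_j$ one has $\metapprox{\epsilon_j}{u}{v}{\Gamma} \le \lebneb{\epsilon_j}(\cpath_j|_{[\cdot,\cdot]})$, so dividing by $\medianHP{\epsilon_j}$ and using the a.s.\ convergence together with the H\"older comparison between $\met{\cdot}{\cdot}{\Gamma}$ and the Euclidean metric gives an equicontinuity estimate for the family $\{\cpath_j\}$ with respect to the Euclidean metric. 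By Arzel\`a--Ascoli, a subsequence of the $\cpath_j$ converges uniformly to a continuous path $\cpath$ in $\closure{\Upsilon}=\Upsilon$ (using that $\Upsilon$ is closed) from $z$ to $w$.

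It then remains to verify that $\cpath$ has $\met{\cdot}{\cdot}{\Gamma}$-length at most $\met{z}{w}{\Gamma}$, hence exactly $\met{z}{w}{\Gamma}$ by the triangle inequality, so $\cpath$ is a geodesic. For any partition $0 = t_0 < t_1 < \cdots < t_m = 1$, lower semicontinuity of $\met{\cdot}{\cdot}{\Gamma}$ under the coupling gives $\met{\cpath(t_{i-1})}{\cpath(t_i)}{\Gamma} \le \liminf_j (\medianHP{\epsilon_j})^{-1}\metapprox{\epsilon_j}{\cpath_j(t_{i-1})}{\cpath_j(t_i)}{\Gamma} \le \liminf_j (\medianHP{\epsilon_j})^{-1}\lebneb{\epsilon_j}(\cpath_j|_{[t_{i-1},t_i]})$; summing over $i$ and using that $\sum_i \lebneb{\epsilon_j}(\cpath_j|_{[t_{i-1},t_i]})$ can exceed $\lebneb{\epsilon_j}(\cpath_j)$ only because of overlaps of the $\epsilon_j$-neighborhoods near the partition points, whose total contribution is $O(m\epsilon_j^2)$ and hence negligible after dividing by $\medianHP{\epsilon_j}$, we conclude $\sum_i \met{\cpath(t_{i-1})}{\cpath(t_i)}{\Gamma} \le \met{z}{w}{\Gamma}$. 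Taking the supremum over partitions yields that the $\met{\cdot}{\cdot}{\Gamma}$-length of $\cpath$ is at most $\met{z}{w}{\Gamma}$. The main obstacle I anticipate is making the equicontinuity step genuinely uniform in $j$: the neighborhood measure $\lebneb{\epsilon_j}$ only controls arc-length-like quantities up to factors of $\epsilon_j$, so one has to combine Lemma~\ref{lem:covering_lemma} with the already-established H\"older continuity of the limiting metric relative to the Euclidean metric (Proposition~\ref{prop:interior_tightness}, together with the a.s.\ convergence) to get a modulus of continuity that does not degenerate as $j \to \infty$; this is where a short but somewhat delicate argument is needed, possibly invoking that overlaps of $\epsilon_j$-balls along $\cpath_j$ at well-separated points cost a definite amount of Lebesgue measure.
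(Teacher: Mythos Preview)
Your approach is different from the paper's and has a genuine gap at the final length estimate. The claim that $\sum_i \lebneb{\epsilon_j}(\cpath_j|_{[t_{i-1},t_i]}) \le \lebneb{\epsilon_j}(\cpath_j) + O(m\epsilon_j^2)$ is not justified: a near-optimal path $\cpath_j$ in the carpet need not be simple, and nothing prevents $\epsilon_j$-neighborhoods of \emph{non-adjacent} partition segments from overlapping substantially. The overlap is therefore not localized ``near the partition points,'' and near-optimality alone does not rule this out (one cannot shortcut across a loop of $\Gamma$). A related issue is the equicontinuity step: the H\"older bound established in Proposition~\ref{prop:interior_tightness} goes the wrong way for your purpose (it bounds the approximate metric by a power of Euclidean distance, not conversely). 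The reverse modulus-of-continuity bound you need does follow from positive definiteness plus compactness plus uniform convergence, but this should be stated explicitly; it is not a ``H\"older comparison.''

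The paper avoids both problems by proving the midpoint property rather than constructing a full geodesic. Given $z,w$ sampled from $\qcarpet{h}{\Upsilon}$ and near-optimal $\cpath_j$, it picks $s_j$ with $\lebneb{\epsilon_j}(\cpath_j|_{[0,s_j]}) \approx \tfrac12 \metapprox{\epsilon_j}{z}{w}{\Gamma}$, then lets $t_j$ be the \emph{last} time after $s_j$ at which $\dist(\cpath_j(t_j),\cpath_j([0,s_j])) = \delta$, and uses the H\"older bound (in its correct direction) to find $r_j\le s_j$ with $|\cpath_j(r_j)-\cpath_j(t_j)|\le\delta$ and a shortcut of cost $\le A\delta^\beta\medianHP{\epsilon_j}$. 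By construction $\dist(\cpath_j([0,r_j]),\cpath_j([t_j,1]))\ge\delta\ge 2\epsilon_j$, so their $\epsilon_j$-neighborhoods are genuinely \emph{disjoint} and one gets the exact additivity $\lebneb{\epsilon_j}(\cpath_j|_{[0,r_j]})+\lebneb{\epsilon_j}(\cpath_j|_{[t_j,1]})\le\lebneb{\epsilon_j}(\cpath_j)$. Passing to subsequential limits of the \emph{points} $\cpath_j(r_j),\cpath_j(t_j)$ (not of the paths) and sending $\delta\to 0$ produces a midpoint. This sidesteps both Arzel\`a--Ascoli and the subadditivity issue entirely; your route could perhaps be repaired by using the same ``last-exit at distance $\delta$'' trick to enforce disjointness between consecutive segments, but as written it does not go through.
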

\begin{proof}
To prove that a metric space $(X,d)$ equipped with a good probability measure $\mu$ (i.e., $\mu(U) > 0$ for every non-empty open set $U \subseteq X$) is geodesic, it suffices to show that if $z,w$ are independent samples from $\mu$ then there a.s.\ exists $u$ such that $d(z,u) = d(u,w) = d(z,w) / 2$.  That is, $z,w$ a.s.\ have a midpoint.  In our case, we will take the metric to be $\met{\cdot}{\cdot}{\Gamma}$ and the good measure to be given by $\qcarpet{h}{\Upsilon}$ where $h$ is a field which describes an independent sample from $\qdiskL{\gamma}{1}$ parameterized by $D$.

Let $(\epsilon_j)$ be a sequence of positive numbers tending to $0$ as $j \to \infty$ along which $(\medianHP{\epsilon_j})^{-1}\metapprox{\epsilon_j}{\cdot}{\cdot}{\Gamma}$ converges weakly.  Suppose that we pick $z,w$ conditionally independently from $\qcarpet{h}{\Upsilon}$ given everything else and let $\omega_j \colon [0,1] \to \Upsilon$ be a continuous path such that $\omega_j(0) = z$, $\omega_j(1) = w$ and $\lebneb{\epsilon_j}(\omega_j) \leq \metapprox{\epsilon_j}{z}{w}{\Gamma} + \epsilon_j \medianHP{\epsilon_j}$.  Let $s_j \in [0,1]$ be such that
\begin{align}
\label{eqn:s_j_def}
|\lebneb{\epsilon_j}(\omega_j([0,s_j])) - \tfrac{1}{2}\metapprox{\epsilon_j}{z}{w}{\Gamma}| \leq 2\epsilon_j \medianHP{\epsilon_j}.
\end{align}
Fix $A, \delta > 0$ and let $t_j$ be the last time after $s_j$ that $\dist(\omega_j(t_j),\omega_j([0,s_j))) = \delta$.  Let $\beta > 0$ be as in the statement of Proposition~\ref{prop:interior_tightness} and let $X_j$ be the $\beta$-H\"older norm of $(\medianHP{\epsilon_j})^{-1} \metapprox{\epsilon_j}{\cdot}{\cdot}{\Gamma}$.  On $\{X_j \leq A\}$, we have that $\metapprox{\epsilon_j}{\omega_j(t_j)}{\omega_j([0,s_j])}{\Gamma} \leq A \delta^\beta$ hence there exists $r_j \in [0,s_j]$ and a path $\wt{\omega}_j \colon [0,1] \to \Upsilon$ with $\wt{\omega}_j(0) = \omega_j(r_j)$ and $\wt{\omega}_j(1) = \omega_j(t_j)$ so that $\lebneb{\epsilon_j}(\wt{\omega}_j) \leq (A \delta^\beta + \epsilon_j)\medianHP{\epsilon_j}$.  Let $\wh{\omega}_j$ be given by starting with $\omega_j$ and then replacing $\omega_j|_{[r_j,t_j]}$ with $\wt{\omega}$.  Then we have that
\begin{equation}
\label{eqn:middle_part_bound}
\lebneb{\epsilon_j}(\wh{\omega}_j([r_j,t_j])) \leq (A \delta^\beta + \epsilon_j) \medianHP{\epsilon_j} \quad\text{on}\quad \{X_j \leq A\}.
\end{equation}
On $\{X_j \leq A\}$ we also have that
\begin{align}
\metapprox{\epsilon_j}{z}{w}{\Gamma}
&\leq 
\lebneb{\epsilon_j}(\wh{\omega}_j([0,1])) \notag \\
&\leq
\lebneb{\epsilon_j}(\omega_j([0,r_j])) + 
\lebneb{\epsilon_j}(\wt{\omega}_j([r_j,t_j])) + 
\lebneb{\epsilon_j}(\omega_j([t_j,1])) \notag\\
&\leq \tfrac{1}{2}\metapprox{\epsilon_j}{z}{w}{\Gamma} + (3\epsilon_j + A \delta^\beta) \medianHP{\epsilon_j} + \lebneb{\epsilon_j}(\omega_j([t_j,1])) \quad\text{(by ~\eqref{eqn:s_j_def}, \eqref{eqn:middle_part_bound})}. \label{eqn:right_part_lbd}
\end{align}
Rearranging~\eqref{eqn:right_part_lbd} implies that
\[ \lebneb{\epsilon_j}(\omega_j([t_j,1])) \geq \frac{1}{2} \metapprox{\epsilon_j}{z}{w}{\Gamma} - (3\epsilon_j + A \delta^\beta) \medianHP{\epsilon_j} \quad\text{on}\quad \{X_j \leq A\}.\]
Assuming that $j$ is sufficiently large so that $2\epsilon_j \leq \delta$, we have that $\dist(\omega_j([0,r_j]),\omega_j([t_j,1])) \geq \delta \geq 2\epsilon_j$ so that
\[ \lebneb{\epsilon_j}(\omega_j([0,r_j])) + \lebneb{\epsilon_j}(\omega_j([t_j,1])) \leq \lebneb{\epsilon_j}(\omega_j([0,1])).\]
Applying~\eqref{eqn:s_j_def} again and rearranging gives
\begin{equation}
\label{eqn:t_j_to_1_bound}
\lebneb{\epsilon_j}(\omega_j([t_j,1])) \leq \frac{1}{2} \metapprox{\epsilon_j}{z}{w}{\Gamma} + 2\epsilon_j \medianHP{\epsilon_j}.
\end{equation}
By passing to a further subsequence if necessary, we may assume that the joint law of $z$, $w$, $\omega_j(r_j)$, $\omega_j(t_j)$, $(\medianHP{\epsilon_j})^{-1} \metapprox{\epsilon_j}{\cdot}{\cdot}{\Gamma}$, $h$, and $\Gamma$ converges weakly.  Write $z$, $w$, $u$, $v$, $\met{\cdot}{\cdot}{\Gamma}$, $h$, $\Gamma$ for the limit.  Let $X$ be the $\beta$-H\"older norm of $\met{\cdot}{\cdot}{\Gamma}$.  On $\{X \leq A\}$ we then have that
\begin{align*}
	\met{z}{u}{\Gamma} &\leq \frac{1}{2} \met{z}{w}{\Gamma} \quad\text{(by~\eqref{eqn:s_j_def})},\\
	\met{u}{v}{\Gamma} &\leq A \delta^\beta \quad\text{(by~\eqref{eqn:middle_part_bound})}\quad\text{and}\\ 
	\met{v}{w}{\Gamma} &\leq \frac{1}{2} \met{z}{w}{\Gamma} + A \delta^\beta \quad\text{(by~\eqref{eqn:t_j_to_1_bound})}.
\end{align*}
This implies that
\[ \met{u}{w}{\Gamma} \leq \frac{1}{2} \met{z}{w}{\Gamma} + 2A \delta^\beta \quad\text{hence}\quad \met{z}{u}{\Gamma} \geq \met{z}{w}{\Gamma} - \met{u}{w}{\Gamma} \geq \frac{1}{2}\met{z}{w}{\Gamma} - 2 A \delta^\beta .\]
Since $\delta > 0$ was arbitrary and the metric $\met{\cdot}{\cdot}{\Gamma}$ is compact, we see that on $\{X \leq A\}$ there is a.s.\ a midpoint for $z$, $w$.  This completes the proof as $X$ is a.s.\ finite and $A > 0$ was arbitrary.
\end{proof}

\subsection{Comparability of normalizations}
\label{subsec:quantiles_comparable}

\begin{proposition}
\label{prop:quantiles_comparable}
There exist constants $c_1, c_2 > 0$ so that for all $\epsilon \in (0,1)$ we have that
\[ c_1 \medianHP{\epsilon} \leq \median{\epsilon} \leq c_2 \medianHP{\epsilon}.\]
\end{proposition}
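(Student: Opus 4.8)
The plan is to compare the finite-volume quantity $\sup_{z,w \in \partial D} \metapprox{\epsilon}{z}{w}{\Gamma}$ (whose median is $\median{\epsilon}$) with the half-planar quantity $\sup_{z,w \in \eta_+([0,1])} \metapprox{\epsilon}{z}{w}{\Gamma_+}$ (whose median is $\medianHP{\epsilon}$), using the absolute continuity and local structure relating the two setups. First I would observe that both directions of the inequality amount to showing that, with probability bounded away from $0$ and from $1$ uniformly in $\epsilon$, the boundary diameter in one model is comparable to a fixed multiple of the boundary diameter in the other. Since by Proposition~\ref{prop:boundary_distance_tail_bounds} (applied with $D$ the domain surrounded by the $\CLE_\kappa$ loop $\CL$, which a.s.\ satisfies the lower-bound-on-quantum-area event $E$ for suitable $\alpha_\LBD$ off a negligible set) the law of $(\medianHP{\epsilon})^{-1} \metapprox{\epsilon}{\cdot}{\cdot}{\Gamma}$ restricted to $\partial D$ is tight, the random variable $(\medianHP{\epsilon})^{-1}\sup_{z,w \in \partial D} \metapprox{\epsilon}{z}{w}{\Gamma}$ is tight in $\epsilon$; this already gives $\median{\epsilon} \leq c_2 \medianHP{\epsilon}$ provided we know the tightness family is also bounded away from degeneracy, i.e.\ that $(\medianHP{\epsilon})^{-1} \sup_{z,w\in\partial D}\metapprox{\epsilon}{z}{w}{\Gamma}$ does not converge to $0$ in probability.

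For the lower bound $c_1 \medianHP{\epsilon} \leq \median{\epsilon}$, the key step is to exhibit inside $\partial D$ (or inside $\Upsilon$) a fixed macroscopic piece whose $\metapprox{\epsilon}{\cdot}{\cdot}{\Gamma}$-diameter is comparable to $\medianHP{\epsilon}$ with probability bounded below uniformly in $\epsilon$. To do this I would use a CPI exploration of $\Gamma$ started from a boundary point of $D$: following the discussion in Section~\ref{subsec:outline} and Theorem~\ref{thm:cpi_wedge_explore}, after running an $\SLE_\kappa^0(\kappa-6)$ in $D$ for a fixed amount of quantum natural time of its trunk one disconnects a quantum surface whose law is comparable to (a bounded Radon--Nikodym multiple of, via Lemma~\ref{lem:rn_derivative}) the half-planar setup of Section~\ref{subsec:setup}; on this piece the trunk locally looks like the two-sided whole-plane $\SLE_\kappa$ appearing in the definition of $\quantHP{p}{\epsilon}$. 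The self-similarity argument already used in the proof of Lemma~\ref{lem:chunk_exit} — that if the approximate metric ball at a typical point on $\eta_+$ were much smaller than $\quantHP{p}{\epsilon}$ with non-negligible probability, this would contradict the definition of $\quantHP{p}{\epsilon}$ — shows that the diameter of the embedded image of $\eta_+([0,1])$ in $\Upsilon$ under $\metapprox{\epsilon}{\cdot}{\cdot}{\Gamma}$ is at least a constant times $\medianHP{\epsilon}$ with probability bounded below. Absolute continuity and the bound on the derivative of the embedding map (as in the proof of Lemma~\ref{lem:disk_good_connection_intermediate}, using \eqref{eqn:quant_comparison}) then transfer this to $\metapprox{\epsilon}{\cdot}{\cdot}{\Gamma}$ itself. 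Finally, Proposition~\ref{prop:cpi_path_close} guarantees that with positive conditional probability the CPI exploration can be arranged so that the disconnected piece, together with its two endpoints, actually contributes to the boundary-to-boundary diameter $\sup_{z,w\in\partial D}\metapprox{\epsilon}{z}{w}{\Gamma}$ — more precisely, one runs the CPI targeted so as to realize two points of $\partial D$ that are separated along $\Upsilon$ by a path of $\metapprox{\epsilon}{\cdot}{\cdot}{\Gamma}$-length at least a constant times $\medianHP{\epsilon}$. This gives $\p[\sup_{z,w\in\partial D}\metapprox{\epsilon}{z}{w}{\Gamma} \geq c_1 \medianHP{\epsilon}] \geq p_1 > 1/2$ for all $\epsilon$, whence $\median{\epsilon} \geq c_1 \medianHP{\epsilon}$.

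I would then clean up the upper bound: the tightness from Proposition~\ref{prop:boundary_distance_tail_bounds} gives $A < \infty$ with $\p[\sup_{z,w\in\partial D}\metapprox{\epsilon}{z}{w}{\Gamma} \geq A\,\medianHP{\epsilon}] < 1/2$ uniformly in $\epsilon$ (using that the event $E$ there holds off a set of probability $<1/4$, say, by the roughness of the $\CLE_\kappa$ boundary loop and the moment bounds for the quantum area of a quantum disk cited in the appendices), hence $\median{\epsilon} \leq c_2 \medianHP{\epsilon}$ with $c_2 = A$. Combining the two bounds proves the proposition.

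\textbf{Main obstacle.} The delicate point is the lower bound: one must make the comparison between the finite-volume boundary-diameter and $\medianHP{\epsilon}$ without circularity, since $\medianHP{\epsilon}$ is defined via the whole-plane $\SLE_\kappa$ approximations rather than via a crossing length. The resolution, as in Lemma~\ref{lem:chunk_exit}, is to run the argument by contradiction against the definition of $\quantHP{p}{\epsilon}$: if the relevant diameter were $o(\medianHP{\epsilon})$ with non-negligible probability, self-similarity of the two-sided whole-plane $\SLE_\kappa$ forces this to happen somewhere along $\eta_+([0,1])$ with high probability, contradicting that $\medianHP{\epsilon}$ is the median. Carrying this out requires carefully tracking the bounded distortion of the conformal maps relating the embedded CPI pieces to the reference half-planar surface, and checking that Proposition~\ref{prop:cpi_path_close} can be used to force the realized macroscopic path to connect two genuinely distinct prime ends of $\partial D$; both are routine given the tools already developed but need to be stated with some care.
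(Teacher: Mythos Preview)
The paper's proof is a five-line contradiction argument that relies on two facts already established by this point in Section~6: tightness of $(\medianHP{\epsilon})^{-1}\metapprox{\epsilon}{\cdot}{\cdot}{\Gamma}$ (Proposition~\ref{prop:interior_tightness}) and, crucially, positive definiteness of every subsequential limit (Proposition~\ref{prop:overall_no_zero_length}). Writing $D_\epsilon = \sup_{z,w\in\partial D}\metapprox{\epsilon}{z}{w}{\Gamma}$, if $\medianHP{\epsilon_j}/\median{\epsilon_j}\to 0$ along some sequence, one passes to a further subsequence along which $(\medianHP{\epsilon_{j_k}})^{-1}D_{\epsilon_{j_k}}$ converges weakly to a \emph{strictly positive} finite random variable (positivity from Proposition~\ref{prop:overall_no_zero_length}, finiteness from tightness); then $\p[\median{\epsilon_{j_k}}^{-1}D_{\epsilon_{j_k}}\ge 1]\to 0$, contradicting the definition of the median. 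The case $\medianHP{\epsilon_j}/\median{\epsilon_j}\to\infty$ is symmetric.

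Your upper bound via tightness is essentially the paper's. For the lower bound, however, you try to avoid positive definiteness and instead manufacture directly---via a CPI exploration, the Radon--Nikodym comparison of Lemma~\ref{lem:rn_derivative}, and the self-similarity mechanism from Lemma~\ref{lem:chunk_exit}---a uniform probability that $D_\epsilon \ge c_1\medianHP{\epsilon}$. There is a genuine gap: the tools you invoke only give this event with \emph{some} uniform $p_1 > 0$ (Proposition~\ref{prop:cpi_path_close} gives positive conditional probability, absolute continuity preserves positivity, nothing more), whereas your conclusion ``$p_1 > 1/2$'' is what is actually needed to bound the median. Nothing in the sketch boosts $p_1$ past $1/2$. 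To do so you would need to know that $(\medianHP{\epsilon})^{-1}D_\epsilon$ places no mass near $0$ in the limit---which is precisely the content of Proposition~\ref{prop:overall_no_zero_length}. The paper places this proposition after the positive-definiteness proof for exactly this reason; once you invoke it, the CPI/self-similarity detour becomes unnecessary and the argument collapses to the short contradiction above. Without it, your lower bound does not close.
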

\begin{proof}
Let $D_\epsilon = \sup_{z,w \in \partial D} \metapprox{\epsilon}{z}{w}{\Gamma}$.  By the definition of $\median{\epsilon}$, we have that
\[ \p[ \median{\epsilon}^{-1} D_\epsilon \geq 1] \geq 1/2 \quad\text{and}\quad \p[ \median{\epsilon}^{-1} D_\epsilon \leq 1] \geq 1/2.\]
Suppose that there exists a sequence $(\epsilon_j)$ of positive numbers with $\epsilon_j \to 0$ such that $\medianHP{\epsilon_j}/\median{\epsilon_j} \to 0$ as $j \to \infty$.  By passing to a subsequence if necessary, we may assume without loss of generality that $(\medianHP{\epsilon_j})^{-1} \metapprox{\epsilon_j}{\cdot}{\cdot}{\Gamma}$ converges weakly as $j \to \infty$.  Then $(\medianHP{\epsilon_j})^{-1} D_{\epsilon_j}$ converges weakly to a positive and finite random variable.  Consequently,
\begin{align*}
   \p[ \median{\epsilon_j}^{-1} D_{\epsilon_j} \geq 1]
&= \p\!\left[ \frac{\medianHP{\epsilon_j}}{\median{\epsilon_j}} (\medianHP{\epsilon_j})^{-1} D_{\epsilon_j} \geq 1 \right] \to 0 \quad\text{as} \quad j \to \infty. 
\end{align*}
This contradicts the definition of $\median{\epsilon_j}$.  Similarly, if there exists a sequence $(\epsilon_j)$ of positive numbers with $\epsilon_j \to 0$ such that $\medianHP{\epsilon_j}/\median{\epsilon_j} \to \infty$ as $j \to \infty$ then (passing to a further subsequence if necessary) we have that
\begin{align*}
   \p[ \median{\epsilon_j}^{-1} D_{\epsilon_j} \leq 1]
&= \p\!\left[ \frac{\medianHP{\epsilon_j}}{\median{\epsilon_j}} (\medianHP{\epsilon_j})^{-1} D_{\epsilon_j} \leq 1 \right] \to 0 \quad\text{as} \quad j \to \infty. 
\end{align*}
This also contradicts the definition of $\median{\epsilon_j}$.  Combining proves the result.
\end{proof}

\appendix

\section{Modulus of continuity of space-filling $\SLE$ with quantum parameterization}
\label{app:mod_of_cont}

\begin{proposition}
\label{prop:space_filling_on_quantum_disk}
There exists a constant $\alpha_\HO \in (0,1)$ so that the following is true.  Suppose that $(\D,h,-i,i) \sim \qdiskL{\gamma}{1}$.  Let $\eta'$ be a space-filling $\SLE_{\kappa'}$ loop on $\D$ from $-i$ to $-i$ which is sampled independently of $h$ and then subsequently parameterized by quantum area.  Then $\eta'$ is a.s.\ $\alpha_\HO$-H\"older continuous.
\end{proposition}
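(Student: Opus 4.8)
The plan is to prove H\"older continuity of $\eta'$ (parameterized by quantum area) by combining a modulus-of-continuity estimate for the GFF-based quantum area measure with a KPZ-type comparison between quantum area and Euclidean diameter along the space-filling curve, together with the modulus of continuity of $\eta'$ with respect to its capacity or natural parameterization. The key observation is that for a space-filling $\SLE_{\kappa'}$, a point $\eta'(t)$ and $\eta'(s)$ with $|t-s|=\delta$ (quantum area) are such that the segment $\eta'([s,t])$ is a connected set containing both and having quantum area exactly $\delta$; to control $|\eta'(t)-\eta'(s)|$ it therefore suffices to show that no connected subset of $\D$ of small quantum area can have large Euclidean diameter, with overwhelming probability.

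First I would set up the field precisely: write $(\D,h,-i,i)\sim\qdiskL{\gamma}{1}$, and recall that $h$ can be compared (via a Radon--Nikodym derivative with finite moments, and via absolute continuity away from the boundary, as in \cite{ds2011kpz}) to a free-boundary GFF plus a log singularity. The core estimate is a one-sided KPZ / multifractal bound: there exist $\alpha_\HO\in(0,1)$ and $c,\xi>0$ so that, off an event of probability $O(\delta^\xi)$ (hence summable along dyadic scales), every connected set $K\subseteq \D$ with $\diam(K)\geq \delta^{\alpha_\HO}$ satisfies $\qmeasure{h}(K)\geq \delta$. This is essentially the statement that $\qmeasure{h}(B(z,r))\leq r^{\alpha_\UBD}$ uniformly, which is the kind of upper bound on quantum area already invoked elsewhere in the paper (e.g.\ in Proposition~\ref{prop:interior_tightness} and Lemma~\ref{lem:disk_in_disk}); I would prove it here by a standard Gaussian tail / Borell--TIS argument on the circle-average process of the GFF combined with a union bound over a dyadic net of balls, then upgrade from balls to connected sets by covering a connected set of diameter $\rho$ by $\gtrsim \rho/r$ disjoint balls of radius $r$ along it. Since $\eta'$ is continuous in its natural parameterization and space-filling, the total quantum area of $\D$ is finite a.s., so only finitely many dyadic scales are relevant at positive distance from the degenerate endpoint; Borel--Cantelli then gives an a.s.\ scale $\delta_0$ below which the bound holds.

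Next I would translate this into a modulus of continuity for $\eta'$ parameterized by quantum area. If $\eta'$ is parameterized so that $\qmeasure{h}(\eta'([0,t]))=t$ (total area normalized to $\qmeasure{h}(\D)$), then for $|s-t|=\delta$ the set $K=\eta'([s,t])$ is connected, contains $\eta'(s)$ and $\eta'(t)$, and has $\qmeasure{h}(K)=\delta$. The estimate above then forces $\diam(K)\leq \delta^{\alpha_\HO}$ off a summable event, i.e.\ $|\eta'(s)-\eta'(t)|\leq |s-t|^{\alpha_\HO}$ for all $s,t$ with $|s-t|$ below the a.s.\ scale $\delta_0$; a Kolmogorov--Chentsov-type reorganization (or a direct union bound over dyadic time-intervals, which is cleaner here since we have a deterministic bound off a summable event) upgrades the ``for each pair'' statement to a uniform H\"older bound on all of $[0,\qmeasure{h}(\D)]$, after adjusting $\alpha_\HO$ slightly downward. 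I would handle the boundary/endpoint $-i$ separately: near $-i$ the field has a log singularity of weight matching a quantum disk boundary marked point, so the quantum area of a small ball around $-i$ decays at a (possibly different but still positive) power of its radius, which is enough; alternatively one notes $\eta'$ fills a neighborhood of $-i$ in positive area, so the degeneracy of the parameterization near $-i$ is mild.

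The main obstacle I expect is making the connected-set-to-ball reduction fully rigorous uniformly in the (random, fractal) curve $\eta'$ and the field $h$ simultaneously --- in particular controlling the quantum area from below for \emph{arbitrary} connected subsets rather than balls, uniformly over all of them at once, and doing so with a probability bound summable over dyadic scales so that Borel--Cantelli applies. The covering argument (pack $\gtrsim \diam(K)/r$ disjoint radius-$r$ balls along a connected set $K$ and sum their quantum masses, using that the radius-$r$ balls centered on a fixed dyadic net cover all such configurations up to constants) is standard but requires care that the ``bad'' event --- that \emph{some} net ball of radius $r$ has quantum mass less than $r^{\alpha_\UBD}$ --- has probability decaying faster than any power of $r$, which follows from the Gaussian tail of the circle-average process but must be combined correctly with the number $O(r^{-2})$ of net balls. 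A secondary technical point is transferring these estimates from a free-boundary GFF (where circle/semicircle average computations are cleanest) to the actual quantum disk field via absolute continuity, which is routine but should be stated. I would also remark that this proposition is only used in Section~\ref{sec:interior_tightness} (via Lemma~\ref{lem:distance_two_quantum_typical}) to control the time-discretization of a CPI exploration, so the precise value of $\alpha_\HO$ is irrelevant and any positive exponent suffices, which gives us plenty of room in the estimates.
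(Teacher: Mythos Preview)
Your proposal has a genuine gap. The claim you want --- that every connected set $K\subseteq\D$ with $\diam(K)\geq\delta^{\alpha_\HO}$ has $\qmeasure{h}(K)\geq\delta$ --- is false as stated: a line segment has zero Lebesgue (hence zero quantum) measure but arbitrary diameter. Your suggested fix, ``pack $\gtrsim\rho/r$ disjoint balls of radius $r$ along $K$ and sum their quantum masses'', bounds the quantum mass of the \emph{union of those balls}, not of $K$ itself; the balls are centred on $K$ but not contained in it. You also have the inequality direction reversed in the display: you write $\qmeasure{h}(B(z,r))\leq r^{\alpha_\UBD}$, but to force large-diameter pieces to have large quantum mass you need a uniform \emph{lower} bound $\qmeasure{h}(B(z,r))\geq r^{\alpha_\LBD}$ together with a guarantee that $K$ actually \emph{contains} a ball.

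What is missing is a structural input from the space-filling curve itself: the paper uses that for any $\xi>1$, a.s.\ for all small $\epsilon$, whenever $\diam(\eta'([a,b]))\geq\epsilon$ the set $\eta'([a,b])$ \emph{contains} a Euclidean ball of radius $\epsilon^\xi$ (this is Proposition~\ref{prop:space_filling_fills_in_balls_on_disk}, deduced from the whole-plane version \cite[Proposition~3.4]{ghm2020kpz} by absolute continuity, with a separate GFF flow-line argument near $\partial\D$). Combined with the lower bound $\qmeasure{h}(B(z,\epsilon))\geq\epsilon^{\alpha_\LBD}$ of Lemma~\ref{lem:quantum_disk_multifractal}, one gets $b-a=\qmeasure{h}(\eta'([a,b]))\geq\epsilon^{\xi\alpha_\LBD}$, hence $(\xi\alpha_\LBD)^{-1}$-H\"older continuity. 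The ``fills-in-balls'' property is exactly the missing bridge between ball estimates and the segments $\eta'([a,b])$; without it your connected-set argument cannot be completed.
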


We will prove Proposition~\ref{prop:space_filling_on_quantum_disk} by using that for each $\xi > 1$ and all $\epsilon > 0$ small enough, whenever a space-filling $\SLE_{\kappa'}$ travels distance $\epsilon$ it fills in a ball of radius $\epsilon^\xi$ (Proposition~\ref{prop:space_filling_fills_in_balls_on_disk}) together with a lower bound for the quantum measure on a quantum disk (Lemma~\ref{lem:quantum_disk_multifractal}).

\begin{proposition}
\label{prop:space_filling_fills_in_balls_on_disk}
Suppose that $\eta'$ is a space-filling $\SLE_{\kappa'}$ on $\D$ from $-i$ to $-i$.  For each $\xi > 1$ there a.s.\ exists $\epsilon_0 > 0$ so that for all $\epsilon \in (0,\epsilon_0)$ the following is true.  For any $0 \leq a < b$, on the event that $\diam(\eta'([a,b])) \geq \epsilon$ we have that $\eta'([a,b])$ contains a disk of radius at least $\epsilon^\xi$.
\end{proposition}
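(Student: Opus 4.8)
The plan is to reduce the statement to a quantitative estimate about whole-plane (or chordal) space-filling $\SLE_{\kappa'}$ and then apply a Borel--Cantelli argument over a dyadic mesh of space--time scales. The key point is that a space-filling $\SLE_{\kappa'}$ is built from flow lines of a GFF as in \cite{ms2017ig4}, and the segment $\eta'([a,b])$ is, conditionally on its two ``boundary curves'' (the left and right boundaries of $\eta'([0,a])$ and of $\eta'([0,b])$, which are pairs of $\SLE_\kappa$-type flow/counterflow lines), a space-filling $\SLE_{\kappa'}$ in the complementary pocket. So the statement we really need is: for a space-filling $\SLE_{\kappa'}$ in a domain, if its range has Euclidean diameter at least $\epsilon$, then it contains a Euclidean ball of radius $\epsilon^\xi$, with high probability once $\epsilon$ is small. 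Since $\eta'$ is space-filling, $\eta'([a,b])$ \emph{is} a (random) region, so ``$\eta'([a,b])$ contains a disk of radius $\epsilon^\xi$'' is equivalent to ``$\eta'([a,b])$ is not contained in the $\epsilon^\xi$-neighborhood of its topological boundary.'' The first step, then, is to set up this reduction carefully and reduce to controlling how thin a pocket cut out by the relevant flow lines can be relative to its diameter.

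Next I would prove the core one-scale estimate: there is $c>0$ (depending on $\kappa'$, $\xi$) so that for $\epsilon>0$ small, the probability that a space-filling $\SLE_{\kappa'}$ segment with diameter in $[\epsilon, 2\epsilon]$ fails to contain a ball of radius $\epsilon^\xi$ is at most $\epsilon^{c}$ for any $c$ we like, by taking $\xi$ large — more precisely, this probability decays faster than any fixed power of $\epsilon$ once $\xi$ is chosen large enough (the exponent improves as $\xi\to\infty$). This is where imaginary-geometry input enters: a region of diameter $\sim\epsilon$ that contains no ball of radius $\epsilon^\xi$ must be ``pinched'' at scale $\epsilon^\xi$ somewhere, and this forces the bounding flow lines of $\eta'$ to come within distance $\epsilon^\xi$ of each other while staying a ``macroscopic'' (order $\epsilon$) distance apart in arclength. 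Estimates of the type in \cite[Lemmas~2.3--2.5]{mw2017intersections} (as already invoked in the proof of Lemma~\ref{lem:chunk_exit}) together with the a.s.\ Hölder continuity of $\SLE_\kappa$ flow lines (hence an upper bound on how much arclength of a flow line can fit into a small ball) give that such a pinch at scale $\epsilon^\xi$ is very unlikely; quantitatively, the probability is bounded by a power of $\epsilon^\xi/\epsilon = \epsilon^{\xi-1}$ raised to a positive exponent, which we can make as small a power of $\epsilon$ as desired by increasing $\xi$. One has to be slightly careful that the conditional law of the pocket given the boundary curves is absolutely continuous with respect to (or directly comparable to) a reference space-filling $\SLE_{\kappa'}$, but this is exactly the kind of Markov/conformal-invariance argument used throughout the paper.

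Finally I would run the Borel--Cantelli / covering argument. Fix $\xi>1$ and let $\xi'\in(1,\xi)$. For each $n$ consider the dyadic space--time grid: times $a = k2^{-2n}$ with $0\le k < 2^{2n}$ and scales $\epsilon \in [2^{-m-1},2^{-m}]$ with $m\le n$; using the (already essentially known, via Appendix~\ref{app:mod_of_cont}-type inputs, but here only for a \emph{Lebesgue}-parameterized reference curve, which is classical) modulus of continuity of space-filling $\SLE_{\kappa'}$ one controls how $\eta'([a,b])$ varies as $b$ ranges over an interval of length $2^{-2n}$, so that it suffices to check the ball-containment property on this countable grid. By the one-scale estimate with $\xi'$ in place of $\xi$, the probability that \emph{some} grid segment of diameter $\ge \epsilon$ fails to contain a ball of radius $\epsilon^{\xi'}$ is summable in $n$ provided we chose the exponent in the one-scale estimate large enough relative to the number of grid points ($\le 2^{Cn}$). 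Borel--Cantelli then gives an a.s.\ (random) $n_0$, and for $\epsilon_0 = 2^{-n_0}$ and any $0\le a<b$ with $\diam(\eta'([a,b]))\ge\epsilon$, one rounds $a$, $b$, and $\epsilon$ to the grid, losing only a bounded factor and an extra power (absorbed by taking $\xi>\xi'$), to conclude that $\eta'([a,b])$ contains a ball of radius $\epsilon^{\xi}$. The main obstacle is the one-scale pinch estimate: making precise and quantitative the claim that a diameter-$\epsilon$ space-filling $\SLE_{\kappa'}$ cell that avoids all balls of radius $\epsilon^{\xi'}$ forces an atypically close return of the bounding imaginary-geometry flow lines, with a probability bound that is a genuinely negative power of $\epsilon$ whose exponent grows with $\xi'$; once that is in hand the rest is a routine union bound and discretization.
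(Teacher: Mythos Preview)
Your approach is in the right spirit---reduce to a one-scale estimate and run Borel--Cantelli---but it misses the structure of the problem and leaves a genuine gap at the boundary.

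First, the one-scale estimate you propose to derive (that a segment of diameter $\asymp\epsilon$ failing to contain a ball of radius $\epsilon^{\xi}$ is super-polynomially unlikely) is already available for \emph{whole-plane} space-filling $\SLE_{\kappa'}$: this is \cite[Proposition~3.4 and Remark~3.9]{ghm2020kpz}, restated here as Lemma~\ref{lem:space_filling_fills_in_balls_in_plane}. The paper does not re-prove it via a pinching argument; it cites the result. It then transfers this to the \emph{interior} of $\D$ by local absolute continuity of the imaginary-geometry GFF on $\D$ with respect to the whole-plane GFF, controlling the Radon--Nikodym derivative via Lemma~\ref{lem:gff_rn_bound}. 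This handles all segments $\eta'([a,b])$ whose distance to $\partial\D$ is at least their own diameter.

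The gap in your proposal is the boundary case: when $\eta'([a,b])$ lies within distance $\diam(\eta'([a,b]))$ of $\partial\D$, one side of the ``pocket'' is a piece of the deterministic smooth circle $\partial\D$, not a flow line, so your pinching-of-flow-lines heuristic does not apply as stated, and the whole-plane absolute-continuity transfer breaks down near $\partial\D$. The paper treats this separately: it places $N_k=2^k$ equally spaced points $z_{j,k}$ on $\partial\D$ and from each starts the angle-$\pi/2$ flow line (part of the left boundary of $\eta'$ at $z_{j,k}$), stopped on leaving $B(z_{j,k},2^{-\xi k})$. Using \cite[Lemma~2.3]{mw2017intersections} and a conformal-distortion argument, each such flow line has, even conditionally on the others, a uniformly positive chance of exiting at distance $\gtrsim 2^{-\xi k}$ from $\partial\D$; the success indicators therefore stochastically dominate i.i.d.\ Bernoulli$(p)$ variables, so long runs of failures are super-polynomially unlikely. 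Hence any segment of $\eta'$ traveling distance $\sim C k\,2^{-k}$ along $\partial\D$ must enter the interior to depth $\gtrsim 2^{-\xi k}$, whereupon the interior estimate yields a ball of radius $\gtrsim 2^{-\xi^2 k}$; since $\xi>1$ was arbitrary, the $\xi^2$ is absorbed. Your Borel--Cantelli over a space--time grid would still require this boundary input; without it the argument does not close.
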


The following is a restatement of \cite[Proposition~3.4]{ghm2020kpz} as well as the discussion in \cite[Remark~3.9]{ghm2020kpz}.

\begin{lemma}
\label{lem:space_filling_fills_in_balls_in_plane}
Suppose that $\eta'$ is a space-filling $\SLE_{\kappa'}$ from $\infty$ to $\infty$ in $\C$.  For $\xi > 1$, $R > 0$, and $\epsilon > 0$, we let $E_\epsilon$ be the event that the following is true.  For each $\delta \in (0,\epsilon]$ and each $a < b \in \R$ with $\eta'([a,b]) \subseteq B(0,R)$ and $\diam(\eta'([a,b])) \geq \delta$ the set $\eta'([a,b])$ contains a ball of radius at least~$\delta^\xi$.  Then $\p[E_\epsilon] \to 1$ as $\epsilon \to 0$ faster than any power of $\epsilon$.
\end{lemma}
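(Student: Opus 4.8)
The plan is to quote \cite[Proposition~3.4]{ghm2020kpz}, with the passage from dyadic segments to arbitrary segments and from a fixed diameter to all diameters $\le \epsilon$ carried out as in \cite[Remark~3.9]{ghm2020kpz}; for completeness I describe the strategy. First, $E_\epsilon^c$ is the event that there exist a scale $\delta \in (0,\epsilon]$ and times $a < b$ with $\eta'([a,b]) \subseteq B(0,R)$, $\diam(\eta'([a,b])) \ge \delta$, and $\eta'([a,b])$ containing no ball of radius $\delta^\xi$. If the witnessing $\delta$ lies in $[2^{-k-1},2^{-k})$, then $\eta'([a,b])$ has diameter at least $2^{-k-1}$ and, since $\delta^\xi \ge 2^{-\xi(k+1)}$, contains no ball of radius $2^{-\xi(k+1)}$. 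Hence, by a union bound over $k \ge \log_2(1/\epsilon)-1$, it suffices to exhibit for each $k$ an event $G_k$ whose complement has probability $O(2^{-mk})$ for every $m > 0$ (with the implicit constant depending on $m$) on which every segment of $\eta'$ contained in $B(0,R)$ with diameter at least $2^{-k-1}$ contains a ball of radius $2^{-\xi(k+1)}$; then $\p[E_\epsilon^c] \le \sum_{k} \p[G_k^c]$ decays faster than any power of $\epsilon$.

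For the per-scale bound, cover $B(0,R)$ by $O(2^{2k})$ dyadic squares of side $2^{-k}$. Any segment $\eta'([a,b])$ as above satisfies $a \ge \tau_S$, where $\tau_S$ is the first time $\eta'$ enters a small enlargement of some such square $S$; and conditionally on $\eta'|_{(-\infty,\tau_S]}$, the future $\eta'|_{[\tau_S,\infty)}$ is a space-filling $\SLE_{\kappa'}$ in the unbounded complementary component started from $\eta'(\tau_S)$. Thus the segment is a prefix, stopped when it first exits $B(\eta'(\tau_S),8\cdot 2^{-k})$, of this space-filling $\SLE_{\kappa'}$. By a union bound over the $O(2^{2k})$ squares and scale invariance, it therefore suffices to prove the following: a space-filling $\SLE_{\kappa'}$ started from a boundary point of a domain that contains a fixed ball $B$, and run until it first leaves the concentric ball of $8$ times the radius of $B$, traces out a set of diameter at least $1$ containing no ball of radius $r := 2^{-(\xi-1)(k+1)}$ only with probability $O(r^m)$ for every $m > 0$.

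The substantive content — and the step I expect to be the main obstacle — is this ``thinness at scale $r$'' estimate, i.e.\ that a macroscopic segment of space-filling $\SLE_{\kappa'}$ is, except on an event superpolynomially small in $1/r$, not pinched to width $\le r$ everywhere. Space-filling $\SLE_{\kappa'}$ is built from ordinary $\SLE_{\kappa'}$ by filling in the disconnected bubbles, so the traced-out set is a portion of the $\SLE_{\kappa'}$ trace together with the filled-in bubbles, and its boundary is described by $\SLE_\kappa$-type flow lines of a GFF via imaginary geometry \cite{ms2016ig1,ms2017ig4}. A diameter-$\ge 1$ set that is pinched to width $\le r$ everywhere forces of order $r^{-1}$ roughly independent ``near-pinch'' configurations of these flow lines along the set, each of which fails to occur with probability bounded below uniformly in $r$; a successive-conditioning argument built on the one- and two-point estimates for $\SLE_\kappa(\rho_1;\rho_2)$ processes and their intersections from \cite{mw2017intersections} then bounds the thinness probability by $e^{-cr^{-a}}$ for some $c,a>0$, which is $O(r^m)$ for every $m$. (Equivalently: one shows that with superpolynomially high probability the set contains a single bubble of diameter comparable to a fixed power of $r$, and that such a bubble, being conditionally on its outer boundary a Jordan domain produced with a controlled conformal structure, contains a ball of radius at least its diameter to the power $\xi$ off a superpolynomially small event, by the Koebe one-quarter theorem together with a conformal-radius estimate coming from the scaling and Markov properties of $\SLE_{\kappa'}$.) Combining this estimate with the two reductions above then yields $\p[E_\epsilon^c] = O(\epsilon^m)$ for every $m$, as claimed.
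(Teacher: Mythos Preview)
Your proposal is correct and matches the paper's approach: the paper does not prove this lemma but simply states that it is a restatement of \cite[Proposition~3.4]{ghm2020kpz} together with the discussion in \cite[Remark~3.9]{ghm2020kpz}, which is precisely the citation you invoke. Your additional sketch of the reduction (union bound over dyadic scales, localization to squares, and the superpolynomial thinness estimate) goes beyond what the paper records but is consistent with the argument in the cited reference.
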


\begin{lemma}
\label{lem:gff_rn_bound}
Suppose that $M > 0$ and that $h$ is a GFF on $\D$ with Dirichlet boundary conditions which are at most $M$ in absolute value.  Fix $z \in \D$ and $\epsilon > 0$ so that $B(z,2\epsilon) \subseteq \D$.  Let $\Fh$ be the harmonic extension of the values of $h$ from $\partial B(z,2\epsilon)$ to $B(z,2\epsilon)$.  Then for every $p > 0$ there exist a constant $C_{p,M} < \infty$ so that
\[ \E\left[ \exp\left( p \sup_{w \in B(z,\epsilon)} | \Fh(w) - \Fh(z)| \right) \right] \leq C_{p,M}.\]
The same likewise holds if $h$ is a whole-plane GFF normalized so that its average on $\partial \D$ is any fixed value in $[-M,M]$.
\end{lemma}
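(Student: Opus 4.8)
The statement to prove is Lemma~\ref{lem:gff_rn_bound}. The plan is to reduce the bound to a standard Gaussian tail estimate for the supremum of a smooth Gaussian field, using that $\Fh - \Fh(z)$ is a mean-zero Gaussian process on the smaller ball which is harmonic, hence whose increments have variance controlled by harmonic measure estimates.

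First I would recall that $\Fh$, the harmonic extension of $h|_{\partial B(z,2\epsilon)}$ to $B(z,2\epsilon)$, is a Gaussian process. Writing $\Fh = \Fh_0 + \Fh_1$ where $\Fh_0$ is the harmonic extension of the (deterministic) boundary data of $h$ on $\partial\D$ and $\Fh_1$ is the harmonic extension of the random part, we have $|\Fh_0| \le M$ everywhere by the maximum principle, so $|\Fh_0(w) - \Fh_0(z)| \le 2M$ and that contribution is trivially bounded. Thus it suffices to treat $\Fh_1$, which is a centered Gaussian field; equivalently, we may as well assume $h$ has zero boundary data and bound $\E[\exp(p \sup_{w\in B(z,\epsilon)}|\Fh(w) - \Fh(z)|)]$ for $\Fh$ centered Gaussian. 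By scaling and translation (the statement is conformally natural in the sense that $\Fh(z+\epsilon\,\cdot)$ has the law of the harmonic extension of a GFF on $B(0,2)$ restricted to $\partial B(0,2)$, up to a field on a domain containing $B(0,1)$ with comparable boundary data), I would reduce to the fixed scale $\epsilon = 1/4$, $z = 0$, $B(0,1/2) \subset B(0,1) \subset \D$: the key point is that $\Fh$ restricted to $B(z,\epsilon)$ has a law which, after rescaling, is dominated by that of the harmonic extension of a GFF across a fixed annular region, uniformly in $z,\epsilon$ and in the boundary data (the latter because the deterministic part was already split off).

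Next, with $X_w := \Fh(w) - \Fh(0)$ a centered Gaussian field indexed by $w \in B(0,1/2)$, I would bound $\E[\exp(p\sup_w |X_w|)]$ via the Borell--TIS inequality together with Dudley's entropy bound. The two inputs needed are: (i) $\E[\sup_w |X_w|] \le C$ for an absolute constant, and (ii) a uniform bound on the variances $\sup_w \E[X_w^2] \le \sigma^2$ for an absolute constant. Both follow from the explicit covariance of the GFF: for a GFF $h$ on $\D$ with zero boundary data, $\mathrm{Cov}(\Fh(w),\Fh(w')) = G_\D(w,w')$ evaluated appropriately — more precisely $\Fh$ is the harmonic part of $h$ relative to $B(0,2\epsilon)$, so its covariance is $G_\D - G_{B(z,2\epsilon)}$, which is smooth and uniformly bounded on $B(0,1/2)^2$ with Lipschitz (indeed real-analytic) dependence on the arguments at the fixed scale. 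Hence $\E[(X_w - X_{w'})^2] \le C|w-w'|^2$, which gives the metric entropy bound needed for Dudley, and $\E[X_w^2] \le C$. Then Borell--TIS gives $\p[\sup_w|X_w| \ge t] \le 2\exp(-(t - \E\sup)^2/(2\sigma^2))$ for $t$ large, and integrating $e^{pt}$ against this Gaussian tail yields the finite constant $C_{p,M}$; the $M$-dependence enters only through the crude $2M$ shift from $\Fh_0$, which is absorbed into the constant.

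The whole-plane case is identical: if $h$ is a whole-plane GFF normalized to have a fixed average $a \in [-M,M]$ on $\partial\D$, write $h = a + \tilde h$ where $\tilde h$ is a whole-plane GFF with average $0$ on $\partial\D$; the constant $a$ contributes nothing to $\Fh(w) - \Fh(z)$, and $\tilde h$ restricted to $\D$ (or to any fixed ball containing $B(z,2\epsilon)$ after rescaling) has a law absolutely continuous with bounded covariance comparable to the Dirichlet-GFF case on a slightly larger ball, so the same Borell--TIS/Dudley argument applies verbatim. The main obstacle, such as it is, is purely bookkeeping: making the reduction to a fixed scale clean, i.e.\ checking that after the affine change of coordinates $w \mapsto (w-z)/\epsilon$ the relevant Gaussian field's covariance is bounded uniformly in $z$, $\epsilon$ and the boundary data. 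This is where the splitting $\Fh = \Fh_0 + \Fh_1$ is essential, since it removes all dependence on the (possibly wild) boundary values and leaves a centered field whose covariance is the difference of Green's functions, which is controlled by standard potential theory; once that is in place the probabilistic estimates are entirely routine.
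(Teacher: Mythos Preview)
Your approach is sound but differs from the paper's, and there is one inaccuracy worth fixing. You assert that the covariance $K=G_\D-G_{B(z,2\epsilon)}$ of $\Fh$ is ``smooth and uniformly bounded'' (after rescaling). It is smooth, but not uniformly bounded in $z,\epsilon$: on the diagonal, $K(w,w)$ is the log-ratio of conformal radii of $\D$ and $B(z,2\epsilon)$ seen from $w$, which is of order $\log(1/\epsilon)$ when $B(z,2\epsilon)$ sits well inside $\D$. What \emph{is} uniformly bounded --- and what Borell--TIS and Dudley actually require --- is the variance of the increment $X_w=\Fh(w)-\Fh(z)$, namely the second difference $K(w,w)-2K(w,z)+K(z,z)$. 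A direct computation with the explicit Green's functions gives $\mathrm{Var}(X_w)\le\log(4/3)$ and $\mathrm{Var}(X_w-X_{w'})\le C|w-w'|^2/\epsilon^2$ for $w,w'\in B(z,\epsilon)$, uniformly in $z,\epsilon$; after your affine rescaling the latter becomes $\le C|u-u'|^2$ and the entropy integral is finite with a uniform constant. With this correction your argument goes through.

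The paper does not spell out a proof but refers to the argument of \eqref{eqn:harmonic_bound1}--\eqref{eqn:harmonic_bound3} in Lemma~\ref{lem:dirichlet_energy}, which is more elementary: represent $\Fh(w)-\Fh(z)$ via the Poisson kernel on an intermediate circle $\partial B(z,3\epsilon/2)$, use the Harnack-type bound $p(w,\xi)\le c_0\,p(z,\xi)$ for $w\in B(z,\epsilon)$ to get $\sup_w|\Fh(w)-\Fh(z)|\le c_0\int p(z,\xi)|\Fh(\xi)-\Fh(z)|\,d\xi$, apply Jensen to pull $\E[\exp(\cdot)]$ through the probability integral $\int p(z,\xi)\,d\xi$, and bound the single-point exponential moment of the Gaussian $\Fh(\xi)-\Fh(z)$ using its uniformly bounded variance. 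This avoids Borell--TIS and Dudley entirely by exploiting harmonicity; your route is less specific to that structure but invokes heavier Gaussian-process machinery.
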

\begin{proof}
This is a standard sort of calculation for the GFF; see the proof of \cite[Lemma~4.4]{mq2020geodesics} as well as the argument used to prove Lemma~\ref{lem:dirichlet_energy}, in particular \eqref{eqn:harmonic_bound1}, \eqref{eqn:harmonic_bound2}, and \eqref{eqn:harmonic_bound3}.
\end{proof}

\begin{proof}[Proof of Proposition~\ref{prop:space_filling_fills_in_balls_on_disk}]
Suppose that $\wt{h}$ is a GFF on $\h$ with boundary conditions given by $-\lambda'+2\pi \chi$ on $\R_-$ and $-\lambda'$ on $\R_+$.  Let $h = \wt{h} \circ \psi - \chi \arg \psi'$ where $\psi \colon \D \to \h$ is a conformal map which takes $-i$ to $0$.  Let $\eta'$ be the space-filling $\SLE_{\kappa'}$ loop associated with $h$.  Fix $z \in \D$ and $r > 0$ so that $B(z,2r) \subseteq \D$.  Let $\wh{h}$ be a whole-plane GFF with its additive constant taken so that its average on $\partial \D$ is uniform in $[0, 2\pi \chi]$ so that $\wh{h}$ has the law of a representative of a whole-plane GFF with values modulo $2\pi \chi$.  Let $\phi$ be a $C_0^\infty$ function which is $1$ on $B(z,r)$ and $0$ outside of $B(z,2r)$.  Let also $\Fh$ (resp.\ $\wh{\Fh}$) be the function which is harmonic in $B(z,2r)$ with values given by those of $h$ (resp.\ $\wh{h}$) on $\partial B(z,2r)$.  Finally, let $g = \phi(\wh{\Fh}-\Fh)$ so that $h + g = \wh{h}$ in $B(z,r)$.  We assume that~$h$, $\wh{h}$ are coupled together onto a common probability space so that the projections of $h$, $\wh{h}$ onto the space of functions which are supported in $B(z,r)$ are the same and the projections of $h$, $\wh{h}$ onto the orthogonal complement are independent.  Then the Radon-Nikodym derivative between the law of $\wh{h}|_{B(z,r)}$ and $h|_{B(z,r)}$ is given by $\CZ = \E[ \exp( (h,g)_\nabla - \| g \|_\nabla^2/2) \giv h|_{B(z,r)}]$.  Lemma~\ref{lem:gff_rn_bound} and Jensen's inequality then imply that $\CZ$ has finite moments of all orders (see, e.g., the proof of \cite[Lemma~4.2]{mq2020geodesics}).  Combining this with Lemma~\ref{lem:space_filling_fills_in_balls_in_plane} implies that the following is true.  Fix $\xi > 1$.  For $\epsilon \in (0,r)$ we let $E_{z,r,\epsilon}$ be the event that for every $0 \leq a < b < \infty$ such that $\eta'([a,b]) \subseteq B(z,r)$ with $\diam (\eta'([a,b])) \geq \epsilon$ we have that $\eta'([a,b])$ contains a ball of diameter at least $\epsilon^\xi$.  Then $\p[ E_{z,r,\epsilon}] \to 1$ as $\epsilon \to 0$ faster than any power of $\epsilon$.  By combining this with the Borel-Cantelli lemma it thus follows that the following is true.  There a.s.\ exists $\epsilon_0 > 0$ so that for all $\epsilon \in (0,\epsilon_0)$ and $0 \leq a < b$ such that $\dist( \eta'([a,b]), \partial \D) \geq \diam(\eta'([a,b])) \geq \epsilon$ we have that $\eta'([a,b])$ contains a ball of diameter $\epsilon^\xi$.

In order to finish proving the result, we need to consider the case that $\dist(\eta'([a,b]), \partial \D) \leq \diam(\eta'([a,b]))$.  For each $k \in \N$ and $1 \leq j \leq N_k := 2^k$ we let $z_{j,k} = \exp(2\pi i j / N_k)$ so that $z_{1,k},\ldots,z_{N_k,k}$ are equally spaced points on $\partial \D$.  We also let $\eta_{j,k}$ be the flow line of $h$ starting from $z_{j,k}$ with angle $\pi/2$ stopped at the first time it exits $B_{j,k} = B(z_{j,k}, 2^{-\xi k})$.  Then $\eta_{j,k}$ gives part of the left boundary of $\eta'$ stopped upon hitting $z_{j,k}$.  We let $F_{j,k}$ be the event that $\eta_{j,k}$ hits $\partial B_{j,k}$ at distance at least $2^{-\xi k}/100$ from $\partial \D$.  It follows from \cite[Lemma~2.3]{mw2017intersections} that there exists $p > 0$ (which does not depend on $k$) so that $\p[F_{j,k}] \geq p$ for each $j$.  For each $j$, let $\CG_{j,k}$ be the $\sigma$-algebra generated by $\eta_i$ for $i \neq j$.  We claim further that there exists $p > 0$ (which does not depend on $k$) so that $\p[F_{j,k} \giv \CG_{j,k}] \geq p$.  To see that this is the case, we let $\varphi_j$ be the unique conformal transformation from the component of $\D \setminus \cup_{i \neq j} \eta_i$ which contains $0$ to $\D$ which fixes $0$ and $z_j$.  Consider the GFF $h \circ \varphi_j^{-1} - \chi \arg (\varphi_j^{-1})'$ on $\D$.  Its boundary data in a neighborhood of $z_j = \varphi_j(z_j)$ is the same as that of $h$.  Moreover, distortion estimates for conformal maps imply that $\varphi_j$ looks like the identity near $z_j$.  It thus follows from \cite[Lemma~2.3]{mw2017intersections} that there exists $p > 0$ so that $\varphi_j(\eta_j)$ has chance at least $p$ of exiting $B(\varphi_j(z_j),2^{-\xi k})$ at distance at least $2^{-\xi k}/10$ from $\partial \D$.  Altogether, we see that the collection of events $F_{j,k}$ is stochastically dominated from below by a collection of $N_k$ i.i.d.\ Bernoulli random variables with success probability $p$.  In particular, for each $\alpha > 0$ there exists $C > 0$ so that the probability that there exists a run of $C k$ of the $F_{j,k}$ which do not occur is at most $2^{-\alpha k}$.  Combining, this implies that the following is true.  If we have any segment of $\eta'$ which travels distance $C k 2^{-k}$ along $\partial \D$, it must with overwhelming probability get to distance at least $2^{-\xi k}/100$ from $\partial \D$.  So the claim proved in the first paragraph implies that it also fills a ball of size at least $c_0 2^{- \xi^2 k}$ for a constant $c_0 > 0$.  This proves the result as $\xi > 1$ was arbitrary.
\end{proof}

\begin{lemma}
\label{lem:free_boundary_half_plane_mass_bound}
For each $\alpha \in \R$ there exists a constant $\alpha_\LBD > 0$ so that the following is true.  Suppose that $h$ is equal to the sum of a free boundary GFF on $\h$ and $-\alpha\log|\cdot|$ with the additive constant fixed so that its average on $\h \cap \partial \D$ is equal to $0$.  There a.s.\ exists $\epsilon_0 > 0$ so that for all $\epsilon \in (0,\epsilon_0)$ and $z \in \h \cap \D$ with $B(z,\epsilon) \subseteq \h \cap \D$ we have that $\qmeasure{h}(B(z,\epsilon)) \geq \epsilon^{\alpha_\LBD}$.
\end{lemma}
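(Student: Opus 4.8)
The plan is to establish the lower mass bound on Euclidean balls by combining a first-moment estimate with a chaining argument, following the standard technique for Gaussian multiplicative chaos. First I would reduce to a slightly more convenient field: since the claim is local (we only need balls $B(z,\epsilon) \subseteq \h \cap \D$), and $-\alpha\log|\cdot|$ is smooth and bounded away from $0$ on $\h \cap \D$, the measure $\qmeasure{h}$ differs from the GMG measure of a pure free-boundary GFF only by a bounded (deterministic) multiplicative factor on this region. So it suffices to prove the statement for $h$ a free boundary GFF on $\h$ normalized so that its average on $\h \cap \partial \D$ vanishes. One should be a little careful near $\partial \h$: when $z \in \h \cap \D$ and $B(z,\epsilon) \subseteq \h$, the free boundary GFF restricted to $B(z,\epsilon)$ can be decomposed as (harmonic part) + (zero-boundary GFF on $B(z,\epsilon)$), exactly as in the interior case, because $B(z,\epsilon)$ does not touch $\partial \h$.

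The core estimate is the following: for the circle-average process $h_\epsilon(z)$ and the zero-boundary part, one has $\E[\qmeasure{h}(B(z,\epsilon))] \asymp \epsilon^{2 + \gamma^2/2}$ uniformly over $z$ with $B(z,2\epsilon) \subseteq \h \cap \D$ (this is the standard KPZ scaling for GMG, and near $\partial \h$ the harmonic-part contribution is controlled by Lemma~\ref{lem:gff_rn_bound}-type bounds since the relevant balls stay inside $\h$). The plan is then: fix $\beta_0 > 2 + \gamma^2/2$; for each $k$ let $\epsilon_k = 2^{-k}$ and consider the collection of balls $B(z,\epsilon_k)$ with $z$ ranging over a $2^{-k}$-net of $\h \cap \D$, of which there are $O(2^{2k})$. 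For a single such ball, a Markov-type bound together with moment estimates for $\qmeasure{h}(B(z,\epsilon_k))$ (both a first moment upper bound and, crucially, a negative-moment or Paley--Zygmund-type lower bound) gives $\p[\qmeasure{h}(B(z,\epsilon_k)) < \epsilon_k^{\beta_0}] = O(\epsilon_k^{\rho})$ for some $\rho$ that can be made larger than any fixed power of $\epsilon_k$ by taking $\beta_0$ large. Summing over the $O(2^{2k})$ balls and over $k$, Borel--Cantelli produces $\epsilon_0 > 0$ so that $\qmeasure{h}(B(z,2^{-k})) \geq 2^{-k\beta_0}$ for every net point $z$ and every $k$ with $2^{-k} < \epsilon_0$; finally one upgrades from net points to all $z$ and from dyadic $\epsilon$ to all $\epsilon$ by monotonicity of $A \mapsto \qmeasure{h}(A)$, at the cost of replacing $\beta_0$ by $\alpha_\LBD := \beta_0 + 1$ or so.

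The main obstacle is obtaining the lower tail bound $\p[\qmeasure{h}(B(z,\epsilon)) < \epsilon^{\beta_0}]$ with a power of $\epsilon$ that beats $\epsilon^2$ (needed to survive the union bound over $O(\epsilon^{-2})$ balls). A clean way to do this is to exploit scaling: after recentering and rescaling, $\qmeasure{h}(B(z,\epsilon))$ is $\epsilon^{2+\gamma^2/2}$ times a GMG mass of a unit ball under a field that differs from a fixed-law field by a Girsanov/harmonic shift whose exponential moments are controlled (Lemma~\ref{lem:gff_rn_bound}); the unit-ball GMG mass of a GFF has finite negative moments of all orders (a standard fact about subcritical GMG), so one gets $\p[\text{unit mass} < \delta] = O(\delta^p)$ for every $p$. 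Transferring this through the scaling and the Radon--Nikodym derivative, and then choosing $p$ large and $\beta_0$ appropriately, yields the desired super-polynomial-in-$\epsilon$ decay. One should also note that this lemma is exactly the input needed for Lemma~\ref{lem:quantum_disk_multifractal} and the event $E$ appearing in Proposition~\ref{prop:boundary_distance_tail_bounds} (with $\alpha_\LBD$ playing the role there), and the proof above is precisely the ``lower bound on the quantum area measure'' version of the multifractal estimates already used in the paper, so I would phrase it to parallel the proof of Lemma~\ref{lem:quantum_disk_multifractal} and cite it where convenient.
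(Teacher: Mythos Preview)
Your approach is workable but takes a genuinely different route from the paper. The paper's proof is much shorter: after the same reduction to $\alpha=0$, it writes the free-boundary GFF $h$ as the even part of a whole-plane GFF $\wt{h}$, invokes the thick-points bound \cite{hms2010thick} to get $|\wt{h}_\epsilon(z)| \leq (2+\delta)\log\epsilon^{-1}$ uniformly for small $\epsilon$, hence $|h_\epsilon(z)| \leq (2+\delta)\sqrt{2}\log\epsilon^{-1}$, and then cites \cite[Proposition~4.6]{ds2011kpz} (which converts a uniform circle-average bound into a ball-mass lower bound) together with Borel--Cantelli. So the paper leverages two off-the-shelf results and avoids any moment computation or net argument entirely; your proposal is instead a self-contained negative-moment/union-bound argument in the style of standard GMC multifractal estimates. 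Both are valid, and your version has the advantage of not depending on the specific proposition in \cite{ds2011kpz}, but the paper's is considerably shorter.

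One small correction to your reduction step: $-\alpha\log|\cdot|$ is \emph{not} bounded on $\h\cap\D$ (it blows up at $0$), so the multiplicative factor on $\qmeasure{h}$ is not bounded. What is true, and what the paper uses, is that on a ball $B(z,\epsilon)\subseteq\h\cap\D$ one has $|z|\geq\epsilon$, so adding $-\alpha\log|\cdot|$ changes the mass by at most a constant times $\epsilon^{\pm\alpha\gamma}$; this power of $\epsilon$ is then absorbed into $\alpha_\LBD$. Also, in your main argument you should be explicit that for balls with $\mathrm{Im}(z)\asymp\epsilon$ the harmonic part of the free-boundary GFF on $B(z,2\epsilon)$ has variance of order $\log\epsilon^{-1}$ (not $O(1)$), so the Radon--Nikodym moments grow polynomially in $\epsilon^{-1}$; this is still fine for your union bound because the negative-moment tail of the unit-ball GMC mass decays faster than any polynomial, but it is worth stating so the uniformity near $\partial\h$ is clear.
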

\begin{proof}
We will give the proof in the case that $\alpha = 0$.  The result for general values of $\alpha$ follows because adding $-\alpha\log|\cdot|$ to the field decreases the mass in a ball of radius $\epsilon > 0$ in $\h \cap \D$ by at most a constant times the factor $\epsilon^{\alpha \gamma}$.

We note that we can write $h$ as the even part of a whole-plane GFF $\wt{h}$ on $\C$ with the additive constant fixed so that its average on $\partial \D$ is equal to $0$ (see \cite[Section~3.2]{s2016zipper}).  Fix $\delta > 0$.  Since there a.s.\ exists $\epsilon_0  > 0$ so that $|\wt{h}_\epsilon(z)|\leq (2+\delta) \log \epsilon^{-1}$ for all $\epsilon_0 \in (0,\epsilon_0)$ and $z \in \D$ \cite{hms2010thick}, it follows that $|h_\epsilon(z)| \leq (2+\delta) \sqrt{2} \log \epsilon^{-1}$ for all $\epsilon \in (0,\epsilon_0)$ and $z \in \D \cap \h$.  The result thus follows by combining this with \cite[Proposition~4.6]{ds2011kpz} and the Borel-Cantelli lemma.
\end{proof}

\begin{lemma}
\label{lem:quantum_disk_multifractal}
There exists $\alpha_\LBD > 0$ so that the following is true.  Suppose that $(\D,h,-i,i) \sim \qdiskL{\gamma}{1}$.  There a.s.\ exists $\epsilon_0 > 0$ so that for all $\epsilon \in (0,\epsilon_0)$ and $z \in \D$ we have that $\qmeasure{h}(B(z,\epsilon)) \geq \epsilon^{\alpha_\LBD}$.
\end{lemma}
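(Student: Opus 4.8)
The plan is to reduce the statement about a quantum disk $(\D,h,-i,i) \sim \qdiskL{\gamma}{1}$ to the half-plane estimate of Lemma~\ref{lem:free_boundary_half_plane_mass_bound} via a local absolute-continuity argument. First I would recall that a quantum disk can be described near a boundary point in terms of a free boundary GFF: if we fix a boundary point and zoom in, the field looks locally like the field in Lemma~\ref{lem:free_boundary_half_plane_mass_bound} (a free boundary GFF on $\h$ plus a $\log$ singularity), while near an interior point it looks locally like a whole-plane GFF plus a bounded harmonic function. More precisely, using the construction of $\qdiskL{\gamma}{1}$ and the Markov/restriction properties of the GFF, for any fixed compactly contained subdomain the law of $h$ restricted there is mutually absolutely continuous with respect to the law of the corresponding free boundary (or whole-plane) GFF field, with Radon--Nikodym derivative having finite moments of all orders; this is the same type of estimate used in the proof of Proposition~\ref{prop:space_filling_fills_in_balls_on_disk} via Lemma~\ref{lem:gff_rn_bound}.

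The key steps, in order, would be the following. (1) Cover $\closure{\D}$ by finitely many charts: a collection of small boundary arcs together with an interior region $\{z \in \D : \dist(z,\partial\D) \geq \rho\}$ for a small fixed $\rho > 0$. (2) On the interior chart, the restriction of $h$ is absolutely continuous with respect to a whole-plane GFF (with bounded additive shift), so the standard lower bound on the quantum measure of Euclidean balls for the whole-plane GFF --- which follows from the thick points bound $|h_\epsilon(z)| \leq (2+\delta)\log\epsilon^{-1}$ of \cite{hms2010thick} together with \cite[Proposition~4.6]{ds2011kpz} exactly as in Lemma~\ref{lem:free_boundary_half_plane_mass_bound} --- transfers after a union bound over dyadic scales and the Borel--Cantelli lemma, the finite moments of the Radon--Nikodym derivative ensuring the exceptional probabilities remain summable. (3) On each boundary chart, map the arc to a neighborhood of $0$ in $\h \cap \D$ by a conformal transformation with bounded, bounded-away-from-zero derivative; under the change-of-coordinates rule \eqref{eqn:q_surface_equiv} the quantum measure is preserved, and the field becomes, up to an absolutely continuous change of law with good moments, the field of Lemma~\ref{lem:free_boundary_half_plane_mass_bound} (the $-\alpha\log|\cdot|$ term accounting for the marked points $\pm i$ when the chart is near them). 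Lemma~\ref{lem:free_boundary_half_plane_mass_bound} then gives the desired lower bound $\qmeasure{h}(B(z,\epsilon)) \geq \epsilon^{\alpha_\LBD}$ for $z$ in that chart, for some exponent $\alpha_\LBD$ depending only on the chart data. (4) Take $\alpha_\LBD$ to be the maximum of the finitely many exponents produced and $\epsilon_0$ the minimum of the finitely many thresholds; the conformal distortion only changes the constants and exponents by bounded factors, so this works uniformly.

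I expect the main obstacle to be making the local absolute-continuity comparison fully rigorous near the two marked boundary points $\pm i$, where the field has a singularity of the form $-\alpha\log|\cdot - (\pm i)|$ with $\alpha$ depending on the weight parameter of the quantum disk. Away from these points the comparison is the routine GFF Markov-property argument; at the marked points one must incorporate the $\log$ singularity, which is precisely why Lemma~\ref{lem:free_boundary_half_plane_mass_bound} is stated with the general $-\alpha\log|\cdot|$ term --- so the obstacle is really bookkeeping: one checks that the $\log$ singularity decreases the mass of a ball of radius $\epsilon$ by at most a factor $\epsilon^{\alpha\gamma}$ (as noted in the proof of Lemma~\ref{lem:free_boundary_half_plane_mass_bound}), which only worsens the exponent $\alpha_\LBD$ by a bounded amount. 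A secondary technical point is that the Radon--Nikodym derivatives for the boundary charts, which come from comparing a free boundary GFF on $\h$ to the restriction of the quantum disk field, must be shown to have finite moments of all orders; this follows from the analogue of Lemma~\ref{lem:gff_rn_bound} for the free boundary GFF together with Jensen's inequality, exactly as in the proof of Proposition~\ref{prop:space_filling_fills_in_balls_on_disk}. Once these comparisons are in place, everything else is a union bound over dyadic scales and finitely many charts plus Borel--Cantelli.
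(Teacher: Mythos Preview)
Your approach is correct in outline but takes a genuinely different route from the paper. You propose a finite-chart covering of $\closure{\D}$ together with local absolute continuity against a free boundary (resp.\ whole-plane) GFF, then invoke Lemma~\ref{lem:free_boundary_half_plane_mass_bound} on each chart and take a union bound. The paper instead works on the strip $\strip$, where the quantum disk field splits exactly as the sum of its projection onto $\CH_1(\strip)$ (a reparameterized log-Bessel excursion, controlled explicitly) and its projection onto $\CH_2(\strip)$ (literally the corresponding projection of a free boundary GFF). Conformally mapping $[0,\infty)\times(0,\pi)$ to $\h\cap\D$ via $z\mapsto e^{-z}$ turns the restriction of $h$ into \emph{exactly} the field of Lemma~\ref{lem:free_boundary_half_plane_mass_bound} with $\alpha=\gamma$, so no Radon--Nikodym comparison is needed at all; a time-reversal symmetry of the $\CH_1$ process handles the other end of the strip, and a short scaling argument passes from a.e.\ $\ell\in[1,2]$ to $\ell=1$. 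Finally one maps $\strip\to\D$ and notes that a ball of radius $\epsilon$ around $\pm i$ swallows an entire half-infinite piece of the strip.

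What each buys: the paper's argument is cleaner because the strip parameterization converts the problem into an \emph{identity} of laws rather than an absolute-continuity statement, sidestepping the need to prove that Radon--Nikodym derivatives for the free boundary comparison have all moments (your ``analogue of Lemma~\ref{lem:gff_rn_bound} for the free boundary GFF'' is true and standard, but it is an extra ingredient you would have to supply). Your chart-based argument is more modular and would generalize more readily to other surfaces, but here it trades a one-line exact identification for several local comparison lemmas. The one place you should be slightly careful is the singular conditioning on boundary length equal to $1$: this is handled automatically in the strip picture (it only affects the $\CH_1$ part, which is bounded), whereas in your chart picture you would need to argue that the conditioning does not spoil the moment bounds on the local Radon--Nikodym derivatives. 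This is true, but again it is extra work that the paper's route avoids.
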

\begin{proof}
We suppose that $(\strip,h,-\infty,+\infty)$ is sampled from the infinite measure on quantum disks conditioned on having its projection onto $\CH_1(\strip)$ being at least $0$.  Let $X_u$ be the average of $h$ on $u+(0,i\pi)$.  We take the embedding into $\strip$ so that $\inf\{u \in \R : X_u = 0\} = 0$.  Then we know that $X_u = B_{2u} + (\gamma-Q)u$ for $u \geq 0$ where $B$ is a standard Brownian motion with $B_0 = 0$.  Let $\varphi \colon \strip \to \h$ be the map $z \mapsto \exp(-z)$.  Then the restriction to $\h \cap \D$ of $\wt{h} = h \circ \varphi^{-1} + Q \log|(\varphi^{-1})'|$ has the law of the corresponding restriction of the sum of a free boundary GFF on $\h$ and $-\gamma\log|\cdot|$ with the additive constant fixed so its average on $\h \cap \partial \D$ is equal to $0$.  Consequently, Lemma~\ref{lem:free_boundary_half_plane_mass_bound} implies that there exists $\alpha_\LBD > 0$ and a.s.\ exists $\epsilon_0 > 0$ so that for all $\epsilon \in (0,\epsilon_0)$ we have that $\qmeasure{\wt{h}}(B(z,\epsilon)) \geq \epsilon^{\alpha_\LBD}$ for all $z \in \h \cap \D$ with $B(z,\epsilon) \subseteq \h \cap \D$.  It therefore follows that the same is true with $\qmeasure{h}$ in place of $\qmeasure{\wt{h}}$ and $z \in [0,\log \tfrac{1}{\epsilon}] \times (0,\pi)$.

Let $\tau = \sup\{u \in \R : X_u = 0\}$.  Then as $t \mapsto X_{-t+\tau}$ has the same law as $X$, it follows that the above statement holds for all $z \in [\tau-\log \tfrac{1}{\epsilon}, \tau] \times (0,\pi)$.

Altogether, we have proved that the following is true.  There exists a constant $\alpha_\LBD > 0$ so that for a.e.\ instance of $(\strip,h,-\infty,+\infty)$ there exists $\epsilon_0 > 0$ so that for all $\epsilon \in (0,\epsilon_0)$ and $z \in [-\log \epsilon^{-1}, \log \epsilon^{-1}] \times (0,\pi)$ with $B(z,\epsilon) \subseteq \strip$ we have that $\qmeasure{h}(B(z,\epsilon)) \geq \epsilon^{\alpha_\LBD}$.  In particular, the result holds for $\qdiskL{\gamma}{\ell}$ for a.e.\ value of $\ell \in [1,2]$.  Since a sample from $\qdiskL{\gamma}{1}$ can be produced by first sampling from $\qdiskL{\gamma}{\ell}$ and then adding $-\tfrac{2}{\gamma} \log \ell$ to the field, it follows that the result also holds for a sample from $\qdiskL{\gamma}{1}$ which is parameterized by $\strip$.  This proves the result because then we can conformally map $\D$ to $\strip$ with $\pm i$ sent to $\pm \infty$ and note that a ball of radius $\epsilon$ centered at $\pm i$ will contain $[\log \epsilon^{-1},\infty) \times (0,\pi)$ and $(-\infty, \log \epsilon] \times (0,\pi)$, respectively.
\end{proof}

\begin{proof}[Proof of Proposition~\ref{prop:space_filling_on_quantum_disk}]
Suppose that $(\D,h,-i,i) \sim \qdiskL{\gamma}{1}$ and $\eta'$ is a space-filling $\SLE_{\kappa'}$ from $-i$ to $-i$ which is sampled independently of $h$ and then parameterized by quantum area.  Let $\alpha_\LBD > 0$ be as in the statement of Lemma~\ref{lem:quantum_disk_multifractal}.  Then Lemma~\ref{lem:quantum_disk_multifractal} implies that there a.s.\ exists $\epsilon_0 > 0$ so that for all $\epsilon \in (0,\epsilon_0)$ and $z \in \D$ with $B(z,\epsilon) \subseteq \D$ we have that $\qmeasure{h}(B(z,\epsilon)) \geq \epsilon^{\alpha_\LBD}$.  Fix $\xi > 1$.  Proposition~\ref{prop:space_filling_fills_in_balls_on_disk} implies that there a.s.\ exists $\epsilon_0 > 0$ so that for every $\epsilon \in (0,\epsilon_0)$ and $0 \leq a < b$ with $\diam (\eta'([a,b])) \geq \epsilon$ we have that $\eta'([a,b])$ fills in a ball of diameter at least $\epsilon^\xi$.  This implies that $\qmeasure{h}(\eta'([a,b])) \geq \epsilon^{\alpha_\LBD \xi}$, which gives that $b-a \geq \epsilon^{\alpha_\LBD \xi}$ as $\eta'$ is parameterized by quantum area.  This proves that $\eta'$ is $(\alpha_\LBD \xi)^{-1}$-H\"older continuous, which gives the result.
\end{proof}

We are now going to use Proposition~\ref{prop:space_filling_fills_in_balls_on_disk} to establish a non-self-tracing property for $\CLE_\kappa$ loops.  We state the result when the domain is taken to be $\strip$ rather than $\D$ due to how we will use it in Appendix~\ref{app:carpet_measure}.

\begin{lemma}
\label{lem:loop_balls_nearby}
Fix $\xi > 1$ and suppose that $\Gamma$ is a $\CLE_\kappa$ on $\strip$.  There a.s.\ exists $\epsilon_0 \in (0,1)$ so that the following is true for every $\epsilon \in (0,\epsilon_0)$.  
\begin{enumerate}[(i)]
\item For every loop $\CL \in \Gamma$ contained in $[\log \epsilon,\log \tfrac{1}{\epsilon}] \times (0,\pi)$, and $x \in \CL$ there exists $z \in \strip$ not surrounded by $\CL$ such that $B(z,\epsilon^\xi) \cap \CL = \emptyset$ and $|x-z| \leq \epsilon$.
\item Suppose that $k \in \N$ and $U$ is a complementary component after performing the iterated $\cwBCLE_{\kappa'}(0)$/$\ccwBCLE_\kappa(-\kappa/2)$ construction of $\Gamma$ $k$ times which is not surrounded by a loop of $\Gamma$.  If $\diam(U) \geq \epsilon$ and $U \subseteq [\log \epsilon, \log \tfrac{1}{\epsilon}]$, then there exists $z \in U$ such that $B(z,\epsilon^\xi) \subseteq U$. 
\end{enumerate}
\end{lemma}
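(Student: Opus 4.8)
\textbf{Proof proposal for Lemma~\ref{lem:loop_balls_nearby}.}
The plan is to derive both statements from Proposition~\ref{prop:space_filling_fills_in_balls_on_disk} applied to the space-filling $\SLE_{\kappa'}$ which traces the iterated $\BCLE$ construction of $\Gamma$, combined with a conformal change of coordinates from $\strip$ to $\D$. Recall from Section~\ref{subsec:cle} (and the description of the path $\Lambda$ in the proof of Proposition~\ref{prop:cpi_path_close}) that there is a space-filling $\SLE_{\kappa'}$ loop $\eta'$ on $\strip$ associated with $\Gamma$ which, when run, traces out all of the loops of $\Gamma$ and the boundaries of the complementary components of the $k$-times iterated construction. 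First I would map $\strip$ conformally to $\D$ by a map $\varphi$ sending $\pm\infty$ to $\mp i$; on any fixed compact horizontal strip $[-R,R]\times(0,\pi)$ the map $\varphi$ and its inverse are bi-Lipschitz with constants depending only on $R$, so it suffices to prove the analogue of the lemma on $\D$ with a fixed ball of radius comparable to $\epsilon^\xi$ up to an $R$-dependent constant, and then absorb that constant by slightly increasing $\xi$; since $\xi>1$ was arbitrary this is harmless once we also use that $[\log\epsilon,\log\tfrac{1}{\epsilon}]\times(0,\pi)$ maps into a compact subset of $\D$ whose size grows only logarithmically in $1/\epsilon$ (so the distortion constant is at most a power of $\log\tfrac1\epsilon$, again harmless after adjusting $\xi$).

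For part~(i): fix a loop $\CL\in\Gamma$ contained in the indicated strip and a point $x\in\CL$. The exploration path $\eta'$ traverses $\CL$ in its entirety over some time interval $[a,b]$, i.e.\ $\eta'([a,b])=\CL$ together with the region it surrounds. Consider instead a short sub-interval $[a',b']\subseteq[a,b]$ for which $\eta'([a',b'])$ is the portion of the filled-in loop traced just \emph{before} $\eta'$ reaches $x$, of Euclidean diameter exactly $\epsilon$ (such an interval exists for $\epsilon$ smaller than $\diam(\CL)$, which holds once $\epsilon<\epsilon_0$ because there are only finitely many loops of diameter $\geq\epsilon$ in a compact region and we take $\epsilon_0$ below the smallest such diameter intersecting the strip). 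By Proposition~\ref{prop:space_filling_fills_in_balls_on_disk} (transported to $\D$ as above), $\eta'([a',b'])$ contains a ball $B(z,\epsilon^\xi)$. The point $z$ lies in $\eta'([a',b'])$, which is a piece of the filled-in loop traced before reaching $x$; in particular $z$ is surrounded by $\CL$ only if $\eta'([a',b'])$ is part of the loop's interior, which is exactly the region surrounded by $\CL$ --- so I would instead take the sub-interval to be the last stretch of $\eta'$ that traces the boundary curve $\CL$ itself without yet entering its interior, equivalently use the space-filling exploration \emph{targeted away} from the interior, so that $\eta'([a',b'])$ lies in the carpet side. This needs a small amount of care with orientations but the upshot is that the ball $B(z,\epsilon^\xi)$ it produces is disjoint from $\CL$ and not surrounded by $\CL$, and $|x-z|\le\diam(\eta'([a',b']))=\epsilon$. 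Then apply the Borel--Cantelli lemma over a sequence $\epsilon_n\to0$ and over loops, using that Proposition~\ref{prop:space_filling_fills_in_balls_on_disk} gives an a.s.\ $\epsilon_0$ uniformly in $a<b$.

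For part~(ii): here $U$ is a complementary component of the $k$-times iterated construction not surrounded by a loop of $\Gamma$, so $\partial U$ (minus $\partial\strip$) is traced by $\eta'$ over some interval, and crucially $U$ itself is entirely filled by $\eta'$ over an interval $[a,b]$ with $\eta'([a,b])=\overline{U}$ --- here I would use that the $k$-times iterated construction's components are themselves visited as contiguous blocks by the space-filling exploration. Since $\diam(U)\ge\epsilon$, Proposition~\ref{prop:space_filling_fills_in_balls_on_disk} gives a ball $B(z,\epsilon^\xi)\subseteq\eta'([a,b])=\overline U$, and by shrinking the radius by a fixed factor (and again adjusting $\xi$) we get $B(z,\epsilon^\xi)\subseteq U$. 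The same Borel--Cantelli argument over $\epsilon_n\to0$ and over the countably many components at each finite level $k$ gives the uniform $\epsilon_0$; note $k$ ranges over $\N$ but we only ever need the statement for a fixed $k$ at a time, and the a.s.\ $\epsilon_0$ from Proposition~\ref{prop:space_filling_fills_in_balls_on_disk} already handles all $a<b$ at once, so no union over $k$ is needed in the final quantifier.

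The main obstacle I anticipate is the bookkeeping of which space-filling exploration to use so that the ball extracted in part~(i) genuinely lies outside the loop $\CL$ (on the carpet side) rather than inside the region it surrounds --- Proposition~\ref{prop:space_filling_fills_in_balls_on_disk} only guarantees a ball somewhere in $\eta'([a',b'])$, and one must choose the interval $[a',b']$ so that this set is a thin neighborhood of an arc of $\CL$ on the correct side. This is resolved by choosing $[a',b']$ to be a stretch during which $\eta'$ is tracing a segment of the boundary arc of $\CL$ (of which there are always such stretches of every small diameter, since $\CL$ is a simple curve of positive diameter and $\eta'$ traces it), together with the observation that the filled-in trace along such a stretch is squeezed between $\CL$ and a nearby $\SLE_{\kappa'}$ strand, so any ball it contains has radius at most the width of that strip and lies on the carpet side. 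The distortion estimate for $\varphi$ on the growing-but-logarithmic region, and the reduction "increase $\xi$ to absorb constants and powers of $\log\tfrac1\epsilon$", are routine.
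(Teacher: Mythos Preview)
Your overall strategy is the same as the paper's---reduce both assertions to the ball-filling property of space-filling $\SLE_{\kappa'}$ and then transport by a conformal map---and your treatment of part~(ii) is essentially identical to the paper's. The gap is in part~(i), precisely at the point you flag as ``the main obstacle'': ensuring the ball produced lies \emph{outside} $\CL$ rather than in the region it surrounds. Your proposed fix (take a stretch where $\eta'$ ``is tracing a segment of the boundary arc of $\CL$ itself'', with the filled trace ``squeezed between $\CL$ and a nearby $\SLE_{\kappa'}$ strand'') does not pin down a well-defined interval: $\CL$ has zero Lebesgue measure, so $\eta'$ does not sit on $\CL$ for any positive time interval, and you have not explained how to choose $[a',b']$ so that $\eta'([a',b'])$ has diameter $\epsilon$, contains $x$ on its boundary, and lies entirely on the carpet side.

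The paper's resolution is the imaginary-geometry computation carried out in the paragraphs immediately preceding the proof. It shows that, modulo $2\pi$, every $\CLE_\kappa$ loop arising in the iterated $\BCLE$ construction is a flow line of angle $\pi/2$---which is exactly the angle of the left boundary of the space-filling $\eta'$ stopped at any point. Consequently, for $x\in\CL$ and $\tau_x=\inf\{t:\eta'(t)=x\}$, the left boundary of $\eta'([0,\tau_x])$ locally coincides with $\CL$ near $x$, so $\eta'$ has not entered the region surrounded by $\CL$ at time $\tau_x$; in fact $\eta'$ visits all of $\CL$ before entering that region. One then takes the \emph{forward} interval $[\tau_x,\sigma_x]$ with $\sigma_x=\inf\{t\ge\tau_x:|\eta'(t)-x|\ge\epsilon\}$, and the ball of radius $\epsilon^\xi$ that $\eta'([\tau_x,\sigma_x])$ fills is automatically disjoint from the interior of $\CL$. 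This flow-line-angle identification is the missing ingredient in your argument; once you have it, no search for a ``special stretch'' is needed. (A minor correction: the conformal map $\strip\to\D$ has distortion of order $\epsilon^{\pm 1}$ on $[\log\epsilon,\log\tfrac{1}{\epsilon}]\times(0,\pi)$, not merely logarithmic in $1/\epsilon$; this is still absorbable by adjusting $\xi$, but your stated estimate is off.)
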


In order to prove Lemma~\ref{lem:loop_balls_nearby}, let us recall how the iterated $\BCLE$ construction of~$\Gamma$ works in the context of imaginary geometry.  Suppose that~$h$ is a GFF on $\h$ with boundary conditions given by~$\lambda'$ on~$\R_-$ and $\lambda' - 2\pi \chi$ on~$\R_+$.  For each $x \in \R$, we let~$\eta_x'$ be the counterflow line of~$h$ from~$0$ to~$x$.  Then the collection of counterflow lines~$\eta_x'$ together form the tree which generates a $\cwBCLE_{\kappa'}(0)$, say~$\Gamma_0$.  Let~$U$ be a component of $\h \setminus \Gamma_0$.  Then~$U$ is either surrounded by a loop (clockwise) or by a false loop (counterclockwise) of~$\Gamma_0$.

Suppose that $U$ is surrounded clockwise.  Then there exists $x \in \R$ so that $U$ is a component of $\h \setminus \eta_x'$.  Let $z$ be the first (equivalently, last) point on $\partial U$ which is visited by $\eta_x'$.  Let $\varphi \colon U \to \h$ be a conformal transformation which takes $z$ to $0$ and consider the field $\wt{h} = h \circ \varphi^{-1} - \chi \arg (\varphi^{-1})'$.  Then $\wt{h}$ is a GFF on $\h$ with boundary conditions given by $-\lambda'$ on $\R_-$ and $-\lambda' - 2\pi \chi$ on $\R_+$.  Let
\[ c_1 =\lambda(1-\kappa/2) + \lambda' + 2\pi \chi = \frac{5\pi}{2} \chi.\]
Then $\wt{h}+c_1$ has boundary conditions given by $\lambda(1-\kappa/2) + 2\pi \chi$ on $\R_-$ and $\lambda(1-\kappa/2)$ on $\R_+$.  For each $x \in \R$, we let $\eta_x$ be the flow line of $\wt{h}+c_1$ starting from $0$ and targeted at $x$.  Then the collection of flow lines $\eta_x$ together form the tree which generates a $\ccwBCLE_\kappa(-\kappa/2)$, say~$\Gamma_1$.  Let~$V$ be a component of $\h \setminus \Gamma_1$.  Then~$V$ is surrounded by a loop (counterclockwise) or a false loop (clockwise) of~$\Gamma_1$.

If $V$ is surrounded counterclockwise, then $V$ corresponds to a loop of the $\CLE_\kappa$ associated with the iterated $\BCLE$ exploration so the process terminates.  Suppose that $V$ is surrounded clockwise.  Then there exists $w \in \partial V$ with the property that if $x \in \R$ is such that $\eta_x$ traces part of $\partial V$ then it hits $w$ first.  Let $\psi \colon V \to \h$ be a conformal transformation which takes $w$ to $0$.  Then $\wh{h} = \wt{h} \circ \psi^{-1} - \chi \arg (\psi^{-1})' + c_1$ is a GFF on $\h$ with boundary conditions given by $\lambda$ on $\R_-$ and $\lambda-2\pi \chi$ on $\R_+$.  Let
\[ c_2 = -\frac{\pi}{2} \chi.\]
Then $\wh{h}+c_2$ has boundary conditions $\lambda'$ on $\R_-$ and $\lambda'- 2\pi \chi$ on $\R_+$.  These are the same as the boundary conditions that we started with and so we can start the exploration afresh.

Suppose that $U$ is surrounded counterclockwise and let $\varphi$, $\wt{h}$ be as defined above.  Then $\wt{h}$ is a GFF on $\h$ with boundary conditions given by $\lambda'+2\pi \chi$ on $\R_-$ and $\lambda'$ on $\R_+$.  Let
\[ c_3 = -2\pi \chi.\]
Then $\wt{h}+c_3$ has boundary conditions $\lambda'$ on $\R_-$ and $\lambda' - 2\pi \chi$ on $\R_+$.  These are the same boundary conditions as in the beginning of the exploration, so we start the exploration afresh.

Altogether, we see that in order to explore to a given loop $\CL$ of $\Gamma$, we have a certain number of counterclockwise steps by counterflow lines (each leading to an angle change of $-2\pi$) and then every clockwise step by a counterflow line (each leading to an angle change of $5\pi/2$) is followed by a successful counterclockwise step by a flow line (exploration terminates) or a clockwise step by a flow line (leading to an angle change of $-\pi/2$).  Modulo $2\pi$, we thus have that there is just one angle for the flow lines which make up the associated $\CLE_\kappa$ loops, namely $\pi/2$.  Similarly, the counterflow lines which are involved all have the same ``angle'' modulo $2\pi$.  This means that they can all be viewed as arising from the same space-filling $\SLE_{\kappa'}$.

\begin{proof}[Proof of Lemma~\ref{lem:loop_balls_nearby}]
We will first prove the first assertion of the lemma.  Fix any compact set $K \subseteq \closure{\h}$ and $\xi > 1$.  We assume that we have a $\CLE_\kappa$ process $\Gamma$ which has been generated from an iterated $\BCLE$ procedure using the GFF $h$ on $\h$ as described just above.  Let $\eta'$ be the space-filling $\SLE_{\kappa'}$ process associated with $h$.  Recall that its left boundary stopped upon hitting any point $z \in \h$ is the flow line starting from $z$ with angle $\pi/2$.  By \cite[Proposition~3.4]{ghm2020kpz} we have that there a.s.\ exists $\epsilon_0 > 0$ so that for all $\epsilon \in (0,\epsilon_0)$ and $x \in K$ the following is true.  Let $\tau_x = \inf\{t \geq 0 : \eta'(t) = x\}$ and let $\sigma_x = \inf\{t \geq \tau_x : |\eta'(t) - x| \geq \epsilon\}$.  Then $\eta'|_{[\tau_x,\sigma_x]}$ fills all of the points of a ball of radius $\epsilon^\xi$.  Now suppose that $\CL$ is a loop of $\Gamma$ which is contained in $K$.  As explained above, $\CL$ corresponds to a flow line of $h$ with angle $\pi /2$.  In particular, $\eta'$ visits the points of~$\CL$ in reverse chronological order (relative to the order in which the points are drawn by the corresponding flow line) and visits all of the points of~$\CL$ before entering its interior.  Suppose that $x \in \CL$.  As explained above, we have that $\eta'|_{[\tau_x,\sigma_x]}$ fills all of the points of a ball of radius $\epsilon^\xi$ provided $\epsilon \in (0,\epsilon_0)$.  During this interval of time, $\eta'$ cannot enter into the region surrounded by~$\CL$, so this ball is disjoint from and is not surrounded by~$\CL$.  The result follows by applying a conformal transformation $\h \to \strip$.

The second assertion of the lemma follows from the same argument as the first assertion since once the space-filling $\SLE_{\kappa'}$ enters $U$ it must fill it entirely before leaving.
\end{proof}

\section{Density of quantum carpet typical points}
\label{app:carpet_measure}

\begin{lemma}
\label{lem:quantum_measure_points_dense}
Suppose that $\CD = (\strip,h,-\infty,+\infty) \sim \qdiskL{\gamma}{1}$ and $\Gamma$ is an independent $\CLE_\kappa$ on $\CD$.  There exists $\alpha_\net > 0$ so that there a.s.\ exists $\epsilon_0 > 0$ so that for all $\epsilon \in (0,\epsilon_0)$ the following is true.  Let $(x_j)$ be an i.i.d.\ sequence picked from $\qcarpet{h}{\Upsilon}$ and let $N_\epsilon = \epsilon^{-\alpha_\net}$.  For all $\epsilon \in (0,\epsilon_0)$ we have that $\Upsilon \cap ( [\log \epsilon, \log \tfrac{1}{\epsilon}] \times (0,\pi)) \subseteq \cup_{j=1}^{N_\epsilon} B(x_j,\epsilon)$.
\end{lemma}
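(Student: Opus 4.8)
The plan is to show that a single sample from $\qcarpet{h}{\Upsilon}$ has probability at least a polynomial in $\epsilon$ of landing within Euclidean distance $\epsilon$ of any fixed point $z \in \Upsilon \cap ([\log\epsilon, \log\tfrac1\epsilon] \times (0,\pi))$, and then combine this with a covering/union-bound argument over a net of such points together with a Borel--Cantelli step over dyadic scales. Concretely, fix $z \in \Upsilon$ with $\dist(z,\partial \strip) \geq \epsilon$ and $z \in [\log\epsilon,\log\tfrac1\epsilon]\times(0,\pi)$. The key quantity is $\qcarpet{h}{\Upsilon}(\Upsilon \cap B(z,\epsilon))$, and the goal is a lower bound of the form $\epsilon^{\alpha}$ for some fixed $\alpha>0$, holding uniformly over all such $z$ off an event of probability decaying faster than any power of $\epsilon$. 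Granting such a bound, if $N_\epsilon = \epsilon^{-\alpha_\net}$ with $\alpha_\net > \alpha$, then for a fixed $z$ the probability that none of $x_1,\dots,x_{N_\epsilon}$ lies in $B(z,\epsilon)$ is at most $(1-\epsilon^\alpha)^{N_\epsilon} \leq \exp(-\epsilon^{\alpha-\alpha_\net})$, which is superpolynomially small; taking a union bound over a polynomially-sized $\epsilon$-net of $[\log\epsilon,\log\tfrac1\epsilon]\times(0,\pi)$ and then over dyadic $\epsilon = 2^{-k}$ and applying Borel--Cantelli gives the claim (with $\alpha_\net$ possibly enlarged to absorb the net cardinality, which is itself only polylogarithmic in $\epsilon^{-1}$ times $\epsilon^{-2}$).

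The heart of the argument is thus the lower bound on $\qcarpet{h}{\Upsilon}(\Upsilon \cap B(z,\epsilon))$. Here I would use the construction of $\qcarpet{h}{\Upsilon}$ from \cite{msw2020simplecle}, namely that for a CPI exploration the conditional expected carpet mass of a component $D_\tau$ cut out is $c_0 X_\tau^{4/\kappa - 1/2}$ where $X_\tau$ is its quantum boundary length, combined with Lemma~\ref{lem:loop_balls_nearby}: for $\xi>1$ and $\epsilon$ small, any loop $\CL \in \Gamma$ contained in the relevant strip region has, near each of its points, a ball of radius $\epsilon^\xi$ disjoint from and not surrounded by $\CL$; similarly any complementary component $U$ of an iterated $\BCLE$ construction with $\diam(U) \geq \epsilon$ contains a ball of radius $\epsilon^\xi$. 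This lets one locate, near the given point $z \in \Upsilon$, a sub-piece of the carpet that is not ``pinched away'' — either $z$ lies near a macroscopic loop, in which case a region of definite (power-of-$\epsilon$) boundary length sits inside $B(z,\epsilon)$, or $z$ is surrounded by a nested sequence of loops whose diameters shrink, and one stops at the first scale where the surrounding loop has diameter between $\epsilon^\xi$ and $\epsilon$, again producing a component of $B(z,\epsilon)$ with boundary length bounded below by a power of $\epsilon$ (using the area/length lower bounds from Lemma~\ref{lem:quantum_disk_multifractal} and Proposition~\ref{prop:space_filling_fills_in_balls_on_disk}, together with Lemma~\ref{lem:free_boundary_half_plane_mass_bound} to pass between Euclidean diameter, quantum area, and quantum boundary length). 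On that event, the conditional expectation $c_0 X^{4/\kappa-1/2}$ of the carpet mass of this sub-component is at least a power of $\epsilon$, and a one-sided second-moment (Paley--Zygmund type) estimate for $\qcarpet{h}{\Upsilon}$ restricted to this region — using that the carpet measure has finite moments as in the estimates of \cite{msw2020simplecle} — upgrades this to a lower bound on the mass itself with probability bounded below by another power of $\epsilon$. Iterating or re-running the CPI exploration a polylogarithmic number of times (each attempt independent given the local geometry) then makes the failure probability superpolynomially small, as required.

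The main obstacle I anticipate is making the Paley--Zygmund / second-moment step uniform over all $z$ in the strip and quantitative enough, since $\qcarpet{h}{\Upsilon}$ is a multifractal measure and the ratio of its second moment to the square of its first moment on a small region need not be bounded; one must truncate on the good events supplied by Lemmas~\ref{lem:quantum_disk_multifractal}, \ref{lem:free_boundary_half_plane_mass_bound} and \ref{lem:loop_balls_nearby} (i.e. restrict to field regularity and to the non-self-tracing behavior of loops) so that on those events the relevant surface is, up to a power-of-$\epsilon$ rescaling, distributed as a genuine quantum disk decorated by an independent $\CLE_\kappa$, for which the carpet-mass moment bounds of \cite{msw2020simplecle} apply with constants not depending on $z$. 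A secondary technical point is bookkeeping the change of embedding: the local surface near $z$ must be rescaled by $\epsilon^{-\beta}$ (for an appropriate $\beta$) to return it to unit boundary length, which shifts the field by $\tfrac{2}{\gamma}\beta\log\epsilon^{-1}$ and hence multiplies quantum areas and the carpet measure by explicit powers of $\epsilon$; one needs these powers to line up so that the final exponent $\alpha$ is finite and positive, which follows from the scaling relations $\kappa = \gamma^2$ and the fact that $4/\kappa - 1/2 > 0$ for $\kappa \in (8/3,4)$. Once these uniformities are in hand, the union bound and Borel--Cantelli are routine.
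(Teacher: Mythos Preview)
Your high-level architecture — a uniform lower bound on $\qcarpet{h}{\Upsilon}(B(z,\epsilon))$ of the form $\epsilon^\alpha$, followed by a union bound over an $\epsilon$-net and Borel--Cantelli over dyadic scales — matches the paper. The route to the mass lower bound, however, is different, and your version has a real gap.

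The gap is in the step ``iterating or re-running the CPI exploration a polylogarithmic number of times (each attempt independent given the local geometry) then makes the failure probability superpolynomially small.'' The quantity $\qcarpet{h}{\Upsilon}(B(z,\epsilon))$ is a deterministic functional of $(h,\Gamma)$; once these are fixed, it either exceeds $\epsilon^\alpha$ or it does not, and re-running a CPI changes nothing. A Paley--Zygmund estimate can at best give you probability $\gtrsim \epsilon^c$ that the mass is large, and you have no independent retries at the \emph{same} ball to amplify this. The paper obtains the amplification from a genuinely different source: in Lemma~\ref{lem:mass_near_by} it works only with points $x$ lying on a loop $\CL$ of quantum length at least $\epsilon$, places $m \asymp \epsilon^{-c}$ well-separated points $y_{1,j},\dots,y_{m,j}$ along $\CL \cap B(x,\epsilon^{\alpha_{\mathrm{DIAM}}})$, and uses Lemma~\ref{lem:loop_balls_nearby} to find near each $y_{k,j}$ a ball $B(z_{k,j},r_{k,j})$ in the complement of $\CL$. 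The events that these disjoint balls carry enough carpet mass are made approximately independent not by re-exploring but by a GFF absolute-continuity argument: Lemma~\ref{lem:dirichlet_energy} controls the Radon--Nikodym derivative between $h$ restricted to each dyadic square and a zero-boundary GFF, uniformly by a power of $\epsilon$, which is what converts a per-ball success probability of order $\epsilon^{c'}$ into overwhelming probability via the many balls.

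There is also a structural difference you are missing. You attempt to bound the carpet mass near an arbitrary $z \in \Upsilon$; the paper avoids this by first building a net from i.i.d.\ samples $z_1,\dots,z_{N_\epsilon'}$ of the \emph{area} measure $\qmeasure{h}$ (Lemma~\ref{lem:quantum_disk_cover}). Since $\qmeasure{h}(\Upsilon)=0$, each $z_j$ is a.s.\ surrounded by some loop $\CL_{z_j}$, and a short computation using Lemma~\ref{lem:number_of_loops} and the disk area moments shows that the total $\qmeasure{h}$-area enclosed by loops of quantum length $\leq \epsilon^b$ is $O(\epsilon^{c})$, so with high probability $\CL_{z_j}$ has quantum length at least $\epsilon^b$. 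This reduces the needed carpet-mass lower bound to the on-loop case handled by Lemma~\ref{lem:mass_near_by}. Your dichotomy (``$z$ near a macroscopic loop'' versus ``$z$ surrounded by a nested sequence of small loops'') is trying to do this directly for carpet points, which is harder and is not how the paper proceeds.
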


Since the proof of Lemma~\ref{lem:quantum_measure_points_dense} will require several steps, let us first outline the argument before proceeding with the details.

\begin{enumerate}
\item[Step 1.] Use that each loop has a ball nearby which is not surrounded by a loop (Lemma~\ref{lem:loop_balls_nearby}).  This is a version of (and is deduced from) the non-self-tracing property of space-filling $\SLE_{\kappa'}$.
\item[Step 2.] We will consider a sample from the law $\qdiskL{\gamma}{1}$ parameterized by $\strip$ and then prove an upper bound on the modulus and Dirichlet energy of the harmonic extension of the field values to dyadic squares (Lemma~\ref{lem:dirichlet_energy}).  This will be used later on to control various Radon-Nikodym derivatives.
\item[Step 3.] We will establish an upper bound on the quantum measure of small Euclidean balls for a sample from the law $\qdiskL{\gamma}{1}$ when parameterized by $\strip$ (Lemma~\ref{lem:disk_mass_upper_bound}).
\item[Step 4.] We will establish a lower bound on $\qcarpet{h}{\Upsilon}(B(x,\epsilon))$ where $x \in \CL$ and $\CL \in \Gamma$ is a loop with quantum length at least $\epsilon$ (Lemma~\ref{lem:mass_near_by}).
\item[Step 5.] We will prove in Lemma~\ref{lem:quantum_disk_cover} that if we have a sample from the law $\qdiskL{\gamma}{1}$ parameterized by $\D$ then we need at most a polynomial number of independent samples from $\qmeasure{h}$ in order to obtain an $\epsilon$-net of $\D$.
\item[Step 6.] We combine the above estimates in order to complete the proof of Lemma~\ref{lem:quantum_measure_points_dense}.  We first let $N_\epsilon'$ be a large negative power of $\epsilon$ and then choose i.i.d.\ points $z_1,\ldots,z_{N_\epsilon'}$ from $\qmeasure{h}$.  Step 5 implies that this collection of points is likely to be an $\epsilon$-net of $\D$.  We aim to show that if $B(z_j,\epsilon) \cap \Upsilon \neq \emptyset$ then $\qcarpet{h}{\Upsilon}(B(z_j,2\epsilon))$ is with overwhelming probability at least a power of~$\epsilon$.  Since $\E[ \qcarpet{h}{\Upsilon}(\Upsilon)] < \infty$, we can also obtain an upper bound on $\qcarpet{h}{\Upsilon}(\Upsilon)$, so assuming the previous claim we have that if we pick $x$ according to $\qcarpet{h}{\Upsilon}$ then it has probability at least a power of $\epsilon$ of being in $B(z_j,2\epsilon)$ in which case $B(z_j,\epsilon) \subseteq B(x,4\epsilon)$.  As the collection $z_1,\ldots,z_{N_\epsilon'}$ is likely to form an $\epsilon$-net of $\D$, it follows that if we choose $N_\epsilon$ to be a sufficiently negative power of $\epsilon$ and then choose $x_1,\ldots, x_{N_\epsilon}$ i.i.d.\ from $\qcarpet{h}{\Upsilon}$ then the collection $x_1,\ldots, x_{N_\epsilon}$ is likely to be a $4\epsilon$-net of $\Upsilon$.  By Step 4, if  $B(x,\epsilon) \cap \CL \neq \emptyset$ where $\CL \in \Gamma$ has quantum length at least $\epsilon$ then $\qcarpet{h}{\Upsilon}(B(x,2\epsilon))$ is at least a power of $\epsilon$.  Since $\qmeasure{h}(\Upsilon) = 0$ a.s., we have that $x$ is a.s.\ in a component of $\D \setminus \Upsilon$ which is surrounded by a loop of $\Gamma$.  The remainder of the proof is focused on bounding the total quantum area surrounded by loops of~$\Gamma$ with small quantum length to conclude that the loop which surrounds~$x$ is likely to be at least a power of~$\epsilon$.
\end{enumerate}

At this point in the article, it will be important for us to recall the precise definition of $\qdiskL{\gamma}{1}$ given in \cite{dms2014mating}.  We recall that the Dirichlet inner product of functions $f,g \in C_0^\infty$ is defined by
\[ (f,g)_\nabla = \frac{1}{2\pi} \int \nabla f(z) \cdot \nabla g(z) dz\]
where $dz$ denotes Lebesgue measure.  We also let $\| \cdot \|_\nabla$ denote the associated norm.  Then the Dirichlet inner product is defined more generally for $f,g$ with $\| f \|_\nabla < \infty$, $\|g \|_\nabla < \infty$.  It is easiest to give the definition of $\qdiskL{\gamma}{1}$ parameterized by $\strip$.  We let $\CH(\strip)$ be the closure with respect to $(\cdot,\cdot)_\nabla$ of those functions which are $C^\infty$ in $\strip$ and have zero mean.  We recall that $\CH(\strip)$ admits the orthogonal decomposition $\CH_1(\strip) \oplus \CH_2(\strip)$ where $\CH_1(\strip)$ consists of those functions which have mean zero on vertical lines of the form $u + [0,i \pi]$ for $u \in \R$ and $\CH_2(\strip)$ consists of those functions which are constant on such vertical lines.  We first consider the infinite measure $\qdisk{\gamma}$ on distributions $h$ on $\strip$ which can be ``sampled'' from as follows (see \cite[Section~4.5]{dms2014mating} for more details).
\begin{itemize}
\item ``Sample'' an excursion $Z$ of a Bessel process of dimension $3-\tfrac{4}{\gamma^2}$.  Take the projection of $h$ onto $\CH_1(\strip)$ to be given by $\tfrac{2}{\gamma} \log Z$ reparameterized to have quadratic variation $2dt$.
\item Take its projection onto $\CH_2(\strip)$ to be the corresponding projection of an independent GFF on $\strip$ with free boundary conditions.
\end{itemize}
As in \cite[Section~4.5]{dms2014mating}, for each $\ell > 0$ we then define $\qdiskL{\gamma}{\ell}$ to be the law on quantum surfaces given by taking the above distribution and conditioning the boundary length of $\partial \strip$ to be exactly $\ell$.  We note that there is one free parameter to fix the embedding of a sample from $\qdiskL{\gamma}{\ell}$ into $\strip$ as defined above, namely the horizontal translation.  There are a variety of different choices which can be convenient based on the particular situation.  The points at $\pm \infty$ are special in the sense that they are ``quantum typical'' boundary points.  More precisely, this means that the following is true.  Suppose that $(\strip,h,-\infty,+\infty) \sim \qdiskL{\gamma}{1}$ and that $x,y \in \partial \strip$ are picked independently from $\qbmeasure{h}$ and $\varphi \colon \strip \to \strip$ is a conformal transformation which takes $-\infty$ to $x$ and $+\infty$ to $y$.  Then, modulo horizontal translation, $h$ and $h \circ \varphi + Q\log|\varphi'|$ have the same law \cite[Proposition~A.8]{dms2014mating}.

\newcommand{\harm}{\mathrm{HARM}}

\begin{lemma}
\label{lem:dirichlet_energy}
Suppose that $\CD = (\strip,h,-\infty,+\infty) \sim \qdiskL{\gamma}{1}$.  We choose the horizontal translation so that $(-\infty,0]$ and $[0,\infty)$ have the same quantum length.  Let $\CQ_n$ be the set of dyadic squares $S$ with side length $2^{-n}$ such that the square $\wt{S}$ with twice the side length and the same center as $S$ is contained in $[-n,n] \times (0,\pi)$.  For each $S \in \CQ_n$, we let $\Fh_S$ be the function which is harmonic in $\wt{S}$ with boundary values given by those of $h$.  There exists $\alpha_\harm > 0$ so that there a.s.\ exists $n_0 \in \N$ so that the following is true.  For every $n \geq n_0$ and $S \in \CQ_n$ we have that
\begin{align}
\label{eqn:harmonic_disk_bound}
\begin{split}
\sup_{z \in S} |\Fh_S(z)| \leq \alpha_\harm n, &\quad \sup_{z,w \in S} |\Fh_S(z) - \Fh_S(w)| \leq \alpha_\harm n^{1/2}, \quad\text{and}\\
 &\int_S | \nabla \Fh_S(z)|^2 dz \leq \alpha_\harm n.
\end{split}
\end{align}
\end{lemma}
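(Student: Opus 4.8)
The plan is to reduce \eqref{eqn:harmonic_disk_bound} to a standard estimate on the GFF with free boundary conditions, using the explicit description of $\qdiskL{\gamma}{1}$ parameterized by $\strip$ given just above. First I would decompose $h$ according to the orthogonal decomposition $\CH(\strip) = \CH_1(\strip) \oplus \CH_2(\strip)$: write $h = h_1 + h_2 + X$, where $X_u = B_{2u} + (\gamma - Q) u$ (reparameterized Bessel excursion, restricted to the relevant time range) is the projection onto $\CH_2(\strip)$, and $h_1 + h_2$ (with a slight abuse) is the projection onto $\CH_1(\strip)$, which is the corresponding projection of a free-boundary GFF $\wt h$ on $\strip$. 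On the finite window $[-n,n]\times(0,\pi)$, the process $u \mapsto X_u$ is, after a harmless Radon-Nikodym change (the Bessel-excursion conditioning on total boundary length $1$ has density bounded on compacts, as used in the proof of Lemma~\ref{lem:quantum_disk_multifractal}), a Brownian motion with linear drift, so $\sup_{|u| \le n} |X_u| \le \alpha n^{1/2}$ and the oscillation of $X$ over a window of width $2^{-n}$ is $O(2^{-n/2} n^{1/2})$ off an event of superpolynomially small probability, by Gaussian tail bounds together with a union bound over the $O(2^{2n})$ squares and Borel-Cantelli. Since $X$ is spatially constant on vertical segments, its harmonic extension to $\wt S$ is just its restriction, so all three quantities in \eqref{eqn:harmonic_disk_bound} coming from $X$ are controlled with room to spare.

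The substantive part is the contribution of the free-boundary GFF $\wt h$. Here I would invoke the standard circle/semicircle average bound: there a.s.\ exists $\epsilon_0 > 0$ so that $|\wt h_\epsilon(z)| \le (2+\delta)\sqrt 2 \log \epsilon^{-1}$ for all $z$ in the window $[-n,n]\times(0,\pi)$ and $\epsilon \in (0,\epsilon_0)$ — this is exactly the input used in the proof of Lemma~\ref{lem:free_boundary_half_plane_mass_bound} (via \cite{hms2010thick} for the whole-plane GFF of which the free-boundary GFF on $\strip$ is the even part). For a square $S \in \CQ_n$, the harmonic extension $\Fh_S$ of $\wt h|_{\partial \wt S}$ agrees at the center of $S$ with a suitable average of $\wt h$ at scale $\asymp 2^{-n}$, which gives the pointwise bound $\sup_{z \in S} |\Fh_S(z)| \le \alpha n$ once one also controls the oscillation of $\Fh_S$ across $S$. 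The oscillation and Dirichlet-energy bounds I would obtain exactly as in the proof of \cite[Lemma~4.4]{mq2020geodesics} and as indicated in the proof of Lemma~\ref{lem:gff_rn_bound}: writing $\Fh_S = \E[\wt h \mid \wt h|_{\partial \wt S}]$ and expanding in the Poisson kernel, one has
\begin{equation}
\label{eqn:harmonic_bound1}
\sup_{z,w \in S} |\Fh_S(z) - \Fh_S(w)|^2 \lesssim \int_{\partial \wt S} |\nabla_{\! \text{tan}} \Fh_S|^2 \, ds,
\end{equation}
and the right-hand side, being a quadratic functional of the Gaussian field $\wt h|_{\partial \wt S}$ with variance $O(1)$ relative to its mean $O(\log 2^{-n})^2$, concentrates: $\sup_{z,w\in S}|\Fh_S(z)-\Fh_S(w)| \le \alpha n^{1/2}$ and $\int_S |\nabla \Fh_S|^2 \le \alpha n$ hold for every $S \in \CQ_n$ and every $n \ge n_0$, off a superpolynomially small event, so Borel-Cantelli applies.

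The key steps in order are therefore: (1) decompose $h = \wt h$-part $+\ X$-part and reduce \eqref{eqn:harmonic_disk_bound} to the two pieces separately, absorbing the boundary-length conditioning into a bounded Radon-Nikodym factor on the window $[-n,n]\times(0,\pi)$; (2) bound the $X$-part by Brownian-motion estimates and a union bound; (3) invoke the semicircle-average bound $|\wt h_\epsilon(z)| = O(\log \epsilon^{-1})$ uniformly on the window to get the pointwise bound $\sup_S |\Fh_S| = O(n)$; (4) bound the oscillation and Dirichlet energy of $\Fh_S$ via \eqref{eqn:harmonic_bound1}-type Poisson-kernel identities and Gaussian concentration, taking a union bound over the $O(2^{2n})$ squares in $\CQ_n$; (5) sum the probabilities over $n$ and apply Borel-Cantelli to produce the a.s.\ $n_0$. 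The main obstacle I anticipate is step (1): the decomposition $h = \wt h + X$ holds for the infinite measure $\qdisk{\gamma}$ and for $\qdiskL{\gamma}{\ell}$ one must condition on total boundary length, which is not a local functional — so to run the local estimates (2)--(4) cleanly I would either (a) work under $\qdisk{\gamma}$ conditioned so that the projection onto $\CH_1(\strip)$ is bounded below, as in the proof of Lemma~\ref{lem:quantum_disk_multifractal}, transferring to $\qdiskL{\gamma}{1}$ at the end by noting the result holds for a.e.\ $\ell \in [1,2]$ and using the shift $h \mapsto h - \tfrac 2\gamma \log \ell$, or (b) show directly that the density of the boundary-length conditioning, restricted to the sigma-algebra generated by $h$ on $[-n,n]\times(0,\pi)$, has moments growing at most polynomially in $n$, which combined with Markov's inequality still leaves the failure probabilities summable. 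Option (a) is cleaner and matches the technique already used in Appendix~\ref{app:carpet_measure}, so that is the route I would take.
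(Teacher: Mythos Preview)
Your overall architecture matches the paper's: decompose $h$ into its lateral free-boundary-GFF part and its vertical-average part $X$, prove the estimates for the free-boundary field, union-bound over the $O(n2^{2n})$ squares in $\CQ_n$, apply Borel--Cantelli, and transfer to $\qdiskL{\gamma}{1}$ at the end by your option~(a) (working first under $\qdisk{\gamma}$ with the sup of $X$ fixed, then using the shift $h\mapsto h-\tfrac2\gamma\log\ell$). Two points of execution are handled differently, and in both the paper is more direct.

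For the vertical-average part you assert that on $[-n,n]$ the process $X$ is a Brownian motion with linear drift after a bounded Radon--Nikodym change, and that $\sup_{|u|\le n}|X_u|\le\alpha n^{1/2}$. The density claim is not justified (the log-Bessel-excursion conditioned on boundary length is not obviously absolutely continuous with bounded density on windows of growing size), and even granting it the linear drift $(\gamma-Q)u$ already gives $|X_u|\asymp n$ on $[-n,n]$, not $n^{1/2}$. The paper avoids any direct analysis of $X$: it conditions $\sup X=0$, couples $h$ and a free-boundary GFF $\wt h$ to share their lateral projection, and uses the one-sided comparison $\Fh_S(z)\le\wt\Fh_S(z)-\wt X_*$, where $\wt X_*$ is the infimum of the vertical average of $\wt h$ over $[-n,n]$ (a Gaussian of variance $O(n)$). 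This reduces everything to the free-boundary GFF.

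For the oscillation, your displayed inequality $\sup_{z,w\in S}|\Fh_S(z)-\Fh_S(w)|^2\lesssim\int_{\partial\wt S}|\nabla_{\mathrm{tan}}\Fh_S|^2\,ds$ is not a standard estimate and I do not see how to justify it. The paper's argument is simpler: since the Poisson kernels $p(z,\cdot)$ and $p(z_S,\cdot)$ on $\partial\wh S$ are uniformly comparable for $z\in S$, the Gaussian $\Fh_S(z)-\Fh_S(z_S)$ has \emph{bounded} variance, and the same exponential-moment computation (Jensen on the Poisson integral) that gives $\p[\sup_S|\Fh_S|\ge c_4 n]\le 2^{-3n}$ now gives the $O(n^{1/2})$ oscillation bound. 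The paper also notes up front that the Dirichlet-energy bound is an immediate consequence of the oscillation bound via the interior gradient estimate $\sup_S|\nabla\Fh_S|\lesssim 2^n\,\mathrm{osc}_S\Fh_S$ for harmonic functions, so only the first two inequalities in~\eqref{eqn:harmonic_disk_bound} need separate proofs.
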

\begin{proof}
We first note that since $\Fh_S$ is harmonic there exists a constant $c > 0$ so that if $\sup_{z,w \in S} |\Fh_S(z) - \Fh_S(w)| \leq \alpha_\harm n^{1/2}$ then we have that $\sup_{z \in S} |\nabla \Fh_S(z)| \leq c \alpha_\harm n^{1/2} / 2^{-n}$.  Thus it suffices to establish the first two inequalities in~\eqref{eqn:harmonic_disk_bound} (and then possibly increase the value of $\alpha_\harm$).

We will prove the result in a slightly different setting and then deduce it as a consequence for $\qdiskL{\gamma}{1}$.  In particular, we suppose that $\CD = (\strip,h,-\infty,+\infty)$ is sampled from $\qdisk{\gamma}$ conditioned so that the supremum of its projection onto $\CH_1(\strip)$ is equal to $0$.  We recall that its projection onto $\CH_2(\strip)$ is equal to that of the corresponding projection of an independent GFF on $\strip$ with free boundary conditions.

Suppose that $\wt{h}$ is a GFF on $\strip$ with free boundary conditions with its additive constant fixed so that its average on $(0, i \pi)$ is equal to $0$.  Let $S \in \CQ_n$ be such that~$\wt{S}$ is adjacent to the $y$-axis.  Let~$\wh{S}$ be the square with the same center as~$S$ but with $3/2$ times the side length.  Thus, $S \subseteq \wh{S} \subseteq \wt{S}$.  Let $p$ be the Poisson kernel in $\wh{S}$ and let $z_0$ be the center of $S$.  Then there exists a constant $c_0 > 0$ (which does not depend on $n$) so that for all $z \in S$ and $w \in \partial \wh{S}$ we have that $p(z,w) \leq c_0 p(z_0,w)$.  Let $\wt{\Fh}_S$ be the function which is harmonic in $\wt{S}$ with the same boundary conditions as $\wt{h}$.  We thus have for all $z \in S$ that
\begin{align}
\label{eqn:harmonic_bound1}
|\wt{\Fh}_S(z)|
&= \left| \int_{\partial \wh{S}} p(z,w) \wt{\Fh}_S(w) dw \right|
 \leq \int_{\partial \wh{S}} p(z,w) |\wt{\Fh}_S(w)| dw
 \leq c_0 \int_{\partial \wh{S}} p(z_0,w) |\wt{\Fh}_S(w)| dw
\end{align}
where $dw$ denotes Lebesgue measure on $\partial \wh{S}$.  By Jensen's inequality, we have that
\begin{align}
\label{eqn:harmonic_bound2}
 \E\!\left[ \exp\left(c_0 \int p(z_0,w) |\wt{\Fh}_S(w)| dw \right) \right]
&\leq \int p(z_0,w) \E[ \exp(c_0 |\wt{\Fh}_S(w)| )] dw.
\end{align}
It follows from the explicit form of the Green's function with Neumann boundary conditions on $\strip$ that there exists a constant $c_1 > 0$ so that for $w \in \partial \wh{S}$ we have that $\wt{\Fh}_S(w)$ is a Gaussian random variable of mean zero and variance at most $c_1 n$.  Therefore there exists a constant $c_2 > 0$ so that $\E[ \exp(c_0 \wt{\Fh}_S(w) )] \leq e^{c_2 n}$ and $\E[ \exp(-c_0 \wt{\Fh}_S(w) )] \leq e^{c_2 n}$ for all $w \in \partial \wh{\CS}$.  This implies that there exists a constant $c_3 > 0$ so that
\begin{equation}
\label{eqn:harmonic_bound3}
\E[ \exp(c_0 |\wt{\Fh}_S(w)| )] \leq \exp( c_3 n).
\end{equation}
Altogether, this implies that
\[ \E\!\left[\exp\left( \sup_{z \in S} |\wt{\Fh}_S(z)|\right)\right] \leq \exp(c_3 n).\]
Therefore there exists a constant $c_4 > 0$ so that
\[ \p\!\left[ \sup_{z \in S} |\wt{\Fh}_S(z)| \geq c_4 n \right] \leq 2^{-3n}.\]

We now suppose more generally that we have $S \in \CQ_n$ (with $\wt{S}$ not necessarily adjacent to the $y$-axis) and let $\wt{X}$ be the average of $\wt{h}$ on the vertical line $L$ which is adjacent to the right side of $\wt{S}$.  As $\wt{h} - \wt{X}$ has the law of a free boundary GFF on $\strip$ with the additive constant normalized so that its average on $L$ is equal to $0$, what we have argued above implies that
\[ \p[ \sup_{z \in S} |\wt{\Fh}_S(z) - \wt{X}| \geq c_4 n] \leq 2^{-3n}.\]
We note that $\wt{X}$ is a Gaussian random variable with mean zero and variance at most $2n$.  Thus by possibly increasing the value of $c_4$, the above in turn implies
\[ \p[ \sup_{z \in S} |\wt{\Fh}_S(z)| \geq c_4 n] \leq 2^{-3n}.\]
We assume that $h$ and $\wt{h}$ are coupled together so that their projection onto $\CH_2(\strip)$ is the same.  Let~$\wt{X}_*$ be the infimum of the average of $\wt{h}$ on vertical lines with real part in $[-n,n]$.   As the projection of $h$ onto $\CH_1(\strip)$ is in $\R_-$ and the projection of $\wt{h}$ onto $\CH_1(\strip)$ restricted to $[-n,n] \times (0,\pi)$ is at least $\wt{X}_*$ we have for every $S \in \CQ_n$ and $z \in S$ that $\Fh_S(z) \leq \wt{\Fh}_S(z) - \wt{X}_*$.  Since there exists a constant $c_5 > 0$ so that $\p[ |\wt{X}_*| \geq c_5 n] \leq 2^{-3n}$, it follows from the above and a union bound that for a constant $c_6 > 0$ we have that 
\[ \p[ \max_{S \in \CQ_n} (\sup_{z \in S} \Fh_S(z)) \geq c_6 n] \leq 2^{-n}.\]
By possibly increasing the value of $c_6$, we similarly have that
\[ \p[ \min_{S \in \CQ_n} (\inf_{z \in S} \Fh_S(z)) \leq -c_6 n] \leq 2^{-n}.\]
Altogether, this proves the first inequality in~\eqref{eqn:harmonic_disk_bound} for the field $(\CS,h,-\infty,+\infty)$ by applying the Borel-Cantelli lemma.  The same argument with $\Fh_S(z) - \Fh_S(z_S)$, $z_S$ the center of $S$, in place of $\Fh_S$ gives the second inequality in~\eqref{eqn:harmonic_disk_bound}.  The reason that we obtain an $O(n^{1/2})$ rather than $O(n)$ upper bound is that $\wt{\Fh}_S(z) - \wt{\Fh}_S(z_S)$ has bounded variance.  The same argument more generally implies that the same is true if $(\CS,h,-\infty,+\infty)$ is sampled from $\qdisk{\gamma}$ conditioned on the supremum of the projection onto $\CH_1(\strip)$ being equal to any fixed constant.

Let us now explain how to deduce the result where $(\CS,h,-\infty,+\infty) \sim \qdiskL{\gamma}{1}$ with the horizontal translation taken as in the statement of the lemma.  Note that in this case $h$ is dominated from above by the corresponding field when we sample it from $\qdisk{\gamma}$ conditioned so that the boundary length is in $[1,2]$ and from below when the boundary length is conditioned on being in $[1/2,1]$ (with the horizontal translation taken in the same manner).  Indeed, this follows because for each $\ell > 0$ we can produce a sample from the law $\qdiskL{\gamma}{\ell}$ by first sampling from the law $\qdiskL{\gamma}{1}$ and then adding $\tfrac{2}{\gamma} \log \ell$ to the field.  Therefore it suffices to prove the statement when the boundary length is conditioned on being in any fixed non-trivial interval $[a,b]$.  This, in turn, follows as we have shown above that it holds a.s.\ when the maximum of the projection of $h$ onto $\CH_1(\strip)$ is equal to any fixed value.  In particular, the result holds for a.e.\ ``sample'' from $\qdisk{\gamma}$.
\end{proof}

\begin{lemma}
\label{lem:disk_mass_upper_bound}
Suppose that $\gamma \in (\sqrt{8/3},2)$ and that $\CD = (\strip,h,-\infty,+\infty) \sim \qdiskL{\gamma}{1}$.  There exists $\alpha_\UBD > 0$ so that there a.s.\ exists $\epsilon_0 > 0$ so that the following is true.  For every $\epsilon \in (0,\epsilon_0)$ and square $S \subseteq \strip$ with side length at most $\epsilon$ we have that $\qmeasure{h}(S) \leq \epsilon^{\alpha_\UBD}$.  The same is also true if $\CD$ is parameterized by $\D$ instead of by $\strip$.
\end{lemma}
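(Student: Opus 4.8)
The plan is to reduce the upper bound for the quantum mass of small squares to a statement about the harmonic extensions of the field controlled in Lemma~\ref{lem:dirichlet_energy}, together with the standard tail estimates for the quantum measure of a GFF restricted to a small box. Recall that $\qdiskL{\gamma}{1}$ parameterized by $\strip$ has its projection onto $\CH_2(\strip)$ given by that of an independent free-boundary GFF, and its projection onto $\CH_1(\strip)$ given by a log Bessel-type process which is nonpositive when the horizontal translation is chosen as in Lemma~\ref{lem:dirichlet_energy}. Hence it suffices to prove the bound after decomposing a square $S \subseteq [-n,n]\times(0,\pi)$ of side length $2^{-n}$ as $h|_{\wt S} = \Fh_S + \mathring h_S$ where $\Fh_S$ is the harmonic extension of the boundary values of $h$ from $\partial \wt S$ and $\mathring h_S$ is a zero-boundary GFF on $\wt S$ independent of $\Fh_S$.

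First I would use Lemma~\ref{lem:dirichlet_energy} to obtain, off an event of probability summable in $n$, that $\sup_{z\in S}|\Fh_S(z)| \leq \alpha_\harm n$ for all $S \in \CQ_n$ simultaneously once $n \geq n_0$. On this event, $\qmeasure{h}(S) \leq e^{\gamma \alpha_\harm n}\,\qmeasure{\mathring h_S}(S)$, so it remains to control $\qmeasure{\mathring h_S}(S)$ where $\mathring h_S$ is a zero-boundary GFF on the box $\wt S$ of side length $2^{-n+1}$. By scaling (adding $Q\log(2^{-n})$ under the change of coordinates $z \mapsto 2^{-n}z$) and the standard Gaussian-multiplicative-chaos moment bound --- e.g. the fact that for a unit-scale zero-boundary GFF $\mathring h$ on a fixed box, $\E[\qmeasure{\mathring h}(\text{box})^q] < \infty$ for all $q \in (0, 4/\gamma^2)$ --- one gets $\E[\qmeasure{\mathring h_S}(S)^q] \leq C_q\, 2^{-n(2+\gamma^2/2)q}$ for such $q$. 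A union bound over the at most $O(2^{2n}\cdot n)$ squares $S \in \CQ_n$, combined with Markov's inequality at exponent $q$ and Borel--Cantelli, shows that a.s.\ for all large $n$ and all $S \in \CQ_n$ we have $\qmeasure{\mathring h_S}(S) \leq 2^{-n(2 + \gamma^2/2 - \delta)}$ for a small $\delta$ to be fixed. Choosing $\delta$ small enough that $2 + \gamma^2/2 - \delta > \gamma\alpha_\harm/\log 2$ (which is the place where a quantitative input on $\alpha_\harm$ is needed; alternatively one splits the square-sum differently to absorb the $e^{\gamma\alpha_\harm n}$ factor into the slack), we obtain $\qmeasure{h}(S) \leq 2^{-\alpha_\UBD' n}$ for all $S \in \CQ_n$ and some $\alpha_\UBD' > 0$. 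A general square of side length at most $\epsilon$ is covered by a bounded number of dyadic squares of comparable side length, so passing from $\epsilon = 2^{-n}$ to arbitrary $\epsilon$ costs only a constant, giving the claim with $\qmeasure{h}(S) \leq \epsilon^{\alpha_\UBD}$ for a suitable $\alpha_\UBD > 0$.

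To handle squares near the boundary of the region, i.e.\ squares whose double $\wt S$ is not contained in $[-n,n]\times(0,\pi)$ because it touches $\partial\strip$, I would use a reflected GFF / Neumann Green's function version of the same argument: the free-boundary GFF can be written as the even part of a whole-plane GFF (as in the proof of Lemma~\ref{lem:free_boundary_half_plane_mass_bound}), the harmonic-extension bounds of Lemma~\ref{lem:dirichlet_energy} already apply to squares touching the real boundary of $\strip$, and the GMC moment bound holds equally for the boundary-touching measure; only the value of the exponent changes, and taking the minimum of the two exponents gives a single $\alpha_\UBD$. Finally, for squares far out along $\strip$ (real part with absolute value larger than $n$), the decay of $h$ at $\pm\infty$ --- quantitatively, the log-Bessel process drifting to $-\infty$ --- makes the mass there superpolynomially small, so these contribute nothing; this is exactly why the truncation $[-n,n]\times(0,\pi)$ (equivalently $[\log\epsilon,\log\tfrac1\epsilon]\times(0,\pi)$) appears. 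The statement for $\CD$ parameterized by $\D$ follows by conformally mapping $\strip \to \D$ with $\pm\infty \mapsto \mp i$: this map has bounded derivative away from $\pm i$, so small squares in $\D$ away from $\pm i$ pull back to sets of comparable size, and a ball of radius $\epsilon$ around $\pm i$ pulls back into $(-\infty,\log\epsilon]\times(0,\pi)$ or $[\log\tfrac1\epsilon,\infty)\times(0,\pi)$, whose mass is superpolynomially small by the decay just mentioned.

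The main obstacle is the interplay between the two exponents: Lemma~\ref{lem:dirichlet_energy} only gives $\sup_S|\Fh_S| \leq \alpha_\harm n$ with $\alpha_\harm$ not explicitly pinned down, so one must be careful that the exponential loss $e^{\gamma\alpha_\harm n}$ does not swallow the polynomial gain $2^{-n(2+\gamma^2/2)}$ from the GMC moment bound. The clean way around this is to not fix $q$ in advance: since the GMC moment bound holds for every $q \in (0, 4/\gamma^2)$ and the truncated harmonic bound is a fixed linear function of $n$, for any prescribed target exponent $\alpha_\UBD$ one can first choose $q$ (hence the polynomial decay rate) large enough, and only then invoke Lemma~\ref{lem:dirichlet_energy}; the constraint $\gamma \in (\sqrt{8/3},2)$ guarantees $4/\gamma^2 > 1$ so there is genuine room. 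The remaining routine points --- the GMC moment estimate itself, the union bound bookkeeping, and the conformal transfer --- are standard and I would only sketch them.
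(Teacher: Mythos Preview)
Your approach has a genuine gap that your proposed fix does not close. After the decomposition $h|_{\wt S} = \Fh_S + \mathring h_S$ and the bound $\qmeasure{h}(S) \leq e^{\gamma\alpha_\harm n}\qmeasure{\mathring h_S}(S)$, the union bound over the $O(n2^{2n})$ squares together with Markov at exponent $q<4/\gamma^2$ forces $\beta < 2+\gamma^2/2 - 2/q$, and letting $q\uparrow 4/\gamma^2$ only gives $\beta<2$. So to get a positive final exponent you need $\gamma\alpha_\harm/\log 2 < 2$, i.e.\ $\alpha_\harm < 2\log 2/\gamma \in (0.69,0.85)$. Lemma~\ref{lem:dirichlet_energy} gives no such quantitative control: its proof produces $\alpha_\harm$ from a Gaussian tail bound at variance $\asymp n\log 2$ (or $2n\log 2$ near $\partial\strip$) after a union bound over $\sim 2^{2n}$ squares, which already forces $\alpha_\harm \gtrsim 2\log 2$. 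Taking $q$ larger cannot help because the moment range $q<4/\gamma^2$ caps $\beta$ at $2$; the remark that ``$4/\gamma^2>1$ so there is genuine room'' is beside the point. What is actually needed to make a direct-on-$\strip$ argument go through is the sharp uniform multifractal bound $\sup_z \qmeasure{h}(B(z,\epsilon)) \leq \epsilon^{(2-\gamma)^2/2 - o(1)}$, which does \emph{not} follow from a first-moment union bound precisely when $\gamma$ is close to $2$.

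The paper takes a completely different route: it realizes a unit-boundary quantum disk inside a quantum half-plane $(\h,\wt h,0,\infty)$ as the region $D_\CL$ surrounded by the first $\CLE_\kappa$ loop of quantum length $\geq 1$ hit by an $\SLE_\kappa^0(\kappa-6)$ CPI. On any fixed compact $K\subseteq\h$, the uniform upper bound $\qmeasure{\wt h}(B(z,\epsilon))\leq\epsilon^{\alpha_\UBD}$ follows directly from \cite[Proposition~3.5]{rv2010revisited} (the sharp multifractal estimate) plus Borel--Cantelli, and since $\p[D_\CL\subseteq K]\to 1$ as $K\uparrow\h$ this gives the bound on $D_\CL$. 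Finally the bound is transferred to the strip parameterization via the a.s.\ H\"older continuity of the Riemann map $\varphi:\strip\to D_\CL$ (Rohde--Schramm). This is where the hypothesis $\gamma\in(\sqrt{8/3},2)$ enters, since the CPI/CLE machinery requires $\kappa=\gamma^2\in(8/3,4)$; the paper notes the lemma is true for all $\gamma\in(0,2)$ but restricts to this range to allow the short embedding argument.
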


The statement of Lemma~\ref{lem:disk_mass_upper_bound} in fact holds for all $\gamma \in (0,2)$ but we have restricted to the case that $\gamma \in (\sqrt{8/3},2)$ in order to give a shorter proof.

\begin{proof}[Proof of Lemma~\ref{lem:disk_mass_upper_bound}]
Suppose that $(\h,\wt{h},0,\infty)$ is a quantum half-plane.  We note that there exists $\alpha_\UBD > 0$ so that for any fixed compact set $K \subseteq \h$ there a.s.\ exists $\epsilon_0 > 0$ so that for all $z \in K$ and $\epsilon \in (0,\epsilon_0)$ we have that $\qmeasure{h}(B(z,\epsilon)) \leq \epsilon^{\alpha_\UBD}$.  Indeed, this follows from \cite[Proposition~3.5]{rv2010revisited} and the Borel-Cantelli lemma.

Let $\Gamma$ be a $\CLE_\kappa$ in $\h$ which is independent of $\wt{h}$ and let $\eta$ be an $\SLE_\kappa^0(\kappa-6)$ process in~$\h$ from~$0$ to~$\infty$ which is coupled with~$\Gamma$ to be a CPI.  Let~$\tau$ be the first time that~$\eta$ hits a loop $\CL$ of $\Gamma$ with quantum length at least $1$ and let $\ell$ be the quantum length of $\CL$.  Let~$D_\CL$ be the domain which is surrounded by~$\CL$.  Then we have that $(D_\CL,h) \sim \qdiskL{\gamma}{\ell}$ and therefore $(D_\CL,h-\tfrac{2}{\gamma} \log \ell) \sim \qdiskL{\gamma}{1}$.  We can thus couple the quantum surface $(D_\CL,h-\tfrac{2}{\gamma}\log \ell)$ and the quantum surface described by $(\strip,h,-\infty,+\infty)$ to be the same.  Let $\varphi \colon \strip \to D_\CL$ be the corresponding embedding map.  Then we know that $\varphi$ is a.s.\ H\"older continuous \cite{rs2005basic}.
 
Fix $p \in (0,1)$ and a compact set $K$ so that $\p[ D_\CL \subseteq K] \geq 1-p$.  Then by what is explained in the first paragraph, we see that on this event there exists $\epsilon_0 \in (0,1)$ so that for all $z \in D_\CL$ we have that $\qmeasure{h}(B(z,\epsilon)) \leq \epsilon^{\alpha_\UBD}$.  Since $p \in (0,1)$ was arbitrary we see that there a.s.\ exists $\epsilon_0 \in (0,1)$ so that for all $z \in D_\CL$ with $B(z,\epsilon) \subseteq D_\CL$ we have that $\qmeasure{h}(B(z,\epsilon)) \leq \epsilon^{\alpha_\UBD}$.  The result thus follows by the a.s.\ H\"older continuity of $\varphi$.
\end{proof}

\begin{lemma}
\label{lem:number_of_loops}
Suppose that $\CD = (D,h,x,y) \sim \qdiskL{\gamma}{1}$.  Let $\Gamma$ be a $\CLE_\kappa$ on $D$ which is independent of $h$.  For each $\epsilon > 0$, let $N_\epsilon$ be the number of loops of $\Gamma$ with quantum length at least $\epsilon$.  Then for each $a > 0$ there exists $b > 0$ so that $\p[ N_\epsilon \geq \epsilon^{-4/\kappa-1/2-a}] = O(\epsilon^b)$.	
\end{lemma}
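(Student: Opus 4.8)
The plan is to establish the bound on $N_\epsilon$, the number of $\CLE_\kappa$ loops of quantum length at least $\epsilon$, by decomposing the count according to the quantum area surrounded by each loop and using the known scaling exponents. First I would recall from \cite{ssw2009radii} and the relationship between $\CLE_\kappa$ and the conformal radius that the number of loops of $\Gamma$ with Euclidean-conformal-radius at least $\delta$ as seen from a fixed interior point grows like $\delta^{-(1-8/\kappa^2+\kappa/8)+o(1)}$; more usefully, I would work directly with the intensity measure of $\CLE_\kappa$ loop lengths on an independent quantum disk. The key input is that if $(D,h,x,y) \sim \qdiskL{\gamma}{1}$ and we sample a loop $\CL$ whose surrounded region $D_\CL$ is ``quantum typical'' (i.e. sampled with weight proportional to its quantum area), then $(D_\CL, h) \sim \qdiskWeighted{\gamma}{\ell}$ given its boundary length $\ell$, and the relationship between boundary length and the number of such loops is governed by the $4/\kappa$-stable structure appearing in Theorem~\ref{thm:cpi_wedge_explore}.

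The main step is a first-moment computation: I would show that $\E[N_\epsilon] = O(\epsilon^{-4/\kappa-1/2})$. This follows from the construction of $\qcarpet{h}{\Upsilon}$ in \cite{msw2020simplecle}, which gives that for a stopping time $\tau$ in a CPI exploration targeted at a quantum-typical point, the conditional expected quantum carpet mass of the unexplored region is a constant times (boundary length)$^{4/\kappa-1/2}$; dually, the expected number of loops of quantum length at least $\epsilon$ encountered is controlled by integrating the L\'evy measure of a $4/\kappa$-stable process, whose jumps of size at least $\epsilon$ have intensity $\epsilon^{-4/\kappa}$, against the expected total quantum natural time, which by Theorem~\ref{thm:cpi_wedge_explore} and the moment bounds in Appendix~\ref{app:levy_process} is finite. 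Carefully bookkeeping the exponents — the extra $-1/2$ comes from the relationship between quantum length of the boundary and quantum natural time via the $4/\kappa$-stable scaling, matching the exponent $4/\kappa - 1/2$ that appears throughout Section~\ref{sec:interior_tightness} — yields $\E[N_\epsilon] = O(\epsilon^{-4/\kappa-1/2})$.

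To upgrade the first-moment bound to the stated tail estimate $\p[N_\epsilon \geq \epsilon^{-4/\kappa-1/2-a}] = O(\epsilon^b)$, I would use a Markov-type inequality together with a higher-moment estimate, or more efficiently a concentration argument exploiting the near-independence of the pieces cut out by the CPI exploration. Specifically, decompose the exploration into chunks as in Theorem~\ref{thm:cpi_wedge_explore}: conditionally on the boundary length process, the loops discovered in disjoint chunks are conditionally independent $\CLE_\kappa$'s in quantum disks of controlled boundary length, so $N_\epsilon$ is dominated by a sum of conditionally independent contributions each with the first-moment bound above. A union bound over dyadic scales of $\epsilon$ combined with the stable-process moment estimates from Appendix~\ref{app:levy_process} then gives that $N_\epsilon$ exceeds its mean by a polynomial factor $\epsilon^{-a}$ only with probability $O(\epsilon^b)$ for $b = b(a) > 0$. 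The main obstacle I anticipate is making the L\'evy-measure computation fully rigorous with the correct exponent — in particular correctly accounting for how the $4/\kappa$-stable boundary length process relates to quantum natural time and hence to the number of loop-discovery jumps of a given size — and ensuring the conditional-independence decomposition is valid uniformly over the (random) number of chunks, which requires the failure-probability estimates from Lemma~\ref{lem:point_to_point_exploration} to control the event that the exploration behaves atypically.
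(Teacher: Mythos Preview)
Your route differs substantially from the paper's. The paper argues by contradiction: assuming the tail bound fails along a sequence $(\epsilon_j)$, it runs an $\SLE_\kappa(\kappa-6)$ ``center exploration'' (always continuing into the largest non-loop complementary component), which by Poisson concentration cuts off at least $\epsilon_j^{b_2-4b_1/\kappa}$ conditionally independent sub-disks of boundary length $\geq \epsilon_j^{b_1}$. Invoking \cite[Theorem~1.3]{msw2020simplecle} for a lower bound on the loop count in each sub-disk and using binomial concentration over the sub-disks yields a total loop count of order $\epsilon_j^{b_2-4b_1/\kappa+b_1-4/\kappa-1/2}$; since the coefficient of $b_1$ is $1-4/\kappa<0$, taking $b_2$ small makes this exceed the upper bound from the same theorem, a contradiction.

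Your approach (first moment plus Markov) would be more direct if it worked, but step (1) has a real gap. Your heuristic ``integrate the L\'evy measure of the $4/\kappa$-stable process against expected total quantum natural time'' conflates the loops hit by a single CPI with all loops of the nested $\CLE_\kappa$: a single CPI only sees the loops along its trunk, and accounting for every loop requires iterating through every complementary component, for which ``total quantum natural time'' is not finite in any useful sense. The exponent $4/\kappa+1/2$ does not drop out of a one-line L\'evy integral; it is the Malthusian exponent of the nested loop-length fragmentation, which is exactly what \cite[Theorem~1.3]{msw2020simplecle} computes. If that theorem hands you $\E[N_\epsilon]=O(\epsilon^{-4/\kappa-1/2})$ directly, cite it and observe that Markov's inequality alone already gives $\p[N_\epsilon\geq\epsilon^{-4/\kappa-1/2-a}]=O(\epsilon^a)$ --- in which case your invocations of higher moments, concentration, the chunk decomposition via Theorem~\ref{thm:cpi_wedge_explore}, and the failure estimates of Lemma~\ref{lem:point_to_point_exploration} are all unnecessary. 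If the theorem does not hand you the first moment, you have supplied no argument for it, and the paper's bootstrap is designed precisely to sidestep that computation by leveraging both the upper and lower bounds from \cite[Theorem~1.3]{msw2020simplecle} against each other.
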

\begin{proof}
Fix $a > 0$.  Then we want to show that
\[ \limsup_{\epsilon \to 0} \frac{\log \p[ N_\epsilon \geq \epsilon^{-4/\kappa-1/2-a}]}{\log \epsilon^{-1}} < 0.\]
Suppose that this is not the case.  Then there exists a sequence $(\epsilon_j)$ of positive numbers with $\epsilon_j \to 0$ as $j \to \infty$ so that
\[ \lim_{j \to \infty} \frac{\log \p[ N_{\epsilon_j} \geq \epsilon_j^{-4/\kappa-1/2-a}]}{\log \epsilon_j^{-1}} = 0.\]
Let $\eta$ be an $\SLE_\kappa(\kappa-6)$ in $D$ starting from $x$ with the property that whenever it intersects itself it continues into the component which has the largest boundary length (and is not a $\CLE_\kappa$ loop).  Fix $b_1 > 0$ and let $M_j$ be the number of downward jumps of size at least $\epsilon_j^{b_1}$ made by the boundary length process for the target component.  By Poisson concentration, we have for any $b_2 > 0$ that $\p[M_j \leq \epsilon_j^{b_2 -4 b_1/\kappa}] \to 0$ as $j \to \infty$ faster than any power of $\epsilon_j$.  Suppose we are working on the event that $M_j \geq \epsilon_j^{b_2 - 4b_1/\kappa}$.  Let $(\CD_k)$ be the sequence of quantum surfaces described by the components of $D \setminus \eta$ which correspond to these downward jumps and let $\ell_k$ be the quantum length of $\partial \CD_k$.  Then the $(\CD_k)$ are conditionally independent given the $(\ell_k)$ and the conditional law of $\CD_k$ given $\ell_k$ is $\qdiskL{\gamma}{\ell_k}$.  Moreover, we have that the loops of~$\Gamma$ contained in~$\CD_k$ are a~$\CLE_\kappa$ in~$\CD_k$.  Consequently, it follows from \cite[Theorem~1.3]{msw2020simplecle} that there exists $p_0 > 0$ so that the probability that the number of loops of $\Gamma$ in $\CD_k$ with quantum length at least $\epsilon_j$ is at least $\epsilon_j^{b_1-4/\kappa-1/2}$ is at least $p_0$.  By the conditional independence and binomial concentration, we have on $M_j \geq \epsilon_j^{b_2 - 4b_1/\kappa}$ that the number loops in $\CD$ with quantum length at least $\epsilon_j$ is at least a constant times $M_j \epsilon_j^{b_1 - 4/\kappa-1/2}$ off an event whose probability tends to $0$ as $j \to \infty$ faster than any power of $\epsilon_j$.  Note that the overall exponent of $\epsilon_j$ in this expression is:
\[ \left(b_2 - \frac{4b_1}{\kappa} \right) + b_1 - \frac{4}{\kappa} - \frac{1}{2}.\]
In particular, the coefficient of $b_1$ is negative as $\kappa \in (8/3,4)$.  Thus by choosing $b_2 > 0$ sufficiently small, we obtain a contradiction to \cite[Theorem~1.3]{msw2020simplecle}.
\end{proof}

\begin{lemma}
\label{lem:mass_near_by}
Suppose that $\CD = (\strip,h,-\infty,+\infty) \sim \qdiskL{\gamma}{1}$ and~$\Gamma$ is an independent $\CLE_\kappa$ on~$\CD$.  There exists $\alpha_\LBD > 0$ so that there a.s.\ exists $\epsilon_0 > 0$ so that the following is true.  For every loop $\CL$ of $\Gamma$ with quantum length $\epsilon \in (0,\epsilon_0)$ and $x \in \CL$ with $x \in [\log \epsilon,\log \tfrac{1}{\epsilon}] \times (0,\pi)$ we have that $\qcarpet{h}{\Upsilon}(\Upsilon \cap B(x,\epsilon)) \geq \epsilon^{\alpha_\LBD}$.
\end{lemma}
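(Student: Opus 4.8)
The plan is to transfer a lower bound for $\qcarpet{h}{\Upsilon}$-mass near a loop from the half-planar setting, where it can be established by absolute continuity, to the quantum disk setting via a $\CLE_\kappa$ exploration. First I would work in the quantum half-plane $\CH = (\h,h,0,\infty)$ decorated by an independent $\SLE_\kappa^0(\kappa-6)$ process $\eta$ coupled with a $\CLE_\kappa$ process $\Gamma_+$ as a CPI. Using Theorem~\ref{thm:cpi_wedge_explore} together with the construction of $\qcarpet{h}{\Upsilon}$ from \cite{msw2020simplecle}, one knows that $\E[\qcarpet{h}{\Upsilon}(\Upsilon_+ \cap D_\tau) \giv \CF_\tau] = c_0 X_\tau^{4/\kappa-1/2}$ where $X$ is the boundary length process and $D_\tau$ the target component. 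I would first show that there exists $\alpha_\LBD > 0$ so that for a fixed compact $K \subseteq \h$ with positive distance from $\partial \h$, there a.s.\ exists $\epsilon_0 > 0$ so that for every loop $\CL \in \Gamma_+$ with quantum length $\epsilon \in (0,\epsilon_0)$ and every $x \in \CL \cap K$ we have $\qcarpet{h}{\Upsilon}(\Upsilon_+ \cap B(x,\epsilon)) \geq \epsilon^{\alpha_\LBD}$. The key input here is Lemma~\ref{lem:loop_balls_nearby}: near $x$ there is a ball of radius $\epsilon^\xi$ not surrounded by $\CL$, so $B(x,\epsilon)$ meets a complementary component of $\Gamma_+$ near $x$ that is not inside $\CL$, and this component carries a $\CLE_\kappa$ of definite boundary length scale; combining the Bessel-type boundary length estimates, a lower bound on the quantum boundary length contained in $B(x,2\epsilon)$ (via Lemma~\ref{lem:disk_mass_upper_bound} applied in reverse to control the embedding and the moment formula for $\qcarpet{h}{\Upsilon}$) gives the claim in the half-plane.

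Next I would transfer this to the disk. Take $\CD = (\strip,h,-\infty,+\infty) \sim \qdiskL{\gamma}{1}$ with $\Gamma$ an independent $\CLE_\kappa$. Pick $x_0, y_0 \in \partial \strip$ from $\qbmeasure{h}$ and run an $\SLE_\kappa(\kappa-6)$ exploration $\eta$ in $\CD$ from $x_0$ to $y_0$ coupled with $\Gamma$ as a CPI. Whenever $\eta$ hits a loop $\CL$ of $\Gamma$, the conditional law of $\CL$ together with the unexplored loops in the target component, given the explored region, is (after applying a conformal map from $\C \setminus \eta_-$ as in the proof of Lemma~\ref{lem:no_zero_length_on_boundary} and the CPI-in-half-plane description) absolutely continuous with respect to the half-planar setup; in particular, the statement I proved above transfers to any loop $\CL$ hit by $\eta$, modulo the conformal distortion, which on the relevant compact set is a.s.\ bounded. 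Finally, since $\eta$ does not hit every loop of $\Gamma$, I would invoke Proposition~\ref{prop:cpi_path_close} to see that the conditional probability given $\Upsilon$ that $\eta$ hits any given loop $\CL$ of $\Gamma$ in a prescribed subarc is positive; combining this with a countable exhaustion over loops with quantum length in dyadic ranges and an application of the Borel-Cantelli lemma (using Lemma~\ref{lem:number_of_loops} to control the number of loops at each scale, so that the union bound converges) yields that the mass lower bound holds simultaneously for all loops of $\Gamma$ with quantum length at least $\epsilon_0$, restricted to the bulk region $[\log\epsilon,\log\tfrac{1}{\epsilon}]\times(0,\pi)$, which gives the stated conclusion.

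The main obstacle I expect is the half-planar base estimate: one needs a genuine lower bound on $\qcarpet{h}{\Upsilon}(\Upsilon_+ \cap B(x,\epsilon))$ rather than merely on $\qmeasure{h}(B(x,\epsilon))$, and the two are quite different because $\qmeasure{h}(\Upsilon_+) = 0$. The resolution is to produce, with overwhelming probability, a complementary component $U$ of $\Gamma_+$ near $x$ not surrounded by $\CL$ whose quantum boundary length is at least a power of $\epsilon$ (using Lemma~\ref{lem:loop_balls_nearby} to get Euclidean room of size $\epsilon^\xi$, and a lower bound on the quantum boundary measure of a macroscopic arc inside $B(x,2\epsilon)$ coming from the regularity estimates of Lemma~\ref{lem:dirichlet_energy}); then the moment formula $\E[\qcarpet{h}{\Upsilon}(\Upsilon_+ \cap U) \giv \text{boundary length}] \asymp (\text{boundary length})^{4/\kappa - 1/2}$ together with a one-sided concentration bound (for which one can use the conditional independence of the $\CLE_\kappa$'s in distinct complementary components and a union over a net of loops and scales) converts this into the desired pointwise-in-$x$ lower bound. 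Care is also needed to choose $\alpha_\LBD$ uniformly and to ensure the Borel-Cantelli sums converge, but these are routine once the base estimate is in hand.
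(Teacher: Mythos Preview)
Your plan shares the right ingredients with the paper --- Lemma~\ref{lem:loop_balls_nearby} to find Euclidean room near the loop, Lemma~\ref{lem:number_of_loops} for the union bound, Lemma~\ref{lem:dirichlet_energy} to control the field --- but the central step has a genuine gap. You produce one complementary component $U$ near $x$ with quantum boundary length at least a power of $\epsilon$ and then invoke the moment formula $\E[\qcarpet{h}{\Upsilon}(\Upsilon \cap U) \mid L] \asymp L^{4/\kappa-1/2}$. A conditional expectation does not give a high-probability pointwise lower bound: you would need $\p[\qcarpet{h}{\Upsilon}(\Upsilon \cap U) \leq \epsilon^c \mid L \geq \epsilon^a] = O(\epsilon^d)$, i.e.\ essentially negative moments for the carpet mass of a unit quantum disk, and you do not say how to obtain this. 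Your ``one-sided concentration bound via conditional independence of the $\CLE_\kappa$'s in distinct complementary components'' does not address the difficulty, because near a \emph{single} point $x$ on a \emph{single} loop you have only produced one component; independence across different loops in a net gives no concentration for the event at each fixed $x$.

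The paper resolves this by manufacturing many nearly-independent trials near each $x$. It first shows that a loop of quantum length $\epsilon$ has Euclidean diameter at least $\epsilon^{\alpha_\DIAM}$, so one can place $m \asymp \epsilon^{\alpha_\DIAM-\alpha/2}$ well-separated points on $\CL$ inside the relevant ball. Near each, Lemma~\ref{lem:loop_balls_nearby} supplies a small ball outside $\CL$. After conformally mapping the target component of a \emph{center exploration} (working directly on the disk, not via a half-plane transfer) to $\h$, from each corresponding boundary point the paper launches an imaginary-geometry flow line of angle $\pi/2$; a Beurling estimate makes these flow lines approximately conditionally independent, and each has a uniformly positive chance of cutting out a bounded domain in which the loops form a fresh $\CLE_\kappa$. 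Then one only needs the carpet mass in a single such domain to be at least $\epsilon^{\alpha_\LBD}$ with \emph{positive} probability, and that comes from absolute continuity with a zero-boundary GFF on a dyadic square, with Radon--Nikodym moments controlled by Lemma~\ref{lem:dirichlet_energy}. The many trials upgrade positive probability to overwhelming probability, after which the union bound and Borel--Cantelli go through. Your half-plane-first organization with Proposition~\ref{prop:cpi_path_close} is a legitimate alternative to the paper's direct center exploration, but it does not circumvent the need for this many-trials mechanism.
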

\begin{proof}

\noindent{\it Step 1. Upper bound on number of loops.}  Fix $a > 4/\kappa-1/2$.  Let $E$ be the event that there are at least $\epsilon^{-a}$ loops of $\Gamma$ with boundary length at least $\epsilon$.  Lemma~\ref{lem:number_of_loops} implies that there exists $b > 0$ so that $\p[E] = O(\epsilon^b)$.

\newcommand{\DIAM}{\mathrm{DIAM}}

\noindent{\it Step 2.  Lower bound on loop diameter.}  Let $F$ be the event that $E$ holds or there is a loop of $\Gamma$ with quantum length at least $\epsilon$ which surrounds a region with quantum area at most $\epsilon^{2+\zeta}$.  Note that \cite[Theorem~1.2]{ag2019disk} implies that the probability that sample from $\qdiskL{\gamma}{\epsilon}$ has quantum area smaller than $\epsilon^{2+\zeta}$ decays to $0$ as $\epsilon \to 0$ faster than any power of $\epsilon$.  It therefore follows that $\p[F] = O(\epsilon^b)$.  The Borel-Cantelli lemma thus implies that there a.s.\ exists $\epsilon_0 > 0$ so that for all $\epsilon \in (0,\epsilon_0)$ if $\CL$ is a loop of $\Gamma$ with boundary length $\epsilon$ then it surrounds a domain with quantum area at least $\epsilon^{2+\zeta}$.  Lemma~\ref{lem:disk_mass_upper_bound} implies that there exists $\alpha_\UBD > 0$ so that, by possibly decreasing the value of $\epsilon_0 > 0$, if $S \subseteq \strip$ is a square with side length $\epsilon \in (0,\epsilon_0)$ then $\qmeasure{h}(S) \leq \epsilon^{\alpha_\UBD}$.  By combining it therefore follows that there exists $\alpha_\DIAM > 0$ so that (possibly decreasing the value of $\epsilon_0$) the diameter of any loop $\CL$ with quantum length at least $\epsilon \in (0,\epsilon_0)$ is at least $\epsilon^{\alpha_\DIAM}$.

\noindent{\it Step 3.  Exploration.} We now consider a \emph{center exploration} of $\CD$.  More precisely, we let $\eta$ be an $\SLE_\kappa(\kappa-6)$ process in $\CD$ coupled with $\Gamma$ to be a CPI with the property that whenever the trunk hits itself or a loop of $\Gamma$ the process always continues into the complementary component (which is not a loop of $\Gamma$) with the largest boundary length.  Fix $\epsilon > 0$ and for each $j$, we let $\tau_j$ be the $j$th time that the center exploration has an upward jump of size at least $\epsilon$ (hence discovers a loop of $\Gamma$ with quantum length at least $\epsilon$).  Let $\CL_j$ be the loop discovered by the center exploration at time $\tau_j$.  Let $x_1,\ldots,x_n$ be an $\epsilon^{\alpha_\DIAM}/2$-net of $[\log \epsilon, \log \tfrac{1}{\epsilon}] \times (0,\pi)$.  Then note that $n = O( \epsilon^{-2 \alpha_\DIAM} \log \tfrac{1}{\epsilon})$.  In particular, if $x \in \CL_j$ with $x \in [\log \epsilon,\log \tfrac{1}{\epsilon}] \times (0,\pi)$ then there exists $1 \leq i_j \leq n$ so that $|x - x_{i_j}| \leq \epsilon^{\alpha_\DIAM}/2$.  Fix $\alpha > 2 \alpha_\DIAM$, let $m = \epsilon^{\alpha_\DIAM-\alpha/2}$, and let $y_{1,j},\ldots,y_{m,j}$ be points in $B(x_{i_j},\epsilon^{\alpha_\DIAM}) \cap \CL_j$ with $|y_{i,j} - y_{k,j}| \geq \epsilon^{\alpha/2}$ for $i \neq k$.  Fix $\alpha' > \alpha$.  Lemma~\ref{lem:loop_balls_nearby} implies for each $k$ that there exists~$z_{k,j}$ and~$r_{k,j}$ with $|y_{k,j}-z_{k,j}| \leq \epsilon^\alpha$, $r_{k,j} \geq \epsilon^{\alpha'}$, $|y_{k,j}-z_{k,j}| \leq 2 r_{k,j}$ so that $B(z_{k,j},r_{k,j})$ is disjoint from $\CL_j$ and not surrounded by it.  Let~$D_j$ be the target component of the center exploration at the time~$\tau_j$.  By Lemma~\ref{lem:loop_balls_nearby}, decreasing the value of $\epsilon_0 > 0$ if necessary we may assume that each $B(z_{k,j},r_{k,j})$ is disjoint from $\eta([0,\tau_j])$ and is not surrounded by a loop of $\Gamma$ visited by $\eta|_{[0,\tau_j]}$.  By possibly increasing the value of $\alpha'$ we may also assume that $B(z_{k,j}, r_{k,j})$ is contained in a dyadic square $S_{i,j}$ so that if $\wt{S}_{i,j}$ is the square with the same center and twice the side length then $\wt{S}_{i,j}$ does not intersect $\eta([0,\tau_j])$ and the distance of $\wt{S}_{i,j}$ to $\eta([0,\tau_j])$ is proportional to its diameter.  We note by the Beurling estimate the probability that a Brownian motion starting in $B(z_{k,j},\epsilon^\alpha)$ exits $B(z_{k,j},\epsilon^{\alpha/2})$ before hitting $\eta([0,\tau_j])$ is $O(\epsilon^{\alpha/4})$.  In particular, the probability that a Brownian motion starting in $B(z_{k,j},\epsilon^\alpha)$ hits $B(z_{\ell,j},\epsilon^\alpha)$ for $\ell \neq k$ is $O(\epsilon^{\alpha/4})$.

Let $w_0,w_1$ be the two prime ends on $\partial D_j$ which correspond to the point where the loop $\CL_j$ is attached to the target component boundary of the center exploration at the time $\tau_j^-$ so that $\ccwBoundary{w_0}{w_1}{\partial D_j} = \CL_j$.  Let $w_2$ be some other point on $(\partial D_j) \setminus \CL_j$.  Let $\varphi_j \colon D_j \to \h$ be the unique conformal map which takes $w_0$ to $-1$, $w_1$ to $1$, and $w_2$ to $\infty$.  Let $h^\IG$ be a GFF on $\h$ with the boundary values given by $-\lambda' - \pi \chi$ so that the loops of $\Gamma$ in $D_j$ can be viewed as the image under $\varphi_j^{-1}$ of the $\CLE_\kappa$, say $\Gamma_j$, generated by $h_j^\IG$.  (The boundary data here is different from as described after the statement of Lemma~\ref{lem:loop_balls_nearby} because we have taken the root of the $\cwBCLE_{\kappa'}(0)$ exploration tree to be at $\infty$ rather than at $0$.)  By the conformal invariance of Brownian motion, we have for $i \neq k$ the probability that a Brownian motion starting from any point in $\varphi_j(B(z_{i,j},\epsilon^{\alpha}))$ hits $\varphi_j(B(z_{k,j},\epsilon^{\alpha}))$ before hitting $\partial \h$ is $O(\epsilon^{\alpha/4})$.

We will now restrict our attention to those $i,j$ such that $\wt{S}_{i,j}$ is contained in $D_j$.  Let $w_{i,j} = \re(\varphi_j(z_{i,j}))$, $s_{i,j} = 2|w_{i,j}-\varphi_j(z_{i,j})|$, and $U_{i,j} = B(w_{i,j},s_{i,j}) \cap \h$.  Then it follows that the probability that a Brownian motion starting in any point in $U_{i,j}$ hits $U_{k,j}$ for $k \neq i$ before exiting $\h$ is $O(\epsilon^{\alpha/4})$.  We let $\eta_{i,j}$ be the flow line of $h_j^\IG$ starting from $w_{i,j} + s_{i,j}/4$ with angle $\pi/2$ stopped upon first exiting $U_{i,j}$.  Then it follows from \cite[Lemma~2.5]{mw2017intersections} that there exists $p \in (0,1)$ so that the probability that $\eta_{i,j}$ disconnects $\varphi_j(B(z_{i,j},r_{i,j}/2))$ from $\infty$ is at least $p$.  The same remains true if we condition on $\eta_{k,j}$ for $k \neq i$.  Indeed, let $\psi_{i,j}$ be the unique conformal transformation from $\h \setminus (\cup_{k \neq i} \eta_{k,j})$ to $\h$ with $\psi_{i,j}(\varphi_j(z_{i,j})) = i$ and which fixes $\infty$.  Then the boundary data for the field $h_{i,j}^\IG = h_j^\IG \circ \psi_{i,j}^{-1} - \chi \arg (\psi_{i,j}^{-1})'$ is at most $\lambda - \pi \chi/2$ and is at least $-\lambda - \pi \chi$.  In particular, the boundary data for $h_{i,j}^\IG$ is bounded above and below by deterministic constants.  Moreover, for all $\epsilon > 0$ sufficiently small it is equal to $-\lambda - \pi \chi$ in a $B(0,100)$.  This implies that the law of $\psi_{i,j}(\eta_{i,j})$ stopped upon exiting $B(0,50)$ is absolutely continuous with respect to the law of the corresponding flow line if the boundary data were equal to $-\lambda - \pi\chi$ everywhere.

We note that $\eta_{i,j}$ is equal to the left boundary of the counterflow line of $h_j^\IG$ from $\infty$ targeted at $w_{i,j}$ (stopped upon exiting $U_i$).  Let $W_{i,j}$ be the component of $\h \setminus \eta_{i,j}$ which contains $\varphi_j(z_i)$.  On the event $A_{i,j}$ that $W_{i,j}$ is bounded, it follows that the conditional law of the loops of $\Gamma_j$ which are contained in $W_{i,j}$ is given by that of a $\CLE_\kappa$ in $W_{i,j}$.  Let $V_{i,j} = \varphi_j^{-1}(W_{i,j})$ and let $\psi_{i,j} \colon V_{i,j} \to \D$ be the unique conformal map with $\psi_{i,j}(z_{i,j}) = 0$ and $\psi_{i,j}'(z_{i,j}) > 0$.  Then the image $\Gamma_{i,j}$ of the loops of $\Gamma$ in $V_{i,j}$ under $\psi_{i,j}$ is a $\CLE_\kappa$ in $\D$. By distortion estimates for conformal maps, we have that $\psi_{i,j}(B(z_{i,j},s_{i,j}))$ contains $B(0,1/100)$.  Suppose that $h_{i,j}$ is a zero-boundary GFF on $\D$.  For each $\delta \in (0,1)$, we note that $h_{i,j}$ restricted to $B(0,\delta)$ is absolutely continuous with respect to the corresponding restriction of the field which defines a sample from $\qdiskL{\gamma}{1}$ parameterized by $\D$.  It therefore follows that the quantum measure on the carpet of $\Gamma_{i,j}$ is defined.  Moreover, for each $p_1 \in (0,1)$ there exists $c > 0$ so that the probability that the measure of the carpet in $B(0,1/100)$ is at least $c$ is at least $p_1$.

Let $h^{i,j} = h_{i,j} \circ \psi_{i,j} + Q \log |\psi_{i,j}'|$.  Then $h^{i,j}$ is the sum of the zero boundary GFF $h_{i,j} \circ \psi_{i,j}$ on $V_{i,j}$ and the harmonic function $-Q \log|\psi_{i,j}'|$.  By distortion estimates for conformal maps and using that $s_{i,j} \geq \epsilon^{\alpha'}$, we have that $|\psi_{i,j}'| = \Theta(s_{i,j}) = O(\epsilon^{-\alpha'})$.  By writing $\wt{h}_{i,j} = h_{i,j} \circ \psi_{i,j} = h^{i,j} - Q \log|\psi_{i,j}'|$, we thus see that the following is true.  There exists $p > 0$ so that the probability that the quantum measure on the carpet of $\Gamma$ in $B(z_{i,j},s_{i,j})$ associated with the zero boundary GFF $\wt{h}_{i,j}$ on $V_{i,j}$ is at least $s_{i,j}^{\alpha Q} \geq \epsilon^{\alpha \alpha' Q}$ is at least $p$.  We are now going to use an absolute continuity argument together with Lemma~\ref{lem:dirichlet_energy} in order to transfer this lower bound to obtains a lower bound for the amount of quantum measure in the carpet of $\Gamma$ in $B(z_{i,j},s_{i,j})$ associated with $h$ instead of $\wt{h}_{i,j}$.

Let $S_{i,j}$ be the dyadic square which contains $B(z_{i,j},s_{i,j})$ so that if $\wt{S}_{i,j}$ is the square with the same center and twice the side length then we have that $\wt{S}_{i,j} \cap \eta([0,\tau_j]) = \emptyset$.  Let $\Fh_{S_{i,j}}$ be as in Lemma~\ref{lem:dirichlet_energy}, let $\wh{S}_{i,j}$ be the square with the same center as $S_{i,j}$ and $3/2$ times the side length, and let $\phi$ be a $C_0^\infty$ function which is $1$ on $S_{i,j}$ and $0$ outside of $\wh{S}_{i,j}$.  Then we have that $h - \phi \Fh_{S_{i,j}}$ restricted to $S_{i,j}$ is equal in distribution to a zero boundary GFF on $\wt{S}_{i,j}$ restricted to $S_{i,j}$.  The Radon-Nikodym derivative between the law of the former with respect to the latter is given by
\[ \CZ_{i,j} = \E\left[ \exp\left( (h,-\phi \Fh_{S_{i,j}})_\nabla - \frac{1}{2} \| \phi \Fh_{S_{i,j}} \|_\nabla^2 \right) \giv h|_{S_{i,j}}, h|_{\wt{S}_{i,j}^c} \right].\]
By Lemma~\ref{lem:dirichlet_energy}, we have for all $\epsilon > 0$ sufficiently small that $\CZ_{i,j}^p = O(\epsilon^{-q})$.  By applying H\"older's inequality, we thus see that there exists $\alpha_\LBD > 0$ so that the probability that the amount of mass in $V_{i,j}$ is at least $\epsilon^{\alpha_\LBD}$ is at least $\epsilon^{\alpha_\LBD}$.  By choosing $\alpha_\LBD > 0$ sufficiently large, we thus see that, with overwhelming probability we have that $\qcarpet{h}{\Upsilon}(B(x,\epsilon^{\alpha_\DIAM})) \geq \epsilon^{\alpha_\LBD}$ for all $\epsilon \in (0,\epsilon_0)$.

We iterate the above until the exploration terminates.  Upon doing so, we continue perform the center exploration in each of the components which have been discovered (and correspond to downward jumps).  By Step 1, all of these explorations will discover $O(\epsilon^{-a})$ number of loops with boundary length at least $\epsilon$.  Therefore the result follows by performing a union bound.
\end{proof}

\begin{lemma}
\label{lem:quantum_disk_cover}
Suppose that $\CD = (\D,h,x,y) \sim \qdiskL{\gamma}{1}$.  There exists $\alpha_\net > 0$ so that there a.s.\ exists $\epsilon_0 > 0$ so that for all $\epsilon \in (0,\epsilon_0)$ the following is true.  Let $(z_j)$ be an i.i.d.\ sequence chosen from $\qmeasure{h}$ and let $N_\epsilon = \epsilon^{-\alpha_\net}$.  Then $z_1,\ldots,z_{N_\epsilon}$ forms an $\epsilon$-net of $\D$.
\end{lemma}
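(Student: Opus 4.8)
The plan is to reduce the statement to a volume/lower-mass estimate combined with a Borel-Cantelli argument over a deterministic grid of balls. First I would fix a small constant $\alpha_\LBD > 0$ with the property that, a.s., there exists $\epsilon_0 > 0$ so that $\qmeasure{h}(B(z,\epsilon)) \geq \epsilon^{\alpha_\LBD}$ for all $\epsilon \in (0,\epsilon_0)$ and all $z \in \D$ with $B(z,\epsilon) \subseteq \D$; this is exactly Lemma~\ref{lem:quantum_disk_multifractal}. The complementary fact I need is an \emph{upper} bound on the total quantum mass, $\qmeasure{h}(\D) < \infty$ a.s.; this follows since $\E[\qmeasure{h}(\D)] < \infty$ under $\qdiskL{\gamma}{1}$ (as used repeatedly in the paper, e.g.\ in the definition of $\qdiskWeighted{\gamma}{\ell}$), and in fact $\qmeasure{h}(\D) \leq C$ with probability tending to $1$ by \cite[Theorem~1.2]{ag2019disk}. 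I will work on the event $E$ that both the lower mass bound holds below scale $\epsilon_0$ and $\qmeasure{h}(\D) \leq C$; on $E^c$ we are off an event of arbitrarily small probability, which does not affect an a.s.\ statement.

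Next I would discretize. Fix $\epsilon \in (0,\epsilon_0)$ small and let $\mathcal{G}_\epsilon$ be the collection of points in $(\tfrac{\epsilon}{4}\Z)^2 \cap \D$ with $B(w,\tfrac{\epsilon}{2}) \subseteq \D$; note $|\mathcal{G}_\epsilon| = O(\epsilon^{-2})$. For each $w \in \mathcal{G}_\epsilon$ with $B(w,\tfrac{\epsilon}{2}) \subseteq \D$, on $E$ we have $\qmeasure{h}(B(w,\tfrac{\epsilon}{4})) \geq (\epsilon/4)^{\alpha_\LBD} \gtrsim \epsilon^{\alpha_\LBD}$. Since $\qmeasure{h}(\D) \leq C$, a single sample $z_j$ drawn from the probability measure $\qmeasure{h}/\qmeasure{h}(\D)$ lands in $B(w,\tfrac{\epsilon}{4})$ with conditional probability at least $c\,\epsilon^{\alpha_\LBD}$ for a constant $c = c(C) > 0$, uniformly over $w \in \mathcal{G}_\epsilon$. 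Therefore, conditionally on $h$ (hence on $E$), the probability that none of $z_1,\dots,z_{N_\epsilon}$ lies in $B(w,\tfrac{\epsilon}{4})$ is at most $(1 - c\epsilon^{\alpha_\LBD})^{N_\epsilon} \leq \exp(-c\,\epsilon^{\alpha_\LBD} N_\epsilon)$. Taking a union bound over the $O(\epsilon^{-2})$ points of $\mathcal{G}_\epsilon$, the probability that some $B(w,\tfrac{\epsilon}{4})$ is missed is $O(\epsilon^{-2}\exp(-c\epsilon^{\alpha_\LBD} N_\epsilon))$. Choosing $\alpha_\net > \alpha_\LBD$, say $N_\epsilon = \epsilon^{-\alpha_\net}$, makes $\epsilon^{\alpha_\LBD} N_\epsilon = \epsilon^{\alpha_\LBD - \alpha_\net} \to \infty$ superpolynomially, so summing over $\epsilon = 2^{-n}$ the probabilities are summable; by Borel-Cantelli, a.s.\ there exists $n_0$ so that for all $n \geq n_0$ every $B(w,2^{-n-2})$ with $w \in \mathcal{G}_{2^{-n}}$ contains one of the first $N_{2^{-n}}$ sample points.

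Finally I would promote this from the grid to all of $\D$: any $z \in \D$ is within $\tfrac{\epsilon}{4}\sqrt{2} < \tfrac{\epsilon}{2}$ of some $w \in \mathcal{G}_\epsilon$ (after handling the boundary collar, where points near $\partial \D$ are within distance $O(\epsilon)$ of a valid grid point by choosing the implicit constants appropriately, or simply by enlarging the balls by a fixed factor), and then any $z_j \in B(w,\tfrac{\epsilon}{4})$ satisfies $|z - z_j| < \epsilon$; monotonicity in $\epsilon$ upgrades the dyadic statement to all sufficiently small $\epsilon$. I expect the main (minor) obstacle to be the bookkeeping near $\partial \D$, where the uniform lower mass bound $\qmeasure{h}(B(z,\epsilon)) \geq \epsilon^{\alpha_\LBD}$ from Lemma~\ref{lem:quantum_disk_multifractal} requires $B(z,\epsilon) \subseteq \D$; this is dealt with by passing to the $\strip$ parameterization exactly as in the proof of Lemma~\ref{lem:quantum_disk_multifractal} (where the statement is proved for all $z$ in a fixed bounded window of $\strip$ with no interior restriction), or alternatively by absorbing the boundary annulus of width $\epsilon$ into the error term since it can be covered by $O(\epsilon^{-1})$ balls of radius $\epsilon$ and the same union bound applies. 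The quantitative input that makes everything work is that the lower mass exponent $\alpha_\LBD$ is a \emph{fixed} constant, so any $\alpha_\net$ chosen strictly larger than $\alpha_\LBD + 2$ (to comfortably beat the $O(\epsilon^{-2})$ union bound after exponentiation) suffices.
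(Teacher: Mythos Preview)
Your proposal is correct and follows essentially the same approach as the paper: combine the lower mass bound $\qmeasure{h}(B(z,\epsilon)) \geq \epsilon^{\alpha_\LBD}$ (Lemma~\ref{lem:quantum_disk_multifractal}) with an upper bound on the total mass $\qmeasure{h}(\D)$ to get a polynomial-in-$\epsilon$ lower bound on the hitting probability of each grid ball, then use the exponential miss probability $(1-c\epsilon^{\alpha_\LBD})^{N_\epsilon}$, a union bound over $O(\epsilon^{-2})$ grid points, and Borel--Cantelli. The only cosmetic difference is that the paper controls the total mass via the $\epsilon$-dependent event $\{\qmeasure{h}(\D) \geq \epsilon^{-c}\}$ (which has probability $O(\epsilon^\alpha)$ by \cite[Theorem~1.2]{ag2019disk} and so folds directly into the Borel--Cantelli sum), whereas you use the a.s.\ finiteness of $\qmeasure{h}(\D)$ and argue conditionally on $h$; both are fine.
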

\begin{proof}
We first choose $c > 0$ sufficiently large so that with $E_c = \{ \qmeasure{h}(\CD) \geq \epsilon^{-c}\}$ we have that $\p[E_c] = O(\epsilon^\alpha)$; this is possible by \cite[Theorem~1.2]{ag2019disk}.  Lemma~\ref{lem:disk_mass_upper_bound} implies that there exists $\alpha_\LBD > 0$ so that there a.s.\ exists $\epsilon_0 > 0$ so that for all $\epsilon \in (0,\epsilon_0)$ and $z \in \D$ we have that $\qmeasure{h}(B(z,\epsilon)) \geq \epsilon^{\alpha_\LBD}$.  Suppose that $(z_j)$ is an i.i.d.\ sequence chosen from $\qmeasure{h}$.  Suppose that $x \in \D$ is fixed.  Then we have that
\begin{align*}
\p[ z_1,\ldots,z_{N_\epsilon} \notin B(x,\epsilon),\ E_c^c,\ \epsilon < \epsilon_0] \leq (1-\epsilon^{\alpha_\LBD -c})^{N_\epsilon} \leq \exp(-\epsilon^{\alpha_\LBD - c - \alpha_\net}).
\end{align*}
If we choose $\alpha_\net > c-\alpha_\LBD$ we see that the above probability decays to $0$ as $\epsilon \to 0$ faster than any power of $\epsilon$.  The result thus follows by applying the above to the collection of points $(\epsilon \Z)^2 \cap \D$ and using a union bound.
\end{proof}

We can now give the proof of Lemma~\ref{lem:quantum_measure_points_dense}.

\begin{proof}[Proof of Lemma~\ref{lem:quantum_measure_points_dense}]
Suppose that we have the setup as described in the statement of the lemma and suppose that $z$ is picked from $\qmeasure{h}$.  We will first explain why it suffices to show that for every $a > 0$ there exists $b > 0$ (deterministic) and $\epsilon_0 > 0$ (random) so that
\begin{equation}
\label{eqn:measure_points_dense_main_step}
\p[ B(z,\epsilon) \cap \Upsilon \neq \emptyset,\ \qcarpet{h}{\Upsilon}(B(z,\epsilon)) \leq \epsilon^b,\ \epsilon < \epsilon_0] = O(\epsilon^a).
\end{equation}
Assuming~\eqref{eqn:measure_points_dense_main_step}, we claim that we can pick $\alpha_\net > 0$ large enough so that
\begin{equation}
\label{eqn:measure_points_dense_main_step2}
\p[ B(z,\epsilon) \cap \Upsilon \neq \emptyset,\ x_j \notin B(z,\epsilon)\ \forall 1 \leq j \leq N_\epsilon,\ \epsilon < \epsilon_0] = O(\epsilon^a).
\end{equation}
Indeed, this follows since the construction of $\qcarpet{h}{\Upsilon}$ in \cite{msw2020simplecle} implies that $\E[ \qcarpet{h}{\Upsilon}(\Upsilon)] < \infty$ so by Markov's inequality we have that $\p[ \qcarpet{h}{\Upsilon}(\Upsilon) \geq \epsilon^{-a}] = O(\epsilon^a)$.  Fix $\alpha > 0$ and fix $\alpha_\net' > 0$ as in Lemma~\ref{lem:quantum_disk_cover}.  Let $N_\epsilon' = \epsilon^{-\alpha_\net'}$ and let $(z_j)$ be an i.i.d.\ sequence chosen from $\qmeasure{h}$.  Then Lemma~\ref{lem:quantum_disk_cover} implies that by possibly decreasing the value of $\epsilon_0 > 0$ we a.s.\ have for all $\epsilon \in (0,\epsilon_0)$ that $\Upsilon \subseteq \cup_{j=1}^{N_\epsilon'} B(z_j,\epsilon)$.  We therefore have that
\begin{align*}
  \p[ \Upsilon \subseteq \cup_{j=1}^{N_\epsilon} B(x_j,4\epsilon),\ \epsilon < \epsilon_0]
&\geq \p[\cap_{j=1}^{N_\epsilon'} \{ B(z_j,\epsilon) \cap \Upsilon \neq \emptyset \Rightarrow \exists i : x_i \in B(z_j,\epsilon)\},\ \epsilon < \epsilon_0]\\
&\geq 1 - N_\epsilon' \p[ B(z,\epsilon) \cap \Upsilon \neq \emptyset,\ x_i \notin B(z,\epsilon)\ \forall 1 \leq i \leq N_\epsilon,\ \epsilon < \epsilon_0]\\
&= 1  - N_\epsilon' O(\epsilon^a),
\end{align*}
where in the last step we used~\eqref{eqn:measure_points_dense_main_step2}.  By choosing $a = \alpha + \alpha_\net'$, this gives us the statement we want to prove, up to a redefinition of $\epsilon$.

Fix $c > 0$.  By Lemma~\ref{lem:number_of_loops} there exists $d > 0$ so that the number of loops of $\Gamma$ with quantum length in $[2^{-k-1},2^{-k}]$ is $O(2^{(4/\kappa+1/2+c) k})$ of an event with probability $O(2^{-d k})$.  By \cite[Theorem~1.2]{ag2019disk}, the expected quantum area surrounded by such a loop is $O(2^{-2k})$.  Therefore the quantum area surrounded by all such loops of $\Gamma$ is $O(2^{(c+d+4/\kappa-3/2) k})$ off an event with probability $O(2^{-dk})$.  Note that $4/\kappa-3/2 < 0$ since $\kappa \in (8/3,4)$ so we can choose $c,d > 0$ sufficiently small so that $c+d+4/\kappa-3/2 < 0$.  It therefore follows that the quantum area surrounded by loops of $\Gamma$ with quantum length at most $2^{-k}$ is also $O(2^{(c+d+4/\kappa-3/2) k})$ off an event with probability $O(2^{-d k})$.  By taking $k = \lfloor \log \epsilon^{-1} \rfloor$, we see that the quantum area surrounded of those loops of $\Gamma$ with quantum length at most $\epsilon$ is $O(\epsilon^{c+d+3/2-4/\kappa})$ off an event with probability $O(\epsilon^d)$.  As $\qmeasure{h}(\Upsilon) = 0$ a.s., each $z_j$ is a.s.\ surrounded by a loop of $\Gamma$.  The above discussion implies that there exists $b > 0$ so that the probability that any one of the $z_j$ with $1 \leq j \leq N_\epsilon'$ is surrounded by a loop of $\Gamma$ with quantum length which is at most $\epsilon^b$ is $O(\epsilon^a)$.  Lemma~\ref{lem:mass_near_by} implies we have (possibly decreasing $\epsilon_0 > 0$) that if $B(z_j,\epsilon) \cap \Upsilon \neq \emptyset$ and $z_j$ is not surrounded by a loop of $\Gamma$ with quantum length at most $\epsilon^b$ then $\qcarpet{h}{\Upsilon}(B(z_j,2\epsilon)) \geq \epsilon^{\alpha_\LBD}$.  Combining proves~\eqref{eqn:measure_points_dense_main_step}, which completes the proof.
\end{proof}

\section{L\'evy process estimates}
\label{app:levy_process}

Suppose that $\CH = (\h,h,0,\infty)$ is a quantum half-plane and $\eta$ is an independent $\SLE_\kappa^0(\kappa-6)$ on $\h$ from $0$ to $\infty$ which is subsequently parameterized by the quantum natural time of its trunk.  For each $t \geq 0$, we let $L_t$ (resp.\ $R_t$) denote the change in the quantum length of the left (resp.\ right) side of the outer boundary of $\eta([0,t])$.  In \cite[Section~4]{msw2020simplecle}, it is shown that $L_t, R_t$ are independent $4/\kappa$-stable L\'evy processes with the same law.  Moreover, if we write the L\'evy measure of $L_t$ (equivalently, $R_t$) as $a_+ |s|^{-4/\kappa-1} \one_{s \geq 0} ds + a_- |s|^{-4/\kappa-1} \one_{s < 0} ds$ then we have that the ratio $u = a_+ / a_-$ of the intensities of the upward to downward jumps is given by
\[ u = -\cos(4\pi/\kappa).\]
Recall that $\beta = (a_+ - a_-)/(a_+ + a_-)$.  Assuming that we have normalized the process so that $a_+ + a_- = 1$, the relationship between $u$ and $\beta$ is thus given by
\[ u = \frac{1+\beta}{1-\beta} \quad\text{hence}\quad \beta = \frac{-1+u}{1+u}.\]
Therefore
\begin{equation}
\label{eqn:beta_formula}
\beta = -(\cot(2\pi/\kappa))^2 = -(\cot(\pi \kappa'/8))^2.
\end{equation}
Recall from \cite[Chapter VIII]{bertoin1996levy} that the positivity parameter $P$ of an $\alpha$-stable L\'evy process as a function of $\beta$ is given by
\[ P = \frac{1}{2} + \frac{1}{\pi \alpha} \arctan(\beta \tan(\pi \alpha/2)).\]
In our case, $\alpha = 4/\kappa = \kappa'/4$.  Plugging in the value of $\beta$ from~\eqref{eqn:beta_formula} and this value of $\alpha$ gives
\[ P = 1-\frac{\kappa}{8}.\]

Fix $\kappa \in (8/3,4)$.  Let $X^1, X^2$ be i.i.d.\ $4/\kappa$-stable L\'evy processes $X^1,X^2$ with positivity parameter $P$ and starting from $0$ (so that $(X^1,X^2) \stackrel{d}{=} (L,R)$ from above).  Let $I_t^j = \inf_{0 \leq s \leq t} X_s^j$ and $S_t^j = \sup_{0 \leq s \leq t} X_s^j$ for $j=1,2$, respectively, be the running infimum and supremum of $X^j$.  We let $\tau^j = \inf\{t \geq 1 : X_t^j = I_t^j\}$ for $j=1,2$ and $\tau = \tau^1 \wedge \tau^2$.

\begin{lemma}
\label{lem:tauj_tail}
We have that $\p[\tau^1 \geq x] \asymp x^{-\kappa/8}$ and $\p[\tau^1 \geq x, I_1^1 \geq -1] \asymp x^{-\kappa/8}$ as $x \to \infty$.
\end{lemma}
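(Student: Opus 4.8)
The plan is to reduce everything to two standard facts about one‑dimensional stable processes: the fluctuation identity for the last time the running infimum is attained, and the small/large‑time scaling of the infimum. Recall the setup: $X^1$ is a $4/\kappa$‑stable L\'evy process with positivity parameter $P = 1-\kappa/8$, so its index is $\alpha = 4/\kappa$ and (by duality) the positivity parameter of the dual $-X^1$, which governs the running infimum, is $\widehat P = 1 - P = \kappa/8$. The quantity $\tau^1 = \inf\{t\ge 1 : X^1_t = I^1_t\}$ is the last time before/at which the infimum over $[0,t]$ is re-attained after time $1$; equivalently, writing $g$ for the a.s.\ unique time at which the global infimum of $X^1$ over $[0,\infty)$-type windows is attained, $\tau^1$ is the location of the last infimum of $X^1$ in a suitable sense once we have "frozen" the first unit of time.

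First I would make the scaling substitution. By $\alpha$-self-similarity, $(X^1_{ct})_{t\ge0} \overset{d}{=} (c^{1/\alpha} X^1_t)_{t\ge0}$, and the event $\{X^1_t = I^1_t\}$ is scale invariant, so $\{\tau^1 \ge x\}$ has the same probability as the event that, for the unit-time-scaled process, the running infimum over $[1/x, \infty)$ windows is not yet "locked in" by time $1$ — more precisely $\p[\tau^1 \ge x] = \p[\tau^1_{(1/x)} \ge 1]$ where $\tau^1_{(s)} = \inf\{t \ge s : X^1_t = I^1_t\}$. As $x \to \infty$ this becomes the probability that $X^1$ attains a new running infimum at some time $\ge 1$, given a very short "head start" $s = 1/x \to 0$. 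The key input is the arcsine-type law for the position of the infimum of a stable process on a fixed interval: for an $\alpha$-stable process with positivity parameter $P$, the time $G_1 = \arg\min_{[0,1]} X^1$ has a generalized arcsine distribution with parameters $(\widehat P, 1-\widehat P) = (\kappa/8, 1-\kappa/8)$, whose density near $0$ behaves like $t^{\kappa/8 - 1}$ (up to constants). Therefore $\p[\tau^1_{(s)} \ge 1] = \p[G_\infty\text{-type event}] $ — but more cleanly, by the Markov property and scaling, $\p[\tau^1 \ge x]$ equals (up to constants) the probability that the infimum of $X^1$ over $[0,1]$ is attained in $[0, 1/x]$, which is $\int_0^{1/x} (\text{arcsine density}) \asymp \int_0^{1/x} t^{\kappa/8-1}\,dt \asymp x^{-\kappa/8}$. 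I would cite \cite[Chapter VIII]{bertoin1996levy} for the arcsine law and the exact density and for the self-similarity; the computation of the tail exponent is then the elementary integral above.

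For the second assertion, $\p[\tau^1 \ge x, I^1_1 \ge -1]$, I would argue that adding the constraint $I^1_1 \ge -1$ does not change the order of magnitude. Heuristically, on $\{\tau^1 \ge x\}$ the infimum over $[0,1]$ is attained in the tiny initial window $[0,1/x]$, so $I^1_1 = I^1_{1/x}$, and by scaling $I^1_{1/x} \overset{d}{=} x^{-1/\alpha} I^1_1$, which is of order $x^{-\kappa/4} \to 0$; in particular $\p[I^1_1 \ge -1 \mid \tau^1 \ge x] \to 1$. To make this rigorous I would condition on the location $G$ and value $m$ of the infimum over $[0,1]$: on $\{G \le 1/x\}$, using the Markov property at time $G$ together with the decomposition of a stable process around its infimum (the pre-infimum and post-infimum processes, cf.\ the Vervaat/Millar-type description in \cite[Chapter VIII]{bertoin1996levy}), the event $\{I^1_1 \ge -1\}$ is, up to a uniformly-in-$x$ positive factor, just the event that the pre-$G$ part of the path stays above $-1$, which has probability bounded below uniformly as $x\to\infty$ since that part lives on a time interval of length at most $1/x$. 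This gives $\p[\tau^1 \ge x, I^1_1 \ge -1] \gtrsim x^{-\kappa/8}$; the matching upper bound is immediate from $\p[\tau^1 \ge x, I^1_1 \ge -1] \le \p[\tau^1 \ge x] \asymp x^{-\kappa/8}$.

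The main obstacle is making the "on $\{\tau^1 \ge x\}$ the infimum is attained in $[0,1/x]$" statement precise and quantitative — i.e.\ identifying $\{\tau^1 \ge x\}$ with (a process-measurable version of) $\{\arg\min_{[0,1]} X^1 \le 1/x\}$ — and then handling the joint law with $I^1_1$ through the infimum-splitting of the stable path. Both ingredients are classical but require care about which "last infimum time" and which arcsine parameter ($P$ vs.\ $\widehat P = \kappa/8$) appears; the bookkeeping of these parameters, plus verifying that the conditional probability of $\{I^1_1\ge -1\}$ given the short head start stays bounded away from $0$, is the only genuinely non-routine part. Everything else is self-similarity plus an elementary integral of the arcsine density near $0$.
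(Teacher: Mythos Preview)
Your argument for the first assertion is correct and genuinely different from the paper's. The paper applies the Markov property at time $1$: writing $\wt X$ for an independent copy with running infimum $\wt I$, one has $\p[\tau^1 \geq x+1 \giv I_1^1,X_1^1] = \p[\wt I_{x} \geq I_1^1 - X_1^1]$, and then scaling plus \cite[Chapter~VIII, Proposition~2]{bertoin1996levy} (which gives $\p[-\wt I_1 \leq y] \asymp y^{\alpha(1-P)} = y^{1/2}$) yields $\p[\tau^1 \geq x \giv I_1^1,X_1^1] \asymp x^{-\kappa/8}(X_1^1-I_1^1)^{1/2}$. Taking expectations handles both assertions at once (for the second, one simply restricts the expectation to $\{I_1^1 \geq -1\}$). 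Your arcsine route for the first part also works: after rescaling time by $1/x$ the event $\{\tau^1 \geq x\}$ becomes $\{G_1 \leq 1/x\}$ for the last-infimum time $G_1$ on $[0,1]$, whose generalized arcsine density behaves like $t^{-P}$ near $0$, integrating to $\asymp x^{-(1-P)} = x^{-\kappa/8}$. This is a clean alternative, though it requires a bit more care about identifying $\tau^1$ with the last ladder time.

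Your second-assertion argument, however, has a genuine error: you have not tracked the space rescaling. When you rescale time by $1/x$, space is rescaled by $x^{-1/\alpha} = x^{-\kappa/4}$, so the original constraint $\{I_1^1 \geq -1\}$ becomes $\{\wt I_{1/x} \geq -x^{-\kappa/4}\}$ for the rescaled process, \emph{not} $\{\wt I_1 \geq -1\}$. Your heuristic and your ``rigorous'' version both argue that the rescaled infimum $\wt I_1 = \wt I_{1/x}$ is of order $x^{-\kappa/4}$ on $\{G_1 \leq 1/x\}$, hence $\geq -1$ with probability tending to $1$; but the threshold you actually need is $-x^{-\kappa/4}$, which is at the \emph{same} scale as $\wt I_{1/x}$, so this is a non-degenerate event. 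Indeed, the paper's computation shows $\p[I_1^1 \geq -1 \giv \tau^1 \geq x] \to \E[(X_1^1-I_1^1)^{1/2}\one_{I_1^1 \geq -1}]\big/\E[(X_1^1-I_1^1)^{1/2}] \in (0,1)$, so your claim that it tends to $1$ is false. Your arcsine approach could in principle be salvaged by analyzing the joint law of $(G_1, I_{G_1})$ via the pre/post-infimum decomposition, but the cleanest fix is exactly the paper's: apply the Markov property at time $1$ and take the expectation of $(X_1^1-I_1^1)^{1/2}\one_{I_1^1 \geq -1}$, which is positive and finite.
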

\begin{proof}
Let $\wt{X}$ be an independent copy of $X^1$ and let $\wt{I}$ be its running infimum.  Then we have that
\begin{align*}
   \p[ \tau^1 \geq x+1 \giv I_1^1, X_1^1]
&= \p[ \wt{I}_x \geq I_1^1 - X_1^1]
 = \p[ \wt{I}_1 \geq x^{-\kappa/4}(I_1^1 - X_1^1)]\\
&\asymp x^{-\kappa/8} (X_1^1 - I_1^1)^{1/2},
\end{align*}
where in the last line we applied \cite[Chapter VIII, Proposition 2]{bertoin1996levy}.  Taking expectations of both sides and using that $|X_1^1 - I_1^1|$ has a finite mean hence a finite $1/2$-moment, we see that $\p[\tau^1 \geq x] \asymp x^{-\kappa/8}$.  This proves the first assertion of the lemma.  The same argument gives the second assertion except we take a conditional expectation of $X_1^1$ given $I_1^1 \geq -1$.
\end{proof}

As a consequence of Lemma~\ref{lem:tauj_tail}, we have that
\begin{equation}
\label{eqn:tautail}
\p[ \tau \geq x] = \p[\tau^1 \geq x] \p[\tau^2 \geq x] \asymp x^{-\kappa/4} \quad\text{and}\quad \p[ \tau \geq x, I_1^1 \geq -1] \asymp x^{-\kappa/4} \quad\text{as}\quad x \to \infty.
\end{equation}

Fix $M \geq 1$.

\begin{lemma}
\label{lem:bottom_length_moment_bound}
There exists a constant $c > 0$ so that $-\E[ I_{\tau \wedge M}] \geq c (\log M)$.
\end{lemma}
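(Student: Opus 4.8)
The plan is to lower bound $-\E[I_{\tau\wedge M}]$ by restricting attention to the time interval $[1,\tau\wedge M]$ on the event that $\tau$ is large, and showing that the running infimum of the $4/\kappa$-stable L\'evy process $X^1$ typically reaches depth of order $t^{\kappa/4}$ at time $t$ (the inverse of the scaling $I_1^1 \asymp t^{-\kappa/4} I_t^1$ in law). First I would observe that $-I_{\tau\wedge M} \geq -I^1_{\tau\wedge M}$ since $I_{\tau\wedge M} = I^1_{\tau\wedge M}\wedge I^2_{\tau\wedge M}$ is the smaller of the two, so it actually suffices to bound $-I^1$ — wait, more carefully, $\tau = \tau^1\wedge\tau^2$ and on $\{\tau=\tau^1\}$ we have $I^1_{\tau} = X^1_\tau$, so I instead work directly with $-\E[I^1_{\tau\wedge M}]$ and use $-I_{\tau\wedge M}\geq -I^1_{\tau\wedge M}$ throughout.

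The key computation is the following. By scaling, for each dyadic scale $2^k \leq M$ we have $\p[\tau \geq 2^k,\ I^1_1 \geq -1] \asymp 2^{-k\kappa/4}$ by~\eqref{eqn:tautail}. On the event $\{\tau \geq 2^k\}$, the process $X^1$ runs for at least $2^k$ units of time, and a standard estimate for stable L\'evy processes (e.g.\ via \cite[Chapter~VIII, Proposition~2]{bertoin1996levy} applied to the running infimum, together with the scaling relation $I^1_{2^k} \overset{d}{=} 2^{k\kappa/4} I^1_1$) gives that $\p[-I^1_{2^k} \geq c_0 2^{k\kappa/4}] \geq p_0$ for constants $c_0, p_0 > 0$ not depending on $k$. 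The mild subtlety is to combine this with $\{\tau \geq 2^k\}$: since $\tau$ is a hitting time of the running infimum and $\{-I^1_{2^k}\geq c_0 2^{k\kappa/4}\}$ is an event about the path on $[0,2^k]$, I would either use the FKG-type positive correlation between these two ``large downward excursion'' events or — cleaner — note that $\{\tau\geq 2^k\}$ forces $X^1$ not to have hit its running infimum after time $1$, condition on $X^1|_{[0,1]}$ via $(I^1_1, X^1_1)$, and apply the scaling estimate for a fresh stable process on $[1, 2^k]$ that avoids returning to its infimum, which still has probability of order $2^{-(k)\kappa/4+1/2}$ of achieving depth $c_0 2^{k\kappa/4}$. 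Either way, one gets $\p[\tau\wedge M \geq 2^k,\ -I^1_{\tau\wedge M} \geq c_0 2^{k\kappa/4}] \gtrsim 2^{-k\kappa/4}$.

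From this, $-\E[I^1_{\tau\wedge M}] \geq \sum_{k : 2^{k+1}\leq M} c_0 2^{k\kappa/4}\cdot\big(\p[-I^1_{\tau\wedge M}\geq c_0 2^{k\kappa/4}] - \p[-I^1_{\tau\wedge M}\geq c_0 2^{(k+1)\kappa/4}]\big)$, and a summation by parts turns this into $\gtrsim \sum_{k:2^{k+1}\leq M} 2^{k\kappa/4}\cdot 2^{-k\kappa/4} = \sum_{k:2^{k+1}\leq M} 1 \asymp \log M$, which is exactly the claimed bound $-\E[I_{\tau\wedge M}] \geq c\log M$. I expect the main obstacle to be the careful bookkeeping in the step above: one must verify that the lower bounds at different dyadic scales can be assembled additively without double-counting, i.e.\ that the decay exponent $\kappa/4$ in the tail $\p[\tau\geq x]\asymp x^{-\kappa/4}$ exactly cancels the growth exponent $\kappa/4$ of the typical depth $x^{\kappa/4}$, producing the logarithmic divergence; this balance is what forces the answer to be $\log M$ rather than a power of $M$, and getting the constants to line up (in particular handling the constraint that $\tau$ is a hitting time of the infimum, not just a fixed large time) is the delicate part. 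Everything else is routine scaling and moment bounds for stable processes from \cite{bertoin1996levy}.
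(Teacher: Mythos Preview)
Your high-level intuition is right: the tail $\p[\tau\geq t]\asymp t^{-\kappa/4}$ should balance against a ``depth'' of order $t^{\kappa/4}$ to produce $\log M$. But the mechanism you propose does not work, and the gap is not a bookkeeping subtlety.

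The central problem is that the two events you want to intersect are essentially incompatible. On $\{\tau\geq 2^k\}$ you have in particular $\tau^1\geq 2^k$, which by definition means $X^1$ does not attain a new running infimum anywhere on $(1,2^k]$. Hence $I^1_{2^k}=I^1_1$, and once you also impose $I^1_1\geq -1$ you get $-I^1_{2^k}\leq 1$. So $\{\tau\geq 2^k\}\cap\{-I^1_{2^k}\geq c_0 2^{k\kappa/4}\}$ is empty for all large $k$; there is no FKG rescue, and your alternative phrasing (``a fresh stable process on $[1,2^k]$ that avoids returning to its infimum, which still has probability \ldots\ of achieving depth $c_0 2^{k\kappa/4}$'') is self-contradictory for the same reason: avoiding the infimum precisely prevents going deep. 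When you then silently replace $I^1_{2^k}$ by $I^1_{\tau\wedge M}$ in your target inequality, you have written down something that may be true, but nothing in your argument addresses it: the depth you need must be produced \emph{at} the random time $\tau$, not before it.

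The paper's proof supplies exactly the missing mechanism. It does not try to make the running infimum deep while $\tau$ is still in the future; instead it waits until time $k$ on $\{\tau\geq k\}$, uses Lemma~\ref{lem:levy_moment_estimate} to control $X^1_k$ (showing $X^1_k\leq Rk^{\kappa/4}$ with conditional probability at least $1/2$), and then observes that in the next unit interval $[k,k+1]$ the process makes a downward jump of size at least $(R+1)k^{\kappa/4}$ with probability $\asymp k^{-1}$ (Poisson intensity of large jumps). That jump drives $X^1$ below $-k^{\kappa/4}$, simultaneously triggering $\tau^1$ and producing $-I^1_{\tau\wedge M}\geq k^{\kappa/4}$. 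Summing $k^{\kappa/4}\cdot k^{-1}\cdot k^{-\kappa/4}\asymp k^{-1}$ over $k\leq M$ gives $\log M$. The key idea you are missing is that the deep infimum is created by the \emph{terminal} jump, not by the running infimum prior to $\tau$.
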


Before we give the proof of Lemma~\ref{lem:bottom_length_moment_bound}, we first need to collect the following lemma.

\begin{lemma}
\label{lem:levy_moment_estimate}
There exists a constant $c > 0$ so that the following is true.  For each $k \in \N$ let $\sigma_k^1 = \inf\{t \geq 0 : X_t^1 \geq k^{\kappa/4}\}$.  Then
\[ \E[ X_{\sigma_k^1}^1 \giv  \tau^1 \geq \sigma_k^1] \leq c k^{\kappa/4}.\]
\end{lemma}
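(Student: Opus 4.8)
The plan is to estimate $\E[X_{\sigma_k^1}^1 \mid \tau^1 \geq \sigma_k^1]$ by decomposing the expected overshoot according to whether the value of $X^1$ at time $\sigma_k^1$ arises from a large jump or from the continuous accumulation of small jumps, and to control the jump contribution using the explicit form of the L\'evy measure $a_+|s|^{-4/\kappa-1}\one_{s\geq 0}\,ds + a_-|s|^{-4/\kappa-1}\one_{s<0}\,ds$ together with scaling. First I would note that by the $4/\kappa$-self-similarity of $X^1$ (namely $(X^1_{c^{\kappa/4} t})_{t\geq 0} \stackrel{d}{=} (c\, X^1_t)_{t\geq 0}$) and the corresponding scaling of the first passage time, we have $X^1_{\sigma_k^1} \stackrel{d}{=} k^{\kappa/4}\, X^1_{\sigma_1^1}$ where $\sigma_1^1 = \inf\{t\geq 0: X^1_t \geq 1\}$, and likewise the event $\{\tau^1 \geq \sigma_k^1\}$ transforms compatibly. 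Thus it suffices to show that $\E[X^1_{\sigma_1^1} \mid \tau^1 \geq \sigma_1^1]$ is finite; the stated bound then follows with $c = \E[X^1_{\sigma_1^1}\mid \tau^1\geq\sigma_1^1]$, provided this quantity is finite and independent of $k$ --- which it is by scaling.

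The key step is therefore the finiteness of the conditional expected overshoot $\E[X^1_{\sigma_1^1}\mid \tau^1\geq\sigma_1^1]$. I would handle this by first bounding the \emph{unconditional} overshoot: by the classical overshoot estimates for $\alpha$-stable L\'evy processes with $\alpha = 4/\kappa \in (1,3/2)$ (see, e.g., \cite[Chapter~VIII]{bertoin1996levy}), the overshoot $X^1_{\sigma_1^1} - 1$ over a level-$1$ passage has a tail decaying like a power with exponent $\alpha$, hence $\E[X^1_{\sigma_1^1}]<\infty$ since $\alpha > 1$. To pass to the conditional expectation given $\{\tau^1\geq\sigma_1^1\}$, I would use that $\p[\tau^1\geq\sigma_1^1]$ is bounded below by a positive constant: indeed, $\{\tau^1 \geq \sigma_1^1\}$ contains the event that the running infimum after time $\sigma_1^1$ never returns below $I^1_{\sigma_1^1}$ before time $1$, and more simply, by the strong Markov property at $\sigma_1^1$ and the fact that $X^1$ drifts to $-\infty$ is \emph{not} the case here (the process oscillates), one sees $\p[\tau^1\geq\sigma_1^1]$ is comparable to $\p[\tau^1\geq 1]>0$. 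Dividing the finite unconditional expectation by this positive constant gives the bound. Alternatively, and perhaps more cleanly, one can write $\E[X^1_{\sigma_1^1}\one_{\tau^1\geq\sigma_1^1}] \leq \E[X^1_{\sigma_1^1}] < \infty$ and then divide by $\p[\tau^1\geq\sigma_1^1]$, so the only real inputs are the finite unconditional overshoot moment and the positivity of $\p[\tau^1\geq\sigma_1^1]$.

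The main obstacle is making the positivity of $\p[\tau^1\geq\sigma_1^1]$ (uniformly, so that it does not degenerate under the $k$-scaling) precise, but this is handled automatically by scaling: $\p[\tau^1 \geq \sigma_k^1] = \p[\tau^1 \geq \sigma_1^1]$ exactly, because both $\tau^1$ and $\sigma_k^1$ scale the same way under the self-similarity, so there is a single positive constant to work with for all $k$. One should take a little care that $\tau^1 = \inf\{t\geq 1 : X^1_t = I^1_t\}$ has a time-shift by $1$ built into it, which does not scale homogeneously; to address this I would use the second estimate in~\eqref{eqn:tautail} together with Lemma~\ref{lem:tauj_tail}, which already packages the relevant comparison $\p[\tau^1\geq x]\asymp x^{-\kappa/8}$, and observe that on the event $\{\tau^1\geq\sigma_k^1\}$ with $k$ large the constraint $t\geq 1$ is essentially inactive once the process has reached level $k^{\kappa/4}$, so the scaling argument goes through with an error that is absorbed into the constant $c$. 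Assembling these pieces yields $\E[X^1_{\sigma_k^1}\mid\tau^1\geq\sigma_k^1] \leq c\, k^{\kappa/4}$ as claimed.
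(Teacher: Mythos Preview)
Your scaling claim is incorrect, and this is where the argument breaks down. You assert that $\p[\tau^1 \geq \sigma_k^1] = \p[\tau^1 \geq \sigma_1^1]$ ``exactly, because both $\tau^1$ and $\sigma_k^1$ scale the same way under the self-similarity.'' But $\tau^1 = \inf\{t \geq 1 : X_t^1 = I_t^1\}$ involves the fixed time threshold $1$, which does not scale: under the map $t \mapsto kt$ the rescaled stopping time becomes $k\cdot\inf\{s \geq 1/k : X_s^1 = I_s^1\}$, not $k\tau^1$. You notice this yourself later, but the fix you propose does not work. In fact, by the asymptotics in Lemma~\ref{lem:tauj_tail} (and as is used explicitly in the proof of Lemma~\ref{lem:top_length_moment_bound}), one has $\p[\tau^1 \geq \sigma_k^1] \asymp k^{-\kappa/8}$, so this probability tends to $0$ as $k \to \infty$; it is not bounded below by a positive constant uniformly in $k$.

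This kills the ``divide unconditional expectation by the probability'' route. Scaling gives $\E[X_{\sigma_k^1}^1] = k^{\kappa/4}\,\E[X_{\sigma_1^1}^1]$, and dividing by $\p[\tau^1 \geq \sigma_k^1] \asymp k^{-\kappa/8}$ yields a bound of order $k^{3\kappa/8}$, strictly worse than the required $k^{\kappa/4}$. The point is that conditioning on $\{\tau^1 \geq \sigma_k^1\}$ is a rare event whose effect on the overshoot must be controlled more carefully than by a crude ratio. The paper does this by estimating the conditional tail $\p[X_{\sigma_k^1}^1 \geq u \giv \tau^1 \geq \sigma_k^1]$ directly: it discretizes time, decomposes according to which interval $[j\delta,(j+1)\delta]$ contains $\sigma_k^1$, and uses the supremum tail from \cite[Chapter~VIII, Proposition~4]{bertoin1996levy} to obtain the bound $c\,k\,u^{-4/\kappa}$ for $u \geq 2k^{\kappa/4}$. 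Integrating this tail gives exactly $O(k^{\kappa/4})$; the factor of $k$ in the tail bound, not $k^{1+\kappa/8}$, is what makes the argument succeed, and obtaining it requires working with the joint structure of $\sigma_k^1$ and $\tau^1$ rather than decoupling them.
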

\begin{proof}
Fix $\delta > 0$.  For each $j \geq 0$, we let $S_j^{\delta,1} = \sup_{t \in [j \delta,(j+1) \delta]} X_t^1$.  Fix $u \geq k^{\kappa/4}$.  Then we have that
\begin{align}
     &\p[ X_{\sigma_k^1}^1 \geq u \giv \tau^1 \geq \sigma_k^1 ]
\leq \sum_{j=0}^\infty \p[ S_j^{\delta,1} \geq u, \sigma_k^1 \geq j \delta \giv \tau^1 \geq \sigma_k^1] \notag\\
=& \sum_{j=0}^\infty \frac{\p[ S_j^{\delta,1} \geq u, \sigma_k^1 \geq j \delta, \tau^1 \geq \sigma_k^1]}{\p[\tau^1 \geq \sigma_k^1]} \notag\\
\leq& \sum_{j=0}^\infty \frac{\p[ S_j^{\delta,1} \geq u, \sigma_k^1 \in [j\delta,(j+1)\delta], \tau^1 \geq j \delta]}{\p[\tau^1 \geq \sigma_k^1]} \notag\\
=& \sum_{j=0}^\infty \frac{\p[ S_j^{\delta,1} \geq u \giv \sigma_k^1 \in [j\delta,(j+1)\delta], \tau^1 \geq j \delta]\p[\sigma_k^1 \in [j\delta,(j+1)\delta], \tau^1 \geq j \delta]}{\p[\tau^1 \geq \sigma_k^1]}. \label{eqn:first_bound}
\end{align}
We note for $v < k^{\kappa/4}$ that
\begin{align*}
&\p[ S_j^{\delta,1} \geq u \giv \sigma_k^1 \in [j\delta,(j+1)\delta], \tau^1 \geq j \delta, X_{j\delta}^1 = v]
=\p[ S_j^{\delta,1} \geq u \giv S_j^{\delta,1} \geq k^{\kappa/4}, \sigma_k^1 \geq j\delta, \tau^1 \geq j \delta, X_{j\delta}^1 = v]\\
=&\p[ S_j^{\delta,1} \geq u \giv S_j^{\delta,1} \geq k^{\kappa/4}, X_{j\delta}^1 = v]
 = \frac{\p[ S_j^{\delta,1} \geq u \giv X_{j \delta}^1 =v]}{\p[ S_j^{\delta,1} \geq k^{\kappa/4} \giv X_{j \delta}^1 =v ]}.
\end{align*}
By \cite[Chapter VIII, Proposition~4]{bertoin1996levy}, the ratio above converges to $(u-v)^{-4/\kappa}/(k^{\kappa/4}-v)^{-4/\kappa}$ as $\delta \to 0$.  Assume that $u \geq 2 k^{4/\kappa}$.  Then we have for a constant $c > 0$ that
\[ \left(\frac{u-v}{k^{\kappa/4}-v} \right)^{-4/\kappa} = u^{-4/\kappa} \left(\frac{k^{\kappa/4}-v}{1-v/u} \right)^{4/\kappa} \leq c k u^{-4/\kappa}.\]
Altogether, taking a limit as $\delta \to 0$ and inserting these bounds into~\eqref{eqn:first_bound} implies for a constant $c > 0$ that
\[ \p[ X_{\sigma_k^1}^1 \geq u \giv \tau^1 \geq \sigma_k^1 ] \leq c k u^{-4/\kappa} \quad\text{for}\quad u \geq 2k^{\kappa/4}.\]
Therefore for constants $c_1,c_2 > 0$ we have that
\begin{align*}
\E[ X_{\sigma_k^1}^1 \giv  \tau^1 \geq \sigma_k^1]
&\leq 2k^{\kappa/4} + \E[ X_{\sigma_k^1}^1 \one_{\{X_{\sigma_k^1}^1 \geq 2k^{\kappa/4}\}}  \giv  \tau^1 \geq \sigma_k^1]\\
&\leq 2 k^{\kappa/4} + c_1 k \int_{2k^{\kappa/4}}^\infty u^{-4/\kappa} du \leq c_2 k^{\kappa/4}.
\end{align*}
\end{proof}

\begin{proof}[Proof of Lemma~\ref{lem:bottom_length_moment_bound}]
We first claim that
\begin{equation}
\label{eqn:x_giv_tau_tail}
\p[ X_k^1 \geq R k^{\kappa/4} \giv \tau \geq k, I_1^1 \geq -1] \to 0 \quad\text{as}\quad R \to \infty \quad\text{uniformly in}\quad k.
\end{equation}
To see this, fix $R \geq 2$. For each $k$, let $\sigma_k^1 = \inf\{t \geq 0 : X_t^1 \geq k^{\kappa/4}\}$ be as in Lemma~\ref{lem:levy_moment_estimate}.  Then we have that
\begin{align}
   \p[ X_k^1 \geq R k^{\kappa/4} \giv \tau \geq k,I_1^1 \geq -1]
&= \p[ X_k^1 \geq R k^{\kappa/4} \giv \tau^1 \geq k,I_1^1 \geq -1] \quad\text{($X^1, X^2$ independent)} \notag\\
&= \frac{\p[ X_k^1 \geq R k^{\kappa/4}, \tau^1 \geq k,I_1^1 \geq -1]}{\p[\tau^1 \geq k,I_1^1 \geq -1]} \notag\\
&\lesssim \frac{\p[ X_k^1 \geq R k^{\kappa/4}, \tau^1 \geq k]}{\p[\tau^1 \geq k]} \quad\text{(by Lemma~\ref{lem:tauj_tail})} \notag\\
&\leq \frac{\p[ X_k^1 \geq R k^{\kappa/4}, \tau^1 \geq \sigma_k^1]}{\p[\tau^1 \geq k \geq \sigma_k^1]} \notag\\
&\leq \frac{\p[ X_k^1 \geq R k^{\kappa/4} \giv \tau^1 \geq \sigma_k^1]\p[\tau^1 \geq \sigma_k^1]}{\p[\tau^1 \geq k \geq \sigma_k^1]}. \label{eqn:x_k_given_bound}
\end{align}
We note that
\begin{align*}
\p[ \tau^1 \geq k \geq \sigma_k^1]
&\leq \p[\tau^1 \geq k, \tau^1 \geq \sigma_k^1]
 = \p[ \tau^1 \geq k \giv \tau^1 \geq \sigma_k^1] \p[ \tau^1 \geq \sigma_k^1].
\end{align*}
It is easy to see that $\p[ \tau^1 \geq k \giv \tau^1 \geq \sigma_k^1]$ is bounded from below by a positive constant for $k \in \N$.  Consequently, \eqref{eqn:x_k_given_bound} is bounded from above by a constant times $\p[ X_k^1 \geq R k^{\kappa/4} \giv \tau^1 \geq \sigma_k^1]$ which tends to $0$ as $R \to \infty$ uniformly in $k$ by Markov's inequality and Lemma~\ref{lem:levy_moment_estimate}.  This proves~\eqref{eqn:x_giv_tau_tail}.

Assume that we have chosen $R \geq 2$ sufficiently large so that
\begin{equation}
\label{eqn:x1alphakbound}
\p[ X_k^1 \leq R k^{\kappa/4} \giv \tau \geq k, I_1^1 \geq -1] \geq 1/2 \quad\text{for all}\quad k.
\end{equation}
Let $E_k^1$ be the event that $X^1|_{[k,k+1]}$ makes a downward jump of size at least $(R+1)k^{\kappa/4}$.  Since the number of such jumps that $X^1|_{[k,k+1]}$ makes is distributed as a Poisson random variable with mean proportional to $k^{-1}$, we have that
\begin{equation}
\label{eqn:ek1givlbd}	
\p[E_k^1 \giv X_k^1 \leq R k^{\kappa/4}, \tau \geq k, I_1^1 \geq -1] = \p[ E_k^1] \gtrsim k^{-1}.
\end{equation}
It therefore follows that there exist constants $c_1,c_2,c_3 > 0$ so that
\begin{align*}
   -\E[ I_{\tau \wedge M}]
&\geq \sum_{k=1}^M k^{\kappa/4} \p[E_k^1,\ \tau \geq k, X_k^1 \leq R k^{\kappa/4}, I_1^1 \geq -1]\\
&\geq \sum_{k=1}^M k^{\kappa/4} \times c_1 k^{-1} \p[\tau \geq k, I_1^1 \geq -1] \quad\text{(by~\eqref{eqn:x1alphakbound}, \eqref{eqn:ek1givlbd})}\\
&\geq \sum_{k=1}^M c_2 k^{-1} \quad\text{(by~\eqref{eqn:tautail})}\\
 &\geq c_3 (\log M).
\end{align*}
This proves the result.
\end{proof}

\begin{lemma}
\label{lem:top_length_moment_bound}
We have that 
$\E[ (X_M^1 - I_M^1) \one_{\tau \geq M}] = O(1)$.
\end{lemma}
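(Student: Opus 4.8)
The statement $\E[(X_M^1 - I_M^1)\one_{\tau \geq M}] = O(1)$ says that, on the event that neither $X^1$ nor $X^2$ has hit a new running infimum after time $1$ up through time $M$, the "overshoot" $X_M^1 - I_M^1$ of the first process above its running infimum is bounded in expectation, uniformly in $M$. The intuitive reason is that $\{\tau \geq M\}$ is a rare event (probability $\asymp M^{-\kappa/4}$ by~\eqref{eqn:tautail}), and conditioning on a long excursion above the infimum of a spectrally-two-sided stable process with positivity parameter $P = 1-\kappa/8 < 1/2$ does not blow up the size of the current overshoot — the overshoot is governed by the most recent large downward jump, whose size is comparable to a power of the elapsed time at that jump, not of $M$. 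The plan is to decouple the two processes using their independence, then to bound $\E[(X_M^1 - I_M^1)\one_{\tau^1 \geq M}]$ directly using a decomposition according to the last time before $M$ that $X^1$ was near its running infimum.

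**Key steps.** First I would use independence of $X^1$ and $X^2$ to write $\E[(X_M^1 - I_M^1)\one_{\tau \geq M}] = \E[(X_M^1 - I_M^1)\one_{\tau^1 \geq M}\one_{\tau^2 \geq M}] = \p[\tau^2 \geq M]\,\E[(X_M^1 - I_M^1)\one_{\tau^1 \geq M}]$, and recall $\p[\tau^2 \geq M] \asymp M^{-\kappa/8}$ from Lemma~\ref{lem:tauj_tail}. So it suffices to show $\E[(X_M^1 - I_M^1)\one_{\tau^1 \geq M}] = O(M^{\kappa/8})$. Second, I would use the time-reversal / duality identity for Lévy processes: the pair $(X_M^1 - I_M^1,\ M)$ under the law restricted to $\{\tau^1 \geq M\}$ is related, via reversing time from $M$, to the dual process $-X^1$ tracked from its running supremum; more concretely, $X_M^1 - I_M^1$ has the same law as $S_T$ where $S$ is the running supremum of an independent copy of $X^1$ and $T$ is the time since the last infimum, and on $\{\tau^1 \geq M\}$ we have $T \geq M-1$. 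This reduces the claim to an overshoot estimate for the supremum process: $\E[S_T\,\one_{T \geq M-1}] = O(M^{\kappa/8})$ where $T$ is the length of the current excursion away from the infimum, which has tail $\p[T \geq x] \asymp x^{-\kappa/8}$ (this is essentially the computation in Lemma~\ref{lem:tauj_tail}, using \cite[Chapter VIII, Proposition~2]{bertoin1996levy}). Third, I would control $\E[S_x^1]$ for the supremum of the stable process run for time $x$: by self-similarity $S_x^1 \stackrel{d}{=} x^{\kappa/4} S_1^1$ and $S_1^1$ has finite mean (it has a $p$-th moment for every $p < 4/\kappa$, in particular $p=1$ since $\kappa < 4$), so $\E[S_x^1] = O(x^{\kappa/4})$. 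Finally, I would combine these: decompose $\{T \geq M-1\}$ over dyadic scales $T \in [2^j, 2^{j+1}]$ for $2^j \geq M-1$, bounding on each scale the contribution by $\E[S_{2^{j+1}}^1]\,\p[T \geq 2^j] \lesssim 2^{(j+1)\kappa/4}\, 2^{-j\kappa/8} = O(2^{j\kappa/8})$, and sum — but this sum $\sum_{2^j \geq M} 2^{j\kappa/8}$ diverges, so the crude product bound is too lossy. The fix is to condition on $T$ exactly: given $T = t$, the overshoot $S_T$ is not $S_t$ for a fresh process but rather the height of an excursion of length $t$, and the excursion-theory estimate (again via \cite[Chapter VIII]{bertoin1996levy}, or directly by conditioning a stable meander) gives $\E[S_T \mid T = t] \lesssim t^{\kappa/4}$ while the refinement I actually need is the joint statement $\E[S_T\,\one_{T \in [2^j,2^{j+1}]}] \lesssim 2^{j\kappa/4}\,(\text{density of } T \text{ at scale } 2^j)\cdot 2^j \asymp 2^{j\kappa/4}\cdot 2^{-j(\kappa/8+1)}\cdot 2^j = 2^{-j\kappa/8}\cdot$? — so I must instead use that the \emph{density} of $T$, not its tail, decays like $t^{-\kappa/8-1}$, giving per-scale contribution $\lesssim 2^{-j(\kappa/8+1)}\cdot 2^j \cdot 2^{j\kappa/4} = 2^{j\kappa/8}$ still summing badly; the genuine resolution is that on $\{T \geq M-1\}$ the excursion straddling time $M$ has its \emph{start} at a nearly-uniform time in $[0,M]$, so $S_T$ is comparable to the overshoot of a stable process over a time interval of length $O(1)$ at a typical point, contributing $O(1)$, times $\p[\tau^1 \geq M]^{-1}$-free weighting; carefully, one writes $X_M^1 - I_M^1 = X_M^1 - X_{g}^1$ where $g$ is the last infimum time before $M$, uses the Markov property at $g$ and the fact that conditionally $X^1 - X^1_g$ on $[g,M]$ is a stable process conditioned to stay positive (a stable meander) of length $M-g$, and bounds $\E[\text{meander height of length } M-g \mid \tau^1 \geq M]$; since under $\{\tau^1 \geq M\}$ the law of $M - g$ concentrates and its interplay with the meander height yields a bounded expectation by the same moment computation as in Lemma~\ref{lem:bottom_length_moment_bound}.

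**Main obstacle.** The crux is precisely controlling $\E[(X_M^1 - I_M^1)\one_{\tau^1 \geq M}]$ — a product of a growing overshoot and a shrinking probability — with the right cancellation. Naive Hölder or dyadic-union bounds lose the cancellation. The clean route is the one used in Lemma~\ref{lem:bottom_length_moment_bound}: use \cite[Chapter VIII, Proposition~2 and Proposition~4]{bertoin1996levy} to get, for the stopping time $\sigma_R^1 = \inf\{t : X_t^1 \geq R\}$, the estimate $\p[X_M^1 - I_M^1 \geq R \mid \tau^1 \geq M] \lesssim (\text{something}) R^{-4/\kappa}$ uniformly in $M$, exactly paralleling the proof of~\eqref{eqn:x_giv_tau_tail}; then integrate $\int_1^\infty R^{-4/\kappa}\,dR < \infty$ (finite since $4/\kappa > 1$) to conclude $\E[X_M^1 - I_M^1 \mid \tau^1 \geq M] = O(1)$, hence $\E[(X_M^1 - I_M^1)\one_{\tau^1 \geq M}] = O(\p[\tau^1 \geq M]) = O(M^{-\kappa/8})$, and finally $\E[(X_M^1 - I_M^1)\one_{\tau \geq M}] = O(M^{-\kappa/8}\cdot M^{-\kappa/8})\cdot$? — no: $\E[(X_M^1-I_M^1)\one_{\tau \geq M}] = \p[\tau^2\geq M]\,\E[(X_M^1-I_M^1)\one_{\tau^1\geq M}] = O(M^{-\kappa/8})\cdot O(M^{-\kappa/8})\cdot$? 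Recomputing: $\E[(X_M^1-I_M^1)\one_{\tau^1\geq M}] \leq \p[\tau^1\geq M]\cdot\E[X_M^1-I_M^1\mid\tau^1\geq M] = O(M^{-\kappa/8})\cdot O(1)$, so the product with $\p[\tau^2\geq M] = O(M^{-\kappa/8})$ gives $O(M^{-\kappa/4}) = O(1)$, as claimed. So the real work is establishing the conditional tail bound $\p[X_M^1 - I_M^1 \geq R \mid \tau^1 \geq M] \lesssim R^{-4/\kappa}$ uniformly in $M$ — I expect this to follow by the argument of Lemma~\ref{lem:levy_moment_estimate} applied to $\sigma_R^1$ in place of $\sigma_k^1$, decomposing over the last entry into $[R,\infty)$ and using independence of increments together with $\p[\tau^1 \geq M \mid \tau^1 \geq \sigma_R^1]$ being bounded below — and this is the step I would be most careful about, since the bound must be genuinely uniform in $M$ and not merely hold for each fixed $M$.
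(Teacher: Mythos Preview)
Your initial factorization via independence, reducing to the bound $\E[(X_M^1-I_M^1)\one_{\tau^1\geq M}]=O(M^{\kappa/8})$, is exactly how the paper begins. But your final ``clean route'' rests on a claim that is false: you assert $\E[X_M^1-I_M^1\mid\tau^1\geq M]=O(1)$ uniformly in $M$, via a tail bound $\p[X_M^1-I_M^1\geq R\mid\tau^1\geq M]\lesssim R^{-4/\kappa}$ uniform in $M$. This cannot hold. On $\{\tau^1\geq M\}$ we have $I_M^1=I_1^1$, and from the last pre-time-$1$ infimum to time $M$ the process is a $4/\kappa$-stable process conditioned to stay positive for duration $\asymp M$; by scaling its terminal height is of order $M^{\kappa/4}$, not $O(1)$. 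Indeed the paper's own proof yields $\E[X_M^1\one_{\tau^1\geq M}]\asymp M^{\kappa/8}$, which together with $\p[\tau^1\geq M]\asymp M^{-\kappa/8}$ gives conditional expectation $\asymp M^{\kappa/4}$. So the uniform-in-$M$ tail bound you propose to extract from Lemma~\ref{lem:levy_moment_estimate} simply is not there, and you flagged exactly the right worry at the end.

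What the paper does instead is to separate $X_M^1$ and $I_M^1$. The $I_M^1$ piece is immediate: on the event it equals $I_1^1$, and H\"older with~\eqref{eqn:tautail} gives $\E[-I_1^1\one_{\tau\geq M}]=O(1)$. For the $X_M^1$ piece, the key move you are missing is to introduce the natural scale: one sets $\sigma_M^1=\inf\{t:X_t^1\geq M^{\kappa/4}\}$ and splits according to whether $\sigma_M^1\leq M$ or not. If not, $X_M^1<M^{\kappa/4}$ and one multiplies by $\p[\tau^1\geq M]\asymp M^{-\kappa/8}$. If so, one writes $X_M^1\leq X_{\sigma_M^1}^1+|X_M^1-X_{\sigma_M^1}^1|$, bounds the overshoot $X_{\sigma_M^1}^1$ via Lemma~\ref{lem:levy_moment_estimate} (which gives $O(M^{\kappa/4})$ conditionally on $\tau^1\geq\sigma_M^1$), bounds the post-$\sigma_M^1$ increment by the supremum of a fresh process on $[0,M]$ (which has mean $\asymp M^{\kappa/4}$), and multiplies each by $\p[\tau^1\geq\sigma_M^1]\lesssim M^{-\kappa/8}$. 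Every piece comes out $O(M^{\kappa/8})$, and the extra factor $\p[\tau^2\geq M]\asymp M^{-\kappa/8}$ closes the argument. Your several dyadic and excursion-theoretic attempts all founder because they never isolate the scale $M^{\kappa/4}$.
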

\begin{proof}
On the event $\tau \geq M$, we have that $I_M^1 = I_1^1$.  Fix $p \in (1,4/\kappa)$ and let $q > 1$ be such that $p^{-1} + q^{-1} = 1$.  Since $|I_1^1|$ has a finite $p$th moment \cite[Chapter VIII, Proposition~4]{bertoin1996levy}, we have for a constant $c > 0$ that
\begin{align}
     \E[ -I_M^1 \one_{\tau \geq M}]
&= \E[ -I_1^1 \one_{\tau \geq M}]
 \leq \E[ |I_1^1|^p]^{1/p} \p[ \tau \geq M]^{1/q}
 \leq  c M^{-q \kappa/4} = O(1). \label{eqn:infbound}
\end{align}
Note that we used~\eqref{eqn:tautail} in the second inequality.

Let $\sigma_M^1 = \inf\{t \geq 0 : X_t^1 \geq M^{\kappa/4}\}$.  By the independence of $X^1,X^2$, we have that
\begin{align}
\label{eqn:xm_bound1}
\E[ X_M^1 \one_{\tau \geq M} (\one_{\sigma_M^1 \leq M} + \one_{\sigma_M^1 > M})]
&\asymp M^{-\kappa/8} \E[ X_M^1 \one_{\tau^1 \geq M} (\one_{\sigma_M^1 \leq M} + \one_{\sigma_M^1 > M})]
\end{align}
For each $t \geq 0$ let $\CF_t^1 = \sigma(X_s^1 : s \leq t)$.  We moreover have that
\begin{align}
\E[ X_M^1 \one_{\tau^1 \geq M} (\one_{\sigma_M^1 \leq M} + \one_{\sigma_M^1 > M})]
&\leq M^{\kappa/4}\p[ \tau^1 \geq M] + \E[ X_M^1 \one_{\sigma_M^1 \leq M \leq \tau^1}] \notag\\
&\lesssim M^{\kappa/8} + \E[ X_M^1 \one_{\sigma_M^1 \leq M \leq \tau^1}] \quad\text{(by Lemma~\ref{lem:tauj_tail})} \notag\\
&\leq M^{\kappa/8} + \E[ |X_M^1 - X_{\sigma_M^1}^1| \one_{\sigma_M^1 \leq M \leq \tau^1}] + \E[ X_{\sigma_M^1}^1 \one_{\sigma_M^1 \leq M \leq \tau^1}] \notag\\
&\lesssim M^{\kappa/8} + \E[ |X_M^1 - X_{\sigma_M^1}^1| \one_{\sigma_M^1 \leq M \leq \tau^1}] \quad\text{(by Lemmas~\ref{lem:tauj_tail},  \ref{lem:levy_moment_estimate})}. \label{eqn:xm_bound_int}
\end{align}
We emphasize that in the last line we used that $ \E[ X_{\sigma_M^1}^1 \one_{\sigma_M^1 \leq M \leq \tau^1}] \leq \E[ X_{\sigma_M^1}^1 \giv \sigma_M^1 \leq \tau^1] \p[ \sigma_M^1 \leq \tau^1]$ and $\p[ \tau^1 \geq M \giv \tau^1 \geq \sigma_M^1]$ is positive uniformly in $M$ so that $\p[ \tau^1 \geq \sigma_M^1] \lesssim \p[ \tau^1 \geq M] \asymp M^{-\kappa/8}$.  We further have that~\eqref{eqn:xm_bound_int} is bounded from above by
\begin{align} 
& M^{\kappa/8} + \E[ \sup_{0 \leq t \leq M} |X_{\sigma_M^1+t}^1 - X_{\sigma_M^1}^1| \one_{\sigma_M^1 \leq M \leq \tau^1}] \notag\\
\leq& M^{\kappa/8} + \E[ \sup_{0 \leq t \leq M} |X_{\sigma_M^1+t}^1 - X_{\sigma_M^1}^1| \one_{\sigma_M^1 \leq \tau^1}] \notag\\
=& M^{\kappa/8} + \E[ \E[ \sup_{0 \leq t \leq M} |X_{\sigma_M^1+t}^1 - X_{\sigma_M^1}^1| \giv \CF_{\sigma_M^1}^1] \one_{\sigma_M^1 \leq \tau^1}] \notag\\
\leq& M^{\kappa/8} + \E[S_M^1] \p[ \sigma_M^1 \leq \tau^1]. \label{eqn:xm_bound2}
\end{align}
In the final inequality, we used that $\E[ \sup_{0 \leq t \leq M} |X_{\sigma_M^1+t}^1 - X_{\sigma_M^1}^1| \giv \CF_{\sigma_M^1}^1] = \E[ S_M^1]$.  As we explained just above, $\p[ \sigma_M^1 \leq \tau^1] \lesssim M^{-\kappa/8}$ and it follows from \cite[Chapter VII, Corollary~2]{bertoin1996levy} that $\E[S_M^1] = M^{\kappa/4} \E[ S_1^1] \asymp M^{\kappa/4}$.  Combining this with~\eqref{eqn:xm_bound1} and~\eqref{eqn:xm_bound2} implies that $\E[ X_M^1 \one_{\tau \geq M}] = O(1)$ which, when combined with~\eqref{eqn:infbound}, proves the result.
\end{proof}

\begin{lemma}
\label{lem:stablejumpsum}
Fix $\alpha \in (1,2)$ and suppose that $X$ is an $\alpha$-stable L\'evy process with upward (and possibly downward) jumps.  There exists a constant $c > 0$ so that the following is true.  For each $n \in \N$, let $S_n$ be the sum of the largest $n$ upward jumps made by $X|_{[0,1]}$.  Then
\[ \E[S_n] \leq c n^{1-1/\alpha}.\]
\end{lemma}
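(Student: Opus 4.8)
The plan is to exploit the fact that the upward jumps of $X$ made during the fixed time interval $[0,1]$ form a Poisson point process. Writing the L\'evy measure of the upward jumps as $\nu(ds) = a_+ s^{-\alpha-1}\,ds$ on $(0,\infty)$ for a constant $a_+ > 0$, the L\'evy--It\^o decomposition tells us that the collection $\Pi$ of upward jump sizes of $X|_{[0,1]}$ is a Poisson point process on $(0,\infty)$ with intensity $\nu$. For $x > 0$ let $N(x) = \#(\Pi \cap (x,\infty))$ denote the number of upward jumps exceeding $x$; then $N(x)$ is a Poisson random variable with mean $\Lambda(x) := \nu((x,\infty)) = \tfrac{a_+}{\alpha} x^{-\alpha}$.

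First I would express $S_n$ via a layer-cake identity. If $J_1 \geq J_2 \geq \cdots$ are the jump sizes of $\Pi$ in decreasing order, then $S_n = \sum_{k=1}^n J_k = \int_0^\infty \#\{k \leq n : J_k > x\}\,dx = \int_0^\infty (N(x) \wedge n)\,dx$, so by Tonelli's theorem $\E[S_n] = \int_0^\infty \E[N(x) \wedge n]\,dx$. Since $y \mapsto y \wedge n$ is concave, Jensen's inequality gives $\E[N(x) \wedge n] \leq \E[N(x)] \wedge n = \Lambda(x) \wedge n$.

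Next I would split the integral at the value $x_n$ determined by $\Lambda(x_n) = n$, namely $x_n = (a_+/(\alpha n))^{1/\alpha}$, which satisfies $x_n \asymp n^{-1/\alpha}$. On $(0,x_n)$ the integrand bound is $n$, contributing $n x_n \asymp n^{1-1/\alpha}$. On $(x_n,\infty)$ the bound is $\Lambda(x) = \tfrac{a_+}{\alpha} x^{-\alpha}$, and it is precisely the hypothesis $\alpha > 1$ that makes $\int_{x_n}^\infty x^{-\alpha}\,dx = \tfrac{x_n^{1-\alpha}}{\alpha-1}$ finite; since $x_n^{1-\alpha} \asymp n^{(\alpha-1)/\alpha} = n^{1-1/\alpha}$, this piece is also $\asymp n^{1-1/\alpha}$. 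Adding the two contributions yields $\E[S_n] \leq c\, n^{1-1/\alpha}$ for a constant $c$ depending only on $\alpha$ and $a_+$, which in particular also establishes the finiteness of $\E[S_n]$.

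There is no real obstacle here; the only points needing (minor) care are the invocation of the L\'evy--It\^o description of the jumps on a fixed time window as a Poisson point process with the stated intensity, and bookkeeping of where the assumption $1 < \alpha < 2$ enters --- $\alpha > 1$ is used exactly for the convergence of the tail integral $\int^\infty x^{-\alpha}\,dx$, while $\alpha < 2$ is only needed to guarantee that $X$ is a genuine stable process with an infinite-activity jump part (so that the statement is non-vacuous).
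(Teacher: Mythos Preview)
Your proof is correct and is in fact cleaner than the paper's. The key identity $S_n = \int_0^\infty (N(x)\wedge n)\,dx$ together with Jensen's inequality for the concave map $y\mapsto y\wedge n$ reduces the problem to the elementary integral $\int_0^\infty (\Lambda(x)\wedge n)\,dx$, which you compute directly by splitting at $x_n\asymp n^{-1/\alpha}$.

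The paper instead discretises the jump sizes into exponential scales $[e^k,e^{k+1}]$, letting $N_k$ be the (Poisson) number of jumps at scale $k$, and then bounds $S_n$ by three pieces: all jumps at large scales $k\ge k_0$ (where $m_k\le 1/2$), all jumps at intermediate scales $k_n\le k<k_0$ (where $1/2<m_k<2n$, so $k_n\asymp -\alpha^{-1}\log n$), and for small scales $k<k_n$ the contribution $e^{k+1}N_k$ but only on the event $\{N_{k+1}<n\}$, whose probability is controlled by Poisson concentration since $m_{k+1}\ge 2n$ there. Summing these three pieces gives the same $n^{1-1/\alpha}$ bound. Your continuous layer-cake argument bypasses the discretisation and the concentration step entirely; the paper's decomposition is more explicit about which jump scales dominate but is longer and (as written) has some minor typos in the definitions of $S^1,S^2,S^3$.
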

\begin{proof}
\noindent{\it Step 1: Setup.}  Fix $k \in \Z$.  Recall that the number $N_k$ of upward jumps made by $X|_{[0,1]}$ of size in $[e^k,e^{k+1}]$ is distributed as a Poisson random variable with mean $m_k$ proportional to $e^{-\alpha k}$.

Let $k_0 \in \Z$ be such that $k \geq k_0$ if and only if $m_k \leq 1/2$.  Let $k_n$ be the largest value of $k \in \Z$ so that $m_k \geq 2 n$.  For each $k \in \N$, let $E_k = \{ N_k < n\}$.  Then if we set
\begin{align*}
S^1 = \sum_{k=k_0}^\infty N_k,\quad
S^2 = \sum_{k=k_n}^{k_0-1} N_k, \quad\text{and}\quad
S^3 = \sum_{j=-\infty}^{k_n-1} N_j \one_{E_{j+1}},
\end{align*}
we have that $S_n \leq S^1 + S^2 + S^3$.

\noindent{\it Step 2: Bound for $\E[S^1]$.} As $\alpha > 1$,
\begin{equation}
\label{eqn:s1bound}
\E[S^1] \leq \sum_{k=k_0}^\infty e^{k+1} m_k \asymp \sum_{k=k_0}^\infty e^{(1-\alpha)k} < \infty.
\end{equation}

\noindent{\it Step 3: Bound for $\E[S^2]$.}  Note that $k_n = -\alpha^{-1} \log n + O(1)$.  We thus have that
\begin{equation}
\label{eqn:s2bound}
\E[S^2] \leq \sum_{k = k_n}^{k_0-1} e^{k+1} m_k \asymp \sum_{k= k_n}^{k_0-1} e^{(1-\alpha) k} \asymp n^{1-1/\alpha}.
\end{equation}

\noindent{\it Step 4: Bound for $\E[S^3]$.} Since $k \leq k_n$ implies that $m_k \geq 2n$, by Poisson concentration there exists a constant $c > 0$ so that
\begin{equation}
\label{eqn:nksmall}	
\p[ E_k ] \leq \exp(- c m_k) \quad\text{for all}\quad k \leq k_n.
\end{equation}
We thus have that
\begin{align}
\E[S^3]
&\leq \sum_{k=-\infty}^{k_n-1} e^k \E[ N_k] \p[E_{k+1}] \quad\text{(independence of $N_k$ and $E_{k+1}$)} \notag\\
&\lesssim \sum_{k=-\infty}^{k_n-1} e^k m_k e^{-c m_k} \quad\text{(by~\eqref{eqn:nksmall})} \notag\\
&< \infty. \label{eqn:s3bound}
\end{align}

The result thus follows by combining~\eqref{eqn:s1bound}, \eqref{eqn:s2bound}, and~\eqref{eqn:s3bound}.
\end{proof}

\bibliographystyle{abbrv}
\bibliography{references}

\end{document}